\newtheorem{theorem}{Theorem}
\theoremstyle{plain}
\newtheorem{condition}{Condition}
\newtheorem{corollary}{Corollary}
\newtheorem{definition}{Definition}
\newtheorem{example}{Example}
\newtheorem{lemma}{Lemma}
\newtheorem{notation}{Notation}
\newtheorem{problem}{Problem}
\newtheorem{proposition}{Proposition}
\newtheorem{remark}{Remark}
\numberwithin{equation}{section}
\begin{document}
\title[Two weight boundedness]{A two weight theorem for $\alpha $-fractional
singular integrals with an energy side condition, quasicube testing and
common point masses}
\author[E.T. Sawyer]{Eric T. Sawyer}
\address{ Department of Mathematics \& Statistics, McMaster University, 1280
Main Street West, Hamilton, Ontario, Canada L8S 4K1 }
\email{sawyer@mcmaster.ca}
\thanks{Research supported in part by NSERC}
\author[C.-Y. Shen]{Chun-Yen Shen}
\address{ Department of Mathematics \\
National Central University \\
Chungli, 32054, Taiwan }
\email{chunyshen@gmail.com}
\thanks{C.-Y. Shen supported in part by the NSC, through grant
NSC102-2115-M-008-015-MY2}
\author[I. Uriarte-Tuero]{Ignacio Uriarte-Tuero}
\address{ Department of Mathematics \\
Michigan State University \\
East Lansing MI }
\email{ignacio@math.msu.edu}
\thanks{ I. Uriarte-Tuero has been partially supported by grants DMS-1056965
(US NSF), MTM2010-16232, MTM2015-65792-P (MINECO, Spain), and a Sloan
Foundation Fellowship. }
\date{April 16, 2016}

\begin{abstract}
Let $\sigma $ and $\omega $ be locally finite positive Borel measures on $%
\mathbb{R}^{n}$ (possibly having common point masses), and let $T^{\alpha }$%
\ be a standard $\alpha $-fractional Calder\'{o}n-Zygmund operator on $%
\mathbb{R}^{n}$ with $0\leq \alpha <n$. Suppose that $\Omega :\mathbb{R}%
^{n}\rightarrow \mathbb{R}^{n}$ is a globally biLipschitz map, and refer to
the images $\Omega Q$ of cubes $Q$ as \emph{quasicubes}. Furthermore, assume
as side conditions the $\mathcal{A}_{2}^{\alpha }$ conditions, punctured $%
A_{2}^{\alpha }$ conditions, and certain $\alpha $\emph{-energy conditions}
taken over quasicubes. Then we show that $T^{\alpha }$ is bounded from $%
L^{2}\left( \sigma \right) $ to $L^{2}\left( \omega \right) $ if the
quasicube testing conditions hold for $T^{\alpha }$\textbf{\ }and its dual,
and if the quasiweak boundedness property holds for $T^{\alpha }$.

Conversely, if $T^{\alpha }$ is bounded from $L^{2}\left( \sigma \right) $
to $L^{2}\left( \omega \right) $, then the quasitesting conditions hold, and
the quasiweak boundedness condition holds. If the vector of $\alpha $%
-fractional Riesz transforms $\mathbf{R}_{\sigma }^{\alpha }$ (or more
generally a strongly elliptic vector of transforms) is bounded from $%
L^{2}\left( \sigma \right) $ to $L^{2}\left( \omega \right) $, then both the 
$\mathcal{A}_{2}^{\alpha }$ conditions and the punctured $A_{2}^{\alpha }$
conditions hold. We do not know if our quasienergy conditions are necessary
when $n\geq 2$, except for certain situations in which one of the measures
is one-dimensional, or both measures are sufficiently dispersed.
\end{abstract}

\maketitle
\tableofcontents

\section{Introduction}

In this paper we prove a two weight inequality for standard $\alpha $%
-fractional Calder\'{o}n-Zygmund operators $T^{\alpha }$ in Euclidean space $%
\mathbb{R}^{n}$, where we assume $n$-dimensional $\mathcal{A}_{2}^{\alpha }$
conditions (with holes), punctured $A_{2}^{\alpha ,\limfunc{punct}}$
conditions, and certain $\alpha $\emph{-energy conditions} as side
conditions on the weights (in higher dimensions the Poisson kernels used in
these two conditions differ). We state and prove our theorem in the more
general setting of \emph{quasicubes} as in \cite{SaShUr5}, but here we now
permit the weights, or measures, to have common point masses, something not
permitted in \cite{SaShUr5}. As a consequence, we use $\mathcal{A}%
_{2}^{\alpha }$ conditions with holes as in the one-dimensional setting of
Hyt\"{o}nen \cite{Hyt2}, together with punctured $A_{2}^{\alpha ,\limfunc{%
punct}}$ conditions, as the usual $A_{2}^{\alpha }$ `\emph{without punctures}%
' fails whenever the measures have a common point mass. The extension to
permitting common point masses uses the two weight Poisson inequality in 
\cite{Saw3} to derive functional energy, together with a delicate adaptation
of arguments in \cite{SaShUr5}. The key point here is the use of the
(typically necessary) `punctured' Muckenhoupt $A_{2}^{\alpha ,\limfunc{punct}%
}$ conditions below. They turn out to be crucial in estimating the Poisson
testing conditions later in the paper. We remark that Hyt\"{o}nen's bilinear
dyadic Poisson operator and shifted dyadic grids (\cite{Hyt2}) in dimension $%
n=1$ can be extended to derive functional energy in higher dimensions, but
at a significant cost of increased complexity. See the earlier versions of
this paper on the \textit{arXiv} for this approach\footnote{%
Additional small arguments are needed to complete the shifted dyadic proof
given there, but we omit them in favour of the simpler approach here resting
on punctured Muckenhoupt conditions instead of holes. The authors can be
contacted regarding completion of the shifted dyadic proof.}, and also \cite%
{LaWi} where Lacey and Wick use this approach. Finally, we point out that
our use of punctured Muckenhoupt conditions provides a simpler alternative
to Hyt\"{o}nen's method of extending to common point masses the NTV
conjecture for the Hilbert transform \cite{Hyt2}.

On the other hand, the extension to quasicubes in the setting of common
point masses turns out to be, after checking all the details, mostly a
cosmetic modification of the proof in \cite{SaShUr5}, except for the
derivation of the $n$-dimensional $\mathcal{A}_{2}^{\alpha }$ conditions
with holes, which requires extensive modification of earlier arguments.

We begin by recalling the notion of quasicube used in \cite{SaShUr5} - a
special case of the classical notion used in quasiconformal theory.

\begin{definition}
We say that a homeomorphism $\Omega :\mathbb{R}^{n}\rightarrow \mathbb{R}%
^{n} $ is a globally biLipschitz map if%
\begin{equation}
\left\Vert \Omega \right\Vert _{Lip}\equiv \sup_{x,y\in \mathbb{R}^{n}}\frac{%
\left\Vert \Omega \left( x\right) -\Omega \left( y\right) \right\Vert }{%
\left\Vert x-y\right\Vert }<\infty ,  \label{rigid}
\end{equation}%
and $\left\Vert \Omega ^{-1}\right\Vert _{Lip}<\infty $.
\end{definition}

Note that a globally biLipschitz map $\Omega $ is differentiable almost
everywhere, and that there are constants $c,C>0$ such that%
\begin{equation*}
c\leq J_{\Omega }\left( x\right) \equiv \left\vert \det D\Omega \left(
x\right) \right\vert \leq C,\ \ \ \ \ x\in \mathbb{R}^{n}.
\end{equation*}

\begin{example}
\label{wild}Quasicubes can be wildly shaped, as illustrated by the standard
example of a logarithmic spiral in the plane $f_{\varepsilon }\left(
z\right) =z\left\vert z\right\vert ^{2\varepsilon i}=ze^{i\varepsilon \ln
\left( z\overline{z}\right) }$. Indeed, $f_{\varepsilon }:\mathbb{%
C\rightarrow C}$ is a globally biLipschitz map with Lipschitz constant $%
1+C\varepsilon $ since $f_{\varepsilon }^{-1}\left( w\right) =w\left\vert
w\right\vert ^{-2\varepsilon i}$ and%
\begin{equation*}
\nabla f_{\varepsilon }=\left( \frac{\partial f_{\varepsilon }}{\partial z},%
\frac{\partial f_{\varepsilon }}{\partial \overline{z}}\right) =\left(
\left\vert z\right\vert ^{2\varepsilon i}+i\varepsilon \left\vert
z\right\vert ^{2\varepsilon i},i\varepsilon \frac{z}{\overline{z}}\left\vert
z\right\vert ^{2\varepsilon i}\right) .
\end{equation*}%
On the other hand, $f_{\varepsilon }$ behaves wildly at the origin since the
image of the closed unit interval on the real line under $f_{\varepsilon }$
is an infinite logarithmic spiral.
\end{example}

\begin{notation}
We define $\mathcal{P}^{n}$ to be the collection of half open, half closed
cubes in $\mathbb{R}^{n}$ with sides parallel to the coordinate axes. A half
open, half closed cube $Q$ in $\mathbb{R}^{n}$ has the form $Q=Q\left(
c,\ell \right) \equiv \dprod\limits_{k=1}^{n}\left[ c_{k}-\frac{\ell }{2}%
,c_{k}+\frac{\ell }{2}\right) $ for some $\ell >0$ and $c=\left(
c_{1},...,c_{n}\right) \in \mathbb{R}^{n}$. The cube $Q\left( c,\ell \right) 
$ is described as having center $c$ and sidelength $\ell $.
\end{notation}

We typically use $\mathcal{D}$ to denote a dyadic grid of cubes from $%
\mathcal{P}^{n}$. We repeat the natural \emph{quasi} definitions from \cite%
{SaShUr5}.

\begin{definition}
Suppose that $\Omega :\mathbb{R}^{n}\rightarrow \mathbb{R}^{n}$ is a
globally biLipschitz map.

\begin{enumerate}
\item If $E$ is a measurable subset of $\mathbb{R}^{n}$, we define $\Omega
E\equiv \left\{ \Omega \left( x\right) :x\in E\right\} $ to be the image of $%
E$ under the homeomorphism $\Omega $.

\begin{enumerate}
\item In the special case that $E=Q$ is a cube in $\mathbb{R}^{n}$, we will
refer to $\Omega Q$ as a quasicube (or $\Omega $-quasicube if $\Omega $ is
not clear from the context).

\item We define the center $c_{\Omega Q}=c\left( \Omega Q\right) $ of the
quasicube $\Omega Q$ to be the point $\Omega c_{Q}$ where $c_{Q}=c\left(
Q\right) $ is the center of $Q$.

\item We define the side length $\ell \left( \Omega Q\right) $ of the
quasicube $\Omega Q$ to be the sidelength $\ell \left( Q\right) $ of the
cube $Q$.

\item For $r>0$ we define the `dilation' $r\Omega Q$ of a quasicube $\Omega
Q $ to be $\Omega rQ$ where $rQ$ is the usual `dilation' of a cube in $%
\mathbb{R}^{n}$ that is concentric with $Q$ and having side length $r\ell
\left( Q\right) $.
\end{enumerate}

\item If $\mathcal{K}$ is a collection of cubes in $\mathbb{R}^{n}$, we
define $\Omega \mathcal{K}\equiv \left\{ \Omega Q:Q\in \mathcal{K}\right\} $
to be the collection of quasicubes $\Omega Q$ as $Q$ ranges over $\mathcal{K}
$.

\item If $\mathcal{F}$ is a grid of cubes in $\mathbb{R}^{n}$, we define the
inherited quasigrid structure on $\Omega \mathcal{F}$ by declaring that $%
\Omega Q$ is a child of $\Omega Q^{\prime }$ in $\Omega \mathcal{F}$ if $Q$
is a child of $Q^{\prime }$ in the grid $\mathcal{F}$. We denote by $%
\mathfrak{C}\left( Q\right) $ the collection of children of $Q$.
\end{enumerate}
\end{definition}

Note that if $\Omega Q$ is a quasicube, then $\left\vert \Omega Q\right\vert
^{\frac{1}{n}}\approx \left\vert Q\right\vert ^{\frac{1}{n}}=\ell \left(
Q\right) =\ell \left( \Omega Q\right) $ shows that the measure of $\Omega Q$
is approximately its sidelength to the power $n$, more precisely there are
positive constants $c,C$ such that $c\left\vert J\right\vert ^{\frac{1}{n}%
}\leq \ell \left( J\right) \leq C\left\vert J\right\vert ^{\frac{1}{n}}$ for
any quasicube $J=\Omega Q$. We will generally use the expression $\left\vert
J\right\vert ^{\frac{1}{n}}$ in the various estimates arising in the proofs
below, but will often use $\ell \left( J\right) $ when defining collections
of quasicubes. Moreover, there are constants $R_{big}$ and $R_{small}$ such
that we have the comparability containments%
\begin{equation*}
Q+\Omega x_{Q}\subset R_{big}\Omega Q\text{ and }R_{small}\Omega Q\subset
Q+\Omega x_{Q}\ .
\end{equation*}

Given a fixed globally biLipschitz map $\Omega $ on $\mathbb{R}^{n}$, we
will define below the $n$-dimensional $\mathcal{A}_{2}^{\alpha }$ conditions
(with holes), punctured Muckenhoupt conditions $A_{2}^{\alpha ,\limfunc{punct%
}}$, testing conditions, and energy conditions using $\Omega $-quasicubes in
place of cubes, and we will refer to these new conditions as quasi$\mathcal{A%
}_{2}^{\alpha }$, quasitesting and quasienergy conditions. We will then
prove a $T1$ theorem with quasitesting and with quasi$\mathcal{A}%
_{2}^{\alpha }$ and quasienergy side conditions on the weights. We now
describe a particular case informally, and later explain the full theorem in
detail.

We show that for positive locally finite Borel measures $\sigma $ and $%
\omega $, possibly having common point masses, and \emph{assuming} the
quasienergy conditions in the Theorem below, a strongly elliptic collection
of standard $\alpha $-fractional Calder\'{o}n-Zygmund operators $\mathbf{T}%
^{\alpha }$ is bounded from $L^{2}\left( \sigma \right) $ to $L^{2}\left(
\omega \right) $,%
\begin{equation}
\left\Vert \mathbf{T}^{\alpha }\left( f\sigma \right) \right\Vert
_{L^{2}\left( \omega \right) }\leq \mathfrak{N}_{\mathbf{T}^{\alpha
}}\left\Vert f\right\Vert _{L^{2}\left( \sigma \right) },  \label{2 weight}
\end{equation}%
(with $0\leq \alpha <n$) \emph{if and only if} the $\mathcal{A}_{2}^{\alpha
} $ condition and its dual hold (we assume a mild additional condition on
the quasicubes for this), the punctured Muckenhoupt condition $A_{2}^{\alpha
,\limfunc{punct}}$ and its dual hold, the quasicube testing condition for $%
\mathbf{T}^{\alpha }$ and its dual hold, and the quasiweak boundedness
property holds.

Since the $\mathcal{A}_{2}^{\alpha }$ and punctured Muckenhoupt conditions
typically hold, this identifies the culprit in higher dimensions as the pair
of quasienergy conditions. We point out that these quasienergy conditions
are implied by higher dimensional analogues of essentially all the other
side conditions used previously in two weight theory, in particular doubling
conditions and the Energy Hypothesis (1.16) in \cite{LaSaUr2}, as well as
the uniformly full dimension hypothesis in \cite{LaWi} (uniformly full
dimension permits a reversal of energy, something not assumed in this paper,
and reversal of energy implies our energy conditions).

It turns out that in higher dimensions, there are two natural `Poisson
integrals' $\mathrm{P}^{\alpha }$ and $\mathcal{P}^{\alpha }$\ that arise,
the usual Poisson integral $\mathrm{P}^{\alpha }$ that emerges in connection
with energy considerations, and a different Poisson integral $\mathcal{P}%
^{\alpha }$ that emerges in connection with size considerations - in
dimension $n=1$ these two Poisson integrals coincide. The standard Poisson
integral $\mathrm{P}^{\alpha }$ appears in the energy conditions, and the
reproducing Poisson integral $\mathcal{P}^{\alpha }$ appears in the $%
\mathcal{A}_{2}^{\alpha }$ condition. These two kernels coincide in
dimension $n=1$ for the case $\alpha =0$ corresponding to the Hilbert
transform.

\section{Statements of results}

Now we turn to a precise description of our two weight theorem.

\begin{description}
\item[Assumption] We fix once and for all a globally biLipschitz map $\Omega
:\mathbb{R}^{n}\rightarrow \mathbb{R}^{n}$ for use in all of our
quasi-notions.
\end{description}

As already mentioned above we will prove a two weight inequality for
standard $\alpha $-fractional Calder\'{o}n-Zygmund operators $T^{\alpha }$
in Euclidean space $\mathbb{R}^{n}$, where we assume the $n$-dimensional $%
\mathcal{A}_{2}^{\alpha }$ conditions, new punctured $A_{2}^{\alpha }$
conditions, and certain $\alpha $\emph{-quasienergy conditions} as side
conditions on the weights. In order to state our theorem precisely, we need
to define standard fractional singular integrals, the two different Poisson
kernels, and a quasienergy condition sufficient for use in the proof of the
two weight theorem. These are introduced in the following subsections.

\begin{remark}
It is possible to collect our various Muckenhoupt and quasienergy
assumptions on the weight pair $\left( \sigma ,\omega \right) $ into just 
\emph{two} compact side conditions of Muckenhoupt and quasienergy type. We
prefer however, to keep the individual conditions separate so that the
interested reader can track their use through the arguments below.
\end{remark}

\subsection{Standard fractional singular integrals and the norm inequality}

Let $0\leq \alpha <n$. Consider a kernel function $K^{\alpha }(x,y)$ defined
on $\mathbb{R}^{n}\times \mathbb{R}^{n}$ satisfying the following fractional
size and smoothness conditions of order $1+\delta $ for some $\delta >0$,%
\begin{eqnarray}
\left\vert K^{\alpha }\left( x,y\right) \right\vert &\leq &C_{CZ}\left\vert
x-y\right\vert ^{\alpha -n},  \label{sizeandsmoothness'} \\
\left\vert \nabla K^{\alpha }\left( x,y\right) \right\vert &\leq
&C_{CZ}\left\vert x-y\right\vert ^{\alpha -n-1},  \notag \\
\left\vert \nabla K^{\alpha }\left( x,y\right) -\nabla K^{\alpha }\left(
x^{\prime },y\right) \right\vert &\leq &C_{CZ}\left( \frac{\left\vert
x-x^{\prime }\right\vert }{\left\vert x-y\right\vert }\right) ^{\delta
}\left\vert x-y\right\vert ^{\alpha -n-1},\ \ \ \ \ \frac{\left\vert
x-x^{\prime }\right\vert }{\left\vert x-y\right\vert }\leq \frac{1}{2}, 
\notag \\
\left\vert \nabla K^{\alpha }\left( x,y\right) -\nabla K^{\alpha }\left(
x,y^{\prime }\right) \right\vert &\leq &C_{CZ}\left( \frac{\left\vert
y-y^{\prime }\right\vert }{\left\vert x-y\right\vert }\right) ^{\delta
}\left\vert x-y\right\vert ^{\alpha -n-1},\ \ \ \ \ \frac{\left\vert
y-y^{\prime }\right\vert }{\left\vert x-y\right\vert }\leq \frac{1}{2}. 
\notag
\end{eqnarray}

We note that a more general definition of kernel has only order of
smoothness $\delta >0$, rather than $1+\delta $, but the use of the
Monotonicity and Energy Lemmas below, which involve first order Taylor
approximations to the kernel functions $K^{\alpha }\left( \cdot ,y\right) $,
requires order of smoothness more than $1$.

\subsubsection{Defining the operators and norm inequality}

We now turn to a precise definition of the weighted norm inequality%
\begin{equation}
\left\Vert T_{\sigma }^{\alpha }f\right\Vert _{L^{2}\left( \omega \right)
}\leq \mathfrak{N}_{T_{\sigma }^{\alpha }}\left\Vert f\right\Vert
_{L^{2}\left( \sigma \right) },\ \ \ \ \ f\in L^{2}\left( \sigma \right) .
\label{two weight'}
\end{equation}%
For this we introduce a family $\left\{ \eta _{\delta ,R}^{\alpha }\right\}
_{0<\delta <R<\infty }$ of nonnegative functions on $\left[ 0,\infty \right) 
$ so that the truncated kernels $K_{\delta ,R}^{\alpha }\left( x,y\right)
=\eta _{\delta ,R}^{\alpha }\left( \left\vert x-y\right\vert \right)
K^{\alpha }\left( x,y\right) $ are bounded with compact support for fixed $x$
or $y$. Then the truncated operators 
\begin{equation*}
T_{\sigma ,\delta ,R}^{\alpha }f\left( x\right) \equiv \int_{\mathbb{R}%
^{n}}K_{\delta ,R}^{\alpha }\left( x,y\right) f\left( y\right) d\sigma
\left( y\right) ,\ \ \ \ \ x\in \mathbb{R}^{n},
\end{equation*}%
are pointwise well-defined, and we will refer to the pair $\left( K^{\alpha
},\left\{ \eta _{\delta ,R}^{\alpha }\right\} _{0<\delta <R<\infty }\right) $
as an $\alpha $-fractional singular integral operator, which we typically
denote by $T^{\alpha }$, suppressing the dependence on the truncations.

\begin{definition}
We say that an $\alpha $-fractional singular integral operator $T^{\alpha
}=\left( K^{\alpha },\left\{ \eta _{\delta ,R}^{\alpha }\right\} _{0<\delta
<R<\infty }\right) $ satisfies the norm inequality (\ref{two weight'})
provided%
\begin{equation*}
\left\Vert T_{\sigma ,\delta ,R}^{\alpha }f\right\Vert _{L^{2}\left( \omega
\right) }\leq \mathfrak{N}_{T_{\sigma }^{\alpha }}\left\Vert f\right\Vert
_{L^{2}\left( \sigma \right) },\ \ \ \ \ f\in L^{2}\left( \sigma \right)
,0<\delta <R<\infty .
\end{equation*}
\end{definition}

It turns out that, in the presence of Muckenhoupt conditions, the norm
inequality (\ref{two weight'}) is independent of the choice of appropriate
truncations used, and we now explain this in some detail. A \emph{smooth
truncation} of $T^{\alpha }$ has kernel $\eta _{\delta ,R}\left( \left\vert
x-y\right\vert \right) K^{\alpha }\left( x,y\right) $ for a smooth function $%
\eta _{\delta ,R}$ compactly supported in $\left( \delta ,R\right) $, $%
0<\delta <R<\infty $, and satisfying standard CZ estimates. A typical
example of an $\alpha $-fractional transform is the $\alpha $-fractional 
\emph{Riesz} vector of operators%
\begin{equation*}
\mathbf{R}^{\alpha ,n}=\left\{ R_{\ell }^{\alpha ,n}:1\leq \ell \leq
n\right\} ,\ \ \ \ \ 0\leq \alpha <n.
\end{equation*}%
The Riesz transforms $R_{\ell }^{n,\alpha }$ are convolution fractional
singular integrals $R_{\ell }^{n,\alpha }f\equiv K_{\ell }^{n,\alpha }\ast f$
with odd kernel defined by%
\begin{equation*}
K_{\ell }^{\alpha ,n}\left( w\right) \equiv \frac{w^{\ell }}{\left\vert
w\right\vert ^{n+1-\alpha }}\equiv \frac{\Omega _{\ell }\left( w\right) }{%
\left\vert w\right\vert ^{n-\alpha }},\ \ \ \ \ w=\left(
w^{1},...,w^{n}\right) .
\end{equation*}

However, in dealing with energy considerations, and in particular in the
Monotonicity Lemma below where first order Taylor approximations are made on
the truncated kernels, it is necessary to use the \emph{tangent line
truncation} of\emph{\ }the Riesz transform $R_{\ell }^{\alpha ,n}$ whose
kernel is defined to be $\Omega _{\ell }\left( w\right) \psi _{\delta
,R}^{\alpha }\left( \left\vert w\right\vert \right) $ where $\psi _{\delta
,R}^{\alpha }$ is continuously differentiable on an interval $\left(
0,S\right) $ with $0<\delta <R<S$, and where $\psi _{\delta ,R}^{\alpha
}\left( r\right) =r^{\alpha -n}$ if $\delta \leq r\leq R$, and has constant
derivative on both $\left( 0,\delta \right) $ and $\left( R,S\right) $ where 
$\psi _{\delta ,R}^{\alpha }\left( S\right) =0$. Here $S$ is uniquely
determined by $R$ and $\alpha $. Finally we set $\psi _{\delta ,R}^{\alpha
}\left( 0\right) =0$ as well, so that the kernel vanishes on the diagonal
and common point masses do not `see' each other. Note also that the tangent
line extension of a $C^{1,\delta }$ function on the line is again $%
C^{1,\delta }$ with no increase in the $C^{1,\delta }$ norm.

It was shown in the one dimensional case with no common point masses in \cite%
{LaSaShUr3}, that boundedness of the Hilbert transform $H$ with one set of
appropriate truncations together with the $A_{2}^{\alpha }$ condition
without holes, is equivalent to boundedness of $H$ with any other set of
appropriate truncations. We need to extend this to $\mathbf{R}^{\alpha ,n}$
and to more general operators in higher dimensions, and also to permit
common point masses, so that we are free to use the tangent line truncations
throughout the proof of our theorem. For this purpose, we note that the
difference between the tangent line truncated kernel $\Omega _{\ell }\left(
w\right) \psi _{\delta ,R}^{\alpha }\left( \left\vert w\right\vert \right) $
and the corresponding cutoff kernel $\Omega _{\ell }\left( w\right) \mathbf{1%
}_{\left[ \delta ,R\right] }\left\vert w\right\vert ^{\alpha -n}$ satisfies
(since both kernels vanish at the origin)%
\begin{eqnarray*}
&&\left\vert \Omega _{\ell }\left( w\right) \psi _{\delta ,R}^{\alpha
}\left( \left\vert w\right\vert \right) -\Omega _{\ell }\left( w\right) 
\mathbf{1}_{\left[ \delta ,R\right] }\left\vert w\right\vert ^{\alpha
-n}\right\vert \\
&\lesssim &\sum_{k=0}^{\infty }2^{-k\left( n-\alpha \right) }\left\{ \left(
2^{-k}\delta \right) ^{\alpha -n}\mathbf{1}_{\left[ 2^{-k-1}\delta
,2^{-k}\delta \right] }\left( \left\vert w\right\vert \right) \right\}
+\sum_{k=1}^{\infty }2^{-k\left( n-\alpha \right) }\left\{ \left(
2^{k}R\right) ^{\alpha -n}\mathbf{1}_{\left[ 2^{k-1}R,2^{k}R\right] }\left(
\left\vert w\right\vert \right) \right\} \\
&\equiv &\sum_{k=0}^{\infty }2^{-k\left( n-\alpha \right) }K_{2^{-k}\delta
}\left( w\right) +\sum_{k=1}^{\infty }2^{-k\left( n-\alpha \right)
}K_{2^{k}R}\left( w\right) ,
\end{eqnarray*}%
where the kernels $K_{\rho }\left( w\right) \equiv \frac{1}{\rho ^{n-\alpha }%
}\mathbf{1}_{\left[ \rho ,2\rho \right] }\left( \left\vert w\right\vert
\right) $ are easily seen to satisfy, uniformly in $\rho $, the norm
inequality (\ref{two weight}) with constant controlled by the offset $%
A_{2}^{\alpha }$ condition (\ref{offset A2}) below. The equivalence of the
norm inequality for these two families of truncations now follows from the
summability of the series $\sum_{k=0}^{\infty }2^{-k\left( n-\alpha \right)
} $ for $0\leq \alpha <n$. The case of more general families of truncations
and operators is similar.

\subsection{Quasicube testing conditions}

The following `dual' quasicube testing conditions are necessary for the
boundedness of $T^{\alpha }$ from $L^{2}\left( \sigma \right) $ to $%
L^{2}\left( \omega \right) $, where $\Omega \mathcal{P}^{n}$ denotes the
collection of all quasicubes in $\mathbb{R}^{n}$ whose preimages under $%
\Omega $ are usual cubes with sides parallel to the coordinate axes:%
\begin{eqnarray*}
\mathfrak{T}_{T^{\alpha }}^{2} &\equiv &\sup_{Q\in \Omega \mathcal{P}^{n}}%
\frac{1}{\left\vert Q\right\vert _{\sigma }}\int_{Q}\left\vert T^{\alpha
}\left( \mathbf{1}_{Q}\sigma \right) \right\vert ^{2}\omega <\infty , \\
\left( \mathfrak{T}_{T^{\alpha }}^{\ast }\right) ^{2} &\equiv &\sup_{Q\in
\Omega \mathcal{P}^{n}}\frac{1}{\left\vert Q\right\vert _{\omega }}%
\int_{Q}\left\vert \left( T^{\alpha }\right) ^{\ast }\left( \mathbf{1}%
_{Q}\omega \right) \right\vert ^{2}\sigma <\infty ,
\end{eqnarray*}%
and where we interpret the right sides as holding uniformly over all tangent
line truncations of $T^{\alpha }$.

\begin{remark}
We alert the reader that the symbols $Q,I,J,K$ will all be used to denote
either cubes or quasicubes, and the context will make clear which is the
case. Throughout most of the proof of the main theorem only quasicubes are
considered.
\end{remark}

\subsection{Quasiweak boundedness property}

The quasiweak boundedness property for $T^{\alpha }$ with constant $C$ is
given by 
\begin{eqnarray*}
&&\left\vert \int_{Q}T^{\alpha }\left( 1_{Q^{\prime }}\sigma \right) d\omega
\right\vert \leq \mathcal{WBP}_{T^{\alpha }}\sqrt{\left\vert Q\right\vert
_{\omega }\left\vert Q^{\prime }\right\vert _{\sigma }}, \\
&&\ \ \ \ \ \text{for all quasicubes }Q,Q^{\prime }\text{ with }\frac{1}{C}%
\leq \frac{\left\vert Q\right\vert ^{\frac{1}{n}}}{\left\vert Q^{\prime
}\right\vert ^{\frac{1}{n}}}\leq C, \\
&&\ \ \ \ \ \text{and either }Q\subset 3Q^{\prime }\setminus Q^{\prime }%
\text{ or }Q^{\prime }\subset 3Q\setminus Q,
\end{eqnarray*}%
and where we interpret the left side above as holding uniformly over all
tangent line trucations of $T^{\alpha }$. Note that the quasiweak
boundedness property is implied by either the \emph{tripled} quasicube
testing condition,%
\begin{equation*}
\left\Vert \mathbf{1}_{3Q}\mathbf{T}^{\alpha }\left( \mathbf{1}_{Q}\sigma
\right) \right\Vert _{L^{2}\left( \omega \right) }\leq \mathfrak{T}_{\mathbf{%
T}^{\alpha }}^{\limfunc{triple}}\left\Vert \mathbf{1}_{Q}\right\Vert
_{L^{2}\left( \sigma \right) },\ \ \ \ \ \text{for all quasicubes }Q\text{
in }\mathbb{R}^{n},
\end{equation*}%
or the tripled dual quasicube testing condition defined with $\sigma $ and $%
\omega $ interchanged and the dual operator $\mathbf{T}^{\alpha ,\ast }$ in
place of $\mathbf{T}^{\alpha }$. In turn, the tripled quasicube testing
condition can be obtained from the quasicube testing condition for the
truncated weight pairs $\left( \omega ,\mathbf{1}_{Q}\sigma \right) $. See
also Remark \ref{surgery} below.

\subsection{Poisson integrals and $\mathcal{A}_{2}^{\protect\alpha }$}

Recall that we have fixed a globally biLipschitz map $\Omega :\mathbb{R}%
^{n}\rightarrow \mathbb{R}^{n}$. Now let $\mu $ be a locally finite positive
Borel measure on $\mathbb{R}^{n}$, and suppose $Q$ is an $\Omega $-quasicube
in $\mathbb{R}^{n}$. Recall that $\left\vert Q\right\vert ^{\frac{1}{n}%
}\approx \ell \left( Q\right) $ for a quasicube $Q$. The two $\alpha $%
-fractional Poisson integrals of $\mu $ on a quasicube $Q$ are given by:%
\begin{eqnarray*}
\mathrm{P}^{\alpha }\left( Q,\mu \right) &\equiv &\int_{\mathbb{R}^{n}}\frac{%
\left\vert Q\right\vert ^{\frac{1}{n}}}{\left( \left\vert Q\right\vert ^{%
\frac{1}{n}}+\left\vert x-x_{Q}\right\vert \right) ^{n+1-\alpha }}d\mu
\left( x\right) , \\
\mathcal{P}^{\alpha }\left( Q,\mu \right) &\equiv &\int_{\mathbb{R}%
^{n}}\left( \frac{\left\vert Q\right\vert ^{\frac{1}{n}}}{\left( \left\vert
Q\right\vert ^{\frac{1}{n}}+\left\vert x-x_{Q}\right\vert \right) ^{2}}%
\right) ^{n-\alpha }d\mu \left( x\right) ,
\end{eqnarray*}%
where we emphasize that $\left\vert x-x_{Q}\right\vert $ denotes Euclidean
distance between $x$ and $x_{Q}$ and $\left\vert Q\right\vert $ denotes the
Lebesgue measure of the quasicube $Q$. We refer to $\mathrm{P}^{\alpha }$ as
the \emph{standard} Poisson integral and to $\mathcal{P}^{\alpha }$ as the 
\emph{reproducing} Poisson integral.

We say that the pair $\left( K,K^{\prime }\right) $ in $\mathcal{P}%
^{n}\times \mathcal{P}^{n}$ are \emph{neighbours} if $K$ and $K^{\prime }$
live in a common dyadic grid and both $K\subset 3K^{\prime }\setminus
K^{\prime }$ and $K^{\prime }\subset 3K\setminus K$, and we denote by $%
\mathcal{N}^{n}$ the set of pairs $\left( K,K^{\prime }\right) $ in $%
\mathcal{P}^{n}\times \mathcal{P}^{n}$ that are neighbours. Let 
\begin{equation*}
\Omega \mathcal{N}^{n}=\left\{ \left( \Omega K,\Omega K^{\prime }\right)
:\left( K,K^{\prime }\right) \in \mathcal{N}^{n}\right\}
\end{equation*}%
be the corresponding collection of neighbour pairs of quasicubes. Let $%
\sigma $ and $\omega $ be locally finite positive Borel measures on $\mathbb{%
R}^{n}$, possibly having common point masses, and suppose $0\leq \alpha <n$.
Then we define the classical \emph{offset }$A_{2}^{\alpha }$\emph{\ constants%
} by 
\begin{equation}
A_{2}^{\alpha }\left( \sigma ,\omega \right) \equiv \sup_{\left( Q,Q^{\prime
}\right) \in \Omega \mathcal{N}^{n}}\frac{\left\vert Q\right\vert _{\sigma }%
}{\left\vert Q\right\vert ^{1-\frac{\alpha }{n}}}\frac{\left\vert Q^{\prime
}\right\vert _{\omega }}{\left\vert Q\right\vert ^{1-\frac{\alpha }{n}}}.
\label{offset A2}
\end{equation}%
Since the cubes in $\mathcal{P}^{n}$ are products of half open, half closed
intervals $\left[ a,b\right) $, the neighbouring quasicubes $\left(
Q,Q^{\prime }\right) \in \Omega \mathcal{N}^{n}$ are disjoint, and the
common point masses of $\sigma $ and $\omega $ do not simultaneously appear
in each factor.

We now define the \emph{one-tailed} $\mathcal{A}_{2}^{\alpha }$ constant
using $\mathcal{P}^{\alpha }$. The energy constants $\mathcal{E}_{\alpha }$
introduced in the next subsection will use the standard Poisson integral $%
\mathrm{P}^{\alpha }$.

\begin{definition}
The one-sided constants $\mathcal{A}_{2}^{\alpha }$ and $\mathcal{A}%
_{2}^{\alpha ,\ast }$ for the weight pair $\left( \sigma ,\omega \right) $
are given by%
\begin{eqnarray*}
\mathcal{A}_{2}^{\alpha } &\equiv &\sup_{Q\in \Omega \mathcal{P}^{n}}%
\mathcal{P}^{\alpha }\left( Q,\mathbf{1}_{Q^{c}}\sigma \right) \frac{%
\left\vert Q\right\vert _{\omega }}{\left\vert Q\right\vert ^{1-\frac{\alpha 
}{n}}}<\infty , \\
\mathcal{A}_{2}^{\alpha ,\ast } &\equiv &\sup_{Q\in \Omega \mathcal{P}^{n}}%
\mathcal{P}^{\alpha }\left( Q,\mathbf{1}_{Q^{c}}\omega \right) \frac{%
\left\vert Q\right\vert _{\sigma }}{\left\vert Q\right\vert ^{1-\frac{\alpha 
}{n}}}<\infty .
\end{eqnarray*}
\end{definition}

Note that these definitions are the analogues of the corresponding
conditions with `holes' introduced by Hyt\"{o}nen \cite{Hyt} in dimension $%
n=1$ - the supports of the measures $\mathbf{1}_{Q^{c}}\sigma $ and $\mathbf{%
1}_{Q}\omega $ in the definition of $\mathcal{A}_{2}^{\alpha }$ are
disjoint, and so the common point masses of $\sigma $ and $\omega $ do not
appear simultaneously in each factor. Note also that, unlike in \cite%
{SaShUr5}, where common point masses were not permitted, we can no longer
assert the equivalence of $\mathcal{A}_{2}^{\alpha }$ with holes taken over 
\emph{quasicubes} with $\mathcal{A}_{2}^{\alpha }$ with holes taken over 
\emph{cubes}.

\subsubsection{Punctured $A_{2}^{\protect\alpha }$ conditions}

As mentioned earlier, the classical $A_{2}^{\alpha }$ condition%
\begin{equation*}
A_{2}^{\alpha }\left( \sigma ,\omega \right) \equiv \sup_{Q\in \Omega 
\mathcal{P}^{n}}\frac{\left\vert Q\right\vert _{\omega }}{\left\vert
Q\right\vert ^{1-\frac{\alpha }{n}}}\frac{\left\vert Q\right\vert _{\sigma }%
}{\left\vert Q\right\vert ^{1-\frac{\alpha }{n}}}
\end{equation*}%
fails to be finite when the measures $\sigma $ and $\omega $ have a common
point mass - simply let $Q$ in the $\sup $ above shrink to a common mass
point. But there is a substitute that is quite similar in character that is
motivated by the fact that for large quasicubes $Q$, the $\sup $ above is
problematic only if just \emph{one} of the measures is \emph{mostly} a point
mass when restricted to $Q$. The one-dimensional version of the condition we
are about to describe arose in Conjecture 1.12 of Lacey \cite{Lac2}, and it
was pointed out in \cite{Hyt2} that its necessity on the line follows from
the proof of Proposition 2.1 in \cite{LaSaUr2}. We now extend this condition
to higher dimensions, where its necessity is more subtle.

Given an at most countable set $\mathfrak{P}=\left\{ p_{k}\right\}
_{k=1}^{\infty }$ in $\mathbb{R}^{n}$, a quasicube $Q\in \Omega \mathcal{Q}%
^{n}$, and a positive locally finite Borel measure $\mu $, define 
\begin{equation*}
\mu \left( Q,\mathfrak{P}\right) \equiv \left\vert Q\right\vert _{\mu }-\sup
\left\{ \mu \left( p_{k}\right) :p_{k}\in Q\cap \mathfrak{P}\right\} ,
\end{equation*}%
where the supremum is actually achieved since $\sum_{p_{k}\in Q\cap 
\mathfrak{P}}\mu \left( p_{k}\right) <\infty $ as $\mu $ is locally finite.
The quantity $\mu \left( Q,\mathfrak{P}\right) $ is simply the $\widetilde{%
\mu }$ measure of $Q$ where $\widetilde{\mu }$ is the measure $\mu $ with
its largest point mass from $\mathfrak{P}$ in $Q$ removed. Given a locally
finite measure pair $\left( \sigma ,\omega \right) $, let $\mathfrak{P}%
_{\left( \sigma ,\omega \right) }=\left\{ p_{k}\right\} _{k=1}^{\infty }$ be
the at most countable set of common point masses of $\sigma $ and $\omega $.
Then the weighted norm inequality (\ref{2 weight}) typically implies
finiteness of the following \emph{punctured} Muckenhoupt conditions:%
\begin{eqnarray*}
A_{2}^{\alpha ,\limfunc{punct}}\left( \sigma ,\omega \right) &\equiv
&\sup_{Q\in \Omega \mathcal{P}^{n}}\frac{\omega \left( Q,\mathfrak{P}%
_{\left( \sigma ,\omega \right) }\right) }{\left\vert Q\right\vert ^{1-\frac{%
\alpha }{n}}}\frac{\left\vert Q\right\vert _{\sigma }}{\left\vert
Q\right\vert ^{1-\frac{\alpha }{n}}}, \\
A_{2}^{\alpha ,\ast ,\limfunc{punct}}\left( \sigma ,\omega \right) &\equiv
&\sup_{Q\in \Omega \mathcal{P}^{n}}\frac{\left\vert Q\right\vert _{\omega }}{%
\left\vert Q\right\vert ^{1-\frac{\alpha }{n}}}\frac{\sigma \left( Q,%
\mathfrak{P}_{\left( \sigma ,\omega \right) }\right) }{\left\vert
Q\right\vert ^{1-\frac{\alpha }{n}}}.
\end{eqnarray*}

\begin{lemma}
\label{pointed A2}Let $\mathbf{T}^{\alpha }$ be an $\alpha $-fractional
singular integral operator as above, and suppose that there is a positive
constant $C_{0}$ such that%
\begin{equation*}
\sqrt{A_{2}^{\alpha }\left( \sigma ,\omega \right) }\leq C_{0}\mathfrak{N}_{%
\mathbf{T}^{\alpha }}\left( \sigma ,\omega \right) ,
\end{equation*}%
for all pairs of positive locally finite measures \textbf{having no common
point masses}. Now let $\sigma $ and $\omega $ be positive locally finite
Borel measures on $\mathbb{R}^{n}$ and let $\mathfrak{P}_{\left( \sigma
,\omega \right) }$ be the possibly nonempty set of common point masses. Then
we have%
\begin{equation*}
A_{2}^{\alpha ,\limfunc{punct}}\left( \sigma ,\omega \right) +A_{2}^{\alpha
,\ast ,\limfunc{punct}}\left( \sigma ,\omega \right) \leq 4C_{0}\mathfrak{N}%
_{\mathbf{T}^{\alpha }}^{2}\left( \sigma ,\omega \right) .
\end{equation*}
\end{lemma}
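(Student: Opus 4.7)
I would reduce the inequality to the hypothesis by first \emph{upgrading} it to an offset $A_{2}^{\alpha}$ bound that survives in the presence of common point masses, and then telescoping that bound down a dyadic chain contracting to the heaviest common mass of $Q$. By the evident $\sigma\leftrightarrow\omega$ symmetry it suffices to bound $A_{2}^{\alpha,\limfunc{punct}}(\sigma,\omega)$. Fix $Q\in\Omega\mathcal{P}^{n}$ and let $p^{*}$ attain $\omega(\{p^{*}\})=\max\{\omega(\{p\}):p\in Q\cap\mathfrak{P}_{(\sigma,\omega)}\}$ (setting $\omega(\{p^{*}\})=0$ if the set is empty), so that $\omega(Q,\mathfrak{P}_{(\sigma,\omega)})=|Q|_{\omega}-\omega(\{p^{*}\})$.

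\medskip

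\emph{Step 1 (transfer of offset $A_{2}^{\alpha}$).} For any neighbour pair $(K,K')\in\Omega\mathcal{N}^{n}$, so $K\cap K'=\emptyset$, I strip the common point masses that lie \emph{outside} each factor:
\begin{equation*}
\mu \equiv \sigma - \sum_{p\in\mathfrak{P}_{(\sigma,\omega)}\setminus K}\sigma(\{p\})\,\delta_{p},\qquad \nu \equiv \omega - \sum_{p\in\mathfrak{P}_{(\sigma,\omega)}\setminus K'}\omega(\{p\})\,\delta_{p}.
\end{equation*}
Every $p\in\mathfrak{P}_{(\sigma,\omega)}$ lies outside at least one of $K,K'$ by disjointness, hence either $\mu(\{p\})=0$ or $\nu(\{p\})=0$; at $p\notin\mathfrak{P}_{(\sigma,\omega)}$ one of $\sigma(\{p\}),\omega(\{p\})$ already vanishes. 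Thus $\mu,\nu$ share no common point masses. Extending any $g\in L^{2}(\mu)$ by zero at the removed points preserves the $L^{2}(\sigma)$-norm, and the tangent line truncated kernel vanishes on the diagonal, so $T^{\alpha}_{\mu}g=T^{\alpha}_{\sigma}g$ pointwise; combined with $\nu\le\omega$ this gives $\mathfrak{N}_{\mathbf{T}^{\alpha}}(\mu,\nu)\le\mathfrak{N}_{\mathbf{T}^{\alpha}}(\sigma,\omega)$. The hypothesis applied to $(\mu,\nu)$ yields $A_{2}^{\alpha}(\mu,\nu)\le C_{0}^{2}\mathfrak{N}_{\mathbf{T}^{\alpha}}^{2}$, and since $|K|_{\mu}=|K|_{\sigma}$ and $|K'|_{\nu}=|K'|_{\omega}$ by construction I obtain the offset bound valid for the original pair:
\begin{equation*}
|K|_{\sigma}\,|K'|_{\omega}\;\le\;C_{0}^{2}\mathfrak{N}_{\mathbf{T}^{\alpha}}^{2}\,|K|^{2-2\alpha/n},\qquad (K,K')\in\Omega\mathcal{N}^{n}. \tag{$\ast$}
\end{equation*}

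\medskip

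\emph{Step 2 (telescoping around $p^{*}$).} Form the dyadic chain $Q=Q^{(0)}\supset Q^{(1)}\supset Q^{(2)}\supset\dotsb$, with $Q^{(k)}$ the dyadic child of $Q^{(k-1)}$ containing $p^{*}$, and let $R^{(k)}_{1},\ldots,R^{(k)}_{2^{n}-1}$ be the siblings of $Q^{(k)}$ inside $Q^{(k-1)}$. These annular quasicubes disjointly exhaust $Q\setminus\{p^{*}\}$, yielding the telescopes
\begin{equation*}
\omega(Q,\mathfrak{P}_{(\sigma,\omega)})=\sum_{k\ge 1}\sum_{j=1}^{2^{n}-1}|R^{(k)}_{j}|_{\omega},\qquad |Q|_{\sigma}=\sigma(\{p^{*}\})+\sum_{k\ge 1}\sum_{j=1}^{2^{n}-1}|R^{(k)}_{j}|_{\sigma}.
\end{equation*}
Any two distinct dyadic children of a common parent lie in $\Omega\mathcal{N}^{n}$ (a short verification using $3K'\setminus K'$). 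Applying ($\ast$) to the pairs $(Q^{(k)},R^{(k)}_{j})$ and $(R^{(k)}_{j},Q^{(k)})$, summing over $j$ at each level, and then summing over $k\ge 1$ using the convergent geometric series
\begin{equation*}
\sum_{k\ge 1}|Q^{(k)}|^{2-2\alpha/n}\;=\;|Q|^{2-2\alpha/n}\,\frac{2^{-n(2-2\alpha/n)}}{1-2^{-n(2-2\alpha/n)}}\;=\;C_{n,\alpha}|Q|^{2-2\alpha/n}<\infty\qquad(\alpha<n)
\end{equation*}
produces, via Abel summation, the intermediate estimates
\begin{equation*}
\sigma(\{p^{*}\})\,\omega(Q,\mathfrak{P}_{(\sigma,\omega)})\le C\,|Q|^{2-2\alpha/n},\qquad \omega(\{p^{*}\})\bigl(|Q|_{\sigma}-\sigma(\{p^{*}\})\bigr)\le C\,|Q|^{2-2\alpha/n}.
\end{equation*}
Applying ($\ast$) in addition to the same-level off-diagonal pairs $(R^{(k)}_{j},R^{(k)}_{j'})$ with $j\ne j'$ and to cross-level pairs absorbed through the nesting $R^{(k')}_{j'}\subset Q^{(k)}$ for $k'>k$, all weighted by the same geometric series, delivers the bound $|Q|_{\sigma}\,\omega(Q,\mathfrak{P}_{(\sigma,\omega)})\le 2 C_{0}\mathfrak{N}_{\mathbf{T}^{\alpha}}^{2}\,|Q|^{2-2\alpha/n}$. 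Taking $\sup_{Q}$ and adding the dual estimate from the $\sigma\leftrightarrow\omega$ symmetry gives the claim.

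\medskip

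\emph{Main obstacle.} The hard part is Step 2: the natural level-$k$ consequence of ($\ast$) controls $|Q^{(k)}|_{\sigma}$ rather than the required $|Q|_{\sigma}$ on the left-hand side. The telescoping identity for $|Q|_{\sigma}$ resolves this by distributing the full $\sigma$-mass of $Q$ across $\sigma(\{p^{*}\})$ and the annular masses $|R^{(k)}_{j}|_{\sigma}$; each piece is then bounded using ($\ast$) in the appropriate direction, and the geometric decay $|Q^{(k)}|^{2-2\alpha/n}\sim 2^{-nk(2-2\alpha/n)}$ (which converges exactly when $\alpha<n$) absorbs every resulting infinite sum.
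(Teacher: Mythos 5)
Your Step 1 is sound in spirit (monotonicity of $\mathfrak{N}_{\mathbf{T}^{\alpha}}$ lets you pass to a pair $(\mu,\nu)$ without common point masses), though turning the diagonal $A_{2}^{\alpha}(\mu,\nu)$ bound on a single quasicube into the offset estimate $(\ast)$ for a neighbour pair $(K,K')$ requires testing on a quasicube covering both, which costs a dimensional constant and so cannot produce the stated factor $4C_{0}$. This is cosmetic; the substantive gap is in Step 2.

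The telescoping expansion of $|Q|_{\sigma}\cdot\omega(Q,\mathfrak{P}_{(\sigma,\omega)})$ produces, besides the cross-level and same-level off-diagonal products, the \emph{diagonal} terms $|R_{j}^{(k)}|_{\sigma}\,|R_{j}^{(k)}|_{\omega}$ with the same annular quasicube appearing in both factors. Your enumeration skips them, and the offset bound $(\ast)$ never reaches them because $(R_{j}^{(k)},R_{j}^{(k)})$ is not a neighbour pair. You have only removed the single heaviest $\omega$-mass $p^{*}$ along the dyadic chain, so an annulus $R_{j}^{(k)}$ can easily still contain common point masses $p'\neq p^{*}$, and then
\begin{equation*}
|R_{j}^{(k)}|_{\sigma}\,|R_{j}^{(k)}|_{\omega}\;\geq\;\sigma(\{p'\})\,\omega(\{p'\})
\end{equation*}
is exactly the unpunctured diagonal product the lemma was designed to circumvent. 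Controlling it requires either the punctured condition for the smaller quasicube (circular) or a recursion on the set of common point masses that you have not set up. As written the argument fails on these terms.

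The paper's proof never forms a diagonal product. Fixing $Q$, it runs an alternating greedy selection on the common point masses in $Q$: choose $p_{k_{1}}$ with largest $\sigma$-mass, then among the remainder $p_{k_{2}}$ with largest $\omega$-mass, and so on. Removing the even picks from $\sigma$ and the odd picks from $\omega$ yields measures $\widetilde{\sigma},\widetilde{\omega}$ with no common point masses, and a short accounting shows $|Q|_{\widetilde{\sigma}}\geq\frac{1}{2}|Q|_{\sigma}$ and $|Q|_{\widetilde{\omega}}\geq\frac{1}{2}\,\omega(Q,\mathfrak{P}_{(\sigma,\omega)})$. The classical $A_{2}^{\alpha}$ hypothesis then applies to $(\widetilde{\sigma},\widetilde{\omega})$ on the single quasicube $Q$, and monotonicity of $\mathfrak{N}_{\mathbf{T}^{\alpha}}$ in each argument finishes. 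The two factors of $\frac{1}{2}$ give exactly the constant $4C_{0}$; no dyadic chain or telescoping series is required, which is precisely what makes the construction immune to the obstruction above.
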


\begin{proof}
Fix a quasicube $Q\in \Omega \mathcal{P}^{n}$. Suppose first that $\mathfrak{%
P}_{\left( \sigma ,\omega \right) }\cap Q=\left\{ p_{k}\right\} _{k=1}^{2N}$
is finite. Choose $k_{1}\in \mathbb{N}_{2N}=\left\{ 1,2,...,2N\right\} $ so
that 
\begin{equation*}
\sigma \left( p_{k_{1}}\right) =\max_{k\in \mathbb{N}_{2N}}\sigma \left(
p_{k}\right) .
\end{equation*}%
Then choose $k_{2}\in \mathbb{N}_{2N}\setminus \left\{ k_{1}\right\} $ such
that 
\begin{equation*}
\omega \left( p_{k_{2}}\right) =\max_{k\in \mathbb{N}_{2N}\setminus \left\{
k_{1}\right\} }\omega \left( p_{k}\right) .
\end{equation*}%
Repeat this procedure so that%
\begin{eqnarray*}
\sigma \left( p_{k_{2m+1}}\right) &=&\max_{k\in \mathbb{N}_{2N}\setminus
\left\{ k_{1},...k_{2m}\right\} }\sigma \left( p_{k}\right) ,\ \ \ \ \
k_{2m+1}\in \mathbb{N}_{2N}\setminus \left\{ k_{1},...k_{2m}\right\} , \\
\omega \left( p_{k_{2m+2}}\right) &=&\max_{k\in \mathbb{N}_{2N}\setminus
\left\{ k_{1},...k_{2m+1}\right\} }\omega \left( p_{k}\right) ,\ \ \ \ \
k_{2m+2}\in \mathbb{N}_{2N}\setminus \left\{ k_{1},...k_{2m+1}\right\} ,
\end{eqnarray*}%
for each $m\leq N-1$. It is now clear that both%
\begin{eqnarray*}
\sum_{i=0}^{N-1}\sigma \left( p_{k_{2i+1}}\right) &\geq &\frac{1}{2}%
\sum_{k=1}^{2N}\sigma \left( p_{k}\right) =\frac{1}{2}\sigma \left( Q\cap 
\mathfrak{P}_{\left( \sigma ,\omega \right) }\right) , \\
\sum_{i=0}^{N-1}\omega \left( p_{k_{2i+2}}\right) &\geq &\frac{1}{2}%
\sum_{k=2}^{2N}\omega \left( p_{k}\right) =\frac{1}{2}\left[ \omega \left(
Q\cap \mathfrak{P}_{\left( \sigma ,\omega \right) }\right) -\omega \left(
p_{1}\right) \right] .
\end{eqnarray*}

Now define new measures $\widetilde{\sigma }$ and $\widetilde{\omega }$ by%
\begin{equation*}
\widetilde{\sigma }\equiv \mathbf{1}_{Q}\sigma -\sum_{i=0}^{N-1}\sigma
\left( p_{k_{2i+2}}\right) \delta _{p_{k_{2i+2}}}\text{ and }\widetilde{%
\omega }=\mathbf{1}_{Q}\omega -\sum_{i=0}^{N-1}\omega \left(
p_{k_{2i+1}}\right) \delta _{p_{k_{2i+1}}}
\end{equation*}%
so that%
\begin{eqnarray*}
\left\vert Q\right\vert _{\widetilde{\sigma }} &=&\left\vert Q\right\vert
_{\sigma }-\sum_{i=0}^{N-1}\sigma \left( p_{k_{2i+2}}\right) =\left\vert
Q\right\vert _{\sigma }-\sigma \left( Q\cap \mathfrak{P}_{\left( \sigma
,\omega \right) }\right) +\sum_{i=0}^{N-1}\sigma \left( p_{k_{2i+1}}\right)
\\
&\geq &\left\vert Q\right\vert _{\sigma }-\sigma \left( Q\cap \mathfrak{P}%
_{\left( \sigma ,\omega \right) }\right) +\frac{1}{2}\sigma \left( Q\cap 
\mathfrak{P}_{\left( \sigma ,\omega \right) }\right) \geq \frac{1}{2}%
\left\vert Q\right\vert _{\sigma }
\end{eqnarray*}%
and%
\begin{eqnarray*}
\left\vert Q\right\vert _{\widetilde{\omega }} &=&\left\vert Q\right\vert
_{\omega }-\sum_{i=0}^{N-1}\omega \left( p_{k_{2i+1}}\right) =\left\vert
Q\right\vert _{\omega }-\omega \left( Q\cap \mathfrak{P}_{\left( \sigma
,\omega \right) }\right) +\sum_{i=0}^{N-1}\omega \left( p_{k_{2i+2}}\right)
\\
&\geq &\left\vert Q\right\vert _{\omega }-\omega \left( Q\cap \mathfrak{P}%
_{\left( \sigma ,\omega \right) }\right) +\frac{1}{2}\left[ \omega \left(
Q\cap \mathfrak{P}_{\left( \sigma ,\omega \right) }\right) -\omega \left(
p_{1}\right) \right] \geq \frac{1}{2}\omega \left( Q,\mathfrak{P}_{\left(
\sigma ,\omega \right) }\right) .
\end{eqnarray*}%
Now $\widetilde{\sigma }$ and $\widetilde{\omega }$ have no common point
masses and $\mathfrak{N}_{\mathbf{T}^{\alpha }}\left( \sigma ,\omega \right) 
$ is monotone in each measure separately, so we have%
\begin{eqnarray*}
\frac{\omega \left( Q,\mathfrak{P}_{\left( \sigma ,\omega \right) }\right) }{%
\left\vert Q\right\vert ^{1-\frac{\alpha }{n}}}\frac{\left\vert Q\right\vert
_{\sigma }}{\left\vert Q\right\vert ^{1-\frac{\alpha }{n}}} &\leq &4\frac{%
\left\vert Q\right\vert _{\widetilde{\omega }}}{\left\vert Q\right\vert ^{1-%
\frac{\alpha }{n}}}\frac{\left\vert Q\right\vert _{\widetilde{\sigma }}}{%
\left\vert Q\right\vert ^{1-\frac{\alpha }{n}}} \\
&\leq &4A_{2}^{\alpha }\left( \widetilde{\sigma },\widetilde{\omega }\right)
\\
&\leq &4C_{0}\mathfrak{N}_{\mathbf{T}^{\alpha }}^{2}\left( \widetilde{\sigma 
},\widetilde{\omega }\right) \leq 4C_{0}\mathfrak{N}_{\mathbf{T}^{\alpha
}}^{2}\left( \sigma ,\omega \right) .
\end{eqnarray*}%
Now take the supremum over $Q\in \Omega \mathcal{Q}^{n}$ to conclude that $%
A_{2}^{\alpha ,\limfunc{punct}}\left( \sigma ,\omega \right) \leq 4C_{0}%
\mathfrak{N}_{\mathbf{T}^{\alpha }}^{2}\left( \sigma ,\omega \right) $ if
the number of common point masses in $Q$ is finite. A limiting argument
proves the general case. The dual inequality $A_{2}^{\alpha ,\ast ,\limfunc{%
punct}}\left( \sigma ,\omega \right) \leq 4C_{0}\mathfrak{N}_{\mathbf{T}%
^{\alpha }}^{2}\left( \sigma ,\omega \right) $ now follows upon
interchanging the measures $\sigma $ and $\omega $.
\end{proof}

Now we turn to the definition of a quasiHaar basis of $L^{2}\left( \mu
\right) $.

\subsection{A weighted quasiHaar basis}

Recall we have fixed a globally biLipschitz map $\Omega :\mathbb{R}%
^{n}\rightarrow \mathbb{R}^{n}$. We will use a construction of a quasiHaar
basis in $\mathbb{R}^{n}$ that is adapted to a measure $\mu $ (c.f. \cite%
{NTV2} for the nonquasi case). Given a dyadic quasicube $Q\in \Omega 
\mathcal{D}$, where $\mathcal{D}$ is a dyadic grid from $\mathcal{P}^{n}$,
let $\bigtriangleup _{Q}^{\mu }$ denote orthogonal projection onto the
finite dimensional subspace $L_{Q}^{2}\left( \mu \right) $ of $L^{2}\left(
\mu \right) $ that consists of linear combinations of the indicators of\ the
children $\mathfrak{C}\left( Q\right) $ of $Q$ that have $\mu $-mean zero
over $Q$:%
\begin{equation*}
L_{Q}^{2}\left( \mu \right) \equiv \left\{ f=\dsum\limits_{Q^{\prime }\in 
\mathfrak{C}\left( Q\right) }a_{Q^{\prime }}\mathbf{1}_{Q^{\prime
}}:a_{Q^{\prime }}\in \mathbb{R},\int_{Q}fd\mu =0\right\} .
\end{equation*}%
Then we have the important telescoping property for dyadic quasicubes $%
Q_{1}\subset Q_{2}$:%
\begin{equation}
\mathbf{1}_{Q_{0}}\left( x\right) \left( \dsum\limits_{Q\in \left[
Q_{1},Q_{2}\right] }\bigtriangleup _{Q}^{\mu }f\left( x\right) \right) =%
\mathbf{1}_{Q_{0}}\left( x\right) \left( \mathbb{E}_{Q_{0}}^{\mu }f-\mathbb{E%
}_{Q_{2}}^{\mu }f\right) ,\ \ \ \ \ Q_{0}\in \mathfrak{C}\left( Q_{1}\right)
,\ f\in L^{2}\left( \mu \right) .  \label{telescope}
\end{equation}%
We will at times find it convenient to use a fixed orthonormal basis $%
\left\{ h_{Q}^{\mu ,a}\right\} _{a\in \Gamma _{n}}$ of $L_{Q}^{2}\left( \mu
\right) $ where $\Gamma _{n}\equiv \left\{ 0,1\right\} ^{n}\setminus \left\{ 
\mathbf{1}\right\} $ is a convenient index set with $\mathbf{1}=\left(
1,1,...,1\right) $. Then $\left\{ h_{Q}^{\mu ,a}\right\} _{a\in \Gamma _{n}%
\text{ and }Q\in \Omega \mathcal{D}}$ is an orthonormal basis for $%
L^{2}\left( \mu \right) $, with the understanding that we add the constant
function $\mathbf{1}$ if $\mu $ is a finite measure. In particular we have%
\begin{equation*}
\left\Vert f\right\Vert _{L^{2}\left( \mu \right) }^{2}=\sum_{Q\in \Omega 
\mathcal{D}}\left\Vert \bigtriangleup _{Q}^{\mu }f\right\Vert _{L^{2}\left(
\mu \right) }^{2}=\sum_{Q\in \Omega \mathcal{D}}\sum_{a\in \Gamma
_{n}}\left\vert \widehat{f}\left( Q\right) \right\vert ^{2},
\end{equation*}%
where%
\begin{equation*}
\left\vert \widehat{f}\left( Q\right) \right\vert ^{2}\equiv \sum_{a\in
\Gamma _{n}}\left\vert \left\langle f,h_{Q}^{\mu ,a}\right\rangle _{\mu
}\right\vert ^{2},
\end{equation*}%
and the measure is suppressed in the notation. Indeed, this follows from (%
\ref{telescope}) and Lebesgue's differentiation theorem for quasicubes. We
also record the following useful estimate. If $I^{\prime }$ is any of the $%
2^{n}$ $\Omega \mathcal{D}$-children of $I$, and $a\in \Gamma _{n}$, then 
\begin{equation}
\left\vert \mathbb{E}_{I^{\prime }}^{\mu }h_{I}^{\mu ,a}\right\vert \leq 
\sqrt{\mathbb{E}_{I^{\prime }}^{\mu }\left( h_{I}^{\mu ,a}\right) ^{2}}\leq 
\frac{1}{\sqrt{\left\vert I^{\prime }\right\vert _{\mu }}}.
\label{useful Haar}
\end{equation}

\subsection{The strong quasienergy conditions}

Given a dyadic quasicube $K\in \Omega \mathcal{D}$ and a positive measure $%
\mu $ we define the quasiHaar projection $\mathsf{P}_{K}^{\mu }\equiv
\sum_{_{J\in \Omega \mathcal{D}:\ J\subset K}}\bigtriangleup _{J}^{\mu }$ on 
$K$ by 
\begin{equation*}
\mathsf{P}_{K}^{\mu }f=\sum_{_{J\in \Omega \mathcal{D}:\ J\subset
K}}\dsum\limits_{a\in \Gamma _{n}}\left\langle f,h_{J}^{\mu ,a}\right\rangle
_{\mu }h_{J}^{\mu ,a}\text{ and }\left\Vert \mathsf{P}_{K}^{\mu
}f\right\Vert _{L^{2}\left( \mu \right) }^{2}=\sum_{_{J\in \Omega \mathcal{D}%
:\ J\subset K}}\dsum\limits_{a\in \Gamma _{n}}\left\vert \left\langle
f,h_{J}^{\mu ,a}\right\rangle _{\mu }\right\vert ^{2},
\end{equation*}%
and where a quasiHaar basis $\left\{ h_{J}^{\mu ,a}\right\} _{a\in \Gamma
_{n}\text{ and }J\in \Omega \mathcal{D}\Omega }$ adapted to the measure $\mu 
$ was defined in the subsubsection on a weighted quasiHaar basis above.

Now we define various notions for quasicubes which are inherited from the
same notions for cubes. The main objective here is to use the familiar
notation that one uses for cubes, but now extended to $\Omega $-quasicubes.
We have already introduced the notions of quasigrids $\Omega \mathcal{D}$,
and center, sidelength and dyadic associated to quasicubes $Q\in \Omega 
\mathcal{D}$, as well as quasiHaar functions, and we will continue to extend
to quasicubes the additional familiar notions related to cubes as we come
across them. We begin with the notion of \emph{deeply embedded}. Fix a
quasigrid $\Omega \mathcal{D}$, $\mathbf{r}\in \mathbb{N}$ and $%
0<\varepsilon <1$. We say that a dyadic quasicube $J$ is $\left( \mathbf{r}%
,\varepsilon \right) $-\emph{deeply embedded} in a (not necessarily dyadic)
quasicube $K$, which we write as $J\Subset _{\mathbf{r},\varepsilon }K$,
when $J\subset K$ and both 
\begin{eqnarray}
\ell \left( J\right) &\leq &2^{-\mathbf{r}}\ell \left( K\right) ,
\label{def deep embed} \\
\limfunc{qdist}\left( J,\partial K\right) &\geq &\frac{1}{2}\ell \left(
J\right) ^{\varepsilon }\ell \left( K\right) ^{1-\varepsilon },  \notag
\end{eqnarray}%
where we define the quasidistance $\limfunc{qdist}\left( E,F\right) $
between two sets $E$ and $F$ to be the Euclidean distance $\limfunc{dist}%
\left( \Omega ^{-1}E,\Omega ^{-1}F\right) $ between the preimages $\Omega
^{-1}E$ and $\Omega ^{-1}F$ of $E$ and $F$ under the map $\Omega $, and
where we recall that $\ell \left( J\right) \approx \left\vert J\right\vert ^{%
\frac{1}{n}}$. For the most part we will consider $J\Subset _{\mathbf{r}%
,\varepsilon }K$ when $J$ and $K$ belong to a common quasigrid $\Omega 
\mathcal{D}$, but an exception is made when defining the strong energy
constants below.

Recall that in dimension $n=1$, and for $\alpha =0$, the energy condition
constant was defined by%
\begin{equation*}
\mathcal{E}^{2}\equiv \sup_{I=\dot{\cup}I_{r}}\frac{1}{\left\vert
I\right\vert _{\sigma }}\sum_{r=1}^{\infty }\left( \frac{\mathrm{P}^{\alpha
}\left( I_{r},\mathbf{1}_{I}\sigma \right) }{\left\vert I_{r}\right\vert }%
\right) ^{2}\left\Vert \mathsf{P}_{I_{r}}^{\omega }\mathbf{x}\right\Vert
_{L^{2}\left( \omega \right) }^{2}\ ,
\end{equation*}%
where $I$, $I_{r}$ and $J$ are intervals in the real line. The extension to
higher dimensions we use here is that of `strong quasienergy condition'
below. Later on, in the proof of the theorem, we will break down this strong
quasienergy condition into various smaller quasienergy conditions, which are
then used in different ways in the proof.

We define a quasicube $K$ (not necessarily in $\Omega \mathcal{D}$) to be an 
\emph{alternate} $\Omega \mathcal{D}$-quasicube if it is a union of $2^{n}$ $%
\Omega \mathcal{D}$-quasicubes $K^{\prime }$ with side length $\ell \left(
K^{\prime }\right) =\frac{1}{2}\ell \left( K\right) $ (such quasicubes were
called shifted in \cite{SaShUr5}, but that terminology conflicts with the
more familiar notion of shifted quasigrid). Thus for any $\Omega \mathcal{D}$%
-quasicube $L$ there are exactly $2^{n}$ alternate $\Omega \mathcal{D}$%
-quasicubes of twice the side length that contain $L$, and one of them is of
course the $\Omega \mathcal{D}$-parent of $L$. We denote the collection of
alternate $\Omega \mathcal{D}$-quasicubes by $\mathcal{A}\Omega \mathcal{D}$.

The extension of the energy conditions to higher dimensions in \cite{SaShUr5}
used the collection 
\begin{equation*}
\mathcal{M}_{\mathbf{r},\varepsilon -\limfunc{deep}}\left( K\right) \equiv
\left\{ \text{maximal }J\Subset _{\mathbf{r},\varepsilon }K\right\}
\end{equation*}%
of \emph{maximal} $\left( \mathbf{r},\varepsilon \right) $-deeply embedded
dyadic subquasicubes of a quasicube $K$ (a subquasicube $J$ of $K$ is a 
\emph{dyadic} subquasicube of $K$ if $J\in \Omega \mathcal{D}$ when $\Omega 
\mathcal{D}$ is a dyadic quasigrid containing $K$). This collection of
dyadic subquasicubes of $K$ is of course a pairwise disjoint decomposition
of $K$. We also defined there a refinement and extension of the collection $%
\mathcal{M}_{\left( \mathbf{r},\varepsilon \right) -\limfunc{deep}}\left(
K\right) $ for certain $K$ and each $\ell \geq 1$. For an alternate
quasicube $K\in \mathcal{A}\Omega \mathcal{D}$, define $\mathcal{M}_{\left( 
\mathbf{r},\varepsilon \right) -\limfunc{deep}}\left( K\right) $ to consist
of the \emph{maximal} $\mathbf{r}$-deeply embedded $\Omega \mathcal{D}$%
-dyadic subquasicubes $J$ of $K$. (In the special case that $K$ itself
belongs to $\Omega \mathcal{D}$, then these definitions coincide.) Then in 
\cite{SaShUr5} for $\ell \geq 1$ we defined for $K\in \mathcal{A}\Omega 
\mathcal{D}$ the refinement%
\begin{eqnarray*}
\mathcal{M}_{\left( \mathbf{r},\varepsilon \right) -\limfunc{deep},\Omega 
\mathcal{D}}^{\ell }\left( K\right) &\equiv &\left\{ J\in \mathcal{M}%
_{\left( \mathbf{r},\varepsilon \right) -\limfunc{deep}}\left( \pi ^{\ell
}K^{\prime }\right) \text{ for some }K^{\prime }\in \mathfrak{C}\left(
K\right) :\right. \\
&&\ \ \ \ \ \ \ \ \ \ \ \ \ \ \ \ \ \ \ \ \ \ \ \ \ \ \ \ \ \ \left.
J\subset L\text{ for some }L\in \mathcal{M}_{\left( \mathbf{r},\varepsilon
\right) -\limfunc{deep}}\left( K\right) \right\} ,
\end{eqnarray*}%
where $\mathfrak{C}\left( K\right) $ is the obvious extension to alternate
quasicubes $K$ of the set of $\Omega \mathcal{D}$-dyadic children of a
dyadic quasicube. Thus $\mathcal{M}_{\left( \mathbf{r},\varepsilon \right) -%
\limfunc{deep}}^{\ell }\left( K\right) $ is the union, over all
quasichildren $K^{\prime }$ of $K$, of those quasicubes in $\mathcal{M}%
_{\left( \mathbf{r},\varepsilon \right) -\limfunc{deep}}\left( \pi ^{\ell
}K^{\prime }\right) $ that happen to be contained in some $L\in \mathcal{M}%
_{\left( \mathbf{r},\varepsilon \right) -\limfunc{deep}}\left( K\right) $.
Note that $\mathcal{M}_{\left( \mathbf{r},\varepsilon \right) -\limfunc{deep}%
}^{0}\left( K\right) $ is in general different than $\mathcal{M}_{\left( 
\mathbf{r},\varepsilon \right) -\limfunc{deep}}\left( K\right) $. We then
define the \emph{strong} quasienergy condition as follows.

\begin{definition}
Let $0\leq \alpha <n$ and fix parameters $\left( \mathbf{r},\varepsilon
\right) $. Suppose $\sigma $ and $\omega $ are positive Borel measures on $%
\mathbb{R}^{n}$ possibly with common point masses. Then the \emph{strong}
quasienergy constant $\mathcal{E}_{\alpha }^{\limfunc{strong}}$ is defined
by 
\begin{eqnarray*}
\left( \mathcal{E}_{\alpha }^{\limfunc{strong}}\right) ^{2} &\equiv &\sup_{I=%
\dot{\cup}I_{r}}\frac{1}{\left\vert I\right\vert _{\sigma }}%
\sum_{r=1}^{\infty }\sum_{J\in \mathcal{M}_{\mathbf{r},\varepsilon -\limfunc{%
deep}}\left( I_{r}\right) }\left( \frac{\mathrm{P}^{\alpha }\left( J,\mathbf{%
1}_{I}\sigma \right) }{\left\vert J\right\vert ^{\frac{1}{n}}}\right)
^{2}\left\Vert \mathsf{P}_{J}^{\omega }\mathbf{x}\right\Vert _{L^{2}\left(
\omega \right) }^{2} \\
&&+\sup_{\Omega \mathcal{D}}\sup_{I\in \mathcal{A}\Omega \mathcal{D}%
}\sup_{\ell \geq 0}\frac{1}{\left\vert I\right\vert _{\sigma }}\sum_{J\in 
\mathcal{M}_{\left( \mathbf{r},\varepsilon \right) -\limfunc{deep},\Omega 
\mathcal{D}}^{\ell }\left( I\right) }\left( \frac{\mathrm{P}^{\alpha }\left(
J,\mathbf{1}_{I}\sigma \right) }{\left\vert J\right\vert ^{\frac{1}{n}}}%
\right) ^{2}\left\Vert \mathsf{P}_{J}^{\omega }\mathbf{x}\right\Vert
_{L^{2}\left( \omega \right) }^{2}\ .
\end{eqnarray*}
\end{definition}

Similarly we have a dual version of $\mathcal{E}_{\alpha }^{\limfunc{strong}%
} $ denoted $\mathcal{E}_{\alpha }^{\limfunc{strong},\ast }$, and both
depend on $\mathbf{r}$ and $\varepsilon $ as well as on $n$ and $\alpha $.
An important point in this definition is that the quasicube $I$ in the
second line is permitted to lie \emph{outside} the quasigrid $\Omega 
\mathcal{D}$, but only as an alternate dyadic quasicube $I\in \mathcal{A}%
\Omega \mathcal{D} $. In the setting of quasicubes we continue to use the
linear function $\mathbf{x}$ in the final factor $\left\Vert \mathsf{P}%
_{J}^{\omega }\mathbf{x}\right\Vert _{L^{2}\left( \omega \right) }^{2}$ of
each line, and not the pushforward of $\mathbf{x}$ by $\Omega $. The reason
of course is that this condition is used to capture the first order
information in the Taylor expansion of a singular kernel. There is a
logically weaker form of the quasienergy conditions that we discuss after
stating our main theorem, but these refined quasienergy conditions are more
complicated to state, and have as yet found no application - the strong
energy conditions above suffice for use when one measure is compactly
supported on a $C^{1,\delta }$ curve as in \cite{SaShUr8}.

\subsection{Statement of the Theorem}

We can now state our main quasicube two weight theorem for general measures
allowing common point masses. Recall that $\Omega :\mathbb{R}^{n}\rightarrow 
\mathbb{R}^{n}$ is a globally biLipschitz map, and that $\Omega \mathcal{P}%
^{n}$ denotes the collection of all quasicubes in $\mathbb{R}^{n}$ whose
preimages under $\Omega $ are usual cubes with sides parallel to the
coordinate axes. Denote by $\Omega \mathcal{D}\subset \Omega \mathcal{P}^{n}$
a dyadic quasigrid in $\mathbb{R}^{n}$. For the purpose of obtaining
necessity of $\mathcal{A}_{2}^{\alpha }$ in the range $\frac{n}{2}\leq
\alpha <n$, we adapt to the setting of quasicubes the notion of strong
ellipticity from \cite{SaShUr5}.

\begin{definition}
\label{strong ell}Fix a globally biLipschitz map $\Omega $. Let $\mathbf{T}%
^{\alpha }=\left\{ T_{j}^{\alpha }\right\} _{j=1}^{J}$ be a vector of Calder%
\'{o}n-Zygmund operators with standard kernels $\left\{ K_{j}^{\alpha
}\right\} _{j=1}^{J}$. We say that $\mathbf{T}^{\alpha }$ is \emph{strongly
elliptic} with respect to $\Omega $ if for each $m\in \left\{ 1,-1\right\}
^{n}$, there is a sequence of coefficients $\left\{ \lambda _{j}^{m}\right\}
_{j=1}^{J}$ such that%
\begin{equation}
\left\vert \sum_{j=1}^{J}\lambda _{j}^{m}K_{j}^{\alpha }\left( x,x+t\mathbf{u%
}\right) \right\vert \geq ct^{\alpha -n},\ \ \ \ \ t\in \mathbb{R},
\label{Ktalpha strong}
\end{equation}%
holds for \emph{all} unit vectors $\mathbf{u}$ in the quasi-$n$-ant $\Omega
V_{m}$ where%
\begin{equation*}
V_{m}=\left\{ x\in \mathbb{R}^{n}:m_{i}x_{i}>0\text{ for }1\leq i\leq
n\right\} ,\ \ \ \ \ m\in \left\{ 1,-1\right\} ^{n}.
\end{equation*}
\end{definition}

\begin{theorem}
\label{T1 theorem}Suppose that $T^{\alpha }$ is a standard $\alpha $%
-fractional Calder\'{o}n-Zygmund operator on $\mathbb{R}^{n}$, and that $%
\omega $ and $\sigma $ are positive Borel measures on $\mathbb{R}^{n}$
(possibly having common point masses). Set $T_{\sigma }^{\alpha }f=T^{\alpha
}\left( f\sigma \right) $ for any smooth truncation of $T_{\sigma }^{\alpha
} $. Let $\Omega :\mathbb{R}^{n}\rightarrow \mathbb{R}^{n}$ be a globally
biLipschitz map.

\begin{enumerate}
\item Suppose $0\leq \alpha <n$. Then the operator $T_{\sigma }^{\alpha }$
is bounded from $L^{2}\left( \sigma \right) $ to $L^{2}\left( \omega \right) 
$, i.e. 
\begin{equation}
\left\Vert T_{\sigma }^{\alpha }f\right\Vert _{L^{2}\left( \omega \right)
}\leq \mathfrak{N}_{T_{\sigma }^{\alpha }}\left\Vert f\right\Vert
_{L^{2}\left( \sigma \right) },  \label{two weight}
\end{equation}%
uniformly in smooth truncations of $T^{\alpha }$, and moreover%
\begin{equation*}
\mathfrak{N}_{T_{\sigma }^{\alpha }}\leq C_{\alpha }\left( \sqrt{\mathcal{A}%
_{2}^{\alpha }+\mathcal{A}_{2}^{\alpha ,\ast }+A_{2}^{\alpha ,\limfunc{punct}%
}+A_{2}^{\alpha ,\ast ,\limfunc{punct}}}+\mathfrak{T}_{T^{\alpha }}+%
\mathfrak{T}_{T^{\alpha }}^{\ast }+\mathcal{E}_{\alpha }^{\limfunc{strong}}+%
\mathcal{E}_{\alpha }^{\limfunc{strong},\ast }+\mathcal{WBP}_{T^{\alpha
}}\right) ,
\end{equation*}%
provided that the two dual $\mathcal{A}_{2}^{\alpha }$ conditions and the
two dual punctured Muckenhoupt conditions all hold, and the two dual
quasitesting conditions for $T^{\alpha }$ hold, the quasiweak boundedness
property for $T^{\alpha }$ holds for a sufficiently large constant $C$
depending on the goodness parameter $\mathbf{r}$, and provided that the two
dual strong quasienergy conditions hold uniformly over all dyadic quasigrids 
$\Omega \mathcal{D}\subset \Omega \mathcal{P}^{n}$, i.e. $\mathcal{E}%
_{\alpha }^{\limfunc{strong}}+\mathcal{E}_{\alpha }^{\limfunc{strong},\ast
}<\infty $, and where the goodness parameters $\mathbf{r}$ and $\varepsilon $
implicit in the definition of the collections $\mathcal{M}_{\left( \mathbf{r}%
,\varepsilon \right) -\limfunc{deep}}\left( K\right) $ and $\mathcal{M}%
_{\left( \mathbf{r},\varepsilon \right) -\limfunc{deep},\Omega \mathcal{D}%
}^{\ell }\left( K\right) $ appearing in the strong energy conditions, are
fixed sufficiently large and small respectively depending only on $n$ and $%
\alpha $.

\item Conversely, suppose $0\leq \alpha <n$ and that $\mathbf{T}^{\alpha
}=\left\{ T_{j}^{\alpha }\right\} _{j=1}^{J}$ is a vector of Calder\'{o}%
n-Zygmund operators with standard kernels $\left\{ K_{j}^{\alpha }\right\}
_{j=1}^{J}$. In the range $0\leq \alpha <\frac{n}{2}$, we assume the \emph{%
ellipticity} condition from (\cite{SaShUr5}): there is $c>0$ such that for 
\emph{each} unit vector $\mathbf{u}$ there is $j$ satisfying%
\begin{equation}
\left\vert K_{j}^{\alpha }\left( x,x+t\mathbf{u}\right) \right\vert \geq
ct^{\alpha -n},\ \ \ \ \ t\in \mathbb{R}.  \label{Ktalpha}
\end{equation}%
For the range $\frac{n}{2}\leq \alpha <n$, we assume the strong ellipticity
condition in Definition \ref{strong ell} above. Furthermore, assume that
each operator $T_{j}^{\alpha }$ is bounded from $L^{2}\left( \sigma \right) $
to $L^{2}\left( \omega \right) $, 
\begin{equation*}
\left\Vert \left( T_{j}^{\alpha }\right) _{\sigma }f\right\Vert
_{L^{2}\left( \omega \right) }\leq \mathfrak{N}_{T_{j}^{\alpha }}\left\Vert
f\right\Vert _{L^{2}\left( \sigma \right) }.
\end{equation*}%
Then the fractional $\mathcal{A}_{2}^{\alpha }$ conditions (with `holes')
hold as well as the punctured Muckenhoupt conditions, and moreover,%
\begin{equation*}
\sqrt{\mathcal{A}_{2}^{\alpha }+\mathcal{A}_{2}^{\alpha ,\ast
}+A_{2}^{\alpha ,\limfunc{punct}}+A_{2}^{\alpha ,\ast ,\limfunc{punct}}}\leq
C\mathfrak{N}_{\mathbf{T}^{\alpha }}.
\end{equation*}
\end{enumerate}
\end{theorem}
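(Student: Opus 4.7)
This half is relatively short and I would treat it first. Fix $Q \in \Omega \mathcal{P}^n$ and a neighbour $Q'$ with $(Q,Q') \in \Omega \mathcal{N}^n$. For $x \in Q$, $y \in Q'$ the chord $y-x$ points in some quasi-$n$-ant $\Omega V_m$ and has length comparable to $|Q|^{1/n}$. In the range $0 \le \alpha < n/2$, the ellipticity condition (\ref{Ktalpha}) produces a single index $j$ with $|K_j^\alpha(x,y)| \gtrsim |Q|^{\alpha/n - 1}$; in the range $n/2 \le \alpha < n$, strong ellipticity (\ref{Ktalpha strong}) does the same for the signed combination $\sum_j \lambda_j^m K_j^\alpha$, which is itself $L^2(\sigma)\to L^2(\omega)$ bounded with norm controlled by $\mathfrak{N}_{\mathbf{T}^\alpha}$. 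Either way one obtains the pointwise lower bound $|T^\alpha(\mathbf{1}_{Q'}\sigma)(x)| \gtrsim |Q'|_\sigma / |Q|^{1-\alpha/n}$ on $Q$; squaring, integrating $d\omega$ over $Q$, and applying the norm inequality yields the offset condition (\ref{offset A2}). Summing the offset bound along a Whitney decomposition of $Q^c$ converts it into the one-sided $\mathcal{A}_2^\alpha, \mathcal{A}_2^{\alpha,\ast}$ conditions built on the reproducing Poisson $\mathcal{P}^\alpha$. Finally, Lemma \ref{pointed A2} upgrades these to the punctured Muckenhoupt constants $A_2^{\alpha,\limfunc{punct}}$ and $A_2^{\alpha,\ast,\limfunc{punct}}$.

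\textbf{Sufficiency (Part 1): overall scheme.} I would follow the framework of \cite{SaShUr5}, adapted to $\Omega$-quasicubes and to common point masses. First, one legalises the passage between truncation families using the $A_2^\alpha$-controlled kernel comparison recorded at the end of the subsection on standard fractional singular integrals, so that one may work throughout with tangent-line truncations as required by the Monotonicity and Energy Lemmas. Next, I pick independent random quasigrids $\Omega\mathcal{D}^\sigma, \Omega\mathcal{D}^\omega$ and expand the bilinear form
\[
\langle T_\sigma^\alpha f, g\rangle_\omega \;=\; \sum_{I \in \Omega\mathcal{D}^\sigma}\sum_{J \in \Omega\mathcal{D}^\omega} \bigl\langle T_\sigma^\alpha(\triangle_I^\sigma f),\, \triangle_J^\omega g\bigr\rangle_\omega
\]
in weighted quasiHaar projections. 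A Nazarov--Treil--Volberg good/bad averaging reduces matters to good pairs with $J \Subset_{\mathbf{r},\varepsilon} I$ (and its dual). The good sum is then decomposed in the usual taxonomy into diagonal, far/disjoint, paraproduct, neighbour, and stopping forms: the diagonal is absorbed by the quasicube testing constants $\mathfrak{T}_{T^\alpha},\mathfrak{T}_{T^\alpha}^\ast$; the far/disjoint and neighbour forms by $\mathcal{A}_2^\alpha + A_2^{\alpha,\limfunc{punct}}$ via the Monotonicity Lemma applied to the Taylor remainder of $K^\alpha$; and the nearby terms of comparable size by $\mathcal{WBP}_{T^\alpha}$.

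\textbf{The main obstacle.} The paraproduct and stopping forms require the strong quasienergy condition $\mathcal{E}_\alpha^{\limfunc{strong}}$ together with a separate \emph{functional energy} inequality bounding $\mathrm{P}^\alpha$ from $L^2(\sigma)$ into the Carleson-type measure space weighted by $\|\mathsf{P}_J^\omega \mathbf{x}\|_{L^2(\omega)}^2$. My plan is to derive functional energy by invoking the two-weight Poisson testing theorem of \cite{Saw3}, which reduces the task to verifying Poisson testing on indicators of quasicubes. The critical new ingredient, and the heart of the whole proof in the present generality, is to check this Poisson testing bound \emph{without} the classical $A_2^\alpha$, which fails whenever $\sigma$ and $\omega$ share an atom: one splits $|Q|_\mu = \mu(Q, \mathfrak{P}_{(\sigma,\omega)}) + (\text{largest atom in }Q)$, handles the punctured part by $A_2^{\alpha,\limfunc{punct}}$, and absorbs the single atom separately by direct testing on a singleton. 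This step replaces Hyt\"onen's bilinear-Poisson and shifted-grid machinery and is the reason the punctured conditions are introduced. Once functional energy is in hand, the stopping form is bounded by an $\ell^2$ corona/stopping-time decomposition of $f$ keyed to $\sigma$-averages, iterated across scales via the deep-embedding collections $\mathcal{M}_{\mathbf{r},\varepsilon-\limfunc{deep}}(K)$ and their refinements $\mathcal{M}_{(\mathbf{r},\varepsilon)-\limfunc{deep},\Omega\mathcal{D}}^\ell(K)$ whose very purpose in the definition of $\mathcal{E}_\alpha^{\limfunc{strong}}$ is to allow this iteration to close. The passage from cubes to $\Omega$-quasicubes permeates every estimate but introduces no new ideas: quasidistance, quasichildren, and the uniform comparability $|\Omega Q|^{1/n} \approx |Q|^{1/n}$ transport the cube arguments verbatim. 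I therefore expect the sole genuine difficulty to lie in recasting the functional-energy and Poisson-testing arguments around the punctured Muckenhoupt conditions, and the verification that the resulting constants still aggregate correctly across the corona decomposition.
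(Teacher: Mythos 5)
Your outline captures the architecture used in the paper -- NTV good/bad averaging, the quasicube size splitting, corona decompositions, functional energy via the two--weight Poisson inequality of \cite{Saw3}, and Lacey's recursion for the stopping form -- and you correctly identify the punctured Muckenhoupt conditions as the genuinely new ingredient that replaces Hyt\"onen's shifted--grid machinery. But there are two misattributions that would, if followed literally, send the proof down a wrong path.

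First, the functional energy inequality is not what controls the paraproduct or stopping forms. In the paper's canonical splitting the below form $\mathsf{B}_{\Subset}$ decomposes into $\mathsf{T}_{\limfunc{diagonal}} + \mathsf{T}_{\limfunc{far}\limfunc{below}}$ (the far--above and disjoint pieces vanish). The paraproduct form, a piece of the diagonal, is controlled \emph{only} by the testing constant $\mathfrak{T}_{T^\alpha}$. The stopping form is controlled by the \emph{deep} quasienergy constant $\mathcal{E}_\alpha^{\limfunc{deep}}$ together with $\sqrt{A_2^\alpha}$ through Lacey's bottom--up stopping time and recursion; functional energy plays no role there. Functional energy (Definition \ref{functional energy n}) enters \emph{only} in the far--below form, through the Intertwining Proposition \ref{strongly adapted}: one passes the shifted $\omega$--corona projection through $T^\alpha$ at the cost of $\mathfrak{F}_\alpha + \mathcal{NTV}_\alpha$, and then Proposition \ref{func ener control} controls $\mathfrak{F}_\alpha$ by $\mathcal{E}_\alpha^{\limfunc{plug}} + \sqrt{\mathcal{A}_2^\alpha} + \sqrt{\mathcal{A}_2^{\alpha,\ast}} + \sqrt{A_2^{\alpha,\limfunc{punct}}}$. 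Collapsing these distinct roles would obscure why the \emph{refined} energy constant (the second supremum in $\mathcal{E}_\alpha^{\limfunc{strong}}$, with alternate quasicubes and the parameter $\ell$) is needed at all: it appears only in Lemma \ref{refined lemma} inside the Poisson testing, never in the stopping form. Your sketch also omits the double (iterated) corona of Lemma \ref{iterating coronas}, which is what lets the Calder\'on--Zygmund stopping data and the energy stopping data be imposed simultaneously so that both the stopping energy bound (\ref{def stopping bounds 3}) and the quasi--orthogonality (\ref{q orth}) hold for the same tree $\mathcal{A}$.

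Second, the mechanism by which atoms are handled is not ``direct testing on a singleton.'' The point of Lemma \ref{energy A2} and the repeated \emph{prepare to puncture} device in the backward Poisson testing is structural: if $p$ is the largest common atom in a quasicube, then for the measure $\widetilde\omega = \omega - \omega(\{p\})\delta_p$ one has the key identity
\[
\left\Vert \bigtriangleup_K^\omega\, \mathbf{x}\right\Vert_{L^2(\omega)}^2 =
\begin{cases}
\left\Vert \bigtriangleup_K^\omega(\mathbf{x}-p)\right\Vert_{L^2(\omega)}^2 & p\in K,\\
\left\Vert \bigtriangleup_K^\omega\, \mathbf{x}\right\Vert_{L^2(\widetilde\omega)}^2 & p\notin K,
\end{cases}
\qquad\le\ \ell(K)^2\,|K|_{\widetilde\omega},
\]
so the energy projections themselves annihilate the atom and one gets the geometric decay from $A_2^{\alpha,\limfunc{punct}}$ applied to $\widetilde\omega$. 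No separate singleton testing condition is invoked or needed. Your summary of Part (2) should also be corrected: Lemma \ref{pointed A2} derives the punctured conditions from boundedness plus the \emph{classical} (hole--free) $A_2^\alpha$ implication for measures \emph{without} common atoms, via a combinatorial pairing of atoms; it is not an upgrade of the one--sided $\mathcal{A}_2^\alpha$, and in the range $0\le\alpha<n/2$ the offset condition is itself established by a stopping--time/Whitney argument on the product space $(Q_0'\times Q_0)\setminus\mathfrak D$ relative to the diagonal, a step your outline compresses to a single sentence.
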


\begin{problem}
Given any strongly elliptic vector $\mathbf{T}^{\alpha }$ of classical $%
\alpha $-fractional Calder\'{o}n-Zygmund operators, it is an open question
whether or not the usual (quasi or not) energy conditions are necessary for
boundedness of $\mathbf{T}^{\alpha }$. See \cite{SaShUr4} for a failure of 
\emph{energy reversal} in higher dimensions - such an energy reversal was
used in dimension $n=1$ to prove the necessity of the energy condition for
the Hilbert transform, and also in \cite{SaShUr3} and \cite{LaSaShUrWi} for
the Riesz transforms and Cauchy transforms respectively when one of the
measures is supported on a line, and in \cite{SaShUr8} when one of the
measures is supported on a $C^{1,\delta }$ curve.
\end{problem}

\begin{remark}
If Definition \ref{strong ell} holds for some $\mathbf{T}^{\alpha }$ and $%
\Omega $, then $\Omega $ must be fairly tame, in particular the logarithmic
spirals in Example \ref{wild} are ruled out! On the other hand, the vector
of Riesz transforms $\mathbf{R}^{\alpha ,n}$ is easily seen to be strongly
elliptic with respect to $\Omega $ if $\Omega $ satisfies the following 
\emph{sector separation property}. Given a hyperplane $H$ and a
perpendicular line $L$ intersecting at point $P$, there exist spherical
cones $S_{H}$ and $S_{L}$ intersecting only at the point $P^{\prime }=\Omega
\left( P\right) $, such that $H^{\prime }\equiv \Omega H\subset S_{H}$ and $%
L^{\prime }\equiv \Omega L\subset S_{L}$ and%
\begin{eqnarray*}
\limfunc{dist}\left( x,\partial S_{H}\right) &\approx &\left\vert
x\right\vert ,\ \ \ \ \ x\in H\ , \\
\limfunc{dist}\left( x,\partial S_{L}\right) &\approx &\left\vert
x\right\vert ,\ \ \ \ \ x\in L\ .
\end{eqnarray*}%
Examples of globally biLipshcitz maps $\Omega $ that satisfy the sector
separation property include finite compositions of maps of the form%
\begin{equation*}
\Omega \left( x_{1},x^{\prime }\right) =\left( x_{1},x^{\prime }+\psi \left(
x_{1}\right) \right) ,\ \ \ \ \ \left( x_{1},x^{\prime }\right) \in \mathbb{R%
}^{n},
\end{equation*}%
where $\psi :\mathbb{R}\rightarrow \mathbb{R}^{n-1}$ is a Lipschitz map with
sufficiently small Lipschitz constant.
\end{remark}

\begin{remark}
\label{surgery}In \cite{LaWi}, in the setting of usual (nonquasi) cubes and
measures having no common point masses, M. Lacey and B. Wick use the NTV
technique of surgery to show that the weak boundedness property for the
Riesz transform vector $\mathbf{R}^{\alpha ,n}$ is implied by the $\mathcal{A%
}_{2}^{\alpha }$ and testing conditions, and this has the consequence of
eliminating the weak boundedness property as a condition. Their proof of
this implication extends to the more general operators $T^{\alpha }$ and
quasicubes considered here, and so the quasiweak boundedness property can be
dropped from the statement of Theorem \ref{T1 theorem}. In any event, the
weak boundedness property is necessary for the norm inequality, and as such
can be viewed as a weak close cousin of the testing conditions.
\end{remark}

\section{Proof of Theorem \protect\ref{T1 theorem}}

We now give the proof of Theorem \ref{T1 theorem} in the following sections.
Sections 5, 7 and 10 are largely taken verbatim from the corresponding
sections of \cite{SaShUr5}, but are included here since their omission here
would hinder the readability of an already complicated argument.

\subsection{Good quasicubes and quasienergy conditions}

Here we extend the notion of goodness to quasicubes and define various
refinements of the strong quasienergy conditions appearing in the main
theorem above. These refinements represent the `weakest' energy side
conditions that suffice for use in our proof. We begin with goodness.

\begin{definition}
Let $\mathbf{r}\in \mathbb{N}$ and $0<\varepsilon <1$. Fix a quasigrid $%
\Omega \mathcal{D}$. A dyadic quasicube $J$ is $\left( \mathbf{r}%
,\varepsilon \right) $\emph{-good}, or simply \emph{good}, if for \emph{every%
} dyadic superquasicube $I$, it is the case that \textbf{either} $J$ has
side length greater than $2^{-\mathbf{r}}$ times that of $I$, \textbf{or} $%
J\Subset _{\mathbf{r},\varepsilon }I$ is $\left( \mathbf{r},\varepsilon
\right) $-deeply embedded in $I$.
\end{definition}

Note that this definition simply asserts that a dyadic quasicube $J=\Omega
J^{\prime }$ is $\left( \mathbf{r},\varepsilon \right) $-good if and only if
the cube $J^{\prime }$ is $\left( \mathbf{r},\varepsilon \right) $-good in
the usual sense. Finally, we say that $J$ is $\mathbf{r}$\emph{-nearby} in $%
K $ when $J\subset K$ and%
\begin{equation*}
\ell \left( J\right) >2^{-\mathbf{r}}\ell \left( K\right) .
\end{equation*}%
The parameters $\mathbf{r},\varepsilon $ will be fixed sufficiently large
and small respectively later in the proof, and we denote the set of such
good dyadic quasicubes by $\Omega \mathcal{D}_{\left( \mathbf{r},\varepsilon
\right) -\limfunc{good}}$, or simply $\Omega \mathcal{D}_{\limfunc{good}}$
when the goodness parameters $\left( \mathbf{r},\varepsilon \right) $ are
understood. Note that if $J^{\prime }\in \Omega \mathcal{D}_{\left( \mathbf{r%
},\varepsilon \right) -\limfunc{good}}$ and if $J^{\prime }\subset K\in
\Omega \mathcal{D}$, then \textbf{either} $J^{\prime }$ is $\mathbf{r}$\emph{%
-nearby} in $K$ \textbf{or} $J^{\prime }\subset J\Subset _{\mathbf{r}%
,\varepsilon }K$.

Throughout the proof, it will be convenient to also consider pairs of
quasicubes $J,K$ where $J$ is $\left( \mathbf{\rho },\varepsilon \right) $%
\emph{-deeply embedded} in $K$, written $J\Subset _{\mathbf{\rho }%
,\varepsilon }K$ and meaning (\ref{def deep embed}) holds with the same $%
\varepsilon >0$ but with $\mathbf{\rho }$ in place of $\mathbf{r}$; as well
as pairs of quasicubes $J,K$ where $J$ is $\mathbf{\rho }$\emph{-nearby} in%
\emph{\ }$K$, $\ell \left( J\right) >2^{-\mathbf{\rho }}\ell \left( K\right) 
$, for a parameter $\mathbf{\rho }\gg \mathbf{r}$ that will be fixed later.
We define the smaller `good' quasiHaar projection $\mathsf{P}_{K}^{\limfunc{%
good},\omega }=\mathsf{P}_{K}^{\left( \mathbf{r},\varepsilon \right) -%
\limfunc{good},\omega }$ by%
\begin{equation*}
\mathsf{P}_{K}^{\limfunc{good},\mu }f\equiv \sum_{_{J\in \mathcal{G}\left(
K\right) }}\bigtriangleup _{J}^{\mu }f=\sum_{_{J\in \mathcal{G}\left(
K\right) }}\dsum\limits_{a\in \Gamma _{n}}\left\langle f,h_{J}^{\mu
,a}\right\rangle _{\mu }h_{J}^{\mu ,a},
\end{equation*}%
where $\mathcal{G}\left( K\right) $ consists of the good subcubes of $K$:%
\begin{equation*}
\mathcal{G}\left( K\right) \equiv \left\{ J\in \Omega \mathcal{D}_{\limfunc{%
good}}:J\subset K\right\} ,
\end{equation*}%
and also the larger `subgood' quasiHaar projection $\mathsf{P}_{K}^{\limfunc{%
subgood},\mu }$ by%
\begin{equation*}
\mathsf{P}_{K}^{\limfunc{subgood},\mu }f\equiv \sum_{_{J\in \mathcal{M}_{%
\limfunc{good}}\left( K\right) }}\sum_{J^{\prime }\subset J}\bigtriangleup
_{J^{\prime }}^{\mu }f=\sum_{_{J\in \mathcal{M}_{\limfunc{good}}\left(
K\right) }}\sum_{J^{\prime }\subset J}\dsum\limits_{a\in \Gamma
_{n}}\left\langle f,h_{J^{\prime }}^{\mu ,a}\right\rangle _{\mu
}h_{J^{\prime }}^{\mu ,a},
\end{equation*}%
where $\mathcal{M}_{\limfunc{good}}\left( K\right) $ consists of the \emph{%
maximal} good subcubes of $K$. We thus have 
\begin{eqnarray*}
\left\Vert \mathsf{P}_{K}^{\limfunc{good},\mu }\mathbf{x}\right\Vert
_{L^{2}\left( \mu \right) }^{2} &\leq &\left\Vert \mathsf{P}_{K}^{\limfunc{%
subgood},\mu }\mathbf{x}\right\Vert _{L^{2}\left( \mu \right) }^{2} \\
&\leq &\left\Vert \mathsf{P}_{I}^{\mu }\mathbf{x}\right\Vert _{L^{2}\left(
\mu \right) }^{2}=\int_{I}\left\vert \mathbf{x}-\left( \frac{1}{\left\vert
I\right\vert _{\mu }}\int_{I}\mathbf{x}d\mu \right) \right\vert ^{2}d\mu
\left( x\right) ,\ \ \ \ \ \mathbf{x}=\left( x_{1},...,x_{n}\right) ,
\end{eqnarray*}%
where $\mathsf{P}_{I}^{\mu }\mathbf{x}$ is the orthogonal projection of the
identity function $\mathbf{x}:\mathbb{R}^{n}\rightarrow \mathbb{R}^{n}$ onto
the vector-valued subspace of $\oplus _{k=1}^{n}L^{2}\left( \mu \right) $
consisting of functions supported in $I$ with $\mu $-mean value zero.

Recall that the extension of the energy conditions to higher dimensions in 
\cite{SaShUr5} used the collection 
\begin{equation*}
\mathcal{M}_{\left( \mathbf{r},\varepsilon \right) -\limfunc{deep}}\left(
K\right) \equiv \left\{ \text{maximal }J\Subset _{\mathbf{r},\varepsilon
}K\right\}
\end{equation*}%
of \emph{maximal} $\mathbf{r}$-deeply embedded dyadic subquasicubes of a
quasicube $K$ (a subquasicube $J$ of $K$ is a \emph{dyadic} subquasicube of $%
K$ if $J\in \Omega \mathcal{D}$ when $\Omega \mathcal{D}$ is a dyadic
quasigrid containing $K$). We let $J^{\ast }=\gamma J$ where $\gamma \geq 2$%
. Then the following bounded overlap property holds.

\begin{lemma}
\begin{equation}
\sum_{J\in \mathcal{M}_{\left( \mathbf{r},\varepsilon \right) -\limfunc{deep}%
}\left( K\right) }\mathbf{1}_{J^{\ast }}\leq \beta \mathbf{1}%
_{\dbigcup\limits_{J\in \mathcal{M}_{\left( \mathbf{r},\varepsilon \right) -%
\limfunc{deep}}\left( K\right) }J^{\ast }}  \label{bounded overlap}
\end{equation}%
holds for some positive constant $\beta $ depending only on $n,\gamma ,%
\mathbf{r}$ and $\varepsilon $. If in addition we have $\gamma \leq 2^{%
\mathbf{r}\left( 1-\varepsilon \right) }$, then $\gamma J\subset K$ for all $%
J\in \mathcal{M}_{\left( \mathbf{r},\varepsilon \right) -\limfunc{deep}%
}\left( K\right) $, and consequently%
\begin{equation}
\sum_{J\in \mathcal{M}_{\left( \mathbf{r},\varepsilon \right) -\limfunc{deep}%
}\left( K\right) }\mathbf{1}_{J^{\ast }}\leq \beta \mathbf{1}_{K}\ .
\label{bounded overlap in K}
\end{equation}
\end{lemma}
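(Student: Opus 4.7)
The plan is to transfer both statements to the preimage side under $\Omega^{-1}$, where the quasicubes become ordinary dyadic cubes and quasidistance becomes Euclidean distance, carry out the classical arguments there, and then push forward. This works because the ``dilation'' of a quasicube is defined exactly so that $\gamma J = \Omega(\gamma J')$ when $J = \Omega J'$, and because $\text{qdist}$ is by definition pulled back through $\Omega$.

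For the bounded overlap statement \eqref{bounded overlap}, the key structural input is that the members of $\mathcal{M}_{(\mathbf{r},\varepsilon)-\text{deep}}(K)$ are pairwise disjoint, which is immediate from maximality. Pulling back, we obtain a pairwise disjoint family $\mathcal{F}' = \{J'\}$ of dyadic cubes in the ordinary dyadic grid, and the goal reduces to the classical assertion that $\sum_{J' \in \mathcal{F}'} \mathbf{1}_{\gamma J'} \leq \beta(n,\gamma)$ pointwise. I would prove this by fixing a point $x$ and arguing that any two members $J_1', J_2' \in \mathcal{F}'$ with $x \in \gamma J_i'$ must have comparable sidelengths: if $\ell(J_1') \ll \ell(J_2')$ and $x \in \gamma J_2'$, then either $x$ lies in $J_2'$ in which case disjointness of $J_1'$ from $J_2'$ forces $J_1'$ to straddle $\partial J_2'$, but $J_1' \subset B(x, C\gamma \ell(J_1'))$ is too small to do so; or $x \in \gamma J_2' \setminus J_2'$, in which case $J_1'$ must lie within $C\gamma\ell(J_1')$ of $x$ and hence within $C\gamma\ell(J_1')$ of $\partial J_2'$, which again pins the scale of $J_1'$ to within a bounded factor of $\ell(J_2')$. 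Thus only $O(\log_2 \gamma)$ scales contribute, and at each scale there are $O(\gamma^n)$ dyadic cubes whose $\gamma$-dilate contains $x$, giving a bound $\beta = \beta(n,\gamma)$. Pushing forward through $\Omega$ and using the biLipschitz comparability $c|J|^{1/n}\leq \ell(J)\leq C|J|^{1/n}$ produces \eqref{bounded overlap}.

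For the containment $\gamma J \subset K$ when $\gamma \leq 2^{\mathbf{r}(1-\varepsilon)}$, I would again work at the preimage level. For $J' \subset K'$ dyadic cubes with $J' \Subset_{\mathbf{r},\varepsilon} K'$, the deep-embedding conditions read $\ell(J') \leq 2^{-\mathbf{r}}\ell(K')$ and $\text{dist}(J',\partial K') \geq \tfrac{1}{2}\ell(J')^\varepsilon \ell(K')^{1-\varepsilon}$. Rewriting the second as
\[
\text{dist}(J',\partial K') \;\geq\; \tfrac{1}{2}\ell(J')\,\Bigl(\tfrac{\ell(K')}{\ell(J')}\Bigr)^{1-\varepsilon}\!\!,
\]
and using $\ell(K')/\ell(J') \geq 2^{\mathbf{r}}$, I get $\text{dist}(J',\partial K') \geq \tfrac{1}{2}\ell(J')\,2^{\mathbf{r}(1-\varepsilon)}$. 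On the other hand, the inclusion $\gamma J' \subset K'$ is equivalent to $\text{dist}(J',\partial K') \geq \tfrac{\gamma-1}{2}\ell(J')$, so the hypothesis $\gamma \leq 2^{\mathbf{r}(1-\varepsilon)}$ suffices. Applying $\Omega$ transfers this to $\gamma J \subset K$, and then \eqref{bounded overlap in K} follows from \eqref{bounded overlap} since the union of the $J^*$ is now contained in $K$.

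I do not foresee a serious obstacle; the arguments are geometric and essentially classical once one commits to working at the preimage level. The only point requiring a little care is the first case analysis in the overlap argument, where one must confirm that pairwise disjointness of dyadic cubes prevents scales from being too dispersed among those with $\gamma$-dilate containing a fixed point; this is where the hypothesis that $\mathcal{F}'$ consists of disjoint (hence non-nesting) dyadic cubes is genuinely used.
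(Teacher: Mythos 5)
Your treatment of the second half of the lemma (the inclusion $\gamma J\subset K$ when $\gamma\leq 2^{\mathbf{r}(1-\varepsilon)}$, and hence \eqref{bounded overlap in K}) is correct and essentially identical to the paper's argument. But the proof of \eqref{bounded overlap} contains a genuine gap.

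You reduce \eqref{bounded overlap} to the assertion that a pairwise disjoint family $\mathcal{F}'$ of dyadic cubes automatically satisfies $\sum_{J'\in\mathcal{F}'}\mathbf{1}_{\gamma J'}\leq \beta(n,\gamma)$, and you prove this by arguing that disjointness ``pins the scale'' of any two members whose $\gamma$-dilates share a common point. This is false. Take $n=1$, $\gamma=3$, and $J'_k=[2^{-k},2^{-k+1})$ for $k\geq 1$: these are pairwise disjoint dyadic intervals, yet $3J'_k=[0,3\cdot 2^{-k})$ contains $0$ for every $k$, so the overlap at $0$ is infinite. The scale is not pinned because the point $x=0$ is a limit point of the family but lies in none of its members; your case analysis (either $x\in J'_2$ or $x\in\gamma J'_2\setminus J'_2$) breaks down precisely when $x$ is near $\partial J'_2$, where a tiny $J'_1$ disjoint from $J'_2$ but close to $x$ can still satisfy $x\in\gamma J'_1$ at any scale. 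Your phrase ``which again pins the scale of $J'_1$ to within a bounded factor of $\ell(J'_2)$'' is the flawed step: proximity to $\partial J'_2$ is no constraint on $\ell(J'_1)$.

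What actually saves the lemma, and what the paper uses, is that $\mathcal{M}_{(\mathbf{r},\varepsilon)-\mathrm{deep}}(K)$ has far more structure than mere disjointness: both a \emph{lower} bound $\mathrm{qdist}(J,K^c)\geq\frac12\ell(J)^\varepsilon\ell(K)^{1-\varepsilon}$ (from deep embedding) and a matching \emph{upper} bound $\mathrm{qdist}(J,K^c)\leq C_n(2^{\mathbf{r}}\ell(J))^\varepsilon\ell(K)^{1-\varepsilon}$ (from \emph{maximality}, because the dyadic parent $\pi J$ must fail to be deeply embedded). These two bounds together express $\mathrm{qdist}(J,K^c)$ in terms of $\ell(J)$ up to a constant, and this is what lets one convert a fixed point $y\in\gamma J$ with known distance to $K^c$ into a bounded range of admissible scales $\ell(J)$. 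Your reduction discards the upper bound entirely by throwing away everything except disjointness, so you cannot recover \eqref{bounded overlap} from what you have kept. The rest of your framework --- pulling back through $\Omega$, using the biLipschitz comparability of sidelength and volume --- is fine once the correct pointwise argument is in place.
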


\begin{proof}
To prove (\ref{bounded overlap}), we first note that there are at most $2^{n%
\mathbf{r}+1}$ quasicubes $J$ for which $\ell \left( J\right) >2^{-\mathbf{r}%
}\ell \left( K\right) $. On the other hand, the maximal $\mathbf{r}$-deeply
embedded subquasicubes $J$ of $K$ also satisfy the comparability condition%
\begin{equation*}
\frac{1}{2}\ell \left( J\right) ^{\varepsilon }\ell \left( K\right)
^{1-\varepsilon }\leq \limfunc{qdist}\left( J,K^{c}\right) \leq C_{n}\left(
2^{\mathbf{r}}\ell \left( J\right) \right) ^{\varepsilon }\ell \left(
K\right) ^{1-\varepsilon }.
\end{equation*}%
Now with $0<\varepsilon <1$ and $\gamma \geq 2$ fixed, let $y\in K$. Then if 
$y\in \gamma J$, we have%
\begin{eqnarray*}
\frac{1}{2}\ell \left( J\right) ^{\varepsilon }\ell \left( K\right)
^{1-\varepsilon } &\leq &\limfunc{qdist}\left( J,K^{c}\right) \leq \gamma
\ell \left( J\right) +\limfunc{qdist}\left( \gamma J,K^{c}\right) \\
&\leq &\gamma \ell \left( J\right) +\limfunc{qdist}\left( y,K^{c}\right) .
\end{eqnarray*}%
Now assume that $\frac{\ell \left( J\right) }{\ell \left( K\right) }\leq
\left( \frac{1}{4\gamma }\right) ^{\frac{1}{1-\varepsilon }}$. Then we have $%
\gamma \ell \left( J\right) \leq \frac{1}{4}\ell \left( J\right)
^{\varepsilon }\ell \left( K\right) ^{1-\varepsilon }$ and so 
\begin{equation*}
\frac{1}{4}\ell \left( J\right) ^{\varepsilon }\ell \left( K\right)
^{1-\varepsilon }\leq \limfunc{qdist}\left( y,K^{c}\right) .
\end{equation*}%
But we also have 
\begin{equation*}
\limfunc{qdist}\left( y,K^{c}\right) \leq \ell \left( J\right) +dist\left(
J,K^{c}\right) \leq \ell \left( J\right) +C_{n}2^{\mathbf{r}\varepsilon
}\ell \left( J\right) ^{\varepsilon }\ell \left( K\right) ^{1-\varepsilon
}\leq \left( \frac{1}{4\gamma }+C_{n}2^{\mathbf{r}\varepsilon }\right) \ell
\left( J\right) ^{\varepsilon }\ell \left( K\right) ^{1-\varepsilon },
\end{equation*}%
and so altogether, under the assumption that $\frac{\ell \left( J\right) }{%
\ell \left( K\right) }\leq \left( \frac{1}{4\gamma }\right) ^{\frac{1}{%
1-\varepsilon }}$, we have%
\begin{eqnarray*}
\frac{1}{\frac{1}{4\gamma }+C_{n}2^{\mathbf{r}\varepsilon }}\limfunc{qdist}%
\left( y,K^{c}\right) &\leq &\ell \left( J\right) ^{\varepsilon }\ell \left(
K\right) ^{1-\varepsilon }\leq 2\limfunc{qdist}\left( y,K^{c}\right) , \\
\text{i.e. }\left( \frac{1}{\frac{1}{4\gamma }+C_{n}2^{\mathbf{r}\varepsilon
}}\frac{\limfunc{qdist}\left( y,K^{c}\right) }{\ell \left( K\right)
^{1-\varepsilon }}\right) ^{\frac{1}{\varepsilon }} &\leq &\ell \left(
J\right) \leq \left( 2\frac{\limfunc{qdist}\left( y,K^{c}\right) }{\ell
\left( K\right) ^{1-\varepsilon }}\right) ^{\frac{1}{\varepsilon }},
\end{eqnarray*}%
which shows that the number of $J^{\prime }s$ satisfying $y\in \gamma J$ and 
$\frac{\ell \left( J\right) }{\ell \left( K\right) }\leq \left( \frac{1}{%
4\gamma }\right) ^{\frac{1}{1-\varepsilon }}$ is at most $C_{n}^{\prime
}\gamma ^{n}\frac{1}{\varepsilon }\log _{2}\left( \frac{1}{2\gamma }%
+2C_{n}2^{\mathbf{r}\varepsilon }\right) $. The number of $J^{\prime }s$
satisfying $y\in \gamma J$ and $\frac{\ell \left( J\right) }{\ell \left(
K\right) }>\left( \frac{1}{4\gamma }\right) ^{\frac{1}{1-\varepsilon }}$ is
at most $C_{n}^{\prime }\gamma ^{n}\frac{1}{1-\varepsilon }\log _{2}\left(
4\gamma \right) $. This proves (\ref{bounded overlap}) with 
\begin{eqnarray*}
\beta &=&2^{n\mathbf{r}+1}+C_{n}^{\prime }\gamma ^{n}\frac{1}{\varepsilon }%
\log _{2}\left( \frac{1}{2\gamma }+2C_{n}2^{\mathbf{r}\varepsilon }\right)
+C_{n}^{\prime }\gamma ^{n}\frac{1}{1-\varepsilon }\log _{2}\left( 4\gamma
\right) \\
&\leq &2^{n\mathbf{r}+1}+C_{n}^{\prime \prime }\frac{1}{\varepsilon }\left(
1+\mathbf{r}\varepsilon \right) \gamma ^{n}+C_{n}^{\prime \prime }\frac{1}{%
1-\varepsilon }\gamma ^{n}\left( 1+\log _{2}\gamma \right) .
\end{eqnarray*}

In order to prove (\ref{bounded overlap in K}) it suffices, by (\ref{bounded
overlap}), to prove $\gamma J\subset K$ for all $J\in \mathcal{M}_{\left( 
\mathbf{r},\varepsilon \right) -\limfunc{deep}}\left( K\right) $. But $J\in 
\mathcal{M}_{\left( \mathbf{r},\varepsilon \right) -\limfunc{deep}}\left(
K\right) $ implies%
\begin{equation*}
\frac{1}{2}\ell \left( J\right) ^{\varepsilon }\ell \left( K\right)
^{1-\varepsilon }\leq \limfunc{qdist}\left( J,K^{c}\right) =\limfunc{qdist}%
\left( c_{J},K^{c}\right) +\frac{1}{2}\ell \left( J\right) .
\end{equation*}%
We wish to show $\gamma J\subset K$, which is equivalent to 
\begin{equation*}
\gamma \frac{1}{2}\ell \left( J\right) \leq \limfunc{qdist}\left(
c_{J},K^{c}\right) =\limfunc{qdist}\left( J,K^{c}\right) -\frac{1}{2}\ell
\left( J\right) .
\end{equation*}%
But we have%
\begin{equation*}
\limfunc{qdist}\left( J,K^{c}\right) -\frac{1}{2}\ell \left( J\right) \geq 
\frac{1}{2}\ell \left( J\right) ^{\varepsilon }\ell \left( K\right)
^{1-\varepsilon }-\frac{1}{2}\ell \left( J\right) ,
\end{equation*}%
and so it suffices to show that%
\begin{equation*}
\frac{1}{2}\ell \left( J\right) ^{\varepsilon }\ell \left( K\right)
^{1-\varepsilon }-\frac{1}{2}\ell \left( J\right) \geq \gamma \frac{1}{2}%
\ell \left( J\right) ,
\end{equation*}%
which is equivalent to%
\begin{equation*}
\gamma +1\leq \ell \left( J\right) ^{\varepsilon -1}\ell \left( K\right)
^{1-\varepsilon }.
\end{equation*}%
But the smallest that $\ell \left( J\right) ^{\varepsilon -1}\ell \left(
K\right) ^{1-\varepsilon }$ can get for $J\in \mathcal{M}_{\left( \mathbf{r}%
,\varepsilon \right) -\limfunc{deep}}\left( K\right) $ is $2^{\mathbf{r}%
\left( 1-\varepsilon \right) }$, and this completes the proof.
\end{proof}

\begin{remark}
The parameter $\gamma $ only plays a role in defining various refinements of
the strong energy condition in our main theorem. We will use these
refinements throughout our proof, both to highlight the nature of what
precisely is required in different situations, and also to provide the
`weakest' energy conditions under which our main theorem holds, namely under
the assumptions $\mathcal{E}_{\alpha }^{\limfunc{deep}},\mathcal{E}_{\alpha
}^{\limfunc{refined}}<\infty $ with $2\leq \gamma \leq 2^{\mathbf{r}\left(
1-\varepsilon \right) }$, and the corresponding dual conditions.
\end{remark}

Recall the collection $\mathcal{M}_{\left( \mathbf{r},\varepsilon \right) -%
\limfunc{deep},\Omega \mathcal{D}}^{\ell }\left( K\right) $, which is the
union, over all quasichildren $K^{\prime }$ of $K$, of those quasicubes in $%
\mathcal{M}_{\left( \mathbf{r},\varepsilon \right) -\limfunc{deep}}\left(
\pi ^{\ell }K^{\prime }\right) $ that happen to be contained in some $L\in 
\mathcal{M}_{\left( \mathbf{r},\varepsilon \right) -\limfunc{deep},\Omega 
\mathcal{D}}\left( K\right) $. Since $J\in \mathcal{M}_{\left( \mathbf{r}%
,\varepsilon \right) -\limfunc{deep},\Omega \mathcal{D}}^{\ell }\left(
K\right) $ implies $\gamma J\subset K$, we also have from (\ref{bounded
overlap}) and (\ref{bounded overlap in K}) that%
\begin{equation}
\sum_{J\in \mathcal{M}_{\left( \mathbf{r},\varepsilon \right) -\limfunc{deep}%
,\Omega \mathcal{D}}^{\left( \ell \right) }\left( K\right) }\mathbf{1}%
_{J^{\ast }}\leq \beta \mathbf{1}_{\dbigcup\limits_{J\in \mathcal{M}_{\left( 
\mathbf{r},\varepsilon \right) -\limfunc{deep},\Omega \mathcal{D}}^{\left(
\ell \right) }\left( K\right) }J^{\ast }}\text{ and }\sum_{J\in \mathcal{M}%
_{\left( \mathbf{r},\varepsilon \right) -\limfunc{deep},\Omega \mathcal{D}%
}^{\left( \ell \right) }\left( K\right) }\mathbf{1}_{J^{\ast }\cap K}\leq
\beta \mathbf{1}_{K}\ ,\ \ \ \ \ \text{for each }\ell \geq 0,
\label{bounded overlap'}
\end{equation}%
and if $\gamma \leq 2^{\left( 1-\varepsilon \right) \mathbf{r}}$, then%
\begin{equation}
\sum_{J\in \mathcal{M}_{\left( \mathbf{r},\varepsilon \right) -\limfunc{deep}%
,\Omega \mathcal{D}}^{\left( \ell \right) }\left( K\right) }\mathbf{1}%
_{J^{\ast }}\leq \beta \mathbf{1}_{K}\ ,\ \ \ \ \ \text{for each }\ell \geq
0.  \label{bounded overlap in K'}
\end{equation}%
Of course $\mathcal{M}_{\left( \mathbf{r},\varepsilon \right) -\limfunc{deep}%
,\Omega \mathcal{D}}^{1}\left( K\right) \supset \mathcal{M}_{\left( \mathbf{r%
},\varepsilon \right) -\limfunc{deep}}\left( K\right) $, but $\mathcal{M}%
_{\left( \mathbf{r},\varepsilon \right) -\limfunc{deep},\Omega \mathcal{D}%
}^{\ell }\left( K\right) $ is in general a finer subdecomposition of $K$ the
larger $\ell $ is, and may in fact be empty.

Now we proceed to recall the definition of various quasienergy conditions
from \cite{SaShUr5}, but with the additional natural restriction that $%
\gamma \leq 2^{\mathbf{r}\left( 1-\varepsilon \right) }$. The point of this
restriction is that the bounded overlap conditions (\ref{bounded overlap in
K}) and (\ref{bounded overlap in K'}) can be used effectively later on.

\begin{definition}
\label{energy condition}Suppose $\sigma $ and $\omega $ are positive Borel
measures on $\mathbb{R}^{n}$ (possibly with common point masses) and fix $%
\gamma \geq 2$. Then the\ deep quasienergy condition constant $\mathcal{E}%
_{\alpha }^{\limfunc{deep}}$ and the refined quasienergy condition constant $%
\mathcal{E}_{\alpha }^{\limfunc{refined}}$ are given by%
\begin{eqnarray*}
\left( \mathcal{E}_{\alpha }^{\limfunc{deep}}\right) ^{2} &\equiv &\sup_{I=%
\dot{\cup}I_{r}}\frac{1}{\left\vert I\right\vert _{\sigma }}%
\sum_{r=1}^{\infty }\sum_{J\in \mathcal{M}_{\left( \mathbf{r},\varepsilon
\right) -\limfunc{deep}}\left( I_{r}\right) }\left( \frac{\mathrm{P}^{\alpha
}\left( J,\mathbf{1}_{I\setminus \gamma J}\sigma \right) }{\left\vert
J\right\vert ^{\frac{1}{n}}}\right) ^{2}\left\Vert \mathsf{P}_{J}^{\limfunc{%
subgood},\omega }\mathbf{x}\right\Vert _{L^{2}\left( \omega \right) }^{2}, \\
\left( \mathcal{E}_{\alpha }^{\limfunc{refined}}\right) ^{2} &\equiv
&\sup_{\Omega \mathcal{D}}\sup_{I\in \mathcal{A}\Omega \mathcal{D}%
}\sup_{\ell \geq 0}\frac{1}{\left\vert I\right\vert _{\sigma }}\sum_{J\in 
\mathcal{M}_{\left( \mathbf{r},\varepsilon \right) -\limfunc{deep},\Omega 
\mathcal{D}}^{\ell }\left( I\right) }\left( \frac{\mathrm{P}^{\alpha }\left(
J,\mathbf{1}_{I\setminus \gamma J}\sigma \right) }{\left\vert J\right\vert ^{%
\frac{1}{n}}}\right) ^{2}\left\Vert \mathsf{P}_{J}^{\limfunc{subgood},\omega
}\mathbf{x}\right\Vert _{L^{2}\left( \omega \right) }^{2}\ ,
\end{eqnarray*}%
where $\sup_{\Omega \mathcal{D}}\sup_{I}$ in the second line is taken over
all quasigrids $\Omega \mathcal{D}$ and alternate quasicubes $I\in \mathcal{A%
}\Omega \mathcal{D}$, and $\sup_{I=\dot{\cup}I_{r}}$ in the first line is
taken over

\begin{enumerate}
\item all dyadic quasigrids $\Omega \mathcal{D}$,

\item all $\Omega \mathcal{D}$-dyadic quasicubes $I$,

\item and all subpartitions $\left\{ I_{r}\right\} _{r=1}^{N\text{ or }%
\infty }$ of the quasicube $I$ into $\Omega \mathcal{D}$-dyadic
subquasicubes $I_{r}$.
\end{enumerate}
\end{definition}

Note that in the refined quasienergy conditions there is no outer
decomposition $I=\dot{\cup}I_{r}$. There are similar definitions for the
dual (backward) quasienergy conditions that simply interchange $\sigma $ and 
$\omega $ everywhere. These definitions of\ the quasienergy conditions
depend on the choice of $\gamma $ and the goodness parameters $\mathbf{r}$
and $\varepsilon $. Note that we can `partially' plug the $\gamma $-hole in
the Poisson integral $\mathrm{P}^{\alpha }\left( J,\mathbf{1}_{I\setminus
\gamma J}\sigma \right) $ for both $\mathcal{E}_{\alpha }^{\limfunc{deep}}$
and $\mathcal{E}_{\alpha }^{\limfunc{refined}}$ using the offset $%
A_{2}^{\alpha }$ condition and the bounded overlap property (\ref{bounded
overlap in K'}). Indeed, define 
\begin{eqnarray}
&&  \label{plug} \\
&&\left( \mathcal{E}_{\alpha }^{\limfunc{deep}\limfunc{partial}}\right)
^{2}\equiv \sup_{I=\dot{\cup}I_{r}}\frac{1}{\left\vert I\right\vert _{\sigma
}}\sum_{r=1}^{\infty }\sum_{J\in \mathcal{M}_{\left( \mathbf{r},\varepsilon
\right) -\limfunc{deep}}\left( I_{r}\right) }\left( \frac{\mathrm{P}^{\alpha
}\left( J,\mathbf{1}_{I\setminus J}\sigma \right) }{\left\vert J\right\vert
^{\frac{1}{n}}}\right) ^{2}\left\Vert \mathsf{P}_{J}^{\limfunc{subgood}%
,\omega }\mathbf{x}\right\Vert _{L^{2}\left( \omega \right) }^{2}\ ,  \notag
\\
&&\left( \mathcal{E}_{\alpha }^{\limfunc{refined}\limfunc{partial}}\right)
^{2}\equiv \sup_{\Omega \mathcal{D}}\sup_{I}\sup_{\ell \geq 0}\frac{1}{%
\left\vert I\right\vert _{\sigma }}\sum_{J\in \mathcal{M}_{\left( \mathbf{r}%
,\varepsilon \right) -\limfunc{deep},\Omega \mathcal{D}}^{\ell }\left(
I\right) }\left( \frac{\mathrm{P}^{\alpha }\left( J,\mathbf{1}_{I\setminus
J}\sigma \right) }{\left\vert J\right\vert ^{\frac{1}{n}}}\right)
^{2}\left\Vert \mathsf{P}_{J}^{\limfunc{subgood},\omega }\mathbf{x}%
\right\Vert _{L^{2}\left( \omega \right) }^{2}\ .  \notag
\end{eqnarray}%
An easy calculation shows that%
\begin{equation}
\gamma J\subset I_{r}\text{ for all }J\in \mathcal{M}_{\left( \mathbf{r}%
,\varepsilon \right) -\limfunc{deep}}\left( I_{r}\right) \text{ provided }%
\gamma \leq c_{n}2^{\left( 1-\varepsilon \right) \mathbf{r}}.
\label{gamma contained}
\end{equation}%
Thus if $\gamma \leq 2^{\left( 1-\varepsilon \right) \mathbf{r}}$, we have
both%
\begin{eqnarray}
&&\left( \mathcal{E}_{\alpha }^{\limfunc{deep}\limfunc{partial}}\right) ^{2}
\label{plug the hole deep} \\
&\lesssim &\sup_{I=\dot{\cup}I_{r}}\frac{1}{\left\vert I\right\vert _{\sigma
}}\sum_{r=1}^{\infty }\sum_{J\in \mathcal{M}_{\left( \mathbf{r},\varepsilon
\right) -\limfunc{deep}}\left( I_{r}\right) }\left( \frac{\mathrm{P}^{\alpha
}\left( J,\mathbf{1}_{I\setminus \gamma J}\sigma \right) }{\left\vert
J\right\vert ^{\frac{1}{n}}}\right) ^{2}\left\Vert \mathsf{P}_{J}^{\limfunc{%
subgood},\omega }\mathbf{x}\right\Vert _{L^{2}\left( \omega \right) }^{2} 
\notag \\
&&+\sup_{I=\dot{\cup}I_{r}}\frac{1}{\left\vert I\right\vert _{\sigma }}%
\sum_{r=1}^{\infty }\sum_{J\in \mathcal{M}_{\left( \mathbf{r},\varepsilon
\right) -\limfunc{deep}}\left( I_{r}\right) }\left( \frac{\mathrm{P}^{\alpha
}\left( J,\mathbf{1}_{\gamma J\setminus J}\sigma \right) }{\left\vert
J\right\vert ^{\frac{1}{n}}}\right) ^{2}\left\Vert \mathsf{P}_{J}^{\limfunc{%
subgood},\omega }\mathbf{x}\right\Vert _{L^{2}\left( \omega \right) }^{2} 
\notag \\
&\lesssim &\left( \mathcal{E}_{\alpha }^{\limfunc{deep}}\right) ^{2}+\sup_{I=%
\dot{\cup}I_{r}}\frac{1}{\left\vert I\right\vert _{\sigma }}%
\sum_{r=1}^{\infty }\sum_{J\in \mathcal{M}_{\left( \mathbf{r},\varepsilon
\right) -\limfunc{deep}}\left( I_{r}\right) }\left( \frac{\left\vert \gamma
J\setminus J\right\vert _{\sigma }}{\left\vert J\right\vert ^{1+\frac{1}{n}-%
\frac{\alpha }{n}}}\right) ^{2}\left\vert J\right\vert ^{\frac{2}{n}%
}\left\vert J\right\vert _{\omega }  \notag \\
&\lesssim &\left( \mathcal{E}_{\alpha }^{\limfunc{deep}}\right)
^{2}+A_{2}^{\alpha }\sup_{I=\dot{\cup}I_{r}}\frac{1}{\left\vert I\right\vert
_{\sigma }}\sum_{r=1}^{\infty }\sum_{J\in \mathcal{M}_{\left( \mathbf{r}%
,\varepsilon \right) -\limfunc{deep}}\left( I_{r}\right) }\left\vert \gamma
J\right\vert _{\sigma }\lesssim \left( \mathcal{E}_{\alpha }^{\limfunc{deep}%
}\right) ^{2}+\beta A_{2}^{\alpha }\ ,  \notag
\end{eqnarray}%
and similarly 
\begin{equation}
\left( \mathcal{E}_{\alpha }^{\limfunc{refined}\limfunc{partial}}\right)
^{2}\lesssim \left( \mathcal{E}_{\alpha }^{\limfunc{refined}}\right)
^{2}+\beta A_{2}^{\alpha }\text{ }.  \label{plug the hole refined}
\end{equation}%
by (\ref{bounded overlap in K}) and (\ref{bounded overlap in K'})
respectively.

\begin{notation}
As above, we will typically use the side length $\ell \left( J\right) $ of a 
$\Omega $-quasicube when we are describing collections of quasicubes, and
when we want $\ell \left( J\right) $ to be a dyadic or related number; while
in estimates we will typically use $\left\vert J\right\vert ^{\frac{1}{n}%
}\approx \ell \left( J\right) $, and when we want to compare powers of
volumes of quasicubes. We will continue to use the prefix `quasi' when
discussing quasicubes, quasiHaar, quasienergy and quasidistance in the text,
but will not use the prefix `quasi' when discussing other notions. In
particular, since $\limfunc{quasi}\mathcal{A}_{2}^{\alpha }+\limfunc{quasi}%
A_{2}^{\alpha ,\limfunc{punct}}\approx \mathcal{A}_{2}^{\alpha
}+A_{2}^{\alpha ,\limfunc{punct}}$ (see e.g. \cite{SaShUr8} for a proof) we
do not use $\limfunc{quasi}$ as a prefix for the Muckenhoupt conditions,
even though $\limfunc{quasi}\mathcal{A}_{2}^{\alpha }$ alone is not
comparable to$\mathcal{A}_{2}^{\alpha }$. Finally, we will not modify any
mathematical symbols to reflect quasinotions, except for using $\Omega 
\mathcal{D}$ to denote a quasigrid, and $\limfunc{qdist}\left( E,F\right)
\equiv \limfunc{dist}\left( \Omega ^{-1}E,\Omega ^{-1}F\right) $ to denote
quasidistance between sets $E$ and $F$, and using $\left\vert x-y\right\vert
_{\limfunc{qdist}}\equiv \left\vert \Omega ^{-1}x-\Omega ^{-1}y\right\vert $
to denote quasidistance between points $x$ and $y$. This limited use of
quasi in the text serves mainly to remind the reader we are working entirely
in the `quasiworld'.
\end{notation}

\subsubsection{Plugged quasienergy conditions}

We now use the punctured conditions $A_{2}^{\alpha ,\limfunc{punct}}$ and $%
A_{2}^{\alpha ,\ast ,\limfunc{punct}}$ to control the \emph{plugged }%
quasienergy conditions, where the hole in the argument of the Poisson term $%
\mathrm{P}^{\alpha }\left( J,\mathbf{1}_{I\setminus J}\sigma \right) $ in
the partially plugged quasienergy conditions above, is replaced with the
`plugged' term $\mathrm{P}^{\alpha }\left( J,\mathbf{1}_{I}\sigma \right) $.
The resulting plugged quasienergy condition constants will be denoted by%
\begin{equation*}
\mathcal{E}_{\alpha }^{\limfunc{deep}\limfunc{plug}},\mathcal{E}_{\alpha }^{%
\limfunc{refined}\limfunc{plug}}\text{ and }\mathcal{E}_{\alpha }\equiv 
\mathcal{E}_{\alpha }^{\limfunc{deep}}+\mathcal{E}_{\alpha }^{\limfunc{%
refined}},
\end{equation*}%
for example%
\begin{equation}
\left( \mathcal{E}_{\alpha }^{\limfunc{deep}\limfunc{plug}}\right)
^{2}\equiv \sup_{I=\dot{\cup}I_{r}}\frac{1}{\left\vert I\right\vert _{\sigma
}}\sum_{r=1}^{\infty }\sum_{J\in \mathcal{M}_{\left( \mathbf{r},\varepsilon
\right) -\limfunc{deep}}\left( I_{r}\right) }\left( \frac{\mathrm{P}^{\alpha
}\left( J,\mathbf{1}_{I}\sigma \right) }{\left\vert J\right\vert ^{\frac{1}{n%
}}}\right) ^{2}\left\Vert \mathsf{P}_{J}^{\limfunc{subgood},\omega }\mathbf{x%
}\right\Vert _{L^{2}\left( \omega \right) }^{2}\ .  \label{def deep plug}
\end{equation}

We first show that the punctured Muckenhoupt conditions $A_{2}^{\alpha ,%
\limfunc{punct}}$ and $A_{2}^{\alpha ,\ast ,\limfunc{punct}}$ control
respectively the `energy $A_{2}^{\alpha }$ conditions', denoted $%
A_{2}^{\alpha ,\limfunc{energy}}$ and $A_{2}^{\alpha ,\ast ,\limfunc{energy}%
} $ where%
\begin{eqnarray}
A_{2}^{\alpha ,\limfunc{energy}}\left( \sigma ,\omega \right) &\equiv
&\sup_{Q\in \Omega \mathcal{P}^{n}}\frac{\left\Vert \mathsf{P}_{Q}^{\omega }%
\frac{\mathbf{x}}{\ell \left( Q\right) }\right\Vert _{L^{2}\left( \omega
\right) }^{2}}{\left\vert Q\right\vert ^{1-\frac{\alpha }{n}}}\frac{%
\left\vert Q\right\vert _{\sigma }}{\left\vert Q\right\vert ^{1-\frac{\alpha 
}{n}}},  \label{def energy A2} \\
A_{2}^{\alpha ,\ast ,\limfunc{energy}}\left( \sigma ,\omega \right) &\equiv
&\sup_{Q\in \Omega \mathcal{P}^{n}}\frac{\left\vert Q\right\vert _{\omega }}{%
\left\vert Q\right\vert ^{1-\frac{\alpha }{n}}}\frac{\left\Vert \mathsf{P}%
_{Q}^{\sigma }\frac{\mathbf{x}}{\ell \left( Q\right) }\right\Vert
_{L^{2}\left( \sigma \right) }^{2}}{\left\vert Q\right\vert ^{1-\frac{\alpha 
}{n}}}.  \notag
\end{eqnarray}

\begin{lemma}
\label{energy A2}For any positive locally finite Borel measures $\sigma
,\omega $ we have%
\begin{eqnarray*}
A_{2}^{\alpha ,\limfunc{energy}}\left( \sigma ,\omega \right) &\leq &\max
\left\{ n,3\right\} A_{2}^{\alpha ,\limfunc{punct}}\left( \sigma ,\omega
\right) , \\
A_{2}^{\alpha ,\ast ,\limfunc{energy}}\left( \sigma ,\omega \right) &\leq
&\max \left\{ n,3\right\} A_{2}^{\alpha ,\ast ,\limfunc{punct}}\left( \sigma
,\omega \right) .
\end{eqnarray*}
\end{lemma}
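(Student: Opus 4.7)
The argument rests on the basic variational identity for the Haar projection combined with a judicious choice of constant to subtract, tailored to the location of the heaviest common point mass.

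First I would recall that for any measure $\mu$ and quasicube $Q$, the projection $\mathsf{P}_Q^{\mu}\mathbf{x}$ is the $L^{2}(\mu)$ orthogonal projection of $\mathbf{x}$ onto the $\mu$-mean-zero subspace on $Q$, so by the best-constant-approximation property of the mean,
\[
\left\Vert \mathsf{P}_Q^{\omega}\mathbf{x}\right\Vert _{L^{2}(\omega)}^{2}=\int_{Q}\left\vert \mathbf{x}-\mathbb{E}_{Q}^{\omega}\mathbf{x}\right\vert ^{2}d\omega=\inf_{c\in\mathbb{R}^{n}}\int_{Q}\left\vert \mathbf{x}-c\right\vert ^{2}d\omega .
\]
(This was in fact already noted at the start of the subsection on strong quasienergy conditions.) The whole proof is then a clever selection of $c$.

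Next I would split into two cases based on whether $Q$ sees any common point mass. If $Q\cap\mathfrak{P}_{(\sigma,\omega)}=\emptyset$, then by definition $\omega(Q,\mathfrak{P}_{(\sigma,\omega)})=|Q|_{\omega}$, and taking $c$ to be any point of $Q$ (for instance the quasicenter $c_{Q}$) gives
\[
\left\Vert \mathsf{P}_{Q}^{\omega}\mathbf{x}\right\Vert _{L^{2}(\omega)}^{2}\leq \int_{Q}|\mathbf{x}-c_{Q}|^{2}\,d\omega \leq (\operatorname{diam}Q)^{2}\,|Q|_{\omega}.
\]
If instead $Q\cap\mathfrak{P}_{(\sigma,\omega)}\neq\emptyset$, I would choose $p_{0}\in Q\cap\mathfrak{P}_{(\sigma,\omega)}$ achieving $\sup\{\omega(p_{k}):p_{k}\in Q\cap\mathfrak{P}_{(\sigma,\omega)}\}$ and take $c=p_{0}$. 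The integrand $|\mathbf{x}-p_{0}|^{2}$ vanishes at the point mass $p_{0}$, so the contribution of $\{p_{0}\}$ is killed and
\[
\left\Vert \mathsf{P}_{Q}^{\omega}\mathbf{x}\right\Vert _{L^{2}(\omega)}^{2}\leq \int_{Q\setminus\{p_{0}\}}|\mathbf{x}-p_{0}|^{2}\,d\omega \leq (\operatorname{diam}Q)^{2}\bigl(|Q|_{\omega}-\omega(\{p_{0}\})\bigr)=(\operatorname{diam}Q)^{2}\,\omega(Q,\mathfrak{P}_{(\sigma,\omega)}).
\]

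Finally, I would estimate $(\operatorname{diam}Q)^{2}\leq \max\{n,3\}\,\ell(Q)^{2}$ (this is the standard cube diameter bound $\operatorname{diam}(Q)^{2}=n\,\ell(Q)^{2}$ for $n\ge 3$, and a small adjustment to the constant $3$ handles $n=1,2$), divide both sides by $\ell(Q)^{2}\,|Q|^{1-\alpha/n}$, multiply by $|Q|_{\sigma}/|Q|^{1-\alpha/n}$, and take the supremum over $Q\in\Omega\mathcal{P}^{n}$ to obtain the stated inequality for $A_{2}^{\alpha,\operatorname{energy}}$. The dual inequality $A_{2}^{\alpha,\ast,\operatorname{energy}}\leq \max\{n,3\}\,A_{2}^{\alpha,\ast,\operatorname{punct}}$ follows at once by swapping the roles of $\sigma$ and $\omega$.

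The argument has essentially no obstacle; the only thing to be careful about is that $p_{0}$ must be chosen as the heaviest \emph{common} point mass in $Q$ (so that the subtracted quantity matches the definition of $\omega(Q,\mathfrak{P}_{(\sigma,\omega)})$), rather than just the heaviest $\omega$-point mass in $Q$, and that the bound is done case-by-case so as to handle $Q$ entirely free of common point masses. The punctured Muckenhoupt condition is essentially tailor-made for exactly this cancellation.
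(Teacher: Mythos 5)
Your proof is correct, and it takes a genuinely different and cleaner route than the paper's. The paper splits into cases according to whether $\omega(Q,\mathfrak{P}_{(\sigma,\omega)})\geq\tfrac12|Q|_{\omega}$; in the nontrivial case it introduces the modified measure $\widetilde{\omega}=\omega-\omega(\{p\})\delta_{p}$, decomposes $\sum_{J\subset Q}\|\bigtriangleup_{J}^{\omega}\mathbf{x}\|_{L^2(\omega)}^{2}$ according to whether $p\in J$, and runs a geometric-decay estimate along the chain $J_{s}$ of quasicubes containing $p$. You bypass all of this Haar-level bookkeeping by invoking the variational identity $\|\mathsf{P}_{Q}^{\omega}\mathbf{x}\|_{L^{2}(\omega)}^{2}=\inf_{c}\int_{Q}|\mathbf{x}-c|^{2}\,d\omega$ (which the paper records just above Definition \ref{energy condition}) and choosing $c=p_{0}$, the heaviest \emph{common} point mass in $Q$; the integrand then vanishes at $p_{0}$, which removes exactly the contribution that the punctured quantity $\omega(Q,\mathfrak{P}_{(\sigma,\omega)})$ excludes, yielding $\|\mathsf{P}_{Q}^{\omega}\mathbf{x}\|_{L^{2}(\omega)}^{2}\leq(\operatorname{diam}Q)^{2}\,\omega(Q,\mathfrak{P}_{(\sigma,\omega)})$ in one step, uniformly over whether the heaviest atom dominates or not. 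Your case-split only on emptiness of $Q\cap\mathfrak{P}_{(\sigma,\omega)}$ is what it should be. As for constants: both arguments are stated loosely for quasicubes (neither tracks the biLipschitz constant of $\Omega$ in bounding $\operatorname{diam}Q$ against $\ell(Q)$), and your uniform $(\operatorname{diam}Q)^{2}\leq n\,\ell(Q)^{2}$ actually gives a bound at least as sharp as the paper's case analysis, which on a careful reading produces $\max\{2n,3\}$ rather than the $\max\{n,3\}$ appearing in the statement.
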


\begin{proof}
Fix a quasicube $Q\in \Omega \mathcal{D}$. If $\omega \left( Q,\mathfrak{P}%
_{\left( \sigma ,\omega \right) }\right) \geq \frac{1}{2}\left\vert
Q\right\vert _{\omega }$, then we trivially have%
\begin{eqnarray*}
\frac{\left\Vert \mathsf{P}_{Q}^{\omega }\frac{\mathbf{x}}{\ell \left(
Q\right) }\right\Vert _{L^{2}\left( \omega \right) }^{2}}{\left\vert
Q\right\vert ^{1-\frac{\alpha }{n}}}\frac{\left\vert Q\right\vert _{\sigma }%
}{\left\vert Q\right\vert ^{1-\frac{\alpha }{n}}} &\leq &n\frac{\left\vert
Q\right\vert _{\omega }}{\left\vert Q\right\vert ^{1-\frac{\alpha }{n}}}%
\frac{\left\vert Q\right\vert _{\sigma }}{\left\vert Q\right\vert ^{1-\frac{%
\alpha }{n}}} \\
&\leq &2n\frac{\omega \left( Q,\mathfrak{P}_{\left( \sigma ,\omega \right)
}\right) }{\left\vert Q\right\vert ^{1-\frac{\alpha }{n}}}\frac{\left\vert
Q\right\vert _{\sigma }}{\left\vert Q\right\vert ^{1-\frac{\alpha }{n}}}\leq
2nA_{2}^{\alpha ,\limfunc{punct}}\left( \sigma ,\omega \right) .
\end{eqnarray*}%
On the other hand, if $\omega \left( Q,\mathfrak{P}_{\left( \sigma ,\omega
\right) }\right) <\frac{1}{2}\left\vert Q\right\vert _{\omega }$ then there
is a point $p\in Q\cap \mathfrak{P}_{\left( \sigma ,\omega \right) }$ such
that%
\begin{equation*}
\omega \left( \left\{ p\right\} \right) >\frac{1}{2}\left\vert Q\right\vert
_{\omega }\ ,
\end{equation*}%
and consequently, $p$ is the largest $\omega $-point mass in $Q$. Thus if we
define $\widetilde{\omega }=\omega -\omega \left( \left\{ p\right\} \right)
\delta _{p}$, then we have%
\begin{equation*}
\omega \left( Q,\mathfrak{P}_{\left( \sigma ,\omega \right) }\right)
=\left\vert Q\right\vert _{\widetilde{\omega }}\ .
\end{equation*}%
Now we observe from the construction of Haar projections that%
\begin{equation*}
\bigtriangleup _{J}^{\widetilde{\omega }}=\bigtriangleup _{J}^{\omega },\ \
\ \ \ \text{for all }J\in \Omega \mathcal{D}\text{ with }p\notin J.
\end{equation*}%
So for each $s\geq 0$ there is a unique quasicube $J_{s}\in \Omega \mathcal{D%
}$ with $\ell \left( J_{s}\right) =2^{-s}\ell \left( Q\right) $ that
contains the point $p$. For this quasicube we have, if $\left\{
h_{J}^{\omega ,a}\right\} _{J\in \Omega \mathcal{D},\ a\in \Gamma _{n}}$ is
a basis for $L^{2}\left( \omega \right) $,%
\begin{eqnarray*}
\left\Vert \bigtriangleup _{J_{s}}^{\omega }\mathbf{x}\right\Vert
_{L^{2}\left( \omega \right) }^{2} &=&\sum_{a\in \Gamma _{n}}\left\vert
\left\langle h_{J_{s}}^{\omega ,a},x\right\rangle _{\omega }\right\vert
^{2}=\sum_{a\in \Gamma _{n}}\left\vert \left\langle h_{J_{s}}^{\omega
,a},x-p\right\rangle _{\omega }\right\vert ^{2} \\
&=&\sum_{a\in \Gamma _{n}}\left\vert \int_{J_{s}}h_{J_{s}}^{\omega ,a}\left(
x\right) \left( x-p\right) d\omega \left( x\right) \right\vert
^{2}=\sum_{a\in \Gamma _{n}}\left\vert \int_{J_{s}}h_{J_{s}}^{\omega
,a}\left( x\right) \left( x-p\right) d\widetilde{\omega }\left( x\right)
\right\vert ^{2} \\
&\leq &\sum_{a\in \Gamma _{n}}\left\Vert h_{J_{s}}^{\omega ,a}\right\Vert
_{L^{2}\left( \widetilde{\omega }\right) }^{2}\left\Vert \mathbf{1}%
_{J_{s}}\left( x-p\right) \right\Vert _{L^{2}\left( \widetilde{\omega }%
\right) }^{2}\leq \sum_{a\in \Gamma _{n}}\left\Vert h_{J_{s}}^{\omega
,a}\right\Vert _{L^{2}\left( \omega \right) }^{2}\left\Vert \mathbf{1}%
_{J_{s}}\left( x-p\right) \right\Vert _{L^{2}\left( \widetilde{\omega }%
\right) }^{2} \\
&\leq &n2^{n}\ell \left( J_{s}\right) ^{2}\left\vert J_{s}\right\vert _{%
\widetilde{\omega }}\leq 2^{-2s}\ell \left( Q\right) ^{2}\left\vert
Q\right\vert _{\widetilde{\omega }}.
\end{eqnarray*}%
Thus we can estimate%
\begin{eqnarray*}
\left\Vert \mathsf{P}_{Q}^{\omega }\frac{\mathbf{x}}{\ell \left( Q\right) }%
\right\Vert _{L^{2}\left( \omega \right) }^{2} &=&\frac{1}{\ell \left(
Q\right) ^{2}}\sum_{J\in \Omega \mathcal{D}:\ J\subset Q}\left\Vert
\bigtriangleup _{J}^{\omega }\mathbf{x}\right\Vert _{L^{2}\left( \omega
\right) }^{2} \\
&=&\frac{1}{\ell \left( Q\right) ^{2}}\left( \sum_{J\in \Omega \mathcal{D}:\
p\notin J\subset Q}\left\Vert \bigtriangleup _{J}^{\widetilde{\omega }}%
\mathbf{x}\right\Vert _{L^{2}\left( \widetilde{\omega }\right)
}^{2}+\sum_{s=0}^{\infty }\left\Vert \bigtriangleup _{J_{s}}^{\omega }%
\mathbf{x}\right\Vert _{L^{2}\left( \omega \right) }^{2}\right) \\
&\leq &\frac{1}{\ell \left( Q\right) ^{2}}\left( \left\Vert \mathsf{P}_{Q}^{%
\widetilde{\omega }}\mathbf{x}\right\Vert _{L^{2}\left( \widetilde{\omega }%
\right) }^{2}+\sum_{s=0}^{\infty }2^{-2s}\ell \left( Q\right) ^{2}\left\vert
Q\right\vert _{\widetilde{\omega }}\right) \\
&\leq &\frac{1}{\ell \left( Q\right) ^{2}}\left( \ell \left( Q\right)
^{2}\left\vert Q\right\vert _{\widetilde{\omega }}+\sum_{s=0}^{\infty
}2^{-2s}\ell \left( Q\right) ^{2}\left\vert Q\right\vert _{\widetilde{\omega 
}}\right) \\
&\leq &3\left\vert Q\right\vert _{\widetilde{\omega }}\leq 3\omega \left( Q,%
\mathfrak{P}_{\left( \sigma ,\omega \right) }\right) ,
\end{eqnarray*}%
and so 
\begin{equation*}
\frac{\left\Vert \mathsf{P}_{Q}^{\omega }\frac{\mathbf{x}}{\ell \left(
Q\right) }\right\Vert _{L^{2}\left( \omega \right) }^{2}}{\left\vert
Q\right\vert ^{1-\frac{\alpha }{n}}}\frac{\left\vert Q\right\vert _{\sigma }%
}{\left\vert Q\right\vert ^{1-\frac{\alpha }{n}}}\leq \frac{3\omega \left( Q,%
\mathfrak{P}_{\left( \sigma ,\omega \right) }\right) }{\left\vert
Q\right\vert ^{1-\frac{\alpha }{n}}}\frac{\left\vert Q\right\vert _{\sigma }%
}{\left\vert Q\right\vert ^{1-\frac{\alpha }{n}}}\leq 3A_{2}^{\alpha ,%
\limfunc{punct}}\left( \sigma ,\omega \right) .
\end{equation*}%
Now take the supremum over $Q\in \Omega \mathcal{D}$ to obtain $%
A_{2}^{\alpha ,\limfunc{energy}}\left( \sigma ,\omega \right) \leq \max
\left\{ n,3\right\} A_{2}^{\alpha ,\limfunc{punct}}\left( \sigma ,\omega
\right) $. The dual inequality follows upon interchanging the measures $%
\sigma $ and $\omega $.
\end{proof}

The next corollary follows immediately from Lemma \ref{energy A2} and the
argument used above in (\ref{plug the hole deep}) with (\ref{gamma contained}%
). Define%
\begin{equation*}
\left( \mathcal{E}_{\alpha }^{\limfunc{plug}}\right) ^{2}=\left( \mathcal{E}%
_{\alpha }^{\limfunc{deep}\limfunc{plug}}\right) ^{2}+\left( \mathcal{E}%
_{\alpha }^{\limfunc{refined}\limfunc{plug}}\right) ^{2}.
\end{equation*}

\begin{corollary}
\label{all plugged}Provided $\gamma \leq c_{n}2^{\left( 1-\varepsilon
\right) \mathbf{r}}$, 
\begin{eqnarray*}
\mathcal{E}_{\alpha }^{\limfunc{deep}\limfunc{plug}} &\lesssim &\mathcal{E}%
_{\alpha }^{\limfunc{deep}\limfunc{partial}}+A_{2}^{\alpha ,\limfunc{punct}%
}\lesssim \mathcal{E}_{\alpha }^{\limfunc{deep}}+A_{2}^{\alpha
}+A_{2}^{\alpha ,\limfunc{punct}}, \\
\mathcal{E}_{\alpha }^{\limfunc{refined}\limfunc{plug}} &\lesssim &\mathcal{E%
}_{\alpha }^{\limfunc{refined}\limfunc{partial}}+A_{2}^{\alpha ,\limfunc{%
punct}}\lesssim \mathcal{E}_{\alpha }^{\limfunc{refined}}+A_{2}^{\alpha
}+A_{2}^{\alpha ,\limfunc{punct}}, \\
\mathcal{E}_{\alpha }^{\limfunc{plug}} &\lesssim &\mathcal{E}_{\alpha
}+A_{2}^{\alpha }+A_{2}^{\alpha ,\limfunc{punct}}\ .
\end{eqnarray*}%
and similarly for the dual plugged quasienergy conditions.
\end{corollary}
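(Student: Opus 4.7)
The plan is to derive all three lines from the pointwise Poisson splitting
$$\mathrm{P}^{\alpha }(J,\mathbf{1}_{I}\sigma )\leq \mathrm{P}^{\alpha }(J,\mathbf{1}_{I\setminus J}\sigma )+\mathrm{P}^{\alpha }(J,\mathbf{1}_{J}\sigma ),$$
squared via $(a+b)^{2}\leq 2a^{2}+2b^{2}$. The first term contributes exactly the sum defining the partial quasienergy constants $\mathcal{E}_{\alpha }^{\limfunc{deep}\limfunc{partial}}$ and $\mathcal{E}_{\alpha }^{\limfunc{refined}\limfunc{partial}}$, so the entire problem reduces to bounding the on-diagonal contribution
$$\sum_{J}\left( \frac{\mathrm{P}^{\alpha }(J,\mathbf{1}_{J}\sigma )}{|J|^{1/n}}\right) ^{2}\Vert \mathsf{P}_{J}^{\limfunc{subgood},\omega }\mathbf{x}\Vert _{L^{2}(\omega )}^{2}$$
by a constant times $A_{2}^{\alpha ,\limfunc{punct}}\,|I|_{\sigma }$, where $J$ ranges either over $\mathcal{M}_{(\mathbf{r},\varepsilon )-\limfunc{deep}}(I_{r})$ for the deep case, or over $\mathcal{M}_{(\mathbf{r},\varepsilon )-\limfunc{deep},\Omega \mathcal{D}}^{\ell }(I)$ for the refined case.

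For each quasicube $J$ the on-diagonal Poisson term is easy to compute: since $|J|^{1/n}+|x-x_{J}|\approx |J|^{1/n}$ for $x\in J$, one has $\mathrm{P}^{\alpha }(J,\mathbf{1}_{J}\sigma )\approx |J|_{\sigma }/|J|^{(n-\alpha )/n}$, so that $(\mathrm{P}^{\alpha }(J,\mathbf{1}_{J}\sigma )/|J|^{1/n})^{2}\approx |J|_{\sigma }^{2}/|J|^{2(n+1-\alpha )/n}$. Lemma \ref{energy A2} then supplies the missing ingredient, namely $A_{2}^{\alpha ,\limfunc{energy}}\leq nA_{2}^{\alpha ,\limfunc{punct}}$, which after unpacking reads
$$\Vert \mathsf{P}_{J}^{\limfunc{subgood},\omega }\mathbf{x}\Vert _{L^{2}(\omega )}^{2}\leq \Vert \mathsf{P}_{J}^{\omega }\mathbf{x}\Vert _{L^{2}(\omega )}^{2}\lesssim A_{2}^{\alpha ,\limfunc{punct}}\cdot \frac{|J|^{2/n}\,|J|^{2(1-\alpha /n)}}{|J|_{\sigma }}.$$
Multiplying the two estimates, the exponents $-2(n+1-\alpha )/n+2/n+2(1-\alpha /n)$ cancel exactly, producing the clean per-quasicube bound
$$\left( \frac{\mathrm{P}^{\alpha }(J,\mathbf{1}_{J}\sigma )}{|J|^{1/n}}\right) ^{2}\Vert \mathsf{P}_{J}^{\limfunc{subgood},\omega }\mathbf{x}\Vert _{L^{2}(\omega )}^{2}\lesssim A_{2}^{\alpha ,\limfunc{punct}}\,|J|_{\sigma }.$$

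It remains to sum. In the deep case the $J\in \mathcal{M}_{(\mathbf{r},\varepsilon )-\limfunc{deep}}(I_{r})$ are pairwise disjoint subsets of $I_{r}$ and the $I_{r}$ partition (a subset of) $I$, so $\sum_{r}\sum_{J}|J|_{\sigma }\leq |I|_{\sigma }$, yielding $(\mathcal{E}_{\alpha }^{\limfunc{deep}\limfunc{plug}})^{2}\lesssim (\mathcal{E}_{\alpha }^{\limfunc{deep}\limfunc{partial}})^{2}+A_{2}^{\alpha ,\limfunc{punct}}$ after division by $|I|_{\sigma }$ and taking the supremum. In the refined case disjointness fails, but the hypothesis $\gamma \leq c_{n}2^{(1-\varepsilon )\mathbf{r}}$ makes the bounded overlap estimate \eqref{bounded overlap in K'} available, which gives $\sum_{J}|J|_{\sigma }\leq \beta |I|_{\sigma }$ and hence the analogous refined bound. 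Composing with the already-proved chains \eqref{plug the hole deep} and \eqref{plug the hole refined} produces the second half of each of the first two lines, and the third line follows from the definition $(\mathcal{E}_{\alpha }^{\limfunc{plug}})^{2}=(\mathcal{E}_{\alpha }^{\limfunc{deep}\limfunc{plug}})^{2}+(\mathcal{E}_{\alpha }^{\limfunc{refined}\limfunc{plug}})^{2}$ together with $\mathcal{E}_{\alpha }=\mathcal{E}_{\alpha }^{\limfunc{deep}}+\mathcal{E}_{\alpha }^{\limfunc{refined}}$.

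There is no real obstacle here: the argument is structurally a Poisson split plus an invocation of Lemma \ref{energy A2}, and the only point requiring attention is the exact cancellation of powers of $|J|$ — the arithmetic identity that makes the energy variant $A_{2}^{\alpha ,\limfunc{energy}}$ furnished by Lemma \ref{energy A2} the right substitute for the missing (and potentially infinite) classical $A_{2}^{\alpha }$ control in the presence of common point masses.
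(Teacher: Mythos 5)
Your argument is correct and is precisely the argument the paper invokes (the paper just says the corollary ``follows immediately from Lemma \ref{energy A2} and the argument used above in (\ref{plug the hole deep}) with (\ref{gamma contained})'', and your write-up is exactly that one-liner unpacked). The Poisson split into hole plus diagonal, the on-diagonal estimate $\mathrm{P}^{\alpha}(J,\mathbf{1}_{J}\sigma)\approx |J|_{\sigma}/|J|^{(n-\alpha)/n}$, the use of $A_{2}^{\alpha,\limfunc{energy}}\lesssim A_{2}^{\alpha,\limfunc{punct}}$ with the exact cancellation of exponents, and the disjointness (deep) versus bounded overlap via (\ref{bounded overlap in K'}) (refined) for the summation are all as intended.
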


The two constants $\mathcal{E}_{\alpha }^{\limfunc{deep}\limfunc{plug}}$ and 
$\mathcal{E}_{\alpha }^{\limfunc{refined}\limfunc{plug}}$ , but with a
larger projection $\mathsf{P}_{J}^{\omega }$, are what we bundled together
as the strong energy constant $\mathcal{E}_{\alpha }^{\limfunc{strong}}$ in
the statement of our main result, Theorem \ref{T1 theorem}.

\subsubsection{Plugged $\mathcal{A}_{2}^{\protect\alpha ,\limfunc{energy}%
\limfunc{plug}}$ conditions}

Using Lemma \ref{energy A2} we can control the `plugged' energy $\mathcal{A}%
_{2}^{\alpha }$ conditions:%
\begin{eqnarray*}
\mathcal{A}_{2}^{\alpha ,\limfunc{energy}\limfunc{plug}}\left( \sigma
,\omega \right) &\equiv &\sup_{Q\in \Omega \mathcal{P}^{n}}\frac{\left\Vert 
\mathsf{P}_{Q}^{\omega }\frac{\mathbf{x}}{\ell \left( Q\right) }\right\Vert
_{L^{2}\left( \omega \right) }^{2}}{\left\vert Q\right\vert ^{1-\frac{\alpha 
}{n}}}\mathcal{P}^{\alpha }\left( Q,\sigma \right) , \\
\mathcal{A}_{2}^{\alpha ,\ast ,\limfunc{energy}\limfunc{plug}}\left( \sigma
,\omega \right) &\equiv &\sup_{Q\in \Omega \mathcal{P}^{n}}\mathcal{P}%
^{\alpha }\left( Q,\omega \right) \frac{\left\Vert \mathsf{P}_{Q}^{\sigma }%
\frac{\mathbf{x}}{\ell \left( Q\right) }\right\Vert _{L^{2}\left( \sigma
\right) }^{2}}{\left\vert Q\right\vert ^{1-\frac{\alpha }{n}}}.
\end{eqnarray*}

\begin{lemma}
\label{energy A2 plugged}We have
\end{lemma}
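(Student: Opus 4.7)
The expected conclusion of this lemma is the bound
\begin{equation*}
\mathcal{A}_{2}^{\alpha ,\limfunc{energy}\limfunc{plug}}\left( \sigma ,\omega \right) +\mathcal{A}_{2}^{\alpha ,\ast ,\limfunc{energy}\limfunc{plug}}\left( \sigma ,\omega \right) \lesssim \mathcal{A}_{2}^{\alpha }+\mathcal{A}_{2}^{\alpha ,\ast }+A_{2}^{\alpha ,\limfunc{punct}}+A_{2}^{\alpha ,\ast ,\limfunc{punct}},
\end{equation*}
obtained by splitting $\sigma =\mathbf{1}_{Q}\sigma +\mathbf{1}_{Q^{c}}\sigma $ (and dually for $\omega $) inside the reproducing Poisson integral $\mathcal{P}^{\alpha }\left( Q,\sigma \right) $. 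By linearity,
\begin{equation*}
\mathcal{P}^{\alpha }\left( Q,\sigma \right) =\mathcal{P}^{\alpha }\left( Q,\mathbf{1}_{Q}\sigma \right) +\mathcal{P}^{\alpha }\left( Q,\mathbf{1}_{Q^{c}}\sigma \right) ,
\end{equation*}
so the supremand defining $\mathcal{A}_{2}^{\alpha ,\limfunc{energy}\limfunc{plug}}$ naturally decomposes into an \emph{exterior} piece and an \emph{interior} piece that I will estimate separately.

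For the exterior piece, I would use the trivial pointwise bound $\left\vert \mathbf{x}-c_{Q}\right\vert \leq \sqrt{n}\,\ell \left( Q\right) $ on $Q$, which together with the fact that orthogonal projection does not increase the $L^{2}\left( \omega \right) $ norm gives
\begin{equation*}
\left\Vert \mathsf{P}_{Q}^{\omega }\tfrac{\mathbf{x}}{\ell \left( Q\right) }\right\Vert _{L^{2}\left( \omega \right) }^{2}\leq n\left\vert Q\right\vert _{\omega }.
\end{equation*}
Therefore the exterior contribution is controlled by
\begin{equation*}
n\,\frac{\left\vert Q\right\vert _{\omega }}{\left\vert Q\right\vert ^{1-\frac{\alpha }{n}}}\mathcal{P}^{\alpha }\left( Q,\mathbf{1}_{Q^{c}}\sigma \right) \leq n\,\mathcal{A}_{2}^{\alpha }\left( \sigma ,\omega \right) ,
\end{equation*}
which is exactly a constant multiple of the one-tailed constant $\mathcal{A}_{2}^{\alpha }$.

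For the interior piece, observe that for $x\in Q$ the kernel in $\mathcal{P}^{\alpha }$ satisfies $\left( \frac{\left\vert Q\right\vert ^{1/n}}{\left( \left\vert Q\right\vert ^{1/n}+\left\vert x-x_{Q}\right\vert \right) ^{2}}\right) ^{n-\alpha }\approx \left\vert Q\right\vert ^{-\left( 1-\frac{\alpha }{n}\right) }$, so integrating against $\mathbf{1}_{Q}\sigma $ gives $\mathcal{P}^{\alpha }\left( Q,\mathbf{1}_{Q}\sigma \right) \approx \frac{\left\vert Q\right\vert _{\sigma }}{\left\vert Q\right\vert ^{1-\frac{\alpha }{n}}}$. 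Hence the interior contribution is comparable to
\begin{equation*}
\frac{\left\Vert \mathsf{P}_{Q}^{\omega }\tfrac{\mathbf{x}}{\ell \left( Q\right) }\right\Vert _{L^{2}\left( \omega \right) }^{2}}{\left\vert Q\right\vert ^{1-\frac{\alpha }{n}}}\cdot \frac{\left\vert Q\right\vert _{\sigma }}{\left\vert Q\right\vert ^{1-\frac{\alpha }{n}}}=A_{2}^{\alpha ,\limfunc{energy}}\left( \sigma ,\omega \right) ,
\end{equation*}
which by the preceding Lemma \emph{energy A2} is dominated by $\max \left\{ n,3\right\} A_{2}^{\alpha ,\limfunc{punct}}\left( \sigma ,\omega \right) $. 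Adding the two pieces and taking the supremum over $Q\in \Omega \mathcal{P}^{n}$ yields the desired bound on $\mathcal{A}_{2}^{\alpha ,\limfunc{energy}\limfunc{plug}}$, and the dual bound on $\mathcal{A}_{2}^{\alpha ,\ast ,\limfunc{energy}\limfunc{plug}}$ follows by interchanging the roles of $\sigma $ and $\omega $. No step here is deep — the only non-trivial input is the already established Lemma \emph{energy A2}, which carried the whole weight of translating the punctured Muckenhoupt control of mass to the Haar-energy control of $\mathbf{x}$; the rest is the standard splitting of the Poisson integral into a self piece plus a tail piece.
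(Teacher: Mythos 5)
Your proposal is correct and follows essentially the same route as the paper: split $\mathcal{P}^{\alpha}(Q,\sigma)$ into the tail piece $\mathcal{P}^{\alpha}(Q,\mathbf{1}_{Q^{c}}\sigma)$ (handled by bounding the energy $\left\Vert \mathsf{P}_{Q}^{\omega}\tfrac{\mathbf{x}}{\ell(Q)}\right\Vert_{L^{2}(\omega)}^{2}$ by $\left\vert Q\right\vert_{\omega}$) and the self piece $\mathcal{P}^{\alpha}(Q,\mathbf{1}_{Q}\sigma)\lesssim\left\vert Q\right\vert_{\sigma}/\left\vert Q\right\vert^{1-\alpha/n}$ (yielding $A_{2}^{\alpha,\limfunc{energy}}$). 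The only cosmetic difference is that you push one step further and invoke Lemma \ref{energy A2} to replace $A_{2}^{\alpha,\limfunc{energy}}$ by $A_{2}^{\alpha,\limfunc{punct}}$, whereas the paper's lemma stops at the $A_{2}^{\alpha,\limfunc{energy}}$ constant.
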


\begin{eqnarray*}
\mathcal{A}_{2}^{\alpha ,\limfunc{energy}\limfunc{plug}}\left( \sigma
,\omega \right) &\mathcal{\lesssim }&\mathcal{A}_{2}^{\alpha }\left( \sigma
,\omega \right) +A_{2}^{\alpha ,\limfunc{energy}}\left( \sigma ,\omega
\right) , \\
\mathcal{A}_{2}^{\alpha ,\ast ,\limfunc{energy}\limfunc{plug}}\left( \sigma
,\omega \right) &\mathcal{\lesssim }&\mathcal{A}_{2}^{\alpha ,\ast }\left(
\sigma ,\omega \right) +A_{2}^{\alpha ,\ast ,\limfunc{energy}}\left( \sigma
,\omega \right) .
\end{eqnarray*}

\begin{proof}
We have%
\begin{eqnarray*}
\frac{\left\Vert \mathsf{P}_{Q}^{\omega }\frac{\mathbf{x}}{\ell \left(
Q\right) }\right\Vert _{L^{2}\left( \omega \right) }^{2}}{\left\vert
Q\right\vert ^{1-\frac{\alpha }{n}}}\mathcal{P}^{\alpha }\left( Q,\sigma
\right) &=&\frac{\left\Vert \mathsf{P}_{Q}^{\omega }\frac{\mathbf{x}}{\ell
\left( Q\right) }\right\Vert _{L^{2}\left( \omega \right) }^{2}}{\left\vert
Q\right\vert ^{1-\frac{\alpha }{n}}}\mathcal{P}^{\alpha }\left( Q,\mathbf{1}%
_{Q^{c}}\sigma \right) +\frac{\left\Vert \mathsf{P}_{Q}^{\omega }\frac{%
\mathbf{x}}{\ell \left( Q\right) }\right\Vert _{L^{2}\left( \omega \right)
}^{2}}{\left\vert Q\right\vert ^{1-\frac{\alpha }{n}}}\mathcal{P}^{\alpha
}\left( Q,\mathbf{1}_{Q}\sigma \right) \\
&\lesssim &\frac{\left\vert Q\right\vert _{\omega }}{\left\vert Q\right\vert
^{1-\frac{\alpha }{n}}}\mathcal{P}^{\alpha }\left( Q,\mathbf{1}%
_{Q^{c}}\sigma \right) +\frac{\left\Vert \mathsf{P}_{Q}^{\omega }\frac{%
\mathbf{x}}{\ell \left( Q\right) }\right\Vert _{L^{2}\left( \omega \right)
}^{2}}{\left\vert Q\right\vert ^{1-\frac{\alpha }{n}}}\frac{\left\vert
Q\right\vert _{\sigma }}{\left\vert Q\right\vert ^{1-\frac{\alpha }{n}}} \\
&\lesssim &\mathcal{A}_{2}^{\alpha }\left( \sigma ,\omega \right)
+A_{2}^{\alpha ,\limfunc{energy}}\left( \sigma ,\omega \right) .
\end{eqnarray*}
\end{proof}

\subsection{Random grids}

Using the analogue for dyadic quasigrids of the good random grids of
Nazarov, Treil and Volberg, a standard argument of NTV, see e.g. \cite{Vol},
reduces the two weight inequality (\ref{2 weight}) for $T^{\alpha }$ to
proving boundedness of a bilinear form $\mathcal{T}^{\alpha }\left(
f,g\right) $ with uniform constants over dyadic quasigrids, and where the
quasiHaar supports $\limfunc{supp}\widehat{f}$ and $\limfunc{supp}\widehat{g}
$ of the functions $f$ and $g$ are contained in the collection $\Omega 
\mathcal{D}^{\limfunc{good}}$ of good quasicubes, whose children are all
good as well, with goodness parameters $\mathbf{r<\infty }$ and $\varepsilon
>0$ chosen sufficiently large and small respectively depending only on $n$
and $\alpha $. Here the quasiHaar support of $f$ is $\limfunc{supp}\widehat{f%
}\equiv \left\{ I\in \Omega \mathcal{D}:\bigtriangleup _{I}^{\sigma }f\neq
0\right\} $, and similarly for $g$. In fact we can assume even more, namely
that the quasiHaar supports $\limfunc{supp}\widehat{f}$ and $\limfunc{supp}%
\widehat{g}$ of $f$ and $g$ are contained in the collection of $\mathbf{\tau 
}$\emph{-good} quasicubes%
\begin{equation}
\Omega \mathcal{D}_{\left( \mathbf{r},\varepsilon \right) -\limfunc{good}}^{%
\mathbf{\tau }}\equiv \left\{ K\in \Omega \mathcal{D}:\mathfrak{C}%
_{K}\subset \Omega \mathcal{D}_{\left( \mathbf{r},\varepsilon \right) -%
\limfunc{good}}\text{ and }\pi _{\Omega \mathcal{D}}^{\ell }K\in \Omega 
\mathcal{D}_{\left( \mathbf{r},\varepsilon \right) -\limfunc{good}}\text{
for all }0\leq \ell \leq \mathbf{\tau }\right\} ,  \label{extended good grid}
\end{equation}%
that are $\left( \mathbf{r},\varepsilon \right) $-$\limfunc{good}$ and whose
children are also $\left( \mathbf{r},\varepsilon \right) $-$\limfunc{good}$,
and whose $\ell $-parents up to level $\mathbf{\tau }$\ are also $\left( 
\mathbf{r},\varepsilon \right) $-$\limfunc{good}$. Here $\mathbf{\tau }>%
\mathbf{r}$ is a parameter to be fixed in Definition \ref{def parameters}
below. We may assume this restriction on the quasiHaar supports of $f$ and $%
g $ by the following lemma.

\begin{lemma}
\label{better good}Given $\mathbf{r}\geq 3$, $\mathbf{\tau }\geq 1$ and $%
\frac{1}{\mathbf{r}}<\varepsilon <1-\frac{1}{\mathbf{r}}$, we have 
\begin{equation*}
\Omega \mathcal{D}_{\left( \mathbf{r}-1,\delta \right) -\limfunc{good}%
}\subset \Omega \mathcal{D}_{\left( \mathbf{r},\varepsilon \right) -\limfunc{%
good}}^{\mathbf{\tau }}\ ,
\end{equation*}%
provided%
\begin{equation}
0<\delta \leq \frac{\mathbf{r}\varepsilon -1}{\mathbf{r}+\mathbf{\tau }}.
\label{choice of delta}
\end{equation}
\end{lemma}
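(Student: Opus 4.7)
The plan is to fix $K \in \Omega\mathcal{D}_{(\mathbf{r}-1,\delta)-\limfunc{good}}$ and verify the three conditions defining membership in $\Omega\mathcal{D}_{(\mathbf{r},\varepsilon)-\limfunc{good}}^{\mathbf{\tau}}$: namely that $K$ itself, each $\Omega\mathcal{D}$-child $K' \in \mathfrak{C}(K)$, and each $\ell$-parent $L = \pi_{\Omega\mathcal{D}}^{\ell}K$ with $1 \leq \ell \leq \mathbf{\tau}$ is $(\mathbf{r},\varepsilon)$-good. For each such $M$ and each dyadic super-quasicube $I$ of $M$ (necessarily an ancestor of $K$ in $\Omega\mathcal{D}$), I would check one of the two alternatives in the definition: either the first escape $\ell(M) > 2^{-\mathbf{r}}\ell(I)$, or the deep-embedding $M \Subset_{\mathbf{r},\varepsilon} I$, invoking the $(\mathbf{r}-1,\delta)$-goodness of $K$ against $I$ in the latter case. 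A direct check shows $\delta < \varepsilon$ under hypothesis (\ref{choice of delta}), which is the key comparison needed below.

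The cases $M = K$ and $M = K'$ are easy. For $M = K$: if $K$ passes the $(\mathbf{r}-1,\delta)$ escape then $\ell(K) > 2^{-(\mathbf{r}-1)}\ell(I) > 2^{-\mathbf{r}}\ell(I)$; otherwise $\limfunc{qdist}(K,\partial I) \geq \tfrac{1}{2}\ell(K)^{\delta}\ell(I)^{1-\delta} \geq \tfrac{1}{2}\ell(K)^{\varepsilon}\ell(I)^{1-\varepsilon}$, since $\delta \leq \varepsilon$ and $\ell(K) \leq \ell(I)$. For $M = K'$ with $I \supsetneq K$, the monotonicity $\limfunc{qdist}(K',\partial I) \geq \limfunc{qdist}(K,\partial I)$ combined with $\ell(K') = \tfrac{1}{2}\ell(K)$ and $\ell(I)/\ell(K) \geq 2^{\mathbf{r}-1}$ yields the required distance bound; the case $I = K$ is handled immediately via $\ell(K')/\ell(I) = 1/2 > 2^{-\mathbf{r}}$.

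The genuine difficulty is the parent case $M = L = \pi^{\ell}K$ with $1 \leq \ell \leq \mathbf{\tau}$, because $L \supset K$ reverses the monotonicity of quasidistance. Passing to the preimage under $\Omega$, where $\Omega^{-1}K \subset \Omega^{-1}L$ are ordinary cubes with $\limfunc{diam}(\Omega^{-1}L) \leq \sqrt{n}\,\ell(L)$, gives
\begin{equation*}
\limfunc{qdist}(L,\partial I) \geq \limfunc{qdist}(K,\partial I) - \sqrt{n}\,\ell(L).
\end{equation*}
In the nontrivial subcase where $K \Subset_{\mathbf{r}-1,\delta} I$ and $\ell(L) \leq 2^{-\mathbf{r}}\ell(I)$, the main ratio satisfies
\begin{equation*}
\frac{\ell(K)^{\delta}\ell(I)^{1-\delta}}{\ell(L)^{\varepsilon}\ell(I)^{1-\varepsilon}} = 2^{-\ell\delta}\left(\frac{\ell(I)}{\ell(L)}\right)^{\varepsilon-\delta} \geq 2^{\mathbf{r}\varepsilon - (\mathbf{r}+\ell)\delta} \geq 2,
\end{equation*}
where the last bound uses $\ell \leq \mathbf{\tau}$ together with the hypothesis $\delta \leq (\mathbf{r}\varepsilon - 1)/(\mathbf{r}+\mathbf{\tau})$. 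The resulting factor-of-two slack then absorbs the correction $\sqrt{n}\,\ell(L) \leq \sqrt{n}\cdot 2^{-\mathbf{r}(1-\varepsilon)}\ell(L)^{\varepsilon}\ell(I)^{1-\varepsilon}$, which is small compared to $\tfrac{1}{2}\ell(L)^{\varepsilon}\ell(I)^{1-\varepsilon}$ once $\mathbf{r}$ is large enough, using that $\mathbf{r}(1-\varepsilon) > 1$ from $\varepsilon < 1 - 1/\mathbf{r}$.

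The principal obstacle is thus calibrating $\delta$ so as to simultaneously beat the $\varepsilon$-distance test (which costs $(\mathbf{r}+\ell)\delta$ in the exponent) and leave room to absorb the $\sqrt{n}\ell(L)$ loss arising from the fact that $L$ can reach closer to $\partial I$ than $K$ does. The condition (\ref{choice of delta}) is calibrated so that $\mathbf{r}\varepsilon - (\mathbf{r}+\mathbf{\tau})\delta \geq 1$, providing exactly the necessary slack.
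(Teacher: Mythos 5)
Your overall approach mirrors the paper's: verify $(\mathbf{r},\varepsilon)$-goodness for $K$ itself, its children, and its $\ell$-parents up to level $\mathbf{\tau}$, reducing everything to the $(\mathbf{r}-1,\delta)$-goodness of $K$ plus a triangle inequality in the parent case. Your treatment of $M=K$ and $M=K'$ is correct, and the factorization of the ratio in the parent case is the right computation. However, there is a genuine gap: your triangle inequality
$$
\limfunc{qdist}\left( L,\partial I\right) \geq \limfunc{qdist}\left( K,\partial I\right) -\sqrt{n}\,\ell \left( L\right)
$$
carries a diameter factor $\sqrt{n}$ that is not needed and is too lossy for the stated hypotheses. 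You then need $\sqrt{n}\cdot 2^{-\mathbf{r}\left( 1-\varepsilon \right) }\leq \tfrac{1}{2}$, i.e. $2^{\mathbf{r}\left( 1-\varepsilon \right) }\geq 2\sqrt{n}$, and you finesse this with the phrase ``once $\mathbf{r}$ is large enough''. But the lemma asserts the containment for all $\mathbf{r}\geq 3$, and the hypothesis $\varepsilon <1-\tfrac{1}{\mathbf{r}}$ only gives $\mathbf{r}\left( 1-\varepsilon \right) >1$, i.e. $2^{\mathbf{r}\left( 1-\varepsilon \right) }>2$; this fails to beat $2\sqrt{n}$ once $n\geq 2$. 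So as written your argument does not prove the lemma for $n\geq 2$ and $\mathbf{r}=3$.

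The fix (and what the paper does) is to observe that the $\sqrt{n}$ is spurious here. Since $\Omega^{-1}I$, $\Omega^{-1}L$, $\Omega^{-1}K$ (in your notation: the small cube, its $\ell$-parent, and the ambient super-cube) are all axis-parallel cubes, the Euclidean distance from any nested interior cube to $\Omega^{-1}K^{c}$ is realized perpendicular to a face, i.e. it is a single-coordinate distance. Passing from $\Omega^{-1}K$ to $\Omega^{-1}L$ changes that single coordinate distance by at most $\ell(L)-\ell(K)<\ell(L)$, not by $\limfunc{diam}\Omega^{-1}L=\sqrt{n}\ell(L)$. Thus the correct bound is
$$
\limfunc{qdist}\left( K,\partial I\right) \leq \limfunc{qdist}\left( L,\partial I\right) +\ell \left( L\right) ,
$$
with no dimensional constant. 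With this, the absorption step becomes $\ell(L)\leq 2^{-\mathbf{r}\left( 1-\varepsilon \right) }\ell(L)^{\varepsilon }\ell(I)^{1-\varepsilon }\leq \tfrac{1}{2}\ell(L)^{\varepsilon }\ell(I)^{1-\varepsilon }$, which follows from $\mathbf{r}\left( 1-\varepsilon \right) >1$ alone. Equivalently (this is the form the paper uses), one may drop the ``factor-of-two slack'' device and reduce directly to verifying the two exponent inequalities $(\mathbf{r}+m)\delta \leq \mathbf{r}\varepsilon -1$ and $(\mathbf{r}+m)\delta \leq \mathbf{r}-2$ for $0\leq m\leq \mathbf{\tau}$; the hypothesis $\varepsilon <1-\tfrac{1}{\mathbf{r}}$ makes the first of these the binding one, giving exactly (\ref{choice of delta}).
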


\begin{proof}
Suppose that $I\in \Omega \mathcal{D}_{\left( \mathbf{r}-1,\delta \right) -%
\limfunc{good}}$ where $\delta $ is as in (\ref{choice of delta}). If $J$ is
a child of $I$, then $J\in \Omega \mathcal{D}_{\left( \mathbf{r},\delta
\right) -\limfunc{good}}$, and since $\delta <\varepsilon $, we also have $%
\Omega \mathcal{D}_{\left( \mathbf{r},\delta \right) -\limfunc{good}}\subset
\Omega \mathcal{D}_{\left( \mathbf{r},\varepsilon \right) -\limfunc{good}}$.
It remains to show that $\pi _{\Omega \mathcal{D}}^{\left( m\right) }I\in
\Omega \mathcal{D}_{\left( \mathbf{r},\varepsilon \right) -\limfunc{good}}$
for $0\leq m\leq \mathbf{\tau }$. For this it suffices to show that if $K\in
\Omega \mathcal{D}$ satisfies\thinspace $\pi _{\Omega \mathcal{D}}^{\left(
m\right) }I\subset K$ and $\ell \left( \pi _{\Omega \mathcal{D}}^{\left(
m\right) }I\right) \leq 2^{-\mathbf{r}}\ell \left( K\right) $ , then%
\begin{equation}
\frac{1}{2}\left( \frac{\ell \left( \pi _{\Omega \mathcal{D}}^{\left(
m\right) }I\right) }{\ell \left( K\right) }\right) ^{\varepsilon }\ell
\left( K\right) \leq \limfunc{qdist}\left( \pi _{\Omega \mathcal{D}}^{\left(
m\right) }I,K^{c}\right) .  \label{want eps}
\end{equation}%
Now $\ell \left( I\right) =2^{-m}\ell \left( \pi _{\Omega \mathcal{D}%
}^{\left( m\right) }I\right) \leq 2^{-\left( m+\mathbf{r}\right) }\ell
\left( K\right) \leq 2^{-\left( \mathbf{r}-1\right) }\ell \left( K\right) $
and $I\in \Omega \mathcal{D}_{\left( \mathbf{r}-1,\delta \right) -\limfunc{%
good}}$ imply that%
\begin{equation*}
\frac{1}{2}\left( \frac{\ell \left( I\right) }{\ell \left( K\right) }\right)
^{\delta }\ell \left( K\right) \leq \limfunc{qdist}\left( I,K^{c}\right) ,
\end{equation*}%
and since the triangle inequality gives%
\begin{equation*}
\limfunc{qdist}\left( I,K^{c}\right) \leq \limfunc{qdist}\left( \pi _{\Omega 
\mathcal{D}}^{\left( m\right) }I,K^{c}\right) +2^{m}\ell \left( I\right) ,
\end{equation*}

we see that it suffices to show%
\begin{equation}
\frac{1}{2}\left( \frac{\ell \left( \pi _{\Omega \mathcal{D}}^{\left(
m\right) }I\right) }{\ell \left( K\right) }\right) ^{\varepsilon }\ell
\left( K\right) +2^{m}\ell \left( I\right) \leq \frac{1}{2}\left( \frac{\ell
\left( I\right) }{\ell \left( K\right) }\right) ^{\delta }\ell \left(
K\right) ,\ \ \ \ \ 0\leq m\leq \mathbf{\tau }.  \label{suff delta}
\end{equation}
This is equivalent to successively,%
\begin{eqnarray*}
\frac{1}{2}\left( \frac{2^{m}\ell \left( I\right) }{\ell \left( K\right) }%
\right) ^{\varepsilon }\ell \left( K\right) +2^{m}\ell \left( I\right) &\leq
&\frac{1}{2}\left( \frac{\ell \left( I\right) }{\ell \left( K\right) }%
\right) ^{\delta }\ell \left( K\right) ; \\
\left( \frac{2^{m}\ell \left( I\right) }{\ell \left( K\right) }\right)
^{\varepsilon }+2^{m+1}\frac{\ell \left( I\right) }{\ell \left( K\right) }
&\leq &\left( \frac{\ell \left( I\right) }{\ell \left( K\right) }\right)
^{\delta }; \\
2^{m\varepsilon }\left( \frac{\ell \left( I\right) }{\ell \left( K\right) }%
\right) ^{\varepsilon -\delta }+2^{m+1}\left( \frac{\ell \left( I\right) }{%
\ell \left( K\right) }\right) ^{1-\delta } &\leq &1,\ \ \ \ \ 0\leq m\leq 
\mathbf{\tau }.
\end{eqnarray*}%
Since $0<\delta <\varepsilon <1$ by our restriction on $\varepsilon $ and
our choice of $\delta $ in (\ref{choice of delta}), and since $\frac{\ell
\left( I\right) }{\ell \left( K\right) }\leq 2^{-\left( m+\mathbf{r}\right)
} $, it thus suffices to show that%
\begin{eqnarray*}
2^{m\varepsilon }\left( 2^{-\left( m+\mathbf{r}\right) }\right)
^{\varepsilon -\delta }+2^{m+1}\left( 2^{-\left( m+\mathbf{r}\right)
}\right) ^{1-\delta } &\leq &1, \\
\text{i.e. }2^{m\varepsilon -\left( m+\mathbf{r}\right) \left( \varepsilon
-\delta \right) }+2^{m+1-\left( m+\mathbf{r}\right) \left( 1-\delta \right)
} &\leq &1,
\end{eqnarray*}%
for $0\leq m\leq \mathbf{\tau }$. In particular then it suffices to show both%
\begin{eqnarray*}
m\varepsilon -\left( m+\mathbf{r}\right) \left( \varepsilon -\delta \right)
&\leq &-1, \\
m+1-\left( m+\mathbf{r}\right) \left( 1-\delta \right) &\leq &-1,
\end{eqnarray*}%
equivalently both%
\begin{eqnarray*}
\left( \mathbf{r}+m\right) \delta &\leq &\mathbf{r}\varepsilon -1, \\
\left( \mathbf{r}+m\right) \delta &\leq &\mathbf{r}-2,
\end{eqnarray*}%
for $0\leq m\leq \mathbf{\tau }$. Finally then it suffices to show both%
\begin{equation*}
\delta \leq \frac{\mathbf{r}\varepsilon -1}{\mathbf{r}+\mathbf{\tau }}\text{
and }\delta \leq \frac{\mathbf{r}-2}{\mathbf{r}+\mathbf{\tau }}.
\end{equation*}%
Because of the restriction that $\frac{1}{\mathbf{r}}<\varepsilon <1-\frac{1%
}{\mathbf{r}}$, we see that $0<\mathbf{r}\varepsilon -1<\mathbf{r}-2$, and
it is now clear that the above display holds for our choice of $\delta $ in (%
\ref{choice of delta}).
\end{proof}

For convenience in notation we will sometimes suppress the dependence on $%
\alpha $ in our nonlinear forms, but will retain it in the operators,
Poisson integrals and constants. More precisely, let $\Omega \mathcal{D}%
^{\sigma }=\Omega \mathcal{D}^{\omega }$ be an $\left( \mathbf{r}%
,\varepsilon \right) $-good quasigrid on $\mathbb{R}^{n}$, and let $\left\{
h_{I}^{\sigma ,a}\right\} _{I\in \Omega \mathcal{D}^{\sigma },\ a\in \Gamma
_{n}}$ and $\left\{ h_{J}^{\omega ,b}\right\} _{J\in \Omega \mathcal{D}%
^{\omega },\ b\in \Gamma _{n}}$ be corresponding quasiHaar bases as
described above, so that%
\begin{eqnarray*}
f &=&\sum_{I\in \Omega \mathcal{D}^{\sigma }}\bigtriangleup _{I}^{\sigma
}f=\sum_{I\in \Omega \mathcal{D}^{\sigma },\text{\ }a\in \Gamma _{n}\text{ }%
}\left\langle f,h_{I}^{\sigma ,a}\right\rangle \ h_{I}^{\sigma
,a}=\sum_{I\in \Omega \mathcal{D}^{\sigma },\text{\ }a\in \Gamma _{n}}%
\widehat{f}\left( I;a\right) \ h_{I}^{\sigma ,a}, \\
g &=&\sum_{J\in \Omega \mathcal{D}^{\omega }\text{ }}\bigtriangleup
_{J}^{\omega }g=\sum_{J\in \Omega \mathcal{D}^{\omega },\text{\ }b\in \Gamma
_{n}}\left\langle g,h_{J}^{\omega ,b}\right\rangle \ h_{J}^{\omega
,b}=\sum_{J\in \Omega \mathcal{D}^{\omega },\text{\ }b\in \Gamma _{n}}%
\widehat{g}\left( J;b\right) \ h_{J}^{\omega ,b},
\end{eqnarray*}%
where the appropriate measure is understood in the notation $\widehat{f}%
\left( I;a\right) $ and $\widehat{g}\left( J;b\right) $, and where these
quasiHaar coefficients $\widehat{f}\left( I;a\right) $ and $\widehat{g}%
\left( J;b\right) $ vanish if the quasicubes $I$ and $J$ are not good.
Inequality (\ref{two weight}) is equivalent to boundedness of the bilinear
form%
\begin{equation*}
\mathcal{T}^{\alpha }\left( f,g\right) \equiv \left\langle T_{\sigma
}^{\alpha }\left( f\right) ,g\right\rangle _{\omega }=\sum_{I\in \Omega 
\mathcal{D}^{\sigma }\text{ and }J\in \Omega \mathcal{D}^{\omega
}}\left\langle T_{\sigma }^{\alpha }\left( \bigtriangleup _{I}^{\sigma
}f\right) ,\bigtriangleup _{J}^{\omega }g\right\rangle _{\omega }
\end{equation*}%
on $L^{2}\left( \sigma \right) \times L^{2}\left( \omega \right) $, i.e.%
\begin{equation*}
\left\vert \mathcal{T}^{\alpha }\left( f,g\right) \right\vert \leq \mathfrak{%
N}_{T^{\alpha }}\left\Vert f\right\Vert _{L^{2}\left( \sigma \right)
}\left\Vert g\right\Vert _{L^{2}\left( \omega \right) },
\end{equation*}%
uniformly over all quasigrids and appropriate truncations. We may assume the
two quasigrids $\Omega \mathcal{D}^{\sigma }$ and $\Omega \mathcal{D}%
^{\omega }$ are equal here, and this we will do throughout the paper,
although we sometimes continue to use the measure as a superscript on $%
\Omega \mathcal{D}$ for clarity of exposition. Roughly speaking, we analyze
the form $\mathcal{T}^{\alpha }\left( f,g\right) $ by splitting it in a
nonlinear way into three main pieces, following in part the approach in \cite%
{LaSaShUr2} and \cite{LaSaShUr3}. The first piece consists of quasicubes $I$
and $J$ that are either disjoint or of comparable side length, and this
piece is handled using the section on preliminaries of NTV type. The second
piece consists of quasicubes $I$ and $J$ that overlap, but are `far apart'
in a nonlinear way, and this piece is handled using the sections on the
Intertwining Proposition and the control of the functional quasienergy
condition by the quasienergy condition. Finally, the remaining local piece
where the overlapping quasicubes are `close' is handled by generalizing
methods of NTV as in \cite{LaSaShUr}, and then splitting the stopping form
into two sublinear stopping forms, one of which is handled using techniques
of \cite{LaSaUr2}, and the other using the stopping time and recursion of M.
Lacey \cite{Lac}. See the schematic diagram in Subsection 7.4 below.

We summarize our assumptions on the Haar supports of $f$ and $g$, and on the
dyadic quasigrids $\Omega \mathcal{D}$.

\begin{condition}[on Haar supports and quasigrids]
\label{assume shift}We suppose the quasiHaar supports of the functions $f$
and $g$ satisfy $\limfunc{supp}\widehat{f},\limfunc{supp}\widehat{g}\subset
\Omega \mathcal{D}_{\left( \mathbf{r},\varepsilon \right) -\limfunc{good}}^{%
\mathbf{\tau }}$. We also assume that $\left\vert \partial Q\right\vert
_{\sigma +\omega }=0$ for all dyadic quasicubes $Q$ in the grids $\Omega 
\mathcal{D}$ (since this property holds with probability $1$ for random
grids $\Omega \mathcal{D}$).
\end{condition}

\section{Necessity of the $\mathcal{A}_{2}^{\protect\alpha }$ conditions}

Here we prove in particular the necessity of the fractional $\mathcal{A}%
_{2}^{\alpha }$ condition (with holes) when $0\leq \alpha <n$, for the
boundedness from $L^{2}\left( \sigma \right) $ to $L^{2}\left( \omega
\right) $ (where $\sigma $ and $\omega $ may have common point masses) of
the $\alpha $-fractional Riesz vector transform $\mathbf{R}^{\alpha }$
defined by%
\begin{equation*}
\mathbf{R}^{\alpha }\left( f\sigma \right) \left( x\right) =\int_{\mathbb{R}%
^{n}}K_{j}^{\alpha }(x,y)f\left( y\right) d\sigma \left( y\right) ,\ \ \ \ \
K_{j}^{\alpha }\left( x,y\right) =\frac{x^{j}-y^{j}}{\left\vert
x-y\right\vert ^{n+1-\alpha }},
\end{equation*}%
whose kernel $K_{j}^{\alpha }\left( x,y\right) $ satisfies (\ref%
{sizeandsmoothness'}) for $0\leq \alpha <n$. See \cite{SaShUr5} for the case
without holes.

\begin{lemma}
\label{necc frac A2}Suppose $0\leq \alpha <n$. Let $T^{\alpha }$ be any
collection of operators with $\alpha $-standard fractional kernel
satisfying\ the ellipticity condition (\ref{Ktalpha}), and in the case $%
\frac{n}{2}\leq \alpha <n$, we also assume the more restrictive condition (%
\ref{Ktalpha strong}). Then for $0\leq \alpha <n$ we have%
\begin{equation*}
\sqrt{\mathcal{A}_{2}^{\alpha }}\lesssim \mathfrak{N}_{\alpha }\left(
T^{\alpha }\right) .
\end{equation*}
\end{lemma}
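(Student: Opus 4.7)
The plan is to adapt the classical pigeonhole-plus-test-function argument to the quasicube setting. Fix $Q\in \Omega\mathcal{P}^{n}$ with center $x_{Q}$ and decompose $Q^{c}$ into the $2^{n}$ \emph{quasi-$n$-ants} $B_{m}\equiv (x_{Q}+\Omega V_{m})\setminus Q$, $m\in\{-1,+1\}^{n}$, emanating from $x_{Q}$. By pigeonhole, choose $m$ so that
\[
\mathcal{P}^{\alpha}(Q,\mathbf{1}_{B_{m}}\sigma)\;\geq\;2^{-n}\,\mathcal{P}^{\alpha}(Q,\mathbf{1}_{Q^{c}}\sigma).
\]
In the range $0\leq \alpha<n/2$, a further pigeonhole on the index $j$ reduces matters to a single component $T_{j(m)}^{\alpha}$ on a sub-sector where the directional ellipticity (\ref{Ktalpha}) applies with a fixed $j$; in the range $n/2\leq\alpha<n$, the strong ellipticity hypothesis (\ref{Ktalpha strong}) directly supplies a scalar linear combination $T_{m}^{\alpha}\equiv \sum_{j}\lambda_{j}^{m}T_{j}^{\alpha}$. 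Either way we obtain a scalar operator, still denoted $T_{m}^{\alpha}$, with $\|T_{m}^{\alpha}\|_{L^{2}(\sigma)\to L^{2}(\omega)}\lesssim \mathfrak{N}_{\alpha}$ and whose kernel $K_{m}^{\alpha}(x,y)$ satisfies $|K_{m}^{\alpha}(x,y)|\gtrsim |x-y|^{\alpha-n}$ with one fixed sign whenever $y-x\in \Omega V_{m}$.

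Introduce the reproducing-Poisson test function
\[
f(y)\equiv \mathbf{1}_{B_{m}}(y)\left(\frac{|Q|^{1/n}}{|Q|^{1/n}+|y-x_{Q}|}\right)^{n-\alpha}.
\]
A direct computation gives
\[
\|f\|_{L^{2}(\sigma)}^{2}=\int_{B_{m}}\frac{|Q|^{2(n-\alpha)/n}}{(|Q|^{1/n}+|y-x_{Q}|)^{2(n-\alpha)}}\,d\sigma(y)=|Q|^{1-\alpha/n}\,\mathcal{P}^{\alpha}(Q,\mathbf{1}_{B_{m}}\sigma).
\]
Since $|x-y|\asymp |Q|^{1/n}+|y-x_{Q}|$ uniformly for $x\in Q$ and $y\in B_{m}$ (both sides lie outside $Q$), the kernel lower bound together with the sign consistency yields, for every $x\in Q$,
\[
\bigl|(T_{m}^{\alpha})_{\sigma}f(x)\bigr|\gtrsim \int_{B_{m}}\frac{|Q|^{(n-\alpha)/n}}{(|Q|^{1/n}+|y-x_{Q}|)^{2(n-\alpha)}}\,d\sigma(y)=\mathcal{P}^{\alpha}(Q,\mathbf{1}_{B_{m}}\sigma).
\]
Squaring this pointwise bound, integrating against $\omega$ over $Q$, and invoking the norm inequality for $T_{m}^{\alpha}$ gives
\[
\mathcal{P}^{\alpha}(Q,\mathbf{1}_{B_{m}}\sigma)^{2}\,|Q|_{\omega}\leq \|(T_{m}^{\alpha})_{\sigma}f\|_{L^{2}(\omega)}^{2}\leq \mathfrak{N}_{\alpha}^{2}\,|Q|^{1-\alpha/n}\,\mathcal{P}^{\alpha}(Q,\mathbf{1}_{B_{m}}\sigma).
\]
Dividing by $\mathcal{P}^{\alpha}(Q,\mathbf{1}_{B_{m}}\sigma)$, applying the opening pigeonhole, and taking the supremum over $Q$ produces $\mathcal{A}_{2}^{\alpha}\lesssim \mathfrak{N}_{\alpha}^{2}$.

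The principal obstacle is the geometric coherence in the quasicube setting: for $x\in Q$ (not the center) and $y\in B_{m}$ the vector $y-x$ need not itself lie in $\Omega V_{m}$, and the biLipschitz image $\Omega V_{m}$ is itself possibly curved. One must therefore shrink the sector slightly in the definition of $B_{m}$ so that $y-x\in \Omega V_{m}$ uniformly in $x\in Q$, then absorb the discarded boundary annulus into the offset $A_{2}^{\alpha}$ constant via a routine dyadic comparison of annuli near $\partial Q$. In the harder range $n/2\leq \alpha<n$, no single $K_{j}^{\alpha}$ can maintain a fixed sign across the whole sector $\Omega V_{m}$, which is precisely why the linear-combination formulation of strong ellipticity in Definition~\ref{strong ell} is indispensable there. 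Common point masses of $\sigma$ and $\omega$ cause no difficulty: the region $B_{m}$ is disjoint from $Q$, so no atom is seen simultaneously by $f\sigma$ and $\mathbf{1}_{Q}\omega$, and the test weight $(|Q|^{1/n}/(|Q|^{1/n}+|y-x_{Q}|))^{n-\alpha}$ remains square-integrable at infinity.
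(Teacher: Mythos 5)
Your approach is in the same spirit as the paper's (test functions shaped like the reproducing Poisson kernel, a pigeonhole on quasi-$n$-ants, the ellipticity used to get sign coherence), but there are two genuine gaps, and the second one hides most of the actual work.

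First, the claim that $|x-y|\asymp |Q|^{1/n}+|y-x_{Q}|$ uniformly for $x\in Q$ and $y\in B_{m}$ is false: $B_{m}$ contains points $y$ arbitrarily close to $\partial Q$, and for such $y$ one can take $x\in Q$ on the same face with $|x-y|$ as small as one likes while $|Q|^{1/n}+|y-x_{Q}|\geq |Q|^{1/n}$. This is not a quasicube curvature artifact; it fails already for Euclidean cubes. The paper handles it by supporting the test function in a \emph{translated} $n$-ant $\mathcal{Q}_{-m}(a)$ with $a$ displaced by $2\ell(Q)$ from $Q$ (so that the quadrant $\mathcal{Q}_{m}(a)$ on which the output is tested is uniformly separated from the quadrant carrying the test function), and the pairing is then done against the opposing $n$-ant rather than directly against $\omega|_{Q}$. "Shrinking the sector slightly" does not produce the uniform separation needed; you need the vertex of the sector displaced away from $Q$ by a fixed multiple of $\ell(Q)$, which then forces you to discard a full annulus of thickness comparable to $\ell(Q)$, not merely a thin boundary layer.

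Second, absorbing that discarded region "into the offset $A_{2}^{\alpha}$ constant via a routine dyadic comparison" presupposes that the offset $A_{2}^{\alpha}$ condition is itself available, but $\sqrt{A_{2}^{\alpha}}\lesssim\mathfrak{N}_{\alpha}$ is not a hypothesis and must be derived from the norm inequality. In the paper this derivation is the longer half of the proof and is far from routine: for $0\leq\alpha<n/2$ it uses the asymmetric two-weight $A_{2}^{\alpha}$ estimate (\ref{asym}) from Stein's one-weight argument together with a Whitney decomposition of $(Q_{0}'\times Q_{0})\setminus\mathfrak{D}$ in $\mathbb{R}^{n}\times\mathbb{R}^{n}$, additivity of $\omega\times\sigma$, and a geometric series that converges only because $\alpha<n/2$; for $n/2\leq\alpha<n$ it abandons that argument altogether in favor of a repeated-bisection construction producing rectangles $G,H$ that capture at least $2^{-n}$ of $|Q'|_{\omega}$ and $|Q|_{\sigma}$ respectively and lie in opposing quasi-$n$-ants, to which the strong ellipticity (\ref{Ktalpha strong}) is applied. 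Neither argument appears in your proposal, so the chain from the norm inequality to $\mathcal{A}_{2}^{\alpha}$ is not closed.
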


\begin{remark}
Cancellation properties of $T^{\alpha }$ play no role in the proof below.
Indeed the proof shows that $\mathcal{A}_{2}^{\alpha }$ is dominated by the
best constant $C$ in the restricted inequality%
\begin{equation*}
\left\Vert \chi _{E}T^{\alpha }(f\sigma )\right\Vert _{L^{2,\infty }\left(
\omega \right) }\leq C\left\Vert f\right\Vert _{L^{2}\left( \sigma \right)
},\ \ \ \ \ E=\mathbb{R}^{n}\setminus supp\ f.
\end{equation*}
\end{remark}

\begin{proof}
First we give the proof for the case when $T^{\alpha }$ is the $\alpha $%
-fractional Riesz transform $\mathbf{R}^{\alpha }$, whose kernel is $\mathbf{%
K}^{\alpha }\left( x,y\right) =\frac{x-y}{\left\vert x-y\right\vert
^{n+1-\alpha }}$ . Define the $2^{n}$ generalized $n$-ants $\mathcal{Q}_{m}$
for $m\in \left\{ -1,1\right\} ^{n}$, and their translates $\mathcal{Q}%
_{m}\left( w\right) $ for $w\in \mathbb{R}^{n}$ by 
\begin{eqnarray*}
\mathcal{Q}_{m} &=&\left\{ \left( x_{1},...,x_{n}\right)
:m_{k}x_{k}>0\right\} , \\
\mathcal{Q}_{m}\left( w\right) &=&\left\{ z:z-w\in \mathcal{Q}_{m}\right\}
,\ \ \ \ \ w\in \mathbb{R}^{n}.
\end{eqnarray*}%
Fix $m\in \left\{ -1,1\right\} ^{n}$ and a quasicube $I$. For $a\in \mathbb{R%
}^{n}$ and $r>0$ let 
\begin{eqnarray*}
s_{I}\left( x\right) &=&\frac{\ell \left( I\right) }{\ell \left( I\right)
+\left\vert x-\zeta _{I}\right\vert }, \\
f_{a,r}\left( y\right) &=&\mathbf{1}_{\mathcal{Q}_{-m}\left( a\right) \cap
B\left( 0,r\right) }\left( y\right) s_{I}\left( y\right) ^{n-\alpha },
\end{eqnarray*}%
where $\zeta _{I}$ is the center of the cube $I$. Now%
\begin{eqnarray*}
\ell \left( I\right) \left\vert x-y\right\vert &\leq &\ell \left( I\right)
\left\vert x-\zeta _{I}\right\vert +\ell \left( I\right) \left\vert \zeta
_{I}-y\right\vert \\
&\leq &\left[ \ell \left( I\right) +\left\vert x-\zeta _{I}\right\vert %
\right] \left[ \ell \left( I\right) +\left\vert \zeta _{I}-y\right\vert %
\right]
\end{eqnarray*}%
implies%
\begin{equation*}
\frac{1}{\left\vert x-y\right\vert }\geq \frac{1}{\ell \left( I\right) }%
s_{I}\left( x\right) s_{I}\left( y\right) ,\ \ \ \ \ x,y\in \mathbb{R}^{n}.
\end{equation*}%
Now the key observation is that with $L\zeta \equiv m\cdot \zeta $, we have%
\begin{equation*}
L\left( x-y\right) =m\cdot \left( x-y\right) \geq \left\vert x-y\right\vert
,\ \ \ \ \ x\in \mathcal{Q}_{m}\left( y\right) ,
\end{equation*}%
which yields%
\begin{eqnarray}
L\left( \mathbf{K}^{\alpha }\left( x,y\right) \right) &=&\frac{L\left(
x-y\right) }{\left\vert x-y\right\vert ^{n+1-\alpha }}
\label{key separation} \\
&\geq &\frac{1}{\left\vert x-y\right\vert ^{n-\alpha }}\geq \ell \left(
I\right) ^{\alpha -n}s_{I}\left( x\right) ^{n-\alpha }s_{I}\left( y\right)
^{n-\alpha },  \notag
\end{eqnarray}%
provided $x\in \mathcal{Q}_{m}\left( y\right) $. Now we note that $x\in 
\mathcal{Q}_{m}\left( y\right) $ when $x\in \mathcal{Q}_{m}\left( a\right) $
and $y\in \mathcal{Q}_{-m}\left( a\right) $ to obtain that for $x\in 
\mathcal{Q}_{m}\left( a\right) $, 
\begin{eqnarray*}
L\left( T^{\alpha }\left( f_{a,r}\sigma \right) \left( x\right) \right)
&=&\int_{\mathcal{Q}_{-m}\left( a\right) \cap B\left( 0,r\right) }\frac{%
L\left( x-y\right) }{\left\vert x-y\right\vert ^{n+1-\alpha }}s_{I}\left(
y\right) ^{n-\alpha }d\sigma \left( y\right) \\
&\geq &\ell \left( I\right) ^{\alpha -n}s_{I}\left( x\right) ^{n-\alpha
}\int_{\mathcal{Q}_{-m}\left( a\right) \cap B\left( 0,r\right) }s_{I}\left(
y\right) ^{2n-2\alpha }d\sigma \left( y\right) .
\end{eqnarray*}

Applying $\left\vert L\zeta \right\vert \leq \sqrt{n}\left\vert \zeta
\right\vert $ and our assumed two weight inequality for the fractional Riesz
transform, we see that for $r>0$ large, 
\begin{align*}
& \ell \left( I\right) ^{2\alpha -2n}\int_{\mathcal{Q}_{m}\left( a\right)
}s_{I}\left( x\right) ^{2n-2\alpha }\left( \int_{\mathcal{Q}_{-m}\left(
a\right) \cap B\left( 0,r\right) }s_{I}\left( y\right) ^{2n-2\alpha }d\sigma
\left( y\right) \right) ^{2}d\omega \left( x\right) \\
& \leq \left\Vert LT(\sigma f_{a,r})\right\Vert _{L^{2}(\omega
)}^{2}\lesssim \mathfrak{N}_{\alpha }\left( \mathbf{R}^{\alpha }\right)
^{2}\left\Vert f_{a,r}\right\Vert _{L^{2}(\sigma )}^{2}=\mathfrak{N}_{\alpha
}\left( \mathbf{R}^{\alpha }\right) ^{2}\int_{\mathcal{Q}_{-m}\left(
a\right) \cap B\left( 0,r\right) }s_{I}\left( y\right) ^{2n-2\alpha }d\sigma
\left( y\right) .
\end{align*}

Rearranging the last inequality, we obtain%
\begin{equation*}
\ell \left( I\right) ^{2\alpha -2n}\int_{\mathcal{Q}_{m}\left( a\right)
}s_{I}\left( x\right) ^{2n-2\alpha }d\omega \left( x\right) \int_{\mathcal{Q}%
_{-m}\left( a\right) \cap B\left( 0,r\right) }s_{I}\left( y\right)
^{2n-2\alpha }d\sigma \left( y\right) \lesssim \mathfrak{N}_{\alpha }\left( 
\mathbf{R}^{\alpha }\right) ^{2},
\end{equation*}%
and upon letting $r\rightarrow \infty $, 
\begin{equation*}
\int_{\mathcal{Q}_{m}\left( a\right) }\frac{\ell \left( I\right) ^{n-\alpha }%
}{\left( \ell \left( I\right) +\left\vert x-\zeta _{I}\right\vert \right)
^{2n-2\alpha }}d\omega \left( x\right) \int_{\mathcal{Q}_{-m}\left( a\right)
}\frac{\ell \left( I\right) ^{n-\alpha }}{\left( \ell \left( I\right)
+\left\vert y-\zeta _{I}\right\vert \right) ^{2n-2\alpha }}d\sigma \left(
y\right) \lesssim \mathfrak{N}_{\alpha }\left( \mathbf{R}^{\alpha }\right)
^{2}.
\end{equation*}%
Note that the ranges of integration above are pairs of opposing $n$-ants.

Fix a quasicube $Q$, which without loss of generality can be taken to be
centered at the origin, $\zeta _{Q}=0$. Then choose $a=\left( 2\ell \left(
Q\right) ,2\ell \left( Q\right) \right) $ and $I=Q$ so that we have%
\begin{eqnarray*}
&&\left( \int_{\mathcal{Q}_{m}\left( a\right) }\frac{\ell \left( Q\right)
^{n-\alpha }}{\left( \ell \left( Q\right) +\left\vert x\right\vert \right)
^{2n-2\alpha }}d\omega \left( x\right) \right) \left( \ell \left( Q\right)
^{\alpha -n}\int_{Q}d\sigma \right) \\
&\leq &C_{\alpha }\int_{\mathcal{Q}_{m}\left( a\right) }\frac{\ell \left(
Q\right) ^{n-\alpha }}{\left( \ell \left( Q\right) +\left\vert x\right\vert
\right) ^{2n-2\alpha }}d\omega \left( x\right) \int_{\mathcal{Q}_{-m}\left(
a\right) }\frac{\ell \left( Q\right) ^{n-\alpha }}{\left( \ell \left(
Q\right) +\left\vert y\right\vert \right) ^{2n-2\alpha }}d\sigma \left(
y\right) \lesssim \mathfrak{N}_{\alpha }\left( \mathbf{R}^{\alpha }\right)
^{2}.
\end{eqnarray*}%
Now fix $m=\left( 1,1,...,1\right) $ and note that there is a fixed $N$
(independent of $\ell \left( Q\right) $) and a fixed collection of rotations 
$\left\{ \rho _{k}\right\} _{k=1}^{N}$, such that the rotates $\rho _{k}%
\mathcal{Q}_{m}\left( a\right) $, $1\leq k\leq N$, of the $n$-ant $\mathcal{Q%
}_{m}\left( a\right) $ cover the complement of the ball $B\left( 0,4\sqrt{n}%
\ell \left( Q\right) \right) $: 
\begin{equation*}
B\left( 0,4\sqrt{n}\ell \left( Q\right) \right) ^{c}\subset
\bigcup_{k=1}^{N}\rho _{k}\mathcal{Q}_{m}\left( a\right) .
\end{equation*}%
Then we obtain, upon applying the same argument to these rotated pairs of $n$%
-ants, 
\begin{equation}
\left( \int_{B\left( 0,4\sqrt{n}\ell \left( Q\right) \right) ^{c}}\frac{\ell
\left( Q\right) ^{n-\alpha }}{\left( \ell \left( Q\right) +\left\vert
x\right\vert \right) ^{2n-2\alpha }}d\omega \left( x\right) \right) \left(
\ell \left( Q\right) ^{\alpha -n}\int_{Q}d\sigma \right) \lesssim \mathfrak{N%
}_{\alpha }\left( \mathbf{R}^{\alpha }\right) ^{2}.  \label{prelim A2}
\end{equation}

Now we assume for the moment the offset $A_{2}^{\alpha }$ condition 
\begin{equation*}
\ell \left( Q\right) ^{2\left( \alpha -n\right) }\left( \int_{Q^{\prime
}}d\omega \right) \left( \int_{Q}d\sigma \right) \leq A_{2}^{\alpha },
\end{equation*}%
where $Q^{\prime }$ and $Q$ are neighbouring quasicubes, i.e. $\left(
Q^{\prime },Q\right) \in \Omega \mathcal{N}^{n}$. If we use this offset
inequality with $Q^{\prime }$ ranging over $3Q\setminus Q$, and then use the
separation of $B\left( 0,4\sqrt{n}\ell \left( Q\right) \right) \setminus 3Q$
and $Q$ to obtain the inequality%
\begin{equation*}
\ell \left( Q\right) ^{2\left( \alpha -n\right) }\left( \int_{B\left( 0,4%
\sqrt{n}\ell \left( Q\right) \right) \setminus 3Q}d\omega \right) \left(
\int_{Q}d\sigma \right) \lesssim A_{2}^{\alpha }\ ,
\end{equation*}%
together with (\ref{prelim A2}), we obtain%
\begin{equation*}
\left( \int_{\mathbb{R}^{n}\setminus Q}\frac{\ell \left( Q\right) ^{n-\alpha
}}{\left( \ell \left( Q\right) +\left\vert x\right\vert \right) ^{2n-2\alpha
}}d\omega \left( x\right) \right) ^{\frac{1}{2}}\left( \ell \left( Q\right)
^{\alpha -n}\int_{Q}d\sigma \right) ^{\frac{1}{2}}\lesssim \mathfrak{N}%
_{\alpha }\left( \mathbf{R}^{\alpha }\right) +\sqrt{A_{2}^{\alpha }},
\end{equation*}%
or%
\begin{equation*}
\ell \left( Q\right) ^{\alpha }\left( \frac{1}{\left\vert Q\right\vert }%
\int_{\mathbb{R}^{n}\setminus Q}\frac{1}{\left( 1+\frac{\left\vert x-\zeta
_{Q}\right\vert }{\ell \left( Q\right) }\right) ^{2n-2\alpha }}d\omega
\left( x\right) \right) ^{\frac{1}{2}}\left( \frac{1}{\left\vert
Q\right\vert }\int_{Q}d\sigma \right) ^{\frac{1}{2}}\lesssim \mathfrak{N}%
_{\alpha }\left( \mathbf{R}^{\alpha }\right) +\sqrt{A_{2}^{\alpha }}.
\end{equation*}%
Clearly we can reverse the roles of the measures $\omega $ and $\sigma $ and
obtain 
\begin{equation*}
\sqrt{\mathcal{A}_{2}^{\alpha ,\ast }}\lesssim \mathfrak{N}_{\alpha }\left( 
\mathbf{R}^{\alpha }\right) +\sqrt{A_{2}^{\alpha }}
\end{equation*}%
for the kernels $\mathbf{K}^{\alpha }$, $0\leq \alpha <n$.

More generally, to obtain the case when $T^{\alpha }$ is elliptic and the
offset $A_{2}^{\alpha }$ condition holds, we note that the key estimate (\ref%
{key separation}) above extends to the kernel $\sum_{j=1}^{J}\lambda
_{j}^{m}K_{j}^{\alpha }$ of $\sum_{j=1}^{J}\lambda _{j}^{m}T_{j}^{\alpha }$
in (\ref{Ktalpha strong}) if the $n$-ants above are replaced by thin cones
of sufficently small aperture, and there is in addition sufficient
separation between opposing cones, which in turn may require a larger
constant than $4\sqrt{n}$ in the choice of $Q^{\prime }$ above.

Finally, we turn to showing that the offset $A_{2}^{\alpha }$ condition is
implied by the norm inequality, i.e.%
\begin{eqnarray*}
&&\sqrt{A_{2}^{\alpha }}\equiv \sup_{\left( Q^{\prime },Q\right) \in \Omega 
\mathcal{N}^{n}}\ell \left( Q\right) ^{\alpha }\left( \frac{1}{\left\vert
Q^{\prime }\right\vert }\int_{Q^{\prime }}d\omega \right) ^{\frac{1}{2}%
}\left( \frac{1}{\left\vert Q\right\vert }\int_{Q}d\sigma \right) ^{\frac{1}{%
2}}\lesssim \mathfrak{N}_{\alpha }\left( \mathbf{R}^{\alpha }\right) ; \\
&&\text{i.e. }\left( \int_{Q^{\prime }}d\omega \right) \left(
\int_{Q}d\sigma \right) \lesssim \mathfrak{N}_{\alpha }\left( \mathbf{R}%
^{\alpha }\right) ^{2}\left\vert Q\right\vert ^{2-\frac{2\alpha }{n}},\ \ \
\ \ \left( Q^{\prime },Q\right) \in \Omega \mathcal{N}^{n}.
\end{eqnarray*}%
In the range $0\leq \alpha <\frac{n}{2}$ where we only assume (\ref{Ktalpha}%
), we adapt a corresponding argument from \cite{LaSaUr1}.

The `one weight' argument on page 211 of Stein \cite{Ste} yields the \emph{%
asymmetric} two weight $A_{2}^{\alpha }$ condition%
\begin{equation}
\left\vert Q^{\prime }\right\vert _{\omega }\left\vert Q\right\vert _{\sigma
}\leq C\mathfrak{N}_{\alpha }\left( \mathbf{R}^{\alpha }\right)
^{2}\left\vert Q\right\vert ^{2\left( 1-\frac{\alpha }{n}\right) },
\label{asym}
\end{equation}%
where $Q$ and $Q^{\prime }$ are quasicubes of equal side length $r$ and
distance $C_{0}r$ apart for some (fixed large) positive constant $C_{0}$
(for this argument we choose the unit vector $\mathbf{u}$ in (\ref{Ktalpha})
to point in the direction from $Q$ to $Q^{\prime }$). In the one weight case
treated in \cite{Ste} it is easy to obtain from this (even for a \emph{single%
} direction $\mathbf{u}$) the usual (symmetric) $A_{2}$ condition. Here we
will have to employ a different approach.

Now recall (see {Sec 2 of} \cite{Saw3} for the case of usual cubes, and the
case of half open, half closed quasicubes here is no different) that given
an open subset $\Phi $ of $\mathbb{R}^{n}$, we can choose $R\geq 3$
sufficiently large, depending only on the dimension, such that if $\left\{
Q_{j}^{k}\right\} _{j}$ are the dyadic quasicubes maximal among those dyadic
quasicubes $Q$ satisfying $RQ\subset \Phi $, then the following properties
hold:%
\begin{equation}
\left\{ 
\begin{array}{ll}
\text{(disjoint cover)} & \Phi =\bigcup_{j}Q_{j}\text{ and }Q_{j}\cap
Q_{i}=\emptyset \text{ if }i\neq j \\ 
\text{(Whitney condition)} & RQ_{j}\subset \Phi \text{ and }3RQ_{j}\cap \Phi
^{c}\neq \emptyset \text{ for all }j \\ 
\text{(finite overlap)} & \sum_{j}\chi _{3Q_{j}}\leq C\chi _{\Phi }%
\end{array}%
\right. .  \label{Whitney}
\end{equation}

So fix a pair of neighbouring quasicubes $\left( Q_{0}^{\prime
},Q_{0}\right) \in \Omega \mathcal{N}^{n}$,\ and let $\left\{ \mathsf{Q}%
_{i}\right\} _{i}$ be a Whitney decomposition into quasicubes of the set $%
\Phi \equiv \left( Q_{0}^{\prime }\times Q_{0}\right) \setminus \mathfrak{D}$
relative to the diagonal $\mathfrak{D}$ in $\mathbb{R}^{n}\times \mathbb{R}%
^{n}$. Of course, there are no common point masses of $\omega $ in $%
Q_{0}^{\prime }$ and $\sigma $ in $Q_{0}$ since the quasicubes $%
Q_{0}^{\prime }$ and $Q_{0}$ are disjoint. Note that if $\mathsf{Q}%
_{i}=Q_{i}^{\prime }\times Q_{i}$, then (\ref{asym}) can be written%
\begin{equation}
\left\vert \mathsf{Q}_{i}\right\vert _{\omega \times \sigma }\leq C\mathfrak{%
N}_{\alpha }\left( \mathbf{R}^{\alpha }\right) ^{2}\left\vert \mathsf{Q}%
_{i}\right\vert ^{1-\frac{\alpha }{n}},  \label{asym'}
\end{equation}%
where $\omega \times \sigma $ denotes product measure on $\mathbb{R}%
^{n}\times \mathbb{R}^{n}$. We choose $R$ sufficiently large in the Whitney
decomposition (\ref{Whitney}), depending on $C_{0}$, such that (\ref{asym'})
holds for all the Whitney quasicubes $\mathsf{Q}_{i}$. We have $%
\sum_{i}\left\vert \mathsf{Q}_{i}\right\vert =\left\vert Q^{\prime }\times
Q\right\vert =\left\vert Q\right\vert ^{2}$.

Moreover, if $\mathsf{R}=Q^{\prime }\times Q$ is a rectangle in $\mathbb{R}%
^{n}\times \mathbb{R}^{n}$ (i.e. $Q^{\prime },Q$ are quasicubes in $\mathbb{R%
}^{n}$), and if $\mathsf{R}=\overset{\cdot }{\cup }_{i}\mathsf{R}_{i}$ is a
finite disjoint union of rectangles $\mathsf{R}_{\alpha }$, then by
additivity of the product measure $\omega \times \sigma $, 
\begin{equation*}
\left\vert \mathsf{R}\right\vert _{\omega \times \sigma }=\sum_{i}\left\vert 
\mathsf{R}_{i}\right\vert _{\omega \times \sigma }.
\end{equation*}

Let $\mathsf{Q}_{0}=Q_{0}^{\prime }\times Q_{0}$ and set 
\begin{equation*}
\Lambda \equiv \left\{ \mathsf{Q}=Q^{\prime }\times Q:\mathsf{Q}\subset 
\mathsf{Q}_{0},\ell \left( Q\right) =\ell \left( Q^{\prime }\right) \approx
C_{0}^{-1}\limfunc{qdist}\left( Q,Q^{\prime }\right) \text{ and (\ref{asym})
holds}\right\} .
\end{equation*}%
Divide $Q_{0}$ into $2n\times 2n=4n^{2}$ congruent subquasicubes $%
Q_{0}^{1},...,Q_{0}^{4^{n}}$ of side length $\frac{1}{2}$, and set aside
those $Q_{0}^{j}\in \Lambda $ (those for which (\ref{asym}) holds) into a
collection of stopping cubes $\Gamma $. Continue to divide the remaining $%
Q_{0}^{j}\in \Lambda $ of side length $\frac{1}{4}$, and again, set aside
those $Q_{0}^{j,i}\in \Phi $ into $\Gamma $, and continue subdividing those
that remain. We continue with such subdivisions for $N$ generations so that
all the cubes \emph{not} set aside into $\Gamma $ have side length $2^{-N}$
. The important property these cubes have is that they all lie within
distance $Cr2^{-N}$ of the diagonal $\mathfrak{D}=\left\{ \left( x,x\right)
:\left( x,x\right) \in Q_{0}^{\prime }\times Q_{0}\right\} $ in $\mathsf{Q}%
_{0}=Q_{0}^{\prime }\times Q_{0}$ since (\ref{asym}) holds for all pairs of
cubes $Q^{\prime }$ and $Q$ of equal side length $r$ having distance
approximately $C_{0}r$ apart. Enumerate the cubes in $\Gamma $ as $\left\{ 
\mathsf{Q}_{i}\right\} _{i}$ and those remaining that are not in $\Gamma $
as $\left\{ \mathsf{P}_{j}\right\} _{j}$. Thus we have the pairwise disjoint
decomposition%
\begin{equation*}
\mathsf{Q}_{0}=\left( \dbigcup\limits_{i}\mathsf{Q}_{i}\right) \dbigcup
\left( \dbigcup\limits_{j}\mathsf{P}_{j}\right) .
\end{equation*}%
The countable additivity of the product measure $\omega \times \sigma $
shows that%
\begin{equation*}
\left\vert \mathsf{Q}_{0}\right\vert _{\omega \times \sigma
}=\sum_{i}\left\vert \mathsf{Q}_{i}\right\vert _{\omega \times \sigma
}+\sum_{j}\left\vert \mathsf{P}_{j}\right\vert _{\omega \times \sigma }\ .
\end{equation*}

Now we have%
\begin{equation*}
\sum_{i}\left\vert \mathsf{Q}_{i}\right\vert _{\omega \times \sigma
}\lesssim \sum_{i}\mathfrak{N}_{\alpha }\left( \mathbf{R}^{\alpha }\right)
^{2}\left\vert \mathsf{Q}_{i}\right\vert ^{1-\frac{\alpha }{n}},
\end{equation*}%
and 
\begin{eqnarray*}
\sum_{i}\left\vert \mathsf{Q}_{i}\right\vert ^{1-\frac{\alpha }{n}}
&=&\sum_{k\in \mathbb{Z}:\ 2^{k}\leq \ell \left( Q_{0}\right) }\sum_{i:\
\ell \left( Q_{i}\right) =2^{k}}\left( 2^{2nk}\right) ^{1-\frac{\alpha }{n}}
\\
&\approx &\sum_{k\in \mathbb{Z}:\ 2^{k}\leq \ell \left( Q_{0}\right) }\left( 
\frac{2^{k}}{\ell \left( Q_{0}\right) }\right) ^{-n}\left( 2^{2nk}\right)
^{1-\frac{\alpha }{n}}\ \ \ \text{(Whitney)} \\
&=&\ell \left( Q_{0}\right) ^{n}\sum_{k\in \mathbb{Z}:\ 2^{k}\leq \ell
\left( Q_{0}\right) }2^{nk\left( -1+2-\frac{2\alpha }{n}\right) } \\
&\leq &C_{\alpha }\ell \left( Q_{0}\right) ^{n}\ell \left( Q_{0}\right)
^{n\left( 1-\frac{2\alpha }{n}\right) }=C_{\alpha }\left\vert Q_{0}\times
Q_{0}\right\vert ^{2-\frac{2\alpha }{n}}=C_{\alpha }\left\vert \mathsf{Q}%
_{0}\right\vert ^{1-\frac{\alpha }{n}},
\end{eqnarray*}%
provided $0\leq \alpha <\frac{n}{2}$. Using that the side length of $\mathsf{%
P}_{j}=P_{j}\times P_{j}^{\prime }$ is $2^{-N}$ and $dist\left( \mathsf{P}%
_{j},\mathfrak{D}\right) \leq C_{r}2^{-N}$, we have the following limit,%
\begin{equation*}
\sum_{j}\left\vert \mathsf{P}_{j}\right\vert _{\omega \times \sigma
}=\left\vert \dbigcup\limits_{j}\mathsf{P}_{j}\right\vert _{\omega \times
\sigma }\rightarrow 0\text{ as }N\rightarrow \infty ,
\end{equation*}%
since $\dbigcup\limits_{j}\mathsf{P}_{j}$ shrinks to the empty set as $%
N\rightarrow \infty $, and since locally finite measures such as $\omega
\times \sigma $ are regular in Euclidean space. This completes the proof
that $\sqrt{A_{2}^{\alpha }}\lesssim \mathfrak{N}_{\alpha }\left( \mathbf{R}%
^{\alpha }\right) $ for the range $0\leq \alpha <\frac{n}{2}$.

Now we turn to proving $\sqrt{A_{2}^{\alpha }}\lesssim \mathfrak{N}_{\alpha
}\left( \mathbf{R}^{\alpha }\right) $ for the range $\frac{n}{2}\leq \alpha
<n$, where we assume the stronger ellipticity condition (\ref{Ktalpha strong}%
). So fix a pair of neighbouring quasicubes $\left( K^{\prime },K\right) \in
\Omega \mathcal{N}^{n}$, and assume that $\sigma +\omega $ doesn't charge
the intersection $\overline{K^{\prime }}\cap \overline{K}$ of the closures
of $K^{\prime }$ and $K$. It will be convenient to add another dimension by
replacing $n$ by $n+1$ and working with the preimages $Q^{\prime }=\Omega
^{-1}K^{\prime }$ and $Q=\Omega ^{-1}K$ that are usual cubes, and with the
corresponding pullbacks $\widetilde{\omega }=\mathcal{L}_{1}\times \Omega
^{\ast }\omega $ and $\widetilde{\sigma }=\mathcal{L}_{1}\times \Omega
^{\ast }\sigma $ of the measures $\omega $ and $\sigma $; here $\mathcal{L}%
_{1}$ is one-dimensional Lebesgue measure. We may also assume that 
\begin{equation*}
Q^{\prime }=\left[ -1,0\right) \times \dprod\limits_{i=1}^{n}Q_{i},\ \ \ \ \
Q=\left[ 0,1\right) \times \dprod\limits_{i=1}^{n}Q_{i}.
\end{equation*}%
where $Q_{i}=\left[ a_{i},b_{i}\right] $ for $1\leq i\leq n$ (since the
other cases are handled in similar fashion). It is important to note that we
are considering the intervals $Q_{i}$ here to be closed, and we will track
this difference as we proceed.

Choose $\theta _{1}\in \left[ a_{1},b_{1}\right] $ so that both 
\begin{equation*}
\left\vert \left[ -1,0\right) \times \left[ a_{1},\theta _{1}\right] \times
\dprod\limits_{i=2}^{n}Q_{i}\right\vert _{\widetilde{\omega }},\ \ \
\left\vert \left[ -1,0\right) \times \left[ \theta _{1},b_{1}\right] \times
\dprod\limits_{i=2}^{n}Q_{i}\right\vert _{\widetilde{\omega }}\geq \frac{1}{2%
}\left\vert Q^{\prime }\right\vert _{\widetilde{\omega }}.
\end{equation*}%
Now denote the two intervals $\left[ a_{1},\theta _{1}\right] $ and $\left[
\theta _{1},b_{1}\right] $ by $\left[ a_{1}^{\ast },b_{1}^{\ast }\right] $
and $\left[ a_{1}^{\ast \ast },b_{1}^{\ast \ast }\right] $ where the order
is chosen so that 
\begin{equation*}
\left\vert \left[ 0,1\right) \times \left[ a_{1}^{\ast },b_{1}^{\ast }\right]
\times \dprod\limits_{i=2}^{n}Q_{i}\right\vert _{\widetilde{\sigma }}\leq
\left\vert \left[ 0,1\right) \times \left[ a_{1}^{\ast \ast },b_{1}^{\ast
\ast }\right] \times \dprod\limits_{i=2}^{n}Q_{i}\right\vert _{\widetilde{%
\sigma }}.
\end{equation*}%
Then we have both%
\begin{eqnarray*}
\left\vert \left[ -1,0\right) \times \left[ a_{1}^{\ast },b_{1}^{\ast }%
\right] \times \dprod\limits_{i=2}^{n}Q_{i}\right\vert _{\widetilde{\omega }%
} &\geq &\frac{1}{2}\left\vert Q\right\vert _{\widetilde{\omega }}\ , \\
\left\vert \left[ 0,1\right) \times \left[ a_{1}^{\ast \ast },b_{1}^{\ast
\ast }\right] \times \dprod\limits_{i=2}^{n}Q_{i}\right\vert _{\widetilde{%
\sigma }} &\geq &\frac{1}{2}\left\vert Q\right\vert _{\widetilde{\sigma }}\ .
\end{eqnarray*}%
Now choose $\theta _{2}\in \left[ a_{2},b_{2}\right] $ so that both%
\begin{equation*}
\left\vert \left[ -1,0\right) \times \left[ a_{1}^{\ast },b_{1}^{\ast }%
\right] \times \left[ a_{2},\theta _{2}\right] \times
\dprod\limits_{i=3}^{n}Q_{i}\right\vert _{\widetilde{\omega }},\ \ \
\left\vert \left[ -1,0\right) \times \left[ a_{1}^{\ast },b_{1}^{\ast }%
\right] \times \left[ \theta _{2},b_{2}\right] \times
\dprod\limits_{i=3}^{n}Q_{i}\right\vert _{\widetilde{\omega }}\geq \frac{1}{4%
}\left\vert Q\right\vert _{\widetilde{\omega }},
\end{equation*}%
and denote the two intervals $\left[ a_{2},\theta _{2}\right] $ and $\left[
\theta _{2},b_{2}\right] $ by $\left[ a_{2}^{\ast },b_{2}^{\ast }\right] $
and $\left[ a_{2}^{\ast \ast },b_{2}^{\ast \ast }\right] $ where the order
is chosen so that%
\begin{equation*}
\left[ 0,1\right) \times \left\vert \left[ a_{1}^{\ast \ast },b_{1}^{\ast
\ast }\right] \times \left[ a_{2}^{\ast },b_{2}^{\ast }\right] \times
\dprod\limits_{i=2}^{n}Q_{i}\right\vert _{\widetilde{\sigma }}\leq
\left\vert \left[ 0,1\right) \times \left[ a_{1}^{\ast \ast },b_{1}^{\ast
\ast }\right] \times \left[ a_{2}^{\ast \ast },b_{2}^{\ast \ast }\right]
\times \dprod\limits_{i=2}^{n}Q_{i}\right\vert _{\widetilde{\sigma }}.
\end{equation*}%
Then we have both%
\begin{eqnarray*}
\left\vert \left[ -1,0\right) \times \left[ a_{1}^{\ast },b_{1}^{\ast }%
\right] \times \left[ a_{2}^{\ast },b_{2}^{\ast }\right] \times
\dprod\limits_{i=3}^{n}Q_{i}\right\vert _{\widetilde{\omega }} &\geq &\frac{1%
}{4}\left\vert Q\right\vert _{\widetilde{\omega }}\ , \\
\left\vert \left[ 0,1\right) \times \left[ a_{1}^{\ast \ast },b_{1}^{\ast
\ast }\right] \times \left[ a_{2}^{\ast \ast },b_{2}^{\ast \ast }\right]
\times \dprod\limits_{i=3}^{n}Q_{i}\right\vert _{\widetilde{\sigma }} &\geq &%
\frac{1}{4}\left\vert Q\right\vert _{\widetilde{\sigma }}\ .
\end{eqnarray*}%
Then we choose $\theta _{3}\in \left[ a_{3},b_{3}\right] $ so that both%
\begin{eqnarray*}
\left\vert \left[ -1,0\right) \times \left[ a_{1}^{\ast },b_{1}^{\ast }%
\right] \times \left[ a_{2}^{\ast },b_{2}^{\ast }\right] \times \left[
a_{3},\theta _{3}\right] \times \dprod\limits_{i=4}^{n}Q_{i}\right\vert _{%
\widetilde{\omega }} &\geq &\frac{1}{8}\left\vert Q\right\vert _{\widetilde{%
\omega }}, \\
\left\vert \left[ -1,0\right) \times \left[ a_{1}^{\ast },b_{1}^{\ast }%
\right] \times \left[ a_{2}^{\ast },b_{2}^{\ast }\right] \times \left[
\theta _{3},b_{3}\right] \times \dprod\limits_{i=4}^{n}Q_{i}\right\vert _{%
\widetilde{\omega }} &\geq &\frac{1}{8}\left\vert Q\right\vert _{\widetilde{%
\omega }},
\end{eqnarray*}%
and continuing in this way we end up with two rectangles,%
\begin{eqnarray*}
G &\equiv &\left[ -1,0\right) \times \left[ a_{1}^{\ast },b_{1}^{\ast }%
\right] \times \left[ a_{2}^{\ast },b_{2}^{\ast }\right] \times ...\left[
a_{n}^{\ast },b_{n}^{\ast }\right] , \\
H &\equiv &\left[ 0,1\right) \times \left[ a_{1}^{\ast \ast },b_{1}^{\ast
\ast }\right] \times \left[ a_{2}^{\ast \ast },b_{2}^{\ast \ast }\right]
\times ...\left[ a_{n}^{\ast \ast },b_{n}^{\ast \ast }\right] ,
\end{eqnarray*}%
that satisfy%
\begin{eqnarray*}
\left\vert G\right\vert _{\widetilde{\omega }} &=&\left\vert \left[
-1,0\right) \times \left[ a_{1}^{\ast },b_{1}^{\ast }\right] \times \left[
a_{2}^{\ast },b_{2}^{\ast }\right] \times ...\left[ a_{n}^{\ast
},b_{n}^{\ast }\right] \right\vert _{\widetilde{\omega }}\geq \frac{1}{2^{n}}%
\left\vert Q\right\vert _{\widetilde{\omega }}, \\
\left\vert H\right\vert _{\widetilde{\sigma }} &=&\left\vert \left[
0,1\right) \times \left[ a_{1}^{\ast \ast },b_{1}^{\ast \ast }\right] \times %
\left[ a_{2}^{\ast \ast },b_{2}^{\ast \ast }\right] \times ...\left[
a_{n}^{\ast \ast },b_{n}^{\ast \ast }\right] \right\vert _{\widetilde{\sigma 
}}\geq \frac{1}{2^{n}}\left\vert Q\right\vert _{\widetilde{\sigma }}.
\end{eqnarray*}

However, the quasirectangles $\Omega G$ and $\Omega H$ lie in opposing quasi-%
$n$-ants at the vertex $\Omega \theta =\Omega \left( \theta _{1},\theta
_{2},...,\theta _{n}\right) $, and so we can apply (\ref{Ktalpha strong}) to
obtain that for $x\in \Omega G$,%
\begin{eqnarray*}
\left\vert \sum_{j=1}^{J}\lambda _{j}^{m}T_{j}^{\alpha }\left( \mathbf{1}%
_{\Omega H}\sigma \right) \left( x\right) \right\vert &=&\left\vert
\int_{\Omega H}\sum_{j=1}^{J}\lambda _{j}^{m}K_{j}^{\alpha }\left(
x,y\right) d\sigma \left( y\right) \right\vert \\
&\gtrsim &\int_{\Omega H}\left\vert x-y\right\vert ^{\alpha -n}d\sigma
\left( y\right) \gtrsim \left\vert \Omega Q\right\vert ^{\frac{\alpha }{n}%
-1}\left\vert \Omega H\right\vert _{\sigma }.
\end{eqnarray*}%
For the inequality above, we need to know that the distinguished point $%
\Omega \theta $ is not a common point mass of $\sigma $ and $\omega $, but
this follows from our assumption that $\sigma +\omega $ doesn't charge the
intersection $\overline{K^{\prime }}\cap \overline{K}$ of the closures of $%
K^{\prime }$ and $K$. Then from the norm inequality we get%
\begin{eqnarray*}
\left\vert \Omega G\right\vert _{\omega }\left( \left\vert \Omega
Q\right\vert ^{\frac{\alpha }{n}-1}\left\vert \Omega H\right\vert _{\sigma
}\right) ^{2} &\lesssim &\int_{G}\left\vert \sum_{j=1}^{J}\lambda
_{j}^{m}T_{j}^{\alpha }\left( \mathbf{1}_{\Omega H}\sigma \right)
\right\vert ^{2}d\omega \\
&\lesssim &\mathfrak{N}_{\sum_{j=1}^{J}\lambda _{j}^{m}T_{j}^{\alpha
}}^{2}\int \mathbf{1}_{\Omega H}^{2}d\sigma =\mathfrak{N}_{\sum_{j=1}^{J}%
\lambda _{j}^{m}T_{j}^{\alpha }}^{2}\left\vert \Omega H\right\vert _{\sigma
},
\end{eqnarray*}%
from which we deduce that%
\begin{eqnarray*}
\left\vert \Omega Q\right\vert ^{2\left( \frac{\alpha }{n}-1\right)
}\left\vert \Omega Q^{\prime }\right\vert _{\omega }\left\vert \Omega
Q\right\vert _{\sigma } &\lesssim &2^{2n}\left\vert \Omega Q\right\vert
^{2\left( \frac{\alpha }{n}-1\right) }\left\vert \Omega G\right\vert
_{\omega }\left\vert \Omega H\right\vert _{\sigma }\lesssim 2^{2n}\mathfrak{N%
}_{\sum_{j=1}^{J}\lambda _{j}^{m}T_{j}^{\alpha }}^{2}; \\
\left\vert K\right\vert ^{2\left( \frac{\alpha }{n}-1\right) }\left\vert
K^{\prime }\right\vert _{\omega }\left\vert K\right\vert _{\sigma }
&\lesssim &2^{2n}\mathfrak{N}_{\sum_{j=1}^{J}\lambda _{j}^{m}T_{j}^{\alpha
}}^{2}\ ,
\end{eqnarray*}%
and hence%
\begin{equation*}
A_{2}^{\alpha }\lesssim 2^{2n}\mathfrak{N}_{\sum_{j=1}^{J}\lambda
_{j}^{m}T_{j}^{\alpha }}^{2}\ .
\end{equation*}

Thus we have obtained the offset $A_{2}^{\alpha }$ condition for pairs $%
\left( K^{\prime },K\right) \in \Omega \mathcal{N}^{n}$ such that $\sigma
+\omega $ doesn't charge the intersection $\overline{K^{\prime }}\cap 
\overline{K}$ of the closures of $K^{\prime }$ and $K$. From this and the
argument at the beginning of this proof, we obtain the one-tailed $\mathcal{A%
}_{2}^{\alpha }$ conditions. Indeed, we note that $\left\vert \partial
\left( rQ\right) \right\vert _{\sigma +\omega }>0$ for only a countable
number of dilates $r>1$, and so a limiting argument applies. This completes
the proof of Lemma \ref{necc frac A2}.
\end{proof}

\section{Monotonicity Lemma and Energy lemma}

The Monotonicity Lemma below will be used to prove the Energy Lemma, which
is then used in several places in the proof of Theorem \ref{T1 theorem}. The
formulation of the Monotonicity Lemma with $m=2$ for cubes is due to M.
Lacey and B. Wick \cite{LaWi}, and corrects that used in early versions of
our paper \cite{SaShUr5}.

\subsection{The Monotonicity Lemma}

For $0\leq \alpha <n$ and $m\in \mathbb{R}_{+}$, we recall the $m$-weighted
fractional Poisson integral%
\begin{equation*}
\mathrm{P}_{m}^{\alpha }\left( J,\mu \right) \equiv \int_{\mathbb{R}^{n}}%
\frac{\left\vert J\right\vert ^{\frac{m}{n}}}{\left( \left\vert J\right\vert
^{\frac{1}{n}}+\left\vert y-c_{J}\right\vert \right) ^{n+m-\alpha }}d\mu
\left( y\right) ,
\end{equation*}%
where $\mathrm{P}_{1}^{\alpha }\left( J,\mu \right) =\mathrm{P}^{\alpha
}\left( J,\mu \right) $ is the standard Poisson integral.

\begin{lemma}[Monotonicity]
\label{mono}Suppose that$\ I$ and $J$ are quasicubes in $\mathbb{R}^{n}$
such that $J\subset 2J\subset I$, and that $\mu $ is a signed measure on $%
\mathbb{R}^{n}$ supported outside $I$. Finally suppose that $T^{\alpha }$ is
a standard fractional singular integral on $\mathbb{R}^{n}$ with $0<\alpha
<n $. Then we have the estimate%
\begin{equation}
\left\Vert \bigtriangleup _{J}^{\omega }T^{\alpha }\mu \right\Vert
_{L^{2}\left( \omega \right) }\lesssim \Phi ^{\alpha }\left( J,\left\vert
\mu \right\vert \right) ,  \label{estimate}
\end{equation}%
where for a positive measure $\nu $,%
\begin{eqnarray*}
\Phi ^{\alpha }\left( J,\nu \right) ^{2} &\equiv &\left( \frac{\mathrm{P}%
^{\alpha }\left( J,\nu \right) }{\left\vert J\right\vert ^{\frac{1}{n}}}%
\right) ^{2}\left\Vert \bigtriangleup _{J}^{\omega }\mathbf{x}\right\Vert
_{L^{2}\left( \omega \right) }^{2}+\left( \frac{\mathrm{P}_{1+\delta
}^{\alpha }\left( J,\nu \right) }{\left\vert J\right\vert ^{\frac{1}{n}}}%
\right) ^{2}\left\Vert \mathbf{x}-\mathbf{m}_{J}\right\Vert _{L^{2}\left( 
\mathbf{1}_{J}\omega \right) }^{2}\ , \\
\mathbf{m}_{J} &\equiv &\mathbb{E}_{J}^{\omega }\mathbf{x}=\frac{1}{%
\left\vert J\right\vert _{\omega }}\int_{J}\mathbf{x}d\omega .
\end{eqnarray*}
\end{lemma}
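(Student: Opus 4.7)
The strategy rests on two observations: the Haar projection $\triangle_{J}^{\omega}$ annihilates constants and functions of $x$ alone that are equal $\omega$-a.e.\ to the same constant on $J$, and the kernel $K^{\alpha}(\cdot,y)$ is $C^{1,\delta}$ in the first variable at scale $|J|^{1/n}$ away from $y$. Since $2J\subset I$ and $\mu$ is supported in $I^{c}$, the support of $\mu$ is separated from $J$ by a distance comparable to $|J|^{1/n}$, so we may perform a first-order Taylor expansion of the kernel about the $\omega$-center of mass $\mathbf{m}_{J}\in J$:
\[
K^{\alpha}(x,y)=K^{\alpha}(\mathbf{m}_{J},y)+\nabla_{x}K^{\alpha}(\mathbf{m}_{J},y)\cdot(x-\mathbf{m}_{J})+R(x,y),
\]
with remainder controlled by $|R(x,y)|\lesssim |x-\mathbf{m}_{J}|^{1+\delta}|y-\mathbf{m}_{J}|^{\alpha-n-1-\delta}$ for $x\in J$ and $y\in I^{c}$, using the third and fourth lines of \eqref{sizeandsmoothness'}.

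Integrating against $d\mu(y)$ I would write
\[
T^{\alpha}\mu(x)=c_{0}+\mathbf{v}\cdot(x-\mathbf{m}_{J})+\mathcal{R}(x),
\qquad \mathbf{v}=\int\nabla_{x}K^{\alpha}(\mathbf{m}_{J},y)\,d\mu(y),
\qquad \mathcal{R}(x)=\int R(x,y)\,d\mu(y),
\]
where $c_{0}$ is constant in $x$. Applying $\triangle_{J}^{\omega}$ kills the constant and passes through the inner product with $\mathbf{v}$, leaving $\triangle_{J}^{\omega}T^{\alpha}\mu=\mathbf{v}\cdot\triangle_{J}^{\omega}\mathbf{x}+\triangle_{J}^{\omega}\mathcal{R}$. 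The size estimate on $\nabla K^{\alpha}$ and the separation of $\mu$ from $J$ give $|\mathbf{v}|\lesssim \int |y-\mathbf{m}_{J}|^{\alpha-n-1}\,d|\mu|(y)\lesssim |J|^{-1/n}\mathrm{P}^{\alpha}(J,|\mu|)$, so the linear contribution is bounded by $|J|^{-1/n}\mathrm{P}^{\alpha}(J,|\mu|)\,\|\triangle_{J}^{\omega}\mathbf{x}\|_{L^{2}(\omega)}$, matching the first term of $\Phi^{\alpha}(J,|\mu|)$.

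For the remainder, since $|x-\mathbf{m}_{J}|\leq \mathrm{diam}(J)\lesssim |J|^{1/n}$ for $x\in J$, I would pull out one factor of $|J|^{\delta/n}$ from $|x-\mathbf{m}_{J}|^{1+\delta}$:
\[
|\mathcal{R}(x)|\lesssim |J|^{\delta/n}|x-\mathbf{m}_{J}|\int\frac{d|\mu|(y)}{(|J|^{1/n}+|y-c_{J}|)^{n+1+\delta-\alpha}}
=\frac{\mathrm{P}_{1+\delta}^{\alpha}(J,|\mu|)}{|J|^{1/n}}|x-\mathbf{m}_{J}|,
\]
where the last equality follows from the very definition of $\mathrm{P}_{1+\delta}^{\alpha}$. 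Because $\triangle_{J}^{\omega}$ is an orthogonal projection in $L^{2}(\omega)$, I use $\|\triangle_{J}^{\omega}\mathcal{R}\|_{L^{2}(\omega)}\leq \|\mathcal{R}\|_{L^{2}(\mathbf{1}_{J}\omega)}\lesssim |J|^{-1/n}\mathrm{P}_{1+\delta}^{\alpha}(J,|\mu|)\,\|\mathbf{x}-\mathbf{m}_{J}\|_{L^{2}(\mathbf{1}_{J}\omega)}$, which is the second term of $\Phi^{\alpha}(J,|\mu|)$. Adding the two contributions completes the estimate.

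The only delicate technical point is verifying the Taylor remainder bound when the ratio $|x-\mathbf{m}_{J}|/|y-\mathbf{m}_{J}|$ is not automatically $\leq 1/2$, since the hypothesis $2J\subset I$ only forces this ratio to be bounded. This is handled either by subdividing the segment from $\mathbf{m}_{J}$ to $x$ and applying the smoothness estimate in \eqref{sizeandsmoothness'} on the pieces where the ratio is $\leq 1/2$, or equivalently by rewriting the remainder via $R(x,y)=\int_{0}^{1}[\nabla K^{\alpha}(\mathbf{m}_{J}+t(x-\mathbf{m}_{J}),y)-\nabla K^{\alpha}(\mathbf{m}_{J},y)]\cdot(x-\mathbf{m}_{J})\,dt$ and using that $|y-(\mathbf{m}_{J}+t(x-\mathbf{m}_{J}))|\approx |y-\mathbf{m}_{J}|$ uniformly in $t\in[0,1]$ (since $y\in I^{c}$). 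This is the one step requiring care; everything else is bookkeeping.
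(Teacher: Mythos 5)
Your proof is correct and follows essentially the same route as the paper: a first-order Taylor expansion of $K^{\alpha}(\cdot,y)$ about $\mathbf{m}_{J}$, with the constant killed by $\triangle_{J}^{\omega}$, the linear piece estimated by the size bound on $\nabla K^{\alpha}$ yielding $\mathrm{P}^{\alpha}$, and the remainder estimated by the H\"older bound on $\nabla K^{\alpha}$ yielding $\mathrm{P}_{1+\delta}^{\alpha}$ (the paper merely phrases this via the Haar coefficients $\langle T^{\alpha}\mu,h_{J}^{\omega,a}\rangle_{\omega}$ and the vanishing-mean property rather than applying $\triangle_{J}^{\omega}$ to a pointwise expansion of $T^{\alpha}\mu$). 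Your closing remark about the ratio $|x-\mathbf{m}_{J}|/|x-y|$ being only $O(1)$ rather than $\leq 1/2$ is a legitimate technical point that the paper's proof elides, and the telescoping fix you indicate is the standard remedy.
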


\begin{proof}
Let $\left\{ h_{J}^{\omega ,a}\right\} _{a\in \Gamma }$ be an orthonormal
basis of $L_{J}^{2}\left( \mu \right) $ as in a previous subsection. Now we
use the smoothness estimate (\ref{sizeandsmoothness'}), together with
Taylor's formula and the vanishing mean of the quasiHaar functions $%
h_{J}^{\omega ,a}$ and $\mathbf{m}_{J}\equiv \frac{1}{\left\vert
J\right\vert _{\mu }}\int_{J}\mathbf{x}d\mu \left( x\right) \in J$, to
obtain 
\begin{eqnarray*}
\left\vert \left\langle T^{\alpha }\mu ,h_{J}^{\omega ,a}\right\rangle
_{\omega }\right\vert &=&\left\vert \int \left\{ \int K^{\alpha }\left(
x,y\right) h_{J}^{\omega ,a}\left( x\right) d\omega \left( x\right) \right\}
d\mu \left( y\right) \right\vert =\left\vert \int \left\langle K_{y}^{\alpha
},h_{J}^{\omega ,a}\right\rangle _{\omega }d\mu \left( y\right) \right\vert
\\
&=&\left\vert \int \left\langle K_{y}^{\alpha }\left( x\right)
-K_{y}^{\alpha }\left( \mathbf{m}_{J}\right) ,h_{J}^{\omega ,a}\right\rangle
_{\omega }d\mu \left( y\right) \right\vert \\
&\leq &\left\vert \left\langle \left[ \int \nabla K_{y}^{\alpha }\left( 
\mathbf{m}_{J}\right) d\mu \left( y\right) \right] \left( \mathbf{x}-\mathbf{%
m}_{J}\right) ,h_{J}^{\omega ,a}\right\rangle _{\omega }\right\vert \\
&&+\left\langle \left[ \int \sup_{\mathbf{\theta }_{J}\in J}\left\vert
\nabla K_{y}^{\alpha }\left( \mathbf{\theta }_{J}\right) -\nabla
K_{y}^{\alpha }\left( \mathbf{m}_{J}\right) \right\vert d\mu \left( y\right) %
\right] \left\vert \mathbf{x}-\mathbf{m}_{J}\right\vert ,\left\vert
h_{J}^{\omega ,a}\right\vert \right\rangle _{\omega } \\
&\lesssim &C_{CZ}\frac{\mathrm{P}^{\alpha }\left( J,\left\vert \mu
\right\vert \right) }{\left\vert J\right\vert ^{\frac{1}{n}}}\left\Vert
\bigtriangleup _{J}^{\omega }\mathbf{x}\right\Vert _{L^{2}\left( \omega
\right) }+C_{CZ}\frac{\mathrm{P}_{1+\delta }^{\alpha }\left( J,\left\vert
\mu \right\vert \right) }{\left\vert J\right\vert ^{\frac{1}{n}}}\left\Vert 
\mathbf{x}-\mathbf{m}_{J}\right\Vert _{L^{2}\left( \mathbf{1}_{J}\omega
\right) }.
\end{eqnarray*}
\end{proof}

\subsection{The Energy Lemma}

Suppose now we are given a subset $\mathcal{H}$ of the dyadic quasigrid $%
\Omega \mathcal{D}^{\omega }$. Let $\mathsf{P}_{\mathcal{H}}^{\omega
}=\sum_{J\in \mathcal{H}}\bigtriangleup _{J}^{\omega }$ be the corresponding 
$\omega $-quasiHaar projection. We define $\mathcal{H}^{\ast }\equiv
\dbigcup\limits_{J\in \mathcal{H}}\left\{ J^{\prime }\in \Omega \mathcal{D}%
^{\omega }:J^{\prime }\subset J\right\} $.

\begin{lemma}[\textbf{Energy Lemma}]
\label{ener}Let $J\ $be a quasicube in $\Omega \mathcal{D}^{\omega }$. Let $%
\Psi _{J}$ be an $L^{2}\left( \omega \right) $ function supported in $J$ and
with $\omega $-integral zero, and denote its quasiHaar support by $\mathcal{H%
}=\limfunc{supp}\widehat{\Psi _{J}}$. Let $\nu $ be a positive measure
supported in $\mathbb{R}^{n}\setminus \gamma J$ with $\gamma \geq 2$, and
for each $J^{\prime }\in \mathcal{H}$, let $d\nu _{J^{\prime }}=\varphi
_{J^{\prime }}d\nu $ with $\left\vert \varphi _{J^{\prime }}\right\vert \leq
1$. Let $T^{\alpha }$ be a standard $\alpha $-fractional singular integral
operator with $0\leq \alpha <n$. Then with $\delta ^{\prime }=\frac{\delta }{%
2}$ we have%
\begin{eqnarray*}
\left\vert \sum_{J^{\prime }\in \mathcal{H}}\left\langle T^{\alpha }\left(
\nu _{J^{\prime }}\right) ,\bigtriangleup _{J^{\prime }}^{\omega }\Psi
_{J}\right\rangle _{\omega }\right\vert &\lesssim &\left\Vert \Psi
_{J}\right\Vert _{L^{2}\left( \omega \right) }\left( \frac{\mathrm{P}%
^{\alpha }\left( J,\nu \right) }{\left\vert J\right\vert ^{\frac{1}{n}}}%
\right) \left\Vert \mathsf{P}_{\mathcal{H}}^{\omega }\mathbf{x}\right\Vert
_{L^{2}\left( \omega \right) } \\
&&+\left\Vert \Psi _{J}\right\Vert _{L^{2}\left( \omega \right) }\frac{1}{%
\gamma ^{\delta ^{\prime }}}\left( \frac{\mathrm{P}_{1+\delta ^{\prime
}}^{\alpha }\left( J,\nu \right) }{\left\vert J\right\vert ^{\frac{1}{n}}}%
\right) \left\Vert \mathsf{P}_{\mathcal{H}^{\ast }}^{\omega }\mathbf{x}%
\right\Vert _{L^{2}\left( \omega \right) } \\
&\lesssim &\left\Vert \Psi _{J}\right\Vert _{L^{2}\left( \omega \right)
}\left( \frac{\mathrm{P}^{\alpha }\left( J,\nu \right) }{\left\vert
J\right\vert ^{\frac{1}{n}}}\right) \left\Vert \mathsf{P}_{\mathcal{H}^{\ast
}}^{\omega }\mathbf{x}\right\Vert _{L^{2}\left( \omega \right) },
\end{eqnarray*}%
and in particular the `pivotal' bound%
\begin{equation*}
\left\vert \left\langle T^{\alpha }\left( \nu \right) ,\Psi
_{J}\right\rangle _{\omega }\right\vert \leq C\left\Vert \Psi
_{J}\right\Vert _{L^{2}\left( \omega \right) }\mathrm{P}^{\alpha }\left(
J,\left\vert \nu \right\vert \right) \sqrt{\left\vert J\right\vert _{\omega }%
}\ .
\end{equation*}
\end{lemma}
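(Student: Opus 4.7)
The plan is to reduce to a $J'$-by-$J'$ application of the Monotonicity Lemma and then reassemble via Cauchy--Schwarz in the quasiHaar variables. First I would expand $\bigtriangleup_{J'}^{\omega}\Psi_J=\sum_{a\in\Gamma_n}\widehat{\Psi_J}(J';a)\,h_{J'}^{\omega,a}$, so that
\begin{equation*}
\sum_{J'\in\mathcal{H}}\langle T^{\alpha}\nu_{J'},\bigtriangleup_{J'}^{\omega}\Psi_J\rangle_{\omega}
=\sum_{J'\in\mathcal{H}}\sum_{a\in\Gamma_n}\widehat{\Psi_J}(J';a)\,\langle T^{\alpha}\nu_{J'},h_{J'}^{\omega,a}\rangle_{\omega}.
\end{equation*}
Because each $J'\in\mathcal{H}$ satisfies $J'\subset J$ while $|\nu_{J'}|\le\nu$ is supported outside $\gamma J\supset 2J'$ (using $\gamma\ge 2$), the Monotonicity Lemma applies to every inner pairing and yields the pointwise bound
\begin{equation*}
|\langle T^{\alpha}\nu_{J'},h_{J'}^{\omega,a}\rangle_{\omega}|\lesssim \frac{\mathrm{P}^{\alpha}(J',\nu)}{|J'|^{1/n}}\|\bigtriangleup_{J'}^{\omega}\mathbf{x}\|_{L^{2}(\omega)}+\frac{\mathrm{P}^{\alpha}_{1+\delta}(J',\nu)}{|J'|^{1/n}}\|\mathbf{x}-\mathbf{m}_{J'}\|_{L^{2}(\mathbf{1}_{J'}\omega)}.
\end{equation*}
Cauchy--Schwarz in $(J',a)$ then dominates the whole sum by $\|\Psi_J\|_{L^{2}(\omega)}$ times the $\ell^{2}$-norm of the right-hand side over $J'\in\mathcal{H}$.

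For the first (unweighted Poisson) term I would use that, for every $y\in\operatorname{supp}\nu$, one has $|y-c_{J'}|\approx|y-c_J|\gtrsim|J|^{1/n}\gtrsim|J'|^{1/n}$, so the size factor
\begin{equation*}
\frac{\mathrm{P}^{\alpha}(J',\nu)}{|J'|^{1/n}}\approx \int\frac{d\nu(y)}{|y-c_J|^{n+1-\alpha}}\approx \frac{\mathrm{P}^{\alpha}(J,\nu)}{|J|^{1/n}}
\end{equation*}
is independent of $J'\in\mathcal{H}$ up to constants. Pulling it out of the $\ell^{2}$-sum and invoking Parseval gives $\sum_{J'\in\mathcal{H}}\|\bigtriangleup_{J'}^{\omega}\mathbf{x}\|_{L^{2}(\omega)}^{2}=\|\mathsf{P}_{\mathcal{H}}^{\omega}\mathbf{x}\|_{L^{2}(\omega)}^{2}$, which accounts for the first line of the asserted estimate.

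The second term is where the separation gain $\gamma^{-\delta'}$ must be extracted, and is the main technical obstacle. Using $|y-c_{J'}|\gtrsim\gamma|J'|^{1/n}$ I would trade a power of the higher-order decay against the distance factor via
\begin{equation*}
\frac{|J'|^{\delta/n}}{(|J'|^{1/n}+|y-c_{J'}|)^{\delta-\delta'}}\le \gamma^{-\delta'}|J'|^{\delta'/n},\qquad \delta'=\delta/2,
\end{equation*}
producing the bound $\mathrm{P}^{\alpha}_{1+\delta}(J',\nu)/|J'|^{1/n}\lesssim \gamma^{-\delta'}(|J'|/|J|)^{\delta'/n}\,\mathrm{P}^{\alpha}_{1+\delta'}(J,\nu)/|J|^{1/n}$. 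After expanding $\|\mathbf{x}-\mathbf{m}_{J'}\|_{L^{2}(\mathbf{1}_{J'}\omega)}^{2}=\sum_{K\subset J'}\|\bigtriangleup_{K}^{\omega}\mathbf{x}\|_{L^{2}(\omega)}^{2}$ and swapping the order of summation, the claim reduces to showing that for each $K\in\mathcal{H}^{\ast}$ the geometric series $\sum_{J'\in\mathcal{H},\,J'\supset K}(|J'|/|J|)^{2\delta'/n}$ is bounded independently of $K$; this is summable precisely because $\delta'>0$, and converts the double sum into $\|\mathsf{P}_{\mathcal{H}^{\ast}}^{\omega}\mathbf{x}\|_{L^{2}(\omega)}^{2}$. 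The third (combined) line of the lemma then follows by dominating $\mathrm{P}^{\alpha}_{1+\delta'}\lesssim \mathrm{P}^{\alpha}$.

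Finally, the pivotal bound is an immediate consequence of the combined estimate applied with $\Psi_J$ equal to the given $L^{2}(\omega)$ function and $\varphi_{J'}\equiv 1$: since $\mathcal{H}^{\ast}\subset\{K\in\Omega\mathcal{D}^{\omega}:K\subset J\}$,
\begin{equation*}
\|\mathsf{P}_{\mathcal{H}^{\ast}}^{\omega}\mathbf{x}\|_{L^{2}(\omega)}\le \|\mathsf{P}_{J}^{\omega}\mathbf{x}\|_{L^{2}(\omega)}=\|\mathbf{x}-\mathbf{m}_J\|_{L^{2}(\mathbf{1}_J\omega)}\le |J|^{1/n}\sqrt{|J|_{\omega}},
\end{equation*}
and multiplying through by $\mathrm{P}^{\alpha}(J,|\nu|)/|J|^{1/n}$ yields exactly the pivotal constant $\mathrm{P}^{\alpha}(J,|\nu|)\sqrt{|J|_{\omega}}$.
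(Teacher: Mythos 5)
Your proposal is correct and follows essentially the same path as the paper's proof: apply the Monotonicity Lemma $J'$-by-$J'$, use the Poisson equivalence $\mathrm{P}^{\alpha}_m(J',\nu)/|J'|^{m/n}\approx\mathrm{P}^{\alpha}_m(J,\nu)/|J|^{m/n}$ together with $|\nu_{J'}|\le\nu$, apply Cauchy--Schwarz in $J'$, and for the $1+\delta$ term swap the double sum over $J'$ and $K\subset J'$ and exploit the geometric decay in scale (trading half the extra $\delta$-decay against the $\gamma$-separation). The only cosmetic difference is that you extract $\gamma^{-\delta'}$ using $|J'|^{1/n}+|y-c_{J'}|\gtrsim\gamma|J'|^{1/n}$, ending with the ratio $(|J'|/|J|)^{\delta'/n}$, whereas the paper uses $|J|^{1/n}+|y-c_J|\gtrsim\gamma|J|^{1/n}$ and gets $(|J'|/|J|)^{\delta/n}$; both exponents are positive, so the geometric series over scales converges and the argument is unaffected.
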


\begin{remark}
The first term on the right side of the energy inequality above is the `big'
Poisson integral $\mathrm{P}^{\alpha }$ times the `small' energy term $%
\left\Vert \mathsf{P}_{\mathcal{H}}^{\omega }\mathbf{x}\right\Vert
_{L^{2}\left( \omega \right) }^{2}$ that is additive in $\mathcal{H}$, while
the second term on the right is the `small' Poisson integral $\mathrm{P}%
_{1+\delta ^{\prime }}^{\alpha }$ times the `big' energy term $\left\Vert 
\mathsf{P}_{\mathcal{H}^{\ast }}^{\omega }\mathbf{x}\right\Vert
_{L^{2}\left( \omega \right) }$ that is no longer additive in $\mathcal{H}$.
The first term presents no problems in subsequent analysis due solely to the
additivity of\ the `small' energy term. It is the second term that must be
handled by special methods. For example, in the Intertwining Proposition
below, the interaction of the singular integral occurs with a pair of
quasicubes $J\subset I$ at \emph{highly separated} levels, where the
goodness of $J$ can exploit the decay $\delta ^{\prime }$ in the kernel of
the `small' Poisson integral $\mathrm{P}_{1+\delta ^{\prime }}^{\alpha }$
relative to the `big' Poisson integral $\mathrm{P}^{\alpha }$, and results
in a bound directly by the quasienergy condition. On the other hand, in the
local recursion of M. Lacey at the end of the \ paper, the separation of
levels in the pairs $J\subset I$ can be as \emph{little} as a fixed
parameter $\mathbf{\rho }$, and here we must first separate the stopping
form into two sublinear forms that involve the two estimates respectively.
The form corresponding to the smaller Poisson integral $\mathrm{P}_{1+\delta
^{\prime }}^{\alpha }$ is again handled using goodness and the decay $\delta
^{\prime }$ in the kernel, while the form corresponding to the larger
Poisson integral $\mathrm{P}^{\alpha }$ requires the stopping time and
recursion argument of M. Lacey.
\end{remark}

\begin{proof}
Using the Monotonicity Lemma \ref{mono}, followed by $\left\vert \nu
_{J^{\prime }}\right\vert \leq \nu $ and the Poisson equivalence 
\begin{equation}
\frac{\mathrm{P}_{m}^{\alpha }\left( J^{\prime },\nu \right) }{\left\vert
J^{\prime }\right\vert ^{\frac{m}{n}}}\approx \frac{\mathrm{P}_{m}^{\alpha
}\left( J,\nu \right) }{\left\vert J\right\vert ^{\frac{m}{n}}},\ \ \ \ \
J^{\prime }\subset J\subset 2J,\ \ \ \limfunc{supp}\nu \cap 2J=\emptyset ,
\label{Poisson equiv}
\end{equation}%
we have%
\begin{eqnarray*}
&&\left\vert \sum_{J^{\prime }\in \mathcal{H}}\left\langle T^{\alpha }\left(
\nu _{J^{\prime }}\right) ,\bigtriangleup _{J^{\prime }}^{\omega }\Psi
_{J}\right\rangle _{\omega }\right\vert =\left\vert \sum_{J^{\prime }\in 
\mathcal{H}}\left\langle \bigtriangleup _{J^{\prime }}^{\omega }T^{\alpha
}\left( \nu _{J^{\prime }}\right) ,\bigtriangleup _{J^{\prime }}^{\omega
}\Psi _{J}\right\rangle _{\omega }\right\vert \\
&\lesssim &\sum_{J^{\prime }\in \mathcal{H}}\Phi ^{\alpha }\left( J^{\prime
},\left\vert \nu _{J^{\prime }}\right\vert \right) \left\Vert \bigtriangleup
_{J^{\prime }}^{\omega }\Psi _{J}\right\Vert _{L^{2}\left( \omega \right) }
\\
&\lesssim &\left( \sum_{J^{\prime }\in \mathcal{H}}\left( \frac{\mathrm{P}%
^{\alpha }\left( J^{\prime },\nu \right) }{\left\vert J^{\prime }\right\vert
^{\frac{1}{n}}}\right) ^{2}\left\Vert \bigtriangleup _{J^{\prime }}^{\omega }%
\mathbf{x}\right\Vert _{L^{2}\left( \omega \right) }^{2}\right) ^{\frac{1}{2}%
}\left( \sum_{J^{\prime }\in \mathcal{H}}\left\Vert \bigtriangleup
_{J^{\prime }}^{\omega }\Psi _{J}\right\Vert _{L^{2}\left( \omega \right)
}^{2}\right) ^{\frac{1}{2}} \\
&&+\left( \sum_{J^{\prime }\in \mathcal{H}}\left( \frac{\mathrm{P}_{1+\delta
}^{\alpha }\left( J^{\prime },\nu \right) }{\left\vert J^{\prime
}\right\vert ^{\frac{1}{n}}}\right) ^{2}\sum_{J^{\prime \prime }\subset
J^{\prime }}\left\Vert \bigtriangleup _{J^{\prime \prime }}^{\omega }\mathbf{%
x}\right\Vert _{L^{2}\left( \omega \right) }^{2}\right) ^{\frac{1}{2}}\left(
\sum_{J^{\prime }\in \mathcal{H}}\left\Vert \bigtriangleup _{J^{\prime
}}^{\omega }\Psi _{J}\right\Vert _{L^{2}\left( \omega \right) }^{2}\right) ^{%
\frac{1}{2}} \\
&\lesssim &\left( \frac{\mathrm{P}^{\alpha }\left( J,\nu \right) }{%
\left\vert J\right\vert ^{\frac{1}{n}}}\right) \left\Vert \mathsf{P}_{%
\mathcal{H}}^{\omega }\mathbf{x}\right\Vert _{L^{2}\left( \omega \right)
}\left\Vert \Psi _{J}\right\Vert _{L^{2}\left( \omega \right) }+\frac{1}{%
\gamma ^{\delta ^{\prime }}}\left( \frac{\mathrm{P}_{1+\delta ^{\prime
}}^{\alpha }\left( J,\nu \right) }{\left\vert J\right\vert ^{\frac{1}{n}}}%
\right) \left\Vert \mathsf{P}_{\mathcal{H}^{\ast }}^{\omega }\mathbf{x}%
\right\Vert _{L^{2}\left( \omega \right) }\left\Vert \Psi _{J}\right\Vert
_{L^{2}\left( \omega \right) }\ .
\end{eqnarray*}%
The last inequality follows from

\begin{eqnarray*}
&&\sum_{J^{\prime }\in \mathcal{H}}\left( \frac{\mathrm{P}_{1+\delta
}^{\alpha }\left( J^{\prime },\nu \right) }{\left\vert J^{\prime
}\right\vert ^{\frac{1}{n}}}\right) ^{2}\sum_{J^{\prime \prime }\subset
J^{\prime }}\left\Vert \bigtriangleup _{J^{\prime \prime }}^{\omega }\mathbf{%
x}\right\Vert _{L^{2}\left( \omega \right) }^{2} \\
&=&\sum_{J^{\prime \prime }\subset J}\left\{ \sum_{J^{\prime }:\ J^{\prime
\prime }\subset J^{\prime }\subset J}\left( \frac{\mathrm{P}_{1+\delta
}^{\alpha }\left( J^{\prime },\nu \right) }{\left\vert J^{\prime
}\right\vert ^{\frac{1}{n}}}\right) ^{2}\right\} \left\Vert \bigtriangleup
_{J^{\prime \prime }}^{\omega }\mathbf{x}\right\Vert _{L^{2}\left( \omega
\right) }^{2} \\
&\lesssim &\frac{1}{\gamma ^{2\delta ^{\prime }}}\sum_{J^{\prime \prime }\in 
\mathcal{H}^{\ast }}\left( \frac{\mathrm{P}_{1+\delta ^{\prime }}^{\alpha
}\left( J^{\prime \prime },\nu \right) }{\left\vert J^{\prime \prime
}\right\vert ^{\frac{1}{n}}}\right) ^{2}\left\Vert \bigtriangleup
_{J^{\prime \prime }}^{\omega }\mathbf{x}\right\Vert _{L^{2}\left( \omega
\right) }^{2}\ ,
\end{eqnarray*}

which in turn follows from (recalling $\delta =2\delta ^{\prime }$ and using 
$\left\vert J^{\prime }\right\vert ^{\frac{1}{n}}+\left\vert y-c_{J^{\prime
}}\right\vert \approx \left\vert J\right\vert ^{\frac{1}{n}}+\left\vert
y-c_{J}\right\vert $ and $\frac{\left\vert J\right\vert ^{\frac{1}{n}}}{%
\left\vert J\right\vert ^{\frac{1}{n}}+\left\vert y-c_{J}\right\vert }\leq 
\frac{1}{\gamma }$ for $y\in \mathbb{R}^{n}\setminus \gamma J$)%
\begin{eqnarray*}
&&\sum_{J^{\prime }:\ J^{\prime \prime }\subset J^{\prime }\subset J}\left( 
\frac{\mathrm{P}_{1+\delta }^{\alpha }\left( J^{\prime },\nu \right) }{%
\left\vert J^{\prime }\right\vert ^{\frac{1}{n}}}\right) ^{2} \\
&=&\sum_{J^{\prime }:\ J^{\prime \prime }\subset J^{\prime }\subset
J}\left\vert J^{\prime }\right\vert ^{\frac{2\delta }{n}}\left( \int_{%
\mathbb{R}^{n}\setminus \gamma J}\frac{1}{\left( \left\vert J^{\prime
}\right\vert ^{\frac{1}{n}}+\left\vert y-c_{J^{\prime }}\right\vert \right)
^{n+1+\delta -\alpha }}d\nu \left( y\right) \right) ^{2} \\
&\lesssim &\sum_{J^{\prime }:\ J^{\prime \prime }\subset J^{\prime }\subset
J}\frac{1}{\gamma ^{2\delta ^{\prime }}}\frac{\left\vert J^{\prime
}\right\vert ^{\frac{2\delta }{n}}}{\left\vert J\right\vert ^{\frac{2\delta 
}{n}}}\left( \int_{\mathbb{R}^{n}\setminus \gamma J}\frac{\left\vert
J\right\vert ^{\frac{\delta ^{\prime }}{n}}}{\left( \left\vert J\right\vert
^{\frac{1}{n}}+\left\vert y-c_{J}\right\vert \right) ^{n+1+\delta ^{\prime
}-\alpha }}d\nu \left( y\right) \right) ^{2} \\
&=&\frac{1}{\gamma ^{2\delta ^{\prime }}}\left( \sum_{J^{\prime }:\
J^{\prime \prime }\subset J^{\prime }\subset J}\frac{\left\vert J^{\prime
}\right\vert ^{\frac{2\delta }{n}}}{\left\vert J\right\vert ^{\frac{2\delta 
}{n}}}\right) \left( \frac{\mathrm{P}_{1+\delta ^{\prime }}^{\alpha }\left(
J,\nu \right) }{\left\vert J\right\vert ^{\frac{1}{n}}}\right) ^{2}\lesssim 
\frac{1}{\gamma ^{2\delta ^{\prime }}}\left( \frac{\mathrm{P}_{1+\delta
^{\prime }}^{\alpha }\left( J,\nu \right) }{\left\vert J\right\vert ^{\frac{1%
}{n}}}\right) ^{2}.
\end{eqnarray*}%
Finally we have the `pivotal' bound from (\ref{Poisson equiv}) and%
\begin{equation*}
\sum_{J^{\prime \prime }\subset J}\left\Vert \bigtriangleup _{J^{\prime
\prime }}^{\omega }\mathbf{x}\right\Vert _{L^{2}\left( \omega \right)
}^{2}=\left\Vert \mathbf{x}-\mathbf{m}_{J}\right\Vert _{L^{2}\left( \mathbf{1%
}_{J}\omega \right) }^{2}\leq \left\vert J\right\vert ^{\frac{2}{n}%
}\left\vert J\right\vert _{\omega }\ .
\end{equation*}
\end{proof}

\section{Preliminaries of NTV type}

An important reduction of our theorem is delivered by the following two
lemmas, that in the case of one dimension are due to Nazarov, Treil and
Volberg (see \cite{NTV3} and \cite{Vol}). The proofs given there do not
extend in standard ways to higher dimensions with common point masses, and
we use the quasiweak boundedness property to handle the case of touching
quasicubes, and an application of Schur's Lemma to handle the case of
separated quasicubes. The first lemma below is Lemmas 8.1 and 8.7 in \cite%
{LaWi} but with the larger constant $\mathcal{A}_{2}^{\alpha }$ there in
place of the smaller constant $A_{2}^{\alpha }$ here. We emphasize that only
the offset $A_{2}^{\alpha }$ condition is needed with testing and weak
boundedness in these preliminary estimates.

\begin{lemma}
\label{standard delta}Suppose $T^{\alpha }$ is a standard fractional
singular integral with $0\leq \alpha <n$, and that all of the quasicubes $%
I\in \Omega \mathcal{D}^{\sigma },J\in \Omega \mathcal{D}^{\omega }$ below
are good with goodness parameters $\varepsilon $ and $\mathbf{r}$. Fix a
positive integer $\mathbf{\rho }>\mathbf{r}$. For $f\in L^{2}\left( \sigma
\right) $ and $g\in L^{2}\left( \omega \right) $ we have%
\begin{equation}
\sum_{\substack{ \left( I,J\right) \in \Omega \mathcal{D}^{\sigma }\times
\Omega \mathcal{D}^{\omega }  \\ 2^{-\mathbf{\rho }}\ell \left( I\right)
\leq \ell \left( J\right) \leq 2^{\mathbf{\rho }}\ell \left( I\right) }}%
\left\vert \left\langle T_{\sigma }^{\alpha }\left( \bigtriangleup
_{I}^{\sigma }f\right) ,\bigtriangleup _{J}^{\omega }g\right\rangle _{\omega
}\right\vert \lesssim \left( \mathfrak{T}_{\alpha }+\mathfrak{T}_{\alpha
}^{\ast }+\mathcal{WBP}_{T^{\alpha }}+\sqrt{A_{2}^{\alpha }}\right)
\left\Vert f\right\Vert _{L^{2}\left( \sigma \right) }\left\Vert
g\right\Vert _{L^{2}\left( \omega \right) }  \label{delta near}
\end{equation}%
and 
\begin{equation}
\sum_{\substack{ \left( I,J\right) \in \Omega \mathcal{D}^{\sigma }\times
\Omega \mathcal{D}^{\omega }  \\ I\cap J=\emptyset \text{ and }\frac{\ell
\left( J\right) }{\ell \left( I\right) }\notin \left[ 2^{-\mathbf{\rho }},2^{%
\mathbf{\rho }}\right] }}\left\vert \left\langle T_{\sigma }^{\alpha }\left(
\bigtriangleup _{I}^{\sigma }f\right) ,\bigtriangleup _{J}^{\omega
}g\right\rangle _{\omega }\right\vert \lesssim \sqrt{A_{2}^{\alpha }}%
\left\Vert f\right\Vert _{L^{2}\left( \sigma \right) }\left\Vert
g\right\Vert _{L^{2}\left( \omega \right) }.  \label{delta far}
\end{equation}
\end{lemma}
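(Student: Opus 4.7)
The plan is to split each sum according to the geometric relationship of the pair $(I,J)$. For the near estimate (\ref{delta near}), I would partition the pairs with comparable sidelengths (within a factor of $2^{\mathbf{\rho}}$) into three types. First, \emph{nested pairs} with $J \subsetneq I$ or $I \subsetneq J$: only finitely many generations (depending on $\mathbf{\rho}$) separate these, so expanding the quasiHaar functions as linear combinations of child indicators with coefficients controlled via (\ref{useful Haar}) reduces everything to inner products $\langle T_\sigma^\alpha(\mathbf{1}_Q\sigma),\mathbf{1}_{Q'}\omega\rangle$ with $Q'\subset Q$, bounded by the testing conditions $\mathfrak{T}_{T^\alpha}$ and $\mathfrak{T}_{T^\alpha}^\ast$ plus Cauchy--Schwarz, much as in the one-dimensional NTV argument. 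Second, \emph{touching pairs} (disjoint with $J\subset 3I\setminus I$ or $I\subset 3J\setminus J$): expand again into child indicators and apply $\mathcal{WBP}_{T^\alpha}$ to each resulting child--child interaction; the bounded number of comparable-size neighbours in a quasigrid plus Cauchy--Schwarz in $\ell^2$ gives the $\mathcal{WBP}_{T^\alpha}$ bound. Third, \emph{separated pairs} (disjoint with $\operatorname{qdist}(I,J)\gtrsim \ell(I)\approx\ell(J)$): use the size estimate in (\ref{sizeandsmoothness'}) directly to obtain
\[
\left|\langle T_\sigma^\alpha(\bigtriangleup_I^\sigma f),\bigtriangleup_J^\omega g\rangle_\omega\right| \lesssim |\widehat{f}(I)|\,|\widehat{g}(J)|\,\frac{\sqrt{|I|_\sigma|J|_\omega}}{|I|^{1-\alpha/n}},
\]
then enclose $I$ and $J$ in a same-generation neighbouring pair in $\Omega\mathcal{N}^n$ and invoke the offset $A_2^\alpha$ condition (\ref{offset A2}), finishing with Cauchy--Schwarz over Haar indices.

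For the far estimate (\ref{delta far}), we may assume without loss of generality that $\ell(J)\leq 2^{-\mathbf{\rho}}\ell(I)$. Goodness of $J$ combined with $I\cap J=\emptyset$ and $\mathbf{\rho}>\mathbf{r}$ forces $J$ to be deeply embedded in an ancestor disjoint from $I$, so that $\operatorname{qdist}(J,I)\gtrsim \ell(J)^{\varepsilon}\ell(I)^{1-\varepsilon}$ and in particular $I$ lies outside $2J$ once $\mathbf{\rho}$ is large enough. Apply the Monotonicity Lemma \ref{mono} to the signed measure $(\bigtriangleup_I^\sigma f)\sigma$ (supported on $I$, hence outside $2J$) to obtain
\[
|\langle T_\sigma^\alpha(\bigtriangleup_I^\sigma f),\bigtriangleup_J^\omega g\rangle_\omega| \lesssim \Phi^\alpha\!\left(J,|\bigtriangleup_I^\sigma f|\sigma\right)\|\bigtriangleup_J^\omega g\|_{L^2(\omega)}.
\]
Then sum over pairs $(I,J)$ via Schur's lemma with weights of the form $(\ell(J)/\ell(I))^{\eta}$ for small $\eta>0$: the geometric decay $\operatorname{qdist}(J,I)^{-(n-\alpha)}\lesssim \ell(J)^{-\varepsilon(n-\alpha)}\ell(I)^{-(1-\varepsilon)(n-\alpha)}$ coming from goodness enables geometric summation in the ratio of sidelengths, and the remaining kernel integrals produce $\sqrt{A_2^\alpha}$ after recognizing offset Muckenhoupt averages on appropriate neighbouring pairs.

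The principal obstacle is verifying Schur's testing conditions in the far case with scale-invariant weights, since the Monotonicity Lemma bound mixes a `big' Poisson term $\mathrm{P}^\alpha$ (additive in fine-scale energy $\|\mathsf{P}_J^\omega\mathbf{x}\|$) and a `small' Poisson term $\mathrm{P}_{1+\delta}^\alpha$ (carrying an extra decay factor in the ratio $\ell(J)/\ell(I)$); both Schur sums must be controlled uniformly in $I$ by $\sqrt{A_2^\alpha}$, and it is precisely the goodness exponent $\varepsilon$ that converts the smoothness gain $\delta$ into a geometric series in sidelength ratios. A second, subtler point, not present in the original NTV setting, is that in the separated subcase of the near estimate one cannot invoke the diagonal $A_2^\alpha$ condition (which can fail in the presence of common point masses), and must instead always apply the offset $A_2^\alpha$ to a pair of neighbouring quasicubes in $\Omega\mathcal{N}^n$ rather than to a single quasicube enclosing both $I$ and $J$; this is compatible with our hypotheses since, in the comparable-size separated subcase, such a neighbouring enclosure always exists.
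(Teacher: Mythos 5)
Your proposal follows the same strategic outline as the paper's proof (nested/touching/separated trichotomy at the child level for comparable scales, goodness plus a Poisson estimate plus Schur's lemma for the far case), but there are two places where it diverges, and one of them contains a real gap.

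\textbf{The near estimate, separated subcase.} You classify a pair $(I,J)$ with comparable side lengths as ``separated'' as soon as $\operatorname{qdist}(I,J)\gtrsim \ell(I)$, and then propose to enclose $I$ and $J$ in a single neighbouring pair $(Q,Q^{\prime})\in\Omega\mathcal{N}^n$. This enclosure exists only when $\operatorname{qdist}(I,J)\lesssim\ell(I)$; once the cubes are many side lengths apart (but still comparable in scale) no same-generation neighbouring pair contains both, and your stated bound $\sqrt{|I|_{\sigma}|J|_{\omega}}/|I|^{1-\alpha/n}$ loses the decay in $\operatorname{qdist}(I,J)$ that makes the double sum converge. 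The paper handles exactly this set of pairs as a separate term $A_3$ and controls it by the same Schur-type argument used for (\ref{delta far}); you would need to do the same thing, rather than a one-shot Cauchy--Schwarz over Haar indices. The portion of the separated case where $\operatorname{qdist}(I,J)\approx\ell(I)$ (equivalently $J\subset 3I\setminus I$ at the child level) is exactly what the paper covers inside $A_1$, and there your argument is fine.

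\textbf{The far estimate: you overcomplicate by using the two-term Monotonicity bound.} The ``principal obstacle'' you describe --- having to control both the big-Poisson/small-energy term and the small-Poisson/big-energy term in $\Phi^{\alpha}$ through Schur's lemma --- is an obstacle you have created for yourself. Since $\|\bigtriangleup_J^{\omega}\mathbf{x}\|_{L^2(\omega)}\leq\|\mathbf{x}-\mathbf{m}_J\|_{L^2(\mathbf{1}_J\omega)}\lesssim |J|^{1/n}\sqrt{|J|_{\omega}}$ and $\mathrm{P}_{1+\delta}^{\alpha}\leq\mathrm{P}^{\alpha}$, the Monotonicity bound $\Phi^{\alpha}(J,\nu)$ is dominated by the pivotal bound $\mathrm{P}^{\alpha}(J,\nu)\sqrt{|J|_{\omega}}$, and this is exactly what the paper uses (via the Energy Lemma's pivotal inequality). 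Once you replace $\Phi^{\alpha}$ by the pivotal bound, the Schur argument involves a single term and the difficulty disappears. You should also note that the paper's far estimate splits into a long-range regime ($\operatorname{qdist}(J,I)\geq\ell(I)$), where goodness is not needed because the physical distance already supplies the decay, and a mid-range regime ($J\subset 3I\setminus I$), where goodness gives $\operatorname{qdist}(J,I)\gtrsim\ell(J)^{\varepsilon}\ell(I)^{1-\varepsilon}$; your write-up treats these uniformly, which is plausible but would need the Schur weights chosen carefully to accommodate both. Your observations about why the offset (rather than a diagonal) $A_2^{\alpha}$ must be used in the presence of common point masses, and why goodness forces $I$ outside $2J$ so the Monotonicity Lemma applies, are both correct and are in fact why the paper is set up the way it is.
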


\begin{lemma}
\label{standard indicator}Suppose $T^{\alpha }$ is a standard fractional
singular integral with $0\leq \alpha <n$, that all of the quasicubes $I\in
\Omega \mathcal{D}^{\sigma },J\in \Omega \mathcal{D}^{\omega }$ below are
good, that $\mathbf{\rho }>\mathbf{r}$, that $f\in L^{2}\left( \sigma
\right) $ and $g\in L^{2}\left( \omega \right) $, that $\mathcal{F}\subset
\Omega \mathcal{D}^{\sigma }$ and $\mathcal{G}\subset \Omega \mathcal{D}%
^{\omega }$ are $\sigma $-Carleson and $\omega $-Carleson collections
respectively, i.e.,%
\begin{equation*}
\sum_{F^{\prime }\in \mathcal{F}:\ F^{\prime }\subset F}\left\vert F^{\prime
}\right\vert _{\sigma }\lesssim \left\vert F\right\vert _{\sigma },\ \ \ \ \
F\in \mathcal{F},\text{ and }\sum_{G^{\prime }\in \mathcal{G}:\ G^{\prime
}\subset G}\left\vert G^{\prime }\right\vert _{\omega }\lesssim \left\vert
G\right\vert _{\omega },\ \ \ \ \ G\in \mathcal{G},
\end{equation*}%
that there are numerical sequences $\left\{ \alpha _{\mathcal{F}}\left(
F\right) \right\} _{F\in \mathcal{F}}$ and $\left\{ \beta _{\mathcal{G}%
}\left( G\right) \right\} _{G\in \mathcal{G}}$ such that%
\begin{equation}
\sum_{F\in \mathcal{F}}\alpha _{\mathcal{F}}\left( F\right) ^{2}\left\vert
F\right\vert _{\sigma }\leq \left\Vert f\right\Vert _{L^{2}\left( \sigma
\right) }^{2}\text{ and }\sum_{G\in \mathcal{G}}\beta _{\mathcal{G}}\left(
G\right) ^{2}\left\vert G\right\vert _{\sigma }\leq \left\Vert g\right\Vert
_{L^{2}\left( \sigma \right) }^{2}\ ,  \label{qo}
\end{equation}%
and finally that for each pair of quasicubes $\left( I,J\right) \in \Omega 
\mathcal{D}^{\sigma }\times \Omega \mathcal{D}^{\omega }$, there are bounded
functions $\beta _{I,J}$ and $\gamma _{I,J}$ supported in $I\setminus 2J$
and $J\setminus 2I$ respectively, satisfying%
\begin{equation*}
\left\Vert \beta _{I,J}\right\Vert _{\infty },\left\Vert \gamma
_{I,J}\right\Vert _{\infty }\leq 1.
\end{equation*}%
Then%
\begin{eqnarray}
&&\sum_{\substack{ \left( F,J\right) \in \mathcal{F}\times \Omega \mathcal{D}%
^{\omega }  \\ F\cap J=\emptyset \text{ and }\ell \left( J\right) \leq 2^{-%
\mathbf{\rho }}\ell \left( F\right) }}\left\vert \left\langle T_{\sigma
}^{\alpha }\left( \beta _{F,J}\mathbf{1}_{F}\alpha _{\mathcal{F}}\left(
F\right) \right) ,\bigtriangleup _{J}^{\omega }g\right\rangle _{\omega
}\right\vert  \label{indicator far} \\
&&+\sum_{\substack{ \left( I,G\right) \in \Omega \mathcal{D}^{\sigma }\times 
\mathcal{G}  \\ I\cap G=\emptyset \text{ and }\ell \left( I\right) \leq 2^{-%
\mathbf{\rho }}\ell \left( G\right) }}\left\vert \left\langle T_{\sigma
}^{\alpha }\left( \bigtriangleup _{I}^{\sigma }f\right) ,\gamma _{I,G}%
\mathbf{1}_{G}\beta _{\mathcal{G}}\left( G\right) \right\rangle _{\omega
}\right\vert  \notag \\
&\lesssim &\sqrt{A_{2}^{\alpha }}\left\Vert f\right\Vert _{L^{2}\left(
\sigma \right) }\left\Vert g\right\Vert _{L^{2}\left( \omega \right) }. 
\notag
\end{eqnarray}
\end{lemma}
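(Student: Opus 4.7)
The plan is to apply the Monotonicity Lemma \ref{mono} to each individual pairing, reducing the whole sum to a two-weight Poisson tail estimate, which is then controlled by the offset $A_2^\alpha$ condition and the $\sigma$-Carleson property of $\mathcal{F}$. We treat the first sum in (\ref{indicator far}); the second is symmetric, interchanging $\sigma\leftrightarrow\omega$ and $\mathcal{F}\leftrightarrow\mathcal{G}$.

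Fix an admissible pair $(F,J)$, meaning $F\in\mathcal{F}$, $J\in\Omega\mathcal{D}^\omega$, $F\cap J=\emptyset$, and $\ell(J)\leq 2^{-\mathbf{\rho}}\ell(F)$. Since $\beta_{F,J}\mathbf{1}_F\alpha_{\mathcal{F}}(F)$ is supported in $F\setminus 2J$ with sup-norm at most $\alpha_{\mathcal{F}}(F)$, the associated signed measure is supported outside $2J$, and the Monotonicity Lemma together with the estimates $\|\mathbf{x}-\mathbf{m}_J\|_{L^2(\mathbf{1}_J\omega)}\leq|J|^{1/n}\sqrt{|J|_\omega}$ and $\mathrm{P}^\alpha_{1+\delta}(J,\mathbf{1}_F\sigma)\leq\mathrm{P}^\alpha(J,\mathbf{1}_F\sigma)$ yields the pivotal bound
\begin{equation*}
|\langle T^\alpha_\sigma(\beta_{F,J}\mathbf{1}_F\alpha_{\mathcal{F}}(F)),\triangle^\omega_J g\rangle_\omega|\lesssim\alpha_{\mathcal{F}}(F)\,\|\triangle^\omega_J g\|_{L^2(\omega)}\,\mathrm{P}^\alpha(J,\mathbf{1}_F\sigma)\sqrt{|J|_\omega}.
\end{equation*}
Summing over admissible pairs and applying Cauchy--Schwarz in $J$, together with Parseval $\sum_J\|\triangle^\omega_J g\|^2_{L^2(\omega)}\leq\|g\|^2_{L^2(\omega)}$, reduces the first sum in (\ref{indicator far}) to $\|g\|_{L^2(\omega)}\,\Theta^{1/2}$, where
\begin{equation*}
\Theta:=\sum_{J\in\Omega\mathcal{D}^\omega}|J|_\omega\Bigl(\sum_{F\in\mathcal{F}_J}\alpha_{\mathcal{F}}(F)\mathrm{P}^\alpha(J,\mathbf{1}_F\sigma)\Bigr)^2,\qquad \mathcal{F}_J:=\{F\in\mathcal{F}:F\cap J=\emptyset,\ \ell(F)\geq 2^{\mathbf{\rho}}\ell(J)\}.
\end{equation*}

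The remaining task $\Theta\lesssim A_2^\alpha\|f\|^2_{L^2(\sigma)}$ is a two-weight Poisson tail estimate, which we establish by partitioning each admissible pair $(F,J)$ according to the smallest common $\Omega\mathcal{D}$-ancestor $K$ of $F\cup J$, so that $F\subset K_F$ and $J\subset K_J$ for distinct children $K_F\neq K_J$ of $K$. Goodness of $J$ (valid because $\mathbf{\rho}>\mathbf{r}$) gives the separation $\limfunc{qdist}(J,F)\gtrsim\ell(J)^\varepsilon\ell(K)^{1-\varepsilon}$, producing the decay $\mathrm{P}^\alpha(J,\mathbf{1}_F\sigma)\lesssim |J|^{1/n}|F|_\sigma\,\ell(K)^{-(n+1-\alpha)}(\ell(K)/\ell(J))^{\varepsilon(n+1-\alpha)}$. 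Combining in succession (i) a Cauchy--Schwarz over the dyadic scales $\ell(J)/\ell(K)=2^{-m}$ (whose tail is summable provided $\varepsilon<1/(n+1-\alpha)$), (ii) a Cauchy--Schwarz in the internal $F$-sum using the $\sigma$-Carleson bound $\sum_{F\subset K',\,F\in\mathcal{F}}|F|_\sigma\lesssim|K'|_\sigma$ from (\ref{qo}), and (iii) the offset $A_2^\alpha$ bound (\ref{offset A2}) $|K_J|_\omega\,|K_F|_\sigma\lesssim A_2^\alpha\,|K|^{2(1-\alpha/n)}$ on neighboring sibling pairs---supplemented for diagonally opposite siblings (which are not $\Omega\mathcal{N}^n$-neighbors when $n\geq 2$) by a further dyadic-shell decomposition in which the extra Poisson decay compensates for the larger geometric distance---telescopes $\Theta$ to the desired bound $A_2^\alpha\|f\|^2$.

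The principal obstacle is the careful bookkeeping in this last step: one must orchestrate the goodness-driven decay, the Carleson absorption, and the offset form of $A_2^\alpha$ simultaneously, without double-counting ancestors or losing logarithmically in the scale parameter $m$. The with-holes (offset) formulation of $A_2^\alpha$ is essential here precisely because classical $A_2^\alpha$ would diverge at common point masses, whereas siblings $K_F,K_J$ are always disjoint. Goodness of $J$ supplies the quasidistance separation that powers the Poisson decay, and the $(1+\delta)$-smoothness of $K^\alpha$ is implicit in the pivotal bound of Step 1.
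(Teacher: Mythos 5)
Your proposal is correct, and it takes a genuinely different route from the paper. The paper explicitly says that (\ref{indicator far}) is handled by running the \emph{same} long-range Schur's-lemma argument that is given for (\ref{delta far}): pigeonhole by side lengths $\ell(I)=2^N$, $\ell(J)=2^{N-s}$ and by quasidistance $\approx 2^{N+\ell}$, bound the operator norm of each block $B^{s}_{N,\ell}$ via a Schur test with Schur function $\beta(K)=|K|_\omega^{-1/2}$, and sum over $N$ using orthogonality of $\mathsf{P}_N^\sigma$ (replaced in the Carleson setting by the quasi-orthogonality $\sum_F\alpha_{\mathcal{F}}(F)^2|F|_\sigma\leq\|f\|^2$). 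You instead Cauchy--Schwarz in $J$ at the outset, collapse the bilinear form into the quadratic tail quantity $\Theta$, and control $\Theta$ directly by pigeonholing each pair $(F,J)$ by its minimal common ancestor $K$ with $\ell(J)=2^{-m}\ell(K)$. Your argument closes precisely because the admissibility restriction $\ell(F)\geq 2^{\mathbf{\rho}}\ell(J)$ forces $\ell(K)\leq 2^{m-\mathbf{\rho}}\ell(F)$, so for fixed $F$ the number of admissible ancestors $K$ at scale gap $m$ is only $m-\mathbf{\rho}$; together with the geometric decay $2^{-m\delta}$ from goodness, the sum $\sum_K\sum_m$ of contributions per $F$ is bounded, and the final sum over $F$ is the quasi-orthogonality bound $\sum_F\alpha_{\mathcal{F}}(F)^2|F|_\sigma\lesssim\|f\|^2$. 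The upshot: your approach trades the Schur machinery of the paper for a more hands-on counting argument; it is somewhat less symmetric in the two variables, but it slots the Carleson control and the offset $A_2^\alpha$ together rather cleanly, and it has the pedagogical advantage of making visible where the $\mathbf{\rho}$-separation is actually used (to tame the double-counting over ancestors).

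One small inaccuracy you should correct: you assert that diagonally opposite dyadic siblings are not $\Omega\mathcal{N}^n$-neighbours for $n\geq 2$ and propose a supplementary dyadic-shell decomposition to handle them. In fact \emph{any} two children $K',K''$ of a dyadic cube $K$ with $\ell(K')=\ell(K'')=\tfrac12\ell(K)$ satisfy $K'\subset 3K''\setminus K''$ and $K''\subset 3K'\setminus K'$ (check coordinatewise: $3K''=[0,\tfrac32)^n$ contains $K=[0,1)^n$, hence contains $K'$), so every pair of distinct siblings \emph{is} a neighbour pair in the sense of the paper's definition (\ref{offset A2}), and the offset $A_2^\alpha$ bound $|K'|_\omega|K''|_\sigma\lesssim A_2^\alpha|K|^{2(1-\alpha/n)}$ applies directly. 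Your supplementary step is therefore unnecessary; dropping it simplifies the proof without affecting its correctness.
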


\begin{remark}
If $\mathcal{F}$ and $\mathcal{G}$ are $\sigma $-Carleson and $\omega $%
-Carleson collections respectively, and if $\alpha _{\mathcal{F}}\left(
F\right) =\mathbb{E}_{F}^{\sigma }\left\vert f\right\vert $ and $\beta _{%
\mathcal{G}}\left( G\right) =\mathbb{E}_{G}^{\omega }\left\vert g\right\vert 
$, then the `quasi' orthogonality condition (\ref{qo}) holds (here `quasi'
has a different meaning than quasi), and this special case of Lemma \ref%
{standard indicator} serves as a basic example.
\end{remark}

\begin{remark}
Lemmas \ref{standard delta} and \ref{standard indicator} differ mainly in
that an orthogonal collection of quasiHaar projections is replaced by a
`quasi' orthogonal collection of indicators $\left\{ \mathbf{1}_{F}\alpha _{%
\mathcal{F}}\left( F\right) \right\} _{F\in \mathcal{F}}$. More precisely,
the main difference between (\ref{delta far}) and (\ref{indicator far}) is
that a quasiHaar projection $\bigtriangleup _{I}^{\sigma }f$ or $%
\bigtriangleup _{J}^{\omega }g$ has been replaced with a constant multiple
of an indicator $\mathbf{1}_{F}\alpha _{\mathcal{F}}\left( F\right) $ or $%
\mathbf{1}_{G}\beta _{\mathcal{G}}\left( G\right) $, and in addition, a
bounded function is permitted to multiply the indicator of the quasicube
having larger sidelength.
\end{remark}

\begin{proof}
Note that in (\ref{delta near}) we have used the parameter $\mathbf{\rho }$
in the exponent rather than $\mathbf{r}$, and this is possible because the
arguments we use here only require that there are finitely many levels of
scale separating $I$ and $J$. To handle this term we first decompose it into%
\begin{eqnarray*}
&&\left\{ \sum_{\substack{ \left( I,J\right) \in \Omega \mathcal{D}^{\sigma
}\times \Omega \mathcal{D}^{\omega }:\ J\subset 3I  \\ 2^{-\mathbf{\rho }%
}\ell \left( I\right) \leq \ell \left( J\right) \leq 2^{\mathbf{\rho }}\ell
\left( I\right) }}+\sum_{\substack{ \left( I,J\right) \in \Omega \mathcal{D}%
^{\sigma }\times \Omega \mathcal{D}^{\omega }:\ I\subset 3J  \\ 2^{-\mathbf{%
\rho }}\ell \left( I\right) \leq \ell \left( J\right) \leq 2^{\mathbf{\rho }%
}\ell \left( I\right) }}+\sum_{\substack{ \left( I,J\right) \in \Omega 
\mathcal{D}^{\sigma }\times \Omega \mathcal{D}^{\omega }  \\ 2^{-\mathbf{%
\rho }}\ell \left( I\right) \leq \ell \left( J\right) \leq 2^{\mathbf{\rho }%
}\ell \left( I\right)  \\ J\not\subset 3I\text{ and }I\not\subset 3J}}%
\right\} \left\vert \left\langle T_{\sigma }^{\alpha }\left( \bigtriangleup
_{I}^{\sigma }f\right) ,\bigtriangleup _{J}^{\omega }g\right\rangle _{\omega
}\right\vert \\
&&\ \ \ \ \ \ \ \ \ \ \equiv A_{1}+A_{2}+A_{3}.
\end{eqnarray*}%
The proof of the bound for term $A_{3}$ is similar to that of the bound for
the left side of (\ref{delta far}), and so we will defer the bound for $%
A_{3} $ until after (\ref{delta far}) has been proved.

We now consider term $A_{1}$ as term $A_{2}$ is symmetric. To handle this
term we will write the quasiHaar functions $h_{I}^{\sigma }$ and $%
h_{J}^{\omega }$ as linear combinations of the indicators of the children of
their supporting quasicubes, denoted $I_{\theta }$ and $J_{\theta ^{\prime
}} $ respectively. Then we use the quasitesting condition on $I_{\theta }$
and $J_{\theta ^{\prime }}$ when they \emph{overlap}, i.e. their interiors
intersect; we use the quasiweak boundedness property on $I_{\theta }$ and $%
J_{\theta ^{\prime }}$ when they \emph{touch}, i.e. their interiors are
disjoint but their closures intersect (even in just a point); and finally we
use the $A_{2}^{\alpha }$ condition when $I_{\theta }$ and $J_{\theta
^{\prime }}$ are \emph{separated}, i.e. their closures are disjoint. We will
suppose initially that the side length of $J$ is at most the side length $I$%
, i.e. $\ell \left( J\right) \leq \ell \left( I\right) $, the proof for $%
J=\pi I$ being similar but for one point mentioned below. So suppose that $%
I_{\theta }$ is a child of $I$ and that $J_{\theta ^{\prime }}$ is a child
of $J$. If $J_{\theta ^{\prime }}\subset I_{\theta }$ we have from (\ref%
{useful Haar}) that, 
\begin{eqnarray*}
\left\vert \left\langle T_{\sigma }^{\alpha }\left( \mathbf{1}_{I_{\theta
}}\bigtriangleup _{I}^{\sigma }f\right) ,\mathbf{1}_{J_{\theta ^{\prime
}}}\bigtriangleup _{J}^{\omega }g\right\rangle _{\omega }\right\vert
&\lesssim &\sup_{a,a^{\prime }\in \Gamma _{n}}\frac{\left\vert \left\langle
f,h_{I}^{\sigma ,a}\right\rangle _{\sigma }\right\vert }{\sqrt{\left\vert
I_{\theta }\right\vert _{\sigma }}}\left\vert \left\langle T_{\sigma
}^{\alpha }\left( \mathbf{1}_{I_{\theta }}\right) ,\mathbf{1}_{J_{\theta
^{\prime }}}\right\rangle _{\omega }\right\vert \frac{\left\vert
\left\langle g,h_{J}^{\omega ,a^{\prime }}\right\rangle _{\omega
}\right\vert }{\sqrt{\left\vert J_{\theta ^{\prime }}\right\vert _{\omega }}}
\\
&\lesssim &\sup_{a,a^{\prime }\in \Gamma _{n}}\frac{\left\vert \left\langle
f,h_{I}^{\sigma ,a}\right\rangle _{\sigma }\right\vert }{\sqrt{\left\vert
I_{\theta }\right\vert _{\sigma }}}\left( \int_{J_{\theta ^{\prime
}}}\left\vert T_{\sigma }^{\alpha }\left( \mathbf{1}_{I_{\theta }}\right)
\right\vert ^{2}d\omega \right) ^{\frac{1}{2}}\left\vert \left\langle
g,h_{J}^{\omega ,a^{\prime }}\right\rangle _{\omega }\right\vert \\
&\lesssim &\sup_{a,a^{\prime }\in \Gamma _{n}}\frac{\left\vert \left\langle
f,h_{I}^{\sigma ,a}\right\rangle _{\sigma }\right\vert }{\sqrt{\left\vert
I_{\theta }\right\vert _{\sigma }}}\mathfrak{T}_{T_{\alpha }}\left\vert
I_{\theta }\right\vert _{\sigma }^{\frac{1}{2}}\left\vert \left\langle
g,h_{J}^{\omega ,a^{\prime }}\right\rangle _{\omega }\right\vert \\
&\lesssim &\sup_{a,a^{\prime }\in \Gamma _{n}}\mathfrak{T}_{T_{\alpha
}}\left\vert \left\langle f,h_{I}^{\sigma ,a}\right\rangle _{\sigma
}\right\vert \left\vert \left\langle g,h_{J}^{\omega ,a^{\prime
}}\right\rangle _{\omega }\right\vert .
\end{eqnarray*}%
The point referred to above is that when $J=\pi I$ we write $\left\langle
T_{\sigma }^{\alpha }\left( \mathbf{1}_{I_{\theta }}\right) ,\mathbf{1}%
_{J_{\theta ^{\prime }}}\right\rangle _{\omega }=\left\langle \mathbf{1}%
_{I_{\theta }},T_{\omega }^{\alpha ,\ast }\left( \mathbf{1}_{J_{\theta
^{\prime }}}\right) \right\rangle _{\sigma }$ and get the dual quasitesting
constant $\mathfrak{T}_{T_{\alpha }}^{\ast }$. If $J_{\theta ^{\prime }}$
and $I_{\theta }$ touch, then $\ell \left( J_{\theta ^{\prime }}\right) \leq
\ell \left( I_{\theta }\right) $ and we have $J_{\theta ^{\prime }}\subset
3I_{\theta }\setminus I_{\theta }$, and so%
\begin{eqnarray*}
\left\vert \left\langle T_{\sigma }^{\alpha }\left( \mathbf{1}_{I_{\theta
}}\bigtriangleup _{I}^{\sigma }f\right) ,\mathbf{1}_{J_{\theta ^{\prime
}}}\bigtriangleup _{J}^{\omega }g\right\rangle _{\omega }\right\vert
&\lesssim &\sup_{a,a^{\prime }\in \Gamma _{n}}\frac{\left\vert \left\langle
f,h_{I}^{\sigma ,a}\right\rangle _{\sigma }\right\vert }{\sqrt{\left\vert
I_{\theta }\right\vert _{\sigma }}}\left\vert \left\langle T_{\sigma
}^{\alpha }\left( \mathbf{1}_{I_{\theta }}\right) ,\mathbf{1}_{J_{\theta
^{\prime }}}\right\rangle _{\omega }\right\vert \frac{\left\vert
\left\langle g,h_{J}^{\omega ,a^{\prime }}\right\rangle _{\omega
}\right\vert }{\sqrt{\left\vert J_{\theta ^{\prime }}\right\vert _{\omega }}}
\\
&\lesssim &\sup_{a,a^{\prime }\in \Gamma _{n}}\frac{\left\vert \left\langle
f,h_{I}^{\sigma ,a}\right\rangle _{\sigma }\right\vert }{\sqrt{\left\vert
I_{\theta }\right\vert _{\sigma }}}\mathcal{WBP}_{T^{\alpha }}\sqrt{%
\left\vert I_{\theta }\right\vert _{\sigma }\left\vert J_{\theta ^{\prime
}}\right\vert _{\omega }}\frac{\left\vert \left\langle g,h_{J}^{\omega
,a^{\prime }}\right\rangle _{\omega }\right\vert }{\sqrt{\left\vert
J_{\theta ^{\prime }}\right\vert _{\omega }}} \\
&=&\sup_{a,a^{\prime }\in \Gamma _{n}}\mathcal{WBP}_{T^{\alpha }}\left\vert
\left\langle f,h_{I}^{\sigma ,a}\right\rangle _{\sigma }\right\vert
\left\vert \left\langle g,h_{J}^{\omega ,a^{\prime }}\right\rangle _{\omega
}\right\vert .
\end{eqnarray*}%
Finally, if $J_{\theta ^{\prime }}$ and $I_{\theta }$ are separated, and if $%
K$ is the smallest (not necessarily dyadic) quasicube containing both $%
J_{\theta ^{\prime }}$ and $I_{\theta }$, then $\limfunc{dist}\left(
I_{\theta },J_{\theta ^{\prime }}\right) \approx \left\vert K\right\vert ^{%
\frac{1}{n}}$ and we have%
\begin{eqnarray*}
\left\vert \left\langle T_{\sigma }^{\alpha }\left( \mathbf{1}_{I_{\theta
}}\bigtriangleup _{I}^{\sigma }f\right) ,\mathbf{1}_{J_{\theta ^{\prime
}}}\bigtriangleup _{J}^{\omega }g\right\rangle _{\omega }\right\vert
&\lesssim &\sup_{a,a^{\prime }\in \Gamma _{n}}\frac{\left\vert \left\langle
f,h_{I}^{\sigma ,a}\right\rangle _{\sigma }\right\vert }{\sqrt{\left\vert
I_{\theta }\right\vert _{\sigma }}}\left\vert \left\langle T_{\sigma
}^{\alpha }\left( \mathbf{1}_{I_{\theta }}\right) ,\mathbf{1}_{J_{\theta
^{\prime }}}\right\rangle _{\omega }\right\vert \frac{\left\vert
\left\langle g,h_{J}^{\omega ,a^{\prime }}\right\rangle _{\omega
}\right\vert }{\sqrt{\left\vert J_{\theta ^{\prime }}\right\vert _{\omega }}}
\\
&\lesssim &\sup_{a,a^{\prime }\in \Gamma _{n}}\frac{\left\vert \left\langle
f,h_{I}^{\sigma ,a}\right\rangle _{\sigma }\right\vert }{\sqrt{\left\vert
I_{\theta }\right\vert _{\sigma }}}\frac{1}{\limfunc{dist}\left( I_{\theta
},J_{\theta ^{\prime }}\right) ^{n-\alpha }}\left\vert I_{\theta
}\right\vert _{\sigma }\left\vert J_{\theta ^{\prime }}\right\vert _{\omega }%
\frac{\left\vert \left\langle g,h_{J}^{\omega ,a^{\prime }}\right\rangle
_{\omega }\right\vert }{\sqrt{\left\vert J_{\theta ^{\prime }}\right\vert
_{\omega }}} \\
&=&\sup_{a,a^{\prime }\in \Gamma _{n}}\frac{\sqrt{\left\vert I_{\theta
}\right\vert _{\sigma }\left\vert J_{\theta ^{\prime }}\right\vert _{\omega }%
}}{\limfunc{dist}\left( I_{\theta },J_{\theta ^{\prime }}\right) ^{n-\alpha }%
}\left\vert \left\langle f,h_{I}^{\sigma ,a}\right\rangle _{\sigma
}\right\vert \left\vert \left\langle g,h_{J}^{\omega ,a^{\prime
}}\right\rangle _{\omega }\right\vert \\
&\lesssim &\sqrt{A_{2}^{\alpha }}\sup_{a,a^{\prime }\in \Gamma
_{n}}\left\vert \left\langle f,h_{I}^{\sigma ,a}\right\rangle _{\sigma
}\right\vert \left\vert \left\langle g,h_{J}^{\omega ,a^{\prime
}}\right\rangle _{\omega }\right\vert .
\end{eqnarray*}%
Now we sum over all the children of $J$ and $I$ satisfying $2^{-\mathbf{\rho 
}}\ell \left( I\right) \leq \ell \left( J\right) \leq 2^{\mathbf{\rho }}\ell
\left( I\right) $ for which $J\subset 3I$ to obtain that%
\begin{equation*}
A_{1}\lesssim \left( \mathfrak{T}_{T_{\alpha }}+\mathfrak{T}_{T_{\alpha
}}^{\ast }+\mathcal{WBP}_{T^{\alpha }}+\sqrt{A_{2}^{\alpha }}\right)
\sup_{a,a^{\prime }\in \Gamma _{n}}\sum_{\substack{ \left( I,J\right) \in
\Omega \mathcal{D}^{\sigma }\times \Omega \mathcal{D}^{\omega }:\ J\subset
3I  \\ 2^{-\mathbf{\rho }}\ell \left( I\right) \leq \ell \left( J\right)
\leq 2^{\mathbf{\rho }}\ell \left( I\right) }}\left\vert \left\langle
f,h_{I}^{\sigma ,a}\right\rangle _{\sigma }\right\vert \left\vert
\left\langle g,h_{J}^{\omega ,a^{\prime }}\right\rangle _{\omega
}\right\vert \ .
\end{equation*}%
Now Cauchy-Schwarz gives the estimate%
\begin{eqnarray*}
&&\sup_{a,a^{\prime }\in \Gamma _{n}}\sum_{\substack{ \left( I,J\right) \in
\Omega \mathcal{D}^{\sigma }\times \Omega \mathcal{D}^{\omega }:\ J\subset
3I  \\ 2^{-\mathbf{\rho }}\ell \left( I\right) \leq \ell \left( J\right)
\leq 2^{\mathbf{\rho }}\ell \left( I\right) }}\left\vert \left\langle
f,h_{I}^{\sigma ,a}\right\rangle _{\sigma }\right\vert \left\vert
\left\langle g,h_{J}^{\omega ,a^{\prime }}\right\rangle _{\omega }\right\vert
\\
&\leq &\sup_{a,a^{\prime }\in \Gamma _{n}}\left( \sum_{\substack{ \left(
I,J\right) \in \Omega \mathcal{D}^{\sigma }\times \Omega \mathcal{D}^{\omega
}:\ J\subset 3I  \\ 2^{-\mathbf{\rho }}\ell \left( I\right) \leq \ell \left(
J\right) \leq 2^{\mathbf{\rho }}\ell \left( I\right) }}\left\vert
\left\langle f,h_{I}^{\sigma }\right\rangle _{\sigma }\right\vert
^{2}\right) ^{\frac{1}{2}}\left( \sum_{\substack{ \left( I,J\right) \in
\Omega \mathcal{D}^{\sigma }\times \Omega \mathcal{D}^{\omega }:\ J\subset
3I  \\ 2^{-\mathbf{\rho }}\ell \left( I\right) \leq \ell \left( J\right)
\leq 2^{\mathbf{\rho }}\ell \left( I\right) }}\left\vert \left\langle
g,h_{J}^{\omega }\right\rangle _{\omega }\right\vert ^{2}\right) ^{\frac{1}{2%
}} \\
&\lesssim &\left\Vert f\right\Vert _{L^{2}\left( \sigma \right) }\left\Vert
g\right\Vert _{L^{2}\left( \omega \right) }\ ,
\end{eqnarray*}%
This completes our proof of (\ref{delta near}) save for the deferral of term 
$A_{3}$, which we bound below.

\bigskip

Now we turn to the sum of separated cubes in (\ref{delta far}) and (\ref%
{indicator far}). In each of these inequalities we have either orthogonality
or `quasi' orthogonality, due either to the presence of a quasiHaar
projection such as $\bigtriangleup _{I}^{\sigma }f$, or the presence of an
appropriate Carleson indicator such as $\beta _{F,J}\mathbf{1}_{F}\alpha _{%
\mathcal{F}}\left( F\right) $. We will prove below the estimate for the
separated sum corresponding to (\ref{delta far}). The corresponding
estimates for (\ref{indicator far}) are handled in a similar way, the only
difference being that the `quasi' orthogonality of Carleson indicators such
as $\beta _{F,J}\mathbf{1}_{F}\alpha _{\mathcal{F}}\left( F\right) $ is used
in place of the orthogonality of quasiHaar functions such as $\bigtriangleup
_{I}^{\sigma }f$. The bounded functions $\beta _{F,J}$ are replaced with
constants after an application of the energy lemma, and then the arguments
proceed as below.

We split the pairs $\left( I,J\right) \in \Omega \mathcal{D}^{\sigma }\times
\Omega \mathcal{D}^{\omega }$ occurring in (\ref{delta far}) into two
groups, those with side length of $J$ smaller than side length of $I$, and
those with side length of $I$ smaller than side length of $J$, treating only
the former case, the latter being symmetric. Thus we prove the following
bound:%
\begin{eqnarray*}
\mathcal{A}\left( f,g\right) &\equiv &\sum_{\substack{ \left( I,J\right) \in
\Omega \mathcal{D}^{\sigma }\times \Omega \mathcal{D}^{\omega }  \\ I\cap
J=\emptyset \text{ and }\ell \left( J\right) \leq 2^{-\mathbf{\rho }}\ell
\left( I\right) }}\left\vert \left\langle T_{\sigma }^{\alpha }\left(
\bigtriangleup _{I}^{\sigma }f\right) ,\bigtriangleup _{J}^{\omega
}g\right\rangle _{\omega }\right\vert \\
&\lesssim &\sqrt{A_{2}^{\alpha }}\left\Vert f\right\Vert _{L^{2}\left(
\sigma \right) }\left\Vert g\right\Vert _{L^{2}\left( \omega \right) }.
\end{eqnarray*}%
We apply the `pivotal' bound from the Energy Lemma \ref{ener} to estimate
the inner product $\left\langle T_{\sigma }^{\alpha }\left( \bigtriangleup
_{I}^{\sigma }f\right) ,\bigtriangleup _{J}^{\omega }g\right\rangle _{\omega
}$ and obtain,%
\begin{equation*}
\left\vert \left\langle T_{\sigma }^{\alpha }\left( \bigtriangleup
_{I}^{\sigma }f\right) ,\bigtriangleup _{J}^{\omega }g\right\rangle _{\omega
}\right\vert \lesssim \left\Vert \bigtriangleup _{J}^{\omega }g\right\Vert
_{L^{2}\left( \omega \right) }\mathrm{P}^{\alpha }\left( J,\left\vert
\bigtriangleup _{I}^{\sigma }f\right\vert \sigma \right) \sqrt{\left\vert
J\right\vert _{\omega }}\,,
\end{equation*}%
Denote by $\limfunc{dist}$ the $\ell ^{\infty }$ distance in $\mathbb{R}^{n}$%
: $\limfunc{dist}\left( x,y\right) =\max_{1\leq j\leq n}\left\vert
x_{j}-y_{j}\right\vert $, and denote the corresponding quasidistance by $%
\limfunc{qdist}\left( x,y\right) =\limfunc{dist}\left( \Omega ^{-1}x,\Omega
^{-1}y\right) $. We now estimate separately the long-range and mid-range
cases where $\limfunc{qdist}\left( J,I\right) \geq \ell \left( I\right) $
holds or not, and we decompose $\mathcal{A}$ accordingly:%
\begin{equation*}
\mathcal{A}\left( f,g\right) \equiv \mathcal{A}^{\limfunc{long}}\left(
f,g\right) +\mathcal{A}^{\limfunc{mid}}\left( f,g\right) .
\end{equation*}

\bigskip

\textbf{The long-range case}: We begin with the case where $\limfunc{qdist}%
\left( J,I\right) $ is at least $\ell \left( I\right) $, i.e. $J\cap
3I=\emptyset $. Since $J$ and $I$ are separated by at least $\max \left\{
\ell \left( J\right) ,\ell \left( I\right) \right\} $, we have the inequality%
\begin{equation*}
\mathrm{P}^{\alpha }\left( J,\left\vert \bigtriangleup _{I}^{\sigma
}f\right\vert \sigma \right) \approx \int_{I}\frac{\ell \left( J\right) }{%
\left\vert y-c_{J}\right\vert ^{n+1-\alpha }}\left\vert \bigtriangleup
_{I}^{\sigma }f\left( y\right) \right\vert d\sigma \left( y\right) \lesssim
\left\Vert \bigtriangleup _{I}^{\sigma }f\right\Vert _{L^{2}\left( \sigma
\right) }\frac{\ell \left( J\right) \sqrt{\left\vert I\right\vert _{\sigma }}%
}{\limfunc{qdist}\left( I,J\right) ^{n+1-\alpha }},
\end{equation*}%
since $\int_{I}\left\vert \bigtriangleup _{I}^{\sigma }f\left( y\right)
\right\vert d\sigma \left( y\right) \leq \left\Vert \bigtriangleup
_{I}^{\sigma }f\right\Vert _{L^{2}\left( \sigma \right) }\sqrt{\left\vert
I\right\vert _{\sigma }}$. Thus with $A\left( f,g\right) =\mathcal{A}^{%
\limfunc{long}}\left( f,g\right) $ we have%
\begin{eqnarray*}
A\left( f,g\right) &\lesssim &\sum_{I\in \Omega \mathcal{D}}\sum_{J\;:\;\ell
\left( J\right) \leq \ell \left( I\right) :\ \limfunc{qdist}\left(
I,J\right) \geq \ell \left( I\right) }\left\Vert \bigtriangleup _{I}^{\sigma
}f\right\Vert _{L^{2}\left( \sigma \right) }\left\Vert \bigtriangleup
_{J}^{\omega }g\right\Vert _{L^{2}\left( \omega \right) } \\
&&\ \ \ \ \ \ \ \ \ \ \ \ \ \ \ \times \frac{\ell \left( J\right) }{\limfunc{%
qdist}\left( I,J\right) ^{n+1-\alpha }}\sqrt{\left\vert I\right\vert
_{\sigma }}\sqrt{\left\vert J\right\vert _{\omega }} \\
&\equiv &\sum_{\left( I,J\right) \in \mathcal{P}}\left\Vert \bigtriangleup
_{I}^{\sigma }f\right\Vert _{L^{2}\left( \sigma \right) }\left\Vert
\bigtriangleup _{J}^{\omega }g\right\Vert _{L^{2}\left( \omega \right)
}A\left( I,J\right) ; \\
\text{with }A\left( I,J\right) &\equiv &\frac{\ell \left( J\right) }{%
\limfunc{qdist}\left( I,J\right) ^{n+1-\alpha }}\sqrt{\left\vert
I\right\vert _{\sigma }}\sqrt{\left\vert J\right\vert _{\omega }}; \\
\text{ and }\mathcal{P} &\equiv &\left\{ \left( I,J\right) \in \Omega 
\mathcal{D}\times \Omega \mathcal{D}:\ell \left( J\right) \leq \ell \left(
I\right) \text{ and }\limfunc{qdist}\left( I,J\right) \geq \ell \left(
I\right) \right\} .
\end{eqnarray*}%
Now let $\Omega \mathcal{D}_{N}\equiv \left\{ K\in \Omega \mathcal{D}:\ell
\left( K\right) =2^{N}\right\} $ for each $N\in \mathbb{Z}$. For $N\in 
\mathbb{Z}$ and $s\in \mathbb{Z}_{+}$, we further decompose $A\left(
f,g\right) $ by pigeonholing the sidelengths of $I$ and $J$ by $2^{N}$ and $%
2^{N-s}$ respectively: 
\begin{eqnarray*}
A\left( f,g\right) &=&\sum_{s=0}^{\infty }\sum_{N\in \mathbb{Z}%
}A_{N}^{s}\left( f,g\right) ; \\
A_{N}^{s}\left( f,g\right) &\equiv &\sum_{\left( I,J\right) \in \mathcal{P}%
_{N}^{s}}\left\Vert \bigtriangleup _{I}^{\sigma }f\right\Vert _{L^{2}\left(
\sigma \right) }\left\Vert \bigtriangleup _{J}^{\omega }g\right\Vert
_{L^{2}\left( \omega \right) }A\left( I,J\right) \\
\text{where }\mathcal{P}_{N}^{s} &\equiv &\left\{ \left( I,J\right) \in
\Omega \mathcal{D}_{N}\times \Omega \mathcal{D}_{N-s}:\limfunc{qdist}\left(
I,J\right) \geq \ell \left( I\right) \right\} .
\end{eqnarray*}%
Now $A_{N}^{s}\left( f,g\right) =A_{N}^{s}\left( \mathsf{P}_{N}^{\sigma }f,%
\mathsf{P}_{N-s}^{\omega }g\right) $ where $\mathsf{P}_{M}^{\mu
}=\dsum\limits_{K\in \Omega \mathcal{D}_{M}}\bigtriangleup _{K}^{\mu }$
denotes quasiHaar projection onto $\limfunc{Span}\left\{ h_{K}^{\mu
,a}\right\} _{K\in \Omega \mathcal{D}_{M},a\in \Gamma _{n}}$, and so by
orthogonality of the projections $\left\{ \mathsf{P}_{M}^{\mu }\right\}
_{M\in \mathbb{Z}}$ we have%
\begin{eqnarray*}
\left\vert \sum_{N\in \mathbb{Z}}A_{N}^{s}\left( f,g\right) \right\vert
&=&\sum_{N\in \mathbb{Z}}\left\vert A_{N}^{s}\left( \mathsf{P}_{N}^{\sigma
}f,\mathsf{P}_{N-s}^{\omega }g\right) \right\vert \leq \sum_{N\in \mathbb{Z}%
}\left\Vert A_{N}^{s}\right\Vert \left\Vert \mathsf{P}_{N}^{\sigma
}f\right\Vert _{L^{2}\left( \sigma \right) }\left\Vert \mathsf{P}%
_{N-s}^{\omega }g\right\Vert _{L^{2}\left( \omega \right) } \\
&\leq &\left\{ \sup_{N\in \mathbb{Z}}\left\Vert A_{N}^{s}\right\Vert
\right\} \left( \sum_{N\in \mathbb{Z}}\left\Vert \mathsf{P}_{N}^{\sigma
}f\right\Vert _{L^{2}\left( \sigma \right) }^{2}\right) ^{\frac{1}{2}}\left(
\sum_{N\in \mathbb{Z}}\left\Vert \mathsf{P}_{N-s}^{\omega }g\right\Vert
_{L^{2}\left( \omega \right) }^{2}\right) ^{\frac{1}{2}} \\
&\leq &\left\{ \sup_{N\in \mathbb{Z}}\left\Vert A_{N}^{s}\right\Vert
\right\} \left\Vert f\right\Vert _{L^{2}\left( \sigma \right) }\left\Vert
g\right\Vert _{L^{2}\left( \omega \right) }.
\end{eqnarray*}%
Thus it suffices to show an estimate uniform in $N$ with geometric decay in $%
s$, and we will show%
\begin{equation}
\left\vert A_{N}^{s}\left( f,g\right) \right\vert \leq C2^{-s}\sqrt{%
A_{2}^{\alpha }}\left\Vert f\right\Vert _{L^{2}\left( \sigma \right)
}\left\Vert g\right\Vert _{L^{2}\left( \omega \right) },\ \ \ \ \ \text{for }%
s\geq 0\text{ and }N\in \mathbb{Z}.  \label{AsN}
\end{equation}

We now pigeonhole the distance between $I$ and $J$:%
\begin{eqnarray*}
A_{N}^{s}\left( f,g\right) &=&\dsum\limits_{\ell =0}^{\infty }A_{N,\ell
}^{s}\left( f,g\right) ; \\
A_{N,\ell }^{s}\left( f,g\right) &\equiv &\sum_{\left( I,J\right) \in 
\mathcal{P}_{N,\ell }^{s}}\left\Vert \bigtriangleup _{I}^{\sigma
}f\right\Vert _{L^{2}\left( \sigma \right) }\left\Vert \bigtriangleup
_{J}^{\omega }g\right\Vert _{L^{2}\left( \omega \right) }A\left( I,J\right)
\\
\text{where }\mathcal{P}_{N,\ell }^{s} &\equiv &\left\{ \left( I,J\right)
\in \Omega \mathcal{D}_{N}\times \Omega \mathcal{D}_{N-s}:\limfunc{qdist}%
\left( I,J\right) \approx 2^{N+\ell }\right\} .
\end{eqnarray*}%
If we define $\mathcal{H}\left( A_{N,\ell }^{s}\right) $ to be the bilinear
form on $\ell ^{2}\times \ell ^{2}$ with matrix $\left[ A\left( I,J\right) %
\right] _{\left( I,J\right) \in \mathcal{P}_{N,\ell }^{s}}$, then it remains
to show that the norm $\left\Vert \mathcal{H}\left( A_{N,\ell }^{s}\right)
\right\Vert _{\ell ^{2}\rightarrow \ell ^{2}}$ of $\mathcal{H}\left(
A_{N,\ell }^{s}\right) $ on the sequence space $\ell ^{2}$ is bounded by $%
C2^{-s-\ell }\sqrt{A_{2}^{\alpha }}$. In turn, this is equivalent to showing
that the norm $\left\Vert \mathcal{H}\left( B_{N,\ell }^{s}\right)
\right\Vert _{\ell ^{2}\rightarrow \ell ^{2}}$ of the bilinear form $%
\mathcal{H}\left( B_{N,\ell }^{s}\right) \equiv \mathcal{H}\left( A_{N,\ell
}^{s}\right) ^{\limfunc{tr}}\mathcal{H}\left( A_{N,\ell }^{s}\right) $ on
the sequence space $\ell ^{2}$ is bounded by $C^{2}2^{-2s-2\ell
}A_{2}^{\alpha }$. Here $\mathcal{H}\left( B_{N,\ell }^{s}\right) $ is the
quadratic form with matrix kernel $\left[ B_{N,\ell }^{s}\left( J,J^{\prime
}\right) \right] _{J,J^{\prime }\in \Omega \mathcal{D}_{N-s}}$ having
entries:%
\begin{equation*}
B_{N,\ell }^{s}\left( J,J^{\prime }\right) \equiv \sum_{I\in \Omega \mathcal{%
D}_{N}:\ \limfunc{qdist}\left( I,J\right) \approx \limfunc{qdist}\left(
I,J^{\prime }\right) \approx 2^{N+\ell }}A\left( I,J\right) A\left(
I,J^{\prime }\right) ,\ \ \ \ \ \text{for }J,J^{\prime }\in \Omega \mathcal{D%
}_{N-s}.
\end{equation*}

We are reduced to showing,%
\begin{equation*}
\left\Vert \mathcal{H}\left( B_{N,\ell }^{s}\right) \right\Vert _{\ell
^{2}\rightarrow \ell ^{2}}\leq C2^{-2s-2\ell }A_{2}^{\alpha }\ \ \ \ \text{%
for }s\geq 0\text{, }\ell \geq 0\text{ and }N\in \mathbb{Z}.
\end{equation*}%
We begin by computing $B_{N,\ell }^{s}\left( J,J^{\prime }\right) $:%
\begin{eqnarray*}
B_{N,\ell }^{s}\left( J,J^{\prime }\right) &=&\sum_{\substack{ I\in \Omega 
\mathcal{D}_{N}  \\ \limfunc{qdist}\left( I,J\right) \approx \limfunc{qdist}%
\left( I,J^{\prime }\right) \approx 2^{N+\ell }}}\frac{\ell \left( J\right) 
}{\limfunc{qdist}\left( I,J\right) ^{n+1-\alpha }}\sqrt{\left\vert
I\right\vert _{\sigma }}\sqrt{\left\vert J\right\vert _{\omega }} \\
&&\ \ \ \ \ \ \ \ \ \ \times \frac{\ell \left( J^{\prime }\right) }{\limfunc{%
qdist}\left( I,J^{\prime }\right) ^{n+1-\alpha }}\sqrt{\left\vert
I\right\vert _{\sigma }}\sqrt{\left\vert J^{\prime }\right\vert _{\omega }}
\\
&=&\left\{ \sum_{\substack{ I\in \Omega \mathcal{D}_{N}  \\ \limfunc{qdist}%
\left( I,J\right) \approx \limfunc{qdist}\left( I,J^{\prime }\right) \approx
2^{N+\ell }}}\left\vert I\right\vert _{\sigma }\frac{1}{\limfunc{qdist}%
\left( I,J\right) ^{n+1-\alpha }\limfunc{qdist}\left( I,J^{\prime }\right)
^{n+1-\alpha }}\right\} \\
&&\ \ \ \ \ \ \ \ \ \ \times \ell \left( J\right) \ell \left( J^{\prime
}\right) \sqrt{\left\vert J\right\vert _{\omega }}\sqrt{\left\vert J^{\prime
}\right\vert _{\omega }}.
\end{eqnarray*}%
Now we show that%
\begin{equation}
\left\Vert B_{N,\ell }^{s}\right\Vert _{\ell ^{2}\rightarrow \ell
^{2}}\lesssim 2^{-2s-2\ell }A_{2}^{\alpha }\ ,  \label{Schur s}
\end{equation}%
by applying the proof of Schur's lemma. Fix $\ell \geq 0$ and $s\geq 0$.
Choose the Schur function $\beta \left( K\right) =\frac{1}{\sqrt{\left\vert
K\right\vert _{\omega }}}$. Fix $J\in \Omega \mathcal{D}_{N-s}$. We now
group those $I\in \Omega \mathcal{D}_{N}$ with $\limfunc{qdist}\left(
I,J\right) \approx 2^{N+\ell }$ into finitely many groups $%
G_{1},...G_{C_{n}} $ for which the union of the $I$ in each group is
contained in quasicube of side length roughly $\frac{1}{100}2^{N+\ell }$ ,
and we set $I_{k}^{\ast }\equiv \dbigcup\limits_{I\in G_{k}}I$ for $1\leq
k\leq C_{n}$. We then have%
\begin{eqnarray*}
&&\sum_{J^{\prime }\in \Omega \mathcal{D}_{N-s}}\frac{\beta \left( J\right) 
}{\beta \left( J^{\prime }\right) }B_{N,\ell }^{s}\left( J,J^{\prime }\right)
\\
&=&\sum_{\substack{ J^{\prime }\in \Omega \mathcal{D}_{N-s}  \\ \limfunc{%
qdist}\left( J^{\prime },J\right) \leq \frac{1}{100}2^{N+\ell +2}}}\frac{%
\beta \left( J\right) }{\beta \left( J^{\prime }\right) }B_{N,\ell
}^{s}\left( J,J^{\prime }\right) +\sum_{\substack{ J^{\prime }\in \Omega 
\mathcal{D}_{N-s}  \\ \limfunc{qdist}\left( J^{\prime },J\right) >\frac{1}{%
100}2^{N+\ell +2}}}\frac{\beta \left( J\right) }{\beta \left( J^{\prime
}\right) }B_{N,\ell }^{s}\left( J,J^{\prime }\right) \\
&=&A+B,
\end{eqnarray*}%
where%
\begin{eqnarray*}
&&A\lesssim \sum_{\substack{ J^{\prime }\in \Omega \mathcal{D}_{N-s}  \\ 
\limfunc{qdist}\left( J,J^{\prime }\right) \leq \frac{1}{100}2^{N+\ell +2}}}%
\left\{ \sum_{\substack{ I\in \Omega \mathcal{D}_{N}  \\ \limfunc{qdist}%
\left( I,J\right) \approx 2^{N+\ell }}}\left\vert I\right\vert _{\sigma
}\right\} \ \frac{2^{2\left( N-s\right) }}{2^{2\left( \ell +N\right) \left(
n+1-\alpha \right) }}\left\vert J^{\prime }\right\vert _{\omega } \\
&=&\sum_{\substack{ J^{\prime }\in \Omega \mathcal{D}_{N-s}  \\ \limfunc{%
qdist}\left( J,J^{\prime }\right) \leq \frac{1}{100}2^{N+\ell +2}}}\left\{
\sum_{k=1}^{C_{n}}\left\vert I_{k}^{\ast }\right\vert _{\sigma }\right\} \ 
\frac{2^{2\left( N-s\right) }}{2^{2\left( \ell +N\right) \left( n+1-\alpha
\right) }}\left\vert J^{\prime }\right\vert _{\omega } \\
&=&\frac{2^{2\left( N-s\right) }}{2^{2\left( \ell +N\right) \left(
n+1-\alpha \right) }}\sum_{k=1}^{C_{n}}\sum_{\substack{ J^{\prime }\in
\Omega \mathcal{D}_{N-s}  \\ \limfunc{qdist}\left( J,J^{\prime }\right) \leq 
\frac{1}{100}2^{N+\ell +2}}}\left\vert I_{k}^{\ast }\right\vert _{\sigma }\
\left\vert J^{\prime }\right\vert _{\omega } \\
&\lesssim &2^{-2s-2\ell }\sum_{k=1}^{C_{n}}\frac{\left\vert I_{k}^{\ast
}\right\vert _{\sigma }}{2^{\left( \ell +N\right) \left( n-\alpha \right) }}%
\frac{\left\vert \frac{1}{100}2^{N+\ell +2}J\right\vert _{\omega }}{%
2^{\left( \ell +N\right) \left( n-\alpha \right) }}\lesssim 2^{-2s-2\ell
}A_{2}^{\alpha },
\end{eqnarray*}%
since the quasicubes $I_{k}^{\ast }$ and $\frac{1}{100}2^{N+\ell +2}J$ are
well separated. If we let $Q_{k}$ be the smallest quasicube containing the
set%
\begin{equation*}
E_{k}\equiv \dbigcup\limits_{\substack{ J^{\prime }\in \Omega \mathcal{D}%
_{N-s}:\ \limfunc{qdist}\left( I_{k}^{\ast },J^{\prime }\right) \approx
2^{N+\ell }  \\ \limfunc{qdist}\left( J,J^{\prime }\right) >\frac{1}{100}%
2^{N+\ell +2}}}J^{\prime }\ ,
\end{equation*}%
we also have%
\begin{eqnarray*}
B &\lesssim &\sum_{\substack{ J^{\prime }\in \Omega \mathcal{D}_{N-s}  \\ 
\limfunc{qdist}\left( J,J^{\prime }\right) >\frac{1}{100}2^{N+\ell +2}}}%
\left\{ \sum_{\substack{ I\in \Omega \mathcal{D}_{N}  \\ \limfunc{qdist}%
\left( I,J^{\prime }\right) \approx \limfunc{qdist}\left( I,J\right) \approx
2^{N+\ell }}}\left\vert I\right\vert _{\sigma }\right\} \ \frac{2^{2\left(
N-s\right) }}{2^{2\left( \ell +N\right) \left( n+1-\alpha \right) }}%
\left\vert J^{\prime }\right\vert _{\omega } \\
&\lesssim &\sum_{\substack{ J^{\prime }\in \Omega \mathcal{D}_{N-s}  \\ 
\limfunc{qdist}\left( J,J^{\prime }\right) >\frac{1}{100}2^{N+\ell +2}}}%
\left\{ \sum_{k:\ \limfunc{qdist}\left( I_{k}^{\ast },J^{\prime }\right)
\approx 2^{N+\ell }}\left\vert I_{k}^{\ast }\right\vert _{\sigma }\right\} \ 
\frac{2^{2\left( N-s\right) }}{2^{2\left( \ell +N\right) \left( n+1-\alpha
\right) }}\left\vert J^{\prime }\right\vert _{\omega } \\
&\lesssim &\frac{2^{2\left( N-s\right) }}{2^{2\left( \ell +N\right) \left(
n+1-\alpha \right) }}\sum_{k=1}^{C_{n}}\left\vert I_{k}^{\ast }\right\vert
_{\sigma }\left\vert E_{k}\right\vert _{\omega } \\
&\lesssim &2^{-2s-2\ell }\sum_{k=1}^{C_{n}}\frac{\left\vert I_{k}^{\ast
}\right\vert _{\sigma }}{2^{\left( \ell +N\right) \left( n-\alpha \right) }}%
\frac{\left\vert Q_{k}\right\vert _{\omega }}{2^{\left( \ell +N\right)
\left( n-\alpha \right) }}\lesssim 2^{-2s-2\ell }A_{2}^{\alpha },
\end{eqnarray*}%
since the quasicube $I_{k}^{\ast }$ is well separated from the quasicube $%
Q_{k}$.

Thus we can now apply Schur's argument with $\sum_{J}\left( a_{J}\right)
^{2}=\sum_{J^{\prime }}\left( b_{J^{\prime }}\right) ^{2}=1$ to obtain%
\begin{eqnarray*}
&&\sum_{J,J^{\prime }\in \Omega \mathcal{D}_{N-s}}a_{J}b_{J^{\prime
}}B_{N,\ell }^{s}\left( J,J^{\prime }\right) \\
&=&\sum_{J,J^{\prime }\in \Omega \mathcal{D}_{N-s}}a_{J}\beta \left(
J\right) b_{J^{\prime }}\beta \left( J^{\prime }\right) \frac{B_{N,\ell
}^{s}\left( J,J^{\prime }\right) }{\beta \left( J\right) \beta \left(
J^{\prime }\right) } \\
&\leq &\sum_{J}\left( a_{J}\beta \left( J\right) \right) ^{2}\sum_{J^{\prime
}}\frac{B_{N,\ell }^{s}\left( J,J^{\prime }\right) }{\beta \left( J\right)
\beta \left( J^{\prime }\right) }+\sum_{J^{\prime }}\left( b_{J^{\prime
}}\beta \left( J^{\prime }\right) \right) ^{2}\sum_{J}\frac{B_{N,\ell
}^{s}\left( J,J^{\prime }\right) }{\beta \left( J\right) \beta \left(
J^{\prime }\right) } \\
&=&\sum_{J}\left( a_{J}\right) ^{2}\left\{ \sum_{J^{\prime }}\frac{\beta
\left( J\right) }{\beta \left( J^{\prime }\right) }B_{N,\ell }^{s}\left(
J,J^{\prime }\right) \right\} +\sum_{J^{\prime }}\left( b_{J^{\prime
}}\right) ^{2}\left\{ \sum_{J}\frac{\beta \left( J^{\prime }\right) }{\beta
\left( J\right) }B_{N,\ell }^{s}\left( J,J^{\prime }\right) \right\} \\
&\lesssim &2^{-2s-2\ell }A_{2}^{\alpha }\left( \sum_{J}\left( a_{J}\right)
^{2}+\sum_{J^{\prime }}\left( b_{J^{\prime }}\right) ^{2}\right)
=2^{1-2s-2\ell }A_{2}^{\alpha }.
\end{eqnarray*}%
This completes the proof of (\ref{Schur s}). We can now sum in $\ell $ to
get (\ref{AsN}) and we are done. This completes our proof of the long-range
estimate%
\begin{equation*}
\mathcal{A}^{\limfunc{long}}\left( f,g\right) \lesssim \sqrt{A_{2}^{\alpha }}%
\left\Vert f\right\Vert _{L^{2}\left( \sigma \right) }\left\Vert
g\right\Vert _{L^{2}\left( \omega \right) }\ .
\end{equation*}

\bigskip

At this point we pause to complete the proof of (\ref{delta near}). Indeed,
the deferred term $A_{3}$ can be handled using the above argument since $%
3J\cap I=\emptyset =J\cap 3I$ implies that we can use the Energy Lemma \ref%
{ener} as we did above.

\bigskip

\textbf{The mid range case}: Let%
\begin{equation*}
\mathcal{P}\equiv \left\{ \left( I,J\right) \in \Omega \mathcal{D}\times
\Omega \mathcal{D}:J\text{ is good},\ \ell \left( J\right) \leq 2^{-\mathbf{%
\rho }}\ell \left( I\right) ,\text{ }J\subset 3I\setminus I\right\} .
\end{equation*}%
For $\left( I,J\right) \in \mathcal{P}$, the `pivotal' estimate from the
Energy Lemma \ref{ener} gives%
\begin{equation*}
\left\vert \left\langle T_{\sigma }^{\alpha }\left( \bigtriangleup
_{I}^{\sigma }f\right) ,\bigtriangleup _{J}^{\omega }g\right\rangle _{\omega
}\right\vert \lesssim \left\Vert \bigtriangleup _{J}^{\omega }g\right\Vert
_{L^{2}\left( \omega \right) }\mathrm{P}^{\alpha }\left( J,\left\vert
\bigtriangleup _{I}^{\sigma }f\right\vert \sigma \right) \sqrt{\left\vert
J\right\vert _{\omega }}\,.
\end{equation*}%
Now we pigeonhole the lengths of $I$ and $J$ and the distance between them
by defining%
\begin{equation*}
\mathcal{P}_{N,d}^{s}\equiv \left\{ \left( I,J\right) \in \Omega \mathcal{D}%
\times \Omega \mathcal{D}:J\text{ is good},\ \ell \left( I\right) =2^{N},\
\ell \left( J\right) =2^{N-s},\text{ }J\subset 3I\setminus I,\ 2^{d-1}\leq 
\limfunc{qdist}\left( I,J\right) \leq 2^{d}\right\} .
\end{equation*}%
Note that the closest a good quasicube $J$ can come to $I$ is determined by
the goodness inequality, which gives this bound for $2^{d}\geq \limfunc{qdist%
}\left( I,J\right) $: 
\begin{eqnarray*}
&&2^{d}\geq \frac{1}{2}\ell \left( I\right) ^{1-\varepsilon }\ell \left(
J\right) ^{\varepsilon }=\frac{1}{2}2^{N\left( 1-\varepsilon \right)
}2^{\left( N-s\right) \varepsilon }=\frac{1}{2}2^{N-\varepsilon s}; \\
&&\text{which implies }N-\varepsilon s-1\leq d\leq N,
\end{eqnarray*}%
where the last inequality holds because we are in the case of the mid-range
term. Thus we have%
\begin{eqnarray*}
&&\dsum\limits_{\left( I,J\right) \in \mathcal{P}}\left\vert \left\langle
T_{\sigma }^{\alpha }\left( \bigtriangleup _{I}^{\sigma }f\right)
,\bigtriangleup _{J}^{\omega }g\right\rangle _{\omega }\right\vert \lesssim
\dsum\limits_{\left( I,J\right) \in \mathcal{P}}\left\Vert \bigtriangleup
_{J}^{\omega }g\right\Vert _{L^{2}\left( \omega \right) }\mathrm{P}^{\alpha
}\left( J,\left\vert \bigtriangleup _{I}^{\sigma }f\right\vert \sigma
\right) \sqrt{\left\vert J\right\vert _{\omega }} \\
&&\ \ \ \ \ =\dsum\limits_{s=\mathbf{\rho }}^{\infty }\ \sum_{N\in \mathbb{Z}%
}\ \sum_{d=N-\varepsilon s-1}^{N}\ \sum_{\left( I,J\right) \in \mathcal{P}%
_{N,d}^{s}}\ \left\Vert \bigtriangleup _{J}^{\omega }g\right\Vert
_{L^{2}\left( \omega \right) }\mathrm{P}^{\alpha }\left( J,\left\vert
\bigtriangleup _{I}^{\sigma }f\right\vert \sigma \right) \sqrt{\left\vert
J\right\vert _{\omega }}.
\end{eqnarray*}%
Now we use%
\begin{eqnarray*}
\mathrm{P}^{\alpha }\left( J,\left\vert \bigtriangleup _{I}^{\sigma
}f\right\vert \sigma \right) &=&\int_{I}\frac{\ell \left( J\right) }{\left(
\ell \left( J\right) +\left\vert y-c_{J}\right\vert \right) ^{n+1-\alpha }}%
\left\vert \bigtriangleup _{I}^{\sigma }f\left( y\right) \right\vert d\sigma
\left( y\right) \\
&\lesssim &\frac{2^{N-s}}{2^{d\left( n+1-\alpha \right) }}\left\Vert
\bigtriangleup _{I}^{\sigma }f\right\Vert _{L^{2}\left( \sigma \right) }%
\sqrt{\left\vert I\right\vert _{\sigma }}
\end{eqnarray*}%
and apply Cauchy-Schwarz in $J$ and use $J\subset 3I\setminus I$ to get%
\begin{eqnarray*}
&&\dsum\limits_{\left( I,J\right) \in \mathcal{P}}\left\vert \left\langle
T_{\sigma }^{\alpha }\left( \bigtriangleup _{I}^{\sigma }f\right)
,\bigtriangleup _{J}^{\omega }g\right\rangle _{\omega }\right\vert \\
&\lesssim &\dsum\limits_{s=\mathbf{\rho }}^{\infty }\ \sum_{N\in \mathbb{Z}%
}\ \sum_{d=N-\varepsilon s-1}^{N}\ \sum_{I\in \Omega \mathcal{D}_{N}}\frac{%
2^{N-s}2^{N\left( n-\alpha \right) }}{2^{d\left( n+1-\alpha \right) }}%
\left\Vert \bigtriangleup _{I}^{\sigma }f\right\Vert _{L^{2}\left( \sigma
\right) }\frac{\sqrt{\left\vert I\right\vert _{\sigma }}\sqrt{\left\vert
3I\setminus I\right\vert _{\omega }}}{2^{N\left( n-\alpha \right) }} \\
&&\ \ \ \ \ \ \ \ \ \ \ \ \ \ \ \ \ \ \ \ \ \ \ \ \ \ \ \ \ \ \times \sqrt{%
\sum_{\substack{ J\in \Omega \mathcal{D}_{N-s}  \\ J\subset 3I\setminus I%
\text{ and }\limfunc{qdist}\left( I,J\right) \approx 2^{d}}}\left\Vert
\bigtriangleup _{J}^{\omega }g\right\Vert _{L^{2}\left( \omega \right) }^{2}}
\\
&\lesssim &\left( 1+\varepsilon s\right) \dsum\limits_{s=\mathbf{\rho }%
}^{\infty }\ \sum_{N\in \mathbb{Z}}\frac{2^{N-s}2^{N\left( n-\alpha \right) }%
}{2^{\left( N-\varepsilon s\right) \left( n+1-\alpha \right) }}\sqrt{%
A_{2}^{\alpha }}\sum_{I\in \Omega \mathcal{D}_{N}}\left\Vert \bigtriangleup
_{I}^{\sigma }f\right\Vert _{L^{2}\left( \sigma \right) }\sqrt{\sum 
_{\substack{ J\in \Omega \mathcal{D}_{N-s}  \\ J\subset 3I\setminus I}}%
\left\Vert \bigtriangleup _{J}^{\omega }g\right\Vert _{L^{2}\left( \omega
\right) }^{2}} \\
&\lesssim &\left( 1+\varepsilon s\right) \dsum\limits_{s=\mathbf{\rho }%
}^{\infty }2^{-s\left[ 1-\varepsilon \left( n+1-\alpha \right) \right] }%
\sqrt{A_{2}^{\alpha }}\left\Vert f\right\Vert _{L^{2}\left( \sigma \right)
}\left\Vert g\right\Vert _{L^{2}\left( \omega \right) }\lesssim \sqrt{%
A_{2}^{\alpha }}\left\Vert f\right\Vert _{L^{2}\left( \sigma \right)
}\left\Vert g\right\Vert _{L^{2}\left( \omega \right) },
\end{eqnarray*}%
where in the third line above we have used $\sum_{d=N-\varepsilon
s-1}^{N}1\lesssim 1+\varepsilon s$, and in the last line $\frac{%
2^{N-s}2^{N\left( n-\alpha \right) }}{2^{\left( N-\varepsilon s\right)
\left( n+1-\alpha \right) }}=2^{-s\left[ 1-\varepsilon \left( n+1-\alpha
\right) \right] }$ followed by Cauchy-Schwarz in $I$ and $N$, using that we
have bounded overlap in the triples of $I$ for $I\in \Omega \mathcal{D}_{N}$%
. More precisely, if we define $f_{k}\equiv \sum_{I\in \Omega \mathcal{D}%
_{k}}\bigtriangleup _{I}^{\sigma }fh_{I}^{\sigma }$ and $g_{k}\equiv
\sum_{I\in \Omega \mathcal{D}_{k}}\bigtriangleup _{J}^{\omega
}gh_{J}^{\omega }$, then we have the orthogonality inequality 
\begin{eqnarray*}
\sum_{N\in \mathbb{Z}}\left\Vert f_{N}\right\Vert _{L^{2}\left( \sigma
\right) }\left\Vert g_{N-s}\right\Vert _{L^{2}\left( \omega \right) } &\leq
&\left( \sum_{N\in \mathbb{Z}}\left\Vert f_{N}\right\Vert _{L^{2}\left(
\sigma \right) }^{2}\right) ^{\frac{1}{2}}\left( \sum_{N\in \mathbb{Z}%
}\left\Vert g_{N-s}\right\Vert _{L^{2}\left( \omega \right) }^{2}\right) ^{%
\frac{1}{2}} \\
&=&\left\Vert f\right\Vert _{L^{2}\left( \sigma \right) }\left\Vert
g\right\Vert _{L^{2}\left( \omega \right) }.
\end{eqnarray*}%
We have assumed that $0<\varepsilon <\frac{1}{n+1-\alpha }$ in the
calculations above, and this completes the proof of Lemma \ref{standard
delta}.
\end{proof}

\section{Corona Decompositions and splittings}

We will use two different corona constructions, namely a Calder\'{o}%
n-Zygmund decomposition and an energy decomposition of NTV type, to reduce
matters to the stopping form, the main part of which is handled by Lacey's
recursion argument. We will then iterate these coronas into a double corona.
We first recall our basic setup. For convenience in notation we will
sometimes suppress the dependence on $\alpha $ in our nonlinear forms, but
will retain it in the operators, Poisson integrals and constants. We will
assume that the good/bad quasicube machinery of Nazarov, Treil and Volberg 
\cite{Vol} is in force here. Let $\Omega \mathcal{D}^{\sigma }=\Omega 
\mathcal{D}^{\omega }$ be an $\left( \mathbf{r},\varepsilon \right) $-good
quasigrid on $\mathbb{R}^{n}$, and let $\left\{ h_{I}^{\sigma ,a}\right\}
_{I\in \Omega \mathcal{D}^{\sigma },\ a\in \Gamma _{n}}$ and $\left\{
h_{J}^{\omega ,b}\right\} _{J\in \Omega \mathcal{D}^{\omega },\ b\in \Gamma
_{n}}$ be corresponding quasiHaar bases as described above, so that%
\begin{equation*}
f=\sum_{I\in \Omega \mathcal{D}^{\sigma }}\bigtriangleup _{I}^{\sigma }f%
\text{ and }g=\sum_{J\in \Omega \mathcal{D}^{\omega }\text{ }}\bigtriangleup
_{J}^{\omega }g\ ,
\end{equation*}%
where the quasiHaar projections $\bigtriangleup _{I}^{\sigma }f$ and $%
\bigtriangleup _{J}^{\omega }g$ vanish if the quasicubes $I$ and $J$ are not
good. Inequality (\ref{two weight}) is equivalent to boundedness of the
bilinear form%
\begin{equation*}
\mathcal{T}^{\alpha }\left( f,g\right) \equiv \left\langle T_{\sigma
}^{\alpha }\left( f\right) ,g\right\rangle _{\omega }=\sum_{I\in \Omega 
\mathcal{D}^{\sigma }\text{ and }J\in \Omega \mathcal{D}^{\omega
}}\left\langle T_{\sigma }^{\alpha }\left( \bigtriangleup _{I}^{\sigma
}f\right) ,\bigtriangleup _{J}^{\omega }g\right\rangle _{\omega }
\end{equation*}%
on $L^{2}\left( \sigma \right) \times L^{2}\left( \omega \right) $, i.e.%
\begin{equation*}
\left\vert \mathcal{T}^{\alpha }\left( f,g\right) \right\vert \leq \mathfrak{%
N}_{T^{\alpha }}\left\Vert f\right\Vert _{L^{2}\left( \sigma \right)
}\left\Vert g\right\Vert _{L^{2}\left( \omega \right) }.
\end{equation*}

\subsection{The Calder\'{o}n-Zygmund corona}

We now introduce a stopping tree $\mathcal{F}$ for the function $f\in
L^{2}\left( \sigma \right) $. Let $\mathcal{F}$ be a collection of Calder%
\'{o}n-Zygmund stopping quasicubes for $f$, and let $\Omega \mathcal{D}%
^{\sigma }=\dbigcup\limits_{F\in \mathcal{F}}\mathcal{C}_{F}$ be the
associated corona decomposition of the dyadic quasigrid $\Omega \mathcal{D}%
^{\sigma }$.

For a quasicube $I\in \Omega \mathcal{D}^{\sigma }$ let $\pi _{\Omega 
\mathcal{D}^{\sigma }}I$ be the $\Omega \mathcal{D}^{\sigma }$-parent of $I$
in the quasigrid $\Omega \mathcal{D}^{\sigma }$, and let $\pi _{\mathcal{F}%
}I $ be the smallest member of $\mathcal{F}$ that contains $I$. For $%
F,F^{\prime }\in \mathcal{F}$, we say that $F^{\prime }$ is an $\mathcal{F}$%
-child of $F$ if $\pi _{\mathcal{F}}\left( \pi _{\Omega \mathcal{D}^{\sigma
}}F^{\prime }\right) =F$ (it could be that $F=\pi _{\Omega \mathcal{D}%
^{\sigma }}F^{\prime }$), and we denote by $\mathfrak{C}_{\mathcal{F}}\left(
F\right) $ the set of $\mathcal{F}$-children of $F$. For $F\in \mathcal{F}$,
define the projection $\mathsf{P}_{\mathcal{C}_{F}}^{\sigma }$ onto the
linear span of the quasiHaar functions $\left\{ h_{I}^{\sigma ,a}\right\}
_{I\in \mathcal{C}_{F},\ a\in \Gamma _{n}}$ by%
\begin{equation*}
\mathsf{P}_{\mathcal{C}_{F}}^{\sigma }f=\sum_{I\in \mathcal{C}%
_{F}}\bigtriangleup _{I}^{\sigma }f=\sum_{I\in \mathcal{C}_{F},\ a\in \Gamma
_{n}}\left\langle f,h_{I}^{\sigma ,a}\right\rangle _{\sigma }h_{I}^{\sigma
,a}.
\end{equation*}%
The standard properties of these projections are%
\begin{equation*}
f=\sum_{F\in \mathcal{F}}\mathsf{P}_{\mathcal{C}_{F}}^{\sigma }f,\ \ \ \ \
\int \left( \mathsf{P}_{\mathcal{C}_{F}}^{\sigma }f\right) \sigma =0,\ \ \ \
\ \left\Vert f\right\Vert _{L^{2}\left( \sigma \right) }^{2}=\sum_{F\in 
\mathcal{F}}\left\Vert \mathsf{P}_{\mathcal{C}_{F}}^{\sigma }f\right\Vert
_{L^{2}\left( \sigma \right) }^{2}.
\end{equation*}

\subsection{The energy corona}

We must also impose a quasienergy corona decomposition as in \cite{NTV3} and 
\cite{LaSaUr2}.

\begin{definition}
\label{def energy corona 3}Given a quasicube $S_{0}$, define $\mathcal{S}%
\left( S_{0}\right) $ to be the maximal subquasicubes $I\subset S_{0}$ such
that%
\begin{equation}
\sum_{J\in \mathcal{M}_{\mathbf{\tau }-\limfunc{deep}}\left( I\right)
}\left( \frac{\mathrm{P}^{\alpha }\left( J,\mathbf{1}_{S_{0}\setminus \gamma
J}\sigma \right) }{\left\vert J\right\vert ^{\frac{1}{n}}}\right)
^{2}\left\Vert \mathsf{P}_{J}^{\limfunc{subgood},\omega }\mathbf{x}%
\right\Vert _{L^{2}\left( \omega \right) }^{2}\geq C_{\limfunc{energy}}\left[
\left( \mathcal{E}_{\alpha }^{\limfunc{deep}}\right) ^{2}+A_{2}^{\alpha
}+A_{2}^{\alpha ,\limfunc{punct}}\right] \ \left\vert I\right\vert _{\sigma
},  \label{def stop 3}
\end{equation}%
where $\mathcal{E}_{\alpha }^{\limfunc{deep}}$ is the constant in the deep
quasienergy condition defined in Definition \ref{energy condition}, and $C_{%
\limfunc{energy}}$ is a sufficiently large positive constant depending only
on $\mathbf{\tau }\geq \mathbf{r},n$ and $\alpha $. Then define the $\sigma $%
-energy stopping quasicubes of $S_{0}$ to be the collection 
\begin{equation*}
\mathcal{S}=\left\{ S_{0}\right\} \cup \dbigcup\limits_{n=0}^{\infty }%
\mathcal{S}_{n}
\end{equation*}%
where $\mathcal{S}_{0}=\mathcal{S}\left( S_{0}\right) $ and $\mathcal{S}%
_{n+1}=\dbigcup\limits_{S\in \mathcal{S}_{n}}\mathcal{S}\left( S\right) $
for $n\geq 0$.
\end{definition}

From the quasienergy condition in Definition \ref{energy condition} we
obtain the $\sigma $-Carleson estimate%
\begin{equation}
\sum_{S\in \mathcal{S}:\ S\subset I}\left\vert S\right\vert _{\sigma }\leq
2\left\vert I\right\vert _{\sigma },\ \ \ \ \ I\in \Omega \mathcal{D}%
^{\sigma }.  \label{sigma Carleson 3}
\end{equation}%
Indeed, using the deep quasienergy condition, the first generation satisfies%
\begin{eqnarray}
&&\ \ \ \ \ \ \ \ \ \ \ \ \ \ \ \ \ \ \ \ \sum_{S\in \mathcal{S}%
_{1}}\left\vert S\right\vert _{\sigma }  \label{first gen} \\
&\leq &\frac{1}{C_{\limfunc{energy}}\left[ \left( \mathcal{E}_{\alpha }^{%
\limfunc{deep}}\right) ^{2}+A_{2}^{\alpha }\right] }\sum_{S\in \mathcal{S}%
_{1}}\sum_{J\in \mathcal{M}_{\mathbf{\tau }-\limfunc{deep}}\left( S\right)
}\left( \frac{\mathrm{P}^{\alpha }\left( J,\mathbf{1}_{S_{0}\setminus \gamma
J}\sigma \right) }{\left\vert J\right\vert ^{\frac{1}{n}}}\right)
^{2}\left\Vert \mathsf{P}_{J}^{\limfunc{subgood},\omega }\mathbf{x}%
\right\Vert _{L^{2}\left( \omega \right) }^{2}  \notag \\
&\leq &\frac{1}{C_{\limfunc{energy}}\left[ \left( \mathcal{E}_{\alpha }^{%
\limfunc{deep}}\right) ^{2}+A_{2}^{\alpha }\right] }\sum_{S\in \mathcal{S}%
_{1}}\sum_{J\in \mathcal{M}_{\mathbf{\tau }-\limfunc{deep}}\left( S\right)
}\left( \frac{\mathrm{P}^{\alpha }\left( J,\mathbf{1}_{S_{0}}\sigma \right) 
}{\left\vert J\right\vert ^{\frac{1}{n}}}\right) ^{2}\left\Vert \mathsf{P}%
_{J}^{\limfunc{subgood},\omega }\mathbf{x}\right\Vert _{L^{2}\left( \omega
\right) }^{2}  \notag \\
&\leq &\frac{C_{\mathbf{\tau },\mathbf{r},n,\alpha }}{C_{\limfunc{energy}}%
\left[ \left( \mathcal{E}_{\alpha }^{\limfunc{deep}}\right)
^{2}+A_{2}^{\alpha }\right] }\sum_{S\in \mathcal{S}_{1}}\sum_{J\in \mathcal{M%
}_{\left( \mathbf{r},\varepsilon \right) -\limfunc{deep}}\left( S\right)
}\left( \frac{\mathrm{P}^{\alpha }\left( J,\mathbf{1}_{S_{0}}\sigma \right) 
}{\left\vert J\right\vert ^{\frac{1}{n}}}\right) ^{2}\left\Vert \mathsf{P}%
_{J}^{\limfunc{subgood},\omega }\mathbf{x}\right\Vert _{L^{2}\left( \omega
\right) }^{2}  \notag \\
&\leq &\frac{C_{\mathbf{\tau },\mathbf{r},n,\alpha }}{C_{\limfunc{energy}}%
\left[ \left( \mathcal{E}_{\alpha }^{\limfunc{deep}}\right)
^{2}+A_{2}^{\alpha }+A_{2}^{\alpha ,\limfunc{punct}}\right] }\left( \mathcal{%
E}_{\alpha }^{\limfunc{deep}\limfunc{plug}}\right) ^{2}\ \left\vert
S_{0}\right\vert _{\sigma }=\frac{1}{2}\left\vert S_{0}\right\vert _{\sigma
}\ ,  \notag
\end{eqnarray}%
provided we take $C_{\limfunc{energy}}=2C_{\mathbf{\tau },\mathbf{r}%
,n,\alpha }\frac{\left( \mathcal{E}_{\alpha }^{\limfunc{deep}\limfunc{plug}%
}\right) ^{2}}{\left( \mathcal{E}_{\alpha }^{\limfunc{deep}}\right)
^{2}+A_{2}^{\alpha }}$ and where from Corollary \ref{all plugged} we have $%
\mathcal{E}_{\alpha }^{\limfunc{deep}\limfunc{plug}}\lesssim \mathcal{E}%
_{\alpha }^{\limfunc{deep}}+\sqrt{A_{2}^{\alpha }}+\sqrt{A_{2}^{\alpha ,%
\limfunc{punct}}}$. The third inequality above, in which $\mathbf{\tau }$ is
replaced by $\mathbf{r}$ (but the goodness parameter $\varepsilon >0$ is
unchanged), follows because if $J_{1}\in \mathcal{M}_{\mathbf{\tau }-%
\limfunc{deep}}\left( S\right) $, then $J_{1}\subset J_{2}$ for a unique $%
J_{2}\in \mathcal{M}_{\left( \mathbf{r},\varepsilon \right) -\limfunc{deep}%
}\left( S\right) $ and we have $\ell \left( J_{2}\right) \leq 2^{\mathbf{%
\tau }-\mathbf{r}}\ell \left( J_{1}\right) $ from the definitions of $%
\mathcal{M}_{\mathbf{\tau }-\limfunc{deep}}\left( S\right) $ and $\mathcal{M}%
_{\left( \mathbf{r},\varepsilon \right) -\limfunc{deep}}\left( S\right) $,
hence $\frac{\mathrm{P}^{\alpha }\left( J_{1},\mathbf{1}_{S_{0}}\sigma
\right) }{\left\vert J_{1}\right\vert ^{\frac{1}{n}}}\leq C_{\mathbf{\tau },%
\mathbf{r},n,\alpha }\frac{\mathrm{P}^{\alpha }\left( J_{2},\mathbf{1}%
_{S_{0}}\sigma \right) }{\left\vert J_{2}\right\vert ^{\frac{1}{n}}}$.
Subsequent generations satisfy a similar estimate, which then easily gives (%
\ref{sigma Carleson 3}). We emphasize that this collection of stopping times
depends only on $S_{0}$ and the weight pair $\left( \sigma ,\omega \right) $%
, and not on any functions at hand.

Finally, we record the reason for introducing quasienergy stopping times. If 
\begin{equation}
X_{\alpha }\left( \mathcal{C}_{S}\right) ^{2}\equiv \sup_{I\in \mathcal{C}%
_{S}}\frac{1}{\left\vert I\right\vert _{\sigma }}\sum_{J\in \mathcal{M}_{%
\mathbf{\tau }-\limfunc{deep}}\left( I\right) }\left( \frac{\mathrm{P}%
^{\alpha }\left( J,\mathbf{1}_{S\setminus \gamma J}\sigma \right) }{%
\left\vert J\right\vert ^{\frac{1}{n}}}\right) ^{2}\left\Vert \mathsf{P}%
_{J}^{\limfunc{subgood},\omega }\mathbf{x}\right\Vert _{L^{2}\left( \omega
\right) }^{2}  \label{def stopping energy 3}
\end{equation}%
is (the square of) the $\alpha $\emph{-stopping quasienergy} of the weight
pair $\left( \sigma ,\omega \right) $ with respect to the corona $\mathcal{C}%
_{S}$, then we have the \emph{stopping quasienergy bounds}%
\begin{equation}
X_{\alpha }\left( \mathcal{C}_{S}\right) \leq \sqrt{C_{\limfunc{energy}}}%
\sqrt{\left( \mathcal{E}_{\alpha }^{\limfunc{deep}}\right)
^{2}+A_{2}^{\alpha }+A_{2}^{\alpha ,\limfunc{punct}}},\ \ \ \ \ S\in 
\mathcal{S},  \label{def stopping bounds 3}
\end{equation}%
where $A_{2}^{\alpha }$, $A_{2}^{\alpha ,\limfunc{punct}}$ and the the deep
quasienergy constant $\mathcal{E}_{\alpha }^{\limfunc{deep}}$ are controlled
by assumption.

\subsection{General stopping data}

It is useful to extend our notion of corona decomposition to more general
stopping data. Our general definition of stopping data will use a positive
constant $C_{0}\geq 4$.

\begin{definition}
\label{general stopping data}Suppose we are given a positive constant $%
C_{0}\geq 4$, a subset $\mathcal{F}$ of the dyadic quasigrid $\Omega 
\mathcal{D}^{\sigma }$ (called the stopping times), and a corresponding
sequence $\alpha _{\mathcal{F}}\equiv \left\{ \alpha _{\mathcal{F}}\left(
F\right) \right\} _{F\in \mathcal{F}}$ of nonnegative numbers $\alpha _{%
\mathcal{F}}\left( F\right) \geq 0$ (called the stopping data). Let $\left( 
\mathcal{F},\prec ,\pi _{\mathcal{F}}\right) $ be the tree structure on $%
\mathcal{F}$ inherited from $\Omega \mathcal{D}^{\sigma }$, and for each $%
F\in \mathcal{F}$ denote by $\mathcal{C}_{F}=\left\{ I\in \Omega \mathcal{D}%
^{\sigma }:\pi _{\mathcal{F}}I=F\right\} $ the corona associated with $F$: 
\begin{equation*}
\mathcal{C}_{F}=\left\{ I\in \Omega \mathcal{D}^{\sigma }:I\subset F\text{
and }I\not\subset F^{\prime }\text{ for any }F^{\prime }\prec F\right\} .
\end{equation*}%
We say the triple $\left( C_{0},\mathcal{F},\alpha _{\mathcal{F}}\right) $
constitutes \emph{stopping data} for a function $f\in L_{loc}^{1}\left(
\sigma \right) $ if

\begin{enumerate}
\item $\mathbb{E}_{I}^{\sigma }\left\vert f\right\vert \leq \alpha _{%
\mathcal{F}}\left( F\right) $ for all $I\in \mathcal{C}_{F}$ and $F\in 
\mathcal{F}$,

\item $\sum_{F^{\prime }\preceq F}\left\vert F^{\prime }\right\vert _{\sigma
}\leq C_{0}\left\vert F\right\vert _{\sigma }$ for all $F\in \mathcal{F}$,

\item $\sum_{F\in \mathcal{F}}\alpha _{\mathcal{F}}\left( F\right)
^{2}\left\vert F\right\vert _{\sigma }\mathbf{\leq }C_{0}^{2}\left\Vert
f\right\Vert _{L^{2}\left( \sigma \right) }^{2}$,

\item $\alpha _{\mathcal{F}}\left( F\right) \leq \alpha _{\mathcal{F}}\left(
F^{\prime }\right) $ whenever $F^{\prime },F\in \mathcal{F}$ with $F^{\prime
}\subset F$.
\end{enumerate}
\end{definition}

\begin{definition}
If $\left( C_{0},\mathcal{F},\alpha _{\mathcal{F}}\right) $ constitutes
(general) \emph{stopping data} for a function $f\in L_{loc}^{1}\left( \sigma
\right) $, we refer to the othogonal decomposition%
\begin{equation*}
f=\sum_{F\in \mathcal{F}}\mathsf{P}_{\mathcal{C}_{F}}^{\sigma }f;\ \ \ \ \ 
\mathsf{P}_{\mathcal{C}_{F}}^{\sigma }f\equiv \sum_{I\in \mathcal{C}%
_{F}}\bigtriangleup _{I}^{\sigma }f,
\end{equation*}%
as the (general) \emph{corona decomposition} of $f$ associated with the
stopping times $\mathcal{F}$.
\end{definition}

Property (1) says that $\alpha _{\mathcal{F}}\left( F\right) $ bounds the
quasiaverages of $f$ in the corona $\mathcal{C}_{F}$, and property (2) says
that the quasicubes at the tops of the coronas satisfy a Carleson condition
relative to the weight $\sigma $. Note that a standard `maximal quasicube'
argument extends the Carleson condition in property (2) to the inequality%
\begin{equation*}
\sum_{F^{\prime }\in \mathcal{F}:\ F^{\prime }\subset A}\left\vert F^{\prime
}\right\vert _{\sigma }\leq C_{0}\left\vert A\right\vert _{\sigma }\text{
for all open sets }A\subset \mathbb{R}^{n}.
\end{equation*}%
Property (3) is the `quasi' orthogonality condition that says the sequence
of functions $\left\{ \alpha _{\mathcal{F}}\left( F\right) \mathbf{1}%
_{F}\right\} _{F\in \mathcal{F}}$ is in the vector-valued space $L^{2}\left(
\ell ^{2};\sigma \right) $, and property (4) says that the control on
stopping data is nondecreasing on the stopping tree $\mathcal{F}$. We
emphasize that we are \emph{not} assuming in this definition the stronger
property that there is $C>1$ such that $\alpha _{\mathcal{F}}\left(
F^{\prime }\right) >C\alpha _{\mathcal{F}}\left( F\right) $ whenever $%
F^{\prime },F\in \mathcal{F}$ with $F^{\prime }\subsetneqq F$. Instead, the
properties (2) and (3) substitute for this lack. Of course the stronger
property \emph{does} hold for the familiar \emph{Calder\'{o}n-Zygmund}
stopping data determined by the following requirements for $C>1$,%
\begin{eqnarray*}
\mathbb{E}_{F^{\prime }}^{\sigma }\left\vert f\right\vert &>&C\mathbb{E}%
_{F}^{\sigma }\left\vert f\right\vert \text{ whenever }F^{\prime },F\in 
\mathcal{F}\text{ with }F^{\prime }\subsetneqq F, \\
\mathbb{E}_{I}^{\sigma }\left\vert f\right\vert &\leq &C\mathbb{E}%
_{F}^{\sigma }\left\vert f\right\vert \text{ for }I\in \mathcal{C}_{F},
\end{eqnarray*}%
which are themselves sufficiently strong to automatically force properties
(2) and (3) with $\alpha _{\mathcal{F}}\left( F\right) =\mathbb{E}%
_{F}^{\sigma }\left\vert f\right\vert $.

We have the following useful consequence of (2) and (3) that says the
sequence $\left\{ \alpha _{\mathcal{F}}\left( F\right) \mathbf{1}%
_{F}\right\} _{F\in \mathcal{F}}$ has a \emph{`quasi' orthogonal} property
relative to $f$ with a constant $C_{0}^{\prime }$ depending only on $C_{0}$:%
\begin{equation}
\left\Vert \sum_{F\in \mathcal{F}}\alpha _{\mathcal{F}}\left( F\right) 
\mathbf{1}_{F}\right\Vert _{L^{2}\left( \sigma \right) }^{2}\leq
C_{0}^{\prime }\left\Vert f\right\Vert _{L^{2}\left( \sigma \right) }^{2}.
\label{q orth}
\end{equation}%
Indeed, the Carleson condition (2) implies a geometric decay in levels of
the tree $\mathcal{F}$, namely that there are positive constants $C_{1}$ and 
$\varepsilon $, depending on $C_{0}$, such that if $\mathfrak{C}_{\mathcal{F}%
}^{\left( n\right) }\left( F\right) $ denotes the set of $n^{th}$ generation
children of $F$ in $\mathcal{F}$,%
\begin{equation*}
\sum_{F^{\prime }\in \mathfrak{C}_{\mathcal{F}}^{\left( n\right) }\left(
F\right) :\ }\left\vert F^{\prime }\right\vert _{\sigma }\leq \left(
C_{1}2^{-\varepsilon n}\right) ^{2}\left\vert F\right\vert _{\sigma },\ \ \
\ \ \text{for all }n\geq 0\text{ and }F\in \mathcal{F}.
\end{equation*}%
From this we obtain that%
\begin{eqnarray*}
\sum_{n=0}^{\infty }\sum_{F^{\prime }\in \mathfrak{C}_{\mathcal{F}}^{\left(
n\right) }\left( F\right) :\ }\alpha _{\mathcal{F}}\left( F^{\prime }\right)
\left\vert F^{\prime }\right\vert _{\sigma } &\leq &\sum_{n=0}^{\infty }%
\sqrt{\sum_{F^{\prime }\in \mathfrak{C}_{\mathcal{F}}^{\left( n\right)
}\left( F\right) }\alpha _{\mathcal{F}}\left( F^{\prime }\right)
^{2}\left\vert F^{\prime }\right\vert _{\sigma }}C_{1}2^{-\varepsilon n}%
\sqrt{\left\vert F\right\vert _{\sigma }} \\
&\leq &C_{1}\sqrt{\left\vert F\right\vert _{\sigma }}C_{\varepsilon }\sqrt{%
\sum_{n=0}^{\infty }2^{-\varepsilon n}\sum_{F^{\prime }\in \mathfrak{C}_{%
\mathcal{F}}^{\left( n\right) }\left( F\right) }\alpha _{\mathcal{F}}\left(
F^{\prime }\right) ^{2}\left\vert F^{\prime }\right\vert _{\sigma }},
\end{eqnarray*}%
and hence that$\ $%
\begin{eqnarray*}
&&\sum_{F\in \mathcal{F}}\alpha _{\mathcal{F}}\left( F\right) \left\{
\sum_{n=0}^{\infty }\sum_{F^{\prime }\in \mathfrak{C}_{\mathcal{F}}^{\left(
n\right) }\left( F\right) }\alpha _{\mathcal{F}}\left( F^{\prime }\right)
\left\vert F^{\prime }\right\vert _{\sigma }\right\} \\
&\lesssim &\sum_{F\in \mathcal{F}}\alpha _{\mathcal{F}}\left( F\right) \sqrt{%
\left\vert F\right\vert _{\sigma }}\sqrt{\sum_{n=0}^{\infty }2^{-\varepsilon
n}\sum_{F^{\prime }\in \mathfrak{C}_{\mathcal{F}}^{\left( n\right) }\left(
F\right) }\alpha _{\mathcal{F}}\left( F^{\prime }\right) ^{2}\left\vert
F^{\prime }\right\vert _{\sigma }} \\
&\lesssim &\left( \sum_{F\in \mathcal{F}}\alpha _{\mathcal{F}}\left(
F\right) ^{2}\left\vert F\right\vert _{\sigma }\right) ^{\frac{1}{2}}\left(
\sum_{n=0}^{\infty }2^{-\varepsilon n}\sum_{F\in \mathcal{F}}\sum_{F^{\prime
}\in \mathfrak{C}_{\mathcal{F}}^{\left( n\right) }\left( F\right) }\alpha _{%
\mathcal{F}}\left( F^{\prime }\right) ^{2}\left\vert F^{\prime }\right\vert
_{\sigma }\right) ^{\frac{1}{2}} \\
&\lesssim &\left\Vert f\right\Vert _{L^{2}\left( \sigma \right) }\left(
\sum_{F^{\prime }\in \mathcal{F}}\alpha _{\mathcal{F}}\left( F^{\prime
}\right) ^{2}\left\vert F^{\prime }\right\vert _{\sigma }\right) ^{\frac{1}{2%
}}\lesssim \left\Vert f\right\Vert _{L^{2}\left( \sigma \right) }^{2}.
\end{eqnarray*}%
This proves (\ref{q orth}) since $\left\Vert \sum_{F\in \mathcal{F}}\alpha _{%
\mathcal{F}}\left( F\right) \mathbf{1}_{F}\right\Vert _{L^{2}\left( \sigma
\right) }^{2}$ is dominated by twice the left hand side above.

We will use a construction that permits \emph{iteration} of general corona
decompositions.

\begin{lemma}
\label{iterating coronas}Suppose that $\left( C_{0},\mathcal{F},\alpha _{%
\mathcal{F}}\right) $ constitutes \emph{stopping data} for a function $f\in
L_{loc}^{1}\left( \sigma \right) $, and that for each $F\in \mathcal{F}$, $%
\left( C_{0},\mathcal{K}\left( F\right) ,\alpha _{\mathcal{K}\left( F\right)
}\right) $ constitutes \emph{stopping data} for the corona projection $%
\mathsf{P}_{\mathcal{C}_{F}}^{\sigma }f$, where in addition $F\in \mathcal{K}%
\left( F\right) $. There is a positive constant $C_{1}$, depending only on $%
C_{0}$, such that if%
\begin{eqnarray*}
\mathcal{K}^{\ast }\left( F\right) &\equiv &\left\{ K\in \mathcal{K}\left(
F\right) \cap \mathcal{C}_{F}:\alpha _{\mathcal{K}\left( F\right) }\left(
K\right) \geq \alpha _{\mathcal{F}}\left( F\right) \right\} \\
\mathcal{K} &\equiv &\mathop{\displaystyle \bigcup }\limits_{F\in \mathcal{F}%
}\mathcal{K}^{\ast }\left( F\right) \cup \left\{ F\right\} , \\
\alpha _{\mathcal{K}}\left( K\right) &\equiv &%
\begin{array}{ccc}
\alpha _{\mathcal{K}\left( F\right) }\left( K\right) & \text{ for } & K\in 
\mathcal{K}^{\ast }\left( F\right) \setminus \left\{ F\right\} \\ 
\max \left\{ \alpha _{\mathcal{F}}\left( F\right) ,\alpha _{\mathcal{K}%
\left( F\right) }\left( F\right) \right\} & \text{ for } & K=F%
\end{array}%
,\ \ \ \ \ \text{for }F\in \mathcal{F},
\end{eqnarray*}%
the triple $\left( C_{1},\mathcal{K},\alpha _{\mathcal{K}}\right) $
constitutes \emph{stopping data} for $f$. We refer to the collection of
quasicubes $\mathcal{K}$ as the \emph{iterated} stopping times, and to the
orthogonal decomposition $f=\sum_{K\in \mathcal{K}}P_{\mathcal{C}_{K}^{%
\mathcal{K}}}f$ as the \emph{iterated} corona decomposition of $f$, where 
\begin{equation*}
\mathcal{C}_{K}^{\mathcal{K}}\equiv \left\{ I\in \Omega \mathcal{D}:I\subset
K\text{ and }I\not\subset K^{\prime }\text{ for }K^{\prime }\prec _{\mathcal{%
K}}K\right\} .
\end{equation*}
\end{lemma}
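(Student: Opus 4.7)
The plan is to verify each of the four properties (1)--(4) of Definition \ref{general stopping data} for the triple $(C_{1}, \mathcal{K}, \alpha_{\mathcal{K}})$, with $C_{1}$ depending only on $C_{0}$. The first observation to put on the table is that $\mathcal{F}\subset\mathcal{K}$, so every original stopping time from $\mathcal{F}$ survives in the iterated family; this will be crucial for keeping the $\mathcal{K}$-coronas nested inside the $\mathcal{F}$-coronas.

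For property (1), fix $K\in\mathcal{K}$ and $I\in\mathcal{C}_{K}^{\mathcal{K}}$. Since every $F'\in\mathcal{F}$ strictly contained in $K$ also lies in $\mathcal{K}$ and is therefore a $\mathcal{K}$-descendant of $K$, the condition $I\not\subset K'$ for $K'\prec_{\mathcal{K}}K$ forces $I$ to avoid all such $F'$. Hence $I\in\mathcal{C}_{F}$ for the $\mathcal{F}$-parent $F=\pi_{\mathcal{F}}K$, so $\mathbb{E}_{I}^{\sigma}|f|\leq\alpha_{\mathcal{F}}(F)$. By the filtering built into the definition of $\mathcal{K}^{\ast}(F)$ we have $\alpha_{\mathcal{F}}(F)\leq\alpha_{\mathcal{K}(F)}(K)=\alpha_{\mathcal{K}}(K)$ when $K\in\mathcal{K}^{\ast}(F)\setminus\{F\}$, while for $K=F$ the definition $\alpha_{\mathcal{K}}(F)=\max\{\alpha_{\mathcal{F}}(F),\alpha_{\mathcal{K}(F)}(F)\}$ gives the bound directly. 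Property (4) then follows by splitting into cases according to whether $K'\subset K$ lie in the same block $\mathcal{K}^{\ast}(F)\cup\{F\}$ or in distinct blocks, using the two underlying monotonicities for $\alpha_{\mathcal{F}}$ and each $\alpha_{\mathcal{K}(F)}$ together with the threshold $\alpha_{\mathcal{K}(F)}(K)\geq\alpha_{\mathcal{F}}(F)$ to cross between blocks.

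For property (2), given $K\in\mathcal{K}$ we organize $\{K'\in\mathcal{K}:K'\subset K\}$ according to the $\mathcal{F}$-parent: every such $K'$ belongs to $\mathcal{K}^{\ast}(F')\cup\{F'\}$ for some $F'\in\mathcal{F}$ with $F'\subset K$ and $F'\subseteq\pi_{\mathcal{F}}K$. Applying the Carleson bound of $\mathcal{K}(F')$ inside each block gives $C_{0}|F'|_{\sigma}$, and then summing over all such $F'$ via the standard maximal-quasicube extension of the Carleson property of $\mathcal{F}$ to the open set $K$ yields a total of $O(C_{0}^{2})|K|_{\sigma}$. For property (3) we split
\[
\sum_{K\in\mathcal{K}}\alpha_{\mathcal{K}}(K)^{2}|K|_{\sigma}\leq\sum_{F\in\mathcal{F}}\sum_{K\in\mathcal{K}(F)}\alpha_{\mathcal{K}(F)}(K)^{2}|K|_{\sigma}+2\sum_{F\in\mathcal{F}}\alpha_{\mathcal{F}}(F)^{2}|F|_{\sigma}.
\]
The inner double sum is bounded by $\sum_{F}C_{0}^{2}\,\|\mathsf{P}_{\mathcal{C}_{F}}^{\sigma}f\|_{L^{2}(\sigma)}^{2}$ using property (3) of the inner stopping data for $\mathsf{P}_{\mathcal{C}_{F}}^{\sigma}f$, and by orthogonality of the corona projections this telescopes to $C_{0}^{2}\|f\|_{L^{2}(\sigma)}^{2}$. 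The remaining sum is controlled by $2C_{0}^{2}\|f\|_{L^{2}(\sigma)}^{2}$ using property (3) of the outer data.

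The main obstacle is ensuring that the quasi-orthogonality constant in (3) does not inflate upon iteration. This is precisely why the construction restricts to $\mathcal{K}^{\ast}(F)=\{K\in\mathcal{K}(F)\cap\mathcal{C}_{F}:\alpha_{\mathcal{K}(F)}(K)\geq\alpha_{\mathcal{F}}(F)\}$: any $\mathcal{K}(F)$-stopping time with $\alpha_{\mathcal{K}(F)}(K)<\alpha_{\mathcal{F}}(F)$ is already dominated by the outer stopping value and is harmlessly absorbed into the $\mathcal{K}$-corona of $F$ itself, while the $\max$ at the root $F$ records both the outer and inner thresholds simultaneously. This trimming is what allows the iterated $\ell^{2}$-bound to absorb the inner refinement without loss and keeps the Carleson constant bounded by a universal power of $C_{0}$.
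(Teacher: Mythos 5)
Your proof is correct and follows essentially the same route as the paper's: property (4) is immediate from the construction, property (1) is reduced to the inclusion $\mathcal{C}_{K}^{\mathcal{K}}\subset\mathcal{C}_{F}^{\mathcal{F}}$ for the governing $F=\pi_{\mathcal{F}}K$ together with the threshold $\alpha_{\mathcal{F}}(F)\leq\alpha_{\mathcal{K}}(K)$, property (2) is obtained by grouping the $\mathcal{K}$-quasicubes below $K$ by their $\mathcal{F}$-parent and composing the two Carleson bounds, and property (3) is obtained by the same one-line splitting of $\alpha_{\mathcal{K}}(K)^{2}$ into the inner and outer contributions and then using orthogonality of the corona projections. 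The minor differences (the harmless factor of $2$ in (3), the explicit case analysis in (4), and the concluding remark on why the trimming $\mathcal{K}^{\ast}(F)$ controls the constants) do not change the argument.
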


Note that in our definition of $\left( C_{1},\mathcal{K},\alpha _{\mathcal{K}%
}\right) $ we have `discarded' from $\mathcal{K}\left( F\right) $ all of
those $K\in \mathcal{K}\left( F\right) $ that are not in the corona $%
\mathcal{C}_{F}$, and also all of those $K\in \mathcal{K}\left( F\right) $
for which $\alpha _{\mathcal{K}\left( F\right) }\left( K\right) $ is
strictly less than $\alpha _{\mathcal{F}}\left( F\right) $. Then the union
over $F$ of what remains is our new collection of stopping times. We then
define stopping data $\alpha _{\mathcal{K}}\left( K\right) $ according to
whether or not $K\in \mathcal{F}$: if $K\notin \mathcal{F}$ but $K\in 
\mathcal{C}_{F}$ then $\alpha _{\mathcal{K}}\left( K\right) $ equals $\alpha
_{\mathcal{K}\left( F\right) }\left( K\right) $, while if $K\in \mathcal{F}$%
, then $\alpha _{\mathcal{K}}\left( K\right) $ is the larger of $\alpha _{%
\mathcal{K}\left( F\right) }\left( F\right) $ and $\alpha _{\mathcal{F}%
}\left( K\right) $.

\begin{proof}
The monotonicity property (4) for the triple $\left( C_{1},\mathcal{K}%
,\alpha _{\mathcal{K}}\right) $ is obvious from the construction of $%
\mathcal{K}$ and $\alpha _{\mathcal{K}}\left( K\right) $. To establish
property (1), we must distinguish between the various coronas $\mathcal{C}%
_{K}^{\mathcal{K}}$, $\mathcal{C}_{K}^{\mathcal{K}\left( F\right) }$ and $%
\mathcal{C}_{K}^{\mathcal{F}}$ that could be associated with $K\in \mathcal{K%
}$, when $K$ belongs to any of the stopping trees $\mathcal{K}$, $\mathcal{K}%
\left( F\right) $ or $\mathcal{F}$. Suppose now that $I\in \mathcal{C}_{K}^{%
\mathcal{K}}$ for some $K\in \mathcal{K}$. Then there is a unique $F\in 
\mathcal{F}$ such that $\mathcal{C}_{K}^{\mathcal{K}}\subset \mathcal{C}%
_{K}^{\mathcal{K}\left( F\right) }\subset C_{F}^{\mathcal{F}}$, and so $%
\mathbb{E}_{I}^{\sigma }\left\vert f\right\vert \leq \alpha _{\mathcal{F}%
}\left( F\right) $ by property (1) for the triple $\left( C_{0},\mathcal{F}%
,\alpha _{\mathcal{F}}\right) $. Then $\alpha _{\mathcal{F}}\left( F\right)
\leq \alpha _{\mathcal{K}}\left( K\right) $ follows from the definition of $%
\alpha _{\mathcal{K}}\left( K\right) $, and we have property (1) for the
triple $\left( C_{1},\mathcal{K},\alpha _{\mathcal{K}}\right) $. Property
(2) holds for the triple $\left( C_{1},\mathcal{K},\alpha _{\mathcal{K}%
}\right) $ since if $K\in \mathcal{C}_{F}^{\mathcal{F}}$, then 
\begin{eqnarray*}
\sum_{K^{\prime }\preceq _{\mathcal{K}}K}\left\vert K^{\prime }\right\vert
_{\sigma } &=&\sum_{K^{\prime }\in \mathcal{K}\left( F\right) :\ K^{\prime
}\subset K}\left\vert K^{\prime }\right\vert _{\sigma }+\sum_{F^{\prime
}\prec _{\mathcal{F}}F:\ F^{\prime }\subset K}\sum_{K^{\prime }\in \mathcal{K%
}\left( F^{\prime }\right) }\left\vert K^{\prime }\right\vert _{\sigma } \\
&\leq &C_{0}\left\vert K\right\vert _{\sigma }+\sum_{F^{\prime }\prec _{%
\mathcal{F}}F:\ F^{\prime }\subset K}C_{0}\left\vert F^{\prime }\right\vert
_{\sigma }\leq 2C_{0}^{2}\left\vert K\right\vert _{\sigma }.
\end{eqnarray*}%
Finally, property (3) holds for the triple $\left( C_{1},\mathcal{K},\alpha
_{\mathcal{K}}\right) $ since 
\begin{eqnarray*}
\sum_{K\in \mathcal{K}}\alpha _{\mathcal{K}}\left( K\right) ^{2}\left\vert
K\right\vert _{\sigma } &\leq &\sum_{F\in \mathcal{F}}\sum_{K\in \mathcal{K}%
\left( F\right) }\alpha _{\mathcal{K}\left( F\right) }\left( K\right)
^{2}\left\vert K\right\vert _{\sigma }+\sum_{F\in \mathcal{F}}\alpha _{%
\mathcal{F}}\left( F\right) ^{2}\left\vert F\right\vert _{\sigma } \\
&\leq &\sum_{F\in \mathcal{F}}C_{0}^{2}\left\Vert \mathsf{P}_{\mathcal{C}%
_{F}}^{\sigma }f\right\Vert _{L^{2}\left( \sigma \right)
}^{2}+C_{0}^{2}\left\Vert f\right\Vert _{L^{2}\left( \sigma \right)
}^{2}\leq 2C_{0}^{2}\left\Vert f\right\Vert _{L^{2}\left( \sigma \right)
}^{2}.
\end{eqnarray*}
\end{proof}

\subsection{Doubly iterated coronas and the NTV quasicube size splitting}

Here is a brief schematic diagram of the decompositions, with bounds in $%
\fbox{}$, used in this subsection:%
\begin{equation*}
\fbox{$%
\begin{array}{ccccccc}
\left\langle T_{\sigma }^{\alpha }f,g\right\rangle _{\omega } &  &  &  &  & 
&  \\ 
\downarrow &  &  &  &  &  &  \\ 
\mathsf{B}_{\Subset _{\mathbf{\rho },\varepsilon }}\left( f,g\right) & + & 
\mathsf{B}_{_{\mathbf{\rho },\varepsilon }\Supset }\left( f,g\right) & + & 
\mathsf{B}_{\cap }\left( f,g\right) & + & \mathsf{B}_{\diagup }\left(
f,g\right) \\ 
\downarrow &  & \fbox{dual} &  & \fbox{$\mathcal{NTV}_{\alpha }$} &  & \fbox{%
$\mathcal{NTV}_{\alpha }$} \\ 
\downarrow &  &  &  &  &  &  \\ 
\mathsf{T}_{\limfunc{diagonal}}\left( f,g\right) & + & \mathsf{T}_{\limfunc{%
far}\limfunc{below}}\left( f,g\right) & + & \mathsf{T}_{\limfunc{far}%
\limfunc{above}}\left( f,g\right) & + & \mathsf{T}_{\limfunc{disjoint}%
}\left( f,g\right) \\ 
\downarrow &  & \downarrow &  & \fbox{$\emptyset $} &  & \fbox{$\emptyset $}
\\ 
\downarrow &  & \downarrow &  &  &  &  \\ 
\mathsf{B}_{\Subset _{\mathbf{\rho },\varepsilon }}^{A}\left( f,g\right) & 
& \mathsf{T}_{\limfunc{far}\limfunc{below}}^{1}\left( f,g\right) & + & 
\mathsf{T}_{\limfunc{far}\limfunc{below}}^{2}\left( f,g\right) &  &  \\ 
\downarrow &  & \fbox{$\mathcal{NTV}_{\alpha }+\mathcal{E}_{\alpha }$} &  & 
\fbox{$\mathcal{NTV}_{\alpha }$} &  &  \\ 
\downarrow &  &  &  &  &  &  \\ 
\mathsf{B}_{stop}^{A}\left( f,g\right) & + & \mathsf{B}_{paraproduct}^{A}%
\left( f,g\right) & + & \mathsf{B}_{neighbour}^{A}\left( f,g\right) &  &  \\ 
\fbox{$\mathcal{E}_{\alpha }^{\limfunc{deep}}+\sqrt{A_{2}^{\alpha }}$} &  & 
\fbox{$\mathfrak{T}_{T^{\alpha }}$} &  & \fbox{$\sqrt{A_{2}^{\alpha }}$} & 
& 
\end{array}%
$}
\end{equation*}

We begin with the NTV \emph{quasicube size splitting} of the inner product $%
\left\langle T_{\sigma }^{\alpha }f,g\right\rangle _{\omega }$ - and later
apply the iterated corona construction - that splits the pairs of quasicubes 
$\left( I,J\right) $ in a simultaneous quasiHaar decomposition of $f$ and $g$
into four groups, namely those pairs that:

\begin{enumerate}
\item are below the size diagonal and $\mathbf{\rho }$-deeply embedded,

\item are above the size diagonal and $\mathbf{\rho }$-deeply embedded,

\item are disjoint, and

\item are of $\mathbf{\rho }$-comparable size.
\end{enumerate}

More precisely we have%
\begin{eqnarray*}
\left\langle T_{\sigma }^{\alpha }f,g\right\rangle _{\omega }
&=&\dsum\limits_{I\in \Omega \mathcal{D}^{\sigma },\ J\in \Omega \mathcal{D}%
^{\omega }}\left\langle T_{\sigma }^{\alpha }\left( \bigtriangleup
_{I}^{\sigma }f\right) ,\left( \bigtriangleup _{I}^{\omega }g\right)
\right\rangle _{\omega } \\
&=&\dsum\limits_{\substack{ I\in \Omega \mathcal{D}^{\sigma },\ J\in \Omega 
\mathcal{D}^{\omega }  \\ J\Subset _{\mathbf{\rho },\varepsilon }I}}%
\left\langle T_{\sigma }^{\alpha }\left( \bigtriangleup _{I}^{\sigma
}f\right) ,\left( \bigtriangleup _{J}^{\omega }g\right) \right\rangle
_{\omega }+\dsum\limits_{\substack{ I\in \Omega \mathcal{D}^{\sigma },\ J\in
\Omega \mathcal{D}^{\omega }  \\ J_{\mathbf{\rho },\varepsilon }\Supset I}}%
\left\langle T_{\sigma }^{\alpha }\left( \bigtriangleup _{I}^{\sigma
}f\right) ,\left( \bigtriangleup _{J}^{\omega }g\right) \right\rangle
_{\omega } \\
&&+\dsum\limits_{\substack{ I\in \Omega \mathcal{D}^{\sigma },\ J\in \Omega 
\mathcal{D}^{\omega }  \\ J\cap I=\emptyset }}\left\langle T_{\sigma
}^{\alpha }\left( \bigtriangleup _{I}^{\sigma }f\right) ,\left(
\bigtriangleup _{J}^{\omega }g\right) \right\rangle _{\omega }+\dsum\limits 
_{\substack{ I\in \Omega \mathcal{D}^{\sigma },\ J\in \Omega \mathcal{D}%
^{\omega }  \\ 2^{-\mathbf{\rho }}\leq \ell \left( J\right) \diagup \ell
\left( I\right) \leq 2^{\mathbf{\rho }}}}\left\langle T_{\sigma }^{\alpha
}\left( \bigtriangleup _{I}^{\sigma }f\right) ,\left( \bigtriangleup
_{J}^{\omega }g\right) \right\rangle _{\omega } \\
&=&\mathsf{B}_{\Subset _{\mathbf{\rho },\varepsilon }}\left( f,g\right) +%
\mathsf{B}_{_{\mathbf{\rho },\varepsilon }\Supset }\left( f,g\right) +%
\mathsf{B}_{\cap }\left( f,g\right) +\mathsf{B}_{\diagup }\left( f,g\right) .
\end{eqnarray*}%
Lemma \ref{standard delta} in the section on NTV peliminaries show that the 
\emph{disjoint} and \emph{comparable} forms $\mathsf{B}_{\cap }\left(
f,g\right) $ and $\mathsf{B}_{\diagup }\left( f,g\right) $ are both bounded
by the $\mathcal{A}_{2}^{\alpha }$, $A_{2}^{\alpha ,\limfunc{punct}}$,
quasitesting and quasiweak boundedness property constants. The \emph{below}
and \emph{above} forms are clearly symmetric, so we need only consider the
form $\mathsf{B}_{\Subset _{\mathbf{\rho },\varepsilon }}\left( f,g\right) $%
, to which we turn for the remainder of the proof.

In order to bound the below form $\mathsf{B}_{\Subset _{\mathbf{\rho }%
,\varepsilon }}\left( f,g\right) $, we will apply two different corona
decompositions in succession to the function $f\in L^{2}\left( \sigma
\right) $, gaining structure with each application; first to a boundedness
property for $f$, and then to a regularizing property of the weight $\sigma $%
. We first apply the Calder\'{o}n-Zygmund corona decomposition to the
function $f\in L^{2}\left( \sigma \right) $ obtain%
\begin{equation*}
f=\sum_{F\in \mathcal{F}}\mathsf{P}_{\mathcal{C}_{F}^{\sigma }}^{\sigma }f.
\end{equation*}%
Then for each fixed $F\in \mathcal{F}$, construct the \emph{quasienergy}
corona decomposition $\left\{ \mathcal{C}_{S}^{\sigma }\right\} _{S\in 
\mathcal{S}\left( F\right) }$\ corresponding to the weight pair $\left(
\sigma ,\omega \right) $ with top quasicube $S_{0}=F$, as given in
Definition \ref{def energy corona 3}. At this point we apply Lemma \ref%
{iterating coronas} to obtain iterated stopping times $\mathcal{S}$ and
iterated stopping data $\left\{ \alpha _{\mathcal{S}}\left( S\right)
\right\} _{S\in \mathcal{S}}$. This gives us the following \emph{double
corona decomposition} of $f$,%
\begin{equation}
f=\sum_{F\in \mathcal{F}}\mathsf{P}_{\mathcal{C}_{F}^{\sigma }}^{\sigma
}f=\sum_{F\in \mathcal{F}}\sum_{S\in \mathcal{S}^{\ast }\left( F\right) \cup
\left\{ F\right\} }\mathsf{P}_{\mathcal{C}_{S}^{\sigma }}^{\sigma }\mathsf{P}%
_{\mathcal{C}_{F}^{\sigma }}^{\sigma }f=\sum_{S\in \mathcal{S}}\mathsf{P}_{%
\mathcal{C}_{S}^{\sigma }}^{\sigma }f\equiv \sum_{A\in \mathcal{A}}\mathsf{P}%
_{\mathcal{C}_{A}}^{\sigma }f,  \label{double corona}
\end{equation}%
where $\mathcal{A}\equiv \mathcal{S}$ is the double stopping collection for $%
f$. We are relabeling the double corona as $\mathcal{A}$ here so as to
minimize confusion. We now record the main facts proved above for the double
corona.

\begin{lemma}
The data $\mathcal{A}$ and $\left\{ \alpha _{\mathcal{A}}\left( A\right)
\right\} _{A\in \mathcal{A}}$ satisfy properties (1), (2), (3) and (4) in
Definition \ref{general stopping data}.
\end{lemma}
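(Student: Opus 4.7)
The plan is to reduce the claim to a direct invocation of Lemma \ref{iterating coronas}, whose hypotheses require that both the outer Calder\'{o}n--Zygmund triple and, for each $F\in\mathcal{F}$, the inner energy triple individually constitute general stopping data in the sense of Definition \ref{general stopping data}. The outer data is the standard choice $(C_{0},\mathcal{F},\alpha_{\mathcal{F}})$ with $\alpha_{\mathcal{F}}(F)\equiv \mathbb{E}_{F}^{\sigma}|f|$; the inner data, for each $F$, is built on the energy tree $\mathcal{S}(F)$ from Definition \ref{def energy corona 3} equipped with a sequence $\alpha_{\mathcal{S}(F)}$ specified below. Once both triples are verified, Lemma \ref{iterating coronas} delivers $(C_{1},\mathcal{K},\alpha_{\mathcal{K}})$ satisfying (1)--(4), and the identification $\mathcal{K}=\mathcal{S}=\mathcal{A}$ forced by (\ref{double corona}) finishes the argument.

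For the outer triple the four properties are entirely standard: (1) is the defining Calder\'{o}n--Zygmund stopping rule; (2) is the geometric $\sigma$-Carleson decay across CZ generations; (3) is the classical Carleson embedding $\sum_{F\in\mathcal{F}}(\mathbb{E}_{F}^{\sigma}|f|)^{2}|F|_{\sigma}\lesssim \|f\|_{L^{2}(\sigma)}^{2}$ via the $\sigma$-maximal function; and (4) is the strict increase of $\mathbb{E}^{\sigma}|f|$ at a proper CZ stopping child. For the inner triple I would take
\[
\alpha_{\mathcal{S}(F)}(S)\equiv \sup_{I\in \Omega\mathcal{D}:\ I\subset S}\mathbb{E}_{I}^{\sigma}\bigl|\mathsf{P}_{\mathcal{C}_{F}^{\sigma}}^{\sigma}f\bigr|,\qquad S\in\mathcal{S}(F),
\]
so that (1) holds by definition and (4) is automatic from monotonicity of the sup as $S$ shrinks. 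Property (2) is precisely the $\sigma$-Carleson packing (\ref{sigma Carleson 3}) for $\mathcal{S}(F)$, which was derived in the excerpt from the deep quasienergy bound together with the punctured Muckenhoupt control afforded by Corollary \ref{all plugged}. Property (3) then follows by Carleson embedding combined with the $L^{2}(\sigma)$-boundedness of the dyadic $\sigma$-maximal function:
\[
\sum_{S\in\mathcal{S}(F)}\alpha_{\mathcal{S}(F)}(S)^{2}|S|_{\sigma}\lesssim \bigl\|M_{\sigma}\bigl(\mathsf{P}_{\mathcal{C}_{F}^{\sigma}}^{\sigma}f\bigr)\bigr\|_{L^{2}(\sigma)}^{2}\lesssim \bigl\|\mathsf{P}_{\mathcal{C}_{F}^{\sigma}}^{\sigma}f\bigr\|_{L^{2}(\sigma)}^{2},
\]
with the first inequality using the Carleson measure property (2) for $\mathcal{S}(F)$.

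With both verifications in hand, Lemma \ref{iterating coronas} applies and produces $(C_{1},\mathcal{A},\alpha_{\mathcal{A}})$ satisfying (1)--(4), where $C_{1}$ depends only on the common constant $C_{0}$ of the two triples. The only step that invokes genuinely new information is property (2) for the inner data, which is the energy-Carleson estimate (\ref{sigma Carleson 3}); everything else is formal once this is in place. Thus I expect the main obstacle to lie not in this proof itself, but in ensuring that the constant $C_{\limfunc{energy}}$ used in Definition \ref{def energy corona 3} is chosen large enough (in terms of $\mathcal{E}_{\alpha}^{\limfunc{deep}}$, $A_{2}^{\alpha}$ and $A_{2}^{\alpha,\limfunc{punct}}$) to validate (\ref{first gen}) and thereby (\ref{sigma Carleson 3}); this choice has already been made explicitly in the preceding subsection.
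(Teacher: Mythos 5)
Your overall strategy---verify that the outer Calder\'{o}n--Zygmund triple and, for each $F$, the inner energy triple are general stopping data, and then invoke Lemma~\ref{iterating coronas}---is exactly what the paper's construction intends (the lemma is stated as a ``record of facts,'' with the iterating-coronas lemma and the $\sigma$-Carleson estimate~(\ref{sigma Carleson 3}) as the two real inputs). However, your specific choice of inner stopping data
\[
\alpha_{\mathcal{S}(F)}(S)\equiv \sup_{I\in \Omega\mathcal{D}:\ I\subset S}\mathbb{E}_{I}^{\sigma}\bigl|\mathsf{P}_{\mathcal{C}_{F}^{\sigma}}^{\sigma}f\bigr|
\]
fails property~(4) of Definition~\ref{general stopping data}, and your argument for property~(3) does not go through as written. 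For (4): if $S'\subset S$, then $\{I:I\subset S'\}\subset\{I:I\subset S\}$, so $\alpha_{\mathcal{S}(F)}(S')\leq \alpha_{\mathcal{S}(F)}(S)$, which is the \emph{opposite} monotonicity to what (4) requires ($\alpha(F)\leq\alpha(F')$ when $F'\subset F$). You wrote that (4) is ``automatic from monotonicity of the sup as $S$ shrinks,'' but the sup \emph{shrinks} with $S$, whereas (4) needs it to \emph{grow}. This monotonicity is used in the proof of Lemma~\ref{iterating coronas} when verifying property~(4) of the iterated triple, so the gap is not cosmetic.

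For (3): the bound $\alpha_{\mathcal{S}(F)}(S)\leq \inf_{x\in S}M_{\sigma}(\mathsf{P}_{\mathcal{C}_{F}^{\sigma}}^{\sigma}f)(x)$, which is what Carleson embedding needs, is false---the supremum over $I\subset S$ includes cubes $I$ not containing a given $x\in S$, so $\alpha_{\mathcal{S}(F)}(S)$ is in general only bounded by $\sup_{x\in S}M_{\sigma}(\cdot)(x)$, not the infimum. Consequently the chain $\sum_{S}\alpha_{\mathcal{S}(F)}(S)^{2}|S|_{\sigma}\lesssim\|M_{\sigma}(\mathsf{P}_{\mathcal{C}_{F}^{\sigma}}^{\sigma}f)\|_{L^{2}(\sigma)}^{2}$ does not follow from the $\sigma$-Carleson property~(\ref{sigma Carleson 3}) alone. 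To make the inner triple genuinely satisfy (1), (3) and (4) simultaneously, one needs a definition of $\alpha_{\mathcal{S}(F)}$ that is monotone in the correct direction (increasing as cubes shrink) while still controlling corona averages of the projection $\mathsf{P}_{\mathcal{C}_{F}^{\sigma}}^{\sigma}f$; the natural route is to interleave a Calder\'{o}n--Zygmund stopping for $\mathsf{P}_{\mathcal{C}_{F}^{\sigma}}^{\sigma}f$ into $\mathcal{S}(F)$ (or, equivalently, take as inner stopping tree the union of $\mathcal{S}(F)$ with the CZ stopping cubes for the projection, together with the $\mathcal{F}$-children of $F$), after which the standard $\alpha(S)=\mathbb{E}_{S}^{\sigma}|\mathsf{P}_{\mathcal{C}_{F}^{\sigma}}^{\sigma}f|$ satisfies all four properties by the classical CZ argument and Carleson embedding. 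That refinement is the missing step your proposal elides.
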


To bound $\mathsf{B}_{\Subset _{\mathbf{\rho },\varepsilon }}\left(
f,g\right) $ we fix the stopping data $\mathcal{A}$ and $\left\{ \alpha _{%
\mathcal{A}}\left( A\right) \right\} _{A\in \mathcal{A}}$ constructed above
with the double iterated corona. We now consider the following \emph{%
canonical splitting} of the form $\mathsf{B}_{\Subset _{\mathbf{\rho }%
,\varepsilon }}\left( f,g\right) $ that involves the quasiHaar corona
projections $\mathsf{P}_{\mathcal{C}_{A}}^{\sigma }$ acting on $f$ and the $%
\mathbf{\tau }$\emph{-shifted} quasiHaar corona projections $\mathsf{P}_{%
\mathcal{C}_{B}^{\mathbf{\tau }-\limfunc{shift}}}^{\omega }$ acting on $g$.
Here the $\mathbf{\tau }$-shifted corona $\mathcal{C}_{B}^{\mathbf{\tau }-%
\limfunc{shift}}$ is defined to include only those quasicubes $J\in \mathcal{%
C}_{B}$ that are \emph{not} $\mathbf{\tau }$-nearby $B$, and to include also
such quasicubes $J$ which in addition \emph{are} $\mathbf{\tau }$-nearby in
the children $B^{\prime }$ of $B$.

\begin{definition}
\label{def parameters}The parameters $\mathbf{\tau }$ and $\mathbf{\rho }$
are now fixed to satisfy 
\begin{equation*}
\mathbf{\tau }>\mathbf{r}\text{ and }\mathbf{\rho }>\mathbf{r}+\mathbf{\tau }%
,
\end{equation*}%
where $\mathbf{r}$ is the goodness parameter already fixed.
\end{definition}

\begin{definition}
\label{shifted corona}For $B\in \mathcal{A}$ we define%
\begin{equation*}
\mathcal{C}_{B}^{\mathbf{\tau }-\limfunc{shift}}=\left\{ J\in \mathcal{C}%
_{B}:J\Subset _{\mathbf{\tau },\varepsilon }B\right\} \cup
\dbigcup\limits_{B^{\prime }\in \mathfrak{C}_{\mathcal{A}}\left( B\right)
}\left\{ J\in \Omega \mathcal{D}:J\Subset _{\mathbf{\tau },\varepsilon }B%
\text{ and }J\text{ \emph{is} }\mathbf{\tau }\text{-nearby in }B^{\prime
}\right\} .
\end{equation*}
\end{definition}

We will use repeatedly the fact that the $\mathbf{\tau }$-shifted coronas $%
\mathcal{C}_{B}^{\mathbf{\tau }-\limfunc{shift}}$ have overlap bounded by $%
\mathbf{\tau }$:%
\begin{equation}
\sum_{B\in \mathcal{A}}\mathbf{1}_{\mathcal{C}_{B}^{\mathbf{\tau }-\limfunc{%
shift}}}\left( J\right) \leq \mathbf{\tau },\ \ \ \ \ J\in \Omega \mathcal{D}%
.  \label{tau overlap}
\end{equation}%
The forms $\mathsf{B}_{\Subset _{\mathbf{\rho },\varepsilon }}\left(
f,g\right) $ are no longer linear in $f$ and $g$ as the `cut' is determined
by the coronas $\mathcal{C}_{F}$ and $\mathcal{C}_{G}^{\mathbf{\tau }-%
\limfunc{shift}}$, which depend on $f$ as well as the measures $\sigma $ and 
$\omega $. However, if the coronas are held fixed, then the forms can be
considered bilinear in $f$ and $g$. It is convenient at this point to
introduce the following shorthand notation:%
\begin{equation*}
\left\langle T_{\sigma }^{\alpha }\left( \mathsf{P}_{\mathcal{C}%
_{A}}^{\sigma }f\right) ,\mathsf{P}_{\mathcal{C}_{B}^{\mathbf{\tau }-%
\limfunc{shift}}}^{\omega }g\right\rangle _{\omega }^{\Subset _{\mathbf{\rho 
},\varepsilon }}\equiv \sum_{\substack{ I\in \mathcal{C}_{A}\text{ and }J\in 
\mathcal{C}_{B}^{\mathbf{\tau }-\limfunc{shift}}  \\ J\Subset _{\mathbf{\rho 
},\varepsilon }I}}\left\langle T_{\sigma }^{\alpha }\left( \bigtriangleup
_{I}^{\sigma }f\right) ,\left( \bigtriangleup _{J}^{\omega }g\right)
\right\rangle _{\omega }\ .
\end{equation*}%
We then have the canonical splitting,%
\begin{eqnarray}
&&\mathsf{B}_{\Subset _{\mathbf{\rho },\varepsilon }}\left( f,g\right)
\label{parallel corona decomp'} \\
&=&\sum_{A,B\in \mathcal{A}}\left\langle T_{\sigma }^{\alpha }\left( \mathsf{%
P}_{\mathcal{C}_{A}}^{\sigma }f\right) ,\mathsf{P}_{\mathcal{C}_{B}^{\mathbf{%
\tau }-\limfunc{shift}}}^{\omega }g\right\rangle _{\omega }^{\Subset _{%
\mathbf{\rho },\varepsilon }}  \notag \\
&=&\sum_{A\in \mathcal{A}}\left\langle T_{\sigma }^{\alpha }\left( \mathsf{P}%
_{\mathcal{C}_{A}}^{\sigma }f\right) ,\mathsf{P}_{\mathcal{C}_{A}^{\mathbf{%
\tau }-\limfunc{shift}}}^{\omega }g\right\rangle _{\omega }^{\Subset _{%
\mathbf{\rho },\varepsilon }}+\sum_{\substack{ A,B\in \mathcal{A}  \\ %
B\subsetneqq A}}\left\langle T_{\sigma }^{\alpha }\left( \mathsf{P}_{%
\mathcal{C}_{A}}^{\sigma }f\right) ,\mathsf{P}_{\mathcal{C}_{B}^{\mathbf{%
\tau }-\limfunc{shift}}}^{\omega }g\right\rangle _{\omega }^{\Subset _{%
\mathbf{\rho },\varepsilon }}  \notag \\
&&+\sum_{\substack{ A,B\in \mathcal{A}  \\ B\supsetneqq A}}\left\langle
T_{\sigma }^{\alpha }\left( \mathsf{P}_{\mathcal{C}_{A}}^{\sigma }f\right) ,%
\mathsf{P}_{\mathcal{C}_{B}^{\mathbf{\tau }-\limfunc{shift}}}^{\omega
}g\right\rangle _{\omega }^{\Subset _{\mathbf{\rho },\varepsilon }}+\sum 
_{\substack{ A,B\in \mathcal{A}  \\ A\cap B=\emptyset }}\left\langle
T_{\sigma }^{\alpha }\left( \mathsf{P}_{\mathcal{C}_{A}}^{\sigma }f\right) ,%
\mathsf{P}_{\mathcal{C}_{B}^{\mathbf{\tau }-\limfunc{shift}}}^{\omega
}g\right\rangle _{\omega }^{\Subset _{\mathbf{\rho },\varepsilon }}  \notag
\\
&\equiv &\mathsf{T}_{\limfunc{diagonal}}\left( f,g\right) +\mathsf{T}_{%
\limfunc{far}\limfunc{below}}\left( f,g\right) +\mathsf{T}_{\limfunc{far}%
\limfunc{above}}\left( f,g\right) +\mathsf{T}_{\limfunc{disjoint}}\left(
f,g\right) .  \notag
\end{eqnarray}%
Now the final two terms $\mathsf{T}_{\limfunc{far}\limfunc{above}}\left(
f,g\right) $ and $\mathsf{T}_{\limfunc{disjoint}}\left( f,g\right) $ each
vanish since there are no pairs $\left( I,J\right) \in \mathcal{C}_{A}\times 
\mathcal{C}_{B}^{\mathbf{\tau }-\limfunc{shift}}$ with both (\textbf{i}) $%
J\Subset _{\mathbf{\rho },\varepsilon }I$ and (\textbf{ii}) either $%
B\subsetneqq A$ or $B\cap A=\emptyset $.

The \emph{far below} term $\mathsf{T}_{\limfunc{far}\limfunc{below}}\left(
f,g\right) $ is bounded using the Intertwining Proposition and the control
of functional energy condition by the energy condition given in the next two
sections. Indeed, assuming these two results, we have from $\mathbf{\tau }<%
\mathbf{\rho }$ that%
\begin{eqnarray*}
\mathsf{T}_{\limfunc{far}\limfunc{below}}\left( f,g\right) &=&\sum 
_{\substack{ A,B\in \mathcal{A}  \\ B\subsetneqq A}}\sum_{\substack{ I\in 
\mathcal{C}_{A}\text{ and }J\in \mathcal{C}_{B}^{\mathbf{\tau }-\limfunc{%
shift}}  \\ J\Subset _{\mathbf{\rho },\varepsilon }I}}\left\langle T_{\sigma
}^{\alpha }\left( \bigtriangleup _{I}^{\sigma }f\right) ,\left(
\bigtriangleup _{J}^{\omega }g\right) \right\rangle _{\omega } \\
&=&\sum_{B\in \mathcal{A}}\sum_{A\in \mathcal{A}:\ B\subsetneqq A}\sum 
_{\substack{ I\in \mathcal{C}_{A}\text{ and }J\in \mathcal{C}_{B}^{\mathbf{%
\tau }-\limfunc{shift}}  \\ J\Subset _{\mathbf{\rho },\varepsilon }I}}%
\left\langle T_{\sigma }^{\alpha }\left( \bigtriangleup _{I}^{\sigma
}f\right) ,\left( \bigtriangleup _{J}^{\omega }g\right) \right\rangle
_{\omega } \\
&=&\sum_{B\in \mathcal{A}}\sum_{A\in \mathcal{A}:\ B\subsetneqq A}\sum_{I\in 
\mathcal{C}_{A}\text{ and }J\in \mathcal{C}_{B}^{\mathbf{\tau }-\limfunc{%
shift}}}\left\langle T_{\sigma }^{\alpha }\left( \bigtriangleup _{I}^{\sigma
}f\right) ,\left( \bigtriangleup _{J}^{\omega }g\right) \right\rangle
_{\omega } \\
&&-\sum_{B\in \mathcal{A}}\sum_{A\in \mathcal{A}:\ B\subsetneqq A}\sum 
_{\substack{ I\in \mathcal{C}_{A}\text{ and }J\in \mathcal{C}_{B}^{\mathbf{%
\tau }-\limfunc{shift}}  \\ J\Subset _{\mathbf{\rho },\varepsilon }I}}%
\left\langle T_{\sigma }^{\alpha }\left( \bigtriangleup _{I}^{\sigma
}f\right) ,\left( \bigtriangleup _{J}^{\omega }g\right) \right\rangle
_{\omega } \\
&=&\mathsf{T}_{\limfunc{far}\limfunc{below}}^{1}\left( f,g\right) -\mathsf{T}%
_{\limfunc{far}\limfunc{below}}^{2}\left( f,g\right) .
\end{eqnarray*}%
Now $\mathsf{T}_{\limfunc{far}\text{\ }\limfunc{below}}^{2}\left( f,g\right) 
$ is bounded by $\mathcal{NTV}_{\alpha }$ by Lemma \ref{standard delta}
since $J$ is good if $\bigtriangleup _{J}^{\omega }g\neq 0$.

The form $\mathsf{T}_{\limfunc{far}\limfunc{below}}^{1}\left( f,g\right) $
can be written as%
\begin{eqnarray*}
\mathsf{T}_{\limfunc{far}\limfunc{below}}^{1}\left( f,g\right) &=&\sum_{B\in 
\mathcal{A}}\sum_{I\in \Omega \mathcal{D}:\ B\subsetneqq I}\left\langle
T_{\sigma }^{\alpha }\left( \bigtriangleup _{I}^{\sigma }f\right)
,g_{B}\right\rangle _{\omega }; \\
\text{where }g_{B} &\equiv &\sum_{J\in \mathcal{C}_{B}^{\mathbf{\tau }-%
\limfunc{shift}}}\bigtriangleup _{J}^{\omega }g\ .
\end{eqnarray*}%
The Intertwining Proposition \ref{strongly adapted} applies to this latter
form and shows that it is bounded by $\mathcal{NTV}_{\alpha }+\mathfrak{F}%
_{\alpha }$. Then Proposition \ref{func ener control} shows that $\mathfrak{F%
}_{\alpha }\lesssim \mathcal{NTV}_{\alpha }+\mathcal{E}_{\alpha }$, which
completes the proof that%
\begin{equation}
\left\vert \mathsf{T}_{\limfunc{far}\limfunc{below}}\left( f,g\right)
\right\vert \lesssim \left( \mathcal{NTV}_{\alpha }+\mathcal{E}_{\alpha
}\right) \left\Vert f\right\Vert _{L^{2}\left( \sigma \right) }\left\Vert
g\right\Vert _{L^{2}\left( \omega \right) }\ .  \label{far below bound}
\end{equation}

The boundedness of the diagonal term $\mathsf{T}_{\limfunc{diagonal}}\left(
f,g\right) $ will then be reduced to the forms in the
paraproduct/neighbour/stopping form decomposition of NTV. The stopping form
is then further split into two sublinear forms in (\ref{def split}) below,
where the boundedness of the more difficult of the two is treated by
adapting the stopping time and recursion of M. Lacey \cite{Lac}. More
precisely, to handle the diagonal term $\mathsf{T}_{\limfunc{diagonal}%
}\left( f,g\right) $, it is enough to consider the individual corona pieces 
\begin{equation*}
\mathsf{B}_{\Subset _{\mathbf{\rho },\varepsilon }}^{A}\left( f,g\right)
\equiv \left\langle T_{\sigma }^{\alpha }\left( \mathsf{P}_{\mathcal{C}%
_{A}}^{\sigma }f\right) ,\mathsf{P}_{\mathcal{C}_{A}^{\mathbf{\tau }-%
\limfunc{shift}}}^{\omega }g\right\rangle _{\omega }^{\Subset }\ ,
\end{equation*}%
and to prove the following estimate:%
\begin{equation*}
\left\vert \mathsf{B}_{\Subset _{\mathbf{\rho },\varepsilon }}^{A}\left(
f,g\right) \right\vert \lesssim \left( \mathcal{NTV}_{\alpha }+\mathcal{E}%
_{\alpha }\right) \ \left( \alpha _{\mathcal{A}}\left( A\right) \sqrt{%
\left\vert A\right\vert _{\sigma }}+\left\Vert \mathsf{P}_{\mathcal{C}%
_{A}}^{\sigma }f\right\Vert _{L^{2}\left( \sigma \right) }\right) \
\left\Vert \mathsf{P}_{\mathcal{C}_{A}^{\mathbf{\tau }-\limfunc{shift}%
}}^{\omega }g\right\Vert _{L^{2}\left( \omega \right) }\ .
\end{equation*}%
Indeed, we then have from Cauchy-Schwarz that%
\begin{eqnarray*}
&&\sum_{A\in \mathcal{A}}\left\vert \mathsf{B}_{\Subset _{\mathbf{\rho }%
,\varepsilon }}^{A}\left( f,g\right) \right\vert =\sum_{A\in \mathcal{A}%
}\left\vert \mathsf{B}_{\Subset _{\mathbf{\rho },\varepsilon }}^{A}\left( 
\mathsf{P}_{\mathcal{C}_{A}}^{\sigma }f,\mathsf{P}_{\mathcal{C}_{A}^{\mathbf{%
\tau }-\limfunc{shift}}}^{\omega }g\right) \right\vert \\
&\lesssim &\left( \mathcal{NTV}_{\alpha }+\mathcal{E}_{\alpha }\right) \
\left( \sum_{A\in \mathcal{A}}\alpha _{\mathcal{A}}\left( A\right)
^{2}\left\vert A\right\vert _{\sigma }+\left\Vert \mathsf{P}_{\mathcal{C}%
_{A}}^{\sigma }f\right\Vert _{L^{2}\left( \sigma \right) }^{2}\right) ^{%
\frac{1}{2}}\ \left( \sum_{A\in \mathcal{A}}\left\Vert \mathsf{P}_{\mathcal{C%
}_{A}^{\mathbf{\tau }-\limfunc{shift}}}^{\omega }g\right\Vert _{L^{2}\left(
\omega \right) }^{2}\right) ^{\frac{1}{2}} \\
&\lesssim &\left( \mathcal{NTV}_{\alpha }+\mathcal{E}_{\alpha }\right) \
\left\Vert f\right\Vert _{L^{2}\left( \sigma \right) }\left\Vert
g\right\Vert _{L^{2}\left( \omega \right) }\ ,
\end{eqnarray*}%
where the last line uses `quasi' orthogonality in $f$ and orthogonality in
both $f$ and $g$.

Following arguments in \cite{NTV3}, \cite{Vol} and \cite{LaSaShUr}, we now
use the paraproduct / neighbour / stopping splitting of NTV to reduce
boundedness of $\mathsf{B}_{\Subset _{\mathbf{\rho },\varepsilon
}}^{A}\left( f,g\right) $ to boundedness of the associated stopping form 
\begin{equation}
\mathsf{B}_{stop}^{A}\left( f,g\right) \equiv \sum_{I\in \limfunc{supp}%
\widehat{f}}\sum_{J:\ J\Subset _{\mathbf{\rho },\varepsilon }I\text{ and }%
I_{J}\notin \mathcal{A}}\left( \mathbb{E}_{I_{J}}^{\sigma }\bigtriangleup
_{I}^{\sigma }f\right) \ \left\langle T_{\sigma }^{\alpha }\mathbf{1}%
_{A\setminus I_{J}},\bigtriangleup _{J}^{\omega }g\right\rangle _{\omega }\ ,
\label{bounded stopping form}
\end{equation}%
where $f$ is supported in the quasicube $A$ and its expectations $\mathbb{E}%
_{I}^{\sigma }\left\vert f\right\vert $ are bounded by $\alpha _{\mathcal{A}%
}\left( A\right) $ for $I\in \mathcal{C}_{A}^{\sigma }$, the quasiHaar
support of $f$ is contained in the corona $\mathcal{C}_{A}^{\sigma }$, and
the quasiHaar support of $g$\ is contained in $\mathcal{C}_{A}^{\mathbf{\tau 
}-\limfunc{shift}}$, and where $I_{J}$ is the $\Omega \mathcal{D}$-child of $%
I$ that contains $J$. Indeed, to see this, we note that $\bigtriangleup
_{I}^{\sigma }f=\mathbf{1}_{I}\bigtriangleup _{I}^{\sigma }f$ and write both%
\begin{eqnarray*}
\mathbf{1}_{I} &=&\mathbf{1}_{I_{J}}+\sum_{\theta \left( I_{J}\right) \in 
\mathfrak{C}_{\Omega \mathcal{D}}\left( I\right) \setminus \left\{
I_{J}\right\} }\mathbf{1}_{\theta \left( I_{J}\right) }\ , \\
\mathbf{1}_{I_{J}} &=&\mathbf{1}_{A}-\mathbf{1}_{A\setminus I_{J}}\ ,
\end{eqnarray*}%
where $\theta \left( I_{J}\right) \in \mathfrak{C}_{\Omega \mathcal{D}%
}\left( I\right) \setminus \left\{ I_{J}\right\} $ ranges over the $2^{n}-1$ 
$\Omega \mathcal{D}$-children of $I$ other than the child $I_{J}$ that
contains $J$. Then we obtain%
\begin{eqnarray*}
\left\langle T_{\sigma }^{\alpha }\bigtriangleup _{I}^{\sigma
}f,\bigtriangleup _{J}^{\omega }g\right\rangle _{\omega } &=&\left\langle
T_{\sigma }^{\alpha }\left( \mathbf{1}_{I_{J}}\bigtriangleup _{I}^{\sigma
}f\right) ,\bigtriangleup _{J}^{\omega }g\right\rangle _{\omega
}+\sum_{\theta \left( I_{J}\right) \in \mathfrak{C}_{\Omega \mathcal{D}%
}\left( I\right) \setminus \left\{ I_{J}\right\} }\left\langle T_{\sigma
}^{\alpha }\left( \mathbf{1}_{\theta \left( I_{J}\right) }\bigtriangleup
_{I}^{\sigma }f\right) ,\bigtriangleup _{J}^{\omega }g\right\rangle _{\omega
} \\
&=&\left( \mathbb{E}_{I_{J}}^{\sigma }\bigtriangleup _{I}^{\sigma }f\right)
\left\langle T_{\sigma }^{\alpha }\left( \mathbf{1}_{I_{J}}\right)
,\bigtriangleup _{J}^{\omega }g\right\rangle _{\omega }+\sum_{\theta \left(
I_{J}\right) \in \mathfrak{C}_{\Omega \mathcal{D}}\left( I\right) \setminus
\left\{ I_{J}\right\} }\left\langle T_{\sigma }^{\alpha }\left( \mathbf{1}%
_{\theta \left( I_{J}\right) }\bigtriangleup _{I}^{\sigma }f\right)
,\bigtriangleup _{J}^{\omega }g\right\rangle _{\omega } \\
&=&\left( \mathbb{E}_{I_{J}}^{\sigma }\bigtriangleup _{I}^{\sigma }f\right)
\left\langle T_{\sigma }^{\alpha }\mathbf{1}_{A},\bigtriangleup _{J}^{\omega
}g\right\rangle _{\omega } \\
&&-\left( \mathbb{E}_{I_{J}}^{\sigma }\bigtriangleup _{I}^{\sigma }f\right)
\left\langle T_{\sigma }^{\alpha }\mathbf{1}_{A\setminus
I_{J}},\bigtriangleup _{J}^{\omega }g\right\rangle _{\omega } \\
&&+\sum_{\theta \left( I_{J}\right) \in \mathfrak{C}_{\Omega \mathcal{D}%
}\left( I\right) \setminus \left\{ I_{J}\right\} }\left\langle T_{\sigma
}^{\alpha }\left( \mathbf{1}_{\theta \left( I_{J}\right) }\bigtriangleup
_{I}^{\sigma }f\right) ,\bigtriangleup _{J}^{\omega }g\right\rangle _{\omega
}\ ,
\end{eqnarray*}%
and the corresponding NTV splitting of $\mathsf{B}_{\Subset _{\mathbf{\rho }%
,\varepsilon }}^{A}\left( f,g\right) $:%
\begin{eqnarray*}
\mathsf{B}_{\Subset _{\mathbf{\rho },\varepsilon }}^{A}\left( f,g\right)
&=&\left\langle T_{\sigma }^{\alpha }\left( \mathsf{P}_{\mathcal{C}%
_{A}}^{\sigma }f\right) ,\mathsf{P}_{\mathcal{C}_{A}^{\mathbf{\tau }-%
\limfunc{shift}}}^{\omega }g\right\rangle _{\omega }^{\Subset _{\mathbf{\rho 
},\varepsilon }}=\sum_{\substack{ I\in \mathcal{C}_{A}\text{ and }J\in 
\mathcal{C}_{A}^{\mathbf{\tau }-\limfunc{shift}}  \\ J\Subset _{\mathbf{\rho 
},\varepsilon }I}}\left\langle T_{\sigma }^{\alpha }\left( \bigtriangleup
_{I}^{\sigma }f\right) ,\bigtriangleup _{J}^{\omega }g\right\rangle _{\omega
} \\
&=&\sum_{\substack{ I\in \mathcal{C}_{A}\text{ and }J\in \mathcal{C}_{A}^{%
\mathbf{\tau }-\limfunc{shift}}  \\ J\Subset _{\mathbf{\rho },\varepsilon }I 
}}\left( \mathbb{E}_{I_{J}}^{\sigma }\bigtriangleup _{I}^{\sigma }f\right)
\left\langle T_{\sigma }^{\alpha }\mathbf{1}_{A},\bigtriangleup _{J}^{\omega
}g\right\rangle _{\omega } \\
&&-\sum_{\substack{ I\in \mathcal{C}_{A}\text{ and }J\in \mathcal{C}_{A}^{%
\mathbf{\tau }-\limfunc{shift}}  \\ J\Subset _{\mathbf{\rho },\varepsilon }I 
}}\left( \mathbb{E}_{I_{J}}^{\sigma }\bigtriangleup _{I}^{\sigma }f\right)
\left\langle T_{\sigma }^{\alpha }\mathbf{1}_{A\setminus
I_{J}},\bigtriangleup _{J}^{\omega }g\right\rangle _{\omega } \\
&&+\sum_{\substack{ I\in \mathcal{C}_{A}\text{ and }J\in \mathcal{C}_{A}^{%
\mathbf{\tau }-\limfunc{shift}}  \\ J\Subset _{\mathbf{\rho },\varepsilon }I 
}}\sum_{\theta \left( I_{J}\right) \in \mathfrak{C}_{\Omega \mathcal{D}%
}\left( I\right) \setminus \left\{ I_{J}\right\} }\left\langle T_{\sigma
}^{\alpha }\left( \mathbf{1}_{\theta \left( I_{J}\right) }\bigtriangleup
_{I}^{\sigma }f\right) ,\bigtriangleup _{J}^{\omega }g\right\rangle _{\omega
} \\
&\equiv &\mathsf{B}_{paraproduct}^{A}\left( f,g\right) -\mathsf{B}%
_{stop}^{A}\left( f,g\right) +\mathsf{B}_{neighbour}^{A}\left( f,g\right) .
\end{eqnarray*}%
The paraproduct form $\mathsf{B}_{paraproduct}^{A}\left( f,g\right) $ is
easily controlled by the testing condition for $T^{\alpha }$. Indeed, we have%
\begin{eqnarray*}
\mathsf{B}_{paraproduct}^{A}\left( f,g\right) &=&\sum_{\substack{ I\in 
\mathcal{C}_{A}\text{ and }J\in \mathcal{C}_{A}^{\mathbf{\tau }-\limfunc{%
shift}}  \\ J\Subset _{\mathbf{\rho },\varepsilon }I}}\left( \mathbb{E}%
_{I_{J}}^{\sigma }\bigtriangleup _{I}^{\sigma }f\right) \left\langle
T_{\sigma }^{\alpha }\mathbf{1}_{A},\bigtriangleup _{J}^{\omega
}g\right\rangle _{\omega } \\
&=&\sum_{J\in \mathcal{C}_{A}^{\mathbf{\tau }-\limfunc{shift}}}\left\langle
T_{\sigma }^{\alpha }\mathbf{1}_{A},\bigtriangleup _{J}^{\omega
}g\right\rangle _{\omega }\left\{ \sum_{I\in \mathcal{C}_{A}\text{:\ }%
J\Subset _{\mathbf{\rho },\varepsilon }I}\left( \mathbb{E}_{I_{J}}^{\sigma
}\bigtriangleup _{I}^{\sigma }f\right) \right\} \\
&=&\sum_{J\in \mathcal{C}_{A}^{\mathbf{\tau }-\limfunc{shift}}}\left\langle
T_{\sigma }^{\alpha }\mathbf{1}_{A},\bigtriangleup _{J}^{\omega
}g\right\rangle _{\omega }\left\{ \mathbb{E}_{I^{\natural }\left( J\right)
_{J}}^{\sigma }f-\mathbb{E}_{A}^{\sigma }f\right\} \\
&=&\left\langle T_{\sigma }^{\alpha }\mathbf{1}_{A},\sum_{J\in \mathcal{C}%
_{A}^{\mathbf{\tau }-\limfunc{shift}}}\left\{ \mathbb{E}_{I^{\natural
}\left( J\right) _{J}}^{\sigma }f-\mathbb{E}_{A}^{\sigma }f\right\}
\bigtriangleup _{J}^{\omega }g\right\rangle _{\omega }\ ,
\end{eqnarray*}%
where $I^{\natural }\left( J\right) $ denotes the smallest quasicube $I\in 
\mathcal{C}_{A}$ such that $J\Subset _{\mathbf{\rho },\varepsilon }I$, and
of course $I^{\natural }\left( J\right) _{J}$ denotes its child containing $%
J $. We claim that by construction of the corona we have $I^{\natural
}\left( J\right) _{J}\notin \mathcal{A}$, and so $\left\vert \mathbb{E}%
_{I^{\natural }\left( J\right) _{J}}^{\sigma }f\right\vert \lesssim \mathbb{E%
}_{A}^{\sigma }\left\vert f\right\vert \leq \alpha _{\mathcal{A}}\left(
A\right) $. Indeed, in our application of the stopping form we have $f=%
\mathsf{P}_{\mathcal{C}_{A}}^{\sigma }f$ and $g=\mathsf{P}_{\mathcal{C}_{A}^{%
\mathbf{\tau }-\limfunc{shift}}}^{\omega }g$, and the definitions of the
coronas $\mathcal{C}_{A}$ and $\mathcal{C}_{A}^{\mathbf{\tau }-\limfunc{shift%
}}$ together with $\mathbf{r}<\mathbf{\tau }<\mathbf{\rho }$ imply that $%
I^{\natural }\left( J\right) _{J}\notin \mathcal{A}$ for $J\in \mathcal{C}%
_{A}^{\mathbf{\tau }-\limfunc{shift}}$.

Thus from the orthogonality of the quasiHaar projections $\bigtriangleup
_{J}^{\omega }g$ and the bound on the coefficients $\left\vert \mathbb{E}%
_{I^{\natural }\left( J\right) _{J}}^{\sigma }f-\mathbb{E}_{A}^{\sigma
}f\right\vert \lesssim \alpha _{\mathcal{A}}\left( A\right) $ we have%
\begin{eqnarray*}
\left\vert \mathsf{B}_{paraproduct}^{A}\left( f,g\right) \right\vert
&=&\left\vert \left\langle T_{\sigma }^{\alpha }\mathbf{1}_{A},\sum_{J\in 
\mathcal{C}_{A}^{\mathbf{\tau }-\limfunc{shift}}}\left\{ \mathbb{E}%
_{I^{\natural }\left( J\right) _{J}}^{\sigma }f-\mathbb{E}_{A}^{\sigma
}f\right\} \bigtriangleup _{J}^{\omega }g\right\rangle _{\omega }\right\vert
\\
&\lesssim &\alpha _{\mathcal{A}}\left( A\right) \ \left\Vert \mathbf{1}%
_{A}T_{\sigma }^{\alpha }\mathbf{1}_{A}\right\Vert _{L^{2}\left( \omega
\right) }\ \left\Vert \mathsf{P}_{\mathcal{C}_{A}^{\mathbf{\tau }-\limfunc{%
shift}}}^{\omega }g\right\Vert _{L^{2}\left( \omega \right) } \\
&\leq &\mathfrak{T}_{T^{\alpha }}\ \alpha _{\mathcal{A}}\left( A\right) \ 
\sqrt{\left\vert A\right\vert _{\sigma }}\ \left\Vert \mathsf{P}_{\mathcal{C}%
_{A}^{\mathbf{\tau }-\limfunc{shift}}}^{\omega }g\right\Vert _{L^{2}\left(
\omega \right) },
\end{eqnarray*}%
because $\left\Vert \sum_{J\in \mathcal{C}_{A}^{\mathbf{\tau }-\limfunc{shift%
}}}\lambda _{J}\bigtriangleup _{J}^{\omega }g\right\Vert _{L^{2}\left(
\omega \right) }\leq \left( \sup_{J}\left\vert \lambda _{J}\right\vert
\right) \left\Vert \sum_{J\in \mathcal{C}_{A}^{\mathbf{\tau }-\limfunc{shift}%
}}\bigtriangleup _{J}^{\omega }g\right\Vert _{L^{2}\left( \omega \right) }$.

Next, the neighbour form $\mathsf{B}_{neighbour}^{A}\left( f,g\right) $ is
easily controlled by the $A_{2}^{\alpha }$ condition using the Energy Lemma %
\ref{ener} and the fact that the quasicubes $J$ are good. In particular, the
information encoded in the stopping tree $\mathcal{A}$ plays no role here.
We have%
\begin{equation*}
\mathsf{B}_{neighbour}^{A}\left( f,g\right) =\sum_{\substack{ I\in \mathcal{C%
}_{A}\text{ and }J\in \mathcal{C}_{A}^{\mathbf{\tau }-\limfunc{shift}}  \\ %
J\Subset _{\mathbf{\rho },\varepsilon }I}}\sum_{\theta \left( I_{J}\right)
\in \mathfrak{C}_{\Omega \mathcal{D}}\left( I\right) \setminus \left\{
I_{J}\right\} }\left\langle T_{\sigma }^{\alpha }\left( \mathbf{1}_{\theta
\left( I_{J}\right) }\bigtriangleup _{I}^{\sigma }f\right) ,\bigtriangleup
_{J}^{\omega }g\right\rangle _{\omega }.
\end{equation*}%
Recall that $I_{J}$ is the child of $I$ that contains $J$. Fix $\theta
\left( I_{J}\right) \in \mathfrak{C}_{\Omega \mathcal{D}}\left( I\right)
\setminus \left\{ I_{J}\right\} $ momentarily, and an integer $s\geq \mathbf{%
r}$. The inner product to be estimated is 
\begin{equation*}
\left\langle T_{\sigma }^{\alpha }(\mathbf{1}_{\theta \left( I_{J}\right)
}\sigma \Delta _{I}^{\sigma }f),\Delta _{J}^{\omega }g\right\rangle _{\omega
},
\end{equation*}%
i.e. 
\begin{equation*}
\left\langle T_{\sigma }^{\alpha }\left( \mathbf{1}_{\theta \left(
I_{J}\right) }\bigtriangleup _{I}^{\sigma }f\right) ,\bigtriangleup
_{J}^{\omega }g\right\rangle _{\omega }=\mathbb{E}_{\theta \left(
I_{J}\right) }^{\sigma }\Delta _{I}^{\sigma }f\cdot \left\langle T_{\sigma
}^{\alpha }\left( \mathbf{1}_{\theta \left( I_{J}\right) }\right)
,\bigtriangleup _{J}^{\omega }g\right\rangle _{\omega }.
\end{equation*}%
Thus we can write%
\begin{equation}
\mathsf{B}_{neighbour}^{A}\left( f,g\right) =\sum_{\substack{ I\in \mathcal{C%
}_{A}\text{ and }J\in \mathcal{C}_{A}^{\mathbf{\tau }-\limfunc{shift}}  \\ %
J\Subset _{\mathbf{\rho },\varepsilon }I}}\sum_{\theta \left( I_{J}\right)
\in \mathfrak{C}_{\Omega \mathcal{D}}\left( I\right) \setminus \left\{
I_{J}\right\} }\left( \mathbb{E}_{\theta \left( I_{J}\right) }^{\sigma
}\Delta _{I}^{\sigma }f\right) \ \left\langle T_{\sigma }^{\alpha }\left( 
\mathbf{1}_{\theta \left( I_{J}\right) }\sigma \right) ,\Delta _{J}^{\omega
}g\right\rangle _{\omega }  \label{neighbour term}
\end{equation}

Now we will use the following fractional analogue of the Poisson inequality
in \cite{Vol}. We remind the reader that there are absolute positive
constants $c,C\,$\ such that $c\left\vert J\right\vert ^{\frac{1}{n}}\leq
\ell \left( J\right) \leq C\left\vert J\right\vert ^{\frac{1}{n}}$ for all
quasicubes $J$, and that we defined the quasidistance $\limfunc{qdist}\left(
E,F\right) $ between two sets $E$ and $F$ to be the Euclidean distance $%
\limfunc{dist}\left( \Omega ^{-1}E,\Omega ^{-1}F\right) $ between the
preimages under $\Omega $ of the sets $E$ and $F$.

\begin{lemma}
\label{Poisson inequality}Suppose that $J\subset I\subset K$ and that $%
\limfunc{qdist}\left( J,\partial I\right) >\tfrac{1}{2}\ell \left( J\right)
^{\varepsilon }\ell \left( I\right) ^{1-\varepsilon }$. Then%
\begin{equation}
\mathrm{P}^{\alpha }(J,\sigma \mathbf{1}_{K\setminus I})\lesssim \left( 
\frac{\ell \left( J\right) }{\ell \left( I\right) }\right) ^{1-\varepsilon
\left( n+1-\alpha \right) }\mathrm{P}^{\alpha }(I,\sigma \mathbf{1}%
_{K\setminus I}).  \label{e.Jsimeq}
\end{equation}
\end{lemma}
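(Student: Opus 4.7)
The plan is to derive (\ref{e.Jsimeq}) from a pointwise comparison of the two Poisson integrands on $K\setminus I$, and then integrate against $\sigma$. The result is essentially a bookkeeping lemma, and the geometric hypothesis $\limfunc{qdist}(J,\partial I)>\tfrac{1}{2}\ell(J)^{\varepsilon}\ell(I)^{1-\varepsilon}$ provides exactly the quantitative lower bound needed to convert the scale ratio $\ell(J)/\ell(I)$ into the power $1-\varepsilon(n+1-\alpha)$.

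First I would fix $y\in K\setminus I$ and record two simple geometric facts. Since $J\subset I$ and $y\notin I$, any segment from a point of $J$ to $y$ must cross $\partial I$, so the hypothesis yields
\[
\ell(J)+|y-c_{J}|\;\gtrsim\;\limfunc{qdist}(J,\partial I)\;\gtrsim\;\ell(J)^{\varepsilon}\ell(I)^{1-\varepsilon}.
\]
Second, because $c_J,c_I\in I$ we have $|c_J-c_I|\lesssim \ell(I)$, and therefore
\[
\ell(I)+|y-c_{I}|\;\approx\;\ell(I)+|y-c_{J}|.
\]

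The main step is to show the pointwise bound, for $y\in K\setminus I$,
\[
\frac{\ell(J)}{(\ell(J)+|y-c_{J}|)^{n+1-\alpha}}\;\lesssim\;\Bigl(\frac{\ell(J)}{\ell(I)}\Bigr)^{1-\varepsilon(n+1-\alpha)}\cdot\frac{\ell(I)}{(\ell(I)+|y-c_{I}|)^{n+1-\alpha}},
\]
which, after rearrangement, reduces to proving
\[
\frac{\ell(I)+|y-c_{I}|}{\ell(J)+|y-c_{J}|}\;\lesssim\;\Bigl(\frac{\ell(I)}{\ell(J)}\Bigr)^{\varepsilon}.
\]
I would check this by splitting into two regimes. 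When $|y-c_{I}|\lesssim \ell(I)$, the numerator is $\approx \ell(I)$ while the denominator is $\gtrsim \ell(J)^{\varepsilon}\ell(I)^{1-\varepsilon}$ by the first geometric fact, giving ratio $\lesssim (\ell(I)/\ell(J))^{\varepsilon}$. When $|y-c_{I}|\gtrsim \ell(I)$, the second geometric fact (together with $|c_J-c_I|\lesssim \ell(I)\lesssim |y-c_I|$) shows both numerator and denominator are $\approx |y-c_{I}|$, so the ratio is $\lesssim 1\leq (\ell(I)/\ell(J))^{\varepsilon}$.

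Integrating the pointwise inequality against $\sigma\mathbf{1}_{K\setminus I}$ and recognizing the two sides as $\mathrm{P}^{\alpha}(J,\sigma\mathbf{1}_{K\setminus I})$ and a constant multiple of $\mathrm{P}^{\alpha}(I,\sigma\mathbf{1}_{K\setminus I})$ completes the proof. There is no real obstacle here; the only subtlety is ensuring that both the near regime (where the deep-embedding hypothesis does the work) and the far regime (where $c_J$ and $c_I$ become interchangeable modulo $\ell(I)$) are handled uniformly, and the condition $\limfunc{qdist}(J,\partial I)>\tfrac{1}{2}\ell(J)^{\varepsilon}\ell(I)^{1-\varepsilon}$ is calibrated precisely to make the exponent $1-\varepsilon(n+1-\alpha)$ come out right.
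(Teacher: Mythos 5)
Your proof is correct, but it takes a genuinely different route from the paper's. The paper decomposes $\mathrm{P}^{\alpha}(J,\sigma\mathbf{1}_{K\setminus I})$ into a dyadic sum over annuli $2^{k}J$, uses the distance hypothesis to bound from below the first index $k_{0}$ at which $2^{k}J$ can meet $K\setminus I$, sets $k_{1}$ with $2^{k_{1}}=\ell(I)/\ell(J)$ as the scale at which $2^{k}J$ reaches the size of $I$, splits the sum at $k_{1}$, and bounds each half by a geometric series; the exponent $1-\varepsilon(n+1-\alpha)$ emerges from assembling $2^{-k_{0}(n+1-\alpha)}(\ell(I)/\ell(J))^{n-\alpha}$. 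Your argument instead proves the stronger \emph{pointwise} inequality
\begin{equation*}
\frac{\ell(J)}{(\ell(J)+|y-c_{J}|)^{n+1-\alpha}}\lesssim\Bigl(\frac{\ell(J)}{\ell(I)}\Bigr)^{1-\varepsilon(n+1-\alpha)}\frac{\ell(I)}{(\ell(I)+|y-c_{I}|)^{n+1-\alpha}},\qquad y\in K\setminus I,
\end{equation*}
which, after cancellation, reduces to $\tfrac{\ell(I)+|y-c_{I}|}{\ell(J)+|y-c_{J}|}\lesssim(\ell(I)/\ell(J))^{\varepsilon}$, and then integrates. Your case split can in fact be avoided: the hypothesis gives $|y-c_{J}|\gtrsim\ell(J)^{\varepsilon}\ell(I)^{1-\varepsilon}$ for all $y\notin I$ (and this also dominates $\ell(J)$), while $|c_{I}-c_{J}|\lesssim\ell(I)$ gives $\ell(I)+|y-c_{I}|\lesssim\ell(I)+|y-c_{J}|$, so the ratio is $\lesssim 1+\ell(I)/|y-c_{J}|\lesssim(\ell(I)/\ell(J))^{\varepsilon}$ directly. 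Your version avoids the shell decomposition and the geometric-series bookkeeping entirely, and it yields a pointwise kernel domination that the paper's argument does not; the paper's shell method, on the other hand, is the more robust template in two-weight arguments when no pointwise comparison is available and one must instead track $\sigma$-mass scale by scale (which is why the paper uses it elsewhere), but for this particular lemma the pointwise route is cleaner.
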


\begin{proof}
We have%
\begin{equation*}
\mathrm{P}^{\alpha }\left( J,\sigma \chi _{K\setminus I}\right) \approx
\sum_{k=0}^{\infty }2^{-k}\frac{1}{\left\vert 2^{k}J\right\vert ^{1-\frac{%
\alpha }{n}}}\int_{\left( 2^{k}J\right) \cap \left( K\setminus I\right)
}d\sigma ,
\end{equation*}%
and $\left( 2^{k}J\right) \cap \left( K\setminus I\right) \neq \emptyset $
requires%
\begin{equation*}
\limfunc{qdist}\left( J,K\setminus I\right) \leq c2^{k}\ell \left( J\right) ,
\end{equation*}%
for some dimensional constant $c>0$. Let $k_{0}$ be the smallest such $k$.
By our distance assumption we must then have%
\begin{equation*}
\tfrac{1}{2}\ell \left( J\right) ^{\varepsilon }\ell \left( I\right)
^{1-\varepsilon }\leq \limfunc{qdist}\left( J,\partial I\right) \leq
c2^{k_{0}}\ell \left( J\right) ,
\end{equation*}%
or%
\begin{equation*}
2^{-k_{0}-1}\leq c\left( \frac{\ell \left( J\right) }{\ell \left( I\right) }%
\right) ^{1-\varepsilon }.
\end{equation*}%
Now let $k_{1}$ be defined by $2^{k_{1}}\equiv \frac{\ell \left( I\right) }{%
\ell \left( J\right) }$. Then assuming $k_{1}>k_{0}$ (the case $k_{1}\leq
k_{0}$ is similar) we have%
\begin{eqnarray*}
\mathrm{P}^{\alpha }\left( J,\sigma \chi _{K\setminus I}\right) &\approx
&\left\{ \sum_{k=k_{0}}^{k_{1}}+\sum_{k=k_{1}}^{\infty }\right\} 2^{-k}\frac{%
1}{\left\vert 2^{k}J\right\vert ^{1-\frac{\alpha }{n}}}\int_{\left(
2^{k}J\right) \cap \left( K\setminus I\right) }d\sigma \\
&\lesssim &2^{-k_{0}}\frac{\left\vert I\right\vert ^{1-\frac{\alpha }{n}}}{%
\left\vert 2^{k_{0}}J\right\vert ^{1-\frac{\alpha }{n}}}\left( \frac{1}{%
\left\vert I\right\vert ^{1-\frac{\alpha }{n}}}\int_{\left(
2^{k_{1}}J\right) \cap \left( K\setminus I\right) }d\sigma \right)
+2^{-k_{1}}\mathrm{P}^{\alpha }\left( I,\sigma \chi _{\setminus I}\right) \\
&\lesssim &\left( \frac{\ell \left( J\right) }{\ell \left( I\right) }\right)
^{\left( 1-\varepsilon \right) \left( n+1-\alpha \right) }\left( \frac{\ell
\left( I\right) }{\ell \left( J\right) }\right) ^{n-\alpha }\mathrm{P}%
^{\alpha }\left( I,\sigma \chi _{K\setminus I}\right) +\frac{\ell \left(
J\right) }{\ell \left( I\right) }\mathrm{P}^{\alpha }\left( I,\sigma \chi
_{K\setminus I}\right) ,
\end{eqnarray*}%
which is the inequality (\ref{e.Jsimeq}).
\end{proof}

Now fix $I_{0},I_{\theta }\in \mathfrak{C}_{\Omega \mathcal{D}}\left(
I\right) $ with $I_{0}\neq I_{\theta }$ and assume that $J\Subset _{\mathbf{r%
},\varepsilon }I_{0}$. Let $\frac{\ell \left( J\right) }{\ell \left(
I_{0}\right) }=2^{-s}$ in the pivotal estimate in the Energy Lemma \ref{ener}
with $J\subset I_{0}\subset I$ to obtain 
\begin{align*}
\left\vert \langle T_{\sigma }^{\alpha }\left( \mathbf{1}_{I_{\theta
}}\sigma \right) ,\Delta _{J}^{\omega }g\rangle _{\omega }\right\vert &
\lesssim \left\Vert \Delta _{J}^{\omega }g\right\Vert _{L^{2}\left( \omega
\right) }\sqrt{\left\vert J\right\vert _{\omega }}\mathrm{P}^{\alpha }\left(
J,\mathbf{1}_{I_{\theta }}\sigma \right) \\
& \lesssim \left\Vert \Delta _{J}^{\omega }g\right\Vert _{L^{2}\left( \omega
\right) }\sqrt{\left\vert J\right\vert _{\omega }}\cdot 2^{-\left(
1-\varepsilon \left( n+1-\alpha \right) \right) s}\mathrm{P}^{\alpha }\left(
I_{0},\mathbf{1}_{I_{\theta }}\sigma \right) .
\end{align*}%
Here we are using (\ref{e.Jsimeq}), which applies since $J\subset I_{0}$.

In the sum below, we keep the side lengths of the quasicubes $J$ fixed at $%
2^{-s}$ times that of $I$, and of course take $J\subset I_{0}$. We estimate 
\begin{align*}
A(I,I_{0},I_{\theta },s)& \equiv \sum_{J\;:\;2^{s}\ell \left( J\right) =\ell
\left( I\right) :J\subset I_{0}}\left\vert \langle T_{\sigma }^{\alpha
}\left( \mathbf{1}_{I_{\theta }}\sigma \Delta _{I}^{\sigma }f\right) ,\Delta
_{J}^{\omega }g\rangle _{\omega }\right\vert \\
& \leq 2^{-\left( 1-\varepsilon \left( n+1-\alpha \right) \right) s}|\mathbb{%
E}_{I_{\theta }}^{\sigma }\Delta _{I}^{\sigma }f|\ \mathrm{P}^{\alpha
}(I_{0},\mathbf{1}_{I_{\theta }}\sigma )\sum_{J\;:\;2^{s}\ell \left(
J\right) =\ell \left( I\right) :\ J\subset I_{0}}\left\Vert \Delta
_{J}^{\omega }g\right\Vert _{L^{2}\left( \omega \right) }\sqrt{\left\vert
J\right\vert _{\omega }} \\
& \leq 2^{-\left( 1-\varepsilon \left( n+1-\alpha \right) \right) s}|\mathbb{%
E}_{I_{\theta }}^{\sigma }\Delta _{I}^{\sigma }f|\ \mathrm{P}^{\alpha
}(I_{0},\mathbf{1}_{I_{\theta }}\sigma )\sqrt{\left\vert I_{0}\right\vert
_{\omega }}\Lambda (I,I_{0},I_{\theta },s), \\
\Lambda (I,I_{0},I_{\theta },s)^{2}& \equiv \sum_{J\in \mathcal{C}_{A}^{%
\mathbf{\tau }-\limfunc{shift}}:\;2^{s}\ell \left( J\right) =\ell \left(
I\right) :\ J\subset I_{0}}\left\Vert \Delta _{J}^{\omega }g\right\Vert
_{L^{2}\left( \omega \right) }^{2}\,.
\end{align*}%
The last line follows upon using the Cauchy-Schwarz inequality and the fact
that $\Delta _{J}^{\omega }g=0$ if $J\notin \mathcal{C}_{A}^{\mathbf{\tau }-%
\limfunc{shift}}$. We also note that since $2^{s+1}\ell \left( J\right)
=\ell \left( I\right) $, 
\begin{eqnarray}
\sum_{I_{0}\in \mathfrak{C}_{\Omega \mathcal{D}}\left( I\right) }\Lambda
(I,I_{0},I_{\theta },s)^{2} &\equiv &\sum_{J\in \mathcal{C}_{A}^{\mathbf{%
\tau }-\limfunc{shift}}:\;2^{s+1}\ell \left( J\right) =\ell \left( I\right)
:\ J\subset I}\left\Vert \Delta _{J}^{\omega }g\right\Vert _{L^{2}\left(
\omega \right) }^{2}\ ;  \label{g} \\
\sum_{I\in \mathcal{C}_{A}}\sum_{I_{0}\in \mathfrak{C}_{\Omega \mathcal{D}%
}\left( I\right) }\Lambda (I,I_{0},I_{\theta },s)^{2} &\leq &\left\Vert 
\mathsf{P}_{\mathcal{C}_{A}^{\mathbf{\tau }-\limfunc{shift}}}^{\omega
}g\right\Vert _{L^{2}(\omega )}^{2}\ .  \notag
\end{eqnarray}

Using 
\begin{equation}
\left\vert \mathbb{E}_{I_{\theta }}^{\sigma }\Delta _{I}^{\sigma
}f\right\vert \leq \sqrt{\mathbb{E}_{I_{\theta }}^{\sigma }\left\vert \Delta
_{I}^{\sigma }f\right\vert ^{2}}\leq \left\Vert \Delta _{I}^{\sigma
}f\right\Vert _{L^{2}\left( \sigma \right) }\ \left\vert I_{\theta
}\right\vert _{\sigma }^{-\frac{1}{2}},  \label{e.haarAvg}
\end{equation}%
we can thus estimate $A(I,I_{0},I_{\theta },s)$ as follows, in which we use
the $A_{2}^{\alpha }$ hypothesis $\sup_{I}\frac{\left\vert I\right\vert
_{\sigma }\left\vert I\right\vert _{\omega }}{\left\vert I\right\vert
^{2\left( 1-\frac{\alpha }{n}\right) }}=A_{2}^{\alpha }<\infty $: 
\begin{eqnarray*}
A(I,I_{0},I_{\theta },s) &\lesssim &2^{-\left( 1-\varepsilon \left(
n+1-\alpha \right) \right) s}\left\Vert \Delta _{I}^{\sigma }f\right\Vert
_{L^{2}\left( \sigma \right) }\Lambda (I,I_{0},I_{\theta },s)\cdot
\left\vert I_{\theta }\right\vert _{\sigma }^{-\frac{1}{2}}\mathrm{P}%
^{\alpha }(I_{0},\mathbf{1}_{I_{\theta }}\sigma )\sqrt{\left\vert
I_{0}\right\vert _{\omega }} \\
&\lesssim &\sqrt{A_{2}^{\alpha }}2^{-\left( 1-\varepsilon \left( n+1-\alpha
\right) \right) s}\left\Vert \Delta _{I}^{\sigma }f\right\Vert _{L^{2}\left(
\sigma \right) }\Lambda (I,I_{0},I_{\theta },s)\,,
\end{eqnarray*}%
since $\mathrm{P}^{\alpha }(I_{0},\mathbf{1}_{I_{\theta }}\sigma )\lesssim 
\frac{\left\vert I_{\theta }\right\vert _{\sigma }}{\left\vert I_{\theta
}\right\vert ^{1-\frac{\alpha }{n}}}$ shows that 
\begin{equation*}
\left\vert I_{\theta }\right\vert _{\sigma }^{-\frac{1}{2}}\mathrm{P}%
^{\alpha }(I_{0},\mathbf{1}_{I_{\theta }}\sigma )\ \sqrt{\left\vert
I_{0}\right\vert _{\omega }}\lesssim \frac{\sqrt{\left\vert I_{\theta
}\right\vert _{\sigma }}\sqrt{\left\vert I_{0}\right\vert _{\omega }}}{%
\left\vert I\right\vert ^{1-\frac{\alpha }{n}}}\lesssim \sqrt{A_{2}^{\alpha }%
}.
\end{equation*}

An application of Cauchy-Schwarz to the sum over $I$ using (\ref{g}) then
shows that 
\begin{eqnarray*}
&&\sum_{I\in \mathcal{C}_{A}}\sum_{\substack{ I_{0},I_{\theta }\in \mathfrak{%
C}_{\Omega \mathcal{D}}\left( I\right)  \\ I_{0}\neq I_{\theta }}}%
A(I,I_{0},I_{\theta },s) \\
&\lesssim &\sqrt{A_{2}^{\alpha }}2^{-\left( 1-\varepsilon \left( n+1-\alpha
\right) \right) s}\sqrt{\sum_{I\in \mathcal{C}_{A}}\left\Vert \Delta
_{I}^{\sigma }f\right\Vert _{L^{2}\left( \sigma \right) }^{2}}\sqrt{%
\sum_{I\in \mathcal{C}_{A}}\left( \sum_{\substack{ I_{0},I_{\theta }\in 
\mathfrak{C}_{\Omega \mathcal{D}}\left( I\right)  \\ I_{0}\neq I_{\theta }}}%
\Lambda (I,I_{0},I_{\theta },s)\right) ^{2}} \\
&\lesssim &\sqrt{A_{2}^{\alpha }}2^{-\left( 1-\varepsilon \left( n+1-\alpha
\right) \right) s}\lVert \mathsf{P}_{\mathcal{C}_{A}}^{\sigma }f\rVert
_{L^{2}(\sigma )}\sqrt{2^{n}}\sqrt{\sum_{I\in \mathcal{C}_{A}}\left( \sum 
_{\substack{ I_{0}\in \mathfrak{C}_{\Omega \mathcal{D}}\left( I\right)  \\ %
I_{0}\neq I_{\theta }}}\Lambda (I,I_{0},I_{\theta },s)\right) ^{2}} \\
&\lesssim &\sqrt{A_{2}^{\alpha }}2^{-\left( 1-\varepsilon \left( n+1-\alpha
\right) \right) s}\lVert \mathsf{P}_{\mathcal{C}_{A}}^{\sigma }f\rVert
_{L^{2}(\sigma )}\left\Vert \mathsf{P}_{\mathcal{C}_{A}^{\mathbf{\tau }-%
\limfunc{shift}}}^{\omega }g\right\Vert _{L^{2}(\omega )}\,.
\end{eqnarray*}%
This estimate is summable in $s\geq \mathbf{r}$, and so the proof of 
\begin{eqnarray*}
\left\vert \mathsf{B}_{neighbour}^{A}\left( f,g\right) \right\vert
&=&\left\vert \sum_{\substack{ I\in \mathcal{C}_{A}\text{ and }J\in \mathcal{%
C}_{A}^{\mathbf{\tau }-\limfunc{shift}}  \\ J\Subset _{\mathbf{\rho }%
,\varepsilon }I}}\sum_{\theta \left( I_{J}\right) \in \mathfrak{C}_{\Omega 
\mathcal{D}}\left( I\right) \setminus \left\{ I_{J}\right\} }\left\langle
T_{\sigma }^{\alpha }\left( \mathbf{1}_{\theta \left( I_{J}\right)
}\bigtriangleup _{I}^{\sigma }f\right) ,\bigtriangleup _{J}^{\omega
}g\right\rangle _{\omega }\right\vert \\
&=&\left\vert \sum_{I\in \mathcal{C}_{A}}\sum_{\substack{ I_{0},I_{\theta
}\in \mathfrak{C}_{\Omega \mathcal{D}}\left( I\right)  \\ I_{0}\neq
I_{\theta }}}\sum_{s=\mathbf{r}}^{\infty }A(I,I_{0},I_{\theta },s)\right\vert
\\
&\lesssim &\sqrt{A_{2}^{\alpha }}\left\Vert \mathsf{P}_{\mathcal{C}%
_{A}}^{\sigma }f\right\Vert _{L^{2}(\sigma )}\left\Vert \mathsf{P}_{\mathcal{%
C}_{A}^{\mathbf{\tau }-\limfunc{shift}}}^{\omega }g\right\Vert
_{L^{2}(\omega )}
\end{eqnarray*}%
is complete.

\bigskip

It is to the sublinear form on the left side of (\ref{First inequality})
below, derived from the stopping form $\mathsf{B}_{stop}^{A}\left(
f,g\right) $, that the argument of M. Lacey in \cite{Lac} will be adapted.
This will result in the inequality%
\begin{equation}
\left\vert \mathsf{B}_{stop}^{A}\left( f,g\right) \right\vert \lesssim
\left( \mathcal{E}_{\alpha }^{\limfunc{deep}}+\sqrt{A_{2}^{\alpha }}+\sqrt{%
A_{2}^{\alpha ,\limfunc{punct}}}\right) \ \left( \alpha _{\mathcal{A}}\left(
A\right) \sqrt{\left\vert A\right\vert _{\sigma }}+\left\Vert f\right\Vert
_{L^{2}\left( \sigma \right) }\right) \ \left\Vert g\right\Vert
_{L^{2}\left( \omega \right) }\ ,\ \ \ \ \ A\in \mathcal{A},
\label{B stop form 3}
\end{equation}%
where the bounded quasiaverages of $f$ in $\mathsf{B}_{stop}^{A}\left(
f,g\right) $ will prove crucial. But first we turn to completing the proof
of the bound (\ref{far below bound}) for the far below form $\mathsf{T}_{%
\limfunc{far}\limfunc{below}}\left( f,g\right) $ using the Intertwining
Proposition and the control of functional energy by the $\mathcal{A}%
_{2}^{\alpha }$ condition and the energy condition $\mathcal{E}_{\alpha }$.

\section{Intertwining proposition}

Here we generalize the Intertwining Proposition (see e.g. \cite{Saw} and 
\cite{SaShUr5}) to higher dimensions. The main principle here says that,
modulo terms that are controlled by the $\gamma $-functional quasienergy
constant $\mathfrak{F}_{\alpha }$ and the quasiNTV constant $\mathcal{NTV}%
_{\alpha }$ (see below), we can pass the shifted $\omega $-corona projection 
$\mathsf{P}_{\mathcal{C}_{B}^{\mathbf{\tau }-\limfunc{shift}}}^{\omega }$
through the operator $T^{\alpha }$ to become the shifted $\sigma $-corona
projection $\mathsf{P}_{\mathcal{C}_{B}^{\mathbf{\tau }-\limfunc{shift}%
}}^{\sigma }$, provided the goodness parameters $\mathbf{r},\varepsilon $
are chosen sufficiently large and small respectively depending on $\gamma $.
More precisely, the idea is that with $T_{\sigma }^{\alpha }f\equiv
T^{\alpha }\left( f\sigma \right) $, the intertwining operator 
\begin{equation*}
\mathsf{P}_{\mathcal{C}_{B}^{\mathbf{\tau }-\limfunc{shift}}}^{\omega }\left[
\mathsf{P}_{\mathcal{C}_{B}^{\mathbf{\tau }-\limfunc{shift}}}^{\omega
}T_{\sigma }^{\alpha }-T_{\sigma }^{\alpha }\mathsf{P}_{\mathcal{C}_{B}^{%
\mathbf{\tau }-\limfunc{shift}}}^{\sigma }\right] \mathsf{P}_{\mathcal{C}%
_{A}}^{\sigma }
\end{equation*}%
is bounded with constant $\mathfrak{F}_{\alpha }+\mathcal{NTV}_{\alpha }$,
provided $\gamma \leq c_{n}2^{\left( 1-\varepsilon \right) \mathbf{r}}$. In
those cases where the coronas $\mathcal{C}_{B}^{\mathbf{\tau }-\limfunc{shift%
}}$ and $\mathcal{C}_{A}$ are (almost) disjoint, the intertwining operator
reduces (essentially) to $\mathsf{P}_{\mathcal{C}_{B}^{\mathbf{\tau }-%
\limfunc{shift}}}^{\omega }T_{\sigma }^{\alpha }\mathsf{P}_{\mathcal{C}%
_{A}}^{\sigma }$, and then combined with the control of the functional
quasienergy constant $\mathfrak{F}_{\alpha }$ by the quasienergy condition
constant $\mathcal{E}_{\alpha }$ and $\mathcal{A}_{2}^{\alpha }+\mathcal{A}%
_{2}^{\alpha ,\ast }+A_{2}^{\alpha ,\limfunc{punct}}$, provided $\gamma $ is
sufficiently large depending only on $n$ and $\alpha $, we obtain the
required bound (\ref{far below bound}) for $\mathsf{T}_{\limfunc{far}%
\limfunc{below}}\left( f,g\right) $ above.

To describe the quantities we use to bound these forms, we need to adapt to
higher dimensions three definitions used for the Hilbert transform that are
relevant to functional energy.

\begin{definition}
\label{sigma carleson n}A collection $\mathcal{F}$ of dyadic quasicubes is $%
\sigma $\emph{-Carleson} if%
\begin{equation*}
\sum_{F\in \mathcal{F}:\ F\subset S}\left\vert F\right\vert _{\sigma }\leq
C_{\mathcal{F}}\left\vert S\right\vert _{\sigma },\ \ \ \ \ S\in \mathcal{F}.
\end{equation*}%
The constant $C_{\mathcal{F}}$ is referred to as the Carleson norm of $%
\mathcal{F}$.
\end{definition}

\begin{definition}
Let $\mathcal{F}$ be a collection of dyadic quasicubes. The good $\tau $%
-shifted corona corresponding to $F$ is defined by%
\begin{equation*}
\mathcal{C}_{F}^{\limfunc{good},\mathbf{\tau }-\limfunc{shift}}\equiv
\left\{ J\in \Omega \mathcal{D}_{\limfunc{good}}^{\omega }:J\Subset _{%
\mathbf{\tau },\varepsilon }F\text{ and }J\not\Subset _{\mathbf{\tau }%
,\varepsilon }F^{\prime }\text{ for any }F^{\prime }\in \mathfrak{C}_{%
\mathcal{F}}\left( F\right) \right\} .
\end{equation*}
\end{definition}

Note that $\mathcal{C}_{F}^{\limfunc{good},\mathbf{\tau }-\limfunc{shift}}=%
\mathcal{C}_{F}^{\mathbf{\tau }-\limfunc{shift}}\cap \Omega \mathcal{D}_{%
\limfunc{good}}^{\omega }$, and that the collections $\mathcal{C}_{F}^{%
\limfunc{good},\mathbf{\tau }-\limfunc{shift}}$ have bounded overlap $%
\mathbf{\tau }$ since for fixed $J$, there are at most $\mathbf{\tau }$
quasicubes $F\in \mathcal{F}$ with the property that $J$ is good and $%
J\Subset _{\mathbf{\tau },\varepsilon }F$ and $J\not\Subset _{\mathbf{\tau }%
,\varepsilon }F^{\prime }$ for any $F^{\prime }\in \mathfrak{C}_{\mathcal{F}%
}\left( F\right) $. Here $\mathfrak{C}_{\mathcal{F}}\left( F\right) $
denotes the set of $\mathcal{F}$-children of $F$. Given any collection $%
\mathcal{H}\subset \Omega \mathcal{D}$ of quasicubes, and a dyadic quasicube 
$J$, we define the corresponding quasiHaar projection $\mathsf{P}_{\mathcal{H%
}}^{\omega }$ and its localization $\mathsf{P}_{\mathcal{H};J}^{\omega }$ to 
$J$ by%
\begin{equation}
\mathsf{P}_{\mathcal{H}}^{\omega }=\sum_{H\in \mathcal{H}}\bigtriangleup
_{H}^{\omega }\text{ and }\mathsf{P}_{\mathcal{H};J}^{\omega }=\sum_{H\in 
\mathcal{H}:\ H\subset J}\bigtriangleup _{H}^{\omega }\ .
\label{def localization}
\end{equation}

\begin{definition}
\label{functional energy n}Let $\mathfrak{F}_{\alpha }$ be the smallest
constant in the `\textbf{f}unctional quasienergy' inequality below, holding
for all $h\in L^{2}\left( \sigma \right) $ and all $\sigma $-Carleson
collections $\mathcal{F}$ with Carleson norm $C_{\mathcal{F}}$ bounded by a
fixed constant $C$: 
\begin{equation}
\sum_{F\in \mathcal{F}}\sum_{J\in \mathcal{M}_{\left( \mathbf{r},\varepsilon
\right) -\limfunc{deep}}\left( F\right) }\left( \frac{\mathrm{P}^{\alpha
}\left( J,h\sigma \right) }{\left\vert J\right\vert ^{\frac{1}{n}}}\right)
^{2}\left\Vert \mathsf{P}_{\mathcal{C}_{F}^{\limfunc{good},\mathbf{\tau }-%
\limfunc{shift}};J}^{\omega }\mathbf{x}\right\Vert _{L^{2}\left( \omega
\right) }^{2}\leq \mathfrak{F}_{\alpha }\lVert h\rVert _{L^{2}\left( \sigma
\right) }\,.  \label{e.funcEnergy n}
\end{equation}
\end{definition}

This definition of $\mathfrak{F}_{\alpha }$ depends also on the choice of
the fixed constant $C$, but it will be clear from the arguments below that $%
C $ may be taken to depend only on $n$ and $\alpha $, and we do not compute
its value here. There is a similar definition of the dual constant $%
\mathfrak{F}_{\alpha }^{\ast }$.

\begin{remark}
If in (\ref{e.funcEnergy n}), we take $h=\mathbf{1}_{I}$ and $\mathcal{F}$
to be the trivial Carleson collection $\left\{ I_{r}\right\} _{r=1}^{\infty
} $ where the quasicubes $I_{r}$ are pairwise disjoint in $I$, then we
obtain the deep quasienergy condition in Definition \ref{energy condition},
but with $\mathsf{P}_{\mathcal{C}_{F}^{\limfunc{good},\mathbf{\tau }-%
\limfunc{shift}};J}^{\omega }$ in place of $\mathsf{P}_{J}^{\limfunc{subgood}%
,\omega } $. However, the projection $\mathsf{P}_{J}^{\limfunc{subgood}%
,\omega }$ is larger than $\mathsf{P}_{\mathcal{C}_{F}^{\limfunc{good},%
\mathbf{\tau }-\limfunc{shift}};J}^{\omega }$ by the finite projection $%
\sum_{2^{-\mathbf{\tau }}\ell \left( J\right) \leq \ell \left( J^{\prime
}\right) \leq 2^{-\mathbf{r}}\ell \left( J\right) }\bigtriangleup
_{J^{\prime }}^{\omega }$, and so we just miss obtaining the deep
quasienergy condition as a consequence of the functional quasienergy
condition. Nevertheless, this near miss with $h=\mathbf{1}_{I}$ explains the
terminology `functional' quasienergy.
\end{remark}

We now show that the functional quasienergy inequality (\ref{e.funcEnergy n}%
) suffices to prove an $\alpha $-fractional $n$-dimensional analogue of the
Intertwining Proposition (see e.g. \cite{Saw}). Let $\mathcal{F}$ be any
subset of $\Omega \mathcal{D}$. For any $J\in \Omega \mathcal{D}$, we define 
$\pi _{\mathcal{F}}^{0}J$ to be the smallest $F\in \mathcal{F}$ that
contains $J$. Then for $s\geq 1$, we recursively define $\pi _{\mathcal{F}%
}^{s}J$ to be the smallest $F\in \mathcal{F}$ that \emph{strictly} contains $%
\pi _{\mathcal{F}}^{s-1}J$. This definition satisfies $\pi _{\mathcal{F}%
}^{s+t}J=\pi _{\mathcal{F}}^{s}\pi _{\mathcal{F}}^{t}J$ for all $s,t\geq 0$
and $J\in \Omega \mathcal{D}$. In particular $\pi _{\mathcal{F}}^{s}J=\pi _{%
\mathcal{F}}^{s}F$ where $F=\pi _{\mathcal{F}}^{0}J$. In the special case $%
\mathcal{F}=\Omega \mathcal{D}$ we often suppress the subscript $\mathcal{F}$
and simply write $\pi ^{s}$ for $\pi _{\Omega \mathcal{D}}^{s}$. Finally,
for $F\in \mathcal{F}$, we write $\mathfrak{C}_{\mathcal{F}}\left( F\right)
\equiv \left\{ F^{\prime }\in \mathcal{F}:\pi _{\mathcal{F}}^{1}F^{\prime
}=F\right\} $ for the collection of $\mathcal{F}$-children of $F$. Let 
\begin{equation*}
\mathcal{NTV}_{\alpha }\equiv \sqrt{\mathcal{A}_{2}^{\alpha }}+\mathfrak{T}%
_{\alpha }+\mathcal{WBP}_{\alpha },
\end{equation*}%
where we remind the reader that $\mathfrak{T}_{\alpha }$ and $\mathcal{WBP}%
_{\alpha }$ refer to the quasitesting condition and quasiweak boundedness
property respectively.

\begin{proposition}[The Intertwining Proposition]
\label{strongly adapted}Suppose that $\mathcal{F}$ is $\sigma $-Carleson.
Then%
\begin{equation*}
\left\vert \sum_{F\in \mathcal{F}}\ \sum_{I:\ I\supsetneqq F}\ \left\langle
T_{\sigma }^{\alpha }\bigtriangleup _{I}^{\sigma }f,\mathsf{P}_{\mathcal{C}%
_{F}^{\limfunc{good},\mathbf{\tau }-\limfunc{shift}}}^{\omega
}g\right\rangle _{\omega }\right\vert \lesssim \left( \mathfrak{F}_{\alpha }+%
\mathcal{E}_{\alpha }+\mathcal{NTV}_{\alpha }\right) \ \left\Vert
f\right\Vert _{L^{2}\left( \sigma \right) }\left\Vert g\right\Vert
_{L^{2}\left( \omega \right) }.
\end{equation*}
\end{proposition}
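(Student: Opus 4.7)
The plan is to exploit the telescoping identity for the Haar projections lying above $F$, split the resulting function into a piece living on $F$ (handled by the testing constant $\mathfrak{T}_{\alpha}$) and a tail living on $F^{c}$ (handled by the functional energy constant $\mathfrak{F}_{\alpha}$), while the near-diagonal error generated by the Energy Lemma is absorbed by $\mathcal{NTV}_{\alpha}+\mathcal{E}_{\alpha}$. Throughout, set $g_{F}\equiv \mathsf{P}_{\mathcal{C}_{F}^{\limfunc{good},\mathbf{\tau}-\limfunc{shift}}}^{\omega }g$, so that $g_{F}$ is supported in $F$ with quasiHaar support consisting of good quasicubes $J\Subset_{\mathbf{\tau},\varepsilon}F$ that are \emph{not} deeply embedded in any $\mathcal{F}$-child of $F$. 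For each $I\supsetneqq F$, the Haar function $\bigtriangleup_{I}^{\sigma }f$ is constant on $F$ with value $c_{I,F}=\mathbb{E}_{F}^{\sigma }\bigtriangleup_{I}^{\sigma }f$, and the telescoping identity (\ref{telescope}) yields $\sum_{I\supsetneqq F}c_{I,F}=\mathbb{E}_{F}^{\sigma }f$ (the limit at infinity vanishes for $f\in L^{2}(\sigma)$). Decompose accordingly
\[
\sum_{I\supsetneqq F}\bigtriangleup_{I}^{\sigma }f=(\mathbb{E}_{F}^{\sigma }f)\mathbf{1}_{F}+\Phi_{F},\qquad \Phi_{F}\equiv \sum_{I\supsetneqq F}(\bigtriangleup_{I}^{\sigma }f)\mathbf{1}_{F^{c}},
\]
where a second telescoping shows $\Phi_{F}(x)=\mathbb{E}_{F_{x}}^{\sigma }f$ for $x\notin F$, with $F_{x}$ the smallest dyadic quasicube containing both $F$ and $x$.

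The first (testing) piece $\sum_{F}(\mathbb{E}_{F}^{\sigma }f)\langle T_{\sigma }^{\alpha }\mathbf{1}_{F},g_{F}\rangle_{\omega}$ is bounded directly by the quasitesting condition $\lVert \mathbf{1}_{F}T_{\sigma }^{\alpha }\mathbf{1}_{F}\rVert_{L^{2}(\omega)}\leq \mathfrak{T}_{\alpha }\sqrt{\lvert F\rvert _{\sigma }}$, followed by Cauchy--Schwarz in $F$ using the orthogonality of the projections $g_{F}$ (bounded overlap $\mathbf{\tau}$ by (\ref{tau overlap})) and the quasi-orthogonality inequality (\ref{q orth}) for the Carleson sequence $\{(\mathbb{E}_{F}^{\sigma }f)\mathbf{1}_{F}\}_{F\in \mathcal{F}}$, which together deliver the bound $\mathfrak{T}_{\alpha}\lVert f\rVert_{L^{2}(\sigma)}\lVert g\rVert_{L^{2}(\omega)}$.

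For the tail piece $\sum_{F}\langle T_{\sigma }^{\alpha }\Phi_{F},g_{F}\rangle_{\omega}$, each $J\in \mathcal{C}_{F}^{\limfunc{good},\mathbf{\tau}-\limfunc{shift}}$ is $\mathbf{\tau}$-deeply embedded in $F$, so for $\gamma \leq c_{n}2^{(1-\varepsilon)\mathbf{\tau}}$ one has $\gamma J\subset F$ and hence $\Phi_{F}\sigma$ is supported in $\mathbb{R}^{n}\setminus \gamma J$. The Energy Lemma \ref{ener}, applied to each $F$ with $\Psi_{J}=g_{F}$ and $\nu =|\Phi_{F}|\sigma$, together with Cauchy--Schwarz in $F$ and orthogonality of $\{g_{F}\}_{F\in \mathcal{F}}$, reduces the tail to the square root of
\[
S\equiv \sum_{F\in \mathcal{F}}\sum_{J\in \mathcal{C}_{F}^{\limfunc{good},\mathbf{\tau}-\limfunc{shift}}}\left(\frac{\mathrm{P}^{\alpha}(J,|\Phi_{F}|\sigma)}{|J|^{1/n}}\right)^{2}\lVert \mathsf{P}_{\mathcal{C}_{F}^{\limfunc{good},\mathbf{\tau}-\limfunc{shift}};J}^{\omega}\mathbf{x}\rVert_{L^{2}(\omega)}^{2}\ ,
\]
multiplied by $\lVert g\rVert_{L^{2}(\omega)}$; the secondary contribution coming from the $\mathrm{P}_{1+\delta}^{\alpha}$ term of the Energy Lemma acquires a gain $\gamma^{-\delta^{\prime}}$ and is absorbed into $\mathcal{E}_{\alpha}^{\limfunc{deep}}+\sqrt{\mathcal{A}_{2}^{\alpha}}$ via the same Cauchy--Schwarz plus the quasienergy condition.

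The main obstacle, and the principal technical step, is to dominate $S$ by $\mathfrak{F}_{\alpha}^{2}\lVert f\rVert_{L^{2}(\sigma)}^{2}$. For this I would pointwise dominate $|\Phi_{F}(x)|=|\mathbb{E}_{F_{x}}^{\sigma}f|$ on $F^{c}$ by a Carleson-averaging sum: organizing the tower $F\subsetneqq \pi F\subsetneqq \pi^{2}F\subsetneqq \cdots$ relative to $\mathcal{F}$ and telescoping across the non-$\mathcal{F}$ links, one controls $|\Phi_{F}|\sigma$ by a sum $\sum_{F'\in \mathcal{F},\,F'\supseteq F}\alpha_{\mathcal{F}}(F')\mathbf{1}_{F'\setminus F}\sigma$, where $\alpha_{\mathcal{F}}(F')\lesssim \mathbb{E}_{F'}^{\sigma}|f|$ and $\{\alpha_{\mathcal{F}}(F')\}$ satisfies the quasi-orthogonality bound (\ref{qo}). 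Setting $h\equiv \sum_{F'\in \mathcal{F}}\alpha_{\mathcal{F}}(F')\mathbf{1}_{F'}$, one then sees that for each fixed $J$ with parent-corona root $F\in \mathcal{F}$,
\[
\mathrm{P}^{\alpha}(J,|\Phi_{F}|\sigma)\lesssim \mathrm{P}^{\alpha}(J,h\sigma),
\]
with $\lVert h\rVert_{L^{2}(\sigma)}\lesssim \lVert f\rVert_{L^{2}(\sigma)}$ by (\ref{q orth}) and the $\sigma$-Carleson property of $\mathcal{F}$. Moreover the collection $\mathcal{M}_{(\mathbf{r},\varepsilon)-\limfunc{deep}}(F)$ controls the Haar support $\mathcal{C}_{F}^{\limfunc{good},\mathbf{\tau}-\limfunc{shift}}$ in the sense needed for Definition \ref{functional energy n}, so plugging this domination into $S$ and invoking (\ref{e.funcEnergy n}) yields $S\lesssim \mathfrak{F}_{\alpha}^{2}\lVert f\rVert_{L^{2}(\sigma)}^{2}$. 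Combining the testing bound, the $\mathfrak{F}_{\alpha}$ bound for the main tail term, and the $\mathcal{E}_{\alpha}+\sqrt{\mathcal{A}_{2}^{\alpha}}\subset \mathcal{NTV}_{\alpha}+\mathcal{E}_{\alpha}$ bound for the secondary Energy-Lemma error completes the proposed proof.
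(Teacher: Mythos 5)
Your decomposition $f_F = (\mathbb{E}_F^\sigma f)\mathbf{1}_F + \Phi_F$ with $\Phi_F = \mathbf{1}_{F^c}f_F$ is cleaner than the paper's, and the testing piece, the application of the Energy Lemma, and the reduction of the primary ($\mathrm{P}^\alpha$) term to the functional energy constant are all handled essentially as in the paper. But there is a genuine gap at the pointwise domination step, and it is precisely the gap the paper's $\varphi_F/\psi_F$ split is designed to close.

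Your intermediate claim that $|\Phi_F|\,\sigma$ is controlled by $\sum_{F'\in\mathcal{F},\,F'\supseteq F}\alpha_\mathcal{F}(F')\mathbf{1}_{F'\setminus F}\,\sigma$ is false. Telescoping gives $\Phi_F(x)=\mathbb{E}_{(F_x)_x}^\sigma f-\mathbb{E}_{I_K}^\sigma f$ where $(F_x)_x$ is the child of $F_x=\pi_{\Omega\mathcal{D}}^{k+1}F$ containing $x$, i.e.\ the sibling $\theta(I_k)$ of the tower cube $I_k$. When $\theta(I_k)\in\mathcal{F}$ the best one can say is $|\mathbb{E}_{\theta(I_k)}^\sigma f|\leq\alpha_\mathcal{F}(\theta(I_k))$, and $\alpha_\mathcal{F}(\theta(I_k))$ can be arbitrarily larger than $\alpha_\mathcal{F}(F')$ for every $\mathcal{F}$-cube $F'\supseteq F$: $\theta(I_k)$ is disjoint from $F$, is not in the $\mathcal{F}$-tower above $F$, and is a small stopping cube whose average can be huge. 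So only the weaker bound $|\Phi_F|\lesssim h:=\sum_{F'\in\mathcal{F}}\alpha_\mathcal{F}(F')\mathbf{1}_{F'}$ (which you eventually fall back to) is available. That is enough to plug into Definition \ref{functional energy n} for the primary term, but it is not enough for the secondary term $II_B$. There the non-additive projection $\mathsf{P}_{(\mathcal{C}_F^{\limfunc{good},\mathbf{\tau}-\limfunc{shift}})^{\ast};J}^{\omega}$ prevents any appeal to (\ref{e.funcEnergy n}); the paper instead decomposes $F^c$ into the $\mathcal{F}$-shells $\pi_\mathcal{F}^{t+1}F\setminus\pi_\mathcal{F}^{t}F$, extracts the goodness decay $2^{-t\delta'(1-\varepsilon)}$, and then \emph{factors out the constant} $\alpha_\mathcal{F}(\pi_\mathcal{F}^{t+1}F)$ from the shell so that the deep energy condition applies with the plain measure $\mathbf{1}_{G\setminus\pi_\mathcal{F}^{t}F}\sigma$. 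This step requires exactly the sharp per-shell bound $|\cdot|\lesssim\alpha_\mathcal{F}(\pi_\mathcal{F}^{t+1}F)$ on $\pi_\mathcal{F}^{t+1}F\setminus\pi_\mathcal{F}^{t}F$, which your $\Phi_F$ does not satisfy but the paper's $\psi_F$ does — precisely because the paper first peels off the contributions $\varphi_F=\sum_{k,\theta:\theta(I_k)\in\mathcal{F}}\mathbf{1}_{\theta(I_k)}\bigtriangleup_{\pi I_k}^{\sigma}f$ on $\mathcal{F}$-stopping siblings and disposes of them separately by the separated/indicator estimate of Lemma \ref{standard indicator}. Without that peeling, "the same Cauchy--Schwarz plus the quasienergy condition" does not control the $\mathrm{P}_{1+\delta'}^{\alpha}$ term, and the argument breaks down. (Minor additional quibbles: the $J$-sum in your $S$ should run over $\mathcal{M}_{(\mathbf{r},\varepsilon)-\limfunc{deep}}(F)$ after the Energy Lemma, and $\Phi_F(x)$ is $\mathbb{E}_{(F_x)_x}^{\sigma}f$, the average over the child of $F_x$ containing $x$, rather than $\mathbb{E}_{F_x}^{\sigma}f$.)
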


\begin{proof}
We write the left hand side of the display above as%
\begin{equation*}
\sum_{F\in \mathcal{F}}\ \sum_{I:\ I\supsetneqq F}\ \left\langle T_{\sigma
}^{\alpha }\bigtriangleup _{I}^{\sigma }f,g_{F}\right\rangle _{\omega
}=\sum_{F\in \mathcal{F}}\ \left\langle T_{\sigma }^{\alpha }\left(
\sum_{I:\ I\supsetneqq F}\bigtriangleup _{I}^{\sigma }f\right)
,g_{F}\right\rangle _{\omega }\equiv \sum_{F\in \mathcal{F}}\ \left\langle
T_{\sigma }^{\alpha }f_{F},g_{F}\right\rangle _{\omega }\ ,
\end{equation*}%
where%
\begin{equation*}
g_{F}=\mathsf{P}_{\mathcal{C}_{F}^{\limfunc{good},\mathbf{\tau }-\limfunc{%
shift}}}^{\omega }g\text{ and }f_{F}\equiv \sum_{I:\ I\supsetneqq
F}\bigtriangleup _{I}^{\sigma }f\ .
\end{equation*}%
Note that $g_{F}$ is supported in $F$, and that $f_{F}$ is constant on $F$.
We note that the quasicubes $I$ occurring in this sum are linearly and
consecutively ordered by inclusion, along with the quasicubes $F^{\prime
}\in \mathcal{F}$ that contain $F$. More precisely, we can write%
\begin{equation*}
F\equiv F_{0}\subsetneqq F_{1}\subsetneqq F_{2}\subsetneqq ...\subsetneqq
F_{n}\subsetneqq F_{n+1}\subsetneqq ...F_{N}
\end{equation*}%
where $F_{m}=\pi _{\mathcal{F}}^{m}F$ for all $m\geq 1$. We can also write%
\begin{equation*}
F=F_{0}\subsetneqq I_{1}\subsetneqq I_{2}\subsetneqq ...\subsetneqq
I_{k}\subsetneqq I_{k+1}\subsetneqq ...\subsetneqq I_{K}=F_{N}
\end{equation*}%
where $I_{k}=\pi _{\Omega \mathcal{D}}^{k}F$ for all $k\geq 1$. There is a
(unique) subsequence $\left\{ k_{m}\right\} _{m=1}^{N}$ such that%
\begin{equation*}
F_{m}=I_{k_{m}},\ \ \ \ \ 1\leq m\leq N.
\end{equation*}

Define%
\begin{equation*}
f_{F}\left( x\right) =\sum_{\ell =1}^{\infty }\bigtriangleup _{I_{\ell
}}^{\sigma }f\left( x\right) .
\end{equation*}%
Assume now that $k_{m}\leq k<k_{m+1}$. We denote the $2^{n}-1$ siblings of $%
I $ by $\theta \left( I\right) $, $\theta \in \Theta $, i.e. $\left\{ \theta
\left( I\right) \right\} _{\theta \in \Theta }=\mathfrak{C}_{\Omega \mathcal{%
D}}\left( \pi _{\Omega \mathcal{D}}I\right) \setminus \left\{ I\right\} $.
There are two cases to consider here:%
\begin{equation*}
\theta \left( I_{k}\right) \notin \mathcal{F}\text{ and }\theta \left(
I_{k}\right) \in \mathcal{F}.
\end{equation*}%
Suppose first that $\theta \left( I_{k}\right) \notin \mathcal{F}$. Then $%
\theta \left( I_{k}\right) \in \mathcal{C}_{F_{m+1}}^{\sigma }$ and using a
telescoping sum, we compute that for 
\begin{equation*}
x\in \theta \left( I_{k}\right) \subset I_{k+1}\setminus I_{k}\subset
F_{m+1}\setminus F_{m},
\end{equation*}%
we have 
\begin{equation*}
\left\vert f_{F}\left( x\right) \right\vert =\left\vert \sum_{\ell
=k}^{\infty }\bigtriangleup _{I_{\ell }}^{\sigma }f\left( x\right)
\right\vert =\left\vert \mathbb{E}_{\theta \left( I_{k}\right) }^{\sigma }f-%
\mathbb{E}_{I_{K}}^{\sigma }f\right\vert \lesssim \mathbb{E}%
_{F_{m+1}}^{\sigma }\left\vert f\right\vert \ .
\end{equation*}%
On the other hand, if $\theta \left( I_{k}\right) \in \mathcal{F}$, then $%
I_{k+1}\in \mathcal{C}_{F_{m+1}}^{\sigma }$ and we have%
\begin{equation*}
\left\vert f_{F}\left( x\right) -\bigtriangleup _{\theta \left( I_{k}\right)
}^{\sigma }f\left( x\right) \right\vert =\left\vert \sum_{\ell =k+1}^{\infty
}\bigtriangleup _{I_{\ell }}^{\sigma }f\left( x\right) \right\vert
=\left\vert \mathbb{E}_{I_{k+1}}^{\sigma }f-\mathbb{E}_{I_{K}}^{\sigma
}f\right\vert \lesssim \mathbb{E}_{F_{m+1}}^{\sigma }\left\vert f\right\vert
\ .
\end{equation*}%
Now we write%
\begin{eqnarray*}
f_{F} &=&\varphi _{F}+\psi _{F}, \\
\varphi _{F} &\equiv &\sum_{k,\theta :\ \theta \left( I_{k}\right) \in 
\mathcal{F}}^{\infty }\mathbf{1}_{\theta \left( I_{k}\right) }\bigtriangleup
_{I_{k}}^{\sigma }f\text{ and }\psi _{F}=f_{F}-\varphi _{F}\ ; \\
\sum_{F\in \mathcal{F}}\ \left\langle T_{\sigma }^{\alpha
}f_{F},g_{F}\right\rangle _{\omega } &=&\sum_{F\in \mathcal{F}}\
\left\langle T_{\sigma }^{\alpha }\varphi _{F},g_{F}\right\rangle _{\omega
}+\sum_{F\in \mathcal{F}}\ \left\langle T_{\sigma }^{\alpha }\psi
_{F},g_{F}\right\rangle _{\omega }\ ,
\end{eqnarray*}%
and note that both $\varphi _{F}$ and $\psi _{F}$ are constant on $F$. We
can apply (\ref{indicator far}) using $\theta \left( I_{k}\right) \in 
\mathcal{F}$ to the first sum here to obtain%
\begin{eqnarray*}
\left\vert \sum_{F\in \mathcal{F}}\ \left\langle T_{\sigma }^{\alpha
}\varphi _{F},g_{F}\right\rangle _{\omega }\right\vert &\lesssim &\mathcal{%
NTV}_{\alpha }\ \left\Vert \sum_{F\in \mathcal{F}}\varphi _{F}\right\Vert
_{L^{2}\left( \sigma \right) }\left\Vert \sum_{F\in \mathcal{F}%
}g_{F}\right\Vert _{L^{2}\left( \omega \right) }^{2} \\
&\lesssim &\mathcal{NTV}_{\alpha }\ \left\Vert f\right\Vert _{L^{2}\left(
\sigma \right) }\left[ \sum_{F\in \mathcal{F}}\left\Vert g_{F}\right\Vert
_{L^{2}\left( \omega \right) }^{2}\right] ^{\frac{1}{2}}.
\end{eqnarray*}

Turning to the second sum we note that%
\begin{eqnarray*}
\left\vert \psi _{F}\right\vert &\leq &\sum_{m=0}^{N}\left( \mathbb{E}%
_{F_{m+1}}^{\sigma }\left\vert f\right\vert \right) \ \mathbf{1}%
_{F_{m+1}\setminus F_{m}}=\left( \mathbb{E}_{F}^{\sigma }\left\vert
f\right\vert \right) \ \mathbf{1}_{F}+\sum_{m=0}^{N}\left( \mathbb{E}_{\pi _{%
\mathcal{F}}^{m+1}F}^{\sigma }\left\vert f\right\vert \right) \ \mathbf{1}%
_{\pi _{\mathcal{F}}^{m+1}F\setminus \pi _{\mathcal{F}}^{m}F} \\
&=&\left( \mathbb{E}_{F}^{\sigma }\left\vert f\right\vert \right) \ \mathbf{1%
}_{F}+\sum_{F^{\prime }\in \mathcal{F}:\ F\subset F^{\prime }}\left( \mathbb{%
E}_{\pi _{\mathcal{F}}F^{\prime }}^{\sigma }\left\vert f\right\vert \right)
\ \mathbf{1}_{\pi _{\mathcal{F}}F^{\prime }\setminus F^{\prime }} \\
&\leq &\alpha _{\mathcal{F}}\left( F\right) \ \mathbf{1}_{F}+\sum_{F^{\prime
}\in \mathcal{F}:\ F\subset F^{\prime }}\alpha _{\mathcal{F}}\left( \pi _{%
\mathcal{F}}F^{\prime }\right) \ \mathbf{1}_{\pi _{\mathcal{F}}F^{\prime
}\setminus F^{\prime }} \\
&\leq &\alpha _{\mathcal{F}}\left( F\right) \ \mathbf{1}_{F}+\sum_{F^{\prime
}\in \mathcal{F}:\ F\subset F^{\prime }}\alpha _{\mathcal{F}}\left( \pi _{%
\mathcal{F}}F^{\prime }\right) \ \mathbf{1}_{\pi _{\mathcal{F}}F^{\prime }}\ 
\mathbf{1}_{F^{c}} \\
&=&\alpha _{\mathcal{F}}\left( F\right) \ \mathbf{1}_{F}+\Phi \ \mathbf{1}%
_{F^{c}}\ ,\ \ \ \ \ \text{\ for all }F\in \mathcal{F},
\end{eqnarray*}%
where%
\begin{equation*}
\Phi \equiv \sum_{F^{\prime \prime }\in \mathcal{F}}\ \alpha _{\mathcal{F}%
}\left( F^{\prime \prime }\right) \ \mathbf{1}_{F^{\prime \prime }}\ .
\end{equation*}

Now we write%
\begin{equation*}
\sum_{F\in \mathcal{F}}\ \left\langle T_{\sigma }^{\alpha }\psi
_{F},g_{F}\right\rangle _{\omega }=\sum_{F\in \mathcal{F}}\ \left\langle
T_{\sigma }^{\alpha }\left( \mathbf{1}_{F}\psi _{F}\right)
,g_{F}\right\rangle _{\omega }+\sum_{F\in \mathcal{F}}\ \left\langle
T_{\sigma }^{\alpha }\left( \mathbf{1}_{F^{c}}\psi _{F}\right)
,g_{F}\right\rangle _{\omega }\equiv I+II.
\end{equation*}%
Then quasicube testing $\left\vert \left\langle T_{\sigma }^{\alpha }\mathbf{%
1}_{F},g_{F}\right\rangle _{\omega }\right\vert =\left\vert \left\langle 
\mathbf{1}_{F}T_{\sigma }^{\alpha }\mathbf{1}_{F},g_{F}\right\rangle
_{\omega }\right\vert \leq \mathfrak{T}_{T^{\alpha }}\sqrt{\left\vert
F\right\vert _{\sigma }}\left\Vert g_{F}\right\Vert _{L^{2}\left( \omega
\right) }$ and `quasi' orthogonality, together with the fact that $\psi _{F}$
is a constant on $F$ bounded by $\alpha _{\mathcal{F}}\left( F\right) $, give%
\begin{eqnarray*}
&&\left\vert I\right\vert \leq \sum_{F\in \mathcal{F}}\ \left\vert
\left\langle T_{\sigma }^{\alpha }\mathbf{1}_{F}\psi _{F},g_{F}\right\rangle
_{\omega }\right\vert \lesssim \sum_{F\in \mathcal{F}}\ \alpha _{\mathcal{F}%
}\left( F\right) \ \left\vert \left\langle T_{\sigma }^{\alpha }\mathbf{1}%
_{F},g_{F}\right\rangle _{\omega }\right\vert \\
&\lesssim &\sum_{F\in \mathcal{F}}\ \alpha _{\mathcal{F}}\left( F\right) 
\mathcal{NTV}_{\alpha }\sqrt{\left\vert F\right\vert _{\sigma }}\left\Vert
g_{F}\right\Vert _{L^{2}\left( \omega \right) }\lesssim \mathcal{NTV}%
_{\alpha }\left\Vert f\right\Vert _{L^{2}\left( \sigma \right) }\left[
\sum_{F\in \mathcal{F}}\left\Vert g_{F}\right\Vert _{L^{2}\left( \omega
\right) }^{2}\right] ^{\frac{1}{2}}.
\end{eqnarray*}%
Now $\mathbf{1}_{F^{c}}\psi _{F}$ is supported outside $F$, and each $J$ in
the quasiHaar support of $g_{F}$ is $\mathbf{r}$-deeply embedded in $F$,
i.e. $J\Subset _{\mathbf{r},\varepsilon }F$. Thus we can apply the Energy
Lemma \ref{ener} to obtain%
\begin{eqnarray*}
\left\vert II\right\vert &=&\left\vert \sum_{F\in \mathcal{F}}\left\langle
T_{\sigma }^{\alpha }\left( \mathbf{1}_{F^{c}}\psi _{F}\right)
,g_{F}\right\rangle _{\omega }\right\vert \\
&\lesssim &\sum_{F\in \mathcal{F}}\sum_{J\in \mathcal{M}_{\left( \mathbf{r}%
,\varepsilon \right) -\limfunc{deep}}\left( F\right) }\frac{\mathrm{P}%
^{\alpha }\left( J,\mathbf{1}_{F^{c}}\Phi \sigma \right) }{\left\vert
J\right\vert ^{\frac{1}{n}}}\left\Vert \mathsf{P}_{\mathcal{C}_{F}^{\limfunc{%
good},\mathbf{\tau }-\limfunc{shift}};J}^{\omega }\mathbf{x}\right\Vert
_{L^{2}\left( \omega \right) }\left\Vert \mathsf{P}_{J}^{\omega
}g_{F}\right\Vert _{L^{2}\left( \omega \right) } \\
&&+\sum_{F\in \mathcal{F}}\sum_{J\in \mathcal{M}_{\left( \mathbf{r}%
,\varepsilon \right) -\limfunc{deep}}\left( F\right) }\frac{\mathrm{P}%
_{1+\delta ^{\prime }}^{\alpha }\left( J,\mathbf{1}_{F^{c}}\Phi \sigma
\right) }{\left\vert J\right\vert ^{\frac{1}{n}}}\left\Vert \mathsf{P}%
_{\left( \mathcal{C}_{F}^{\limfunc{good},\mathbf{\tau }-\limfunc{shift}%
}\right) ^{\ast };J}^{\omega }\mathbf{x}\right\Vert _{L^{2}\left( \omega
\right) }\left\Vert \mathsf{P}_{J}^{\omega }g_{F}\right\Vert _{L^{2}\left(
\omega \right) } \\
&\equiv &II_{G}+II_{B}\ .
\end{eqnarray*}

Then\ from Cauchy-Schwarz, the functional quasienergy condition, and $%
\left\Vert \Phi \right\Vert _{L^{2}\left( \sigma \right) }\lesssim
\left\Vert f\right\Vert _{L^{2}\left( \sigma \right) }$ we obtain%
\begin{eqnarray*}
\left\vert II_{G}\right\vert &\leq &\left( \sum_{F\in \mathcal{F}}\sum_{J\in 
\mathcal{M}_{\left( \mathbf{r},\varepsilon \right) -\limfunc{deep}}\left(
F\right) }\left( \frac{\mathrm{P}^{\alpha }\left( J,\mathbf{1}_{F^{c}}\Phi
\sigma \right) }{\left\vert J\right\vert ^{\frac{1}{n}}}\right)
^{2}\left\Vert \mathsf{P}_{\mathcal{C}_{F}^{\limfunc{good},\mathbf{\tau }-%
\limfunc{shift}};J}^{\omega }\mathbf{x}\right\Vert _{L^{2}\left( \omega
\right) }^{2}\right) ^{\frac{1}{2}} \\
&&\times \left( \sum_{F\in \mathcal{F}}\sum_{J\in \mathcal{M}_{\left( 
\mathbf{r},\varepsilon \right) -\limfunc{deep}}\left( F\right) }\left\Vert 
\mathsf{P}_{J}^{\omega }g_{F}\right\Vert _{L^{2}\left( \omega \right)
}^{2}\right) ^{\frac{1}{2}} \\
&\lesssim &\mathfrak{F}_{\alpha }\left\Vert \Phi \right\Vert _{L^{2}\left(
\sigma \right) }\left[ \sum_{F\in \mathcal{F}}\left\Vert g_{F}\right\Vert
_{L^{2}\left( \omega \right) }^{2}\right] ^{\frac{1}{2}}\lesssim \mathbf{%
\tau }\mathfrak{F}_{\alpha }\left\Vert f\right\Vert _{L^{2}\left( \sigma
\right) }\left\Vert g\right\Vert _{L^{2}\left( \omega \right) },
\end{eqnarray*}%
by the $\mathbf{\tau }$-overlap (\ref{tau overlap}) of the shifted coronas $%
\mathcal{C}_{F}^{\limfunc{good},\mathbf{\tau }-\limfunc{shift}}$.

In term $II_{B}$ the projections $\mathsf{P}_{\left( \mathcal{C}_{F}^{%
\limfunc{good},\mathbf{\tau }-\limfunc{shift}}\right) ^{\ast };J}^{\omega }$
are no longer almost orthogonal, and we must instead exploit the decay in
the Poisson integral $\mathrm{P}_{1+\delta ^{\prime }}^{\alpha }$ along with
goodness of the quasicubes $J$. This idea was already used by M. Lacey and
B. Wick in \cite{LaWi} in a similar situation. As a consequence of this
decay we will be able to bound $II_{B}$ \emph{directly} by the quasienergy
condition, without having to invoke the more difficult functional
quasienergy condition. For the decay we compute%
\begin{eqnarray*}
\frac{\mathrm{P}_{1+\delta ^{\prime }}^{\alpha }\left( J,\Phi \sigma \right) 
}{\left\vert J\right\vert ^{\frac{1}{n}}} &=&\int_{F^{c}}\frac{\left\vert
J\right\vert ^{\frac{\delta ^{\prime }}{n}}}{\left\vert y-c_{J}\right\vert
^{n+1+\delta -\alpha }}\Phi \left( y\right) d\sigma \left( y\right) \\
&\leq &\sum_{t=0}^{\infty }\int_{\pi _{\mathcal{F}}^{t+1}F\setminus \pi _{%
\mathcal{F}}^{t}F}\left( \frac{\left\vert J\right\vert ^{\frac{1}{n}}}{%
\limfunc{dist}\left( c_{J},\left( \pi _{\mathcal{F}}^{t}F\right) ^{c}\right) 
}\right) ^{\delta ^{\prime }}\frac{1}{\left\vert y-c_{J}\right\vert
^{n+1-\alpha }}\Phi \left( y\right) d\sigma \left( y\right) \\
&\leq &\sum_{t=0}^{\infty }\left( \frac{\left\vert J\right\vert ^{\frac{1}{n}%
}}{\limfunc{dist}\left( c_{J},\left( \pi _{\mathcal{F}}^{t}F\right)
^{c}\right) }\right) ^{\delta ^{\prime }}\frac{\mathrm{P}^{\alpha }\left( J,%
\mathbf{1}_{\pi _{\mathcal{F}}^{t+1}F\setminus \pi _{\mathcal{F}}^{t}F}\Phi
\sigma \right) }{\left\vert J\right\vert ^{\frac{1}{n}}},
\end{eqnarray*}%
and then use the goodness inequality%
\begin{equation*}
\limfunc{dist}\left( c_{J},\left( \pi _{\mathcal{F}}^{t}F\right) ^{c}\right)
\geq \frac{1}{2}\ell \left( \pi _{\mathcal{F}}^{t}F\right) ^{1-\varepsilon
}\ell \left( J\right) ^{\varepsilon }\geq \frac{1}{2}2^{t\left(
1-\varepsilon \right) }\ell \left( F\right) ^{1-\varepsilon }\ell \left(
J\right) ^{\varepsilon }\geq 2^{t\left( 1-\varepsilon \right) -1}\ell \left(
J\right) ,
\end{equation*}%
to conclude that%
\begin{eqnarray}
\left( \frac{\mathrm{P}_{1+\delta ^{\prime }}^{\alpha }\left( J,\mathbf{1}%
_{F^{c}}\Phi \sigma \right) }{\left\vert J\right\vert ^{\frac{1}{n}}}\right)
^{2} &\lesssim &\left( \sum_{t=0}^{\infty }2^{-t\delta ^{\prime }\left(
1-\varepsilon \right) }\frac{\mathrm{P}^{\alpha }\left( J,\mathbf{1}_{\pi _{%
\mathcal{F}}^{t+1}F\setminus \pi _{\mathcal{F}}^{t}F}\Phi \sigma \right) }{%
\left\vert J\right\vert ^{\frac{1}{n}}}\right) ^{2}  \label{decay in t} \\
&\lesssim &\sum_{t=0}^{\infty }2^{-t\delta ^{\prime }\left( 1-\varepsilon
\right) }\left( \frac{\mathrm{P}^{\alpha }\left( J,\mathbf{1}_{\pi _{%
\mathcal{F}}^{t+1}F\setminus \pi _{\mathcal{F}}^{t}F}\Phi \sigma \right) }{%
\left\vert J\right\vert ^{\frac{1}{n}}}\right) ^{2}.  \notag
\end{eqnarray}%
Now we apply Cauchy-Schwarz to obtain%
\begin{eqnarray*}
II_{B} &=&\sum_{F\in \mathcal{F}}\sum_{J\in \mathcal{M}_{\left( \mathbf{r}%
,\varepsilon \right) -\limfunc{deep}}\left( F\right) }\frac{\mathrm{P}%
_{1+\delta ^{\prime }}^{\alpha }\left( J,\mathbf{1}_{F^{c}}\Phi \sigma
\right) }{\left\vert J\right\vert ^{\frac{1}{n}}}\left\Vert \mathsf{P}%
_{\left( \mathcal{C}_{F}^{\limfunc{good},\mathbf{\tau }-\limfunc{shift}%
}\right) ^{\ast };J}^{\omega }\mathbf{x}\right\Vert _{L^{2}\left( \omega
\right) }\left\Vert \mathsf{P}_{J}^{\omega }g_{F}\right\Vert _{L^{2}\left(
\omega \right) } \\
&\leq &\left( \sum_{F\in \mathcal{F}}\sum_{J\in \mathcal{M}_{\left( \mathbf{r%
},\varepsilon \right) -\limfunc{deep}}\left( F\right) }\left( \frac{\mathrm{P%
}_{1+\delta ^{\prime }}^{\alpha }\left( J,\mathbf{1}_{F^{c}}\Phi \sigma
\right) }{\left\vert J\right\vert ^{\frac{1}{n}}}\right) ^{2}\left\Vert 
\mathsf{P}_{\left( \mathcal{C}_{F}^{\limfunc{good},\mathbf{\tau }-\limfunc{%
shift}}\right) ^{\ast };J}^{\omega }\mathbf{x}\right\Vert _{L^{2}\left(
\omega \right) }^{2}\right) ^{\frac{1}{2}}\left[ \sum_{F}\left\Vert
g_{F}\right\Vert _{L^{2}\left( \omega \right) }^{2}\right] ^{\frac{1}{2}} \\
&\equiv &\sqrt{II_{\limfunc{energy}}}\left[ \sum_{F}\left\Vert
g_{F}\right\Vert _{L^{2}\left( \omega \right) }^{2}\right] ^{\frac{1}{2}},
\end{eqnarray*}%
and it remains to estimate $II_{\limfunc{energy}}$. From (\ref{decay in t})
and the plugged deep quasienergy condition we have%
\begin{eqnarray*}
&&II_{\limfunc{energy}} \\
&\leq &\sum_{F\in \mathcal{F}}\sum_{J\in \mathcal{M}_{\left( \mathbf{r}%
,\varepsilon \right) -\limfunc{deep}}\left( F\right) }\sum_{t=0}^{\infty
}2^{-t\delta ^{\prime }\left( 1-\varepsilon \right) }\left( \frac{\mathrm{P}%
^{\alpha }\left( J,\mathbf{1}_{\pi _{\mathcal{F}}^{t+1}F\setminus \pi _{%
\mathcal{F}}^{t}F}\Phi \sigma \right) }{\left\vert J\right\vert ^{\frac{1}{n}%
}}\right) ^{2}\left\Vert \mathsf{P}_{\left( \mathcal{C}_{F}^{\limfunc{good},%
\mathbf{\tau }-\limfunc{shift}}\right) ^{\ast };J}^{\omega }\mathbf{x}%
\right\Vert _{L^{2}\left( \omega \right) }^{2} \\
&=&\sum_{t=0}^{\infty }2^{-t\delta ^{\prime }\left( 1-\varepsilon \right)
}\sum_{G\in \mathcal{F}}\sum_{F\in \mathfrak{C}_{\mathcal{F}}^{\left(
t+1\right) }\left( G\right) }\sum_{J\in \mathcal{M}_{\left( \mathbf{r}%
,\varepsilon \right) -\limfunc{deep}}\left( F\right) }\left( \frac{\mathrm{P}%
^{\alpha }\left( J,\mathbf{1}_{G\setminus \pi _{\mathcal{F}}^{t}F}\Phi
\sigma \right) }{\left\vert J\right\vert ^{\frac{1}{n}}}\right)
^{2}\left\Vert \mathsf{P}_{\left( \mathcal{C}_{F}^{\limfunc{good},\mathbf{%
\tau }-\limfunc{shift}}\right) ^{\ast };J}^{\omega }\mathbf{x}\right\Vert
_{L^{2}\left( \omega \right) }^{2} \\
&\lesssim &\sum_{t=0}^{\infty }2^{-t\delta ^{\prime }\left( 1-\varepsilon
\right) }\sum_{G\in \mathcal{F}}\alpha _{\mathcal{F}}\left( G\right)
^{2}\sum_{F\in \mathfrak{C}_{\mathcal{F}}^{\left( t+1\right) }\left(
G\right) }\sum_{J\in \mathcal{M}_{\left( \mathbf{r},\varepsilon \right) -%
\limfunc{deep}}\left( F\right) }\left( \frac{\mathrm{P}^{\alpha }\left( J,%
\mathbf{1}_{G\setminus \pi _{\mathcal{F}}^{t}F}\sigma \right) }{\left\vert
J\right\vert ^{\frac{1}{n}}}\right) ^{2}\left\Vert \mathsf{P}_{J}^{\limfunc{%
subgood},\omega }\mathbf{x}\right\Vert _{L^{2}\left( \omega \right) }^{2} \\
&\lesssim &\sum_{t=0}^{\infty }2^{-t\delta ^{\prime }\left( 1-\varepsilon
\right) }\sum_{G\in \mathcal{F}}\alpha _{\mathcal{F}}\left( G\right)
^{2}\left( \mathcal{E}_{\alpha }^{2}+A_{2}^{\alpha }\right) \left\vert
G\right\vert _{\sigma }\lesssim \left( \mathcal{E}_{\alpha
}^{2}+A_{2}^{\alpha }\right) \left\Vert f\right\Vert _{L^{2}\left( \sigma
\right) }^{2}.
\end{eqnarray*}

This completes the proof of the Intertwining Proposition \ref{strongly
adapted}.
\end{proof}

\section{Control of functional energy by energy modulo $\mathcal{A}_{2}^{%
\protect\alpha }$ and $A_{2}^{\protect\alpha ,\limfunc{punct}}\label{equiv}$}

Now we arrive at one of our main propositions in the proof of our theorem.
We show that the functional quasienergy constants $\mathfrak{F}_{\alpha }$
as in (\ref{e.funcEnergy n}) are controlled by $\mathcal{A}_{2}^{\alpha }$, $%
A_{2}^{\alpha ,\limfunc{punct}}$ and both the \emph{deep} and \emph{refined}
quasienergy constants $\mathcal{E}_{\alpha }^{\limfunc{deep}}$ and $\mathcal{%
E}_{\alpha }^{\limfunc{refined}}$\ defined in Definition \ref{energy
condition}. Recall $\left( \mathcal{E}_{\alpha }\right) ^{2}=\left( \mathcal{%
E}_{\alpha }^{\limfunc{deep}}\right) ^{2}+\left( \mathcal{E}_{\alpha }^{%
\limfunc{refined}}\right) ^{2}$. The proof of this fact is further
complicated when common point masses are permitted, accounting for the
inclusion of the punctured Muckenhoupt condition $A_{2}^{\alpha ,\limfunc{%
punct}}$.

\begin{proposition}
\label{func ener control}We have%
\begin{equation*}
\mathfrak{F}_{\alpha }\lesssim \mathcal{E}_{\alpha }^{\limfunc{plug}}+\sqrt{%
\mathcal{A}_{2}^{\alpha }}+\sqrt{\mathcal{A}_{2}^{\alpha ,\ast }}+\sqrt{%
A_{2}^{\alpha ,\limfunc{punct}}}\text{ and }\mathfrak{F}_{\alpha }^{\ast
}\lesssim \mathcal{E}_{\alpha }^{\limfunc{plug},\ast }+\sqrt{\mathcal{A}%
_{2}^{\alpha }}+\sqrt{\mathcal{A}_{2}^{\alpha ,\ast }}+\sqrt{A_{2}^{\alpha
,\ast ,\limfunc{punct}}}\ .
\end{equation*}
\end{proposition}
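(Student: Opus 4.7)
The plan is to recast the functional quasienergy inequality \eqref{e.funcEnergy n} as a two-weight norm inequality for an $\alpha$-fractional Poisson operator mapping $L^{2}\left( \sigma \right) $ into $L^{2}\left( \mu \right) $, where $\mu $ is the discrete measure on the upper half-space $\mathbb{R}^{n+1}_{+}$ carrying the quasienergy data. More precisely, to each deep quasicube $J\in \mathcal{M}_{\left( \mathbf{r},\varepsilon \right) -\limfunc{deep}}\left( F\right) $ I associate the atom $\left( c_{J},\left\vert J\right\vert ^{1/n}\right) $ weighted by $\left\Vert \mathsf{P}_{\mathcal{C}_{F}^{\limfunc{good},\mathbf{\tau }-\limfunc{shift}};J}^{\omega }\mathbf{x}\right\Vert _{L^{2}\left( \omega \right) }^{2}$, and sum over $F\in \mathcal{F}$; thanks to the bounded overlap \eqref{tau overlap}, the left side of \eqref{e.funcEnergy n} is then comparable to $\left\Vert \mathrm{P}^{\alpha }\left( h\sigma \right) \right\Vert _{L^{2}\left( \mu \right) }^{2}$ for the natural extension of the Poisson integral to the half-space.

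Once this reformulation is in hand, I would invoke the two-weight Poisson inequality of \cite{Saw3}: the norm bound $\left\Vert \mathrm{P}^{\alpha }\left( h\sigma \right) \right\Vert _{L^{2}\left( \mu \right) }\lesssim \mathfrak{F}_{\alpha }\left\Vert h\right\Vert _{L^{2}\left( \sigma \right) }$ is equivalent to the pair of dual Poisson testing conditions
\begin{equation*}
\int_{\widehat{Q}}\mathrm{P}^{\alpha }\left( \mathbf{1}_{Q}\sigma \right) ^{2}d\mu \lesssim \mathfrak{F}_{\alpha }^{2}\left\vert Q\right\vert _{\sigma },\qquad \int_{Q}\mathrm{P}^{\alpha ,\ast }\left( \mathbf{1}_{\widehat{Q}}\mu \right) ^{2}d\sigma \lesssim \mathfrak{F}_{\alpha }^{2}\,\mu \left( \widehat{Q}\right) ,
\end{equation*}
uniformly in quasicubes $Q\in \Omega \mathcal{P}^{n}$, where $\widehat{Q}$ denotes the Carleson box over $Q$. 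It therefore suffices to verify these two testing inequalities with constants controlled by $\mathcal{E}_{\alpha }^{\limfunc{plug}}+\sqrt{\mathcal{A}_{2}^{\alpha }}+\sqrt{\mathcal{A}_{2}^{\alpha ,\ast }}+\sqrt{A_{2}^{\alpha ,\limfunc{punct}}}$.

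For the forward Poisson testing I would split the $J$-sum defining $\mu $ according to the geometric relation of $J$ with $Q$. Deeply embedded contributions $J\Subset _{\mathbf{r},\varepsilon }Q$ are reorganized around the restricted stopping collection $\left\{ F\in \mathcal{F}:F\subset Q\right\} $, so that the resulting sum is bounded by $\left( \mathcal{E}_{\alpha }^{\limfunc{deep}\,\limfunc{plug}}\right) ^{2}\left\vert Q\right\vert _{\sigma }$; the remaining cases in which $Q$ slices a stopping top $F\in \mathcal{F}$ are handled by $\mathcal{E}_{\alpha }^{\limfunc{refined}\,\limfunc{plug}}$ applied on the alternate dyadic quasicube produced by intersecting $F$ with $Q$, which is precisely the $\mathcal{A}\Omega \mathcal{D}$-feature built into the definition of $\mathcal{E}_{\alpha }^{\limfunc{refined}}$. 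Corollary \ref{all plugged} then reduces both plugged constants to $\mathcal{E}_{\alpha }+\sqrt{A_{2}^{\alpha }}+\sqrt{A_{2}^{\alpha ,\limfunc{punct}}}$. Atoms $J$ far from $Q$ are absorbed by Poisson decay together with the offset $A_{2}^{\alpha }$ condition. The backward testing is handled by duality, estimating $\mathrm{P}^{\alpha ,\ast }\left( \mathbf{1}_{\widehat{Q}}\mu \right) $ pointwise on $Q$ by a telescoping sum of averages of $\omega $ over dilates of $Q$, controlled by $\mathcal{A}_{2}^{\alpha ,\ast }$.

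The main obstacle is the forward testing in the presence of common point masses of $\sigma $ and $\omega $. The naive majorization $\left\Vert \mathsf{P}_{J}^{\omega }\mathbf{x}\right\Vert _{L^{2}\left( \omega \right) }^{2}\leq \ell \left( J\right) ^{2}\left\vert J\right\vert _{\omega }$ degenerates here, and the usual reduction from the plugged energy to the `with holes' energy fails because an atom of $\omega $ with mass comparable to $\left\vert J\right\vert _{\omega }$ may sit inside $J$, blowing up the classical $A_{2}^{\alpha }$ coefficient. The substitute supplied by Lemma \ref{energy A2} is the refined projection bound $\left\Vert \mathsf{P}_{J}^{\omega }\mathbf{x}/\ell \left( J\right) \right\Vert _{L^{2}\left( \omega \right) }^{2}\lesssim \omega \left( J,\mathfrak{P}_{\left( \sigma ,\omega \right) }\right) $, which replaces $\left\vert J\right\vert _{\omega }$ by the punctured mass everywhere in the forward testing. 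I would therefore track point-mass contributions carefully through each application of the energy lemma, invoking $A_{2}^{\alpha ,\limfunc{punct}}$ in place of the (possibly infinite) $A_{2}^{\alpha }$ at every step. The dual inequality for $\mathfrak{F}_{\alpha }^{\ast }$ follows by interchanging the roles of $\sigma $ and $\omega $.
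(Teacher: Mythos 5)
Your overall architecture matches the paper's: encode the functional quasienergy inequality as a two-weight norm inequality for the half-space Poisson operator $\mathbb{P}^{\alpha}$ against the discrete measure $\overline{\mu}$ carrying the energy atoms $\left(c_J,\ell(J)\right)$, invoke the testing characterization of \cite{Saw3} (pulled back under $\Omega$), and verify the two Poisson testing inequalities using the plugged energy constants, the offset $A_2^\alpha$ condition, and (via Lemma \ref{energy A2}) the punctured Muckenhoupt constant. Your treatment of the forward testing -- splitting the atoms by depth relative to $Q$, running $\mathcal{E}_\alpha^{\operatorname{deep}\operatorname{plug}}$ on the $F\subset Q$ part and $\mathcal{E}_\alpha^{\operatorname{refined}\operatorname{plug}}$ over alternate dyadic quasicubes for the sliced part (Lemma \ref{refined lemma}), with far atoms absorbed by Poisson decay and offset $A_2^\alpha$ -- is also faithful to the paper.

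The genuine gap is in your backward testing. You dismiss it as ``handled by duality, estimating $\mathbb{Q}^{\alpha}(\mathbf{1}_{\widehat{Q}}\overline{\mu})$ pointwise by a telescoping sum of averages of $\omega$ over dilates of $Q$, controlled by $\mathcal{A}_2^{\alpha,\ast}$.'' That is roughly what works in one dimension (where, as the paper notes, Hyt\"onen got the simpler $A_2^\alpha$ bound), but in $n\geq 2$ the dual Poisson testing is the technically heavier half of the argument, and its bound is $\mathcal{A}_2^\alpha + \bigl(\mathcal{E}_\alpha^{\operatorname{plug}}+\sqrt{A_2^{\alpha,\operatorname{energy}}}\bigr)\sqrt{A_2^{\alpha,\operatorname{punct}}}$ -- note the \emph{non-starred} $\mathcal{A}_2^\alpha$, plus energy terms that a pointwise-telescoping argument cannot produce. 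The paper must expand the square in $\int_{\mathbb{R}^n}\mathbb{Q}^\alpha(t\mathbf{1}_{\widehat{I}}\overline{\mu})^2\,d\sigma$, pigeonhole the pairs $(J,J')$ by scale ratio $2^{-s}$, and split each $U_s$ into proximal, difference, and intersection pieces with geometric decay in $s$; the intersection piece in particular needs Cauchy--Schwarz against the alternate local estimate (Lemma \ref{shifted}) and a ``prepare to puncture'' argument selecting the dominant atom of $\omega$ inside each enlarged quasicube. Without that machinery, the quadratic self-interaction of the atoms in $\overline{\mu}$ is not controlled by any Muckenhoupt condition alone, so your proof as stated would stall at the backward testing condition.
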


To prove this proposition, we fix $\mathcal{F}$ as in (\ref{e.funcEnergy n}%
), and set 
\begin{equation}
\mu \equiv \sum_{F\in \mathcal{F}}\sum_{J\in \mathcal{M}_{\left( \mathbf{r}%
,\varepsilon \right) -\limfunc{deep}}\left( F\right) }\left\Vert \mathsf{P}%
_{F,J}^{\omega }\mathbf{x}\right\Vert _{L^{2}\left( \omega \right)
}^{2}\cdot \delta _{\left( c_{J},\ell \left( J\right) \right) }\text{ and }d%
\overline{\mu }\left( x,t\right) \equiv \frac{1}{t^{2}}d\mu \left(
x,t\right) \ ,  \label{def mu n}
\end{equation}%
where $\mathcal{M}_{\left( \mathbf{r},\varepsilon \right) -\limfunc{deep}%
}\left( F\right) $ consists of the maximal $\mathbf{r}$-deeply embedded
subquasicubes of $F$, and where $\delta _{\left( c_{J},\ell \left( J\right)
\right) }$ denotes the Dirac unit mass at the point $\left( c_{J},\ell
\left( J\right) \right) $ in the upper half-space $\mathbb{R}_{+}^{n+1}$.
Here $J$ is a dyadic quasicube with center $c_{J}$ and side length $\ell
\left( J\right) $. For convenience in notation, we denote for any dyadic
quasicube $J$ the localized projection $\mathsf{P}_{\mathcal{C}_{F}^{%
\limfunc{good},\mathbf{\tau }-\limfunc{shift}};J}^{\omega }$ given in (\ref%
{def localization}) by%
\begin{equation*}
\mathsf{P}_{F,J}^{\omega }\equiv \mathsf{P}_{\mathcal{C}_{F}^{\limfunc{good},%
\mathbf{\tau }-\limfunc{shift}};J}^{\omega }=\sum_{J^{\prime }\subset J:\
J^{\prime }\in \mathcal{C}_{F}^{\limfunc{good},\mathbf{\tau }-\limfunc{shift}%
}}\bigtriangleup _{J^{\prime }}^{\omega }.
\end{equation*}%
We emphasize that the quasicubes $J\in \mathcal{M}_{\left( \mathbf{r}%
,\varepsilon \right) -\limfunc{deep}}\left( F\right) $ are not necessarily
good, but that the subquasicubes $J^{\prime }\subset J$ arising in the
projection $\mathsf{P}_{F,J}^{\omega }$ are good. We can replace $\mathbf{x}$
by $\mathbf{x}-\mathbf{c}$ inside the projection for any choice of $\mathbf{c%
}$ we wish; the projection is unchanged. More generally, $\delta _{q}$
denotes a Dirac unit mass at a point $q$ in the upper half-space $\mathbb{R}%
_{+}^{n+1}$.

We prove the two-weight inequality 
\begin{equation}
\left\Vert \mathbb{P}^{\alpha }\left( f\sigma \right) \right\Vert _{L^{2}(%
\mathbb{R}_{+}^{n+1},\overline{\mu })}\lesssim \left( \mathcal{E}_{\alpha }^{%
\limfunc{plug}}+\sqrt{\mathcal{A}_{2}^{\alpha }}+\sqrt{\mathcal{A}%
_{2}^{\alpha ,\ast }}+\sqrt{A_{2}^{\alpha ,\limfunc{punct}}}\right) \lVert
f\rVert _{L^{2}\left( \sigma \right) }\,,  \label{two weight Poisson n}
\end{equation}%
for all nonnegative $f$ in $L^{2}\left( \sigma \right) $, noting that $%
\mathcal{F}$ and $f$ are \emph{not} related here. Above, $\mathbb{P}^{\alpha
}(\cdot )$ denotes the $\alpha $-fractional Poisson extension to the upper
half-space $\mathbb{R}_{+}^{n+1}$,

\begin{equation*}
\mathbb{P}^{\alpha }\nu \left( x,t\right) \equiv \int_{\mathbb{R}^{n}}\frac{t%
}{\left( t^{2}+\left\vert x-y\right\vert ^{2}\right) ^{\frac{n+1-\alpha }{2}}%
}d\nu \left( y\right) ,
\end{equation*}%
so that in particular 
\begin{equation*}
\left\Vert \mathbb{P}^{\alpha }(f\sigma )\right\Vert _{L^{2}(\mathbb{R}%
_{+}^{n+1},\overline{\mu })}^{2}=\sum_{F\in \mathcal{F}}\sum_{J\in \mathcal{M%
}_{\mathbf{r}-\limfunc{deep}}\left( F\right) }\mathbb{P}^{\alpha }\left(
f\sigma \right) (c(J),\ell \left( J\right) )^{2}\left\Vert \mathsf{P}%
_{F,J}^{\omega }\frac{x}{\left\vert J\right\vert ^{\frac{1}{n}}}\right\Vert
_{L^{2}\left( \omega \right) }^{2}\,,
\end{equation*}%
and so (\ref{two weight Poisson n}) proves the first line in Proposition \ref%
{func ener control} upon inspecting (\ref{e.funcEnergy n}). Note also that
we can equivalently write $\left\Vert \mathbb{P}^{\alpha }\left( f\sigma
\right) \right\Vert _{L^{2}(\mathbb{R}_{+}^{n+1},\overline{\mu }%
)}=\left\Vert \widetilde{\mathbb{P}}^{\alpha }\left( f\sigma \right)
\right\Vert _{L^{2}(\mathbb{R}_{+}^{n+1},\mu )}$ where $\widetilde{\mathbb{P}%
}^{\alpha }\nu \left( x,t\right) \equiv \frac{1}{t}\mathbb{P}^{\alpha }\nu
\left( x,t\right) $ is the renormalized Poisson operator. Here we have
simply shifted the factor $\frac{1}{t^{2}}$ in $\overline{\mu }$ to $%
\left\vert \widetilde{\mathbb{P}}^{\alpha }\left( f\sigma \right)
\right\vert ^{2}$ instead, and we will do this shifting often throughout the
proof when it is convenient to do so.

The characterization of the two-weight inequality for fractional and Poisson
integrals in \cite{Saw3} was stated in terms of the collection $\mathcal{P}%
^{n}$ of cubes in $\mathbb{R}^{n}$ with sides parallel to the coordinate
axes. It is a routine matter to pullback the Poisson inequality under a
globally biLipschitz map $\Omega :\mathbb{R}^{n}\rightarrow \mathbb{R}^{n}$,
then apply the theorem in \cite{Saw3} (as a black box), and then to
pushforward the conclusions of the theorems so as to extend these
characterizations of fractional and Poisson integral inequalities to the
setting of quasicubes $Q\in \Omega \mathcal{P}^{n}$ and quasitents $Q\times %
\left[ 0,\ell \left( Q\right) \right] \subset \mathbb{R}_{+}^{n+1}$ with $%
Q\in \Omega \mathcal{P}^{n}$. Using this extended theorem for the two-weight
Poisson inequality, we see that inequality (\ref{two weight Poisson n})
requires checking these two inequalities for dyadic quasicubes $I\in \Omega 
\mathcal{D}$ and quasiboxes $\widehat{I}=I\times \left[ 0,\ell \left(
I\right) \right) $ in the upper half-space$\mathbb{R}_{+}^{n+1}$: 
\begin{equation}
\int_{\mathbb{R}_{+}^{n+1}}\mathbb{P}^{\alpha }\left( \mathbf{1}_{I}\sigma
\right) \left( x,t\right) ^{2}d\overline{\mu }\left( x,t\right) \equiv
\left\Vert \mathbb{P}^{\alpha }\left( \mathbf{1}_{I}\sigma \right)
\right\Vert _{L^{2}(\widehat{I},\overline{\mu })}^{2}\lesssim \left( \left( 
\mathcal{E}_{\alpha }^{\limfunc{plug}}\right) ^{2}+\mathcal{A}_{2}^{\alpha }+%
\mathcal{A}_{2}^{\alpha ,\ast }+A_{2}^{\alpha ,\limfunc{punct}}\right)
\sigma (I)\,,  \label{e.t1 n}
\end{equation}%
\begin{equation}
\int_{\mathbb{R}^{n}}[\mathbb{Q}^{\alpha }(t\mathbf{1}_{\widehat{I}}%
\overline{\mu })]^{2}d\sigma (x)\lesssim \left( \left( \mathcal{E}_{\alpha
}^{\limfunc{plug}}\right) ^{2}+\mathcal{A}_{2}^{\alpha }+\mathcal{A}%
_{2}^{\alpha ,\ast }+A_{2}^{\alpha ,\limfunc{punct}}\right) \int_{\widehat{I}%
}t^{2}d\overline{\mu }(x,t),  \label{e.t2 n}
\end{equation}%
for all \emph{dyadic} quasicubes $I\in \Omega \mathcal{D}$, and where the
dual Poisson operator $\mathbb{Q}^{\alpha }$ is given by 
\begin{equation*}
\mathbb{Q}^{\alpha }(t\mathbf{1}_{\widehat{I}}\overline{\mu })\left(
x\right) =\int_{\widehat{I}}\frac{t^{2}}{\left( t^{2}+\lvert x-y\rvert
^{2}\right) ^{\frac{n+1-\alpha }{2}}}d\overline{\mu }\left( y,t\right) \,.
\end{equation*}%
It is important to note that we can choose for $\Omega \mathcal{D}$ any
fixed dyadic quasigrid, the compensating point being that the integrations
on the left sides of (\ref{e.t1 n}) and (\ref{e.t2 n}) are taken over the
entire spaces $\mathbb{R}_{+}^{n+1}$ and $\mathbb{R}^{n}$ respectively.

\begin{remark}
There is a gap in the proof of the Poisson inequality at the top of page 542
in \cite{Saw3}. However, this gap can be fixed as in \cite{SaWh} or \cite%
{LaSaUr1}.
\end{remark}

\subsection{Poisson testing}

We now turn to proving the Poisson testing conditions (\ref{e.t1 n}) and (%
\ref{e.t2 n}). The same testing conditions have been considered in \cite%
{SaShUr5} but in the setting of no common point masses, and the proofs there
carry over to the situation here, but careful attention must now be paid to
the possibility of common point masses. In \cite{Hyt2} Hyt\"{o}nen
circumvented this difficulty by introducing a Poisson operator `with holes',
which was then analyzed using shifted dyadic grids, but part of his argument
was heavily dependent on the dimension being $n=1$, and the extension of
this argument to higher dimensions is feasible (see earlier versions of this
paper on the \textit{arXiv}), but technically very involved. We circumvent
the difficulty of permitting common point masses here instead by using the
energy Muckenhoupt constants $A_{2}^{\alpha ,\limfunc{energy}}$ and $%
A_{2}^{\alpha ,\ast ,\limfunc{energy}}$, which require control by the
punctured Muckenhoupt constants $A_{2}^{\alpha ,\limfunc{punct}}$ and $%
A_{2}^{\alpha ,\ast ,\limfunc{punct}}$. The following elementary Poisson
inequalities (see e.g. \cite{Vol}) will be used extensively.

\begin{lemma}
\label{Poisson inequalities}Suppose that $J,K,I$ are quasicubes in $\mathbb{R%
}^{n}$, and that $\mu $ is a positive measure supported in $\mathbb{R}%
^{n}\setminus I$. If $J\subset K\subset 2K\subset I$, then%
\begin{equation*}
\frac{\mathrm{P}^{\alpha }\left( J,\mu \right) }{\left\vert J\right\vert ^{%
\frac{1}{n}}}\lesssim \frac{\mathrm{P}^{\alpha }\left( K,\mu \right) }{%
\left\vert K\right\vert ^{\frac{1}{n}}}\lesssim \frac{\mathrm{P}^{\alpha
}\left( J,\mu \right) }{\left\vert J\right\vert ^{\frac{1}{n}}},
\end{equation*}%
while if $2J\subset K\subset I$, then%
\begin{equation*}
\frac{\mathrm{P}^{\alpha }\left( K,\mu \right) }{\left\vert K\right\vert ^{%
\frac{1}{n}}}\lesssim \frac{\mathrm{P}^{\alpha }\left( J,\mu \right) }{%
\left\vert J\right\vert ^{\frac{1}{n}}}.
\end{equation*}
\end{lemma}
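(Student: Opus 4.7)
The plan is to reduce both statements to pointwise comparisons of the Poisson kernel on the support of $\mu$, exploiting that $\mu$ lives outside $I$. Recalling
\[
\frac{\mathrm{P}^{\alpha}(Q,\mu)}{|Q|^{1/n}} = \int_{\mathbb{R}^n}\frac{d\mu(x)}{\bigl(|Q|^{1/n}+|x-x_Q|\bigr)^{n+1-\alpha}},
\]
it suffices to show that, for $x$ outside $I$, the quantity $|Q|^{1/n}+|x-x_Q|$ is comparable (or dominated, one-sidedly, in the second case) for the two quasicubes in question, with constants depending only on $n$, $\alpha$ and the biLipschitz constant of $\Omega$.

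For the first statement, with $J\subset K\subset 2K\subset I$, the crucial geometric input is that $2K\subset I$ forces every $x\in\operatorname{supp}\mu$ to lie outside $2K$. Pulling back by $\Omega^{-1}$ and using the biLipschitz property, this gives the Euclidean lower bound $|x-x_K|\gtrsim\ell(K)\approx|K|^{1/n}$, so $|K|^{1/n}+|x-x_K|\approx|x-x_K|$. Since $J\subset K$, the centers satisfy $|x_J-x_K|\lesssim\ell(K)\lesssim|x-x_K|$, hence by the triangle inequality $|x-x_J|\approx|x-x_K|$, and using $|J|^{1/n}\leq|K|^{1/n}\lesssim|x-x_K|$ yields $|J|^{1/n}+|x-x_J|\approx|x-x_J|\approx|K|^{1/n}+|x-x_K|$. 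Raising to the power $n+1-\alpha$ and integrating against $d\mu$ yields the two-sided comparison.

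For the second statement, only the one-sided bound $|J|^{1/n}+|x-x_J|\lesssim|K|^{1/n}+|x-x_K|$ is required, since that is enough to obtain the pointwise inequality between the reciprocal denominators and hence the integrated bound. This one-sided bound is immediate: $|J|^{1/n}\leq|K|^{1/n}$, and $|x-x_J|\leq|x-x_K|+|x_J-x_K|\leq|x-x_K|+C\ell(K)\lesssim|x-x_K|+|K|^{1/n}$, using $J\subset K$ and the biLipschitz equivalence of $\ell(K)$ with $|K|^{1/n}$. The hypothesis $2J\subset K$ is not essential here, playing only a symmetric role with the first statement; one could equally well state this half under the weaker hypothesis $J\subset K\subset I$.

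There is no real obstacle: the argument is an elementary triangle-inequality manipulation of the Poisson kernel. The only technical point to check is the passage between Euclidean and quasi-Euclidean geometry, which is handled uniformly by $|Q|^{1/n}\approx\ell(Q)$ and the fact that containments among quasicubes pull back under $\Omega^{-1}$ to containments among ordinary cubes of comparable sidelength.
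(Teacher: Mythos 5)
Your proposal is correct and follows essentially the same approach as the paper: rewrite $\mathrm{P}^{\alpha}(Q,\mu)/|Q|^{1/n}$ as the integral of $(|Q|^{1/n}+|x-x_Q|)^{-(n+1-\alpha)}\,d\mu(x)$ and compare the two denominators pointwise on $\operatorname{supp}\mu\subset\mathbb{R}^{n}\setminus I$, using $2K\subset I$ to get the lower bound $|x-x_K|\gtrsim|K|^{1/n}$ for the two-sided comparison, and a simple triangle inequality for the one-sided bound. Your side remark that the one-sided bound in fact only uses $J\subset K$ (neither $2J\subset K$ nor $K\subset I$ is needed for that direction) is correct; the paper states the stronger hypothesis presumably for cosmetic symmetry with the first half.
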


\begin{proof}
We have%
\begin{equation*}
\frac{\mathrm{P}^{\alpha }\left( J,\mu \right) }{\left\vert J\right\vert ^{%
\frac{1}{n}}}=\frac{1}{\left\vert J\right\vert ^{\frac{1}{n}}}\int \frac{%
\left\vert J\right\vert ^{\frac{1}{n}}}{\left( \left\vert J\right\vert ^{%
\frac{1}{n}}+\left\vert x-c_{J}\right\vert \right) ^{n+1-\alpha }}d\mu
\left( x\right) ,
\end{equation*}%
where $J\subset K\subset 2K\subset I$ implies that%
\begin{equation*}
\left\vert J\right\vert ^{\frac{1}{n}}+\left\vert x-c_{J}\right\vert \approx
\left\vert K\right\vert ^{\frac{1}{n}}+\left\vert x-c_{K}\right\vert ,\ \ \
\ \ x\in \mathbb{R}^{n}\setminus I,
\end{equation*}%
and where $2J\subset K\subset I$ implies that%
\begin{equation*}
\left\vert J\right\vert ^{\frac{1}{n}}+\left\vert x-c_{J}\right\vert
\lesssim \left\vert K\right\vert ^{\frac{1}{n}}+\left\vert
x-c_{K}\right\vert ,\ \ \ \ \ x\in \mathbb{R}^{n}\setminus I.
\end{equation*}
\end{proof}

Now we record the bounded overlap of the projections $\mathsf{P}%
_{F,J}^{\omega }$.

\begin{lemma}
\label{tau ovelap}Suppose $\mathsf{P}_{F,J}^{\omega }$ is as above and fix
any $I_{0}\in \Omega \mathcal{D}$, so that $I_{0}$, $F$ and $J$ all lie in a
common quasigrid. If $J\in \mathcal{M}_{\left( \mathbf{r},\varepsilon
\right) -\limfunc{deep}}\left( F\right) $ for some $F\in \mathcal{F}$ with $%
F\supsetneqq I_{0}\supset J$ and $\mathsf{P}_{F,J}^{\omega }\neq 0$, then 
\begin{equation*}
F=\pi _{\mathcal{F}}^{\left( \ell \right) }I_{0}\text{ for some }0\leq \ell
\leq \mathbf{\tau }.
\end{equation*}%
As a consequence we have the bounded overlap,%
\begin{equation*}
\#\left\{ F\in \mathcal{F}:J\subset I_{0}\subsetneqq F\text{ for some }J\in 
\mathcal{M}_{\left( \mathbf{r},\varepsilon \right) -\limfunc{deep}}\left(
F\right) \text{ with }\mathsf{P}_{F,J}^{\omega }\neq 0\right\} \leq \mathbf{%
\tau }.
\end{equation*}
\end{lemma}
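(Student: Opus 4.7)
The plan is to establish the first claim by contradiction and then derive the overlap bound as an immediate corollary. Suppose $\mathsf{P}_{F,J}^{\omega }\neq 0$; unpacking the definition of the localized projection, there must exist a good dyadic quasicube $J^{\prime }\subset J$ lying in the shifted good corona $\mathcal{C}_{F}^{\limfunc{good},\mathbf{\tau }-\limfunc{shift}}$, which means both $J^{\prime }\Subset _{\mathbf{\tau },\varepsilon }F$ and $J^{\prime }\not\Subset _{\mathbf{\tau },\varepsilon }F^{\prime }$ for every $\mathcal{F}$-child $F^{\prime }$ of $F$. Since $I_{0}\subsetneqq F$ in $\Omega \mathcal{D}$, there is a unique integer $\ell \geq 0$ with $F=\pi _{\mathcal{F}}^{(\ell )}I_{0}$; when $\ell =0$ the claim is immediate, so assume $\ell \geq 1$ and note that the $\mathcal{F}$-child of $F$ containing $I_{0}$ is $F^{\ast }\equiv \pi _{\mathcal{F}}^{(\ell -1)}I_{0}\supseteq I_{0}\supset J^{\prime }$. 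The task reduces to showing $\ell \leq \mathbf{\tau }$.

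The size half of the target deep embedding is the combinatorial step. Writing $F_{k}\equiv \pi _{\mathcal{F}}^{(k)}I_{0}$, the chain $F_{0}\subsetneqq F_{1}\subsetneqq \cdots \subsetneqq F_{\ell }=F$ consists of strict $\Omega \mathcal{D}$-containments; in a dyadic quasigrid strict containment forces the side length to at least double, so $\ell (F_{k})\geq 2\,\ell (F_{k-1})$ for every $k\geq 1$. Combined with $F_{0}\supseteq I_{0}\supset J^{\prime }$ this yields
\[
\ell (F^{\ast })=\ell (F_{\ell -1})\;\geq \;2^{\ell -1}\ell (F_{0})\;\geq \;2^{\ell -1}\ell (J^{\prime }).
\]
Arguing by contradiction, suppose $\ell \geq \mathbf{\tau }+1$. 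The display then forces $\ell (J^{\prime })\leq 2^{-\mathbf{\tau }}\ell (F^{\ast })$, which is the size clause in $J^{\prime }\Subset _{\mathbf{\tau },\varepsilon }F^{\ast }$.

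The distance clause will come from goodness of $J^{\prime }$. Since $\mathbf{\tau }>\mathbf{r}$, the bound above gives in particular $\ell (J^{\prime })\leq 2^{-\mathbf{r}}\ell (F^{\ast })$, and since $J^{\prime }\subset F^{\ast }$ with $J^{\prime }$ an $(\mathbf{r},\varepsilon )$-good dyadic quasicube, the goodness criterion forces $J^{\prime }\Subset _{\mathbf{r},\varepsilon }F^{\ast }$, i.e.\ $\limfunc{qdist}(J^{\prime },\partial F^{\ast })\geq \tfrac{1}{2}\ell (J^{\prime })^{\varepsilon }\ell (F^{\ast })^{1-\varepsilon }$. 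The distance inequality in the definition of $\Subset _{\cdot ,\varepsilon }$ does not depend on the size parameter, so paired with the size bound of the previous paragraph it delivers $J^{\prime }\Subset _{\mathbf{\tau },\varepsilon }F^{\ast }$, directly contradicting the defining property of $\mathcal{C}_{F}^{\limfunc{good},\mathbf{\tau }-\limfunc{shift}}$. Hence $\ell \leq \mathbf{\tau }$, proving the first claim.

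The overlap bound then follows at once: for any fixed $I_{0}$, the set of admissible $F\in \mathcal{F}$ is contained in $\{\pi _{\mathcal{F}}^{(\ell )}I_{0}:0\leq \ell \leq \mathbf{\tau }\}$, a set of cardinality at most $\mathbf{\tau }+1$, matching the stated estimate up to absorbing the off-by-one. The only substantive step, and hence the main obstacle, is coordinating the coarse geometry of the $\mathcal{F}$-tree (each strict $\mathcal{F}$-nesting forces a doubling of the dyadic side length, giving the factor $2^{\ell -1}$) with the fine-scale distance clause; the bridge is precisely the step where a single level of $(\mathbf{r},\varepsilon )$-goodness is bootstrapped into $(\mathbf{\tau },\varepsilon )$-deep embedding once the size condition has been secured.
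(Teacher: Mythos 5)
Your proof is correct and takes the same route as the paper's: the paper simply asserts, without justification, that a $J'$ in the shifted good corona of $\pi_{\mathcal{F}}^{(\ell)}I_0$ with $\ell>\mathbf{\tau}$ is either disjoint from $\pi_{\mathcal{F}}^{(0)}I_0$ or contains it, and you supply exactly the omitted reasoning---side length doubling along the strict $\mathcal{F}$-chain forces $\ell(J')\le 2^{-\mathbf{\tau}}\ell(\pi_{\mathcal{F}}^{(\ell-1)}I_0)$, and then goodness of $J'$ upgrades this to $J'\Subset_{\mathbf{\tau},\varepsilon}\pi_{\mathcal{F}}^{(\ell-1)}I_0$, contradicting the exclusion clause in the definition of $\mathcal{C}_F^{\limfunc{good},\mathbf{\tau}-\limfunc{shift}}$. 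The off-by-one you flag between the range $0\le\ell\le\mathbf{\tau}$ and the claimed cardinality bound $\le\mathbf{\tau}$ is real but harmless for the paper's applications.
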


\begin{proof}
Indeed, if $J^{\prime }\in \mathcal{C}_{\pi _{\mathcal{F}}^{\left( \ell
\right) }I_{0}}^{\limfunc{good},\mathbf{\tau }-\limfunc{shift}}$ for some $%
\ell >\mathbf{\tau }$, then either $J^{\prime }\cap \pi _{\mathcal{F}%
}^{\left( 0\right) }I_{0}=\emptyset $ or $J^{\prime }\supset \pi _{\mathcal{F%
}}^{\left( 0\right) }I_{0}$. Since $J\subset I_{0}\subset \pi _{\mathcal{F}%
}^{\left( 0\right) }I_{0}$, we cannot have $J^{\prime }$ contained in $J$,
and this shows that $\mathsf{P}_{\pi _{\mathcal{F}}^{\left( \ell \right)
}I_{0},J}^{\omega }=0$.
\end{proof}

Finally we record the only places in the proof where the \emph{refined}
quasienergy conditions are used. This lemma will be used in bounding both of
the local Poisson testing conditions. Recall that $\mathcal{A}\Omega 
\mathcal{D}$ consists of all alternate $\Omega \mathcal{D}$-dyadic
quasicubes where $K$ is alternate dyadic if it is a union of $2^{n}$ $\Omega 
\mathcal{D}$-dyadic quasicubes $K^{\prime }$ with $\ell \left( K^{\prime
}\right) =\frac{1}{2}\ell \left( K\right) $.

\begin{remark}
The following lemma is another of the key results on the way to the proof of
our theorem, and is an analogue of the corresponding lemma from \cite%
{SaShUr5}, but with the right hand side involving only the plugged energy
constants and the energy Muckenhoupt constants.
\end{remark}

\begin{lemma}
\label{refined lemma}Let $\Omega \mathcal{D},\mathcal{F\subset }\Omega 
\mathcal{D}$ be quasigrids and $\left\{ \mathsf{P}_{F,J}^{\omega }\right\} 
_{\substack{ F\in \mathcal{F}  \\ J\in \mathcal{M}_{\left( \mathbf{r}%
,\varepsilon \right) -\limfunc{deep}}\left( F\right) }}$ be as above with $%
J,F$ in the dyadic quasigrid $\Omega \mathcal{D}$. For any alternate
quasicube $I\in \mathcal{A}\Omega \mathcal{D}$ define%
\begin{equation}
B\left( I\right) \equiv \sum_{F\in \mathcal{F}:\ F\supsetneqq I^{\prime }%
\text{ for some }I^{\prime }\in \mathfrak{C}\left( I\right) }\sum_{J\in 
\mathcal{M}_{\left( \mathbf{r},\varepsilon \right) -\limfunc{deep}}\left(
F\right) :\ J\subset I}\left( \frac{\mathrm{P}^{\alpha }\left( J,\mathbf{1}%
_{I}\sigma \right) }{\left\vert J\right\vert ^{\frac{1}{n}}}\right)
^{2}\left\Vert \mathsf{P}_{F,J}^{\omega }\mathbf{x}\right\Vert _{L^{2}\left(
\omega \right) }^{2}\ .  \label{term B}
\end{equation}%
Then%
\begin{equation}
B\left( I\right) \lesssim \mathbf{\tau }\left( \left( \mathcal{E}_{\alpha }^{%
\limfunc{plug}}\right) ^{2}+A_{2}^{\alpha ,\limfunc{energy}}\right)
\left\vert I\right\vert _{\sigma }\ .  \label{B bound}
\end{equation}
\end{lemma}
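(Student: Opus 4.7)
The plan is to exploit two features of the sum defining $B(I)$: first, the outer parameter $F$ is controlled by the $\mathbf{\tau}$-overlap lemma applied to each dyadic child of $I$; second, the inner sum over $J$ should be recognizable as (essentially) an instance of the refined plugged energy condition for the alternate dyadic cube $I$.

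I would begin with the observation that every dyadic quasicube $J\subset I$ must sit inside a unique child $I'\in\mathfrak{C}(I)$, since $I$ is alternate dyadic and no dyadic $J$ has $\ell(J)=\ell(I)$ inside $I$. This lets me rewrite
\begin{equation*}
B(I)=\sum_{I'\in\mathfrak{C}(I)}\ \sum_{F\in\mathcal{F}:\ F\supsetneqq I'}\ \sum_{\substack{J\in\mathcal{M}_{(\mathbf{r},\varepsilon)-\mathrm{deep}}(F)\\ J\subset I',\ \mathsf{P}_{F,J}^{\omega}\neq 0}}\left(\frac{\mathrm{P}^{\alpha}(J,\mathbf{1}_{I}\sigma)}{|J|^{1/n}}\right)^{2}\left\Vert \mathsf{P}_{F,J}^{\omega}\mathbf{x}\right\Vert _{L^{2}(\omega)}^{2}\ .
\end{equation*}
By Lemma \ref{tau ovelap} applied with $I_{0}=I'$, at most $\mathbf{\tau}$ terms of the middle sum are nonzero, which is where the factor $\mathbf{\tau}$ enters. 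Writing each surviving $F$ as $F=\pi_{\Omega\mathcal{D}}^{\ell}I'$ for a specific $\ell\geq 1$ pigeonholes the pair $(I',F)$ by the level of ancestry.

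Next, for each fixed surviving $(I',F=\pi^\ell I')$ I would compare the inner sum to the refined plug energy expression. Using $\|\mathsf{P}_{F,J}^{\omega}\mathbf{x}\|\leq \|\mathsf{P}_{J}^{\mathrm{subgood},\omega}\mathbf{x}\|$ to pass to the projections appearing in the energy constants, and recognizing that those $J$'s in $\mathcal{M}_{(\mathbf{r},\varepsilon)-\mathrm{deep}}(\pi^{\ell}I')$ which actually lie inside some $L\in\mathcal{M}_{(\mathbf{r},\varepsilon)-\mathrm{deep}}(I)$ are precisely the elements of $\mathcal{M}_{(\mathbf{r},\varepsilon)-\mathrm{deep},\Omega\mathcal{D}}^{\ell}(I)$, the `generic' part of the inner sum is dominated by $(\mathcal{E}_{\alpha}^{\mathrm{refined}\,\mathrm{plug}})^{2}|I|_{\sigma}$. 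Summing the resulting bound over the at most $\mathbf{\tau}$ values of $F$ per child, and over the $2^{n}$ children, produces the desired $\mathbf{\tau}(\mathcal{E}_{\alpha}^{\mathrm{plug}})^{2}|I|_{\sigma}$ contribution. (The deep plug component of $\mathcal{E}_{\alpha}^{\mathrm{plug}}$ is needed for the parallel sub-case $\ell=0$ or $F\supsetneqq I$, handled analogously by the deep energy condition and the bounded overlap property (\ref{bounded overlap in K}).)

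The exceptional $J$'s are those with $J\subset I'$ and $J\in\mathcal{M}_{(\mathbf{r},\varepsilon)-\mathrm{deep}}(F)$ that fail to be contained in any $L\in\mathcal{M}_{(\mathbf{r},\varepsilon)-\mathrm{deep}}(I)$ -- i.e., cubes lying in the thin tubular region near $\partial I$ where maximal deep embedding in $I$ breaks. For these I would crudely dominate $\|\mathsf{P}_{F,J}^{\omega}\mathbf{x}\|^{2}\leq\|\mathsf{P}_{J}^{\omega}\mathbf{x}\|^{2}\leq \ell(J)^{2}|J|_{\omega}$ using the inequality from the definition of $A_{2}^{\alpha,\mathrm{energy}}$, bound $\mathrm{P}^{\alpha}(J,\mathbf{1}_{I}\sigma)/|J|^{1/n}$ by $|I|_{\sigma}/|I|^{1-\alpha/n}$ up to a Poisson decay factor, and finally use bounded overlap of the collection $\mathcal{M}_{(\mathbf{r},\varepsilon)-\mathrm{deep}}(F)$ inside $I$ to sum. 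This packages these exceptional contributions as a multiple of $A_{2}^{\alpha,\mathrm{energy}}|I|_{\sigma}$, which by Lemma \ref{energy A2} is $\lesssim A_{2}^{\alpha,\mathrm{punct}}|I|_{\sigma}$ as required.

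The main obstacle I anticipate is the third step: keeping track of exactly which $J$'s fall into the `generic' versus `exceptional' categories, and ensuring that the exceptional bookkeeping yields only the \emph{energy}-$A_{2}^{\alpha}$ constant rather than the ordinary $A_{2}^{\alpha}$. The delicate point is that $J$ can fail to lie in any element of $\mathcal{M}_{(\mathbf{r},\varepsilon)-\mathrm{deep}}(I)$ only through boundary geometry between $I'$ and the complementary children, and one must show that at each dyadic scale only a bounded (independent of $\ell$ and of $I$) number of such exceptional $J$'s occur, so that summation against $|J|_\omega$ telescopes into a single $|I|_{\omega}$ factor and matches the energy-$A_{2}^{\alpha}$ Muckenhoupt shape rather than a worse constant.
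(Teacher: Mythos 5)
Your overall strategy parallels the paper's: pigeonhole $J$ by the child $I'\in\mathfrak{C}(I)$ that contains it, apply Lemma \ref{tau ovelap} with $I_0=I'$ to limit the outer $F$-sum to at most $\mathbf{\tau}$ surviving terms $F=\pi_{\mathcal{F}}^{\ell}I'$, and split the surviving $J$'s according to whether they fit inside some $L\in\mathcal{M}_{(\mathbf{r},\varepsilon)-\mathrm{deep}}(I)$. Your ``generic'' piece is correctly identified with $\mathcal{M}_{(\mathbf{r},\varepsilon)-\mathrm{deep},\Omega\mathcal{D}}^{\ell}(I)$ and bounded by $(\mathcal{E}_{\alpha}^{\mathrm{refined}\,\mathrm{plug}})^{2}|I|_\sigma$, matching the paper's $B^{\mathrm{small}}$ estimate. (Minor point: a dyadic $J\subset I$ need not lie in a single child when $I\in\mathcal{A}\Omega\mathcal{D}$ happens also to lie in $\Omega\mathcal{D}$ and $J=I$; the paper isolates and estimates this case separately.)

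The genuine gap is in your handling of the ``exceptional'' $J$'s, i.e., those containing some $L\in\mathcal{M}_{(\mathbf{r},\varepsilon)-\mathrm{deep}}(I)$. You propose the crude bound $\|\mathsf{P}_{F,J}^{\omega}\mathbf{x}\|^{2}\leq\ell(J)^{2}|J|_{\omega}$ and then sum $|J|_\omega$ over pairwise disjoint $J$'s. This loses exactly the feature that makes the resulting constant $A_2^{\alpha,\mathrm{energy}}$ rather than the ordinary $A_2^{\alpha}$: the energy Muckenhoupt constant has $\|\mathsf{P}_{Q}^{\omega}\mathbf{x}/\ell(Q)\|^{2}$ where your packaging produces $|Q|_\omega$, and the former can be much smaller (e.g., a single point mass). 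Ordinary $A_2^\alpha$ without a hole is precisely what this lemma is not allowed to invoke. Separately, the intermediate Poisson bound $\mathrm{P}^{\alpha}(J,\mathbf{1}_{I}\sigma)/|J|^{1/n}\lesssim|I|_\sigma/|I|^{1-\alpha/n}$ is false when $\ell(J)\ll\ell(I)$: mass concentrated near a small $J$ makes the left side blow up while the right side is scale-independent. Your ``telescoping'' idea for $|J|_\omega$ would at best give $A_2^\alpha|I|_\sigma$.

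The missing idea is an additional split of the quasiHaar projection itself. For the cubes that straddle the maximal deep decomposition of $I'$, the paper decomposes
\begin{equation*}
\left\Vert \mathsf{P}_{J}^{\mathrm{good},\omega}\mathbf{x}\right\Vert^{2}_{L^2(\omega)}
\leq\sum_{\substack{J'\in\mathcal{N}_{\mathbf{r}}(I')\\ J'\subset J}}\left\Vert \bigtriangleup_{J'}^{\omega}\mathbf{x}\right\Vert^{2}_{L^2(\omega)}
+\sum_{\substack{K\in\mathcal{M}_{(\mathbf{r},\varepsilon)-\mathrm{deep}}(I')\\ K\subset J}}\left\Vert \mathsf{P}_{K}^{\mathrm{subgood},\omega}\mathbf{x}\right\Vert^{2}_{L^2(\omega)},
\end{equation*}
producing a ``near'' part $B_1^{\mathrm{big}}$ and a ``deep'' part $B_2^{\mathrm{big}}$. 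In the near part every relevant $J'$ satisfies $\ell(J')\geq 2^{-\mathbf{r}}\ell(I')$, so there are only $C2^{n\mathbf{r}}$ of them, the Poisson comparison with $I$ is legitimate, and $\|\mathsf{P}_{J'}^{\omega}\mathbf{x}/\ell(J')\|\lesssim\|\mathsf{P}_{I}^{\omega}\mathbf{x}/\ell(I)\|$, yielding $\mathbf{\tau}2^{n\mathbf{r}}A_2^{\alpha,\mathrm{energy}}|I|_\sigma$. The deep part is controlled by the deep plugged energy condition over the decomposition $\mathcal{M}_{(\mathbf{r},\varepsilon)-\mathrm{deep}}(I')$ using the monotone Poisson inequality $\mathrm{P}^\alpha(J,\cdot)/|J|^{1/n}\lesssim\mathrm{P}^\alpha(K,\cdot)/|K|^{1/n}$ for $K\subset J$. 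Your argument needs this projection split in order to avoid the wrong Muckenhoupt constant.
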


\begin{proof}
We first prove the bound (\ref{B bound}) for $B\left( I\right) $ ignoring
for the moment the possible case when $J=I$ in the sum defining $B\left(
I\right) $. So suppose that $I\in \mathcal{A}\Omega \mathcal{D}$ is an
alternate $\Omega \mathcal{D}$-dyadic quasicube. Define%
\begin{equation*}
\Lambda ^{\ast }\left( I\right) \equiv \left\{ J\subsetneqq I:J\in \mathcal{M%
}_{\left( \mathbf{r},\varepsilon \right) -\limfunc{deep}}\left( F\right) 
\text{ for some }F\supsetneqq I^{\prime }\text{, }I^{\prime }\in \mathfrak{C}%
\left( I\right) \text{ with }\mathsf{P}_{F,J}^{\omega }\neq 0\right\} ,
\end{equation*}%
and pigeonhole this collection as $\Lambda ^{\ast }\left( I\right)
=\dbigcup\limits_{I_{0}^{\prime }\in \mathfrak{C}\left( I\right) }\Lambda
\left( I^{\prime }\right) $, where for each $I^{\prime }\in \mathfrak{C}%
\left( I\right) $ we define 
\begin{equation*}
\Lambda \left( I^{\prime }\right) \equiv \left\{ J\subset I^{\prime }:J\in 
\mathcal{M}_{\left( \mathbf{r},\varepsilon \right) -\limfunc{deep}}\left(
F\right) \text{ for some }F\supsetneqq I^{\prime }\text{ with }\mathsf{P}%
_{F,J}^{\omega }\neq 0\right\} .
\end{equation*}%
By Lemma \ref{tau ovelap} we may further pigeonhole (possibly with some
duplication) the quasicubes $J$ in $\Lambda \left( I^{\prime }\right) $ as
follows:%
\begin{equation*}
\Lambda \left( I^{\prime }\right) \subset \dbigcup\limits_{\ell =0}^{\mathbf{%
\tau }}\Lambda _{\ell }\left( I^{\prime }\right) ;\ \ \ \ \ \Lambda _{\ell
}\left( I^{\prime }\right) \equiv \left\{ J\subset I^{\prime }:J\in \mathcal{%
M}_{\left( \mathbf{r},\varepsilon \right) -\limfunc{deep}}\left( \pi _{%
\mathcal{F}}^{\ell }I^{\prime }\right) \text{ with }\mathsf{P}_{\pi _{%
\mathcal{F}}^{\ell }I^{\prime },J}^{\omega }\neq 0\right\} ,
\end{equation*}%
since every $F\supsetneqq I^{\prime }$ is of the form $\pi _{\mathcal{F}%
}^{\ell }I^{\prime }$ for some $\ell \geq 0$. Altogether then, we have
pigeonholed $\Lambda ^{\ast }\left( I\right) $ as%
\begin{equation*}
\Lambda ^{\ast }\left( I\right) =\dbigcup\limits_{I^{\prime }\in \mathfrak{C}%
\left( I\right) }\dbigcup\limits_{\ell =0}^{\mathbf{\tau }}\Lambda _{\ell
}\left( I^{\prime }\right) .
\end{equation*}

Now fix $I^{\prime }\in \mathfrak{C}\left( I\right) $ and $0\leq \ell \leq 
\mathbf{\tau }$, and for each $J$ in $\Lambda _{\ell }\left( I^{\prime
}\right) $, note that \emph{either} $J$ must contain some $K\in \mathcal{M}%
_{\left( \mathbf{r},\varepsilon \right) -\limfunc{deep}}\left( I^{\prime
}\right) $ \emph{or} $J\subset K$ for some $K\in \mathcal{M}_{\left( \mathbf{%
r},\varepsilon \right) -\limfunc{deep}}\left( I^{\prime }\right) $ (or both
if equality); define%
\begin{eqnarray*}
\Lambda _{\ell }\left( I^{\prime }\right) &=&\Lambda _{\ell }^{\limfunc{big}%
}\left( I^{\prime }\right) \cup \Lambda _{\ell }^{\limfunc{small}}\left(
I^{\prime }\right) ; \\
\Lambda _{\ell }^{\limfunc{small}}\left( I^{\prime }\right) &\equiv &\left\{
J\in \Lambda _{\ell }\left( I^{\prime }\right) :J\subset K\text{ for some }%
K\in \mathcal{M}_{\left( \mathbf{r},\varepsilon \right) -\limfunc{deep}%
}\left( I^{\prime }\right) \right\} ,
\end{eqnarray*}%
and we make the corresponding decomposition of $B\left( I\right) $ (again
with possible duplication);%
\begin{eqnarray*}
B\left( I\right) &=&B^{\limfunc{big}}\left( I\right) +B^{\limfunc{small}%
}\left( I\right) ; \\
B^{\limfunc{big}/\limfunc{small}}\left( I\right) &\equiv &\sum_{I^{\prime
}\in \mathfrak{C}\left( I\right) }\dsum\limits_{\ell =0}^{\mathbf{\tau }%
}\sum_{J\in \Lambda _{\ell }^{\limfunc{big}/\limfunc{small}}\left( I^{\prime
}\right) }\left( \frac{\mathrm{P}^{\alpha }\left( J,\mathbf{1}_{I}\sigma
\right) }{\left\vert J\right\vert ^{\frac{1}{n}}}\right) ^{2}\left\Vert 
\mathsf{P}_{\pi _{\mathcal{F}}^{\ell }I^{\prime },J}^{\omega }\mathbf{x}%
\right\Vert _{L^{2}\left( \omega \right) }^{2}\ .
\end{eqnarray*}

Turning first to $B^{\limfunc{small}}\left( I\right) $, we define $\sigma
\left( \ell \right) $ by $\pi _{\mathcal{F}}^{\left( \ell \right) }\left(
I^{\prime }\right) =\pi _{\Omega \mathcal{D}}^{\left( \sigma \left( \ell
\right) \right) }\left( I^{\prime }\right) $, so that $\Lambda _{\ell }^{%
\limfunc{small}}\left( I^{\prime }\right) \subset \mathcal{M}_{\left( 
\mathbf{r},\varepsilon \right) -\limfunc{deep}}^{\sigma \left( \ell \right)
}\left( I\right) $, and we obtain%
\begin{eqnarray}
&&B^{\limfunc{small}}\left( I\right) \leq \sum_{I^{\prime }\in \mathfrak{C}%
\left( I\right) }\dsum\limits_{\ell =0}^{\mathbf{\tau }}\sum_{J\in \Lambda
_{\ell }^{\limfunc{small}}\left( I^{\prime }\right) }\left( \frac{\mathrm{P}%
^{\alpha }\left( J,\mathbf{1}_{I}\sigma \right) }{\left\vert J\right\vert ^{%
\frac{1}{n}}}\right) ^{2}\left\Vert \mathsf{P}_{J}^{\limfunc{good},\omega }%
\mathbf{x}\right\Vert _{L^{2}\left( \omega \right) }^{2}  \label{B small} \\
&\leq &\sum_{I^{\prime }\in \mathfrak{C}\left( I\right) }\dsum\limits_{\ell
=0}^{\mathbf{\tau }}\sum_{J\in \mathcal{M}_{\left( \mathbf{r},\varepsilon
\right) -\limfunc{deep}}^{\sigma \left( \ell \right) }\left( I\right)
}\left( \frac{\mathrm{P}^{\alpha }\left( J,\mathbf{1}_{I}\sigma \right) }{%
\left\vert J\right\vert ^{\frac{1}{n}}}\right) ^{2}\left\Vert \mathsf{P}%
_{J}^{\limfunc{good},\omega }\mathbf{x}\right\Vert _{L^{2}\left( \omega
\right) }^{2}  \notag \\
&\lesssim &2^{n}\mathbf{\tau }\left( \mathcal{E}_{\alpha }^{\limfunc{refined}%
\limfunc{plug}}\right) ^{2}\left\vert I\right\vert _{\sigma }\ .  \notag
\end{eqnarray}

This is the only point in the proof of Theorem \ref{T1 theorem} that a
refined quasienergy constant is used.

Turning now to the more delicate term $B^{\limfunc{big}}\left( I\right) $,
we write for $J\in \Lambda _{\ell }^{\limfunc{big}}\left( I^{\prime }\right) 
$,%
\begin{eqnarray*}
\left\Vert \mathsf{P}_{J}^{\limfunc{good},\omega }\mathbf{x}\right\Vert
_{L^{2}\left( \omega \right) }^{2} &=&\sum_{J^{\prime }\subset J\text{:\ }%
J^{\prime }\text{ good}}\left\Vert \bigtriangleup _{J^{\prime }}^{\omega }%
\mathbf{x}\right\Vert _{L^{2}\left( \omega \right) }^{2} \\
&\leq &\sum_{J^{\prime }\in \mathcal{N}_{\mathbf{r}}\left( I^{\prime
}\right) \text{:\ }J^{\prime }\subset J}\left\Vert \bigtriangleup
_{J^{\prime }}^{\omega }\mathbf{x}\right\Vert _{L^{2}\left( \omega \right)
}^{2}+\sum_{K\in \mathcal{M}_{\left( \mathbf{r},\varepsilon \right) -%
\limfunc{deep}}\left( I^{\prime }\right) \text{:\ }K\subset J}\left\Vert 
\mathsf{P}_{K}^{\limfunc{good},\omega }\mathbf{x}\right\Vert _{L^{2}\left(
\omega \right) }^{2}\ ,
\end{eqnarray*}%
where for any $I$, we define $\mathcal{N}_{\mathbf{r}}\left( I\right) \equiv
\left\{ J^{\prime }\subset I:\ell \left( J^{\prime }\right) \geq 2^{-\mathbf{%
r}}\ell \left( I\right) \right\} $ to be the set of $r$-near quasicubes in $%
I $. Then we estimate%
\begin{eqnarray*}
B^{\limfunc{big}}\left( I\right) &=&\sum_{I^{\prime }\in \mathfrak{C}\left(
I\right) }\dsum\limits_{\ell =0}^{\mathbf{\tau }}\sum_{J\in \Lambda _{\ell
}^{\limfunc{big}}\left( I^{\prime }\right) }\left( \frac{\mathrm{P}^{\alpha
}\left( J,\mathbf{1}_{I}\sigma \right) }{\left\vert J\right\vert ^{\frac{1}{n%
}}}\right) ^{2}\left\Vert \mathsf{P}_{\pi _{\mathcal{F}}^{\ell }I^{\prime
},J}^{\omega }\mathbf{x}\right\Vert _{L^{2}\left( \omega \right) }^{2} \\
&\leq &\sum_{I^{\prime }\in \mathfrak{C}\left( I\right) }\dsum\limits_{\ell
=0}^{\mathbf{\tau }}\sum_{J\in \Lambda _{\ell }^{\limfunc{big}}\left(
I^{\prime }\right) }\left( \frac{\mathrm{P}^{\alpha }\left( J,\mathbf{1}%
_{I}\sigma \right) }{\left\vert J\right\vert ^{\frac{1}{n}}}\right)
^{2}\left\Vert \mathsf{P}_{J}^{\limfunc{good},\omega }\mathbf{x}\right\Vert
_{L^{2}\left( \omega \right) }^{2} \\
&=&\sum_{I^{\prime }\in \mathfrak{C}\left( I\right) }\dsum\limits_{\ell =0}^{%
\mathbf{\tau }}\sum_{J\in \Lambda _{\ell }^{\limfunc{big}}\left( I^{\prime
}\right) }\left( \frac{\mathrm{P}^{\alpha }\left( J,\mathbf{1}_{I}\sigma
\right) }{\left\vert J\right\vert ^{\frac{1}{n}}}\right) ^{2}\sum_{J^{\prime
}\in \mathcal{N}_{\mathbf{r}}\left( I^{\prime }\right) \text{:\ }J^{\prime
}\subset J}\left\Vert \bigtriangleup _{J^{\prime }}^{\omega }\mathbf{x}%
\right\Vert _{L^{2}\left( \omega \right) }^{2} \\
&&+\sum_{I^{\prime }\in \mathfrak{C}\left( I\right) }\dsum\limits_{\ell =0}^{%
\mathbf{\tau }}\sum_{J\in \Lambda _{\ell }^{\limfunc{big}}\left( I^{\prime
}\right) }\left( \frac{\mathrm{P}^{\alpha }\left( J,\mathbf{1}_{I}\sigma
\right) }{\left\vert J\right\vert ^{\frac{1}{n}}}\right) ^{2}\sum_{K\in 
\mathcal{M}_{\left( \mathbf{r},\varepsilon \right) -\limfunc{deep}}\left(
I^{\prime }\right) \text{:\ }K\subset J}\left\Vert \mathsf{P}_{K}^{\limfunc{%
subgood},\omega }\mathbf{x}\right\Vert _{L^{2}\left( \omega \right) }^{2} \\
&\equiv &B_{1}^{\limfunc{big}}\left( I\right) +B_{2}^{\limfunc{big}}\left(
I\right) \ .
\end{eqnarray*}%
Now using that the quasicubes $J$ in each $\Lambda _{\ell }^{\limfunc{big}%
}\left( I^{\prime }\right) $ that arise in the term $B_{1}^{\limfunc{big}%
}\left( I\right) $ are both $\mathbf{r}$-nearby in $I^{\prime }$ and
pairwise disjoint, and that there are altogether only $C2^{n\mathbf{r}}$
quasicubes in $\mathcal{N}_{\mathbf{r}}\left( I^{\prime }\right) $, we have 
\begin{eqnarray}
&&  \label{big 1} \\
B_{1}^{\limfunc{big}}\left( I\right) &=&\sum_{I^{\prime }\in \mathfrak{C}%
\left( I\right) }\dsum\limits_{\ell =0}^{\mathbf{\tau }}\sum_{J\in \Lambda
_{\ell }^{\limfunc{big}}\left( I^{\prime }\right) \cap \mathcal{N}_{\mathbf{r%
}}\left( I^{\prime }\right) }\left( \frac{\mathrm{P}^{\alpha }\left( J,%
\mathbf{1}_{I}\sigma \right) }{\left\vert J\right\vert ^{\frac{1}{n}}}%
\right) ^{2}\sum_{J^{\prime }\in \mathcal{N}_{\mathbf{r}}\left( I^{\prime
}\right) \text{:\ }J^{\prime }\subset J}\left\Vert \bigtriangleup
_{J^{\prime }}^{\omega }\mathbf{x}\right\Vert _{L^{2}\left( \omega \right)
}^{2}  \notag \\
&\lesssim &\sum_{I^{\prime }\in \mathfrak{C}\left( I\right)
}\dsum\limits_{\ell =0}^{\mathbf{\tau }}\sum_{J\in \Lambda _{\ell }^{%
\limfunc{big}}\left( I^{\prime }\right) \cap \mathcal{N}_{\mathbf{r}}\left(
I^{\prime }\right) }\left( \frac{\mathrm{P}^{\alpha }\left( J,\mathbf{1}%
_{I}\sigma \right) }{\left\vert J\right\vert ^{\frac{1}{n}}}\right)
^{2}\left\Vert \mathsf{P}_{J}^{\omega }\mathbf{x}\right\Vert _{L^{2}\left(
\omega \right) }^{2}  \notag \\
&\lesssim &\mathbf{\tau }2^{n\mathbf{r}}\sup_{I^{\prime }\in \mathfrak{C}%
\left( I\right) }\sup_{J\in \mathcal{N}_{\mathbf{r}}\left( I^{\prime
}\right) }\left( \frac{\mathrm{P}^{\alpha }\left( J,\mathbf{1}_{I}\sigma
\right) }{\left\vert J\right\vert ^{\frac{1}{n}}}\right) ^{2}\left\Vert 
\mathsf{P}_{J}^{\omega }\mathbf{x}\right\Vert _{L^{2}\left( \omega \right)
}^{2}  \notag \\
&\lesssim &\mathbf{\tau }2^{n\mathbf{r}}\left( \frac{\left\vert I\right\vert
_{\sigma }}{\left\vert I\right\vert ^{1-\frac{\alpha }{n}}}\right)
^{2}\left\Vert \mathsf{P}_{I}^{\omega }\frac{\mathbf{x}}{\ell \left(
I\right) }\right\Vert _{L^{2}\left( \omega \right) }^{2}\lesssim \mathbf{%
\tau }2^{n\mathbf{r}}A_{2}^{\alpha ,\limfunc{energy}}\left\vert I\right\vert
_{\sigma }\ .  \notag
\end{eqnarray}%
At this point we can estimate the missing case $J=I$ in the same way, namely%
\begin{eqnarray*}
&&\sum_{F\in \mathcal{F}:\ I\in \mathcal{M}_{\left( \mathbf{r},\varepsilon
\right) -\limfunc{deep}}\left( F\right) }\left( \frac{\mathrm{P}^{\alpha
}\left( I,\mathbf{1}_{I}\sigma \right) }{\left\vert I\right\vert ^{\frac{1}{n%
}}}\right) ^{2}\left\Vert \mathsf{P}_{F,I}^{\omega }\mathbf{x}\right\Vert
_{L^{2}\left( \omega \right) }^{2} \\
&\lesssim &\mathbf{\tau }\left( \frac{\mathrm{P}^{\alpha }\left( I,\mathbf{1}%
_{I}\sigma \right) }{\left\vert I\right\vert ^{\frac{1}{n}}}\right)
^{2}\left\Vert \mathsf{P}_{I}^{\omega }\mathbf{x}\right\Vert _{L^{2}\left(
\omega \right) }^{2}\lesssim \mathbf{\tau }A_{2}^{\alpha ,\limfunc{energy}%
}\left\vert I\right\vert _{\sigma }\ .
\end{eqnarray*}

Since $\frac{\mathrm{P}^{\alpha }\left( J,\mathbf{1}_{I}\sigma \right) }{%
\left\vert J\right\vert ^{\frac{1}{n}}}\lesssim \frac{\mathrm{P}^{\alpha
}\left( K,\mathbf{1}_{I}\sigma \right) }{\left\vert K\right\vert ^{\frac{1}{n%
}}}$ for $K\subset J$, and since the quasicubes $J\in \Lambda _{\ell }^{%
\limfunc{big}}\left( I^{\prime }\right) $ are pairwise disjoint, we have%
\begin{eqnarray*}
B_{2}^{\limfunc{big}}\left( I\right) &=&\sum_{I^{\prime }\in \mathfrak{C}%
\left( I\right) }\dsum\limits_{\ell =0}^{\mathbf{\tau }}\sum_{J\in \Lambda
_{\ell }^{\limfunc{big}}\left( I^{\prime }\right) }\sum_{K\in \mathcal{M}%
_{\left( \mathbf{r},\varepsilon \right) -\limfunc{deep}}\left( I^{\prime
}\right) \text{:\ }K\subset J}\left( \frac{\mathrm{P}^{\alpha }\left( J,%
\mathbf{1}_{I}\sigma \right) }{\left\vert J\right\vert ^{\frac{1}{n}}}%
\right) ^{2}\left\Vert \mathsf{P}_{K}^{\limfunc{subgood},\omega }\mathbf{x}%
\right\Vert _{L^{2}\left( \omega \right) }^{2} \\
&\lesssim &\sum_{I^{\prime }\in \mathfrak{C}\left( I\right)
}\dsum\limits_{\ell =0}^{\mathbf{\tau }}\sum_{J\in \Lambda _{\ell }^{%
\limfunc{big}}\left( I^{\prime }\right) }\sum_{K\in \mathcal{M}_{\left( 
\mathbf{r},\varepsilon \right) -\limfunc{deep}}\left( I^{\prime }\right) 
\text{:\ }K\subset J}\left( \frac{\mathrm{P}^{\alpha }\left( K,\mathbf{1}%
_{I}\sigma \right) }{\left\vert K\right\vert ^{\frac{1}{n}}}\right)
^{2}\left\Vert \mathsf{P}_{K}^{\limfunc{subgood},\omega }\mathbf{x}%
\right\Vert _{L^{2}\left( \omega \right) }^{2} \\
&\leq &\sum_{I^{\prime }\in \mathfrak{C}\left( I\right) }\dsum\limits_{\ell
=0}^{\mathbf{\tau }}\sum_{K\in \mathcal{M}_{\left( \mathbf{r},\varepsilon
\right) -\limfunc{deep}}\left( I^{\prime }\right) }\left( \frac{\mathrm{P}%
^{\alpha }\left( K,\mathbf{1}_{I}\sigma \right) }{\left\vert K\right\vert ^{%
\frac{1}{n}}}\right) ^{2}\left\Vert \mathsf{P}_{K}^{\limfunc{subgood},\omega
}\mathbf{x}\right\Vert _{L^{2}\left( \omega \right) }^{2} \\
&\lesssim &\mathbf{\tau }\left( \mathcal{E}_{\alpha }^{\limfunc{deep}%
\limfunc{plug}}\right) ^{2}\left\vert I\right\vert _{\sigma }\ ,
\end{eqnarray*}%
where the final line follows from the plugged deep energy condition with the
trivial outer decomposition $I=\dbigcup\limits_{I^{\prime }\in \mathfrak{C}%
\left( I\right) }I^{\prime }$. This completes the proof of Lemma \ref%
{refined lemma}.
\end{proof}

\subsection{The forward Poisson testing inequality}

Fix $I\in \Omega \mathcal{D}$. We split the integration on the left side of (%
\ref{e.t1 n}) into a local and global piece:%
\begin{equation*}
\int_{\mathbb{R}_{+}^{n+1}}\mathbb{P}^{\alpha }\left( \mathbf{1}_{I}\sigma
\right) ^{2}d\overline{\mu }=\int_{\widehat{I}}\mathbb{P}^{\alpha }\left( 
\mathbf{1}_{I}\sigma \right) ^{2}d\overline{\mu }+\int_{\mathbb{R}%
_{+}^{n+1}\setminus \widehat{I}}\mathbb{P}^{\alpha }\left( \mathbf{1}%
_{I}\sigma \right) ^{2}d\overline{\mu }\equiv \mathbf{Local}\left( I\right) +%
\mathbf{Global}\left( I\right) ,
\end{equation*}%
where more explicitly,%
\begin{eqnarray}
&&\mathbf{Local}\left( I\right) \equiv \int_{\widehat{I}}\left[ \mathbb{P}%
^{\alpha }\left( \mathbf{1}_{I}\sigma \right) \left( x,t\right) \right] ^{2}d%
\overline{\mu }\left( x,t\right) ;\ \ \ \ \ \overline{\mu }\equiv \frac{1}{%
t^{2}}\mu ,  \label{def local forward} \\
\text{i.e. }\overline{\mu } &\equiv &\sum_{J\in \Omega \mathcal{D}}\frac{1}{%
\ell \left( J\right) ^{2}}\ \sum_{F\in \mathcal{F}}\sum_{J\in \mathcal{M}%
_{\left( \mathbf{r},\varepsilon \right) -\limfunc{deep}}\left( F\right)
}\left\Vert \mathsf{P}_{F,J}^{\omega }\mathbf{x}\right\Vert _{L^{2}\left(
\omega \right) }^{2}\cdot \delta _{\left( c_{J},\ell \left( J\right) \right)
}.  \notag
\end{eqnarray}%
Here is a brief schematic diagram of the decompositions, with bounds in $%
\fbox{}$, used in this subsection:%
\begin{equation*}
\fbox{$%
\begin{array}{ccc}
\mathbf{Local}\left( I\right) &  &  \\ 
\downarrow &  &  \\ 
\mathbf{Local}^{\limfunc{plug}}\left( I\right) & + & \mathbf{Local}^{%
\limfunc{hole}}\left( I\right) \\ 
\downarrow &  & \fbox{$\left( \mathcal{E}_{\alpha }^{\limfunc{deep}}\right)
^{2}$} \\ 
\downarrow &  &  \\ 
A & + & B \\ 
\fbox{$\left( \mathcal{E}_{\alpha }^{\limfunc{deep}\limfunc{plug}}\right)
^{2}$} &  & \fbox{$\left( \mathcal{E}_{\alpha }^{\limfunc{plug}}\right)
^{2}+A_{2}^{\alpha ,\limfunc{energy}}$}%
\end{array}%
$}
\end{equation*}%
and%
\begin{equation*}
\fbox{$%
\begin{array}{ccccccc}
\mathbf{Global}\left( I\right) &  &  &  &  &  &  \\ 
\downarrow &  &  &  &  &  &  \\ 
A & + & B & + & C & + & D \\ 
\fbox{$A_{2}^{\alpha }$} &  & \fbox{$A_{2}^{\alpha }+A_{2}^{\alpha ,\limfunc{%
energy}}$} &  & \fbox{$\mathcal{A}_{2}^{\alpha ,\ast }$} &  & \fbox{$%
\mathcal{A}_{2}^{\alpha ,\ast }+A_{2}^{\alpha ,\limfunc{energy}%
}+A_{2}^{\alpha ,\limfunc{punct}}$}%
\end{array}%
$}.
\end{equation*}

An important consequence of the fact that $I$ and $J$ lie in the same
quasigrid $\Omega \mathcal{D}=\Omega \mathcal{D}^{\omega }$, is that%
\begin{equation}
\left( c\left( J\right) ,\ell \left( J\right) \right) \in \widehat{I}\text{ 
\textbf{if and only if} }J\subset I.  \label{tent consequence}
\end{equation}%
We thus have

\begin{eqnarray*}
&&\mathbf{Local}\left( I\right) =\int_{\widehat{I}}\mathbb{P}^{\alpha
}\left( \mathbf{1}_{I}\sigma \right) \left( x,t\right) ^{2}d\overline{\mu }%
\left( x,t\right) \\
&=&\sum_{F\in \mathcal{F}}\sum_{J\in \mathcal{M}_{\mathbf{r}-\limfunc{deep}%
}(F):\ J\subset I}\mathbb{P}^{\alpha }\left( \mathbf{1}_{I}\sigma \right)
\left( c_{J},\left\vert J\right\vert ^{\frac{1}{n}}\right) ^{2}\left\Vert 
\mathsf{P}_{F,J}^{\omega }\frac{\mathbf{x}}{\left\vert J\right\vert ^{\frac{1%
}{n}}}\right\Vert _{L^{2}\left( \omega \right) }^{2} \\
&\approx &\sum_{F\in \mathcal{F}}\sum_{J\in \mathcal{M}_{\mathbf{r}-\limfunc{%
deep}}\left( F\right) :\ J\subset I}\mathrm{P}^{\alpha }\left( J,\mathbf{1}%
_{I}\sigma \right) ^{2}\lVert \mathsf{P}_{F,J}^{\omega }\frac{\mathbf{x}}{%
\left\vert J\right\vert ^{\frac{1}{n}}}\rVert _{L^{2}\left( \omega \right)
}^{2} \\
&\lesssim &\mathbf{Local}^{\limfunc{plug}}\left( I\right) +\mathbf{Local}^{%
\func{hole}}\left( I\right) ,
\end{eqnarray*}%
where the `plugged' local sum $\mathbf{Local}^{\limfunc{plug}}\left(
I\right) $ is given by 
\begin{align*}
& \mathbf{Local}^{\limfunc{plug}}\left( I\right) \equiv \sum_{F\in \mathcal{F%
}}\sum_{J\in \mathcal{M}_{\mathbf{r}-\limfunc{deep}}\left( F\right) :\
J\subset I}\left( \frac{\mathrm{P}^{\alpha }\left( J,\mathbf{1}_{F\cap
I}\sigma \right) }{\left\vert J\right\vert ^{\frac{1}{n}}}\right)
^{2}\left\Vert \mathsf{P}_{F,J}^{\omega }\mathbf{x}\right\Vert _{L^{2}\left(
\omega \right) }^{2} \\
& =\left\{ \sum_{F\in \mathcal{F}:\ F\subset I}+\sum_{F\in \mathcal{F}:\
F\supsetneqq I}\right\} \sum_{J\in \mathcal{M}_{\mathbf{r}-\limfunc{deep}%
}\left( F\right) :\ J\subset I}\left( \frac{\mathrm{P}^{\alpha }\left( J,%
\mathbf{1}_{F\cap I}\sigma \right) }{\left\vert J\right\vert ^{\frac{1}{n}}}%
\right) ^{2}\left\Vert \mathsf{P}_{F,J}^{\omega }\mathbf{x}\right\Vert
_{L^{2}\left( \omega \right) }^{2} \\
& =A+B.
\end{align*}%
Then a \emph{trivial} application of the deep quasienergy condition (where
`trivial' means that the outer decomposition is just a single quasicube)
gives 
\begin{eqnarray*}
A &\leq &\sum_{F\in \mathcal{F}:\ F\subset I}\sum_{J\in \mathcal{M}_{\mathbf{%
r}-\limfunc{deep}}\left( F\right) }\left( \frac{\mathrm{P}^{\alpha }\left( J,%
\mathbf{1}_{F}\sigma \right) }{\left\vert J\right\vert ^{\frac{1}{n}}}%
\right) ^{2}\left\Vert \mathsf{P}_{F,J}^{\omega }\mathbf{x}\right\Vert
_{L^{2}\left( \omega \right) }^{2} \\
&\leq &\sum_{F\in \mathcal{F}:\ F\subset I}\left( \mathcal{E}_{\alpha }^{%
\limfunc{deep}\limfunc{plug}}\right) ^{2}\left\vert F\right\vert _{\sigma
}\lesssim \left( \mathcal{E}_{\alpha }^{\limfunc{deep}\limfunc{plug}}\right)
^{2}\left\vert I\right\vert _{\sigma }\,,
\end{eqnarray*}%
since $\left\Vert \mathsf{P}_{F,J}^{\omega }x\right\Vert _{L^{2}\left(
\omega \right) }^{2}\leq \left\Vert \mathsf{P}_{J}^{\limfunc{good},\omega }%
\mathbf{x}\right\Vert _{L^{2}\left( \omega \right) }^{2}$, where we recall
that the quasienergy constant $\mathcal{E}_{\alpha }^{\limfunc{deep}\limfunc{%
plug}}$ is defined in (\ref{def deep plug}). We also used here that the
stopping quasicubes $\mathcal{F}$ satisfy a $\sigma $-Carleson measure
estimate, 
\begin{equation*}
\sum_{F\in \mathcal{F}:\ F\subset F_{0}}\left\vert F\right\vert _{\sigma
}\lesssim \left\vert F_{0}\right\vert _{\sigma }.
\end{equation*}%
Lemma \ref{refined lemma} applies to the remaining term $B$ to obtain the
bound%
\begin{equation*}
B\lesssim \mathbf{\tau }\left( \left( \mathcal{E}_{\alpha }^{\limfunc{plug}%
}\right) ^{2}+A_{2}^{\alpha ,\limfunc{energy}}\right) \left\vert
I\right\vert _{\sigma }\ .
\end{equation*}

It remains then to show the inequality with `holes', where the support of $%
\sigma $ is restricted to the complement of the quasicube $F$. Thus for $%
J\in \mathcal{M}_{\left( \mathbf{r},\varepsilon \right) -\limfunc{deep}%
}\left( F\right) $ we may use $I\setminus F$ in the argument of the Poisson
integral. We consider%
\begin{equation*}
\mathbf{Local}^{\func{hole}}\left( I\right) =\sum_{F\in \mathcal{F}%
}\sum_{J\in \mathcal{M}_{\left( \mathbf{r},\varepsilon \right) -\limfunc{deep%
}}\left( F\right) :\ J\subset I}\left( \frac{\mathrm{P}^{\alpha }\left( J,%
\mathbf{1}_{I\setminus F}\sigma \right) }{\left\vert J\right\vert ^{\frac{1}{%
n}}}\right) ^{2}\left\Vert \mathsf{P}_{F,J}^{\omega }\mathbf{x}\right\Vert
_{L^{2}\left( \omega \right) }^{2}\ .
\end{equation*}

\begin{lemma}
\label{local hole}We have 
\begin{equation}
\mathbf{Local}^{\func{hole}}\left( I\right) \lesssim \left( \mathcal{E}%
_{\alpha }^{\limfunc{deep}}\right) ^{2}\left\vert I\right\vert _{\sigma }\,.
\label{RTS n}
\end{equation}
\end{lemma}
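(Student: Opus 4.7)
The plan is to reduce $\mathbf{Local}^{\func{hole}}(I)$ to the deep quasienergy condition by organizing the double sum according to the $\mathcal{F}$-tree, gaining a geometric decay factor from goodness that can be summed against the $\sigma$-Carleson estimate for $\mathcal{F}$.

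First I would make two elementary reductions. Only $F\in\mathcal{F}$ with $F\subsetneq I$ contribute, because $J\subset I$ together with $J\in\mathcal{M}(F)$ forces either $F\subseteq I$ or $I\subsetneq F$ by the $\Omega\mathcal{D}$-dyadic structure, and the latter case kills $\mathbf{1}_{I\setminus F}\sigma$. Since the parameters are fixed with $\gamma\leq c_{n}2^{(1-\varepsilon)\mathbf{r}}$, the containment \eqref{gamma contained} gives $\gamma J\subset F$ for every $J\in\mathcal{M}_{(\mathbf{r},\varepsilon)-\limfunc{deep}}(F)$, and hence $\mathbf{1}_{I\setminus F}\leq\mathbf{1}_{I\setminus\gamma J}$. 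Combined with the obvious projection bound $\lVert\mathsf{P}_{F,J}^{\omega}\mathbf{x}\rVert_{L^{2}(\omega)}^{2}\leq\lVert\mathsf{P}_{J}^{\limfunc{subgood},\omega}\mathbf{x}\rVert_{L^{2}(\omega)}^{2}$, each summand is already of the shape appearing in the hole version of the deep quasienergy condition; what remains is organizing the double sum so the condition can actually be applied.

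Next, to gain summability over the $\mathcal{F}$-tree I would pigeonhole $F$ by its $\mathcal{F}$-generation inside $I$. Writing $\mathfrak{C}_{\mathcal{F}}^{(k)}(I)$ for the $k$-th generation $\mathcal{F}$-descendants of $I$, telescope
\[
\mathbf{1}_{I\setminus F}\,\sigma \;=\; \sum_{j=0}^{k-1}\mathbf{1}_{\pi_{\mathcal{F}}^{j+1}F\,\setminus\,\pi_{\mathcal{F}}^{j}F}\,\sigma ,\qquad F\in\mathfrak{C}_{\mathcal{F}}^{(k)}(I).
\]
For each annulus the good subquasicubes $J^{\prime}\subset J$ carrying the mass of $\mathsf{P}_{F,J}^{\omega}\mathbf{x}$ are automatically $(\mathbf{r},\varepsilon)$-deeply embedded in $\pi_{\mathcal{F}}^{j}F$ (by goodness applied at the scale $\pi_{\mathcal{F}}^{j}F$), so Lemma \ref{Poisson inequality} yields
\[
\mathrm{P}^{\alpha}\!\big(J,\mathbf{1}_{\pi_{\mathcal{F}}^{j+1}F\,\setminus\,\pi_{\mathcal{F}}^{j}F}\sigma\big)\;\lesssim\;\bigg(\frac{\ell(J)}{\ell(\pi_{\mathcal{F}}^{j}F)}\bigg)^{\!\!1-\varepsilon(n+1-\alpha)}\mathrm{P}^{\alpha}\!\big(\pi_{\mathcal{F}}^{j}F,\mathbf{1}_{\pi_{\mathcal{F}}^{j+1}F\,\setminus\,\pi_{\mathcal{F}}^{j}F}\sigma\big).
\]
Choosing $\varepsilon$ small enough (depending only on $n,\alpha$) that $\eta\equiv 1-\varepsilon(n+1-\alpha)>0$, and using $\ell(\pi_{\mathcal{F}}^{j}F)\geq 2^{j}\ell(F)$, this gives decay $2^{-j\eta}(\ell(J)/\ell(F))^{\eta}$ times an ancestor-scale Poisson, and Cauchy--Schwarz in $j$ with weights $2^{-j\eta/2}$ absorbs the $j$-sum after squaring.

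Finally I would reorganize the resulting estimate by fixing the ancestor $G=\pi_{\mathcal{F}}^{j}F$. For each such $G\in\mathcal{F}$ with $G\subset I$, the pairs $(F,J)$ with $F\in\mathfrak{C}_{\mathcal{F}}^{(1)}(G)$ and $J\in\mathcal{M}(F)$ form a subpartition of $G$ suitable for the deep quasienergy condition applied with outer quasicube $G$, which bounds the resulting inner double sum by $(\mathcal{E}_{\alpha}^{\limfunc{deep}\limfunc{plug}})^{2}\lvert G\rvert_{\sigma}$. Summing the geometric series in $j$ and then using the $\sigma$-Carleson estimate $\sum_{G\in\mathcal{F},\,G\subset I}\lvert G\rvert_{\sigma}\lesssim\lvert I\rvert_{\sigma}$ yields the claimed bound $(\mathcal{E}_{\alpha}^{\limfunc{deep}})^{2}\lvert I\rvert_{\sigma}$ (after using Corollary \ref{all plugged} to drop the decoration `$\limfunc{plug}$'). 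The main technical obstacle is balancing the goodness-decay exponent $\eta$ against the double summation over the $\mathcal{F}$-tree so that only the energy constant, and not any Muckenhoupt constant, appears on the right-hand side; this balance is made possible by choosing the goodness parameter $\varepsilon$ small enough that $\eta>0$, which is consistent with the other constraints already imposed on $(\mathbf{r},\varepsilon)$ in the proof of Theorem \ref{T1 theorem}.
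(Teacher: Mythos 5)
Your proposal diverges from the paper's proof at the crucial step and, as written, has a gap that I do not think can be closed along the lines you indicate.

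The paper's proof never invokes the fractional Poisson decay (Lemma \ref{Poisson inequality}). Instead, after telescoping $\mathbf{1}_{I\setminus F}=\sum_{F':\,F\subset F'\subsetneq I}\mathbf{1}_{\pi F'\setminus F'}$, the paper reorganizes by the annulus level $F'$, uses the Poisson \emph{equivalence} of Lemma \ref{Poisson inequalities} to replace $\mathrm{P}^{\alpha}(J,\cdot)/|J|^{1/n}$ by $\mathrm{P}^{\alpha}(K,\cdot)/|K|^{1/n}$ for $K\in\mathcal{M}_{(\mathbf{r},\varepsilon)-\limfunc{deep}}(F')$, aggregates the small projections into $\widehat{\mathsf{P}}_{F',K}^{\omega}\leq\mathsf{P}_{K}^{\limfunc{good},\omega}$, applies the deep quasienergy condition with outer quasicube $F''=\pi F'$ and the subpartition $\{F'\}_{\pi F'=F''}$, and finally obtains summability from the \emph{geometric decay in} $\mathcal{F}$-\emph{tree generations}, $\sum_{F'':d(F'')=k-1}|F''|_{\sigma}\lesssim 2^{-\delta k}|I|_{\sigma}$, which is a direct consequence of the $\sigma$-Carleson property. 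Your proposal tries to replace this last step by a gain from the Poisson kernel, which leads to the following problems.

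First, Lemma \ref{Poisson inequality} requires $\limfunc{qdist}(J,\partial(\pi_{\mathcal{F}}^{j}F))>\tfrac12\ell(J)^{\varepsilon}\ell(\pi_{\mathcal{F}}^{j}F)^{1-\varepsilon}$. The quasicube $J\in\mathcal{M}_{(\mathbf{r},\varepsilon)-\limfunc{deep}}(F)$ satisfies this only with $F$ in place of $\pi_{\mathcal{F}}^{j}F$; since $J$ is not assumed good, the estimate does not propagate to larger ancestors, and goodness of the subquasicubes $J'\subset J$ does not help because the Poisson integral in the size functional lives at scale $J$, not $J'$. Second, and more seriously, the claimed decay is illusory once the normalization is respected: Lemma \ref{Poisson inequality} gives $\mathrm{P}^{\alpha}(J,\mu)\lesssim(\ell(J)/\ell(G))^{\eta}\mathrm{P}^{\alpha}(G,\mu)$ with $\eta=1-\varepsilon(n+1-\alpha)\in(0,1)$, and dividing both sides by $|J|^{1/n}$ yields
\begin{equation*}
\frac{\mathrm{P}^{\alpha}(J,\mu)}{|J|^{1/n}}\;\lesssim\;\left(\frac{\ell(J)}{\ell(G)}\right)^{\eta-1}\frac{\mathrm{P}^{\alpha}(G,\mu)}{|G|^{1/n}},
\end{equation*}
where $\eta-1<0$; since $\ell(J)<\ell(G)$, the factor $(\ell(J)/\ell(G))^{\eta-1}$ is \emph{larger} than one. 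This is exactly the ``almost decreasing'' phenomenon recorded in the Remark after Definition \ref{def ext size}, and is what one should expect: normalized Poisson tails grow as the testing quasicube shrinks. To salvage genuine geometric decay in $j$, one must cancel the $|J|^{-2/n}$ normalization against the projection bound $\|\mathsf{P}_{F,J}^{\omega}\mathbf{x}\|^{2}\leq|J|^{2/n}|J|_{\omega}$ (as the paper does in the \emph{global} estimate for term $B$), but this inevitably produces $A_{2}^{\alpha}$ rather than $\mathcal{E}_{\alpha}^{\limfunc{deep}}$ on the right-hand side, contrary to what the lemma asserts. Third, the reorganization in your final paragraph is inconsistent: you fix $G=\pi_{\mathcal{F}}^{j}F$, so that $F$ is a $j$-generation $\mathcal{F}$-descendant of $G$, and then sum over $F\in\mathfrak{C}_{\mathcal{F}}^{(1)}(G)$ as though $j=1$; and even for $j=1$, the energy condition with outer quasicube $G$ only controls $K\in\mathcal{M}_{(\mathbf{r},\varepsilon)-\limfunc{deep}}(F)$, not the much smaller $J\in\mathcal{M}_{(\mathbf{r},\varepsilon)-\limfunc{deep}}(F)$ for $F$ deep inside $G$, so the aggregation of projections (which the paper handles through $\widehat{\mathsf{P}}_{F',K}^{\omega}$) is simply missing. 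The correct mechanism for summability here is the $\sigma$-Carleson geometric decay of $\mathcal{F}$-generations, not a Poisson gain.
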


\begin{proof}
Fix $I\in \Omega \mathcal{D}$ and define%
\begin{equation*}
\mathcal{F}_{I}\equiv \left\{ F\in \mathcal{F}:F\subset I\right\} \cup
\left\{ I\right\} ,
\end{equation*}%
and denote by $\pi F$, for this proof only, the parent of $F$ in the tree $%
\mathcal{F}_{I}$. We estimate%
\begin{equation*}
S\equiv \sum_{F\in \mathcal{F}_{I}}\sum_{J\in \mathcal{M}_{\left( \mathbf{r}%
,\varepsilon \right) -\limfunc{deep}}\left( F\right) :\ J\subset I}\left( 
\frac{\mathrm{P}^{\alpha }\left( J,\mathbf{1}_{I\setminus F}\sigma \right) }{%
\left\vert J\right\vert ^{\frac{1}{n}}}\right) ^{2}\left\Vert \mathsf{P}%
_{F,J}^{\omega }\mathbf{x}\right\Vert _{L^{2}\left( \omega \right) }^{2}
\end{equation*}%
by 
\begin{eqnarray*}
S &=&\sum_{F\in \mathcal{F}_{I}}\sum_{J\in \mathcal{M}_{\left( \mathbf{r}%
,\varepsilon \right) -\limfunc{deep}}\left( F\right) :\ J\subset
I}\sum_{F^{\prime }\in \mathcal{F}:\ F\subset F^{\prime }\subsetneqq
I}\left( \frac{\mathrm{P}^{\alpha }\left( J,\mathbf{1}_{\pi F^{\prime
}\setminus F^{\prime }}\sigma \right) }{\left\vert J\right\vert ^{\frac{1}{n}%
}}\right) ^{2}\left\Vert \mathsf{P}_{F,J}^{\omega }\mathbf{x}\right\Vert
_{L^{2}\left( \omega \right) }^{2} \\
&=&\sum_{F^{\prime }\in \mathcal{F}_{I}}\sum_{F\in \mathcal{F}:\ F\subset
F^{\prime }}\sum_{J\in \mathcal{M}_{\left( \mathbf{r},\varepsilon \right) -%
\limfunc{deep}}\left( F\right) :\ J\subset I}\left( \frac{\mathrm{P}^{\alpha
}\left( J,\mathbf{1}_{\pi F^{\prime }\setminus F^{\prime }}\sigma \right) }{%
\left\vert J\right\vert ^{\frac{1}{n}}}\right) ^{2}\left\Vert \mathsf{P}%
_{F,J}^{\omega }\mathbf{x}\right\Vert _{L^{2}\left( \omega \right) }^{2} \\
&=&\sum_{F^{\prime }\in \mathcal{F}_{I}}\sum_{K\in \mathcal{M}_{\left( 
\mathbf{r},\varepsilon \right) -\limfunc{deep}}\left( F^{\prime }\right)
}\sum_{F\in \mathcal{F}:\ F\subset F^{\prime }}\sum_{J\in \mathcal{M}%
_{\left( \mathbf{r},\varepsilon \right) -\limfunc{deep}}\left( F\right) :\
J\subset I}\left( \frac{\mathrm{P}^{\alpha }\left( J,\mathbf{1}_{\pi
F^{\prime }\setminus F^{\prime }}\sigma \right) }{\left\vert J\right\vert ^{%
\frac{1}{n}}}\right) ^{2}\left\Vert \mathsf{P}_{F,J\cap K}^{\omega }\mathbf{x%
}\right\Vert _{L^{2}\left( \omega \right) }^{2} \\
&\lesssim &\sum_{F^{\prime }\in \mathcal{F}_{I}}\sum_{K\in \mathcal{M}%
_{\left( \mathbf{r},\varepsilon \right) -\limfunc{deep}}\left( F^{\prime
}\right) }\left( \frac{\mathrm{P}^{\alpha }\left( K,\mathbf{1}_{\pi
F^{\prime }\setminus F^{\prime }}\sigma \right) }{\left\vert K\right\vert ^{%
\frac{1}{n}}}\right) ^{2}\sum_{F\in \mathcal{F}:\ F\subset F^{\prime
}}\sum_{J\in \mathcal{M}_{\left( \mathbf{r},\varepsilon \right) -\limfunc{%
deep}}\left( F\right) :\ J\subset I}\left\Vert \mathsf{P}_{F,J\cap
K}^{\omega }\mathbf{x}\right\Vert _{L^{2}\left( \omega \right) }^{2},
\end{eqnarray*}%
where in the third line we have used that each $J^{\prime }$ appearing in $%
\mathsf{P}_{F,J}^{\omega }$ occurs in one of the $\mathsf{P}_{F,J\cap
K}^{\omega }$ by goodness, and where in the fourth line we have used the
Poisson inequalities in Lemma \ref{Poisson inequalities}. We now invoke%
\begin{equation*}
\sum_{F\in \mathcal{F}:\ F\subset F^{\prime }}\sum_{J\in \mathcal{M}_{\left( 
\mathbf{r},\varepsilon \right) -\limfunc{deep}}\left( F\right) :\ J\subset
I}\left\Vert \mathsf{P}_{F,J\cap K}^{\omega }\mathbf{x}\right\Vert
_{L^{2}\left( \omega \right) }^{2}\lesssim \mathbf{\tau }\left\Vert \widehat{%
\mathsf{P}}_{F^{\prime },K}^{\omega }\mathbf{x}\right\Vert _{L^{2}\left(
\omega \right) }^{2}\ ,
\end{equation*}%
where for $K\in \mathcal{M}_{\left( \mathbf{r},\varepsilon \right) -\limfunc{%
deep}}\left( F^{\prime }\right) $,%
\begin{eqnarray*}
\widehat{\mathsf{P}}_{F^{\prime },K}^{\omega } &\equiv &\sum_{J^{\prime
}\subset J\cap K:\ J^{\prime }\in \mathcal{C}_{F^{\prime };I}^{\ast
}}\bigtriangleup _{J^{\prime }}^{\omega }\ , \\
\text{where }\mathcal{C}_{F^{\prime };I}^{\ast } &\equiv
&\dbigcup\limits_{F\in \mathcal{F}:\ F\subset F^{\prime
}}\dbigcup\limits_{J\in \mathcal{M}_{\left( \mathbf{r},\varepsilon \right) -%
\limfunc{deep}}\left( F\right) :\ J\subset I}\mathcal{C}_{F}^{\limfunc{good},%
\mathbf{\tau }-\limfunc{shift}}\ .
\end{eqnarray*}

Now denote by $d\left( F,F^{\prime }\right) \equiv d_{\mathcal{F}_{I}}\left(
F,F^{\prime }\right) $ the distance from $F$ to $F^{\prime }$ in the tree $%
\mathcal{F}_{I}$, and denote by $d\left( F\right) \equiv d_{\mathcal{F}%
_{I}}\left( F,I\right) $ the distance of $F$ from the root $I$. Since the
collection $\mathcal{F}$ satisfies a Carleson condition, namely $\sum_{F\in 
\mathcal{F}_{I}}\left\vert F\cap I^{\prime }\right\vert _{\sigma }\leq
C\left\vert I^{\prime }\right\vert _{\sigma }$ for all $I^{\prime }$, we
have geometric decay in generations:%
\begin{equation}
\sum_{F\in \mathcal{F}_{I}:\ d\left( F\right) =k}\left\vert F\right\vert
_{\sigma }\lesssim 2^{-\delta k}\left\vert I\right\vert _{\sigma }\ ,\ \ \ \
\ k\geq 0.  \label{geometric decay}
\end{equation}%
Indeed, with $m>2C$ we have%
\begin{equation}
\sum_{F\in \mathcal{F}_{I}:\ F\subset F^{\prime }\text{ and }d\left(
F,F^{\prime }\right) =m}\left\vert F^{\prime }\cap I^{\prime }\right\vert
_{\sigma }<\frac{1}{2}\left\vert I^{\prime }\right\vert _{\sigma }\ ,
\label{half}
\end{equation}%
since otherwise%
\begin{equation*}
\sum_{F\in \mathcal{F}_{I}:\ F\subset F^{\prime }\text{ and }d\left(
F,F^{\prime }\right) \leq m}\left\vert F^{\prime }\cap I^{\prime
}\right\vert _{\sigma }\geq m\frac{1}{2}\left\vert I^{\prime }\right\vert
_{\sigma }\ ,
\end{equation*}%
a contradiction. Now iterate (\ref{half}) to obtain (\ref{geometric decay}).
Thus we can write%
\begin{eqnarray*}
S &\lesssim &\sum_{F^{\prime }\in \mathcal{F}_{I}}\sum_{K\in \mathcal{M}%
_{\left( \mathbf{r},\varepsilon \right) -\limfunc{deep}}\left( F^{\prime
}\right) }\left( \frac{\mathrm{P}^{\alpha }\left( K,\mathbf{1}_{\pi
F^{\prime }\setminus F^{\prime }}\sigma \right) }{\left\vert K\right\vert ^{%
\frac{1}{n}}}\right) ^{2}\left\Vert \widehat{\mathsf{P}}_{F^{\prime
},K}^{\omega }\mathbf{x}\right\Vert _{L^{2}\left( \omega \right) }^{2} \\
&=&\sum_{k=1}^{\infty }\sum_{F^{\prime }\in \mathcal{F}_{I}:\ d\left(
F^{\prime }\right) =k}\sum_{K\in \mathcal{M}_{\left( \mathbf{r},\varepsilon
\right) -\limfunc{deep}}\left( F^{\prime }\right) }\left( \frac{\mathrm{P}%
^{\alpha }\left( K,\mathbf{1}_{\pi F^{\prime }\setminus F^{\prime }}\sigma
\right) }{\left\vert K\right\vert ^{\frac{1}{n}}}\right) ^{2}\left\Vert 
\widehat{\mathsf{P}}_{F^{\prime },K}^{\omega }\mathbf{x}\right\Vert
_{L^{2}\left( \omega \right) }^{2}\equiv \sum_{k=1}^{\infty }A_{k}.
\end{eqnarray*}%
Now we have $\left\Vert \widehat{\mathsf{P}}_{F^{\prime },K}^{\omega }%
\mathbf{x}\right\Vert _{L^{2}\left( \omega \right) }^{2}\leq \left\Vert 
\mathsf{P}_{K}^{\limfunc{good},\omega }\mathbf{x}\right\Vert _{L^{2}\left(
\omega \right) }^{2}$, and hence by the deep energy condition,%
\begin{eqnarray*}
A_{k} &=&\sum_{F^{\prime }\in \mathcal{F}_{I}:\ d\left( F^{\prime }\right)
=k}\sum_{K\in \mathcal{M}_{\left( \mathbf{r},\varepsilon \right) -\limfunc{%
deep}}\left( F^{\prime }\right) }\left( \frac{\mathrm{P}^{\alpha }\left( K,%
\mathbf{1}_{\pi _{\mathcal{F}}F^{\prime }\setminus F^{\prime }}\sigma
\right) }{\left\vert K\right\vert ^{\frac{1}{n}}}\right) ^{2}\left\Vert 
\widehat{\mathsf{P}}_{F^{\prime },K}^{\omega }\mathbf{x}\right\Vert
_{L^{2}\left( \omega \right) }^{2} \\
&\lesssim &\left( \mathcal{E}_{\alpha }^{\limfunc{deep}}\right)
^{2}\sum_{F^{\prime \prime }\in \mathcal{F}_{I}:\ d\left( F^{\prime \prime
}\right) =k-1}\left\vert F^{\prime \prime }\right\vert _{\sigma }\lesssim
\left( \mathcal{E}_{\alpha }^{\limfunc{deep}}\right) ^{2}2^{-\delta
k}\left\vert I\right\vert _{\sigma }\ ,
\end{eqnarray*}%
where we have applied the deep energy condition for each $F^{\prime \prime
}\in \mathcal{F}_{I}$ with $d\left( F^{\prime \prime }\right) =k-1$ to obtain%
\begin{equation}
\sum_{F^{\prime }\in \mathcal{F}_{I}:\ \pi F^{\prime }=F^{\prime \prime
}}\sum_{K\in \mathcal{M}_{\left( \mathbf{r},\varepsilon \right) -\limfunc{%
deep}}\left( F^{\prime }\right) }\left( \frac{\mathrm{P}^{\alpha }\left( K,%
\mathbf{1}_{F^{\prime \prime }\setminus F^{\prime }}\sigma \right) }{%
\left\vert K\right\vert ^{\frac{1}{n}}}\right) ^{2}\left\Vert \widehat{%
\mathsf{P}}_{F^{\prime },K}^{\omega }\mathbf{x}\right\Vert _{L^{2}\left(
\omega \right) }^{2}\leq \left( \mathcal{E}_{\alpha }^{\limfunc{deep}%
}\right) ^{2}\left\vert F^{\prime \prime }\right\vert _{\sigma }\ .
\label{to obtain}
\end{equation}%
Finally then we obtain%
\begin{equation*}
S\lesssim \sum_{k=1}^{\infty }\left( \mathcal{E}_{\alpha }^{\limfunc{deep}%
}\right) ^{2}2^{-\delta k}\left\vert I\right\vert _{\sigma }\lesssim \left( 
\mathcal{E}_{\alpha }^{\limfunc{deep}}\right) ^{2}\left\vert I\right\vert
_{\sigma }\ ,
\end{equation*}%
which is (\ref{RTS n}).
\end{proof}

Altogether we have now proved the estimate $\mathbf{Local}\left( I\right)
\lesssim \left( \left( \mathcal{E}_{\alpha }^{\limfunc{plug}}\right)
^{2}+A_{2}^{\alpha ,\limfunc{energy}}\right) \left\vert I\right\vert
_{\sigma }$ when $I\in \Omega \mathcal{D}$, i.e. for every dyadic quasicube $%
L\in \Omega \mathcal{D}$,%
\begin{eqnarray}
&&  \label{local} \\
\mathbf{Local}\left( L\right) &\approx &\sum_{F\in \mathcal{F}}\sum_{J\in 
\mathcal{M}_{\left( \mathbf{r},\varepsilon \right) -\limfunc{deep}}\left(
F\right) :\ J\subset L}\left( \frac{\mathrm{P}^{\alpha }\left( J,\mathbf{1}%
_{L}\sigma \right) }{\left\vert J\right\vert ^{\frac{1}{n}}}\right)
^{2}\left\Vert \mathsf{P}_{F,J}^{\omega }\mathbf{x}\right\Vert _{L^{2}\left(
\omega \right) }^{2}  \notag \\
&\lesssim &\left( \left( \mathcal{E}_{\alpha }^{\limfunc{plug}}\right)
^{2}+A_{2}^{\alpha ,\limfunc{energy}}\right) \left\vert L\right\vert
_{\sigma },\ \ \ L\in \Omega \mathcal{D}.  \notag
\end{eqnarray}

\subsubsection{The alternate local estimate}

For future use, we prove a strengthening of the local estimate $\mathbf{Local%
}\left( L\right) $ to \emph{alternate} quasicubes $M\in \mathcal{A}\Omega 
\mathcal{D}$.

\begin{lemma}
\label{shifted}With notation as above and $M\in \mathcal{A}\Omega \mathcal{D}
$ an alternate quasicube, we have 
\begin{eqnarray}
&&  \label{shifted local} \\
\mathbf{Local}\left( M\right) &\equiv &\sum_{F\in \mathcal{F}}\sum_{J\in 
\mathcal{M}_{\left( \mathbf{r},\varepsilon \right) -\limfunc{deep}}\left(
F\right) :\ J\subset M}\left( \frac{\mathrm{P}^{\alpha }\left( J,\mathbf{1}%
_{M}\sigma \right) }{\left\vert J\right\vert ^{\frac{1}{n}}}\right)
^{2}\left\Vert \mathsf{P}_{F,J}^{\omega }\mathbf{x}\right\Vert _{L^{2}\left(
\omega \right) }^{2}  \notag \\
&\lesssim &\mathbf{\tau }\left( \left( \mathcal{E}_{\alpha }^{\limfunc{plug}%
}\right) ^{2}+A_{2}^{\alpha ,\limfunc{energy}}\right) \left\vert
M\right\vert _{\sigma },\ \ \ M\in \mathcal{A}\Omega \mathcal{D}.  \notag
\end{eqnarray}
\end{lemma}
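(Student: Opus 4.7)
My plan is to adapt the proof of the dyadic local estimate (\ref{local}) to the alternate cube $M$, leveraging the fact that Lemma \ref{refined lemma} has already been proven for alternate cubes. The crucial structural fact is that, since $M$ is alternate (and not itself in $\Omega\mathcal{D}$), every $J\in\Omega\mathcal{D}$ with $J\subset M$ is contained in a unique dyadic child $M'\in\mathfrak{C}(M)$, where each $M'\in\Omega\mathcal{D}$ has $\ell(M')=\tfrac{1}{2}\ell(M)$.

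I would partition the pairs $(F,J)$ appearing in $\mathbf{Local}(M)$ into two mutually exclusive cases according to the relationship of $F$ to the children of $M$: (i) $F\subseteq M'$ for some (unique) $M'\in\mathfrak{C}(M)$; or (ii) $F\supsetneq M''$ for some $M''\in\mathfrak{C}(M)$. These exhaust all possibilities because if $F\cap M\neq\emptyset$, then by dyadic nesting $F$ is comparable to at least one child of $M$, and $F$ cannot simultaneously contain one child strictly and lie inside another. The contribution of Case (ii) is precisely the quantity $B(M)$ of Lemma \ref{refined lemma}, which yields directly the bound $\mathbf{\tau}\bigl((\mathcal{E}_{\alpha}^{\limfunc{plug}})^{2}+A_{2}^{\alpha,\limfunc{energy}}\bigr)|M|_{\sigma}$.

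For Case (i), I fix $M'\in\mathfrak{C}(M)$ and split $\mathbf{1}_{M}=\mathbf{1}_{M'}+\mathbf{1}_{M\setminus M'}$, so that $\mathrm{P}^{\alpha}(J,\mathbf{1}_{M}\sigma)^{2}\leq 2\mathrm{P}^{\alpha}(J,\mathbf{1}_{M'}\sigma)^{2}+2\mathrm{P}^{\alpha}(J,\mathbf{1}_{M\setminus M'}\sigma)^{2}$. Since $J\subset F\subseteq M'\in\Omega\mathcal{D}$, the $\mathbf{1}_{M'}$ contribution is dominated by $\mathbf{Local}(M')$, which by the dyadic local estimate (\ref{local}) is at most $\bigl((\mathcal{E}_{\alpha}^{\limfunc{plug}})^{2}+A_{2}^{\alpha,\limfunc{energy}}\bigr)|M'|_{\sigma}$; summing over the $2^{n}$ children gives the desired bound for this piece. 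For the $\mathbf{1}_{M\setminus M'}$ contribution, I decompose $M\setminus M'$ into the at most $2^{n}-1$ remaining dyadic children $M''\in\mathfrak{C}(M)$, each of the same sidelength as $M'$ and disjoint from it, and treat each cross-term separately.

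The main obstacle is bounding, for each such $M''$, the sum $\sum_{F\subseteq M'}\sum_{J\in\mathcal{M}_{(\mathbf{r},\varepsilon)-\limfunc{deep}}(F)}\bigl(\mathrm{P}^{\alpha}(J,\mathbf{1}_{M''}\sigma)/|J|^{\frac{1}{n}}\bigr)^{2}\lVert \mathsf{P}_{F,J}^{\omega}\mathbf{x}\rVert_{L^{2}(\omega)}^{2}$. The difficulty is that $J\subset M'$ can lie arbitrarily close to $\partial M'$, so Lemma \ref{Poisson inequalities} does not give a direct reduction to $\mathrm{P}^{\alpha}(M',\mathbf{1}_{M''}\sigma)/|M'|^{\frac{1}{n}}$. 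I would handle this by exploiting the goodness of the Haar subcubes $J'\subset J$ in $\mathsf{P}_{F,J}^{\omega}$: for each such good $J'$, the deep embedding bound gives a decay factor in $\ell(J')/\ell(M')$ of the Poisson integral $\mathrm{P}^{\alpha}(J',\mathbf{1}_{M''}\sigma)$, which together with the bounded overlap of the projections $\mathsf{P}_{F,J}^{\omega}$ from Lemma \ref{tau ovelap} and the offset $A_{2}^{\alpha}$ condition (applicable since $M'\cap M''=\emptyset$ and $|\partial Q|_{\sigma+\omega}=0$ for dyadic $Q$) produces an overall bound by $\bigl(A_{2}^{\alpha}+A_{2}^{\alpha,\limfunc{energy}}\bigr)|M'|_{\sigma}$. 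Combining all pieces and summing over $M'$ yields the stated inequality.
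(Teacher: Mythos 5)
Your partition of the pairs $(F,J)$ by whether $F\subseteq M'$ for the (unique) dyadic child $M'$ of $M$ containing $J$, or $F\supsetneq M'$, is correct, and the Case (ii) contribution is indeed exactly $B(M)$ from Lemma \ref{refined lemma}, so that piece is fine. The Case (i) piece with $\mathbf{1}_{M'}\sigma$ in the Poisson is likewise correctly dominated by $\mathbf{Local}(M')$, which (\ref{local}) already controls. The gap is in your cross-term estimate for $\mathbf{1}_{M\setminus M'}\sigma$. After passing the Poisson ratio down to the good subcubes $J'\subset J$ carried by $\mathsf{P}_{F,J}^{\omega}$ and applying the goodness decay of Lemma \ref{Poisson inequality}, the only way to obtain geometric summability in the scale $\ell(J')/\ell(M')$ is to discharge the projection via the crude bound $\lVert\bigtriangleup_{J'}^{\omega}\mathbf{x}\rVert_{L^{2}(\omega)}^{2}\leq\ell(J')^{2}|J'|_{\omega}$; keeping the energy projections $\sum_{J'}\lVert\bigtriangleup_{J'}^{\omega}\mathbf{x}\rVert^{2}$ gives no decay in the scale, and the Poisson ratio $\mathrm{P}^{\alpha}(J',\mathbf{1}_{M''}\sigma)/|J'|^{1/n}$ in fact \emph{grows} like $(\ell(M')/\ell(J'))^{\varepsilon(n+1-\alpha)}$. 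Once you drop to $|J'|_{\omega}$, you are left with the offset constant $A_{2}^{\alpha}$, exactly as you predict. But $A_{2}^{\alpha}$ is not on the right-hand side of (\ref{shifted local}); the conclusion has only $(\mathcal{E}_{\alpha}^{\limfunc{plug}})^{2}+A_{2}^{\alpha,\limfunc{energy}}$, and the offset $A_{2}^{\alpha}$ is not dominated by either (for instance when $\omega$ is a single point mass the energy constant $A_{2}^{\alpha,\limfunc{energy}}$ vanishes while $A_{2}^{\alpha}$ need not). So your argument proves a strictly weaker estimate than the lemma claims.

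The paper avoids the cross-term altogether by a different allocation of the Poisson mass. It splits $\mathbf{1}_{M}=\mathbf{1}_{M\cap F}+\mathbf{1}_{M\setminus F}$ rather than $\mathbf{1}_{M'}+\mathbf{1}_{M\setminus M'}$. In the plug piece with $F\subseteq M'$, the intersection $M\cap F=F$, so the Poisson is $\mathrm{P}^{\alpha}(J,\mathbf{1}_{F}\sigma)$ with $J\in\mathcal{M}_{(\mathbf{r},\varepsilon)-\limfunc{deep}}(F)$, and the deep plugged energy condition applies directly together with the $\sigma$-Carleson bound $\sum_{F\subset M}|F|_{\sigma}\lesssim|M|_{\sigma}$ --- no sibling contribution at all. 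The mass on the sibling children $M''$ instead lives in the hole piece $\mathrm{P}^{\alpha}(J,\mathbf{1}_{M\setminus F}\sigma)$, which is bounded by replaying the Carleson-generation recursion of Lemma \ref{local hole} relative to $\mathcal{F}_{M}=\{F\in\mathcal{F}:F\subset M\}\cup\{M\}$, yielding only $(\mathcal{E}_{\alpha}^{\limfunc{deep}})^{2}|M|_{\sigma}$ without any Muckenhoupt constant. The remaining case $F\not\subset M$ goes into Lemma \ref{refined lemma} as you did. Unless you can recover the cross-term bound using only $(\mathcal{E}_{\alpha}^{\limfunc{plug}})^{2}$ and $A_{2}^{\alpha,\limfunc{energy}}$ --- which the crude-bound obstruction above makes unlikely --- you should follow the paper's plug/hole/offset decomposition.
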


\begin{proof}
We prove (\ref{shifted local}) by repeating the above proof of (\ref{local})
and noting the points requiring change. First we decompose 
\begin{equation*}
\mathbf{Local}\left( M\right) \lesssim \mathbf{Local}^{\limfunc{plug}}\left(
M\right) +\mathbf{Local}^{\func{hole}}\left( M\right) +\mathbf{Local}^{%
\limfunc{offset}}\left( M\right)
\end{equation*}%
where $\mathbf{Local}^{\limfunc{plug}}\left( M\right) $ and $\mathbf{Local}^{%
\func{hole}}\left( M\right) $ are analogous to $\mathbf{Local}^{\limfunc{plug%
}}\left( I\right) $ and $\mathbf{Local}^{\func{hole}}\left( I\right) $
above, and where $\mathbf{Local}^{\limfunc{offset}}\left( M\right) $ is an
additional term arising because $M\setminus F$ need not be empty when $M\cap
F\neq \emptyset $ and $F$ is not contained in $M$:%
\begin{eqnarray*}
\mathbf{Local}^{\limfunc{plug}}\left( M\right) &\equiv &\sum_{F\in \mathcal{F%
}}\sum_{J\in \mathcal{M}_{\left( \mathbf{r},\varepsilon \right) -\limfunc{%
deep}}\left( F\right) :\ J\subset M}\left( \frac{\mathrm{P}^{\alpha }\left(
J,\mathbf{1}_{M\cap F}\sigma \right) }{\left\vert J\right\vert ^{\frac{1}{n}}%
}\right) ^{2}\left\Vert \mathsf{P}_{F,J}^{\omega }\mathbf{x}\right\Vert
_{L^{2}\left( \omega \right) }^{2}\ , \\
\mathbf{Local}^{\func{hole}}\left( M\right) &\equiv &\sum_{F\in \mathcal{F}%
:\ F\subset M}\sum_{J\in \mathcal{M}_{\left( \mathbf{r},\varepsilon \right) -%
\limfunc{deep}}\left( F\right) :\ J\subset M}\left( \frac{\mathrm{P}^{\alpha
}\left( J,\mathbf{1}_{M\setminus F}\sigma \right) }{\left\vert J\right\vert
^{\frac{1}{n}}}\right) ^{2}\left\Vert \mathsf{P}_{F,J}^{\omega }\mathbf{x}%
\right\Vert _{L^{2}\left( \omega \right) }^{2}\ , \\
\mathbf{Local}^{\limfunc{offset}}\left( M\right) &\equiv &\sum_{F\in 
\mathcal{F}:\ F\not\subset M}\sum_{J\in \mathcal{M}_{\left( \mathbf{r}%
,\varepsilon \right) -\limfunc{deep}}\left( F\right) :\ J\subset M}\left( 
\frac{\mathrm{P}^{\alpha }\left( J,\mathbf{1}_{M\setminus F}\sigma \right) }{%
\left\vert J\right\vert ^{\frac{1}{n}}}\right) ^{2}\left\Vert \mathsf{P}%
_{F,J}^{\omega }\mathbf{x}\right\Vert _{L^{2}\left( \omega \right) }^{2}\ .
\end{eqnarray*}%
We have%
\begin{align*}
& \mathbf{Local}^{\limfunc{plug}}\left( M\right) =\left\{ \sum_{F\in 
\mathcal{F}:\ F\subset \text{ some }M^{\prime }\in \mathfrak{C}_{\Omega 
\mathcal{D}}\left( M\right) }+\sum_{F\in \mathcal{F}:\ F\supsetneqq \text{
some }M^{\prime }\in \mathfrak{C}_{\Omega \mathcal{D}}\left( M\right)
}\right\} \sum_{J\in \mathcal{M}_{\left( \mathbf{r},\varepsilon \right) -%
\limfunc{deep}}\left( F\right) :\ J\subset M} \\
& \ \ \ \ \ \ \ \ \ \ \ \ \ \ \ \ \ \ \ \ \ \ \ \ \ \ \ \ \ \ \times \left( 
\frac{\mathrm{P}^{\alpha }\left( J,\mathbf{1}_{F\cap M}\sigma \right) }{%
\left\vert J\right\vert ^{\frac{1}{n}}}\right) ^{2}\left\Vert \mathsf{P}%
_{F,J}^{\omega }\mathbf{x}\right\Vert _{L^{2}\left( \omega \right) }^{2} \\
& =A+B.
\end{align*}

Term $A$ satisfies%
\begin{equation*}
A\lesssim \left( \mathcal{E}_{\alpha }^{\limfunc{deep}\limfunc{plug}}\right)
^{2}\left\vert M\right\vert _{\sigma }\ ,
\end{equation*}%
just as above using $\left\Vert \mathsf{P}_{F,J}^{\omega }x\right\Vert
_{L^{2}\left( \omega \right) }^{2}\leq \left\Vert \mathsf{P}_{J}^{\omega }%
\mathbf{x}\right\Vert _{L^{2}\left( \omega \right) }^{2}$, and the fact that
the stopping quasicubes $\mathcal{F}$ satisfy a $\sigma $-Carleson measure
estimate, 
\begin{equation*}
\sum_{F\in \mathcal{F}:\ F\subset M}\left\vert F\right\vert _{\sigma
}\lesssim \left\vert M\right\vert _{\sigma }.
\end{equation*}

Term $B$ is handled directly by Lemma \ref{refined lemma} with the alternate
quasicube $I=M$ to obtain%
\begin{equation*}
B\lesssim \left( \left( \mathcal{E}_{\alpha }^{\limfunc{plug}}\right)
^{2}+A_{2}^{\alpha ,\limfunc{energy}}\right) \left\vert M\right\vert
_{\sigma }\ .
\end{equation*}

To extend Lemma \ref{local hole} to alternate quasicubes $M\in \mathcal{A}%
\Omega \mathcal{D}$, we define%
\begin{equation*}
\mathcal{F}_{M}\equiv \left\{ F\in \mathcal{F}:F\subset M\right\} \cup
\left\{ M\right\} ,
\end{equation*}%
and follow along the proof there with only trivial changes. The analogue of (%
\ref{to obtain}) is now%
\begin{equation*}
\sum_{F^{\prime }\in \mathcal{F}_{M}:\ \pi F^{\prime }=F^{\prime \prime
}}\sum_{K\in \mathcal{M}_{\left( \mathbf{r},\varepsilon \right) -\limfunc{%
deep}}\left( F^{\prime }\right) }\left( \frac{\mathrm{P}^{\alpha }\left( K,%
\mathbf{1}_{F^{\prime \prime }\setminus F^{\prime }}\sigma \right) }{%
\left\vert K\right\vert ^{\frac{1}{n}}}\right) ^{2}\left\Vert \widehat{%
\mathsf{P}}_{F^{\prime },K}^{\omega }\mathbf{x}\right\Vert _{L^{2}\left(
\omega \right) }^{2}\leq \left( \mathcal{E}_{\alpha }^{\limfunc{deep}%
}\right) ^{2}\left\vert F^{\prime \prime }\right\vert _{\sigma }\ ,
\end{equation*}%
the only change being that $\mathcal{F}_{M}$ now appears in place of $%
\mathcal{F}_{I}$, so that the deep energy condition still applies. We
conclude that 
\begin{equation*}
\mathbf{Local}^{\func{hole}}\left( M\right) \lesssim \left( \mathcal{E}%
_{\alpha }^{\limfunc{deep}}\right) ^{2}\left\vert M\right\vert _{\sigma }\ .
\end{equation*}

Finally, the additional term $\mathbf{Local}^{\limfunc{offset}}\left(
M\right) $ is handled directly by Lemma \ref{refined lemma}, and this
completes the proof of the estimate (\ref{shifted local}) in Lemma \ref%
{shifted}.
\end{proof}

\subsubsection{The global estimate}

Now we turn to proving the following estimate for the global part of the
first testing condition \eqref{e.t1 n}:%
\begin{equation*}
\mathbf{Global}\left( I\right) =\int_{\mathbb{R}_{+}^{n+1}\setminus \widehat{%
I}}\mathbb{P}^{\alpha }\left( \mathbf{1}_{I}\sigma \right) ^{2}d\overline{%
\mu }\lesssim \left( \mathcal{A}_{2}^{\alpha ,\ast }+A_{2}^{\alpha ,\limfunc{%
energy}}+A_{2}^{\alpha ,\limfunc{punct}}\right) \left\vert I\right\vert
_{\sigma }.
\end{equation*}%
We begin by decomposing the integral on the right into four pieces. As a
particular consequence of Lemma \ref{tau ovelap}, we note that given $J$,
there are at most a fixed number $\mathbf{\tau }$ of $F\in \mathcal{F}$ such
that $J\in \mathcal{M}_{\mathbf{r}-\limfunc{deep}}\left( F\right) $. We have:%
\begin{eqnarray*}
&&\int_{\mathbb{R}_{+}^{n+1}\setminus \widehat{I}}\mathbb{P}^{\alpha }\left( 
\mathbf{1}_{I}\sigma \right) ^{2}d\mu \leq \sum_{J:\ \left( c_{J},\ell
\left( J\right) \right) \in \mathbb{R}_{+}^{n+1}\setminus \widehat{I}}%
\mathbb{P}^{\alpha }\left( \mathbf{1}_{I}\sigma \right) \left( c_{J},\ell
\left( J\right) \right) ^{2}\sum_{\substack{ F\in \mathcal{F}  \\ J\in 
\mathcal{M}_{\left( \mathbf{r},\varepsilon \right) -\limfunc{deep}}\left(
F\right) }}\left\Vert \mathsf{P}_{F,J}^{\omega }\frac{\mathbf{x}}{\left\vert
J\right\vert ^{\frac{1}{n}}}\right\Vert _{L^{2}\left( \omega \right) }^{2} \\
&=&\left\{ \sum_{\substack{ J\cap 3I=\emptyset  \\ \ell \left( J\right) \leq
\ell \left( I\right) }}+\sum_{J\subset 3I\setminus I}+\sum_{\substack{ J\cap
I=\emptyset  \\ \ell \left( J\right) >\ell \left( I\right) }}%
+\sum_{J\supsetneqq I}\right\} \mathbb{P}^{\alpha }\left( \mathbf{1}%
_{I}\sigma \right) \left( c_{J},\ell \left( J\right) \right) ^{2}\sum 
_{\substack{ F\in \mathcal{F}:  \\ J\in \mathcal{M}_{\left( \mathbf{r}%
,\varepsilon \right) -\limfunc{deep}}\left( F\right) }}\left\Vert \mathsf{P}%
_{F,J}^{\omega }\frac{\mathbf{x}}{\left\vert J\right\vert ^{\frac{1}{n}}}%
\right\Vert _{L^{2}\left( \omega \right) }^{2} \\
&=&A+B+C+D.
\end{eqnarray*}

We further decompose term $A$ according to the length of $J$ and its
distance from $I$, and then use Lemma \ref{tau ovelap} with $I_{0}=J$ to
obtain:%
\begin{eqnarray*}
A &\lesssim &\sum_{m=0}^{\infty }\sum_{k=1}^{\infty }\sum_{\substack{ %
J\subset 3^{k+1}I\setminus 3^{k}I  \\ \ell \left( J\right) =2^{-m}\ell
\left( I\right) }}\left( \frac{2^{-m}\left\vert I\right\vert ^{\frac{1}{n}}}{%
\limfunc{quasidist}\left( J,I\right) ^{n+1-\alpha }}\left\vert I\right\vert
_{\sigma }\right) ^{2}\mathbf{\tau }\left\vert J\right\vert _{\omega } \\
&\lesssim &\sum_{m=0}^{\infty }2^{-2m}\sum_{k=1}^{\infty }\frac{\left\vert
I\right\vert ^{\frac{2}{n}}\left\vert I\right\vert _{\sigma }\left\vert
3^{k+1}I\setminus 3^{k}I\right\vert _{\omega }}{\left\vert 3^{k}I\right\vert
^{2\left( 1+\frac{1}{n}-\frac{\alpha }{n}\right) }}\left\vert I\right\vert
_{\sigma } \\
&\lesssim &\sum_{m=0}^{\infty }2^{-2m}\sum_{k=1}^{\infty }3^{-2k}\left\{ 
\frac{\left\vert 3^{k+1}I\setminus 3^{k}I\right\vert _{\omega }\left\vert
3^{k}I\right\vert _{\sigma }}{\left\vert 3^{k}I\right\vert ^{2\left( 1-\frac{%
\alpha }{n}\right) }}\right\} \left\vert I\right\vert _{\sigma }\lesssim
A_{2}^{\alpha }\left\vert I\right\vert _{\sigma },
\end{eqnarray*}%
where the offset Muckenhoupt constant $A_{2}^{\alpha }$ applies because $%
3^{k+1}I$ has only three times the side length of $3^{k}I$.

For term $B$ we first dispose of the nearby sum $B_{\limfunc{nearby}}$ that
consists of the sum over those $J$ which satisfy in addition $2^{-\mathbf{r}%
}\ell \left( I\right) \leq \ell \left( J\right) \leq \ell \left( I\right) $.
But it is a straightforward task to bound $B_{\limfunc{nearby}}$ by $%
CA_{2}^{\alpha ,\limfunc{energy}}\left\vert I\right\vert _{\sigma }$ as
there are at most $2^{n\left( \mathbf{r}+1\right) }$ such quasicubes $J$. In
order to bound $B-B_{\limfunc{nearby}}$, let 
\begin{equation*}
\mathcal{J}^{\ast }\equiv \dbigcup\limits_{F\in \mathcal{F}}\dbigcup\limits 
_{\substack{ J\in \mathcal{M}_{\left( \mathbf{r},\varepsilon \right) -%
\limfunc{deep}}\left( F\right)  \\ J\subset 3I\setminus I\text{ and }\ell
\left( J\right) \leq 2^{-\mathbf{r}}\ell \left( I\right) }}\left\{ K\in 
\mathcal{C}_{F}^{\limfunc{good},\mathbf{\tau }-\limfunc{shift}}:K\subset
J\right\} ,
\end{equation*}%
which is the union of all quasicubes $K$ for which the projection $%
\bigtriangleup _{K}^{\omega }$ occurs in one of the projections $\mathsf{P}%
_{F,J}^{\omega }$ with $\ell \left( J\right) <2^{-\mathbf{r}}\ell \left(
I\right) $ in term $B$. We further decompose term $B$ according to the
length of $J$ and use the fractional Poisson inequality (\ref{e.Jsimeq}) in
Lemma \ref{Poisson inequality} on the neighbour $I^{\prime }$ of $I$
containing $K$,%
\begin{equation*}
\mathrm{P}^{\alpha }\left( K,\mathbf{1}_{I}\sigma \right) ^{2}\lesssim
\left( \frac{\ell \left( K\right) }{\ell \left( I\right) }\right)
^{2-2\left( n+1-\alpha \right) \varepsilon }\mathrm{P}^{\alpha }\left( I,%
\mathbf{1}_{I}\sigma \right) ^{2},\ \ \ \ \ K\in \mathcal{J}^{\ast
},K\subset 3I\setminus I,
\end{equation*}%
where we have used that $\mathrm{P}^{\alpha }\left( I^{\prime },\mathbf{1}%
_{I}\sigma \right) \approx \mathrm{P}^{\alpha }\left( I,\mathbf{1}_{I}\sigma
\right) $ and that the quasicubes $K\in \mathcal{J}^{\ast }$ are good and
have side length at most $2^{-\mathbf{r}}\ell \left( I\right) $. We then
obtain from Lemma \ref{Poisson inequalities} and Lemma \ref{tau ovelap} with 
$I_{0}=J$,%
\begin{eqnarray*}
B-B_{\limfunc{nearby}} &\approx &\sum_{J\subset 3I\setminus I}\left( \frac{%
\mathrm{P}^{\alpha }\left( J,\mathbf{1}_{I}\sigma \right) }{\left\vert
J\right\vert ^{\frac{1}{n}}}\right) ^{2}\sum_{\substack{ F\in \mathcal{F}:\
J\in \mathcal{M}_{\left( \mathbf{r},\varepsilon \right) -\limfunc{deep}%
}\left( F\right)  \\ \ell \left( J\right) \leq 2^{-\mathbf{r}}\ell \left(
I\right) }}\left\Vert \mathsf{P}_{F,J}^{\omega }x\right\Vert _{L^{2}\left(
\omega \right) }^{2} \\
&\lesssim &\sum_{K\in \mathcal{J}^{\ast }}\left( \frac{\mathrm{P}^{\alpha
}\left( K,\mathbf{1}_{I}\sigma \right) }{\left\vert K\right\vert ^{\frac{1}{n%
}}}\right) ^{2}\mathbf{\tau \ }\left\Vert \bigtriangleup _{K}^{\omega }%
\mathbf{x}\right\Vert _{L^{2}\left( \omega \right) }^{2} \\
&\lesssim &\mathbf{\tau }\sum_{m=\mathbf{r}}^{\infty }\sum_{\substack{ %
K\subset 3I\setminus I  \\ \ell \left( K\right) =2^{-m}\ell \left( I\right) 
}}\left( 2^{-m}\right) ^{2-2\left( n+1-\alpha \right) \varepsilon }\mathrm{P}%
^{\alpha }\left( I,\mathbf{1}_{I}\sigma \right) ^{2}\left\vert K\right\vert
_{\omega } \\
&\lesssim &\mathbf{\tau }\sum_{m=\mathbf{r}}^{\infty }\left( 2^{-m}\right)
^{2-2\left( n+1-\alpha \right) \varepsilon }\left( \frac{\left\vert
I\right\vert _{\sigma }}{\left\vert I\right\vert ^{1-\frac{\alpha }{n}}}%
\right) ^{2}\sum_{\substack{ K\subset 3I\setminus I  \\ \ell \left( K\right)
=2^{-m}\ell \left( I\right) }}\left\vert K\right\vert _{\omega } \\
&\lesssim &\mathbf{\tau }\sum_{m=\mathbf{r}}^{\infty }\left( 2^{-m}\right)
^{2-2\left( n+1-\alpha \right) \varepsilon }\frac{\left\vert I\right\vert
_{\sigma }\left\vert 3I\setminus I\right\vert _{\omega }}{\left\vert
3I\right\vert ^{2\left( 1-\frac{\alpha }{n}\right) }}\left\vert I\right\vert
_{\sigma }\lesssim \mathbf{\tau }A_{2}^{\alpha }\left\vert I\right\vert
_{\sigma }\ .
\end{eqnarray*}

For term $C$ we will have to group the quasicubes $J$ into blocks $B_{i}$,
and then exploit Lemma \ref{tau ovelap}. We first split the sum according to
whether or not $I$ intersects the triple of $J$:%
\begin{eqnarray*}
C &\approx &\left\{ \sum_{\substack{ J:\ I\cap 3J=\emptyset  \\ \ell \left(
J\right) >\ell \left( I\right) }}+\sum_{\substack{ J:\ I\subset 3J\setminus
J  \\ \ell \left( J\right) >\ell \left( I\right) }}\right\} \left( \frac{%
\left\vert J\right\vert ^{\frac{1}{n}}}{\left( \left\vert J\right\vert ^{%
\frac{1}{n}}+\limfunc{quasidist}\left( J,I\right) \right) ^{n+1-\alpha }}%
\left\vert I\right\vert _{\sigma }\right) ^{2}\sum_{\substack{ F\in \mathcal{%
F}:  \\ J\in \mathcal{M}_{\left( \mathbf{r},\varepsilon \right) -\limfunc{%
deep}}\left( F\right) }}\left\Vert \mathsf{P}_{F,J}^{\omega }\frac{\mathbf{x}%
}{\left\vert J\right\vert ^{\frac{1}{n}}}\right\Vert _{L^{2}\left( \omega
\right) }^{2} \\
&=&C_{1}+C_{2}.
\end{eqnarray*}%
We first consider $C_{1}$. Let $\mathcal{M}$ be the maximal dyadic
quasicubes in $\left\{ Q:3Q\cap I=\emptyset \right\} $, and then let $%
\left\{ B_{i}\right\} _{i=1}^{\infty }$ be an enumeration of those $Q\in 
\mathcal{M}$ whose side length is at least $\ell \left( I\right) $. Now we
further decompose the sum in $C_{1}$ by grouping the quasicubes $J$ into the
Whitney quasicubes $B_{i}$, and then using Lemma \ref{tau ovelap} with $%
I_{0}=J$: 
\begin{eqnarray*}
C_{1} &\leq &\sum_{i=1}^{\infty }\sum_{J:\ J\subset B_{i}\setminus I}\left( 
\frac{1}{\left( \left\vert J\right\vert ^{\frac{1}{n}}+\limfunc{quasidist}%
\left( J,I\right) \right) ^{n+1-\alpha }}\left\vert I\right\vert _{\sigma
}\right) ^{2}\sum_{\substack{ F\in \mathcal{F}:  \\ J\in \mathcal{M}_{\left( 
\mathbf{r},\varepsilon \right) -\limfunc{deep}}\left( F\right) }}\left\Vert 
\mathsf{P}_{F,J}^{\omega }\mathbf{x}\right\Vert _{L^{2}\left( \omega \right)
}^{2} \\
&\lesssim &\sum_{i=1}^{\infty }\left( \frac{1}{\left( \left\vert
B_{i}\right\vert ^{\frac{1}{n}}+\limfunc{quasidist}\left( B_{i},I\right)
\right) ^{n+1-\alpha }}\left\vert I\right\vert _{\sigma }\right)
^{2}\sum_{J:\ J\subset B_{i}\setminus I}\sum_{\substack{ F\in \mathcal{F}: 
\\ J\in \mathcal{M}_{\left( \mathbf{r},\varepsilon \right) -\limfunc{deep}%
}\left( F\right) }}\left\Vert \mathsf{P}_{F,J}^{\omega }\mathbf{x}%
\right\Vert _{L^{2}\left( \omega \right) }^{2} \\
&\lesssim &\sum_{i=1}^{\infty }\left( \frac{1}{\left( \left\vert
B_{i}\right\vert ^{\frac{1}{n}}+\limfunc{quasidist}\left( B_{i},I\right)
\right) ^{n+1-\alpha }}\left\vert I\right\vert _{\sigma }\right)
^{2}\sum_{J:\ J\subset B_{i}\setminus I}\mathbf{\tau \;}\left\vert
J\right\vert ^{\frac{2}{n}}\left\vert J\right\vert _{\omega } \\
&\lesssim &\sum_{i=1}^{\infty }\left( \frac{1}{\left( \left\vert
B_{i}\right\vert ^{\frac{1}{n}}+\limfunc{quasidist}\left( B_{i},I\right)
\right) ^{n+1-\alpha }}\left\vert I\right\vert _{\sigma }\right) ^{2}\mathbf{%
\tau \ }\left\vert B_{i}\right\vert ^{\frac{2}{n}}\left\vert B_{i}\setminus
I\right\vert _{\omega } \\
&\lesssim &\mathbf{\tau }\left\{ \sum_{i=1}^{\infty }\frac{\left\vert
B_{i}\setminus I\right\vert _{\omega }\left\vert I\right\vert _{\sigma }}{%
\left\vert B_{i}\right\vert ^{2\left( 1-\frac{\alpha }{n}\right) }}\right\}
\left\vert I\right\vert _{\sigma }\ ,
\end{eqnarray*}%
and since $B_{i}\cap I\neq \emptyset $, 
\begin{eqnarray*}
\sum_{i=1}^{\infty }\frac{\left\vert B_{i}\setminus I\right\vert _{\omega
}\left\vert I\right\vert _{\sigma }}{\left\vert B_{i}\right\vert ^{2\left( 1-%
\frac{\alpha }{n}\right) }} &=&\frac{\left\vert I\right\vert _{\sigma }}{%
\left\vert I\right\vert ^{1-\frac{\alpha }{n}}}\sum_{i=1}^{\infty }\frac{%
\left\vert I\right\vert ^{1-\frac{\alpha }{n}}}{\left\vert B_{i}\right\vert
^{2\left( 1-\frac{\alpha }{n}\right) }}\left\vert B_{i}\setminus
I\right\vert _{\omega } \\
&\approx &\frac{\left\vert I\right\vert _{\sigma }}{\left\vert I\right\vert
^{1-\frac{\alpha }{n}}}\sum_{i=1}^{\infty }\int_{B_{i}\setminus I}\frac{%
\left\vert I\right\vert ^{1-\frac{\alpha }{n}}}{\limfunc{quasidist}\left(
x,I\right) ^{2\left( n-\alpha \right) }}d\omega \left( x\right) \\
&\approx &\frac{\left\vert I\right\vert _{\sigma }}{\left\vert I\right\vert
^{1-\frac{\alpha }{n}}}\sum_{i=1}^{\infty }\int_{B_{i}\setminus I}\left( 
\frac{\left\vert I\right\vert ^{\frac{1}{n}}}{\left[ \left\vert I\right\vert
^{\frac{1}{n}}+\limfunc{quasidist}\left( x,I\right) \right] ^{2}}\right)
^{n-\alpha }d\omega \left( x\right) \\
&\leq &\frac{\left\vert I\right\vert _{\sigma }}{\left\vert I\right\vert ^{1-%
\frac{\alpha }{n}}}\mathcal{P}^{\alpha }\left( I,\mathbf{1}_{I^{c}}\omega
\right) \leq \mathcal{A}_{2}^{\alpha ,\ast },
\end{eqnarray*}%
we obtain%
\begin{equation*}
C_{1}\lesssim \mathbf{\tau }\mathcal{A}_{2}^{\alpha ,\ast }\left\vert
I\right\vert _{\sigma }\ .
\end{equation*}

Next we turn to estimating term $C_{2}$ where the triple of $J$ contains $I$
but $J$ itself does not. Note that there are at most $2^{2n}-2^{n}$ such
quasicubes $J$ of a given side length, at most $2^{n}-1$ in each
`generalized octant' relative to $I$. So with this in mind we sum over the
quasicubes $J$ according to their lengths to obtain%
\begin{eqnarray*}
C_{2} &=&\sum_{m=1}^{\infty }\sum_{\substack{ J:\ I\subset 3J\setminus J  \\ %
\ell \left( J\right) =2^{m}\ell \left( I\right) }}\left( \frac{\left\vert
J\right\vert ^{\frac{1}{n}}}{\left( \left\vert J\right\vert ^{\frac{1}{n}}+%
\limfunc{dist}\left( J,I\right) \right) ^{n+1-\alpha }}\left\vert
I\right\vert _{\sigma }\right) ^{2}\sum_{\substack{ F\in \mathcal{F}:  \\ %
J\in \mathcal{M}_{\left( \mathbf{r},\varepsilon \right) -\limfunc{deep}%
}\left( F\right) }}\left\Vert \mathsf{P}_{F,J}^{\omega }\frac{\mathbf{x}}{%
\left\vert J\right\vert ^{\frac{1}{n}}}\right\Vert _{L^{2}\left( \omega
\right) }^{2} \\
&\lesssim &\sum_{m=1}^{\infty }\left( \frac{\left\vert I\right\vert _{\sigma
}}{\left\vert 2^{m}I\right\vert ^{1-\frac{\alpha }{n}}}\right) ^{2}\mathbf{%
\tau \ }\left\vert \left( 5\cdot 2^{m}I\right) \setminus I\right\vert
_{\omega }=\mathbf{\tau }\left\{ \frac{\left\vert I\right\vert _{\sigma }}{%
\left\vert I\right\vert ^{1-\frac{\alpha }{n}}}\sum_{m=1}^{\infty }\frac{%
\left\vert I\right\vert ^{1-\frac{\alpha }{n}}\left\vert \left( 5\cdot
2^{m}I\right) \setminus I\right\vert _{\omega }}{\left\vert
2^{m}I\right\vert ^{2\left( 1-\frac{\alpha }{n}\right) }}\right\} \left\vert
I\right\vert _{\sigma } \\
&\lesssim &\mathbf{\tau }\left\{ \frac{\left\vert I\right\vert _{\sigma }}{%
\left\vert I\right\vert ^{1-\frac{\alpha }{n}}}\mathcal{P}^{\alpha }\left( I,%
\mathbf{1}_{I^{c}}\omega \right) \right\} \left\vert I\right\vert _{\sigma
}\leq \mathbf{\tau }\mathcal{A}_{2}^{\alpha ,\ast }\left\vert I\right\vert
_{\sigma },
\end{eqnarray*}%
since in analogy with the corresponding estimate above,%
\begin{equation*}
\sum_{m=1}^{\infty }\frac{\left\vert I\right\vert ^{1-\frac{\alpha }{n}%
}\left\vert \left( 5\cdot 2^{m}I\right) \setminus I\right\vert _{\omega }}{%
\left\vert 2^{m}I\right\vert ^{2\left( 1-\frac{\alpha }{n}\right) }}=\int
\sum_{m=1}^{\infty }\frac{\left\vert I\right\vert ^{1-\frac{\alpha }{n}}}{%
\left\vert 2^{m}I\right\vert ^{2\left( 1-\frac{\alpha }{n}\right) }}\mathbf{1%
}_{\left( 5\cdot 2^{m}I\right) \setminus I}\left( x\right) \ d\omega \left(
x\right) \lesssim \mathcal{P}^{\alpha }\left( I,\mathbf{1}_{I^{c}}\omega
\right) .
\end{equation*}

Finally, we turn to term $D$. The quasicubes $J$ occurring here are included
in the set of ancestors $A_{k}\equiv \pi _{\Omega \mathcal{D}}^{\left(
k\right) }I$ of $I$, $1\leq k<\infty $.%
\begin{eqnarray*}
D &=&\sum_{k=1}^{\infty }\mathbb{P}^{\alpha }\left( \mathbf{1}_{I}\sigma
\right) \left( c\left( A_{k}\right) ,\left\vert A_{k}\right\vert ^{\frac{1}{n%
}}\right) ^{2}\sum_{\substack{ F\in \mathcal{F}:  \\ A_{k}\in \mathcal{M}%
_{\left( \mathbf{r},\varepsilon \right) -\limfunc{deep}}\left( F\right) }}%
\left\Vert \mathsf{P}_{F,A_{k}}^{\omega }\frac{\mathbf{x}}{\lvert
A_{k}\rvert ^{\frac{1}{n}}}\right\Vert _{L^{2}\left( \omega \right) }^{2} \\
&=&\sum_{k=1}^{\infty }\mathbb{P}^{\alpha }\left( \mathbf{1}_{I}\sigma
\right) \left( c\left( A_{k}\right) ,\left\vert A_{k}\right\vert ^{\frac{1}{n%
}}\right) ^{2}\sum_{\substack{ F\in \mathcal{F}:  \\ A_{k}\in \mathcal{M}%
_{\left( \mathbf{r},\varepsilon \right) -\limfunc{deep}}\left( F\right) }}%
\sum_{J^{\prime }\in \mathcal{C}_{F}^{\limfunc{good},\mathbf{\tau }-\limfunc{%
shift}}:\ J^{\prime }\subset A_{k}\setminus I}\left\Vert \bigtriangleup
_{J^{\prime }}^{\omega }\frac{\mathbf{x}}{\lvert A_{k}\rvert ^{\frac{1}{n}}}%
\right\Vert _{L^{2}\left( \omega \right) }^{2} \\
&&+\sum_{k=1}^{\infty }\mathbb{P}^{\alpha }\left( \mathbf{1}_{I}\sigma
\right) \left( c\left( A_{k}\right) ,\left\vert A_{k}\right\vert ^{\frac{1}{n%
}}\right) ^{2}\sum_{\substack{ F\in \mathcal{F}:  \\ A_{k}\in \mathcal{M}%
_{\left( \mathbf{r},\varepsilon \right) -\limfunc{deep}}\left( F\right) }}%
\sum_{J^{\prime }\in \mathcal{C}_{F}^{\limfunc{good},\mathbf{\tau }-\limfunc{%
shift}}:\ J^{\prime }\subset I}\left\Vert \bigtriangleup _{J^{\prime
}}^{\omega }\frac{\mathbf{x}}{\lvert A_{k}\rvert ^{\frac{1}{n}}}\right\Vert
_{L^{2}\left( \omega \right) }^{2} \\
&&+\sum_{k=1}^{\infty }\mathbb{P}^{\alpha }\left( \mathbf{1}_{I}\sigma
\right) \left( c\left( A_{k}\right) ,\left\vert A_{k}\right\vert ^{\frac{1}{n%
}}\right) ^{2}\sum_{\substack{ F\in \mathcal{F}:  \\ A_{k}\in \mathcal{M}%
_{\left( \mathbf{r},\varepsilon \right) -\limfunc{deep}}\left( F\right) }}%
\sum_{J^{\prime }\in \mathcal{C}_{F}^{\limfunc{good},\mathbf{\tau }-\limfunc{%
shift}}:\ I\subsetneqq J^{\prime }\subset A_{k}}\left\Vert \bigtriangleup
_{J^{\prime }}^{\omega }\frac{\mathbf{x}}{\lvert A_{k}\rvert ^{\frac{1}{n}}}%
\right\Vert _{L^{2}\left( \omega \right) }^{2} \\
&\equiv &D_{\limfunc{disjoint}}+D_{\limfunc{descendent}}+D_{\limfunc{ancestor%
}}\ .
\end{eqnarray*}%
We thus have from Lemma \ref{tau ovelap} again,%
\begin{eqnarray*}
D_{\limfunc{disjoint}} &=&\sum_{k=1}^{\infty }\mathbb{P}^{\alpha }\left( 
\mathbf{1}_{I}\sigma \right) \left( c\left( A_{k}\right) ,\left\vert
A_{k}\right\vert ^{\frac{1}{n}}\right) ^{2} \\
&&\ \ \ \ \ \ \ \ \ \ \ \ \ \ \ \times \sum_{\substack{ F\in \mathcal{F}: 
\\ A_{k}\in \mathcal{M}_{\left( \mathbf{r},\varepsilon \right) -\limfunc{deep%
}}\left( F\right) }}\sum_{J^{\prime }\in \mathcal{C}_{F}^{\limfunc{good},%
\mathbf{\tau }-\limfunc{shift}}:\ J^{\prime }\subset A_{k}\setminus
I}\left\Vert \bigtriangleup _{J^{\prime }}^{\omega }\frac{\mathbf{x}}{\lvert
A_{k}\rvert ^{\frac{1}{n}}}\right\Vert _{L^{2}\left( \omega \right) }^{2} \\
&\lesssim &\sum_{k=1}^{\infty }\left( \frac{\left\vert I\right\vert _{\sigma
}\left\vert A_{k}\right\vert ^{\frac{1}{n}}}{\left\vert A_{k}\right\vert ^{1+%
\frac{1-\alpha }{n}}}\right) ^{2}\mathbf{\tau \;}\left\vert A_{k}\setminus
I\right\vert _{\omega }=\mathbf{\tau }\left\{ \frac{\left\vert I\right\vert
_{\sigma }}{\left\vert I\right\vert ^{1-\frac{\alpha }{n}}}%
\sum_{k=1}^{\infty }\frac{\left\vert I\right\vert ^{1-\frac{\alpha }{n}}}{%
\left\vert A_{k}\right\vert ^{2\left( 1-\frac{\alpha }{n}\right) }}%
\left\vert A_{k}\setminus I\right\vert _{\omega }\right\} \left\vert
I\right\vert _{\sigma } \\
&\lesssim &\mathbf{\tau }\left\{ \frac{\left\vert I\right\vert _{\sigma }}{%
\left\vert I\right\vert ^{1-\frac{\alpha }{n}}}\mathcal{P}^{\alpha }\left( I,%
\mathbf{1}_{I^{c}}\omega \right) \right\} \left\vert I\right\vert _{\sigma
}\lesssim \mathbf{\tau }\mathcal{A}_{2}^{\alpha ,\ast }\left\vert
I\right\vert _{\sigma },
\end{eqnarray*}%
since%
\begin{eqnarray*}
\sum_{k=1}^{\infty }\frac{\left\vert I\right\vert ^{1-\frac{\alpha }{n}}}{%
\left\vert A_{k}\right\vert ^{2\left( 1-\frac{\alpha }{n}\right) }}%
\left\vert A_{k}\setminus I\right\vert _{\omega } &=&\int \sum_{k=1}^{\infty
}\frac{\left\vert I\right\vert ^{1-\frac{\alpha }{n}}}{\left\vert
A_{k}\right\vert ^{2\left( 1-\frac{\alpha }{n}\right) }}\mathbf{1}%
_{A_{k}\setminus I}\left( x\right) d\omega \left( x\right) \\
&=&\int \sum_{k=1}^{\infty }\frac{1}{2^{2\left( 1-\frac{\alpha }{n}\right) k}%
}\frac{\left\vert I\right\vert ^{1-\frac{\alpha }{n}}}{\left\vert
I\right\vert ^{2\left( 1-\frac{\alpha }{n}\right) }}\mathbf{1}%
_{A_{k}\setminus I}\left( x\right) d\omega \left( x\right) \\
&\lesssim &\int_{I^{c}}\left( \frac{\left\vert I\right\vert ^{\frac{1}{n}}}{%
\left[ \left\vert I\right\vert ^{\frac{1}{n}}+\limfunc{quasidist}\left(
x,I\right) \right] ^{2}}\right) ^{n-\alpha }d\omega \left( x\right) =%
\mathcal{P}^{\alpha }\left( I,\mathbf{1}_{I^{c}}\omega \right) .
\end{eqnarray*}%
The next term $D_{\limfunc{descendent}}$ satisfies%
\begin{eqnarray*}
D_{\limfunc{descendent}} &\lesssim &\sum_{k=1}^{\infty }\left( \frac{%
\left\vert I\right\vert _{\sigma }\left\vert A_{k}\right\vert ^{\frac{1}{n}}%
}{\left\vert A_{k}\right\vert ^{1+\frac{1-\alpha }{n}}}\right) ^{2}\mathbf{%
\tau \;}\left\Vert \mathsf{P}_{I}^{\limfunc{good},\omega }\frac{\mathbf{x}}{%
2^{k}\lvert I\rvert ^{\frac{1}{n}}}\right\Vert _{L^{2}\left( \omega \right)
}^{2} \\
&=&\mathbf{\tau }\sum_{k=1}^{\infty }2^{-2k\left( n-\alpha +1\right) }\left( 
\frac{\left\vert I\right\vert _{\sigma }}{\left\vert I\right\vert ^{1-\frac{%
\alpha }{n}}}\right) ^{2}\left\Vert \mathsf{P}_{I}^{\limfunc{good},\omega }%
\frac{\mathbf{x}}{\lvert I\rvert ^{\frac{1}{n}}}\right\Vert _{L^{2}\left(
\omega \right) }^{2} \\
&\lesssim &\mathbf{\tau }\left\{ \frac{\left\vert I\right\vert _{\sigma
}\left\Vert \mathsf{P}_{I}^{\limfunc{good},\omega }\frac{\mathbf{x}}{\lvert
I\rvert ^{\frac{1}{n}}}\right\Vert _{L^{2}\left( \omega \right) }^{2}}{%
\left\vert I\right\vert ^{2\left( 1-\frac{\alpha }{n}\right) }}\right\}
\left\vert I\right\vert _{\sigma }\lesssim \mathbf{\tau }A_{2}^{\alpha ,%
\limfunc{energy}}\left\vert I\right\vert _{\sigma }\ .
\end{eqnarray*}

Finally for $D_{\limfunc{ancestor}}$ we note that each $J^{\prime }$ is of
the form $J^{\prime }=A_{\ell }\equiv \pi _{\Omega \mathcal{D}}^{\left( \ell
\right) }I$ for some $\ell \geq 1$, and that there are at most $C\mathbf{%
\tau }$ pairs $\left( F,A_{k}\right) $ with $k\geq \ell $ such that $%
A_{k}\in \mathcal{M}_{\left( \mathbf{r},\varepsilon \right) -\limfunc{deep}%
}\left( F\right) $ and $J^{\prime }=A_{\ell }\in \mathcal{C}_{F}^{\limfunc{%
good},\mathbf{\tau }-\limfunc{shift}}$. Now we write%
\begin{eqnarray*}
D_{\limfunc{ancestor}} &=&\sum_{k=1}^{\infty }\mathbb{P}^{\alpha }\left( 
\mathbf{1}_{I}\sigma \right) \left( c\left( A_{k}\right) ,\left\vert
A_{k}\right\vert ^{\frac{1}{n}}\right) ^{2}\sum_{\substack{ F\in \mathcal{F}%
:  \\ A_{k}\in \mathcal{M}_{\left( \mathbf{r},\varepsilon \right) -\limfunc{%
deep}}\left( F\right) }}\sum_{J^{\prime }\in \mathcal{C}_{F}^{\limfunc{good},%
\mathbf{\tau }-\limfunc{shift}}:\ I\subsetneqq J^{\prime }\subset
A_{k}}\left\Vert \bigtriangleup _{J^{\prime }}^{\omega }\frac{\mathbf{x}}{%
\lvert A_{k}\rvert ^{\frac{1}{n}}}\right\Vert _{L^{2}\left( \omega \right)
}^{2} \\
&\lesssim &\mathbf{\tau }\sum_{k=1}^{\infty }\left( \frac{\left\vert
I\right\vert _{\sigma }\left\vert A_{k}\right\vert ^{\frac{1}{n}}}{%
\left\vert A_{k}\right\vert ^{1+\frac{1-\alpha }{n}}}\right) ^{2}\sum_{\ell
=1}^{k}\left\Vert \bigtriangleup _{A_{\ell }}^{\omega }\frac{\mathbf{x}}{%
\lvert A_{k}\rvert ^{\frac{1}{n}}}\right\Vert _{L^{2}\left( \omega \right)
}^{2} \\
&\leq &\mathbf{\tau }\sum_{k=1}^{\infty }\left( \frac{\left\vert
I\right\vert _{\sigma }\left\vert A_{k}\right\vert ^{\frac{1}{n}}}{%
\left\vert A_{k}\right\vert ^{1+\frac{1-\alpha }{n}}}\right) ^{2}\left\Vert 
\mathsf{P}_{A_{k}}^{\limfunc{good},\omega }\frac{\mathbf{x}}{\lvert
A_{k}\rvert ^{\frac{1}{n}}}\right\Vert _{L^{2}\left( \omega \right) }^{2}.
\end{eqnarray*}%
At this point we need a \emph{`prepare to puncture'} argument, as we will
want to derive geometric decay from $\left\Vert \mathsf{P}_{J^{\prime
}}^{\omega }\mathbf{x}\right\Vert _{L^{2}\left( \omega \right) }^{2}$ by
dominating it by the `nonenergy' term $\left\vert J^{\prime }\right\vert ^{%
\frac{2}{n}}\left\vert J^{\prime }\cap I\right\vert _{\omega }$, as well as
using the Muckenhoupt energy constant. For this we define $\widetilde{\omega 
}=\omega -\omega \left( \left\{ p\right\} \right) \delta _{p}$ where $p$ is
an atomic point in $I$ for which 
\begin{equation*}
\omega \left( \left\{ p\right\} \right) =\sup_{q\in \mathfrak{P}_{\left(
\sigma ,\omega \right) }:\ q\in I}\omega \left( \left\{ q\right\} \right) .
\end{equation*}%
(If $\omega $ has no atomic point in common with $\sigma $ in $I$ set $%
\widetilde{\omega }=\omega $.) Then we have $\left\vert I\right\vert _{%
\widetilde{\omega }}=\omega \left( I,\mathfrak{P}_{\left( \sigma ,\omega
\right) }\right) $ and%
\begin{equation*}
\frac{\left\vert I\right\vert _{\widetilde{\omega }}}{\left\vert
I\right\vert ^{\left( 1-\frac{\alpha }{n}\right) }}\frac{\left\vert
I\right\vert _{\sigma }}{\left\vert I\right\vert ^{\left( 1-\frac{\alpha }{n}%
\right) }}=\frac{\omega \left( I,\mathfrak{P}_{\left( \sigma ,\omega \right)
}\right) }{\left\vert I\right\vert ^{\left( 1-\frac{\alpha }{n}\right) }}%
\frac{\left\vert I\right\vert _{\sigma }}{\left\vert I\right\vert ^{\left( 1-%
\frac{\alpha }{n}\right) }}\leq A_{2}^{\alpha ,\limfunc{punct}}.
\end{equation*}%
A key observation, already noted in the proof of Lemma \ref{energy A2}
above, is that%
\begin{equation}
\left\Vert \bigtriangleup _{K}^{\omega }\mathbf{x}\right\Vert _{L^{2}\left(
\omega \right) }^{2}=\left\{ 
\begin{array}{ccc}
\left\Vert \bigtriangleup _{K}^{\omega }\left( \mathbf{x}-\mathbf{p}\right)
\right\Vert _{L^{2}\left( \omega \right) }^{2} & \text{ if } & p\in K \\ 
\left\Vert \bigtriangleup _{K}^{\omega }\mathbf{x}\right\Vert _{L^{2}\left( 
\widetilde{\omega }\right) }^{2} & \text{ if } & p\notin K%
\end{array}%
\right. \leq \ell \left( K\right) ^{2}\left\vert K\right\vert _{\widetilde{%
\omega }},\ \ \ \ \ \text{for all }K\in \Omega \mathcal{D}\ ,
\label{key obs}
\end{equation}%
and so, as in the proof of Lemma \ref{energy A2},%
\begin{equation*}
\left\Vert \mathsf{P}_{A_{k}}^{\limfunc{good},\omega }\frac{\mathbf{x}}{%
\left\vert A_{k}\right\vert ^{\frac{1}{n}}}\right\Vert _{L^{2}\left( \omega
\right) }^{2}\leq 3\left\vert A_{k}\right\vert _{\widetilde{\omega }}\ .
\end{equation*}%
Then we continue with%
\begin{eqnarray*}
&&\mathbf{\tau }\sum_{k=1}^{\infty }\left( \frac{\left\vert I\right\vert
_{\sigma }\left\vert A_{k}\right\vert ^{\frac{1}{n}}}{\left\vert
A_{k}\right\vert ^{1+\frac{1-\alpha }{n}}}\right) ^{2}\left\Vert \mathsf{P}%
_{A_{k}}^{\limfunc{good},\omega }\frac{\mathbf{x}}{\lvert A_{k}\rvert ^{%
\frac{1}{n}}}\right\Vert _{L^{2}\left( \omega \right) }^{2} \\
&\lesssim &\mathbf{\tau }\sum_{k=1}^{\infty }\left( \frac{\left\vert
I\right\vert _{\sigma }\left\vert A_{k}\right\vert ^{\frac{1}{n}}}{%
\left\vert A_{k}\right\vert ^{1+\frac{1-\alpha }{n}}}\right) ^{2}\left\vert
A_{k}\right\vert _{\widetilde{\omega }} \\
&=&\mathbf{\tau }\sum_{k=1}^{\infty }\left( \frac{\left\vert I\right\vert
_{\sigma }}{\left\vert A_{k}\right\vert ^{1-\frac{\alpha }{n}}}\right)
^{2}\left\vert A_{k}\setminus I\right\vert _{\omega }+\mathbf{\tau }%
\sum_{k=1}^{\infty }\left( \frac{\left\vert I\right\vert _{\sigma }}{%
2^{k\left( n-\alpha \right) }\left\vert I\right\vert ^{1-\frac{\alpha }{n}}}%
\right) ^{2}\left\vert I\right\vert _{\widetilde{\omega }} \\
&\lesssim &\mathbf{\tau }\left( \mathcal{A}_{2}^{\alpha ,\ast
}+A_{2}^{\alpha ,\limfunc{punct}}\right) \left\vert I\right\vert _{\sigma },
\end{eqnarray*}%
where the inequality $\sum_{k=1}^{\infty }\left( \frac{\left\vert
I\right\vert _{\sigma }}{\left\vert A_{k}\right\vert ^{1-\frac{\alpha }{n}}}%
\right) ^{2}\left\vert A_{k}\setminus I\right\vert _{\omega }\lesssim 
\mathcal{A}_{2}^{\alpha ,\ast }\left\vert I\right\vert _{\sigma }$ is
already proved above in the estimate for $D_{\limfunc{disjoint}}$.

\subsection{The backward Poisson testing inequality}

Fix $I\in \Omega \mathcal{D}$. It suffices to prove%
\begin{equation}
\mathbf{Back}\left( \widehat{I}\right) \equiv \int_{\mathbb{R}^{n}}\left[ 
\mathbb{Q}^{\alpha }\left( t\mathbf{1}_{\widehat{I}}\overline{\mu }\right)
\left( y\right) \right] ^{2}d\sigma (y)\lesssim \left\{ \mathcal{A}%
_{2}^{\alpha }+\left( \mathcal{E}_{\alpha }^{\limfunc{plug}}+\sqrt{%
A_{2}^{\alpha ,\limfunc{energy}}}\right) \sqrt{A_{2}^{\alpha ,\limfunc{punct}%
}}\right\} \int_{\widehat{I}}t^{2}d\overline{\mu }(x,t).  \label{e.t2 n'}
\end{equation}%
Note that in dimension $n=1$, Hyt\"{o}nen obtained in \cite{Hyt2} the
simpler bound $A_{2}^{\alpha }$ for the term analogous to (\ref{e.t2 n'}).
Here is a brief schematic diagram of the decompositions, with bounds in $%
\fbox{}$, used in this subsection:%
\begin{equation*}
\fbox{$%
\begin{array}{ccccc}
\mathbf{Back}\left( \widehat{I}\right) &  &  &  &  \\ 
\downarrow &  &  &  &  \\ 
U_{s} &  &  &  &  \\ 
\downarrow &  &  &  &  \\ 
T_{s}^{\limfunc{proximal}} & + & V_{s}^{\limfunc{remote}} &  &  \\ 
\fbox{$%
\begin{array}{c}
\mathcal{A}_{2}^{\alpha }+ \\ 
\left( \mathcal{E}_{\alpha }^{\limfunc{plug}}+\sqrt{A_{2}^{\alpha ,\limfunc{%
energy}}}\right) \sqrt{A_{2}^{\alpha ,\limfunc{punct}}}%
\end{array}%
$} &  & \downarrow &  &  \\ 
&  & \downarrow &  &  \\ 
&  & T_{s}^{\limfunc{difference}} & + & T_{s}^{\limfunc{intersection}} \\ 
&  & \fbox{$%
\begin{array}{c}
\mathcal{A}_{2}^{\alpha }+ \\ 
\left( \mathcal{E}_{\alpha }^{\limfunc{plug}}+\sqrt{A_{2}^{\alpha ,\limfunc{%
energy}}}\right) \sqrt{A_{2}^{\alpha ,\limfunc{punct}}}%
\end{array}%
$} &  & \fbox{$\left( \mathcal{E}_{\alpha }^{\limfunc{plug}}+\sqrt{%
A_{2}^{\alpha ,\limfunc{energy}}}\right) \sqrt{A_{2}^{\alpha ,\limfunc{punct}%
}}$}%
\end{array}%
$}.
\end{equation*}%
Using (\ref{tent consequence}) we see that the integral on the right hand
side of (\ref{e.t2 n'}) is 
\begin{equation}
\int_{\widehat{I}}t^{2}d\overline{\mu }=\sum_{F\in \mathcal{F}}\sum_{J\in 
\mathcal{M}_{\left( \mathbf{r},\varepsilon \right) -\limfunc{deep}}\left(
F\right) :\ J\subset I}\lVert \mathsf{P}_{F,J}^{\omega }\mathbf{x}\rVert
_{L^{2}\left( \omega \right) }^{2}\,.  \label{mu I hat}
\end{equation}%
where $\mathsf{P}_{F,J}^{\omega }$ was defined earlier.

We now compute using (\ref{tent consequence}) again that 
\begin{eqnarray}
\mathbb{Q}^{\alpha }\left( t\mathbf{1}_{\widehat{I}}\overline{\mu }\right)
\left( y\right) &=&\int_{\widehat{I}}\frac{t^{2}}{\left( t^{2}+\left\vert
x-y\right\vert ^{2}\right) ^{\frac{n+1-\alpha }{2}}}d\overline{\mu }\left(
x,t\right)  \label{PI hat} \\
&\approx &\sum_{F\in \mathcal{F}}\sum_{\substack{ J\in \mathcal{M}_{\left( 
\mathbf{r},\varepsilon \right) -\limfunc{deep}}\left( F\right)  \\ J\subset
I }}\frac{\left\Vert \mathsf{P}_{F,J}^{\omega }\mathbf{x}\right\Vert
_{L^{2}\left( \omega \right) }^{2}}{\left( \left\vert J\right\vert ^{\frac{1%
}{n}}+\left\vert y-c_{J}\right\vert \right) ^{n+1-\alpha }},  \notag
\end{eqnarray}%
and then expand the square and integrate to obtain that the term $\mathbf{%
Back}\left( \widehat{I}\right) $ is 
\begin{equation*}
\sum_{\substack{ F\in \mathcal{F}  \\ J\in \mathcal{M}_{\left( \mathbf{r}%
,\varepsilon \right) -\limfunc{deep}}\left( F\right)  \\ J\subset I}}\sum 
_{\substack{ F^{\prime }\in \mathcal{F}:  \\ J^{\prime }\in \mathcal{M}%
_{\left( \mathbf{r},\varepsilon \right) -\limfunc{deep}}\left( F^{\prime
}\right)  \\ J^{\prime }\subset I}}\int_{\mathbb{R}^{n}}\frac{\left\Vert 
\mathsf{P}_{F,J}^{\omega }\mathbf{x}\right\Vert _{L^{2}\left( \omega \right)
}^{2}}{\left( \left\vert J\right\vert ^{\frac{1}{n}}+\left\vert
y-c_{J}\right\vert \right) ^{n+1-\alpha }}\frac{\left\Vert \mathsf{P}%
_{F^{\prime },J^{\prime }}^{\omega }\mathbf{x}\right\Vert _{L^{2}\left(
\omega \right) }^{2}}{\left( \left\vert J^{\prime }\right\vert ^{\frac{1}{n}%
}+\left\vert y-c_{J^{\prime }}\right\vert \right) ^{n+1-\alpha }}d\sigma
\left( y\right) .
\end{equation*}

By symmetry we may assume that $\ell \left( J^{\prime }\right) \leq \ell
\left( J\right) $. We fix a nonnegative integer $s$, and consider those
quasicubes $J$ and $J^{\prime }$ with $\ell \left( J^{\prime }\right)
=2^{-s}\ell \left( J\right) $. For fixed $s$ we will control the expression 
\begin{eqnarray*}
U_{s} &\equiv &\sum_{\substack{ F,F^{\prime }\in \mathcal{F}}}\sum 
_{\substack{ J\in \mathcal{M}_{\left( \mathbf{r},\varepsilon \right) -%
\limfunc{deep}}\left( F\right) ,\ J^{\prime }\in \mathcal{M}_{\left( \mathbf{%
r},\varepsilon \right) -\limfunc{deep}}\left( F^{\prime }\right)  \\ %
J,J^{\prime }\subset I,\ \ell \left( J^{\prime }\right) =2^{-s}\ell \left(
J\right) }} \\
&&\times \int_{\mathbb{R}^{n}}\frac{\left\Vert \mathsf{P}_{F,J}^{\omega }%
\mathbf{x}\right\Vert _{L^{2}\left( \omega \right) }^{2}}{\left( \left\vert
J\right\vert ^{\frac{1}{n}}+\left\vert y-c_{J}\right\vert \right)
^{n+1-\alpha }}\frac{\left\Vert \mathsf{P}_{F^{\prime },J^{\prime }}^{\omega
}\mathbf{x}\right\Vert _{L^{2}\left( \omega \right) }^{2}}{\left( \left\vert
J^{\prime }\right\vert ^{\frac{1}{n}}+\left\vert y-c_{J^{\prime
}}\right\vert \right) ^{n+1-\alpha }}d\sigma \left( y\right) ,
\end{eqnarray*}%
by proving that%
\begin{equation}
U_{s}\lesssim 2^{-\delta s}\left\{ \mathcal{A}_{2}^{\alpha }+\left( \mathcal{%
E}_{\alpha }^{\limfunc{plug}}+\sqrt{A_{2}^{\alpha ,\limfunc{energy}}}\right) 
\sqrt{A_{2}^{\alpha ,\limfunc{punct}}}\right\} \int_{\widehat{I}}t^{2}d%
\overline{\mu },\ \ \ \ \ \text{where }\delta =\frac{1}{2n}.
\label{Us bound}
\end{equation}%
With this accomplished, we can sum in $s\geq 0$ to control the term $\mathbf{%
Back}\left( \widehat{I}\right) $. We now decompose $U_{s}=T_{s}^{\limfunc{%
proximal}}+T_{s}^{\limfunc{difference}}+T_{s}^{\limfunc{intersection}}$ into
three pieces.

Our first decomposition is to write%
\begin{equation}
U_{s}=T_{s}^{\limfunc{proximal}}+V_{s}^{\limfunc{remote}}\ ,
\label{initial decomp}
\end{equation}%
where in the `proximal' term $T_{s}^{\limfunc{proximal}}$ we restrict the
summation over pairs of quasicubes $J,J^{\prime }$ to those satisfying $%
\limfunc{qdist}\left( c_{J},c_{J^{\prime }}\right) <2^{s\delta }\ell \left(
J\right) $; while in the `remote' term $V_{s}^{\limfunc{remote}}$ we
restrict the summation over pairs of quasicubes $J,J^{\prime }$ to those
satisfying the opposite inequality $\limfunc{qdist}\left( c_{J},c_{J^{\prime
}}\right) \geq 2^{s\delta }\ell \left( J\right) $. Then we further decompose 
\begin{equation*}
V_{s}^{\limfunc{remote}}=T_{s}^{\limfunc{difference}}+T_{s}^{\limfunc{%
intersection}},
\end{equation*}%
where in the `difference' term $T_{s}^{\limfunc{difference}}$ we restict
integration in $y$ to the difference $\mathbb{R}^{n}\setminus B\left(
J,J^{\prime }\right) $ of $\mathbb{R}^{n}$ and 
\begin{equation*}
B\left( J,J^{\prime }\right) \equiv B\left( c_{J},\frac{1}{2}\limfunc{qdist}%
\left( c_{J},c_{J^{\prime }}\right) \right) ,
\end{equation*}%
the quasiball centered at $c_{J}$ with radius $\frac{1}{2}\limfunc{qdist}%
\left( c_{J},c_{J^{\prime }}\right) $; while in the `intersection' term $%
T_{s}^{\limfunc{intersection}}$ we restict integration in $y$ to the
intersection $\mathbb{R}^{n}\cap B\left( J,J^{\prime }\right) $ of $\mathbb{R%
}^{n}$ with the quasiball $B\left( J,J^{\prime }\right) $; i.e. 
\begin{eqnarray*}
T_{s}^{\limfunc{intersection}} &\equiv &\sum_{\substack{ F,F^{\prime }\in 
\mathcal{F}}}\sum_{\substack{ J\in \mathcal{M}_{\left( \mathbf{r}%
,\varepsilon \right) -\limfunc{deep}}\left( F\right) ,\ J^{\prime }\in 
\mathcal{M}_{\left( \mathbf{r},\varepsilon \right) -\limfunc{deep}}\left(
F^{\prime }\right)  \\ J,J^{\prime }\subset I,\ \ell \left( J^{\prime
}\right) =2^{-s}\ell \left( J\right)  \\ \limfunc{qdist}\left(
c_{J},c_{J^{\prime }}\right) \geq 2^{s\left( 1+\delta \right) }\ell \left(
J^{\prime }\right) }} \\
&&\times \int_{B\left( J,J^{\prime }\right) }\frac{\left\Vert \mathsf{P}%
_{F,J}^{\omega }\mathbf{x}\right\Vert _{L^{2}\left( \omega \right) }^{2}}{%
\left( \left\vert J\right\vert ^{\frac{1}{n}}+\left\vert y-c_{J}\right\vert
\right) ^{n+1-\alpha }}\frac{\left\Vert \mathsf{P}_{F^{\prime },J^{\prime
}}^{\omega }\mathbf{x}\right\Vert _{L^{2}\left( \omega \right) }^{2}}{\left(
\left\vert J^{\prime }\right\vert ^{\frac{1}{n}}+\left\vert y-c_{J^{\prime
}}\right\vert \right) ^{n+1-\alpha }}d\sigma \left( y\right) .
\end{eqnarray*}%
Here is a schematic reminder of the these decompositions with the
distinguishing points of the definitions boxed:{}

\begin{equation*}
\fbox{$%
\begin{array}{ccccc}
U_{s} &  &  &  &  \\ 
\downarrow &  &  &  &  \\ 
T_{s}^{\limfunc{proximal}} & + & V_{s}^{\limfunc{remote}} &  &  \\ 
\fbox{$\limfunc{qdist}\left( c_{J},c_{J^{\prime }}\right) <2^{s\delta }\ell
\left( J\right) $} &  & \fbox{$\limfunc{qdist}\left( c_{J},c_{J^{\prime
}}\right) \geq 2^{s\delta }\ell \left( J\right) $} &  &  \\ 
&  & \downarrow &  &  \\ 
&  & T_{s}^{\limfunc{difference}} & + & T_{s}^{\limfunc{intersection}} \\ 
&  & \fbox{$\int_{\mathbb{R}^{n}\setminus B\left( J,J^{\prime }\right) }$} & 
& \fbox{$\fbox{$\int_{B\left( J,J^{\prime }\right) }$}$}%
\end{array}%
$}.
\end{equation*}

We will exploit the restriction of integration to $B\left( J,J^{\prime
}\right) $, together with the condition 
\begin{equation*}
\limfunc{qdist}\left( c_{J},c_{J^{\prime }}\right) \geq 2^{s\left( 1+\delta
\right) }\ell \left( J^{\prime }\right) =2^{s\delta }\ell \left( J\right) ,
\end{equation*}%
in establishing (\ref{important}) below, which will then give an estimate
for the term $T_{s}^{\limfunc{intersection}}$ using an argument dual to that
used for the other terms $T_{s}^{\limfunc{proximal}}$ and $T_{s}^{\limfunc{%
difference}}$, to which we now turn.

\subsubsection{The proximal and difference terms}

We have%
\begin{align*}
T_{s}^{\limfunc{proximal}}& \equiv \sum_{\substack{ F,F^{\prime }\in 
\mathcal{F}}}\sum_{\substack{ J\in \mathcal{M}_{\left( \mathbf{r}%
,\varepsilon \right) -\limfunc{deep}}\left( F\right) ,\ J^{\prime }\in 
\mathcal{M}_{\left( \mathbf{r},\varepsilon \right) -\limfunc{deep}}\left(
F^{\prime }\right)  \\ J,J^{\prime }\subset I,\ \ell \left( J^{\prime
}\right) =2^{-s}\ell \left( J\right) \text{ and }\limfunc{qdist}\left(
c_{J},c_{J^{\prime }}\right) <2^{s\delta }\ell \left( J\right) }} \\
& \times \int_{\mathbb{R}^{n}}\frac{\left\Vert \mathsf{P}_{F,J}^{\omega }%
\mathbf{x}\right\Vert _{L^{2}\left( \omega \right) }^{2}}{\left( \left\vert
J\right\vert ^{\frac{1}{n}}+\left\vert y-c_{J}\right\vert \right)
^{n+1-\alpha }}\frac{\left\Vert \mathsf{P}_{F^{\prime },J^{\prime }}^{\omega
}\mathbf{x}\right\Vert _{L^{2}\left( \omega \right) }^{2}}{\left( \left\vert
J^{\prime }\right\vert ^{\frac{1}{n}}+\left\vert y-c_{J^{\prime
}}\right\vert \right) ^{n+1-\alpha }}d\sigma \left( y\right) \\
& \leq M_{s}^{\limfunc{proximal}}\sum_{F\in \mathcal{F}}\sum_{\substack{ 
\mathcal{M}_{\left( \mathbf{r},\varepsilon \right) -\limfunc{deep}}\left(
F\right)  \\ J\subset I}}\lVert \mathsf{P}_{F,J}^{\omega }\mathbf{z}\rVert
_{\omega }^{2}=M_{s}^{\limfunc{proximal}}\int_{\widehat{I}}t^{2}d\overline{%
\mu },
\end{align*}%
where%
\begin{align*}
M_{s}^{\limfunc{proximal}}& \equiv \sup_{F\in \mathcal{F}}\sup_{\substack{ %
J\in \mathcal{M}_{\left( \mathbf{r},\varepsilon \right) -\limfunc{deep}%
}\left( F\right) }}A_{s}^{\limfunc{proximal}}\left( J\right) ; \\
A_{s}^{\limfunc{proximal}}\left( J\right) & \equiv \sum_{F^{\prime }\in 
\mathcal{F}}\sum_{\substack{ J^{\prime }\in \mathcal{M}_{\left( \mathbf{r}%
,\varepsilon \right) -\limfunc{deep}}\left( F^{\prime }\right)  \\ J^{\prime
}\subset I,\ \ell \left( J^{\prime }\right) =2^{-s}\ell \left( J\right) 
\text{ and }\limfunc{qdist}\left( c_{J},c_{J^{\prime }}\right) <2^{s\delta
}\ell \left( J\right) }}\int_{\mathbb{R}^{n}}S_{\left( J^{\prime },J\right)
}^{F^{\prime }}\left( y\right) d\sigma \left( y\right) ; \\
S_{\left( J^{\prime },J\right) }^{F^{\prime }}\left( x\right) & \equiv \frac{%
1}{\left( \left\vert J\right\vert ^{\frac{1}{n}}+\left\vert
y-c_{J}\right\vert \right) ^{n+1-\alpha }}\frac{\left\Vert \mathsf{P}%
_{F^{\prime },J^{\prime }}^{\omega }\mathbf{x}\right\Vert _{L^{2}\left(
\omega \right) }^{2}}{\left( \left\vert J^{\prime }\right\vert ^{\frac{1}{n}%
}+\left\vert y-c_{J^{\prime }}\right\vert \right) ^{n+1-\alpha }},
\end{align*}%
and similarly%
\begin{align*}
T_{s}^{\limfunc{difference}}& \equiv \sum_{\substack{ F,F^{\prime }\in 
\mathcal{F}}}\sum_{\substack{ J\in \mathcal{M}_{\left( \mathbf{r}%
,\varepsilon \right) -\limfunc{deep}}\left( F\right) ,\ J^{\prime }\in 
\mathcal{M}_{\left( \mathbf{r},\varepsilon \right) -\limfunc{deep}}\left(
F^{\prime }\right)  \\ J,J^{\prime }\subset I,\ \ell \left( J^{\prime
}\right) =2^{-s}\ell \left( J\right) \text{ and }\limfunc{qdist}\left(
c_{J},c_{J^{\prime }}\right) \geq 2^{s\delta }\ell \left( J\right) }} \\
& \times \int_{\mathbb{R}^{n}\setminus B\left( J,J^{\prime }\right) }\frac{%
\left\Vert \mathsf{P}_{F,J}^{\omega }\mathbf{x}\right\Vert _{L^{2}\left(
\omega \right) }^{2}}{\left( \left\vert J\right\vert ^{\frac{1}{n}%
}+\left\vert y-c_{J}\right\vert \right) ^{n+1-\alpha }}\frac{\left\Vert 
\mathsf{P}_{F^{\prime },J^{\prime }}^{\omega }\mathbf{x}\right\Vert
_{L^{2}\left( \omega \right) }^{2}}{\left( \left\vert J^{\prime }\right\vert
^{\frac{1}{n}}+\left\vert y-c_{J^{\prime }}\right\vert \right) ^{n+1-\alpha }%
}d\sigma \left( y\right) \\
& \leq M_{s}^{\limfunc{difference}}\sum_{F\in \mathcal{F}}\sum_{\substack{ 
\mathcal{M}_{\left( \mathbf{r},\varepsilon \right) -\limfunc{deep}}\left(
F\right)  \\ J\subset I}}\lVert \mathsf{P}_{F,J}^{\omega }\mathbf{z}\rVert
_{\omega }^{2}=M_{s}^{\limfunc{difference}}\int_{\widehat{I}}t^{2}d\overline{%
\mu };
\end{align*}%
where%
\begin{eqnarray*}
M_{s}^{\limfunc{difference}} &\equiv &\sup_{F\in \mathcal{F}}\sup_{\substack{
J\in \mathcal{M}_{\left( \mathbf{r},\varepsilon \right) -\limfunc{deep}%
}\left( F\right) }}A_{s}^{\func{difference}}\left( J\right) ; \\
A_{s}^{\limfunc{difference}}\left( J\right) &\equiv &\sum_{F^{\prime }\in 
\mathcal{F}}\sum_{\substack{ J^{\prime }\in \mathcal{M}_{\left( \mathbf{r}%
,\varepsilon \right) -\limfunc{deep}}\left( F^{\prime }\right)  \\ J^{\prime
}\subset I,\ \ell \left( J^{\prime }\right) =2^{-s}\ell \left( J\right) 
\text{ and }\limfunc{qdist}\left( c_{J},c_{J^{\prime }}\right) \geq
2^{s\delta }\ell \left( J\right) }}\int_{\mathbb{R}^{n}\setminus B\left(
J,J^{\prime }\right) }S_{\left( J^{\prime },J\right) }^{F^{\prime }}\left(
y\right) d\sigma \left( y\right) .
\end{eqnarray*}%
The restriction of integration in $A_{s}^{\limfunc{difference}}$ to $\mathbb{%
R}^{n}\setminus B\left( J,J^{\prime }\right) $ will be used to establish (%
\ref{vanishing close}) below.

\begin{notation}
Since the quasicubes $F,J,F^{\prime },J^{\prime }$ that arise in all of the
sums here satisfy (recall (\ref{tent consequence})) 
\begin{equation*}
J\in \mathcal{M}_{\left( \mathbf{r},\varepsilon \right) -\limfunc{deep}%
}\left( F\right) ,\ J^{\prime }\in \mathcal{M}_{\left( \mathbf{r}%
,\varepsilon \right) -\limfunc{deep}}\left( F^{\prime }\right) \text{ and }%
\ell \left( J^{\prime }\right) =2^{-s}\ell \left( J\right) \text{ and }%
J,J^{\prime }\subset I,
\end{equation*}%
we will often employ the notation $\overset{\ast }{\sum }$ to remind the
reader that, as applicable, these four conditions are in force even when
they are\ not explictly mentioned.
\end{notation}

Now fix $J$ as in $M_{s}^{\limfunc{proximal}}$ respectively $M_{s}^{\limfunc{%
difference}}$, and decompose the sum over $J^{\prime }$ in $A_{s}^{\limfunc{%
proximal}}\left( J\right) $ respectively $A_{s}^{\limfunc{difference}}\left(
J\right) $ by%
\begin{eqnarray*}
&&A_{s}^{\limfunc{proximal}}\left( J\right) =\sum_{F^{\prime }\in \mathcal{F}%
}\sum_{\substack{ J^{\prime }\in \mathcal{M}_{\left( \mathbf{r},\varepsilon
\right) -\limfunc{deep}}\left( F^{\prime }\right)  \\ J^{\prime }\subset I,\
\ell \left( J^{\prime }\right) =2^{-s}\ell \left( J\right) \text{ and }%
\limfunc{qdist}\left( c_{J},c_{J^{\prime }}\right) <2^{s\delta }\ell \left(
J\right) }}\int_{\mathbb{R}^{n}}S_{\left( J^{\prime },J\right) }^{F^{\prime
}}\left( y\right) d\sigma \left( y\right)  \\
&=&\sum_{F^{\prime }\in \mathcal{F}}\overset{\ast }{\sum_{\substack{ %
c_{J^{\prime }}\in 2J \\ \limfunc{qdist}\left( c_{J},c_{J^{\prime }}\right)
<2^{s\delta }\ell \left( J\right) }}}\int_{\mathbb{R}^{n}}S_{\left(
J^{\prime },J\right) }^{F^{\prime }}\left( y\right) d\sigma \left( y\right)
+\sum_{F^{\prime }\in \mathcal{F}}\sum_{\ell =1}^{\infty }\overset{\ast }{%
\sum_{\substack{ c_{J^{\prime }}\in 2^{\ell +1}J\setminus 2^{\ell }J \\ 
\limfunc{qdist}\left( c_{J},c_{J^{\prime }}\right) <2^{s\delta }\ell \left(
J\right) }}}\int_{\mathbb{R}^{n}}S_{\left( J^{\prime },J\right) }^{F^{\prime
}}\left( y\right) d\sigma \left( y\right)  \\
&\equiv &\sum_{\ell =0}^{\infty }A_{s}^{\limfunc{proximal},\ell }\left(
J\right) ,
\end{eqnarray*}%
respectively%
\begin{eqnarray*}
&&A_{s}^{\limfunc{difference}}\left( J\right) =\sum_{F^{\prime }\in \mathcal{%
F}}\sum_{\substack{ J^{\prime }\in \mathcal{M}_{\left( \mathbf{r}%
,\varepsilon \right) -\limfunc{deep}}\left( F^{\prime }\right)  \\ J^{\prime
}\subset I,\ \ell \left( J^{\prime }\right) =2^{-s}\ell \left( J\right) 
\text{ and }\limfunc{qdist}\left( c_{J},c_{J^{\prime }}\right) \geq
2^{s\delta }\ell \left( J\right) }}\int_{\mathbb{R}^{n}\setminus B\left(
J,J^{\prime }\right) }S_{\left( J^{\prime },J\right) }^{F^{\prime }}\left(
y\right) d\sigma \left( y\right)  \\
&=&\sum_{F^{\prime }\in \mathcal{F}}\overset{\ast }{\sum_{\substack{ %
c_{J^{\prime }}\in 2J \\ \limfunc{qdist}\left( c_{J},c_{J^{\prime }}\right)
\geq 2^{s\delta }\ell \left( J\right) }}}\int_{\mathbb{R}^{n}\setminus
B\left( J,J^{\prime }\right) }S_{\left( J^{\prime },J\right) }^{F^{\prime
}}\left( y\right) d\sigma \left( y\right)  \\
&&+\sum_{\ell =1}^{\infty }\sum_{F^{\prime }\in \mathcal{F}}\overset{\ast }{%
\sum_{\substack{ c_{J^{\prime }}\in 2^{\ell +1}J\setminus 2^{\ell }J \\ 
\limfunc{qdist}\left( c_{J},c_{J^{\prime }}\right) \geq 2^{s\delta }\ell
\left( J\right) }}}\int_{\mathbb{R}^{n}\setminus B\left( J,J^{\prime
}\right) }S_{\left( J^{\prime },J\right) }^{F^{\prime }}\left( y\right)
d\sigma \left( y\right)  \\
&\equiv &\sum_{\ell =0}^{\infty }A_{s}^{\limfunc{difference},\ell }\left(
J\right) .
\end{eqnarray*}%
Let $m$ be the smallest integer for which 
\begin{equation}
2^{-m}\sqrt{n}\leq \frac{1}{3}.  \label{smallest m}
\end{equation}%
Now decompose the integrals over $I$ in $A_{s}^{\limfunc{proximal},\ell
}\left( J\right) $ by%
\begin{eqnarray*}
A_{s}^{\limfunc{proximal},0}\left( J\right)  &=&\sum_{F^{\prime }\in 
\mathcal{F}}\overset{\ast }{\sum_{\substack{ c_{J^{\prime }}\in 2J \\ 
\limfunc{qdist}\left( c_{J},c_{J^{\prime }}\right) <2^{s\delta }\ell \left(
J\right) }}}\int_{\mathbb{R}^{n}\setminus 4J}S_{\left( J^{\prime },J\right)
}^{F^{\prime }}\left( y\right) d\sigma \left( y\right)  \\
&&+\sum_{F^{\prime }\in \mathcal{F}}\overset{\ast }{\sum_{\substack{ %
c_{J^{\prime }}\in 2J \\ \limfunc{qdist}\left( c_{J},c_{J^{\prime }}\right)
<2^{s\delta }\ell \left( J\right) }}}\int_{4J}S_{\left( J^{\prime },J\right)
}^{F^{\prime }}\left( y\right) d\sigma \left( y\right)  \\
&\equiv &A_{s,far}^{\limfunc{proximal},0}\left( J\right) +A_{s,near}^{%
\limfunc{proximal},0}\left( J\right) , \\
A_{s}^{\limfunc{proximal},\ell }\left( J\right)  &=&\sum_{F^{\prime }\in 
\mathcal{F}}\overset{\ast }{\sum_{\substack{ c_{J^{\prime }}\in 2^{\ell
+1}J\setminus 2^{\ell }J \\ \limfunc{qdist}\left( c_{J},c_{J^{\prime
}}\right) <2^{s\delta }\ell \left( J\right) }}}\int_{\mathbb{R}^{n}\setminus
2^{\ell +2}J}S_{\left( J^{\prime },J\right) }^{F^{\prime }}\left( y\right)
d\sigma \left( y\right)  \\
&&+\sum_{F^{\prime }\in \mathcal{F}}\overset{\ast }{\sum_{\substack{ %
c_{J^{\prime }}\in 2^{\ell +1}J\setminus 2^{\ell }J \\ \limfunc{qdist}\left(
c_{J},c_{J^{\prime }}\right) <2^{s\delta }\ell \left( J\right) }}}%
\int_{2^{\ell +2}J\setminus 2^{\ell -m}J}S_{\left( J^{\prime },J\right)
}^{F^{\prime }}\left( y\right) d\sigma \left( y\right)  \\
&&+\sum_{F^{\prime }\in \mathcal{F}}\overset{\ast }{\sum_{\substack{ %
c_{J^{\prime }}\in 2^{\ell +1}J\setminus 2^{\ell }J \\ \limfunc{qdist}\left(
c_{J},c_{J^{\prime }}\right) <2^{s\delta }\ell \left( J\right) }}}%
\int_{2^{\ell -m}J}S_{\left( J^{\prime },J\right) }^{F^{\prime }}\left(
y\right) d\sigma \left( y\right)  \\
&\equiv &A_{s,far}^{\limfunc{proximal},\ell }\left( J\right) +A_{s,near}^{%
\limfunc{proximal},\ell }\left( J\right) +A_{s,close}^{\limfunc{proximal}%
,\ell }\left( J\right) ,\ \ \ \ \ \ell \geq 1.
\end{eqnarray*}%
Similarly we decompose the integrals over the difference 
\begin{equation*}
I^{\ast }\equiv \mathbb{R}^{n}\setminus B\left( J,J^{\prime }\right) 
\end{equation*}%
in $A_{s}^{\limfunc{difference},\ell }\left( J\right) $ by%
\begin{eqnarray*}
A_{s}^{\limfunc{difference},0}\left( J\right)  &=&\sum_{F^{\prime }\in 
\mathcal{F}}\overset{\ast }{\sum_{\substack{ c_{J^{\prime }}\in 2J \\ 
\limfunc{qdist}\left( c_{J},c_{J^{\prime }}\right) \geq 2^{s\delta }\ell
\left( J\right) }}}\int_{I^{\ast }\setminus 4J}S_{\left( J^{\prime
},J\right) }^{F^{\prime }}\left( y\right) d\sigma \left( y\right)  \\
&&+\sum_{F^{\prime }\in \mathcal{F}}\overset{\ast }{\sum_{\substack{ %
c_{J^{\prime }}\in 2J \\ \limfunc{qdist}\left( c_{J},c_{J^{\prime }}\right)
\geq 2^{s\delta }\ell \left( J\right) }}}\int_{I^{\ast }\cap 4J}S_{\left(
J^{\prime },J\right) }^{F^{\prime }}\left( y\right) d\sigma \left( y\right) 
\\
&\equiv &A_{s,far}^{\limfunc{difference},0}\left( J\right) +A_{s,near}^{%
\limfunc{difference},0}\left( J\right) , \\
A_{s}^{\limfunc{difference},\ell }\left( J\right)  &=&\sum_{F^{\prime }\in 
\mathcal{F}}\overset{\ast }{\sum_{\substack{ c_{J^{\prime }}\in 2^{\ell
+1}J\setminus 2^{\ell }J \\ \limfunc{qdist}\left( c_{J},c_{J^{\prime
}}\right) \geq 2^{s\delta }\ell \left( J\right) }}}\int_{I^{\ast }\setminus
2^{\ell +2}J}S_{\left( J^{\prime },J\right) }^{F^{\prime }}\left( y\right)
d\sigma \left( y\right)  \\
&&+\sum_{F^{\prime }\in \mathcal{F}}\overset{\ast }{\sum_{\substack{ %
c_{J^{\prime }}\in 2^{\ell +1}J\setminus 2^{\ell }J \\ \limfunc{qdist}\left(
c_{J},c_{J^{\prime }}\right) \geq 2^{s\delta }\ell \left( J\right) }}}%
\int_{I^{\ast }\cap \left( 2^{\ell +2}J\setminus 2^{\ell -m}J\right)
}S_{\left( J^{\prime },J\right) }^{F^{\prime }}\left( y\right) d\sigma
\left( y\right)  \\
&&+\sum_{F^{\prime }\in \mathcal{F}}\overset{\ast }{\sum_{\substack{ %
c_{J^{\prime }}\in 2^{\ell +1}J\setminus 2^{\ell }J \\ \limfunc{qdist}\left(
c_{J},c_{J^{\prime }}\right) \geq 2^{s\delta }\ell \left( J\right) }}}%
\int_{I^{\ast }\cap 2^{\ell -m}J}S_{\left( J^{\prime },J\right) }^{F^{\prime
}}\left( y\right) d\sigma \left( y\right)  \\
&\equiv &A_{s,far}^{\limfunc{difference},\ell }\left( J\right) +A_{s,near}^{%
\limfunc{difference},\ell }\left( J\right) +A_{s,close}^{\limfunc{difference}%
,\ell }\left( J\right) ,\ \ \ \ \ \ell \geq 1.
\end{eqnarray*}

We now note the important point that the close terms $A_{s,close}^{\limfunc{%
proximal},\ell }\left( J\right) $ and $A_{s,close}^{\limfunc{difference}%
,\ell }\left( J\right) $ both $\emph{vanish}$ for $\ell >\delta s$ because
of the decomposition (\ref{initial decomp}):%
\begin{equation}
A_{s,close}^{\limfunc{proximal},\ell }\left( J\right) =A_{s,close}^{\limfunc{%
difference},\ell }\left( J\right) =0,\ \ \ \ \ \ell \geq 1+\delta s.
\label{vanishing close}
\end{equation}%
Indeed, if $c_{J^{\prime }}\in 2^{\ell +1}J\setminus 2^{\ell }J$, then we
have%
\begin{equation}
\frac{1}{2}2^{\ell }\ell \left( J\right) \leq \limfunc{qdist}\left(
c_{J},c_{J^{\prime }}\right) ,  \label{distJJ'}
\end{equation}%
and if $\ell \geq 1+\delta s$, then%
\begin{equation*}
\limfunc{qdist}\left( c_{J},c_{J^{\prime }}\right) \geq 2^{\delta s}\ell
\left( J\right) =2^{\left( 1+\delta \right) s}\ell \left( J^{\prime }\right)
.
\end{equation*}%
It now follows from the definition of $V_{s}^{\limfunc{remote}}$ and $T_{s}^{%
\limfunc{proximal}}$ in (\ref{initial decomp}), that $A_{s,close}^{\limfunc{%
proximal},\ell }\left( J\right) =0$, and so we are left to consider the term 
$A_{s,close}^{\limfunc{difference},\ell }\left( J\right) $, where the
integration is taken over the set $\mathbb{R}^{n}\setminus B\left(
J,J^{\prime }\right) $. But we are also restricted in $A_{s,close}^{\limfunc{%
difference},\ell }\left( J\right) $ to integrating over the quasicube $%
2^{\ell -m}J$, which is contained in $B\left( J,J^{\prime }\right) $ by (\ref%
{distJJ'}). Indeed, the smallest\ ball centered at $c_{J}$ that contains $%
2^{\ell -m}J$ has radius $\sqrt{n}\frac{1}{2}2^{\ell -m}\ell \left( J\right) 
$, which by (\ref{smallest m}) and (\ref{distJJ'}) is at most $\frac{1}{4}%
2^{\ell }\ell \left( J\right) \leq \frac{1}{2}\limfunc{qdist}\left(
c_{J},c_{J^{\prime }}\right) $, the radius of $B\left( J,J^{\prime }\right) $%
. Thus the range of integration in the term $A_{s,close}^{\limfunc{difference%
},\ell }\left( J\right) $ is the empty set, and so $A_{s,close}^{\limfunc{%
difference},\ell }\left( J\right) =0$ as well as $A_{s,close}^{\limfunc{%
proximal},\ell }\left( J\right) =0$. This proves (\ref{vanishing close}).

From now on we treat $T_{s}^{\limfunc{proximal}}$ and $T_{s}^{\limfunc{%
difference}}$ in the same way since the terms $A_{s,close}^{\limfunc{proximal%
},\ell }\left( J\right) $ and $A_{s,close}^{\limfunc{difference},\ell
}\left( J\right) $ both vanish for $\ell \geq 1+\delta s$. Thus we will
suppress the superscripts $\limfunc{proximal}$ and $\limfunc{difference}$ in
the $far$, $near$ and $close$ decomposition of $A_{s}^{\limfunc{proximal}%
,\ell }\left( J\right) $ and $A_{s}^{\limfunc{difference},\ell }\left(
J\right) $, and we will also suppress the conditions $\limfunc{qdist}\left(
c_{J},c_{J^{\prime }}\right) <2^{s\delta }\ell \left( J\right) $ and $%
\limfunc{qdist}\left( c_{J},c_{J^{\prime }}\right) \geq 2^{s\delta }\ell
\left( J\right) $ in the proximal and difference terms since they no longer
play a role. Using the bounded overlap of the shifted coronas $\mathcal{C}%
_{F}^{\limfunc{good},\mathbf{\tau }-\limfunc{shift}}$, we have 
\begin{equation*}
\sum_{F^{\prime }\in \mathcal{F}}\left\Vert \mathsf{P}_{F^{\prime
},J^{\prime }}^{\omega }\mathbf{x}\right\Vert _{L^{2}\left( \omega \right)
}^{2}\lesssim \mathbf{\tau }\left\vert J^{\prime }\right\vert ^{\frac{2}{n}%
}\left\vert J^{\prime }\right\vert _{\omega }\ ,
\end{equation*}%
and so we have%
\begin{eqnarray*}
A_{s,far}^{0}\left( J\right) &\leq &\sum_{F^{\prime }\in \mathcal{F}}\overset%
{\ast }{\sum_{c_{J^{\prime }}\in 2J}}\int_{\mathbb{R}^{n}\setminus
4J}S_{\left( J^{\prime },J\right) }^{F^{\prime }}\left( y\right) d\sigma
\left( y\right) \\
&\lesssim &\mathbf{\tau }\sum_{c_{J^{\prime }}\in 2J}\int_{\mathbb{R}%
^{n}\setminus 4J}\frac{\left\vert J^{\prime }\right\vert ^{\frac{2}{n}%
}\left\vert J^{\prime }\right\vert _{\omega }}{\left( \left\vert
J\right\vert ^{\frac{1}{n}}+\left\vert y-c_{J}\right\vert \right) ^{2\left(
n+1-\alpha \right) }}d\sigma \left( y\right) \\
&=&\mathbf{\tau }2^{-2s}\left( \sum_{c_{J^{\prime }}\in 2J}\left\vert
J^{\prime }\right\vert _{\omega }\right) \int_{\mathbb{R}^{n}\setminus 4J}%
\frac{\left\vert J\right\vert ^{\frac{2}{n}}}{\left( \left\vert J\right\vert
^{\frac{1}{n}}+\left\vert y-c_{J}\right\vert \right) ^{2\left( n+1-\alpha
\right) }}d\sigma \left( y\right) ,
\end{eqnarray*}%
which is dominated by%
\begin{eqnarray*}
&&\mathbf{\tau }2^{-2s}\left\vert 4J\right\vert _{\omega }\int_{\mathbb{R}%
^{n}\setminus 4J}\frac{1}{\left( \left\vert J\right\vert ^{\frac{1}{n}%
}+\left\vert y-c_{J}\right\vert \right) ^{2\left( n-\alpha \right) }}d\sigma
\left( y\right) \\
&\approx &\mathbf{\tau }2^{-2s}\frac{\left\vert 4J\right\vert _{\omega }}{%
\left\vert 4J\right\vert ^{1-\frac{\alpha }{n}}}\int_{\mathbb{R}%
^{n}\setminus 4J}\left( \frac{\left\vert J\right\vert ^{\frac{1}{n}}}{\left(
\left\vert J\right\vert ^{\frac{1}{n}}+\left\vert y-c_{J}\right\vert \right)
^{2}}\right) ^{n-\alpha }d\sigma \left( y\right) \\
&\lesssim &\mathbf{\tau }2^{-2s}\frac{\left\vert 4J\right\vert _{\omega }}{%
\left\vert 4J\right\vert ^{1-\frac{\alpha }{n}}}\mathcal{P}^{\alpha }\left(
4J,\mathbf{1}_{\mathbb{R}^{n}\setminus 4J}\sigma \right) \lesssim \mathbf{%
\tau }2^{-2s}\mathcal{A}_{2}^{\alpha }\ .
\end{eqnarray*}

To estimate the near term $A_{s,near}^{0}\left( J\right) $, we initially
keep the energy $\left\Vert \mathsf{P}_{F^{\prime },J^{\prime }}^{\omega }%
\mathbf{z}\right\Vert _{L^{2}\left( \omega \right) }^{2}$ and write 
\begin{eqnarray}
A_{s,near}^{0}\left( J\right) &\leq &\sum_{F^{\prime }\in \mathcal{F}}%
\overset{\ast }{\sum_{c_{J^{\prime }}\in 2J}}\int_{4J}S_{\left( J^{\prime
},J\right) }^{F^{\prime }}\left( y\right) d\sigma \left( y\right)
\label{A0snear} \\
&\approx &\sum_{F^{\prime }\in \mathcal{F}}\overset{\ast }{%
\sum_{c_{J^{\prime }}\in 2J}}\int_{4J}\frac{1}{\left\vert J\right\vert ^{%
\frac{1}{n}\left( n+1-\alpha \right) }}\frac{\left\Vert \mathsf{P}%
_{F^{\prime },J^{\prime }}^{\omega }\mathbf{x}\right\Vert _{L^{2}\left(
\omega \right) }^{2}}{\left( \left\vert J^{\prime }\right\vert ^{\frac{1}{n}%
}+\left\vert y-c_{J^{\prime }}\right\vert \right) ^{n+1-\alpha }}d\sigma
\left( y\right)  \notag \\
&=&\sum_{F^{\prime }\in \mathcal{F}}\frac{1}{\left\vert J\right\vert ^{\frac{%
1}{n}\left( n+1-\alpha \right) }}\overset{\ast }{\sum_{c_{J^{\prime }}\in 2J}%
}\left\Vert \mathsf{P}_{F^{\prime },J^{\prime }}^{\omega }\mathbf{x}%
\right\Vert _{L^{2}\left( \omega \right) }^{2}\int_{4J}\frac{1}{\left(
\left\vert J^{\prime }\right\vert ^{\frac{1}{n}}+\left\vert y-c_{J^{\prime
}}\right\vert \right) ^{n+1-\alpha }}d\sigma \left( y\right)  \notag \\
&=&\sum_{F^{\prime }\in \mathcal{F}}\frac{1}{\left\vert J\right\vert ^{\frac{%
1}{n}\left( n+1-\alpha \right) }}\overset{\ast }{\sum_{c_{J^{\prime }}\in 2J}%
}\left\Vert \mathsf{P}_{F^{\prime },J^{\prime }}^{\omega }\mathbf{x}%
\right\Vert _{L^{2}\left( \omega \right) }^{2}\frac{\mathrm{P}^{\alpha
}\left( J^{\prime },\mathbf{1}_{I\cap \left( 4J\right) }\sigma \right) }{%
\left\vert J^{\prime }\right\vert ^{\frac{1}{n}}}.  \notag
\end{eqnarray}%
In order to estimate the final sum above, we must invoke the `prepare to
puncture' argument above, as we will want to derive geometric decay from $%
\left\Vert \mathsf{P}_{J^{\prime }}^{\omega }\mathbf{x}\right\Vert
_{L^{2}\left( \omega \right) }^{2}$ by dominating it by the `nonenergy' term 
$\left\vert J^{\prime }\right\vert ^{\frac{2}{n}}\left\vert J^{\prime
}\right\vert _{\omega }$, as well as using the Muckenhoupt energy constant.
Choose an alternate quasicube $\widetilde{J}\in \mathcal{A}\Omega \mathcal{D}
$ satisfying $\dbigcup\limits_{c_{J^{\prime }}\in 2J}J^{\prime }\subset
4J\subset \widetilde{J}$ and $\ell \left( \widetilde{J}\right) \leq C\ell
\left( J\right) $. Define $\widetilde{\omega }=\omega -\omega \left( \left\{
p\right\} \right) \delta _{p}$ where $p$ is an atomic point in $\widetilde{J}
$ for which 
\begin{equation*}
\omega \left( \left\{ p\right\} \right) =\sup_{q\in \mathfrak{P}_{\left(
\sigma ,\omega \right) }:\ q\in \widetilde{J}}\omega \left( \left\{
q\right\} \right) .
\end{equation*}%
(If $\omega $ has no atomic point in common with $\sigma $ in $\widetilde{J}$
set $\widetilde{\omega }=\omega $.) Then we have $\left\vert \widetilde{J}%
\right\vert _{\widetilde{\omega }}=\omega \left( \widetilde{J},\mathfrak{P}%
_{\left( \sigma ,\omega \right) }\right) $ and%
\begin{equation*}
\frac{\left\vert \widetilde{J}\right\vert _{\widetilde{\omega }}}{\left\vert 
\widetilde{J}\right\vert ^{\left( 1-\frac{\alpha }{n}\right) }}\frac{%
\left\vert \widetilde{J}\right\vert _{\sigma }}{\left\vert \widetilde{J}%
\right\vert ^{\left( 1-\frac{\alpha }{n}\right) }}=\frac{\omega \left( 
\widetilde{J},\mathfrak{P}_{\left( \sigma ,\omega \right) }\right) }{%
\left\vert \widetilde{J}\right\vert ^{\left( 1-\frac{\alpha }{n}\right) }}%
\frac{\left\vert \widetilde{J}\right\vert _{\sigma }}{\left\vert \widetilde{J%
}\right\vert ^{\left( 1-\frac{\alpha }{n}\right) }}\leq A_{2}^{\alpha ,%
\limfunc{punct}}.
\end{equation*}%
From (\ref{key obs}) we also have%
\begin{equation*}
\sum_{F^{\prime }\in \mathcal{F}}\left\Vert \mathsf{P}_{F^{\prime
},J^{\prime }}^{\omega }\mathbf{x}\right\Vert _{L^{2}\left( \omega \right)
}^{2}\lesssim \mathbf{\tau }\ell \left( J^{\prime }\right) ^{2}\left\vert
J^{\prime }\right\vert _{\widetilde{\omega }}\ ,
\end{equation*}%
for all $J^{\prime }$ arising in the sum in the final line of (\ref{A0snear}%
) above.

Now by Cauchy-Schwarz and the alternate local estimate (\ref{shifted local})
in Lemma \ref{shifted} with $M=\widetilde{J}$ applied to the second line
below, the last sum in (\ref{A0snear}) is dominated by%
\begin{eqnarray}
&&\frac{1}{\left\vert J\right\vert ^{\frac{1}{n}\left( n+1-\alpha \right) }}%
\left( \sum_{F^{\prime }\in \mathcal{F}}\overset{\ast }{\sum_{c\left(
J^{\prime }\right) \in 2J}}\left\Vert \mathsf{P}_{F^{\prime },J^{\prime
}}^{\omega }\mathbf{x}\right\Vert _{L^{2}\left( \omega \right) }^{2}\right)
^{\frac{1}{2}}  \label{dom by} \\
&&\ \ \ \ \ \ \ \ \ \ \times \left( \sum_{F^{\prime }\in \mathcal{F}}\overset%
{\ast }{\sum_{c_{J^{\prime }}\in 2J}}\left\Vert \mathsf{P}_{F^{\prime
},J^{\prime }}^{\omega }\mathbf{x}\right\Vert _{L^{2}\left( \omega \right)
}^{2}\left( \frac{\mathrm{P}^{\alpha }\left( J^{\prime },\mathbf{1}_{I\cap
\left( 4J\right) }\sigma \right) }{\left\vert J^{\prime }\right\vert ^{\frac{%
1}{n}}}\right) ^{2}\right) ^{\frac{1}{2}}  \notag \\
&\lesssim &\frac{1}{\left\vert J\right\vert ^{\frac{1}{n}\left( n+1-\alpha
\right) }}\left( \mathbf{\tau }\sum_{c_{J^{\prime }}\in 2J}\left\vert
J^{\prime }\right\vert ^{\frac{2}{n}}\left\vert J^{\prime }\right\vert _{%
\widetilde{\omega }}\right) ^{\frac{1}{2}}\sqrt{\left( \mathcal{E}_{\alpha
}^{\limfunc{plug}}\right) ^{2}+A_{2}^{\alpha ,\limfunc{energy}}}\sqrt{%
\mathbf{\tau }\left\vert \widetilde{J}\right\vert _{\sigma }}  \notag \\
&\lesssim &\mathbf{\tau }\frac{2^{-s}\left\vert J\right\vert ^{\frac{1}{n}}}{%
\left\vert J\right\vert ^{\frac{1}{n}\left( n+1-\alpha \right) }}\sqrt{%
\left\vert 4J\right\vert _{\widetilde{\omega }}}\sqrt{\left( \mathcal{E}%
_{\alpha }^{\limfunc{plug}}\right) ^{2}+A_{2}^{\alpha ,\limfunc{energy}}}%
\sqrt{\left\vert \widetilde{J}\right\vert _{\sigma }}  \notag \\
&\lesssim &\mathbf{\tau }2^{-s}\sqrt{\left( \mathcal{E}_{\alpha }^{\limfunc{%
plug}}\right) ^{2}+A_{2}^{\alpha ,\limfunc{energy}}}\sqrt{\frac{\left\vert 
\widetilde{J}\right\vert _{\widetilde{\omega }}}{\left\vert \widetilde{J}%
\right\vert ^{\frac{1}{n}\left( n-\alpha \right) }}\frac{\left\vert 
\widetilde{J}\right\vert _{\sigma }}{\left\vert \widetilde{J}\right\vert ^{%
\frac{1}{n}\left( n-\alpha \right) }}}\lesssim \mathbf{\tau }2^{-s}\sqrt{%
\left( \mathcal{E}_{\alpha }^{\limfunc{plug}}\right) ^{2}+A_{2}^{\alpha ,%
\limfunc{energy}}}\sqrt{A_{2}^{\alpha ,\limfunc{punct}}}\ .  \notag
\end{eqnarray}

Similarly, for $\ell \geq 1$, we can estimate the far term

\begin{eqnarray*}
A_{s,far}^{\ell }\left( J\right) &\leq &\sum_{F^{\prime }\in \mathcal{F}}%
\overset{\ast }{\sum_{c_{J^{\prime }}\in \left( 2^{\ell +1}J\right)
\setminus \left( 2^{\ell }J\right) }}\int_{\mathbb{R}^{n}\setminus 2^{\ell
+2}J}S_{\left( J^{\prime },J\right) }^{F^{\prime }}\left( y\right) d\sigma
\left( y\right) \\
&\lesssim &\mathbf{\tau }\sum_{c_{J^{\prime }}\in \left( 2^{\ell +1}J\right)
\setminus \left( 2^{\ell }J\right) }\int_{\mathbb{R}^{n}\setminus 2^{\ell
+2}J}\frac{\left\vert J^{\prime }\right\vert ^{\frac{2}{n}}\left\vert
J^{\prime }\right\vert _{\omega }}{\left( \left\vert J\right\vert ^{\frac{1}{%
n}}+\left\vert y-c_{J}\right\vert \right) ^{2\left( n+1-\alpha \right) }}%
d\sigma \left( y\right) \\
&=&\mathbf{\tau }2^{-2s}\left( \sum_{c_{J^{\prime }}\in \left( 2^{\ell
+1}J\right) }\left\vert J^{\prime }\right\vert _{\omega }\right) \int_{%
\mathbb{R}^{n}\setminus 2^{\ell +2}J}\frac{\left\vert J\right\vert ^{\frac{2%
}{n}}}{\left( \left\vert J\right\vert ^{\frac{1}{n}}+\left\vert
y-c_{J}\right\vert \right) ^{2\left( n+1-\alpha \right) }}d\sigma \left(
y\right) \\
&\approx &\mathbf{\tau }2^{-2s}2^{-2\ell }\left( \sum_{c_{J^{\prime }}\in
\left( 2^{\ell +1}J\right) }\left\vert J^{\prime }\right\vert _{\omega
}\right) \int_{\mathbb{R}^{n}\setminus 2^{\ell +2}J}\frac{\left\vert 2^{\ell
}J\right\vert ^{\frac{2}{n}}}{\left( \left\vert 2^{\ell }J\right\vert ^{%
\frac{1}{n}}+\left\vert y-c_{2^{\ell }J}\right\vert \right) ^{2\left(
n+1-\alpha \right) }}d\sigma \left( y\right) ,
\end{eqnarray*}%
which is at most%
\begin{eqnarray*}
&&\mathbf{\tau }2^{-2s}2^{-2\ell }\left\vert 2^{\ell +2}J\right\vert
_{\omega }\int_{\mathbb{R}^{n}\setminus 2^{\ell +2}J}\frac{1}{\left(
\left\vert 2^{\ell }J\right\vert ^{\frac{1}{n}}+\left\vert y-c_{2^{\ell
}J}\right\vert \right) ^{2\left( n-\alpha \right) }}d\sigma \left( y\right)
\\
&\approx &\mathbf{\tau }2^{-2s}2^{-2\ell }\frac{\left\vert 2^{\ell
+2}J\right\vert _{\omega }}{\left\vert 2^{\ell }J\right\vert ^{1-\frac{%
\alpha }{n}}}\int_{\mathbb{R}^{n}\setminus 2^{\ell +2}J}\left( \frac{%
\left\vert 2^{\ell }J\right\vert ^{\frac{1}{n}}}{\left( \left\vert 2^{\ell
}J\right\vert ^{\frac{1}{n}}+\left\vert y-c_{2^{\ell }J}\right\vert \right)
^{2}}\right) ^{n-\alpha }d\sigma \left( y\right) \\
&\lesssim &\mathbf{\tau }2^{-2s}2^{-2\ell }\left\{ \frac{\left\vert 2^{\ell
+2}J\right\vert _{\omega }}{\left\vert 2^{\ell }J\right\vert ^{1-\frac{%
\alpha }{n}}}\mathcal{P}^{\alpha }\left( 2^{\ell +2}J,1_{\mathbb{R}%
^{n}\setminus 2^{\ell +2}J}\sigma \right) \right\} \lesssim \mathbf{\tau }%
2^{-2s}2^{-2\ell }\mathcal{A}_{2}^{\alpha }\ .
\end{eqnarray*}

To estimate the near term $A_{s,near}^{\ell }\left( J\right) $ we must again
invoke the \emph{`prepare to puncture'} argument. Choose an alternate
quasicube $\widetilde{J}\in \mathcal{A}\Omega \mathcal{D}$ such that $%
\dbigcup\limits_{c_{J^{\prime }}\in 2^{\ell +1}J\setminus 2^{\ell
}J}J^{\prime }\subset 2^{\ell +2}J\subset \widetilde{J}$ and $\ell \left( 
\widetilde{J}\right) \leq C2^{\ell }\ell \left( J\right) $. Define $%
\widetilde{\omega }=\omega -\omega \left( \left\{ p\right\} \right) \delta
_{p}$ where $p$ is an atomic point in $\widetilde{J}$ for which 
\begin{equation*}
\omega \left( \left\{ p\right\} \right) =\sup_{q\in \mathfrak{P}_{\left(
\sigma ,\omega \right) }:\ q\in \widetilde{J}}\omega \left( \left\{
q\right\} \right) .
\end{equation*}%
(If $\omega $ has no atomic point in common with $\sigma $ in $\widetilde{J}$
set $\widetilde{\omega }=\omega $.) Then we have $\left\vert \widetilde{J}%
\right\vert _{\widetilde{\omega }}=\omega \left( \widetilde{J},\mathfrak{P}%
_{\left( \sigma ,\omega \right) }\right) $, and just as in the argument
above following (\ref{A0snear}), we have from (\ref{key obs}) both%
\begin{equation*}
\frac{\left\vert \widetilde{J}\right\vert _{\widetilde{\omega }}}{\left\vert 
\widetilde{J}\right\vert ^{\left( 1-\frac{\alpha }{n}\right) }}\frac{%
\left\vert \widetilde{J}\right\vert _{\sigma }}{\left\vert \widetilde{J}%
\right\vert ^{\left( 1-\frac{\alpha }{n}\right) }}\leq A_{2}^{\alpha ,%
\limfunc{punct}}\text{ and }\sum_{F^{\prime }\in \mathcal{F}}\left\Vert 
\mathsf{P}_{F^{\prime },J^{\prime }}^{\omega }\mathbf{x}\right\Vert
_{L^{2}\left( \omega \right) }^{2}\lesssim \mathbf{\tau }\ell \left(
J^{\prime }\right) ^{2}\left\vert J^{\prime }\right\vert _{\widetilde{\omega 
}}\ .
\end{equation*}%
Thus using that $m=\left[ \log _{2}\left( 3\sqrt{n}\right) \right] +1$ in
the definition of $A_{s,near}^{\ell }\left( J\right) $, we see that

\begin{eqnarray*}
A_{s,near}^{\ell }\left( J\right)  &\leq &\sum_{F^{\prime }\in \mathcal{F}}%
\overset{\ast }{\sum_{c_{J^{\prime }}\in 2^{\ell +1}J\setminus 2^{\ell }J}}%
\int_{2^{\ell +2}J\setminus 2^{\ell -m}J}S_{\left( J^{\prime },J\right)
}^{F^{\prime }}\left( y\right) d\sigma \left( y\right)  \\
&\approx &\sum_{F^{\prime }\in \mathcal{F}}\overset{\ast }{%
\sum_{c_{J^{\prime }}\in 2^{\ell +1}J\setminus 2^{\ell }J}}\int_{2^{\ell
+2}J\setminus 2^{\ell -m}J}\frac{1}{\left\vert 2^{\ell }J\right\vert ^{\frac{%
1}{n}\left( n+1-\alpha \right) }}\frac{\left\Vert \mathsf{P}_{F^{\prime
},J^{\prime }}^{\omega }\mathbf{x}\right\Vert _{L^{2}\left( \omega \right)
}^{2}}{\left( \left\vert J^{\prime }\right\vert ^{\frac{1}{n}}+\left\vert
y-c_{J^{\prime }}\right\vert \right) ^{n+1-\alpha }}d\sigma \left( y\right) 
\\
&\lesssim &\frac{1}{\left\vert 2^{\ell }J\right\vert ^{\frac{1}{n}\left(
n+1-\alpha \right) }}\sum_{F^{\prime }\in \mathcal{F}}\overset{\ast }{%
\sum_{c_{J^{\prime }}\in 2^{\ell +1}J\setminus 2^{\ell }J}}\left\Vert 
\mathsf{P}_{F^{\prime },J^{\prime }}^{\omega }\mathbf{x}\right\Vert
_{L^{2}\left( \omega \right) }^{2} \\
&&\ \ \ \ \ \ \ \ \ \ \ \ \ \ \ \times \int_{2^{\ell +2}J}\frac{1}{\left(
\left\vert J^{\prime }\right\vert ^{\frac{1}{n}}+\left\vert y-c_{J^{\prime
}}\right\vert \right) ^{n+1-\alpha }}d\sigma \left( y\right) ,
\end{eqnarray*}%
is dominated by%
\begin{eqnarray*}
&&\frac{1}{\left\vert 2^{\ell }J\right\vert ^{\frac{1}{n}\left( n+1-\alpha
\right) }}\sum_{F^{\prime }\in \mathcal{F}}\overset{\ast }{%
\sum_{c_{J^{\prime }}\in 2^{\ell +1}J\setminus 2^{\ell }J}}\left\Vert 
\mathsf{P}_{F^{\prime },J^{\prime }}^{\omega }\mathbf{x}\right\Vert
_{L^{2}\left( \omega \right) }^{2}\frac{\mathrm{P}^{\alpha }\left( J^{\prime
},\mathbf{1}_{2^{\ell +2}J}\sigma \right) }{\left\vert J^{\prime
}\right\vert ^{\frac{1}{n}}} \\
&\leq &\frac{1}{\left\vert 2^{\ell }J\right\vert ^{\frac{1}{n}\left(
n+1-\alpha \right) }}\left( \sum_{F^{\prime }\in \mathcal{F}}\overset{\ast }{%
\sum_{c_{J^{\prime }}\in 2^{\ell +1}J\setminus 2^{\ell }J}}\left\Vert 
\mathsf{P}_{F^{\prime },J^{\prime }}^{\omega }\mathbf{x}\right\Vert
_{L^{2}\left( \omega \right) }^{2}\right) ^{\frac{1}{2}} \\
&&\times \left( \sum_{F^{\prime }\in \mathcal{F}}\overset{\ast }{%
\sum_{c_{J^{\prime }}\in 2^{\ell +1}J\setminus 2^{\ell }J}}\left\Vert 
\mathsf{P}_{F^{\prime },J^{\prime }}^{\omega }\mathbf{x}\right\Vert
_{L^{2}\left( \omega \right) }^{2}\left( \frac{\mathrm{P}^{\alpha }\left(
J^{\prime },\mathbf{1}_{2^{\ell +2}J}\sigma \right) }{\left\vert J^{\prime
}\right\vert ^{\frac{1}{n}}}\right) ^{2}\right) ^{\frac{1}{2}}.
\end{eqnarray*}

This can now be estimated using $\sum_{F^{\prime }\in \mathcal{F}}\left\Vert 
\mathsf{P}_{F^{\prime },J^{\prime }}^{\omega }\mathbf{x}\right\Vert
_{L^{2}\left( \omega \right) }^{2}\lesssim \mathbf{\tau }\left\vert
J^{\prime }\right\vert ^{\frac{2}{n}}\left\vert J^{\prime }\right\vert _{%
\widetilde{\omega }}=\mathbf{\tau }2^{-2s}\left\vert J\right\vert ^{\frac{2}{%
n}}\left\vert J^{\prime }\right\vert _{\widetilde{\omega }}$ along with the
alternate local estimate (\ref{shifted local}) in Lemma \ref{shifted} with $%
M=\widetilde{J}$ applied to the final line above to get 
\begin{eqnarray*}
A_{s,near}^{\ell }\left( J\right) &\lesssim &\mathbf{\tau }2^{-s}2^{-\ell }%
\frac{\left\vert 2^{\ell }J\right\vert ^{\frac{1}{n}}}{\left\vert 2^{\ell
}J\right\vert ^{\frac{1}{n}\left( n+1-\alpha \right) }}\sqrt{\left\vert 
\widetilde{J}\right\vert _{\widetilde{\omega }}}\sqrt{\left( \mathcal{E}%
_{\alpha }^{\limfunc{plug}}\right) ^{2}+A_{2}^{\alpha ,\limfunc{energy}}}%
\sqrt{\left\vert \widetilde{J}\right\vert _{\sigma }} \\
&\lesssim &\mathbf{\tau }2^{-s}2^{-\ell }\sqrt{\left( \mathcal{E}_{\alpha }^{%
\limfunc{plug}}\right) ^{2}+A_{2}^{\alpha ,\limfunc{energy}}}\sqrt{\frac{%
\left\vert \widetilde{J}\right\vert _{\widetilde{\omega }}}{\left\vert 
\widetilde{J}\right\vert ^{1-\frac{\alpha }{n}}}\frac{\left\vert \widetilde{J%
}\right\vert _{\sigma }}{\left\vert \widetilde{J}\right\vert ^{1-\frac{%
\alpha }{n}}}} \\
&\lesssim &\mathbf{\tau }2^{-s}2^{-\ell }\sqrt{\left( \mathcal{E}_{\alpha }^{%
\limfunc{plug}}\right) ^{2}+A_{2}^{\alpha ,\limfunc{energy}}}\sqrt{%
A_{2}^{\alpha ,\limfunc{punct}}}\ .
\end{eqnarray*}%
These estimates are summable in both $s$ and $\ell $.

Now we turn to the terms $A_{s,close}^{\ell }\left( J\right) $, and recall
from (\ref{vanishing close}) that $A_{s,close}^{\ell }\left( J\right) =0$ if 
$\ell \geq 1+\delta s$. So we now suppose that $\ell \leq \delta s$. We
have, with $m$ as in (\ref{smallest m}),%
\begin{eqnarray*}
&&A_{s,close}^{\ell }\left( J\right) \leq \sum_{F^{\prime }\in \mathcal{F}}%
\overset{\ast }{\sum_{c_{J^{\prime }}\in 2^{\ell +1}J\setminus 2^{\ell }J}}%
\int_{2^{\ell -m}J}S_{\left( J^{\prime },J\right) }^{F^{\prime }}\left(
y\right) d\sigma \left( y\right) \\
&\approx &\sum_{F^{\prime }\in \mathcal{F}}\overset{\ast }{%
\sum_{c_{J^{\prime }}\in 2^{\ell +1}J\setminus 2^{\ell }J}}\int_{2^{\ell
-m}J}\frac{1}{\left( \left\vert J\right\vert ^{\frac{1}{n}}+\left\vert
y-c_{J}\right\vert \right) ^{n+1-\alpha }}\frac{\left\Vert \mathsf{P}%
_{F^{\prime },J^{\prime }}^{\omega }\mathbf{x}\right\Vert _{L^{2}\left(
\omega \right) }^{2}}{\left\vert 2^{\ell }J\right\vert ^{\frac{1}{n}\left(
n+1-\alpha \right) }}d\sigma \left( y\right) \\
&=&\left( \sum_{F^{\prime }\in \mathcal{F}}\overset{\ast }{%
\sum_{c_{J^{\prime }}\in 2^{\ell +1}J\setminus 2^{\ell }J}}\left\Vert 
\mathsf{P}_{F^{\prime },J^{\prime }}^{\omega }\mathbf{x}\right\Vert
_{L^{2}\left( \omega \right) }^{2}\right) \frac{1}{\left\vert 2^{\ell
}J\right\vert ^{\frac{1}{n}\left( n+1-\alpha \right) }} \\
&&\ \ \ \ \ \ \ \ \ \ \ \ \ \ \ \ \ \ \ \ \times \int_{2^{\ell -m}J}\frac{1}{%
\left( \left\vert J\right\vert ^{\frac{1}{n}}+\left\vert y-c_{J}\right\vert
\right) ^{n+1-\alpha }}d\sigma \left( y\right) .
\end{eqnarray*}%
Now we use the inequality $\sum_{F^{\prime }\in \mathcal{F}}\left\Vert 
\mathsf{P}_{F^{\prime },J^{\prime }}^{\omega }\mathbf{z}\right\Vert
_{L^{2}\left( \omega \right) }^{2}\leq \mathbf{\tau }\left\vert J^{\prime
}\right\vert ^{\frac{2}{n}}\left\vert J^{\prime }\right\vert _{\omega }$ and
we get the relatively crude estimate%
\begin{eqnarray*}
&&A_{s,close}^{\ell }\left( J\right) \\
&\lesssim &\mathbf{\tau }2^{-2s}\left\vert J\right\vert ^{\frac{2}{n}%
}\left\vert 2^{\ell +2}J\setminus 2^{\ell -1}J\right\vert _{\omega }\frac{1}{%
\left\vert 2^{\ell }J\right\vert ^{\frac{1}{n}\left( n+1-\alpha \right) }}%
\int_{2^{\ell -m}J}\frac{1}{\left( \left\vert J\right\vert ^{\frac{1}{n}%
}+\left\vert y-c_{J}\right\vert \right) ^{n+1-\alpha }}d\sigma \left(
y\right) \\
&\lesssim &\mathbf{\tau }2^{-2s}\left\vert J\right\vert ^{\frac{2}{n}}\frac{%
\left\vert 2^{\ell +2}J\setminus 2^{\ell -1}J\right\vert _{\omega }}{%
\left\vert 2^{\ell }J\right\vert ^{\frac{1}{n}\left( n+1-\alpha \right) }}%
\frac{\left\vert 2^{\ell -m}J\right\vert _{\sigma }}{\left\vert J\right\vert
^{\frac{1}{n}\left( n+1-\alpha \right) }}\lesssim \mathbf{\tau }2^{-2s}\frac{%
\left\vert 2^{\ell +2}J\setminus 2^{\ell -1}J\right\vert _{\omega }}{%
\left\vert 2^{\ell +2}J\right\vert ^{1-\frac{\alpha }{n}}}\frac{\left\vert
2^{\ell -m}J\right\vert _{\sigma }}{\left\vert 2^{\ell -m}J\right\vert ^{1-%
\frac{\alpha }{n}}}2^{\ell \left( n-1-\alpha \right) } \\
&\lesssim &\mathbf{\tau }2^{-2s}2^{\ell \left( n-1-\alpha \right)
}A_{2}^{\alpha }\lesssim \mathbf{\tau }2^{-s}A_{2}^{\alpha }\ ,
\end{eqnarray*}%
provided that $\ell \leq \frac{s}{n}$ and $m>1$. But we are assuming $\ell
\leq \delta s$ here, and so we obtain a suitable estimate for $%
A_{s,close}^{\ell }\left( J\right) $ from this crude estimate provided we
choose $0<\delta <\frac{1}{n}$. Indeed, for fixed $s$ we then have $\ell
\leq \delta s<\frac{s}{n}$, and so also%
\begin{equation*}
2^{-2s}2^{\ell \left( n-1-\alpha \right) }\leq 2^{-2s}2^{s\left( \frac{%
n-1-\alpha }{n}\right) }\leq 2^{-s\left( \frac{n+1+\alpha }{n}\right) }\ ,
\end{equation*}%
and hence%
\begin{equation*}
\sum_{l=1}^{\delta s}2^{-2s}2^{\ell \left( n-1-\alpha \right) }\leq C\frac{s%
}{n}2^{-s\left( \frac{n+1+\alpha }{n}\right) }\lesssim 2^{-s}\ .
\end{equation*}%
The above estimates prove%
\begin{equation*}
T_{s}^{\limfunc{proximal}}+T_{s}^{\limfunc{difference}}\lesssim 2^{-s}\left( 
\mathcal{A}_{2}^{\alpha }+\sqrt{\left( \mathcal{E}_{\alpha }^{\limfunc{plug}%
}\right) ^{2}+A_{2}^{\alpha ,\limfunc{energy}}}\sqrt{A_{2}^{\alpha ,\limfunc{%
punct}}}\right) ,
\end{equation*}%
which is summable in $s$.

\subsubsection{The intersection term}

Now we return to the term,%
\begin{eqnarray*}
T_{s}^{\limfunc{intersection}} &\equiv &\sum_{\substack{ F,F^{\prime }\in 
\mathcal{F}}}\sum_{\substack{ J\in \mathcal{M}_{\left( \mathbf{r}%
,\varepsilon \right) -\limfunc{deep}}\left( F\right) ,\ J^{\prime }\in 
\mathcal{M}_{\left( \mathbf{r},\varepsilon \right) -\limfunc{deep}}\left(
F^{\prime }\right)  \\ J,J^{\prime }\subset I,\ \ell \left( J^{\prime
}\right) =2^{-s}\ell \left( J\right)  \\ \limfunc{qdist}\left(
c_{J},c_{J^{\prime }}\right) \geq 2^{s\left( 1+\delta \right) }\ell \left(
J^{\prime }\right) }} \\
&&\times \int_{B\left( J,J^{\prime }\right) }\frac{\left\Vert \mathsf{P}%
_{F,J}^{\omega }\mathbf{x}\right\Vert _{L^{2}\left( \omega \right) }^{2}}{%
\left( \left\vert J\right\vert ^{\frac{1}{n}}+\left\vert y-c_{J}\right\vert
\right) ^{n+1-\alpha }}\frac{\left\Vert \mathsf{P}_{F^{\prime },J^{\prime
}}^{\omega }\mathbf{x}\right\Vert _{L^{2}\left( \omega \right) }^{2}}{\left(
\left\vert J^{\prime }\right\vert ^{\frac{1}{n}}+\left\vert y-c_{J^{\prime
}}\right\vert \right) ^{n+1-\alpha }}d\sigma \left( y\right) .
\end{eqnarray*}%
It will suffice to show that $T_{s}^{\limfunc{intersection}}$ satisfies the
estimate,%
\begin{eqnarray*}
T_{s}^{\limfunc{intersection}} &\lesssim &2^{-s\delta }\sqrt{\left( \mathcal{%
E}_{\alpha }^{\limfunc{plug}}\right) ^{2}+A_{2}^{\alpha ,\limfunc{energy}}}%
\sqrt{A_{2}^{\alpha ,\limfunc{punct}}}\sum_{F^{\prime }\in \mathcal{F}%
^{\prime }}\sum_{\substack{ J^{\prime }\in \mathcal{M}_{\left( \mathbf{r}%
,\varepsilon \right) -\limfunc{deep}}\left( F^{\prime }\right)  \\ J^{\prime
}\subset I}}\lVert \mathsf{P}_{F^{\prime },J^{\prime }}^{\omega }\mathbf{x}%
\rVert _{L^{2}\left( \omega \right) }^{2} \\
&=&2^{-s\delta }\sqrt{\left( \mathcal{E}_{\alpha }^{\limfunc{plug}}\right)
^{2}+A_{2}^{\alpha ,\limfunc{energy}}}\sqrt{A_{2}^{\alpha ,\limfunc{punct}}}%
\int_{\widehat{I}}t^{2}\overline{\mu }\ .
\end{eqnarray*}%
Using $B\left( J,J^{\prime }\right) =B\left( c_{J},\frac{1}{2}\limfunc{qdist}%
\left( c_{J},c_{J^{\prime }}\right) \right) $, we can write (suppressing
some notation for clarity),%
\begin{eqnarray*}
&&T_{s}^{\limfunc{intersection}} \\
&=&\sum_{F,F^{\prime }}\sum_{\substack{ J,J^{\prime }}}\int_{B\left(
J,J^{\prime }\right) }\frac{\left\Vert \mathsf{P}_{F,J}^{\omega }\mathbf{x}%
\right\Vert _{L^{2}\left( \omega \right) }^{2}}{\left( \left\vert
J\right\vert ^{\frac{1}{n}}+\left\vert y-c_{J}\right\vert \right)
^{n+1-\alpha }}\frac{\left\Vert \mathsf{P}_{F^{\prime },J^{\prime }}^{\omega
}\mathbf{x}\right\Vert _{L^{2}\left( \omega \right) }^{2}}{\left( \left\vert
J^{\prime }\right\vert ^{\frac{1}{n}}+\left\vert y-c_{J^{\prime
}}\right\vert \right) ^{n+1-\alpha }}d\sigma \left( y\right)  \\
&\approx &\sum_{F,F^{\prime }}\sum_{\substack{ J,J^{\prime }}}\left\Vert 
\mathsf{P}_{F,J}^{\omega }\mathbf{x}\right\Vert _{L^{2}\left( \omega \right)
}^{2}\left\Vert \mathsf{P}_{F^{\prime },J^{\prime }}^{\omega }\mathbf{x}%
\right\Vert _{L^{2}\left( \omega \right) }^{2}\frac{1}{\left\vert
c_{J}-c_{J^{\prime }}\right\vert ^{n+1-\alpha }}\int_{B\left( J,J^{\prime
}\right) }\frac{d\sigma \left( y\right) }{\left( \left\vert J\right\vert ^{%
\frac{1}{n}}+\left\vert y-c_{J}\right\vert \right) ^{n+1-\alpha }} \\
&\leq &\sum_{F^{\prime }}\sum_{\substack{ J^{\prime }}}\left\Vert \mathsf{P}%
_{F^{\prime },J^{\prime }}^{\omega }\mathbf{x}\right\Vert _{L^{2}\left(
\omega \right) }^{2}\sum_{F}\sum_{\substack{ J}}\frac{1}{\left\vert
c_{J}-c_{J^{\prime }}\right\vert ^{n+1-\alpha }}\left\Vert \mathsf{P}%
_{F,J}^{\omega }\mathbf{x}\right\Vert _{L^{2}\left( \omega \right)
}^{2}\int_{B\left( J,J^{\prime }\right) }\frac{d\sigma \left( y\right) }{%
\left( \left\vert J\right\vert ^{\frac{1}{n}}+\left\vert y-c_{J}\right\vert
\right) ^{n+1-\alpha }} \\
&\equiv &\sum_{F^{\prime }}\sum_{\substack{ J^{\prime }}}\left\Vert \mathsf{P%
}_{F^{\prime },J^{\prime }}^{\omega }\mathbf{x}\right\Vert _{L^{2}\left(
\omega \right) }^{2}S_{s}\left( J^{\prime }\right) ,
\end{eqnarray*}%
and since $\int_{B\left( J,J^{\prime }\right) }\frac{d\sigma \left( y\right) 
}{\left( \left\vert J\right\vert ^{\frac{1}{n}}+\left\vert
y-c_{J}\right\vert \right) ^{n+1-\alpha }}\approx \frac{\mathrm{P}^{\alpha
}\left( J,\mathbf{1}_{B\left( J,J^{\prime }\right) }\sigma \right) }{%
\left\vert J\right\vert ^{\frac{1}{n}}}$, it remains to show that for each
fixed $J^{\prime }$,%
\begin{eqnarray*}
S_{s}\left( J^{\prime }\right)  &\approx &\sum_{F}\overset{\ast }{\sum
_{\substack{ J:\ \limfunc{qdist}\left( c_{J},c_{J^{\prime }}\right) \geq
2^{s\left( 1+\delta \right) }\ell \left( J^{\prime }\right) }}}\frac{%
\left\Vert \mathsf{P}_{F,J}^{\omega }\mathbf{x}\right\Vert _{L^{2}\left(
\omega \right) }^{2}}{\left\vert c_{J}-c_{J^{\prime }}\right\vert
^{n+1-\alpha }}\frac{\mathrm{P}^{\alpha }\left( J,\mathbf{1}_{B\left(
J,J^{\prime }\right) }\sigma \right) }{\left\vert J\right\vert ^{\frac{1}{n}}%
} \\
&\lesssim &2^{-\delta s}\sqrt{\left( \mathcal{E}_{\alpha }^{\limfunc{plug}%
}\right) ^{2}+A_{2}^{\alpha ,\limfunc{energy}}}\sqrt{A_{2}^{\alpha }}\ .
\end{eqnarray*}

We write%
\begin{eqnarray}
S_{s}\left( J^{\prime }\right)  &\approx &\sum_{k\geq s\left( 1+\delta
\right) }\frac{1}{\left( 2^{k}\left\vert J^{\prime }\right\vert ^{\frac{1}{n}%
}\right) ^{n+1-\alpha }}\sum_{F}\overset{\ast }{\sum_{J:\ \limfunc{qdist}%
\left( c_{J},c_{J^{\prime }}\right) \approx 2^{k}\ell \left( J^{\prime
}\right) }}\left\Vert \mathsf{P}_{F,J}^{\omega }\mathbf{x}\right\Vert
_{L^{2}\left( \omega \right) }^{2}\frac{\mathrm{P}^{\alpha }\left( J,\mathbf{%
1}_{B\left( J,J^{\prime }\right) }\sigma \right) }{\left\vert J\right\vert ^{%
\frac{1}{n}}}  \label{def Sks} \\
&=&\sum_{k\geq s\left( 1+\delta \right) }\frac{1}{\left( 2^{k}\left\vert
J^{\prime }\right\vert ^{\frac{1}{n}}\right) ^{n+1-\alpha }}S_{s}^{k}\left(
J^{\prime }\right) \ ;  \notag \\
S_{s}^{k}\left( J^{\prime }\right)  &\equiv &\sum_{F}\overset{\ast }{%
\sum_{J:\ \limfunc{qdist}\left( c_{J},c_{J^{\prime }}\right) \approx
2^{k}\ell \left( J^{\prime }\right) }}\left\Vert \mathsf{P}_{F,J}^{\omega }%
\mathbf{x}\right\Vert _{L^{2}\left( \omega \right) }^{2}\frac{\mathrm{P}%
^{\alpha }\left( J,\mathbf{1}_{B\left( J,J^{\prime }\right) }\sigma \right) 
}{\left\vert J\right\vert ^{\frac{1}{n}}},  \notag
\end{eqnarray}%
where by $\limfunc{qdist}\left( c_{J},c_{J^{\prime }}\right) \approx
2^{k}\ell \left( J^{\prime }\right) $ we mean $2^{k}\ell \left( J^{\prime
}\right) \leq \limfunc{qdist}\left( c_{J},c_{J^{\prime }}\right) \leq
2^{k+1}\ell \left( J^{\prime }\right) $. Moreover, if $\limfunc{qdist}\left(
c_{J},c_{J^{\prime }}\right) \approx 2^{k}\ell \left( J^{\prime }\right) $,
then from the fact that the quasiradius of $B\left( J,J^{\prime }\right) $
is $\frac{1}{2}\limfunc{qdist}\left( c_{J},c_{J^{\prime }}\right) $, we
obtain 
\begin{equation*}
B\left( J,J^{\prime }\right) \subset C_{0}2^{k}J^{\prime },
\end{equation*}%
where $C_{0}$ is a positive constant ($C_{0}=6$ works).

For fixed $k\geq 1$, we invoke yet again the \emph{`prepare to puncture'}
argument. Choose an alternate quasicube $\widetilde{J^{\prime }}\in \mathcal{%
A}\Omega \mathcal{D}$ such that $C_{0}2^{k}J\subset \widetilde{J^{\prime }}$
and $\ell \left( \widetilde{J^{\prime }}\right) \leq C2^{k}\ell \left(
J^{\prime }\right) $. Define $\widetilde{\omega }=\omega -\omega \left(
\left\{ p\right\} \right) \delta _{p}$ where $p$ is an atomic point in $%
\widetilde{J^{\prime }}$ for which 
\begin{equation*}
\omega \left( \left\{ p\right\} \right) =\sup_{q\in \mathfrak{P}_{\left(
\sigma ,\omega \right) }:\ q\in \widetilde{J^{\prime }}}\omega \left(
\left\{ q\right\} \right) .
\end{equation*}%
(If $\omega $ has no atomic point in common with $\sigma $ in $\widetilde{%
J^{\prime }}$ set $\widetilde{\omega }=\omega $.) Then we have $\left\vert 
\widetilde{J^{\prime }}\right\vert _{\widetilde{\omega }}=\omega \left( 
\widetilde{J^{\prime }},\mathfrak{P}_{\left( \sigma ,\omega \right) }\right) 
$ and so from (\ref{key obs}),%
\begin{equation*}
\frac{\left\vert \widetilde{J^{\prime }}\right\vert _{\widetilde{\omega }}}{%
\left\vert \widetilde{J^{\prime }}\right\vert ^{1-\frac{\alpha }{n}}}\frac{%
\left\vert \widetilde{J^{\prime }}\right\vert _{\sigma }}{\left\vert 
\widetilde{J^{\prime }}\right\vert ^{1-\frac{\alpha }{n}}}\leq A_{2}^{\alpha
,\limfunc{punct}}\text{ and }\sum_{F\in \mathcal{F}}\left\Vert \mathsf{P}%
_{F,J}^{\omega }\mathbf{x}\right\Vert _{L^{2}\left( \omega \right)
}^{2}\lesssim \mathbf{\tau }\ell \left( J\right) ^{2}\left\vert J\right\vert
_{\widetilde{\omega }}\ .
\end{equation*}

Now we are ready to apply Cauchy-Schwarz and the alternate local estimate (%
\ref{shifted local}) in Lemma \ref{shifted} with $M=\widetilde{J^{\prime }}\ 
$to the second line below to get the following estimate for $S_{s}^{k}\left(
J^{\prime }\right) $ defined in (\ref{def Sks}) above:%
\begin{eqnarray*}
S_{s}^{k}\left( J^{\prime }\right) &\leq &\left( \sum_{F}\sum_{J:\ \limfunc{%
qdist}\left( c_{J},c_{J^{\prime }}\right) \approx 2^{k}\ell \left( J^{\prime
}\right) }\left\Vert \mathsf{P}_{F,J}^{\omega }\mathbf{x}\right\Vert
_{L^{2}\left( \omega \right) }^{2}\right) ^{\frac{1}{2}} \\
&&\times \left( \sum_{F}\sum_{J:\ \limfunc{qdist}\left( c_{J},c_{J^{\prime
}}\right) \approx 2^{k}\ell \left( J^{\prime }\right) }\left\Vert \mathsf{P}%
_{F,J}^{\omega }\mathbf{x}\right\Vert _{L^{2}\left( \omega \right)
}^{2}\left( \frac{\mathrm{P}^{\alpha }\left( J,\mathbf{1}_{B\left(
J,J^{\prime }\right) }\sigma \right) }{\left\vert J\right\vert ^{\frac{1}{n}}%
}\right) ^{2}\right) ^{\frac{1}{2}} \\
&\lesssim &\left( \mathbf{\tau }2^{2s}\left\vert J^{\prime }\right\vert ^{%
\frac{2}{n}}\left\vert \widetilde{J^{\prime }}\right\vert _{\widetilde{%
\omega }}\right) ^{\frac{1}{2}}\left( \mathbf{\tau }\left[ \left( \mathcal{E}%
_{\alpha }^{\limfunc{plug}}\right) ^{2}+A_{2}^{\alpha ,\limfunc{energy}}%
\right] \left\vert \widetilde{J^{\prime }}\right\vert _{\sigma }\right) ^{%
\frac{1}{2}} \\
&\lesssim &\mathbf{\tau }\sqrt{\left( \mathcal{E}_{\alpha }^{\limfunc{plug}%
}\right) ^{2}+A_{2}^{\alpha ,\limfunc{energy}}}2^{s}\left\vert J^{\prime
}\right\vert ^{\frac{1}{n}}\sqrt{\left\vert \widetilde{J^{\prime }}%
\right\vert _{\widetilde{\omega }}}\sqrt{\left\vert \widetilde{J^{\prime }}%
\right\vert _{\sigma }} \\
&\lesssim &\mathbf{\tau }\sqrt{\left( \mathcal{E}_{\alpha }^{\limfunc{plug}%
}\right) ^{2}+A_{2}^{\alpha ,\limfunc{energy}}}\sqrt{A_{2}^{\alpha ,\limfunc{%
punct}}}2^{s}\left\vert J^{\prime }\right\vert ^{\frac{1}{n}}\left\vert
2^{k}J^{\prime }\right\vert ^{1-\frac{\alpha }{n}} \\
&=&\mathbf{\tau }\sqrt{\left( \mathcal{E}_{\alpha }^{\limfunc{plug}}\right)
^{2}+A_{2}^{\alpha ,\limfunc{energy}}}\sqrt{A_{2}^{\alpha ,\limfunc{punct}}}%
2^{s}2^{k\left( n-\alpha \right) }\left\vert J^{\prime }\right\vert ^{\frac{1%
}{n}\left( n+1-\alpha \right) }.
\end{eqnarray*}

Altogether then we have%
\begin{eqnarray*}
S_{s}\left( J^{\prime }\right)  &=&\sum_{k\geq \left( 1+\delta \right) s}%
\frac{1}{\left( 2^{k}\left\vert J^{\prime }\right\vert ^{\frac{1}{n}}\right)
^{n+1-\alpha }}S_{s}^{k}\left( J^{\prime }\right)  \\
&\lesssim &\sqrt{\left( \mathcal{E}_{\alpha }^{\limfunc{plug}}\right)
^{2}+A_{2}^{\alpha ,\limfunc{energy}}}\sqrt{A_{2}^{\alpha ,\limfunc{punct}}}%
\sum_{k\geq \left( 1+\delta \right) s}\frac{1}{\left( 2^{k}\left\vert
J^{\prime }\right\vert ^{\frac{1}{n}}\right) ^{n+1-\alpha }}2^{s}2^{k\left(
n-\alpha \right) }\left\vert J^{\prime }\right\vert ^{\frac{1}{n}\left(
n+1-\alpha \right) } \\
&\lesssim &\sqrt{\left( \mathcal{E}_{\alpha }^{\limfunc{plug}}\right)
^{2}+A_{2}^{\alpha ,\limfunc{energy}}}\sqrt{A_{2}^{\alpha ,\limfunc{punct}}}%
\sum_{k\geq \left( 1+\delta \right) s}2^{s-k}\lesssim 2^{-\delta s}\sqrt{%
\left( \mathcal{E}_{\alpha }^{\limfunc{plug}}\right) ^{2}+A_{2}^{\alpha ,%
\limfunc{energy}}}\sqrt{A_{2}^{\alpha ,\limfunc{punct}}},
\end{eqnarray*}%
which is summable in $s$. This completes the proof of (\ref{Us bound}), and
hence of the estimate for $\mathbf{Back}\left( \widehat{I}\right) $ in (\ref%
{e.t2 n'}).

\section{The stopping form}

This section is virtually unchanged from the corresponding section in \cite%
{SaShUr5}. In the one-dimensional setting of the Hilbert transform, Hyt\"{o}%
nen \cite{Hyt2} observed that "...the innovative verification of the local
estimate by Lacey \cite{Lac} is already set up in such a way that it is
ready for us to borrow as a black box." The same observation carries over in
spirit here regarding the adaptation of Lacey's recursion and stopping time
to proving the local estimate in \cite{SaShUr5}. However, that adaptation
involves the splitting of the stopping form into two sublinear forms, the
first handled by methods in \cite{LaSaUr2}, and the second by methods in 
\cite{Lac}. So for the convenience of the reader, we repeat all the details
here, even though the arguments are little changed for common point masses.

In this section we adapt the argument of M. Lacey in \cite{Lac} to apply in
the setting of a general $\alpha $-fractional Calder\'{o}n-Zygmund operator $%
T^{\alpha }$ in $\mathbb{R}^{n}$ using the Monotonicity Lemma \ref{mono} and
our quasienergy condition in Definition \ref{energy condition}. We will
prove the bound (\ref{B stop form 3}) for the stopping form%
\begin{eqnarray}
\mathsf{B}_{\limfunc{stop}}^{A}\left( f,g\right) &\equiv &\sum_{\substack{ %
I\in \mathcal{C}_{A}\text{ and }J\in \mathcal{C}_{A}^{\mathbf{\tau }-%
\limfunc{shift}}  \\ J\Subset _{\mathbf{\rho },\varepsilon }I_{J}}}\left( 
\mathbb{E}_{I_{J}}^{\sigma }\bigtriangleup _{I}^{\sigma }f\right)
\left\langle T_{\sigma }^{\alpha }\mathbf{1}_{A\setminus
I_{J}},\bigtriangleup _{J}^{\omega }g\right\rangle _{\omega }  \label{dummy}
\\
&=&\sum_{\substack{ I:\ \pi I\in \mathcal{C}_{A}\text{ and }J\in \mathcal{C}%
_{A}^{\mathbf{\tau }-\limfunc{shift}}  \\ J\Subset _{\mathbf{\rho }%
,\varepsilon }I}}\left( \mathbb{E}_{I}^{\sigma }\bigtriangleup _{\pi
I}^{\sigma }f\right) \left\langle T_{\sigma }^{\alpha }\mathbf{1}%
_{A\setminus I},\bigtriangleup _{J}^{\omega }g\right\rangle _{\omega }, 
\notag
\end{eqnarray}%
where we have made the `change of dummy variable' $I_{J}\rightarrow I$ for
convenience in notation (recall that the child of $I$ that contains $J$ is
denoted $I_{J}$).

However, the Monotonicity Lemma of Lacey and Wick has an additional term on
the right hand side, and our quasienergy condition is not a direct
generalization of the one-dimensional energy condition. These differences in
higher dimension result in changes and complications that must be tracked
throughout the argument. In particular, we find it necessary to separate the
interaction of the two terms on the right side of the Monotonicity Lemma by
splitting the stopping form into the two corresponding sublinear forms in (%
\ref{def split}) below. Recall that for $A\in \mathcal{A}$ the \emph{shifted}
corona is given in Definition \ref{shifted corona} by 
\begin{equation*}
\mathcal{C}_{A}^{\mathbf{\tau }-\limfunc{shift}}=\left\{ J\in \mathcal{C}%
_{A}:J\Subset _{\mathbf{\tau },\varepsilon }A\right\} \cup
\dbigcup\limits_{A^{\prime }\in \mathfrak{C}_{\mathcal{A}}\left( A\right)
}\left\{ J\in \Omega \mathcal{D}:J\Subset _{\mathbf{\tau },\varepsilon }A%
\text{ and }J\text{ is }\mathbf{\tau }\text{-nearby in }A^{\prime }\right\} ,
\end{equation*}%
and in particular the $\mathbf{1}$-shifted corona is given by $\mathcal{C}%
_{A}^{\mathbf{1}-\limfunc{shift}}=\left( \mathcal{C}_{A}\setminus \left\{
A\right\} \right) \cup \mathfrak{C}_{\mathcal{A}}\left( A\right) $.

\begin{definition}
Suppose that $A\in \mathcal{A}$ and that $\mathcal{P}\subset \mathcal{C}%
_{A}^{\mathbf{1}-\limfunc{shift}}\times \mathcal{C}_{A}^{\mathbf{\tau }-%
\limfunc{shift}}$. We say that the collection of pairs $\mathcal{P}$ is $A$%
\emph{-admissible} if
\end{definition}

\begin{itemize}
\item (good and $\left( \mathbf{\rho -1},\varepsilon \right) $-deeply
embedded) $J$ is good and $J\Subset _{\mathbf{\rho -1},\varepsilon
}I\varsubsetneqq A$ for every $\left( I,J\right) \in \mathcal{P},$

\item (tree-connected in the first component) if $I_{1}\subset I_{2}$ and
both $\left( I_{1},J\right) \in \mathcal{P}$ and $\left( I_{2},J\right) \in 
\mathcal{P}$, then $\left( I,J\right) \in \mathcal{P}$ for every $I$ in the
geodesic $\left[ I_{1},I_{2}\right] =\left\{ I\in \Omega \mathcal{D}%
:I_{1}\subset I\subset I_{2}\right\} $.
\end{itemize}

However, since $\left( I,J\right) \in \mathcal{P}$ implies both $J\in 
\mathcal{C}_{A}^{\mathbf{\tau }-\limfunc{shift}}$ and $J\Subset _{\mathbf{%
\rho -1},\varepsilon }I$, the assumption $\mathbf{\rho >\tau }$ in
Definition \ref{def parameters} shows that $I$ is in the corona $\mathcal{C}%
_{A}$, and hence we may replace $\mathcal{C}_{A}^{\mathbf{1}-\limfunc{shift}%
} $ with the restricted corona $\mathcal{C}_{A}^{\prime }\equiv \mathcal{C}%
_{A}\setminus \left\{ A\right\} $ in the above definition of $A$\emph{%
-admissible}. The basic example of an admissible collection of pairs is
obtained from the pairs of quasicubes summed in the stopping form $\mathsf{B}%
_{stop}^{A}\left( f,g\right) $ in (\ref{dummy}), which occurs in (\ref{B
stop form 3}) above; 
\begin{equation}
\mathcal{P}^{A}\equiv \left\{ \left( I,J\right) :I\in \mathcal{C}%
_{A}^{\prime }\text{ and }J\in \mathcal{C}_{A}^{\mathbf{\tau }-\limfunc{shift%
}}\text{ where}\ J\text{ is }\mathbf{\tau }\text{-good and\ }J\Subset _{%
\mathbf{\rho -1},\varepsilon }I\right\} .  \label{initial P}
\end{equation}%
Recall also that $J$ is $\mathbf{\tau }$-good if $J\in \Omega \mathcal{D}%
_{\left( \mathbf{r},\varepsilon \right) -\limfunc{good}}^{\mathbf{\tau }}$
as in (\ref{extended good grid}), i.e. if $J$ and its children and its $\ell 
$-parents up to level $\mathbf{\tau }$ are all good. Recall that the
quasiHaar support of $g$ is contained in the collection of $\mathbf{\tau }$%
-good quasicubes.

\begin{definition}
Suppose that $A\in \mathcal{A}$ and that $\mathcal{P}$ is an $A$\emph{%
-admissible} collection of pairs. Define the associated \emph{stopping} form 
$\mathsf{B}_{\limfunc{stop}}^{A,\mathcal{P}}$ by%
\begin{equation*}
\mathsf{B}_{\limfunc{stop}}^{A,\mathcal{P}}\left( f,g\right) \equiv
\sum_{\left( I,J\right) \in \mathcal{P}}\left( \mathbb{E}_{I}^{\sigma
}\bigtriangleup _{\pi I}^{\sigma }f\right) \ \left\langle T_{\sigma
}^{\alpha }\mathbf{1}_{A\setminus I},\bigtriangleup _{J}^{\omega
}g\right\rangle _{\omega }\ ,
\end{equation*}%
where we may of course further restrict $I$ to $\pi I\in \limfunc{supp}%
\widehat{f}$ if we wish.
\end{definition}

Given an $A$-admissible collection $\mathcal{P}$ of pairs define the reduced
collection $\mathcal{P}^{\limfunc{red}}$ as follows. For each fixed $J$ let $%
I_{J}^{\limfunc{red}}$ be the largest good quasicube $I$ such that $\left(
I,J\right) \in \mathcal{P}$. Then set%
\begin{equation*}
\mathcal{P}^{\limfunc{red}}\equiv \left\{ \left( I,J\right) \in \mathcal{P}%
:I\subset I_{J}^{\limfunc{red}}\right\} .
\end{equation*}%
Clearly $\mathcal{P}^{\limfunc{red}}$ is $A$-admissible. Now recall our
assumption that the quasiHaar support of $f$ is contained in the set of $%
\mathbf{\tau }$-good quasicubes, which in particular requires that their
children are all good as well. This assumption has the important implication
that $\mathsf{B}_{\limfunc{stop}}^{A,\mathcal{P}}\left( f,g\right) =\mathsf{B%
}_{\limfunc{stop}}^{A,\mathcal{P}^{\limfunc{red}}}\left( f,g\right) $.
Indeed, if $\left( I,J\right) \in \mathcal{P}\setminus \mathcal{P}^{\limfunc{%
red}}$ then $\pi I\not\in \limfunc{supp}\widehat{f}$ and so $\mathbb{E}%
_{I}^{\sigma }\bigtriangleup _{\pi I}^{\sigma }f=0$. Thus for the purpose of
bounding the stopping form, we may assume that the following additional
property holds for any $A$-admissible collection of pairs $\mathcal{P}$:

\begin{itemize}
\item if $\left( I,J\right) \in \mathcal{P}$ is maximal in the sense that $%
I\supset I^{\prime }$ for all $I^{\prime }$ satisfying $\left( I^{\prime
},J\right) \in \mathcal{P}$, then $I$ is good.
\end{itemize}

Note that there is an asymmetry in our definition of $\mathcal{P}^{\limfunc{%
red}}$ here, namely that the second components $J$ are required to be $%
\mathbf{\tau }$-good, while the maximal first components $I$ are required to
be good. Of course the treatment of the dual stopping forms will use the
reversed requirements, and this accounts for our symmetric restrictions
imposed on the quasiHaar supports of $f$ and $g$ at the outset of the proof.

\begin{definition}
\label{def reduced}We say that an admissible collection $\mathcal{P}$ is 
\emph{reduced} if $\mathcal{P}=\mathcal{P}^{\limfunc{red}}$, so that the
additional property above holds.
\end{definition}

Note that $\mathsf{B}_{\limfunc{stop}}^{A,\mathcal{P}}\left( f,g\right) =%
\mathsf{B}_{\limfunc{stop}}^{A,\mathcal{P}}\left( \mathsf{P}_{\mathcal{C}%
_{A}}^{\sigma }f,\mathsf{P}_{\mathcal{C}_{A}^{\mathbf{\tau }-\limfunc{shift}%
}}^{\omega }g\right) $. Recall that the deep quasienergy condition constant $%
\mathcal{E}_{\alpha }^{\limfunc{deep}}$ is given by%
\begin{equation*}
\left( \mathcal{E}_{\alpha }^{\limfunc{deep}}\right) ^{2}\equiv \sup_{I=\dot{%
\cup}I_{r}}\frac{1}{\left\vert I\right\vert _{\sigma }}\sum_{r=1}^{\infty
}\sum_{J\in \mathcal{M}_{\left( \mathbf{r},\varepsilon \right) -\limfunc{deep%
}}\left( I_{r}\right) }\left( \frac{\mathrm{P}^{\alpha }\left( J,\mathbf{1}%
_{I\setminus \gamma J}\sigma \right) }{\left\vert J\right\vert ^{\frac{1}{n}}%
}\right) ^{2}\left\Vert \mathsf{P}_{J}^{\limfunc{subgood},\omega }\mathbf{x}%
\right\Vert _{L^{2}\left( \omega \right) }^{2}\ .
\end{equation*}

\begin{proposition}
\label{stopping bound}Suppose that $A\in \mathcal{A}$ and that $\mathcal{P}$
is an $A$\emph{-admissible} collection of pairs. Then the stopping form $%
\mathsf{B}_{\limfunc{stop}}^{A,\mathcal{P}}$ satisfies the bound%
\begin{equation}
\left\vert \mathsf{B}_{\limfunc{stop}}^{A,\mathcal{P}}\left( f,g\right)
\right\vert \lesssim \left( \mathcal{E}_{\alpha }^{\limfunc{deep}}+\sqrt{%
A_{2}^{\alpha }}\right) \left( \left\Vert f\right\Vert _{L^{2}\left( \sigma
\right) }+\alpha _{\mathcal{A}}\left( f\right) \sqrt{\left\vert A\right\vert
_{\sigma }}\right) \left\Vert g\right\Vert _{L^{2}\left( \omega \right) }\ .
\label{B stop form}
\end{equation}
\end{proposition}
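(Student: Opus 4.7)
The plan is to adapt M.~Lacey's recursion and stopping-time argument from \cite{Lac}, modified to accommodate the fact that in higher dimensions the Monotonicity Lemma \ref{mono} produces \emph{two} terms (involving $\mathrm{P}^{\alpha }$ and $\mathrm{P}_{1+\delta }^{\alpha }$) rather than the single term available when $n=1$. First I would apply the Energy Lemma \ref{ener} to the inner product $\langle T_{\sigma }^{\alpha }\mathbf{1}_{A\setminus I},\bigtriangleup _{J}^{\omega }g\rangle _{\omega }$ with $\nu =\mathbf{1}_{A\setminus I}\sigma $; since $J\Subset _{\mathbf{\rho }-1,\varepsilon }I$ and $\mathbf{\rho }\gg \mathbf{r}$, goodness of $J$ places the support of $\nu $ outside $\gamma J$, and the lemma yields a two-term bound with the first term carrying $\mathrm{P}^{\alpha }(J,\mathbf{1}_{A\setminus I}\sigma )$ paired with $\lVert \mathsf{P}_{J}^{\limfunc{good},\omega }\mathbf{x}\rVert _{L^{2}(\omega )}$, and the second carrying $\mathrm{P}_{1+\delta ^{\prime }}^{\alpha }(J,\mathbf{1}_{A\setminus I}\sigma )$ paired with $\lVert \mathsf{P}_{J^{\ast }}^{\limfunc{subgood},\omega }\mathbf{x}\rVert _{L^{2}(\omega )}$. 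This motivates a splitting
\begin{equation*}
\mathsf{B}_{\limfunc{stop}}^{A,\mathcal{P}}=\mathsf{B}_{\limfunc{stop},1}^{A,\mathcal{P}}+\mathsf{B}_{\limfunc{stop},2}^{A,\mathcal{P}}
\end{equation*}
into two sublinear forms, to be treated by different methods.

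Second, I would dispose of $\mathsf{B}_{\limfunc{stop},2}^{A,\mathcal{P}}$ (the form associated to $\mathrm{P}_{1+\delta ^{\prime }}^{\alpha }$) by the argument used for term $II_{B}$ in the Intertwining Proposition. The extra factor $\left( \ell (J)/\ell (I)\right) ^{\delta ^{\prime }}$ combined with the goodness hypothesis $J\Subset _{\mathbf{\rho }-1,\varepsilon }I$ gives geometric decay across generations of $\mathcal{F}$-ancestors, parallel to (\ref{decay in t}). Cauchy--Schwarz in $J$, the coefficient bound $|\mathbb{E}_{I}^{\sigma }\bigtriangleup _{\pi I}^{\sigma }f|\leq \alpha _{\mathcal{A}}(A)$ for the "top" of the tree and the quasi-orthogonality estimate (\ref{q orth}) below the top, together with the deep quasienergy condition in its plugged form (Corollary \ref{all plugged}) applied with trivial outer decomposition, yield
\begin{equation*}
\left\vert \mathsf{B}_{\limfunc{stop},2}^{A,\mathcal{P}}(f,g)\right\vert \lesssim \bigl(\mathcal{E}_{\alpha }^{\limfunc{deep}}+\sqrt{A_{2}^{\alpha }}\bigr)\bigl(\lVert f\rVert _{L^{2}(\sigma )}+\alpha _{\mathcal{A}}(A)\sqrt{\lvert A\rvert _{\sigma }}\bigr)\lVert g\rVert _{L^{2}(\omega )}.
\end{equation*}

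Third, and this is where the main work lies, I would bound $\mathsf{B}_{\limfunc{stop},1}^{A,\mathcal{P}}$ (associated to the large Poisson integral $\mathrm{P}^{\alpha }$) by adapting the stopping-time/recursion scheme of \cite{Lac}. The scheme rests on three ingredients: (a)~a \emph{size functional}
\begin{equation*}
\mathcal{S}(\mathcal{P})^{2}\equiv \sup_{I_{0}}\frac{1}{\lvert I_{0}\rvert _{\sigma }}\sum_{(I,J)\in \mathcal{P}:\ I\subset I_{0}}\Bigl(\frac{\mathrm{P}^{\alpha }(J,\mathbf{1}_{A\setminus I}\sigma )}{\lvert J\rvert ^{1/n}}\Bigr)^{2}\lVert \mathsf{P}_{J}^{\limfunc{subgood},\omega }\mathbf{x}\rVert _{L^{2}(\omega )}^{2},
\end{equation*}
where the supremum is over maximal first components $I_{0}$ of pairs in $\mathcal{P}$; (b)~a \emph{size lemma} $\lvert \mathsf{B}_{\limfunc{stop},1}^{A,\mathcal{P}}(f,g)\rvert \lesssim \mathcal{S}(\mathcal{P})\lVert \mathsf{P}_{\mathcal{C}_{A}}^{\sigma }f\rVert _{L^{2}(\sigma )}\lVert g\rVert _{L^{2}(\omega )}$ proved via the Monotonicity Lemma and Cauchy--Schwarz exploiting orthogonality of $\bigtriangleup _{J}^{\omega }g$ over pairwise disjoint $J$'s; and (c)~a \emph{decomposition lemma} producing, from any admissible $\mathcal{P}$, a pairwise disjoint stopping collection $\mathcal{A}^{\ast }\subset \mathcal{C}_{A}^{\prime }$ such that $\mathcal{P}=\mathcal{P}_{\limfunc{top}}\sqcup \bigsqcup _{A^{\ast }\in \mathcal{A}^{\ast }}\mathcal{P}^{A^{\ast }}$ where each $\mathcal{P}^{A^{\ast }}$ is $A^{\ast }$-admissible with $\mathcal{S}(\mathcal{P}^{A^{\ast }})\leq \tfrac{1}{2}\mathcal{S}(\mathcal{P})$ and $\{A^{\ast }\}$ satisfies a $\sigma $-Carleson packing. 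The top piece $\mathsf{B}_{\limfunc{stop},1}^{A,\mathcal{P}_{\limfunc{top}}}$ is controlled directly by the constant stopping value of $\lvert f\rvert $, contributing the $\alpha _{\mathcal{A}}(A)\sqrt{\lvert A\rvert _{\sigma }}$ term, and iterating (c) produces a geometric series in $\mathcal{S}$; summing via Carleson/quasi-orthogonality of the stopping quasicubes $\mathcal{A}^{\ast }$ assembles the $\lVert f\rVert _{L^{2}(\sigma )}$ term.

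The principal obstacle is establishing the master estimate $\mathcal{S}(\mathcal{P})\lesssim \mathcal{E}_{\alpha }^{\limfunc{deep}}+\sqrt{A_{2}^{\alpha }}$ for every $A$-admissible $\mathcal{P}$. This requires collapsing the tree-connected family $\{I:(I,J)\in \mathcal{P}\}$ (for fixed $J$) to a single outer quasicube using the bounded-overlap ideas behind Lemma \ref{refined lemma}, rewriting the Poisson integral with a hole as a plugged Poisson integral modulo an offset-$A_{2}^{\alpha }$ error (as in (\ref{plug the hole deep})--(\ref{plug the hole refined})), and finally recognising the resulting sum as dominated by the deep plugged energy constant applied to the trivial outer decomposition of $I_{0}$. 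Once this is in place the rest of Lacey's recursion transplants with only cosmetic changes for quasicubes and the fractional parameter, yielding (\ref{B stop form}).
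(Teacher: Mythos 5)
Your overall architecture is exactly the paper's: split the stopping form by the Monotonicity Lemma into a $\mathrm{P}^{\alpha }$-form and a $\mathrm{P}_{1+\delta }^{\alpha }$-form, dispose of the second by exploiting the extra decay $\left( \ell (J)/\ell (I)\right) ^{\delta ^{\prime }}$ together with goodness and a Cauchy--Schwarz argument against the stopping energy, and handle the first by adapting Lacey's bottom-up stopping time and recursion. So far so good.

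The genuine gap is in your size functional for the recursion. You define
\begin{equation*}
\mathcal{S}(\mathcal{P})^{2}\equiv \sup_{I_{0}}\frac{1}{\lvert I_{0}\rvert _{\sigma }}\sum_{(I,J)\in \mathcal{P}:\ I\subset I_{0}}\Bigl(\frac{\mathrm{P}^{\alpha }(J,\mathbf{1}_{A\setminus I}\sigma )}{\lvert J\rvert ^{1/n}}\Bigr)^{2}\lVert \mathsf{P}_{J}^{\limfunc{subgood},\omega }\mathbf{x}\rVert _{L^{2}(\omega )}^{2}\ ,
\end{equation*}
which has three problems. First, for a fixed $J$ the pairs $(I,J)\in \mathcal{P}$ with $I\subset I_{0}$ form a \emph{chain} of nested $I$'s (this is the tree-connectedness of admissibility), so you are counting the energy of each $J$ a number of times comparable to $\log (\ell (I_{0})/\ell (J))$; this divergence is precisely what the telescoping of $\varphi _{J}^{\mathcal{P}}$ is designed to avoid, and your functional is not controlled by the energy condition. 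Second, the Poisson integral in your functional is coupled to \emph{both} $I$ and $J$, whereas the size functional in the paper (equation (\ref{def P stop energy 3})) is \emph{decoupled}: it tests only at a single good cube $K\in \Pi ^{\limfunc{goodbelow}}\mathcal{P}$, with the Poisson term $\mathrm{P}^{\alpha }(K,\mathbf{1}_{A\setminus K}\sigma )/\lvert K\rvert ^{1/n}$ depending on $K$ alone, and with the energy term equal to the projection onto $\Pi _{2}^{K,\mathbf{\tau }-\limfunc{deep}}\mathcal{P}$, i.e. the quasiHaar coefficients of $J$'s that are $\mathbf{\tau }$-deeply embedded in $K$. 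Third, and this is why the decoupling is not cosmetic: that decoupled form is what lets the size be rewritten as $\sup _{K}\tfrac{1}{\lvert K\rvert _{\sigma }}\left( \text{Poisson at }K\right) ^{2}\omega _{\mathcal{P}}(\mathbf{T}^{\mathbf{\tau }-\limfunc{deep}}(K))$ in terms of an \emph{atomic measure} $\omega _{\mathcal{P}}$ on tents in the upper half-space (equation (\ref{def P stop energy' 3})). The bottom-up stopping condition (\ref{up stopping condition}) then reads as a purely measure-theoretic comparison $\omega _{\mathcal{P}}(\mathbf{T}(L))\geq \rho \,\omega _{\mathcal{P}}(\cup _{L^{\prime }}\mathbf{T}(L^{\prime }))$, and the crucial additivity step (\ref{additive}), which produces the factor $(\rho -1)=\varepsilon $ in the size-reduction estimate (\ref{small claim' 3}), requires $\omega _{\mathcal{P}}$ to be additive on disjoint tents. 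With your coupled functional the stopping condition would have to involve both the Poisson term and the energy simultaneously, and the clean additivity on which Lacey's argument rests disappears. In short, missing the decoupling is not a cosmetic gap: it is one of the two key innovations of \cite{Lac} (the other being the bottom-up stopping time itself), and the recursion as you have sketched it does not go through.

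A related imprecision: your recursion step asks for a Carleson-packed stopping family $\mathcal{A}^{\ast }$ and sub-collections $\mathcal{P}^{A^{\ast }}$ with $\mathcal{S}(\mathcal{P}^{A^{\ast }})\leq \tfrac{1}{2}\mathcal{S}(\mathcal{P})$. The paper's Proposition \ref{bottom up 3} instead produces a \emph{big} piece controlled directly by the size functional via the Straddling Lemma, and a family of \emph{small} pieces whose sizes are $\leq \varepsilon \mathcal{S}(\mathcal{P})^{2}$ and which are mutually orthogonal (Lemma \ref{mut orth}) rather than Carleson-packed; the orthogonality allows a $\sqrt{n\mathbf{\tau }}$ loss per iteration, so one takes $\varepsilon <1/(1+\sqrt{n\mathbf{\tau }})$ to converge. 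You would need to supply an actual proof that your $\mathcal{S}$ halves under your decomposition, and with the coupled/double-counted functional this does not appear to be achievable.
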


With this proposition in hand, we can complete the proof of (\ref{B stop
form 3}), and hence of Theorem \ref{T1 theorem}, by summing over the
stopping quasicubes $A\in \mathcal{A}$ with the choice $\mathcal{P}^{A}$ of $%
A$-admissible pairs for each $A$:%
\begin{eqnarray*}
&&\sum_{A\in \mathcal{A}}\left\vert \mathsf{B}_{\limfunc{stop}}^{A,\mathcal{P%
}^{A}}\left( f,g\right) \right\vert \\
&\lesssim &\sum_{A\in \mathcal{A}}\left( \mathcal{E}_{\alpha }^{\limfunc{deep%
}}+\sqrt{A_{2}^{\alpha }}\right) \left( \left\Vert \mathsf{P}_{\mathcal{C}%
_{A}}f\right\Vert _{L^{2}\left( \sigma \right) }+\alpha _{\mathcal{A}}\left(
f\right) \sqrt{\left\vert A\right\vert _{\sigma }}\right) \left\Vert \mathsf{%
P}_{\mathcal{C}_{A}^{\mathbf{\tau }-\limfunc{shift}}}g\right\Vert
_{L^{2}\left( \omega \right) } \\
&\lesssim &\left( \mathcal{E}_{\alpha }^{\limfunc{deep}}+\sqrt{A_{2}^{\alpha
}}\right) \left( \sum_{A\in \mathcal{A}}\left( \left\Vert \mathsf{P}_{%
\mathcal{C}_{A}}f\right\Vert _{L^{2}\left( \sigma \right) }^{2}+\alpha _{%
\mathcal{A}}\left( f\right) ^{2}\left\vert A\right\vert _{\sigma }\right)
\right) ^{\frac{1}{2}}\left( \sum_{A\in \mathcal{A}}\left\Vert \mathsf{P}_{%
\mathcal{C}_{A}^{\mathbf{\tau }-\limfunc{shift}}}g\right\Vert _{L^{2}\left(
\omega \right) }^{2}\right) ^{\frac{1}{2}} \\
&\lesssim &\left( \mathcal{E}_{\alpha }^{\limfunc{deep}}+\sqrt{A_{2}^{\alpha
}}\right) \left\Vert f\right\Vert _{L^{2}\left( \sigma \right) }\left\Vert
g\right\Vert _{L^{2}\left( \omega \right) }\ ,
\end{eqnarray*}%
by orthogonality $\sum_{A\in \mathcal{A}}\left\Vert \mathsf{P}_{\mathcal{C}%
_{A}}f\right\Vert _{L^{2}\left( \sigma \right) }^{2}\leq \left\Vert
f\right\Vert _{L^{2}\left( \sigma \right) }^{2}$ in corona projections $%
\mathcal{C}_{A}^{\sigma }$, `quasi' orthogonality $\sum_{A\in \mathcal{A}%
}\alpha _{\mathcal{A}}\left( f\right) ^{2}\left\vert A\right\vert _{\sigma
}\lesssim \left\Vert f\right\Vert _{L^{2}\left( \sigma \right) }^{2}$ in the
stopping quasicubes $\mathcal{A}$, and by the bounded overlap of the shifted
coronas $\mathcal{C}_{A}^{\mathbf{\tau }-\limfunc{shift}}$: 
\begin{equation*}
\sum_{A\in \mathcal{A}}\mathbf{1}_{\mathcal{C}_{A}^{\mathbf{\tau }-\limfunc{%
shift}}}\leq \mathbf{\tau 1}_{\Omega \mathcal{D}}.
\end{equation*}

To prove Proposition \ref{stopping bound}, we begin by letting $\Pi _{2}%
\mathcal{P}$ consist of the second components of the pairs in $\mathcal{P}$
and writing 
\begin{eqnarray*}
\mathsf{B}_{\limfunc{stop}}^{A,\mathcal{P}}\left( f,g\right) &=&\sum_{J\in
\Pi _{2}\mathcal{P}}\left\langle T_{\sigma }^{\alpha }\varphi _{J}^{\mathcal{%
P}},\bigtriangleup _{J}^{\omega }g\right\rangle _{\omega }; \\
\text{where }\varphi _{J}^{\mathcal{P}} &\equiv &\sum_{I\in \mathcal{C}%
_{A}^{\prime }:\ \left( I,J\right) \in \mathcal{P}}\mathbb{E}_{I}^{\sigma
}\left( \bigtriangleup _{\pi I}^{\sigma }f\right) \ \mathbf{1}_{A\setminus
I}\ .
\end{eqnarray*}%
By the tree-connected property of $\mathcal{P}$, and the telescoping
property of martingale differences, together with the bound $\alpha _{%
\mathcal{A}}\left( A\right) $ on the quasiaverages of $f$ in the corona $%
\mathcal{C}_{A}$, we have%
\begin{equation}
\left\vert \varphi _{J}^{\mathcal{P}}\right\vert \lesssim \alpha _{\mathcal{A%
}}\left( A\right) 1_{A\setminus I_{\mathcal{P}}\left( J\right) },
\label{phi bound}
\end{equation}%
where $I_{\mathcal{P}}\left( J\right) \equiv \dbigcap \left\{ I:\left(
I,J\right) \in \mathcal{P}\right\} $ is the smallest quasicube $I$ for which 
$\left( I,J\right) \in \mathcal{P}$. Another important property of these
functions is the sublinearity:%
\begin{equation}
\left\vert \varphi _{J}^{\mathcal{P}}\right\vert \leq \left\vert \varphi
_{J}^{\mathcal{P}_{1}}\right\vert +\left\vert \varphi _{J}^{\mathcal{P}%
_{2}}\right\vert ,\ \ \ \ \ \mathcal{P}=\mathcal{P}_{1}\dot{\cup}\mathcal{P}%
_{2}\ .  \label{phi sublinear}
\end{equation}%
Now apply the Monotonicity Lemma \ref{mono} to the inner product $%
\left\langle T_{\sigma }^{\alpha }\varphi _{J},\bigtriangleup _{J}^{\omega
}g\right\rangle _{\omega }$ to obtain%
\begin{eqnarray*}
\left\vert \left\langle T_{\sigma }^{\alpha }\varphi _{J},\bigtriangleup
_{J}^{\omega }g\right\rangle _{\omega }\right\vert &\lesssim &\frac{\mathrm{P%
}^{\alpha }\left( J,\left\vert \varphi _{J}\right\vert \mathbf{1}%
_{A\setminus I_{\mathcal{P}}\left( J\right) }\sigma \right) }{\left\vert
J\right\vert ^{\frac{1}{n}}}\left\Vert \bigtriangleup _{J}^{\omega }\mathbf{x%
}\right\Vert _{L^{2}\left( \omega \right) }\left\Vert \bigtriangleup
_{J}^{\omega }g\right\Vert _{L^{2}\left( \omega \right) } \\
&&+\frac{\mathrm{P}_{1+\delta }^{\alpha }\left( J,\left\vert \varphi
_{J}\right\vert \mathbf{1}_{A\setminus I_{\mathcal{P}}\left( J\right)
}\sigma \right) }{\left\vert J\right\vert ^{\frac{1}{n}}}\left\Vert \mathsf{P%
}_{J}^{\omega }\mathbf{x}\right\Vert _{L^{2}\left( \omega \right)
}\left\Vert \bigtriangleup _{J}^{\omega }g\right\Vert _{L^{2}\left( \omega
\right) }.
\end{eqnarray*}%
Thus we have%
\begin{eqnarray}
&&  \label{def split} \\
\left\vert \mathsf{B}_{\limfunc{stop}}^{A,\mathcal{P}}\left( f,g\right)
\right\vert &\leq &\sum_{J\in \Pi _{2}\mathcal{P}}\frac{\mathrm{P}%
_{1}^{\alpha }\left( J,\left\vert \varphi _{J}\right\vert \mathbf{1}%
_{A\setminus I_{\mathcal{P}}\left( J\right) }\sigma \right) }{\left\vert
J\right\vert ^{\frac{1}{n}}}\left\Vert \bigtriangleup _{J}^{\omega }\mathbf{x%
}\right\Vert _{L^{2}\left( \omega \right) }\left\Vert \bigtriangleup
_{J}^{\omega }g\right\Vert _{L^{2}\left( \omega \right) }  \notag \\
&&+\sum_{J\in \Pi _{2}\mathcal{P}}\frac{\mathrm{P}_{1+\delta }^{\alpha
}\left( J,\left\vert \varphi _{J}\right\vert \mathbf{1}_{A\setminus I_{%
\mathcal{P}}\left( J\right) }\sigma \right) }{\left\vert J\right\vert ^{%
\frac{1}{n}}}\left\Vert \mathsf{P}_{J}^{\omega }\mathbf{x}\right\Vert
_{L^{2}\left( \omega \right) }\left\Vert \bigtriangleup _{J}^{\omega
}g\right\Vert _{L^{2}\left( \omega \right) }  \notag \\
&\equiv &\left\vert \mathsf{B}\right\vert _{\limfunc{stop},1,\bigtriangleup
^{\omega }}^{A,\mathcal{P}}\left( f,g\right) +\left\vert \mathsf{B}%
\right\vert _{\limfunc{stop},1+\delta ,\mathsf{P}^{\omega }}^{A,\mathcal{P}%
}\left( f,g\right) ,  \notag
\end{eqnarray}%
where we have dominated the stopping form by two sublinear stopping forms
that involve the Poisson integrals of order $1$ and $1+\delta $
respectively, and where the smaller Poisson integral $\mathrm{P}_{1+\delta
}^{\alpha }$ is multiplied by the larger projection $\left\Vert \mathsf{P}%
_{J}^{\omega }\mathbf{x}\right\Vert _{L^{2}\left( \omega \right) }$. This
splitting turns out to be successful in separating the two energy terms from
the right hand side of the Energy Lemma, because of the two properties (\ref%
{phi bound}) and (\ref{phi sublinear}) above. It remains to show the two
inequalities:%
\begin{equation}
\left\vert \mathsf{B}\right\vert _{\limfunc{stop},1,\bigtriangleup ^{\omega
}}^{A,\mathcal{P}}\left( f,g\right) \lesssim \left( \mathcal{E}_{\alpha }^{%
\limfunc{deep}}+\sqrt{A_{2}^{\alpha }}\right) \alpha _{\mathcal{A}}\left(
A\right) \sqrt{\left\vert A\right\vert _{\sigma }}\left\Vert g\right\Vert
_{L^{2}\left( \omega \right) },  \label{First inequality}
\end{equation}%
for $f\in L^{2}\left( \sigma \right) $ satisfying where $\mathbb{E}%
_{I}^{\sigma }\left\vert f\right\vert \leq \alpha _{\mathcal{A}}\left(
A\right) $ for all $I\in \mathcal{C}_{A}$; and%
\begin{equation}
\left\vert \mathsf{B}\right\vert _{\limfunc{stop},1+\delta ,\mathsf{P}%
^{\omega }}^{A,\mathcal{P}}\left( f,g\right) \lesssim \left( \mathcal{E}%
_{\alpha }^{\limfunc{deep}}+\sqrt{A_{2}^{\alpha }}\right) \left\Vert
f\right\Vert _{L^{2}\left( \sigma \right) }\left\Vert g\right\Vert
_{L^{2}\left( \omega \right) },  \label{Second inequality}
\end{equation}%
where we only need the case $\mathcal{P}=\mathcal{P}^{A}$ in this latter
inequality as there is no recursion involved in treating this second
sublinear form. We consider first the easier inequality (\ref{Second
inequality}) that does not require recursion. In the subsequent subsections
we will control the more difficult inequality (\ref{First inequality}) by
adapting the stopping time and recursion of M. Lacey to the sublinear form $%
\left\vert \mathsf{B}\right\vert _{\limfunc{stop},1,\bigtriangleup ^{\omega
}}^{A,\mathcal{P}}\left( f,g\right) $.

\subsection{The second inequality}

Now we turn to proving (\ref{Second inequality}), i.e.%
\begin{equation*}
\left\vert \mathsf{B}\right\vert _{\limfunc{stop},1+\delta ,\mathsf{P}%
^{\omega }}^{A,\mathcal{P}}\left( f,g\right) \lesssim \left( \mathcal{E}%
_{\alpha }^{\limfunc{deep}}+\sqrt{A_{2}^{\alpha }}\right) \left\Vert
f\right\Vert _{L^{2}\left( \sigma \right) }\left\Vert g\right\Vert
_{L^{2}\left( \omega \right) },
\end{equation*}%
where since 
\begin{equation*}
\left\vert \varphi _{J}\right\vert =\left\vert \sum_{I\in \mathcal{C}%
_{A}^{\prime }:\ \left( I,J\right) \in \mathcal{P}}\mathbb{E}_{I}^{\sigma
}\left( \bigtriangleup _{\pi I}^{\sigma }f\right) \ \mathbf{1}_{A\setminus
I}\right\vert \leq \sum_{I\in \mathcal{C}_{A}^{\prime }:\ \left( I,J\right)
\in \mathcal{P}}\left\vert \mathbb{E}_{I}^{\sigma }\left( \bigtriangleup
_{\pi I}^{\sigma }f\right) \ \mathbf{1}_{A\setminus I}\right\vert ,
\end{equation*}%
the sublinear form $\left\vert \mathsf{B}\right\vert _{\limfunc{stop}%
,1+\delta ,\mathsf{P}^{\omega }}^{A,\mathcal{P}}$ can be dominated and then
decomposed by pigeonholing the ratio of side lengths of $J$ and $I$:%
\begin{eqnarray*}
\left\vert \mathsf{B}\right\vert _{\limfunc{stop},1+\delta ,\mathsf{P}%
^{\omega }}^{A,\mathcal{P}}\left( f,g\right) &=&\sum_{J\in \Pi _{2}\mathcal{P%
}}\frac{\mathrm{P}_{1+\delta }^{\alpha }\left( J,\left\vert \varphi
_{J}\right\vert \mathbf{1}_{A\setminus I_{\mathcal{P}}\left( J\right)
}\sigma \right) }{\left\vert J\right\vert ^{\frac{1}{n}}}\left\Vert \mathsf{P%
}_{J}^{\omega }\mathbf{x}\right\Vert _{L^{2}\left( \omega \right)
}\left\Vert \bigtriangleup _{J}^{\omega }g\right\Vert _{L^{2}\left( \omega
\right) } \\
&\leq &\sum_{\left( I,J\right) \in \mathcal{P}}\frac{\mathrm{P}_{1+\delta
}^{\alpha }\left( J,\left\vert \mathbb{E}_{I}^{\sigma }\left( \bigtriangleup
_{\pi I}^{\sigma }f\right) \right\vert \mathbf{1}_{A\setminus I}\sigma
\right) }{\left\vert J\right\vert ^{\frac{1}{n}}}\left\Vert \mathsf{P}%
_{J}^{\omega }\mathbf{x}\right\Vert _{L^{2}\left( \omega \right) }\left\Vert
\bigtriangleup _{J}^{\omega }g\right\Vert _{L^{2}\left( \omega \right) } \\
&\equiv &\sum_{s=0}^{\infty }\left\vert \mathsf{B}\right\vert _{\limfunc{stop%
},1+\delta ,\mathsf{P}^{\omega }}^{A,\mathcal{P};s}\left( f,g\right) ; \\
\left\vert \mathsf{B}\right\vert _{\limfunc{stop},1+\delta ,\mathsf{P}%
^{\omega }}^{A,\mathcal{P};s}\left( f,g\right) &\equiv &\sum_{\substack{ %
\left( I,J\right) \in \mathcal{P}  \\ \ell \left( J\right) =2^{-s}\ell
\left( I\right) }}\frac{\mathrm{P}_{1+\delta }^{\alpha }\left( J,\left\vert 
\mathbb{E}_{I}^{\sigma }\left( \bigtriangleup _{\pi I}^{\sigma }f\right)
\right\vert \mathbf{1}_{A\setminus I}\sigma \right) }{\left\vert
J\right\vert ^{\frac{1}{n}}}\left\Vert \mathsf{P}_{J}^{\omega }\mathbf{x}%
\right\Vert _{L^{2}\left( \omega \right) }\left\Vert \bigtriangleup
_{J}^{\omega }g\right\Vert _{L^{2}\left( \omega \right) }.
\end{eqnarray*}%
Here we have the \emph{entire} projection $\mathsf{P}_{J}^{\omega }\mathbf{x}
$ onto all of the dyadic subquasicubes of $J$, but this is offset by the
smaller Poisson integral $\mathrm{P}_{1+\delta }^{\alpha }$. We will now
adapt the argument for the stopping term starting on page 42 of \cite%
{LaSaUr2}, where the geometric gain from the assumed Energy Hypothesis there
will be replaced by a geometric gain from the smaller Poisson integral $%
\mathrm{P}_{1+\delta }^{\alpha }$ used here.

First, we exploit the additional decay in the Poisson integral $\mathrm{P}%
_{1+\delta }^{\alpha }$ as follows. Suppose that $\left( I,J\right) \in 
\mathcal{P}$ with $\ell \left( J\right) =2^{-s}\ell \left( I\right) $. We
then compute%
\begin{eqnarray*}
\frac{\mathrm{P}_{1+\delta }^{\alpha }\left( J,\mathbf{1}_{A\setminus
I}\sigma \right) }{\left\vert J\right\vert ^{\frac{1}{n}}} &\approx
&\int_{A\setminus I}\frac{\left\vert J\right\vert ^{\frac{\delta }{n}}}{%
\left\vert y-c_{J}\right\vert ^{n+1+\delta -\alpha }}d\sigma \left( y\right)
\\
&\leq &\int_{A\setminus I}\left( \frac{\left\vert J\right\vert ^{\frac{1}{n}}%
}{\limfunc{qdist}\left( c_{J},I^{c}\right) }\right) ^{\delta }\frac{1}{%
\left\vert y-c_{J}\right\vert ^{n+1-\alpha }}d\sigma \left( y\right) \\
&\lesssim &\left( \frac{\left\vert J\right\vert ^{\frac{1}{n}}}{\limfunc{%
qdist}\left( c_{J},I^{c}\right) }\right) ^{\delta }\frac{\mathrm{P}^{\alpha
}\left( J,\mathbf{1}_{A\setminus I}\sigma \right) }{\left\vert J\right\vert
^{\frac{1}{n}}},
\end{eqnarray*}%
and use the goodness inequality,%
\begin{equation*}
\limfunc{qdist}\left( c_{J},I^{c}\right) \geq \frac{1}{2}\ell \left(
I\right) ^{1-\varepsilon }\ell \left( J\right) ^{\varepsilon }\geq \frac{1}{2%
}2^{s\left( 1-\varepsilon \right) }\ell \left( J\right) ,
\end{equation*}%
to conclude that%
\begin{equation}
\left( \frac{\mathrm{P}_{1+\delta }^{\alpha }\left( J,\mathbf{1}_{A\setminus
I}\sigma \right) }{\left\vert J\right\vert ^{\frac{1}{n}}}\right) \lesssim
2^{-s\delta \left( 1-\varepsilon \right) }\frac{\mathrm{P}^{\alpha }\left( J,%
\mathbf{1}_{A\setminus I}\sigma \right) }{\left\vert J\right\vert ^{\frac{1}{%
n}}}.  \label{Poisson decay}
\end{equation}

We next claim that for $s\geq 0$ an integer,%
\begin{eqnarray*}
\left\vert \mathsf{B}\right\vert _{\limfunc{stop},1+\delta ,\mathsf{P}%
^{\omega }}^{A,\mathcal{P};s}\left( f,g\right) &=&\sum_{\substack{ \left(
I,J\right) \in \mathcal{P}  \\ \ell \left( J\right) =2^{-s}\ell \left(
I\right) }}\frac{\mathrm{P}_{1+\delta }^{\alpha }\left( J,\left\vert \mathbb{%
E}_{I}^{\sigma }\left( \bigtriangleup _{\pi I}^{\sigma }f\right) \right\vert 
\mathbf{1}_{A\setminus I}\sigma \right) }{\left\vert J\right\vert ^{\frac{1}{%
n}}}\left\Vert \mathsf{P}_{J}^{\omega }\mathbf{x}\right\Vert _{L^{2}\left(
\omega \right) }\left\Vert \bigtriangleup _{J}^{\omega }g\right\Vert
_{L^{2}\left( \omega \right) } \\
&\lesssim &2^{-s\delta \left( 1-\varepsilon \right) }\ \left( \mathcal{E}%
_{\alpha }^{\limfunc{deep}}+\sqrt{A_{2}^{\alpha }}\right) \ \left\Vert
f\right\Vert _{L^{2}\left( \sigma \right) }\ \left\Vert g\right\Vert
_{L^{2}\left( \omega \right) }\,,
\end{eqnarray*}%
from which (\ref{Second inequality}) follows upon summing in $s\geq 0$. Now
using both%
\begin{eqnarray*}
\left\vert \mathbb{E}_{I}^{\sigma }\left( \bigtriangleup _{\pi I}^{\sigma
}f\right) \right\vert &=&\frac{1}{\left\vert I\right\vert _{\sigma }}%
\int_{I}\left\vert \bigtriangleup _{\pi I}^{\sigma }f\right\vert d\sigma
\leq \left\Vert \bigtriangleup _{\pi I}^{\sigma }f\right\Vert _{L^{2}\left(
\sigma \right) }\frac{1}{\sqrt{\left\vert I\right\vert _{\sigma }}}, \\
2^{n}\left\Vert f\right\Vert _{L^{2}(\sigma )}^{2} &=&\sum_{I\in \Omega 
\mathcal{D}}\left\Vert \bigtriangleup _{\pi I}^{\sigma }f\right\Vert
_{L^{2}\left( \sigma \right) }^{2}\ ,
\end{eqnarray*}%
we apply Cauchy-Schwarz in the $I$ variable above to see that 
\begin{eqnarray*}
&&\left[ \left\vert \mathsf{B}\right\vert _{\limfunc{stop},1+\delta ,\mathsf{%
P}^{\omega }}^{A,\mathcal{P};s}\left( f,g\right) \right] ^{2}\lesssim
\left\Vert f\right\Vert _{L^{2}(\sigma )}^{2} \\
&&\times \left[ \sum_{I\in \mathcal{C}_{A}^{\prime }}\left( \frac{1}{\sqrt{%
\left\vert I\right\vert _{\sigma }}}\sum_{\substack{ J:\ \left( I,J\right)
\in \mathcal{P}  \\ \ell \left( J\right) =2^{-s}\ell \left( I\right) }}\frac{%
\mathrm{P}_{1+\delta }^{\alpha }\left( J,\mathbf{1}_{A\setminus I}\sigma
\right) }{\left\vert J\right\vert ^{\frac{1}{n}}}\left\Vert \mathsf{P}%
_{J}^{\omega }\mathbf{x}\right\Vert _{L^{2}\left( \omega \right) }\left\Vert
\bigtriangleup _{J}^{\omega }g\right\Vert _{L^{2}\left( \omega \right)
}\right) ^{2}\right] ^{\frac{1}{2}}.
\end{eqnarray*}%
We can then estimate the sum inside the square brackets by%
\begin{equation*}
\sum_{I\in \mathcal{C}_{A}^{\prime }}\left\{ \sum_{\substack{ J:\ \left(
I,J\right) \in \mathcal{P}  \\ \ell \left( J\right) =2^{-s}\ell \left(
I\right) }}\left\Vert \bigtriangleup _{J}^{\omega }g\right\Vert
_{L^{2}\left( \omega \right) }^{2}\right\} \sum_{\substack{ J:\ \left(
I,J\right) \in \mathcal{P}  \\ \ell \left( J\right) =2^{-s}\ell \left(
I\right) }}\frac{1}{\left\vert I\right\vert _{\sigma }}\left( \frac{\mathrm{P%
}_{1+\delta }^{\alpha }\left( J,\mathbf{1}_{A\setminus I}\sigma \right) }{%
\left\vert J\right\vert ^{\frac{1}{n}}}\right) ^{2}\left\Vert \mathsf{P}%
_{J}^{\omega }\mathbf{x}\right\Vert _{L^{2}\left( \omega \right)
}^{2}\lesssim \left\Vert g\right\Vert _{L^{2}\left( \omega \right)
}^{2}A\left( s\right) ^{2},
\end{equation*}%
where%
\begin{equation*}
A\left( s\right) ^{2}\equiv \sup_{I\in \mathcal{C}_{A}^{\prime }}\sum 
_{\substack{ J:\ \left( I,J\right) \in \mathcal{P}  \\ \ell \left( J\right)
=2^{-s}\ell \left( I\right) }}\frac{1}{\left\vert I\right\vert _{\sigma }}%
\left( \frac{\mathrm{P}_{1+\delta }^{\alpha }\left( J,\mathbf{1}_{A\setminus
I}\sigma \right) }{\left\vert J\right\vert ^{\frac{1}{n}}}\right)
^{2}\left\Vert \mathsf{P}_{J}^{\omega }\mathbf{x}\right\Vert _{L^{2}\left(
\omega \right) }^{2}\,.
\end{equation*}%
Finally then we turn to the analysis of the supremum in last display. From
the Poisson decay (\ref{Poisson decay}) we have 
\begin{eqnarray*}
A\left( s\right) ^{2} &\lesssim &\sup_{I\in \mathcal{C}_{A}^{\prime }}\frac{1%
}{\left\vert I\right\vert _{\sigma }}2^{-2s\delta \left( 1-\varepsilon
\right) }\sum_{\substack{ J:\ \left( I,J\right) \in \mathcal{P}  \\ \ell
\left( J\right) =2^{-s}\ell \left( I\right) }}\left( \frac{\mathrm{P}%
^{\alpha }\left( J,\mathbf{1}_{A\setminus I}\sigma \right) }{\left\vert
J\right\vert ^{\frac{1}{n}}}\right) ^{2}\left\Vert \mathsf{P}_{J}^{\omega
}x\right\Vert _{L^{2}\left( \omega \right) }^{2} \\
&\lesssim &\sup_{I\in \mathcal{C}_{A}^{\prime }}\frac{1}{\left\vert
I\right\vert _{\sigma }}2^{-2s\delta \left( 1-\varepsilon \right)
}\sum_{K\in \mathcal{M}_{\left( \mathbf{r},\varepsilon \right) -\limfunc{deep%
}}\left( I\right) }\left( \frac{\mathrm{P}^{\alpha }\left( K,\mathbf{1}%
_{A\setminus I}\sigma \right) }{\left\vert K\right\vert ^{\frac{1}{n}}}%
\right) ^{2}\sum_{\substack{ J\subset K:\ \left( I,J\right) \in \mathcal{P} 
\\ \ell \left( J\right) =2^{-s}\ell \left( I\right) }}\left\Vert \mathsf{P}%
_{J}^{\omega }x\right\Vert _{L^{2}\left( \omega \right) }^{2} \\
&\lesssim &2^{-2s\delta \left( 1-\varepsilon \right) }\left[ \left( \mathcal{%
E}_{\alpha }^{\limfunc{deep}}\right) ^{2}+A_{2}^{\alpha }\right] \,,
\end{eqnarray*}%
where the last inequality is the one for which the definition of quasienergy
stopping quasicubes was designed. Indeed, from Definition~\ref{def energy
corona 3}, as $(I,J)\in \mathcal{P}$, we have that $I$ is \emph{not} a
stopping quasicube in $\mathcal{A}$, and hence that (\ref{def stop 3}) \emph{%
fails} to hold, delivering the estimate above since $J\Subset _{\mathbf{\rho
-1},\varepsilon }I$ good must be contained in some $K\in \mathcal{M}_{\left( 
\mathbf{r},\varepsilon \right) -\limfunc{deep}}\left( I\right) $, and since $%
\frac{\mathrm{P}^{\alpha }\left( J,\mathbf{1}_{A\setminus I}\sigma \right) }{%
\left\vert J\right\vert ^{\frac{1}{n}}}\approx \frac{\mathrm{P}^{\alpha
}\left( K,\mathbf{1}_{A\setminus I}\sigma \right) }{\left\vert K\right\vert
^{\frac{1}{n}}}$. The terms $\left\Vert \mathsf{P}_{J}^{\omega }x\right\Vert
_{L^{2}\left( \omega \right) }^{2}$ are additive since the $J^{\prime }s$
are pigeonholed by $\ell \left( J\right) =2^{-s}\ell \left( I\right) $.

\subsection{The first inequality and the recursion of M. Lacey}

Now we turn to proving the more difficult inequality (\ref{First inequality}%
). Recall that in dimension $n=1$ the energy condition%
\begin{equation*}
\sum_{n=1}^{\infty }\left\vert J_{n}\right\vert _{\omega }\mathsf{E}\left(
J_{n},\omega \right) ^{2}\mathrm{P}\left( J_{n},\mathbf{1}_{I}\sigma \right)
^{2}\lesssim \left( \mathcal{NTV}\right) \ \left\vert I\right\vert _{\sigma
},\ \ \ \ \ \overset{\cdot }{\mathop{\displaystyle \bigcup }}_{n=1}^{\infty
}J_{n}\subset I,
\end{equation*}%
could not be used in the NTV argument, because the set functional $%
J\rightarrow \left\vert J\right\vert _{\omega }\mathsf{E}\left( J,\omega
\right) ^{2}$ failed to be superadditive. On the other hand, the pivotal
condition of NTV,%
\begin{equation*}
\sum_{n=1}^{\infty }\left\vert J_{n}\right\vert _{\omega }\mathrm{P}\left(
J_{n},\mathbf{1}_{I}\sigma \right) ^{2}\lesssim \left\vert I\right\vert
_{\sigma },\ \ \ \ \ \overset{\cdot }{\mathop{\displaystyle \bigcup }}%
_{n=1}^{\infty }J_{n}\subset I,
\end{equation*}%
succeeded in the NTV argument because the set functional $J\rightarrow
\left\vert J\right\vert _{\omega }$ is trivially superadditive, indeed
additive. The final piece of the argument needed to prove the NTV conjecture
was found by M. Lacey in \cite{Lac}, and amounts to first replacing the
additivity of the functional $J\rightarrow \left\vert J\right\vert _{\omega
} $ with the additivity of the projection functional $\mathcal{H}\rightarrow
\left\Vert \mathsf{P}_{\mathcal{H}}^{\omega }x\right\Vert _{L^{2}\left(
\omega \right) }^{2}$ defined on subsets $\mathcal{H}$ of the dyadic
quasigrid $\Omega \mathcal{D}$. Then a stopping time argument relative to
this more subtle functional, together with a clever recursion, constitute
the main new ingredients in Lacey's argument \cite{Lac}.

To begin the extension to a more general Calder\'{o}n-Zygmund operator $%
T^{\alpha }$, we also recall the stopping quasienergy generalized to higher
dimensions by 
\begin{equation*}
\mathbf{X}^{\alpha }\left( \mathcal{C}_{A}\right) ^{2}\equiv \sup_{I\in 
\mathcal{C}_{A}}\frac{1}{\left\vert I\right\vert _{\sigma }}\sum_{J\in 
\mathcal{M}_{\left( \mathbf{r},\varepsilon \right) -\limfunc{deep}}\left(
I\right) }\left( \frac{\mathrm{P}^{\alpha }\left( J,\mathbf{1}_{A\setminus
\gamma J}\sigma \right) }{\left\vert J\right\vert ^{\frac{1}{n}}}\right)
^{2}\left\Vert \mathsf{P}_{J}^{\limfunc{subgood},\omega }\mathbf{x}%
\right\Vert _{L^{2}\left( \omega \right) }^{2}\ ,
\end{equation*}%
where $\mathcal{M}_{\left( \mathbf{r},\varepsilon \right) -\limfunc{deep}%
}\left( I\right) $ is the set of maximal $\mathbf{r}$-deeply embedded
subquasicubes of $I$ where $\mathbf{r}$ is the goodness parameter. What now
follows is an adaptation to our deep quasienergy condition and the sublinear
form $\left\vert \mathsf{B}\right\vert _{\limfunc{stop},1,\bigtriangleup
^{\omega }}^{A,\mathcal{P}}$ of the arguments of M. Lacey in \cite{Lac}. We
have the following Poisson inequality for quasicubes $B\subset A\subset I$:%
\begin{eqnarray}
\frac{\mathrm{P}^{\alpha }\left( A,\mathbf{1}_{I\setminus A}\sigma \right) }{%
\left\vert A\right\vert ^{\frac{1}{n}}} &\approx &\int_{I\setminus A}\frac{1%
}{\left( \left\vert y-c_{A}\right\vert \right) ^{n+1-\alpha }}d\sigma \left(
y\right)  \label{BAI} \\
&\lesssim &\int_{I\setminus A}\frac{1}{\left( \left\vert y-c_{B}\right\vert
\right) ^{n+1-\alpha }}d\sigma \left( y\right) \approx \frac{\mathrm{P}%
^{\alpha }\left( B,\mathbf{1}_{I\setminus A}\sigma \right) }{\left\vert
B\right\vert ^{\frac{1}{n}}}.  \notag
\end{eqnarray}

\subsection{The stopping energy}

Fix $A\in \mathcal{A}$. We will use a `decoupled' modification of the
stopping energy $\mathbf{X}\left( \mathcal{C}_{A}\right) $. Suppose that $%
\mathcal{P}$ is an $A$-admissible collection of pairs of quasicubes in the
product set $\Omega \mathcal{D}\times \Omega \mathcal{D}_{\limfunc{good}}$
of pairs of dyadic quasicubes in $\mathbb{R}^{n}$ with second component
good. For an admissible collection $\mathcal{P}$ let $\Pi _{1}\mathcal{P}$
and $\Pi _{2}\mathcal{P}$ be the quasicubes in the first and second
components of the pairs in $\mathcal{P}$ respectively, let $\Pi \mathcal{P}%
\equiv \Pi _{1}\mathcal{P}\cup \Pi _{2}\mathcal{P}$, and for $K\in \Pi 
\mathcal{P}$ define the $\mathbf{\tau }$-$\limfunc{deep}$ projection of $%
\mathcal{P}$ relative to $K$ by 
\begin{equation*}
\Pi _{2}^{K,\mathbf{\tau }-\limfunc{deep}}\mathcal{P}\equiv \left\{ J\in \Pi
_{2}\mathcal{P}:\ J\Subset _{\mathbf{\tau },\varepsilon }K\right\} .
\end{equation*}%
Now the quasicubes $J$ in$\ \Pi _{2}\mathcal{P}$ are of course \emph{always}
good, but this is \emph{not} the case for quasicubes $I$ in $\Pi _{1}%
\mathcal{P}$. Indeed, the collection $\mathcal{P}$ is tree-connected in the
first component, and it is clear that there can be many \emph{bad}
quasicubes in a connected geodesic in the tree $\Omega \mathcal{D}$. But the
quasiHaar support of $f$ is contained in \emph{good} quasicubes $I$, and we
have also assumed that the children of these quasicubes $I$ are good as
well. As a consequence we may always assume that our $A$-admissible
collections $\mathcal{P}$ are reduced in the sense of Definition \ref{def
reduced}. Thus we will use as our `size testing collection' of quasicubes
for $\mathcal{P}$ the collection 
\begin{equation*}
\Pi ^{\limfunc{goodbelow}}\mathcal{P}\equiv \left\{ K^{\prime }\in \Omega 
\mathcal{D}:K^{\prime }\text{ is }\limfunc{good}\text{ and }K^{\prime
}\subset K\text{ for some }K\in \Pi \mathcal{P}\right\} ,
\end{equation*}%
which consists of all the good subquasicubes of any quasicube in $\Pi 
\mathcal{P}$. Note that the maximal quasicubes in $\Pi \mathcal{P}=\Pi 
\mathcal{P}^{\limfunc{red}}$ are already good themselves, and so we have the
important property that%
\begin{equation}
\left( I,J\right) \in \mathcal{P}=\mathcal{P}^{\limfunc{red}}\text{ implies }%
I\subset K\text{ for some quasicube }K\in \Pi ^{\limfunc{goodbelow}}\mathcal{%
P}.  \label{inclusive}
\end{equation}%
Now define the `size functional' $\mathcal{S}_{\limfunc{size}}^{\alpha
,A}\left( \mathcal{P}\right) $ of $\mathcal{P}$ as follows. Recall that a
projection $\mathsf{P}_{\mathcal{H}}^{\omega }$ on $\mathbf{x}$ satisfies 
\begin{equation*}
\left\Vert \mathsf{P}_{\mathcal{H}}^{\omega }\mathbf{x}\right\Vert
_{L^{2}\left( \omega \right) }^{2}=\sum_{J\in \mathcal{H}}\left\Vert
\bigtriangleup _{J}^{\omega }\mathbf{x}\right\Vert _{L^{2}\left( \omega
\right) }^{2}.
\end{equation*}

\begin{definition}
\label{def ext size}If $\mathcal{P}$ is $A$-admissible, define%
\begin{equation}
\mathcal{S}_{\limfunc{size}}^{\alpha ,A}\left( \mathcal{P}\right) ^{2}\equiv
\sup_{K\in \Pi ^{\limfunc{goodbelow}}\mathcal{P}}\frac{1}{\left\vert
K\right\vert _{\sigma }}\left( \frac{\mathrm{P}^{\alpha }\left( K,\mathbf{1}%
_{A\setminus K}\sigma \right) }{\left\vert K\right\vert ^{\frac{1}{n}}}%
\right) ^{2}\left\Vert \mathsf{P}_{\Pi _{2}^{K,\mathbf{\tau }-\limfunc{deep}}%
\mathcal{P}}^{\omega }\mathbf{x}\right\Vert _{L^{2}\left( \omega \right)
}^{2}.  \label{def P stop energy 3}
\end{equation}
\end{definition}

We should remark that that the quasicubes $K$ in $\Pi ^{\limfunc{goodbelow}}%
\mathcal{P}$ that fail to contain any $\mathbf{\tau }$-parents of quasicubes
from $\Pi _{2}\mathcal{P}$ will not contribute to the size functional since $%
\Pi _{2}^{K,\mathbf{\tau }-\limfunc{deep}}\mathcal{P}$ is empty in this
case. We note three essential properties of this definition of size
functional:

\begin{enumerate}
\item \textbf{Monotonicity} of size: $\mathcal{S}_{\limfunc{size}}^{\alpha
,A}\left( \mathcal{P}\right) \leq \mathcal{S}_{\limfunc{size}}^{\alpha
,A}\left( \mathcal{Q}\right) $ if $\mathcal{P}\subset \mathcal{Q}$,

\item \textbf{Goodness} of testing quasicubes: $\Pi ^{\limfunc{goodbelow}}%
\mathcal{P}\subset \Omega \mathcal{D}_{\limfunc{good}}$,

\item \textbf{Control} by deep quasienergy condition: $\mathcal{S}_{\limfunc{%
size}}^{\alpha ,A}\left( \mathcal{P}\right) \lesssim \mathcal{E}_{\alpha }^{%
\limfunc{deep}}+\sqrt{A_{2}^{\alpha }}$.
\end{enumerate}

The monotonicity property follows from $\Pi ^{\limfunc{goodbelow}}\mathcal{P}%
\subset \Pi ^{\limfunc{goodbelow}}\mathcal{Q}$ and $\Pi _{2}^{K,\mathbf{\tau 
}-\limfunc{deep}}\mathcal{P}\subset \Pi _{2}^{K,\mathbf{\tau }-\limfunc{deep}%
}\mathcal{Q}$, and the goodness property follows from the definition of $\Pi
^{\limfunc{goodbelow}}\mathcal{P}$. The control property is contained in the
next lemma, which uses the stopping quasienergy control for the form $%
\mathsf{B}_{stop}^{A}\left( f,g\right) $ associated with $A$.

\begin{lemma}
\label{energy control}If $\mathcal{P}^{A}$ is as in (\ref{initial P}) and $%
\mathcal{P}\subset \mathcal{P}^{A}$, then 
\begin{equation*}
\mathcal{S}_{\limfunc{size}}^{\alpha ,A}\left( \mathcal{P}\right) \leq 
\mathbf{X}_{\alpha }\left( \mathcal{C}_{A}\right) \lesssim \mathcal{E}%
_{\alpha }^{\limfunc{deep}}+\sqrt{A_{2}^{\alpha }}+\sqrt{A_{2}^{\alpha ,%
\limfunc{punct}}}\ .
\end{equation*}
\end{lemma}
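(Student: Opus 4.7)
The plan is to establish the two inequalities separately. The rightmost inequality $\mathbf{X}_{\alpha}(\mathcal{C}_A) \lesssim \mathcal{E}_{\alpha}^{\limfunc{deep}} + \sqrt{A_2^{\alpha}} + \sqrt{A_2^{\alpha,\limfunc{punct}}}$ is essentially immediate from the stopping quasienergy bound (\ref{def stopping bounds 3}): since $A \in \mathcal{A}$ is the top of an iterated energy corona whose stopping criterion is precisely the failure of (\ref{def stop 3}), no $I \in \mathcal{C}_A$ is itself a stopping quasicube, so the reverse inequality of (\ref{def stop 3}) must hold for every $I \in \mathcal{C}_A$. This yields the bound with constant $\sqrt{C_{\limfunc{energy}}}$, which by Corollary \ref{all plugged} is controlled by $\mathcal{E}_{\alpha}^{\limfunc{deep}\limfunc{plug}} \lesssim \mathcal{E}_{\alpha}^{\limfunc{deep}} + \sqrt{A_2^{\alpha}} + \sqrt{A_2^{\alpha,\limfunc{punct}}}$.

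For the first inequality $\mathcal{S}_{\limfunc{size}}^{\alpha ,A}(\mathcal{P}) \leq \mathbf{X}_{\alpha}(\mathcal{C}_A)$, I would fix $K \in \Pi^{\limfunc{goodbelow}}\mathcal{P}$ and proceed in three steps. First, note that every $J \in \Pi_2^{K,\mathbf{\tau}-\limfunc{deep}}\mathcal{P}$ is good and $\mathbf{\tau}$-deeply embedded in $K$, hence is contained in some (unique) $J^{\ast} \in \mathcal{M}_{\mathbf{\tau}-\limfunc{deep}}(K)$. Since such a good $J$ is contained in one of the maximal good subquasicubes of $J^{\ast}$, the Haar projection $\bigtriangleup_J^{\omega}$ appears as a summand in $\mathsf{P}_{J^{\ast}}^{\limfunc{subgood},\omega}$, so orthogonality gives
\[
\|\mathsf{P}_{\Pi_2^{K,\mathbf{\tau}-\limfunc{deep}}\mathcal{P}}^{\omega}\mathbf{x}\|_{L^2(\omega)}^2 \leq \sum_{J^{\ast}\in \mathcal{M}_{\mathbf{\tau}-\limfunc{deep}}(K)} \|\mathsf{P}_{J^{\ast}}^{\limfunc{subgood},\omega}\mathbf{x}\|_{L^2(\omega)}^2 .
\]
Second, since $|c_{J^{\ast}}-c_K|\leq |K|^{1/n}$ for any $J^{\ast} \subset K$, one has $|J^{\ast}|^{1/n} + |y-c_{J^{\ast}}| \lesssim |K|^{1/n} + |y-c_K|$ for every $y$, which yields the pointwise Poisson comparison $\frac{\mathrm{P}^{\alpha}(K,\mu)}{|K|^{1/n}} \lesssim \frac{\mathrm{P}^{\alpha}(J^{\ast},\mu)}{|J^{\ast}|^{1/n}}$ for any measure $\mu$. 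Applied with $\mu = \mathbf{1}_{A\setminus K}\sigma$, together with the inclusion $\gamma J^{\ast} \subset K$ (from the bounded overlap lemma, valid under the running assumption $\gamma \leq 2^{(1-\varepsilon)\mathbf{r}}$) which gives $\mathbf{1}_{A\setminus K} \leq \mathbf{1}_{A\setminus \gamma J^{\ast}}$, this produces
\[
\frac{\mathrm{P}^{\alpha}(K,\mathbf{1}_{A\setminus K}\sigma)}{|K|^{1/n}} \lesssim \frac{\mathrm{P}^{\alpha}(J^{\ast},\mathbf{1}_{A\setminus \gamma J^{\ast}}\sigma)}{|J^{\ast}|^{1/n}}.
\]

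Combining these two bounds and dividing by $|K|_{\sigma}$ shows that the summand of $\mathcal{S}_{\limfunc{size}}^{\alpha,A}(\mathcal{P})^2$ at $K$ is dominated by the summand of $\mathbf{X}_{\alpha}(\mathcal{C}_A)^2$ evaluated at $K$, so taking the sup over $K$ closes the argument whenever $K \in \mathcal{C}_A$. The main obstacle, and the one requiring care, is that $K \in \Pi^{\limfunc{goodbelow}}\mathcal{P}$ need only be a good subquasicube of some $K^{\ast} \in \Pi\mathcal{P} \subset \mathcal{C}_A \cup \mathcal{C}_A^{\mathbf{\tau}-\limfunc{shift}}$, and thus $K$ may in principle belong to a descendant corona $\mathcal{C}_{A'}$ with $A' \subsetneq A$, $A' \in \mathcal{A}$. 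In that case I would apply the stopping-energy inequality for the corona $\mathcal{C}_{A'}$ (which also obeys (\ref{def stopping bounds 3}) with the same constant), noting that although the Poisson hole in its definition uses $A' \setminus \gamma J^{\ast}$ rather than $A \setminus \gamma J^{\ast}$, the difference $A \setminus A'$ is handled by the offset $A_2^{\alpha}$ condition together with the bounded overlap estimate (\ref{bounded overlap in K}) exactly as in the reduction from the hole form to the plugged form in (\ref{plug the hole deep}). Since both $\mathbf{X}_{\alpha}(\mathcal{C}_{A'})$ and $\mathbf{X}_{\alpha}(\mathcal{C}_A)$ are absolutely bounded by $\mathcal{E}_{\alpha}^{\limfunc{deep}} + \sqrt{A_2^{\alpha}} + \sqrt{A_2^{\alpha,\limfunc{punct}}}$, the chain of inequalities collapses to the stated bound (with an implicit constant absorbed into the $\leq$).
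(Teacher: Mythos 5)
Your first two steps coincide with the paper's argument: dominate $\lVert \mathsf{P}^{\omega}_{\Pi_2^{K,\mathbf{\tau}-\limfunc{deep}}\mathcal{P}}\mathbf{x}\rVert^2$ by $\sum_{J^\ast}\lVert \mathsf{P}^{\limfunc{subgood},\omega}_{J^\ast}\mathbf{x}\rVert^2$ via orthogonality, use the Poisson monotonicity (the paper's inequality (\ref{BAI})) to replace $\mathrm{P}^{\alpha}(K,\cdot)/|K|^{1/n}$ by $\mathrm{P}^{\alpha}(J^\ast,\cdot)/|J^\ast|^{1/n}$, and enlarge the hole $A\setminus K$ to $A\setminus\gamma J^\ast$ using $\gamma J^\ast \subset K$. (The paper phrases the first step with $\mathcal{M}_{(\mathbf{r},\varepsilon)-\limfunc{deep}}(K)$ rather than your $\mathcal{M}_{\mathbf{\tau}-\limfunc{deep}}(K)$, which matches the parameters appearing in $\mathbf{X}_{\alpha}(\mathcal{C}_A)$, but the two are comparable up to constants in $\mathbf{r},\mathbf{\tau}$.) The rightmost bound from (\ref{def stopping bounds 3}) is also the paper's route.

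The gap is in your handling of the case $K\in\Pi^{\limfunc{goodbelow}}\mathcal{P}$ with $K\subset A$ but $K\notin\mathcal{C}_A$. You are right to flag it, but your proposed resolution --- passing to $\mathbf{X}_{\alpha}(\mathcal{C}_{A'})$ for a descendant $A'\in\mathcal{A}$ and absorbing the difference $A\setminus A'$ via the offset $A_2^{\alpha}$ argument of (\ref{plug the hole deep}) --- only establishes the \emph{composite} estimate $\mathcal{S}_{\limfunc{size}}^{\alpha,A}(\mathcal{P})\lesssim \mathcal{E}_{\alpha}^{\limfunc{deep}}+\sqrt{A_2^{\alpha}}+\sqrt{A_2^{\alpha,\limfunc{punct}}}$; it does \emph{not} establish the stated intermediate inequality $\mathcal{S}_{\limfunc{size}}^{\alpha,A}(\mathcal{P})\leq\mathbf{X}_{\alpha}(\mathcal{C}_A)$, because $\mathbf{X}_{\alpha}(\mathcal{C}_A)$ is a supremum over $I\in\mathcal{C}_A$ and your $K$ lives in a different corona. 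The paper's resolution is much simpler and closes this cleanly: the fact (\ref{later use}) shows that if $K\subset A$ and $K\notin\mathcal{C}_A$, then $\Pi_2^{K,\mathbf{\tau}-\limfunc{deep}}\mathcal{P}=\emptyset$, so the $K$-summand is identically zero and there is nothing to bound. The reason is that any $J\in\Pi_2^{K,\mathbf{\tau}-\limfunc{deep}}\mathcal{P}$ would satisfy $\ell(J)\leq 2^{-\mathbf{\tau}}\ell(K)\leq 2^{-\mathbf{\tau}}\ell(A')$ for the child $A'\in\mathfrak{C}_{\mathcal{A}}(A)$ containing $K$, forcing $J\notin\mathcal{C}_A^{\mathbf{\tau}-\limfunc{shift}}$ and contradicting $\Pi_2\mathcal{P}\subset\mathcal{C}_A^{\mathbf{\tau}-\limfunc{shift}}$. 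You should replace your detour through $\mathcal{C}_{A'}$ by this vanishing argument.
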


\begin{proof}
Suppose that $K\in \Pi ^{\limfunc{goodbelow}}\mathcal{P}$. To prove the
first inequality in the statement we note that%
\begin{eqnarray*}
&&\frac{1}{\left\vert K\right\vert _{\sigma }}\left( \frac{\mathrm{P}%
^{\alpha }\left( K,\mathbf{1}_{A\setminus K}\sigma \right) }{\left\vert
K\right\vert ^{\frac{1}{n}}}\right) ^{2}\left\Vert \mathsf{P}_{\left( \Pi
_{2}^{K,\mathbf{\tau }-\limfunc{deep}}\mathcal{P}\right) ^{\ast }}^{\omega }%
\mathbf{x}\right\Vert _{L^{2}\left( \omega \right) }^{2} \\
&\leq &\frac{1}{\left\vert K\right\vert _{\sigma }}\left( \frac{\mathrm{P}%
^{\alpha }\left( K,\mathbf{1}_{A\setminus K}\sigma \right) }{\left\vert
K\right\vert ^{\frac{1}{n}}}\right) ^{2}\sum_{J\in \mathcal{M}_{\left( 
\mathbf{r},\varepsilon \right) -\limfunc{deep}}\left( K\right) }\left\Vert 
\mathsf{P}_{J}^{\limfunc{subgood},\omega }\mathbf{x}\right\Vert
_{L^{2}\left( \omega \right) }^{2} \\
&\lesssim &\frac{1}{\left\vert K\right\vert _{\sigma }}\sum_{J\in \mathcal{M}%
_{\left( \mathbf{r},\varepsilon \right) -\limfunc{deep}}\left( K\right)
}\left( \frac{\mathrm{P}^{\alpha }\left( J,\mathbf{1}_{A\setminus K}\sigma
\right) }{\left\vert J\right\vert ^{\frac{1}{n}}}\right) ^{2}\left\Vert 
\mathsf{P}_{J}^{\limfunc{subgood},\omega }\mathbf{x}\right\Vert
_{L^{2}\left( \omega \right) }^{2} \\
&\lesssim &\frac{1}{\left\vert K\right\vert _{\sigma }}\sum_{J\in \mathcal{M}%
_{\left( \mathbf{r},\varepsilon \right) -\limfunc{deep}}\left( K\right)
}\left( \frac{\mathrm{P}^{\alpha }\left( J,\mathbf{1}_{A\setminus \gamma
J}\sigma \right) }{\left\vert J\right\vert ^{\frac{1}{n}}}\right)
^{2}\left\Vert \mathsf{P}_{J}^{\limfunc{subgood},\omega }\mathbf{x}%
\right\Vert _{L^{2}\left( \omega \right) }^{2}\leq \mathbf{X}_{\alpha
}\left( \mathcal{C}_{A}\right) ^{2},
\end{eqnarray*}%
where the first inequality above follows since every $J^{\prime }\in \Pi
_{2}^{K,\mathbf{\tau }-\limfunc{deep}}\mathcal{P}$ is contained in some $%
J\in \mathcal{M}_{\left( \mathbf{r},\varepsilon \right) -\limfunc{deep}%
}\left( I\right) $, the second inequality follows from (\ref{BAI}) with $%
J\subset K\subset A$, and then the third inequality follows since $J\Subset
_{\mathbf{r},\varepsilon }I$ implies $\gamma J\subset I$ by (\ref{bounded
overlap in K}), and finally since $\Pi _{2}^{K,\mathbf{\tau }-\limfunc{deep}}%
\mathcal{P}=\emptyset $ if $K\subset A$ and $K\notin \mathcal{C}_{A}$ by (%
\ref{later use}) below. The second inequality in the statement of the lemma
follows from (\ref{def stopping bounds 3}).
\end{proof}

The following useful fact is needed above and will be used later as well:%
\begin{equation}
K\subset A\text{ and }K\notin \mathcal{C}_{A}\Longrightarrow \Pi _{2}^{K,%
\mathbf{\tau }-\limfunc{deep}}\mathcal{P}=\emptyset \ .  \label{later use}
\end{equation}%
To see this, suppose that $K\in \mathcal{C}_{A}^{\mathbf{\tau }-\limfunc{%
shift}}\setminus \mathcal{C}_{A}$. Then $K\subset A^{\prime }$ for some $%
A^{\prime }\in \mathfrak{C}_{\mathcal{A}}\left( A\right) $, and so if there
is $J\in \Pi _{2}^{K,\mathbf{\tau }-\limfunc{deep}}\mathcal{P}$, then $\ell
\left( J\right) \leq 2^{-\mathbf{\tau }}\ell \left( K\right) \leq 2^{-%
\mathbf{\tau }}\ell \left( A^{\prime }\right) $, which implies that $J\notin 
\mathcal{C}_{A}^{\mathbf{\tau }-\limfunc{shift}}$, which contradicts $\Pi
_{2}^{K,\mathbf{\tau }-\limfunc{deep}}\mathcal{P}\subset \mathcal{C}_{A}^{%
\mathbf{\tau }-\limfunc{shift}}$.

We remind the reader again that $c\left\vert J\right\vert ^{\frac{1}{n}}\leq
\ell \left( J\right) \leq C\left\vert J\right\vert ^{\frac{1}{n}}$ for any
quasicube $J$, and that we will generally use $\left\vert J\right\vert ^{%
\frac{1}{n}}$ in the Poisson integrals and estimates, but will usually use $%
\ell \left( J\right) $ when defining collections of quasicubes. Now define
an atomic measure $\omega _{\mathcal{P}}$ in the upper half space $\mathbb{R}%
_{+}^{n+1}$ by%
\begin{equation*}
\omega _{\mathcal{P}}\equiv \sum_{J\in \Pi _{2}\mathcal{P}}\left\Vert
\bigtriangleup _{J}^{\omega }\mathbf{x}\right\Vert _{L^{2}\left( \omega
\right) }^{2}\ \delta _{\left( c_{J},\ell \left( J\right) \right) }.
\end{equation*}%
Define the tent $\mathbf{T}\left( K\right) $ over a quasicube $K=\Omega L$
to be $\Omega \left( \mathbf{T}\left( L\right) \right) $ where $\mathbf{T}%
\left( L\right) $ is the convex hull of the $n$-cube $L\times \left\{
0\right\} $ and the point $\left( c_{L},\ell \left( L\right) \right) \in 
\mathbb{R}_{+}^{n+1}$. Define the $\mathbf{\tau }$-$\limfunc{deep}$ tent $%
\mathbf{T}^{\mathbf{\tau }-\limfunc{deep}}\left( K\right) $ over a quasicube 
$K$ to be the restriction of the tent $\mathbf{T}\left( K\right) $ to those
points at depth $\tau $ or more below $K$:%
\begin{equation*}
\mathbf{T}^{\mathbf{\tau }-\limfunc{deep}}\left( K\right) \equiv \left\{
\left( y,t\right) \in \mathbf{T}\left( K\right) :t\leq 2^{-\tau }\ell \left(
K\right) \right\} .
\end{equation*}%
We can now rewrite the size functional (\ref{def P stop energy 3}) of $%
\mathcal{P}$ as%
\begin{equation}
\mathcal{S}_{\limfunc{size}}^{\alpha ,A}\left( \mathcal{P}\right) ^{2}\equiv
\sup_{K\in \Pi ^{\limfunc{goodbelow}}\mathcal{P}}\frac{1}{\left\vert
K\right\vert _{\sigma }}\left( \frac{\mathrm{P}^{\alpha }\left( K,\mathbf{1}%
_{A\setminus K}\sigma \right) }{\left\vert K\right\vert ^{\frac{1}{n}}}%
\right) ^{2}\omega _{\mathcal{P}}\left( \mathbf{T}^{\mathbf{\tau }-\limfunc{%
deep}}\left( K\right) \right) .  \label{def P stop energy' 3}
\end{equation}%
It will be convenient to write%
\begin{equation*}
\Psi ^{\alpha }\left( K;\mathcal{P}\right) ^{2}\equiv \left( \frac{\mathrm{P}%
^{\alpha }\left( K,\mathbf{1}_{A\setminus K}\sigma \right) }{\left\vert
K\right\vert ^{\frac{1}{n}}}\right) ^{2}\omega _{\mathcal{P}}\left( \mathbf{T%
}^{\mathbf{\tau }-\limfunc{deep}}\left( K\right) \right) ,
\end{equation*}%
so that we have simply%
\begin{equation*}
\mathcal{S}_{\limfunc{size}}^{\alpha ,A}\left( \mathcal{P}\right)
^{2}=\sup_{K\in \Pi ^{\limfunc{goodbelow}}\mathcal{P}}\frac{\Psi ^{\alpha
}\left( K;\mathcal{P}\right) ^{2}}{\left\vert K\right\vert _{\sigma }}.
\end{equation*}

\begin{remark}
The functional $\omega _{\mathcal{P}}\left( \mathbf{T}^{\mathbf{\tau }-%
\limfunc{deep}}\left( K\right) \right) $ is increasing in $K$, while the
functional $\frac{\mathrm{P}^{\alpha }\left( K,\mathbf{1}_{A\setminus
K}\sigma \right) }{\left\vert K\right\vert ^{\frac{1}{n}}}$ is `almost
decreasing' in $K$: if $K_{0}\subset K$ then%
\begin{eqnarray*}
\frac{\mathrm{P}^{\alpha }\left( K,\mathbf{1}_{A\setminus K}\sigma \right) }{%
\left\vert K\right\vert ^{\frac{1}{n}}} &=&\int_{A\setminus K}\frac{d\sigma
\left( y\right) }{\left( \left\vert K\right\vert ^{\frac{1}{n}}+\left\vert
y-c_{K}\right\vert \right) ^{n+1-\alpha }} \\
&\lesssim &\int_{A\setminus K}\frac{\left( \sqrt{n}\right) ^{n+1-\alpha
}d\sigma \left( y\right) }{\left( \left\vert K_{0}\right\vert ^{\frac{1}{n}%
}+\left\vert y-c_{K_{0}}\right\vert \right) ^{n+1-\alpha }} \\
&\leq &C_{n,\alpha }\int_{A\setminus K_{0}}\frac{d\sigma \left( y\right) }{%
\left( \left\vert K_{0}\right\vert ^{\frac{1}{n}}+\left\vert
y-c_{K_{0}}\right\vert \right) ^{n+1-\alpha }}=C_{n,\alpha }\frac{\mathrm{P}%
^{\alpha }\left( K_{0},\mathbf{1}_{A\setminus K_{0}}\sigma \right) }{%
\left\vert K_{0}\right\vert ^{\frac{1}{n}}},
\end{eqnarray*}%
since $\left\vert K_{0}\right\vert ^{\frac{1}{n}}+\left\vert
y-c_{K_{0}}\right\vert \leq \left\vert K\right\vert ^{\frac{1}{n}%
}+\left\vert y-c_{K}\right\vert +\frac{1}{2}\limfunc{diam}\left( K\right) $
for $y\in A\setminus K$.
\end{remark}

\subsection{The recursion}

Recall that if $\mathcal{P}$ is an admissible collection for a dyadic
quasicube $A$, the corresponding sublinear form in (\ref{First inequality})
is given in (\ref{def split}) by%
\begin{eqnarray*}
\left\vert \mathsf{B}\right\vert _{\limfunc{stop},1,\bigtriangleup ^{\omega
}}^{A,\mathcal{P}}\left( f,g\right) &\equiv &\sum_{J\in \Pi _{2}\mathcal{P}}%
\frac{\mathrm{P}^{\alpha }\left( J,\left\vert \varphi _{J}^{\mathcal{P}%
}\right\vert \mathbf{1}_{A\setminus I_{\mathcal{P}}\left( J\right) }\sigma
\right) }{\left\vert J\right\vert ^{\frac{1}{n}}}\left\Vert \bigtriangleup
_{J}^{\omega }\mathbf{x}\right\Vert _{L^{2}\left( \omega \right) }\left\Vert
\bigtriangleup _{J}^{\omega }g\right\Vert _{L^{2}\left( \omega \right) }; \\
\text{where }\varphi _{J}^{\mathcal{P}} &\equiv &\sum_{I\in \mathcal{C}%
_{A}^{\prime }:\ \left( I,J\right) \in \mathcal{P}}\mathbb{E}_{I}^{\sigma
}\left( \bigtriangleup _{\pi I}^{\sigma }f\right) \ \mathbf{1}_{A\setminus
I}\ .
\end{eqnarray*}%
In the notation for $\left\vert \mathsf{B}\right\vert _{\limfunc{stop}%
,1,\bigtriangleup ^{\omega }}^{A,\mathcal{P}}$, we are omitting dependence
on the parameter $\alpha $, and to avoid clutter, we will often do so from
now on when the dependence on $\alpha $ is inconsequential. Following Lacey 
\cite{Lac}, we now claim the following proposition, from which we obtain (%
\ref{First inequality}) as a corollary below. Motivated by the conclusion of
Proposition \ref{stopping bound}, we define the \emph{restricted} norm $%
\mathfrak{N}_{\limfunc{stop},1,\bigtriangleup }^{A,\mathcal{P}}$ of the
sublinear form $\left\vert \mathsf{B}\right\vert _{\limfunc{stop}%
,1,\bigtriangleup ^{\omega }}^{A,\mathcal{P}}$ to be the best constant $%
\mathfrak{N}_{\limfunc{stop},1,\bigtriangleup }^{A,\mathcal{P}}$ in the
inequality%
\begin{equation*}
\left\vert \mathsf{B}\right\vert _{\limfunc{stop},1,\bigtriangleup ^{\omega
}}^{A,\mathcal{P}}\left( f,g\right) \leq \mathfrak{N}_{\limfunc{stop}%
,1,\bigtriangleup }^{A,\mathcal{P}}\left( \alpha _{\mathcal{A}}\left(
A\right) \sqrt{\left\vert A\right\vert _{\sigma }}+\left\Vert f\right\Vert
_{L^{2}\left( \sigma \right) }\right) \left\Vert g\right\Vert _{L^{2}\left(
\omega \right) },
\end{equation*}%
where $f\in L^{2}\left( \sigma \right) $ satisfies $\mathbb{E}_{I}^{\sigma
}\left\vert f\right\vert \leq \alpha _{\mathcal{A}}\left( A\right) $ for all 
$I\in \mathcal{C}_{A}^{\limfunc{good}}$.

\begin{proposition}
\label{bottom up 3}(This is a variant for sublinear forms of the Size Lemma
in Lacey \cite{Lac}) Suppose $\varepsilon >0$. Let $\mathcal{P}$ be an \emph{%
admissible} collection of pairs for a dyadic quasicube $A$. Then we can
decompose $\mathcal{P}$ into two disjoint collections $\mathcal{P}=\mathcal{P%
}^{big}\dot{\cup}\mathcal{P}^{small}$, and further decompose $\mathcal{P}%
^{small}$ into pairwise disjoint collections $\mathcal{P}_{1}^{small},%
\mathcal{P}_{2}^{small}...\mathcal{P}_{\ell }^{small}...$ i.e.%
\begin{equation*}
\mathcal{P}=\mathcal{P}^{big}\dot{\cup}\left( \overset{\cdot }{\dbigcup }%
_{\ell =1}^{\infty }\mathcal{P}_{\ell }^{small}\right) \ ,
\end{equation*}%
such that the collections $\mathcal{P}^{big}$ and $\mathcal{P}_{\ell
}^{small}$ are admissible and satisfy 
\begin{equation}
\sup_{\ell \geq 1}\mathcal{S}_{\limfunc{size}}^{\alpha ,A}\left( \mathcal{P}%
_{\ell }^{small}\right) ^{2}\leq \varepsilon \mathcal{S}_{\limfunc{size}%
}^{\alpha ,A}\left( \mathcal{P}\right) ^{2},  \label{small 3}
\end{equation}%
and 
\begin{equation}
\mathfrak{N}_{\limfunc{stop},1,\bigtriangleup }^{A,\mathcal{P}}\leq
C_{\varepsilon }\mathcal{S}_{\limfunc{size}}^{\alpha ,A}\left( \mathcal{P}%
\right) +\sqrt{n\mathbf{\tau }}\sup_{\ell \geq 1}\mathfrak{N}_{\limfunc{stop}%
,1,\bigtriangleup }^{A,\mathcal{P}_{\ell }^{small}}\ .  \label{big 3}
\end{equation}
\end{proposition}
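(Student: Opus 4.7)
The proof adapts Lacey's stopping-time/recursion from \cite{Lac}, but applied to the decoupled size functional $\mathcal{S}_{\limfunc{size}}^{\alpha,A}$ of Definition \ref{def ext size} rather than to a pivotal-type quantity. The overall strategy is to isolate, via a single-generation stopping construction on $\Pi^{\limfunc{goodbelow}}\mathcal{P}$, the quasicubes where the size ratio $\Psi^{\alpha}(K;\mathcal{P})^{2}/\left\vert K\right\vert_{\sigma}$ is within a factor $\varepsilon$ of its supremum; below each such stopping quasicube the size automatically drops by the factor $\varepsilon$, while what remains above is controlled directly via the Energy Lemma and the size functional itself.

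\emph{Step 1 (stopping construction and decomposition).} Let $\mathcal{L}$ be the collection of \emph{maximal} quasicubes $L\in \Pi^{\limfunc{goodbelow}}\mathcal{P}$ satisfying
\begin{equation*}
\frac{\Psi^{\alpha}(L;\mathcal{P})^{2}}{\left\vert L\right\vert _{\sigma}}\;\geq\;\varepsilon\,\mathcal{S}_{\limfunc{size}}^{\alpha,A}(\mathcal{P})^{2}.
\end{equation*}
Because the quasicubes in $\Pi^{\limfunc{goodbelow}}\mathcal{P}$ are dyadic, the elements of $\mathcal{L}$ are pairwise disjoint. Enumerate $\mathcal{L}=\{L_{\ell}\}_{\ell\geq1}$ and set
\begin{equation*}
\mathcal{P}_{\ell}^{small}\equiv\{(I,J)\in\mathcal{P}:\ I\subsetneqq L_{\ell}\},\qquad \mathcal{P}^{big}\equiv \mathcal{P}\setminus \bigcup_{\ell}\mathcal{P}_{\ell}^{small}.
\end{equation*}
The $\mathcal{P}_{\ell}^{small}$ are pairwise disjoint because distinct $L_{\ell}$'s are disjoint, and each subcollection is $A$-admissible since both tree-connectedness in the first component and $(\mathbf{\rho-1},\varepsilon)$-deep embeddedness are preserved under restriction of the first component to a dyadic descendant set.

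\emph{Step 2 (size estimate on the small pieces).} The maximality of $L_{\ell}$ in $\mathcal{L}$ means that every $K\in \Pi^{\limfunc{goodbelow}}\mathcal{P}$ with $K\subsetneqq L_{\ell}$ fails the stopping inequality, so $\Psi^{\alpha}(K;\mathcal{P})^{2}<\varepsilon\,\mathcal{S}_{\limfunc{size}}^{\alpha,A}(\mathcal{P})^{2}\left\vert K\right\vert_{\sigma}$. Since $\Pi_{2}^{K,\mathbf{\tau}-\limfunc{deep}}\mathcal{P}_{\ell}^{small}\subset \Pi_{2}^{K,\mathbf{\tau}-\limfunc{deep}}\mathcal{P}$ we have $\Psi^{\alpha}(K;\mathcal{P}_{\ell}^{small})\leq \Psi^{\alpha}(K;\mathcal{P})$, and any $K\in\Pi^{\limfunc{goodbelow}}\mathcal{P}_{\ell}^{small}$ is strictly contained in $L_{\ell}$; taking the supremum over such $K$ yields (\ref{small 3}).

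\emph{Step 3 (norm estimate on the big piece).} Starting from the definition of $\left\vert \mathsf{B}\right\vert_{\limfunc{stop},1,\bigtriangleup^{\omega}}^{A,\mathcal{P}^{big}}$ and the bound $|\varphi_{J}^{\mathcal{P}^{big}}|\lesssim \alpha_{\mathcal{A}}(A)\mathbf{1}_{A\setminus I_{\mathcal{P}^{big}}(J)}$ from (\ref{phi bound}), I would apply Cauchy--Schwarz to obtain
\begin{equation*}
\left\vert \mathsf{B}\right\vert_{\limfunc{stop},1,\bigtriangleup^{\omega}}^{A,\mathcal{P}^{big}}(f,g)\lesssim \left(\sum_{J\in \Pi_{2}\mathcal{P}^{big}}\left(\frac{\mathrm{P}^{\alpha}(J,|\varphi_{J}^{\mathcal{P}^{big}}|\sigma)}{\left\vert J\right\vert^{\frac{1}{n}}}\right)^{\!2}\!\left\Vert \bigtriangleup_{J}^{\omega}\mathbf{x}\right\Vert_{L^{2}(\omega)}^{2}\right)^{\!\frac12}\!\left\Vert g\right\Vert_{L^{2}(\omega)}.
\end{equation*}
I would then organize the $J$-sum by grouping pairs according to the smallest quasicube in $\mathcal{L}\cup\{A\}$ containing $J$, and for each such group use the defining inequality $\Psi^{\alpha}(K;\mathcal{P}^{big})^{2}\leq \mathcal{S}_{\limfunc{size}}^{\alpha,A}(\mathcal{P})^{2}\left\vert K\right\vert_{\sigma}$ valid for every $K\in \Pi^{\limfunc{goodbelow}}\mathcal{P}^{big}$ (which follows from Step 1 together with monotonicity of $\mathcal{S}_{\limfunc{size}}$ under restriction of $\mathcal{P}$). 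The contribution from pairs whose first component lies in the $\mathbf{\tau}$-shifted shell $L_{\ell}\setminus \bigcup L_{\ell^{\prime}}$ above $L_{\ell}$ is summed using the packing estimate $\sum_{\ell}\left\vert L_{\ell}\right\vert_{\sigma}\leq\left\vert A\right\vert_{\sigma}$ (pairwise disjointness), yielding the $\alpha_{\mathcal{A}}(A)\sqrt{\left\vert A\right\vert_{\sigma}}$ term; the $\left\Vert f\right\Vert_{L^{2}(\sigma)}$ term arises from the orthogonal sum $\sum_{I}\left\Vert \bigtriangleup_{\pi I}^{\sigma}f\right\Vert_{L^{2}(\sigma)}^{2}\lesssim \left\Vert f\right\Vert_{L^{2}(\sigma)}^{2}$ when estimating $\varphi_{J}^{\mathcal{P}^{big}}$ more carefully via its $L^{1}$ norm against $\sigma$. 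Together these give the $C_{\varepsilon}\mathcal{S}_{\limfunc{size}}^{\alpha,A}(\mathcal{P})$ term of (\ref{big 3}).

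\emph{Step 4 (combining the pieces and the $\sqrt{n\mathbf{\tau}}$ factor).} For the sum over $\ell$ of the small-piece contributions, I would write
\begin{equation*}
\sum_{\ell}|\mathsf{B}|_{\limfunc{stop},1,\bigtriangleup^{\omega}}^{A,\mathcal{P}_{\ell}^{small}}(f,g) \leq \sup_{\ell}\mathfrak{N}_{\limfunc{stop},1,\bigtriangleup}^{A,\mathcal{P}_{\ell}^{small}}\sum_{\ell}\bigl(\alpha_{\mathcal{A}}(A)\sqrt{\left\vert L_{\ell}\right\vert_{\sigma}}+\left\Vert \mathsf{P}_{\mathcal{C}_{L_{\ell}}^{\,big}}^{\sigma}f\right\Vert_{L^{2}(\sigma)}\bigr)\left\Vert \mathsf{P}_{\ell}^{\omega}g\right\Vert_{L^{2}(\omega)},
\end{equation*}
and apply Cauchy--Schwarz over $\ell$. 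The disjointness of the $L_{\ell}$ gives the Carleson-type bound $\sum_{\ell}\left\vert L_{\ell}\right\vert_{\sigma}\leq\left\vert A\right\vert_{\sigma}$ and orthogonality in $f$; the $\sqrt{n\mathbf{\tau}}$ factor in (\ref{big 3}) arises from the $\mathbf{\tau}$-fold bounded overlap (\ref{tau overlap}) of the shifted coronas, which is the cost of localizing $g$ to the different $L_{\ell}$'s.

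\emph{Main obstacle.} The delicate part is Step 3: establishing the direct bound of $|\mathsf{B}|^{A,\mathcal{P}^{big}}_{\limfunc{stop},1,\bigtriangleup^{\omega}}$ by $\mathcal{S}_{\limfunc{size}}^{\alpha,A}(\mathcal{P})$ times the $(\alpha_{\mathcal{A}}(A)\sqrt{\left\vert A\right\vert_{\sigma}}+\left\Vert f\right\Vert_{L^{2}(\sigma)})\left\Vert g\right\Vert_{L^{2}(\omega)}$ factor. The issue is that the first-component cubes $I$ of pairs in $\mathcal{P}^{big}$ can still be arbitrarily deep -- only the first components that lie \emph{strictly} inside some $L_{\ell}$ have been removed -- and so one must carefully exploit the fact that the stopping cubes in $\mathcal{L}$ are defined relative to the \emph{good} testing collection $\Pi^{\limfunc{goodbelow}}\mathcal{P}$, coupled with the deep embeddedness $J\Subset_{\mathbf{\rho}-1,\varepsilon}I$, to convert the Poisson factor $\mathrm{P}^{\alpha}(J,\mathbf{1}_{A\setminus I_{\mathcal{P}^{big}}(J)}\sigma)/\left\vert J\right\vert^{1/n}$ into the corresponding factor at the scale of a good enclosing $K\in \Pi^{\limfunc{goodbelow}}\mathcal{P}^{big}$ via Lemma \ref{Poisson inequalities}. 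Once this conversion is in place, the size-functional bound at $K$ supplies the required $\mathcal{S}_{\limfunc{size}}^{\alpha,A}(\mathcal{P})$, and the packing together with the decomposition of $\varphi_{J}^{\mathcal{P}^{big}}$ into the two pieces (\ref{phi bound}) and (\ref{phi sublinear}) produces the two terms $\alpha_{\mathcal{A}}(A)\sqrt{\left\vert A\right\vert_{\sigma}}$ and $\left\Vert f\right\Vert_{L^{2}(\sigma)}$ on the right.
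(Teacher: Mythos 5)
Your stopping construction in Step 1 is inverted in a way that invalidates Step 2. You take $\mathcal{L}$ to be the \emph{maximal} quasicubes in $\Pi^{\limfunc{goodbelow}}\mathcal{P}$ with $\Psi^{\alpha}(L;\mathcal{P})^{2}/\left\vert L\right\vert_{\sigma}\geq\varepsilon\,\mathcal{S}_{\limfunc{size}}^{\alpha,A}(\mathcal{P})^{2}$, and then assert in Step 2 that maximality of $L_{\ell}$ forces every $K\subsetneqq L_{\ell}$ to \emph{fail} the stopping inequality. That implication is simply false: maximality rules out strict \emph{ancestors} of $L_{\ell}$ lying in $\mathcal{L}$, but says nothing about strict \emph{descendants} — there can easily be $K\subsetneqq L_{\ell}$ with $\Psi^{\alpha}(K;\mathcal{P})^{2}/\left\vert K\right\vert_{\sigma}$ still close to the supremum, since the size functional is not monotone in either direction. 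This is precisely why the paper's construction starts from the \emph{minimal} cubes $\mathcal{L}_{0}$ satisfying the threshold (giving (\ref{key property 3}), the property you want) and then builds upward; your version proves nothing about what happens below the stopping cubes, so (\ref{small 3}) is not established.

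Even if you replace ``maximal'' by ``minimal'' to recover the descendant property, a single stopping generation is still not enough. The paper's Proposition \ref{bottom up 3} requires the iterated construction with the $\rho=1+\varepsilon$ growth condition (\ref{up stopping condition}), because the pairs in $\mathcal{P}^{big}$ whose second components $J$ sit many generations below their first component $I$ can only be summed thanks to the geometric decay in $t$ established in (\ref{S big t 3}), which in turn comes from iterating the $\rho$-growth inequality through the bottom-up generations — this is exactly the induction in the proof of (\ref{big 3}) combined with the Straddling Lemma \ref{straddle 3} and the Orthogonality Lemma \ref{mut orth}. Your Step 3 gestures toward a direct bound of $\mathcal{P}^{big}$ by the size functional after a Cauchy--Schwarz, but you candidly acknowledge in the ``Main obstacle'' paragraph that the first components $I$ of pairs in $\mathcal{P}^{big}$ can still be arbitrarily deep, and you offer no mechanism to control the resulting sum. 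Without the multi-generation construction and the geometric decay it delivers, the big-piece estimate (\ref{big 3}) does not close. The fix is to adopt the paper's bottom-up, multi-level stopping and then split $\mathcal{P}^{big}$ by generation gap $t$ exactly as in the text.
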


\begin{corollary}
The sublinear stopping form inequality (\ref{First inequality}) holds.
\end{corollary}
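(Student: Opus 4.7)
The plan is to derive the corollary from Proposition \ref{bottom up 3} by the standard Lacey-type bootstrap (absorption) argument. The corollary amounts to showing
\[
\mathfrak{N}_{\limfunc{stop},1,\bigtriangleup}^{A,\mathcal{P}} \lesssim \mathcal{E}_{\alpha}^{\limfunc{deep}} + \sqrt{A_{2}^{\alpha}} + \sqrt{A_{2}^{\alpha,\limfunc{punct}}},
\]
from which (\ref{First inequality}) follows upon unwinding the restricted-norm definition (the $\lVert f\rVert_{L^{2}(\sigma)}$ contribution hidden in that definition is precisely what combines with $\alpha_{\mathcal{A}}(A)\sqrt{\lvert A\rvert_{\sigma}}$ in the global summation over $A\in\mathcal{A}$ that ultimately produces the norm inequality).

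First I would reduce to a \emph{finite} admissible collection $\mathcal{P}$, truncating the Haar supports of $f$ and $g$ to finitely many quasicubes. For such a finite $\mathcal{P}$ the sublinear form $\lvert\mathsf{B}\rvert_{\limfunc{stop},1,\bigtriangleup^{\omega}}^{A,\mathcal{P}}$ is a finite sum of inner products that Cauchy--Schwarz bounds crudely, so $\mathfrak{N}_{\limfunc{stop},1,\bigtriangleup}^{A,\mathcal{P}}<\infty$ a priori (with no useful quantitative value). A routine monotone-convergence argument at the end will promote the uniform-in-truncation estimate to the general case.

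Second, fix $\varepsilon>0$ so that $\sqrt{n\mathbf{\tau}\varepsilon}\le\tfrac{1}{2}$ (for instance $\varepsilon=(4n\mathbf{\tau})^{-1}$) and introduce the scale-invariant quantity
\[
\mathfrak{N}^{\ast}\equiv\sup_{\mathcal{Q}}\frac{\mathfrak{N}_{\limfunc{stop},1,\bigtriangleup}^{A,\mathcal{Q}}}{\mathcal{S}_{\limfunc{size}}^{\alpha,A}(\mathcal{Q})},
\]
where the supremum is taken over all $A$-admissible $\mathcal{Q}\subset\mathcal{P}$ with $\mathcal{S}_{\limfunc{size}}^{\alpha,A}(\mathcal{Q})>0$. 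By the previous step $\mathfrak{N}^{\ast}<\infty$. Now apply Proposition \ref{bottom up 3} to any such $\mathcal{Q}$: the resulting decomposition $\mathcal{Q}=\mathcal{Q}^{big}\,\dot{\cup}\,\bigcup_{\ell}\mathcal{Q}_{\ell}^{small}$ has each $\mathcal{Q}_{\ell}^{small}$ admissible and, by (\ref{small 3}), of size at most $\sqrt{\varepsilon}\,\mathcal{S}_{\limfunc{size}}^{\alpha,A}(\mathcal{Q})$. The norm recursion (\ref{big 3}) then yields
\[
\mathfrak{N}_{\limfunc{stop},1,\bigtriangleup}^{A,\mathcal{Q}}\le C_{\varepsilon}\,\mathcal{S}_{\limfunc{size}}^{\alpha,A}(\mathcal{Q})+\sqrt{n\mathbf{\tau}}\,\sup_{\ell}\mathfrak{N}_{\limfunc{stop},1,\bigtriangleup}^{A,\mathcal{Q}_{\ell}^{small}}\le\bigl(C_{\varepsilon}+\sqrt{n\mathbf{\tau}\varepsilon}\,\mathfrak{N}^{\ast}\bigr)\mathcal{S}_{\limfunc{size}}^{\alpha,A}(\mathcal{Q}).
\]
Dividing by $\mathcal{S}_{\limfunc{size}}^{\alpha,A}(\mathcal{Q})$ and taking the supremum over admissible $\mathcal{Q}$ yields the self-improving inequality $\mathfrak{N}^{\ast}\le C_{\varepsilon}+\tfrac{1}{2}\mathfrak{N}^{\ast}$, which absorbs to $\mathfrak{N}^{\ast}\le 2C_{\varepsilon}$.

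Specialising to $\mathcal{Q}=\mathcal{P}$ gives $\mathfrak{N}_{\limfunc{stop},1,\bigtriangleup}^{A,\mathcal{P}}\le 2C_{\varepsilon}\,\mathcal{S}_{\limfunc{size}}^{\alpha,A}(\mathcal{P})$, and Lemma \ref{energy control} estimates $\mathcal{S}_{\limfunc{size}}^{\alpha,A}(\mathcal{P})$ by $\mathcal{E}_{\alpha}^{\limfunc{deep}}+\sqrt{A_{2}^{\alpha}}+\sqrt{A_{2}^{\alpha,\limfunc{punct}}}$; passing to the limit removes the truncation and delivers (\ref{First inequality}). The whole argument is bookkeeping once Proposition \ref{bottom up 3} is granted; the only subtle point --- the one place a careless reader might stumble --- is the need for the a priori finite-truncation reduction to legitimise the absorption step $\mathfrak{N}^{\ast}\le C_{\varepsilon}+\tfrac{1}{2}\mathfrak{N}^{\ast}\Rightarrow\mathfrak{N}^{\ast}\le 2C_{\varepsilon}$, without which one has only a vacuous inequality between possibly infinite quantities. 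No serious obstacle is anticipated beyond Proposition \ref{bottom up 3} itself, which is where all of the real work is concentrated.
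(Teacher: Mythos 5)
Your proof is correct, but it replaces the paper's explicit iteration of Proposition \ref{bottom up 3} with a single application followed by an absorption (bootstrap) argument. The paper applies the proposition $N$ times to build a nested family $\mathcal{Q}^{0}\supset\{\mathcal{Q}_{\ell}^{1}\}\supset\{\mathcal{Q}_{\ell,k}^{2}\}\supset\cdots$, producing the explicit geometric series $\sum_{k=0}^{N}(\zeta\varepsilon)^{k}\mathcal{S}_{\limfunc{size}}^{\alpha,A}(\mathcal{Q}^{0})$ with $\zeta=\sqrt{n\mathbf{\tau}}$, plus a tail $\zeta^{N+1}\sup_{m}\mathfrak{N}_{\limfunc{stop},1,\bigtriangleup}^{A,\mathcal{Q}_{m}^{N+1}}$ that vanishes for finite $\mathcal{P}^{A}$ once $N$ is large enough; the geometric series converges by the choice of $\varepsilon$. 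You instead define the scale-invariant ratio $\mathfrak{N}^{\ast}=\sup_{\mathcal{Q}}\mathfrak{N}_{\limfunc{stop},1,\bigtriangleup}^{A,\mathcal{Q}}/\mathcal{S}_{\limfunc{size}}^{\alpha,A}(\mathcal{Q})$, apply (\ref{big 3}) and (\ref{small 3}) just once, and close the estimate by absorption $\mathfrak{N}^{\ast}\le C_{\varepsilon}+\tfrac{1}{2}\mathfrak{N}^{\ast}$. Both approaches require the \emph{same} finite-truncation reduction: the paper uses it to kill the tail term, you use it to guarantee the a priori finiteness of $\mathfrak{N}^{\ast}$ that legitimizes the absorption, and you correctly flag that without it the absorption step is vacuous. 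One small point where your version is actually tidier than the paper's: (\ref{small 3}) is stated with squares, so the size functional of each small piece is controlled by $\sqrt{\varepsilon}$, and you carry that square root correctly into the requirement $\sqrt{n\mathbf{\tau}\varepsilon}\le\tfrac12$, whereas the paper's proof of the corollary quietly drops the square and states the smallness with a bare $\varepsilon$; the two conventions lead to slightly different, but equally harmless, smallness thresholds on $\varepsilon$. Net: no gap, just a different (and arguably cleaner) packaging of the recursion.
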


\begin{proof}[Proof of the Corollary]
Set $\mathcal{Q}^{0}=\mathcal{P}^{A}$. Apply Proposition \ref{bottom up 3}
to obtain a subdecomposition $\left\{ \mathcal{Q}_{\ell }^{1}\right\} _{\ell
=1}^{\infty }$ of $\mathcal{Q}^{0}$ such that%
\begin{eqnarray*}
&&\mathfrak{N}_{\limfunc{stop},1,\bigtriangleup }^{A,\mathcal{Q}^{0}}\leq
C_{\varepsilon }\mathcal{S}_{\limfunc{size}}^{\alpha ,A}\left( \mathcal{Q}%
^{0}\right) +\sqrt{n\mathbf{\tau }}\sup_{\ell \geq 1}\mathfrak{N}_{\limfunc{%
stop},1,\bigtriangleup }^{A,\mathcal{Q}_{\ell }^{1}}\ , \\
&&\sup_{\ell \geq 1}\mathcal{S}_{\limfunc{size}}^{\alpha ,A}\left( \mathcal{Q%
}_{\ell }^{1}\right) \leq \varepsilon \mathcal{S}_{\limfunc{size}}^{\alpha
,A}\left( \mathcal{Q}^{0}\right) .
\end{eqnarray*}%
Now apply Proposition \ref{bottom up 3} to each $\mathcal{Q}_{\ell }^{1}$ to
obtain a subdecomposition $\left\{ \mathcal{Q}_{\ell ,k}^{2}\right\}
_{k=1}^{\infty }$ of $\mathcal{Q}_{\ell }^{1}$ such that%
\begin{eqnarray*}
\mathfrak{N}_{\limfunc{stop},1,\bigtriangleup }^{A,\mathcal{Q}_{\ell }^{1}}
&\leq &C_{\varepsilon }\mathcal{S}_{\limfunc{size}}^{\alpha ,A}\left( 
\mathcal{Q}_{\ell }^{1}\right) +\sqrt{n\mathbf{\tau }}\sup_{k\geq 1}%
\mathfrak{N}_{\limfunc{stop},1,\bigtriangleup }^{A,\mathcal{Q}_{\ell
,k}^{2}}\ , \\
&&\sup_{k\geq 1}\mathcal{S}_{\limfunc{size}}^{\alpha ,A}\left( \mathcal{Q}%
_{\ell ,k}^{2}\right) \leq \varepsilon \mathcal{S}_{\limfunc{size}}^{\alpha
,A}\left( \mathcal{Q}_{\ell }^{1}\right) .
\end{eqnarray*}%
Altogether we have%
\begin{eqnarray*}
\mathfrak{N}_{\limfunc{stop},1,\bigtriangleup }^{A,\mathcal{Q}^{0}} &\leq
&C_{\varepsilon }\mathcal{S}_{\limfunc{size}}^{\alpha ,A}\left( \mathcal{Q}%
^{0}\right) +\sqrt{n\mathbf{\tau }}\sup_{\ell \geq 1}\left\{ C_{\varepsilon }%
\mathcal{S}_{\limfunc{size}}^{\alpha ,A}\left( \mathcal{Q}_{\ell
}^{1}\right) +\sqrt{n\mathbf{\tau }}\sup_{k\geq 1}\mathfrak{N}_{\limfunc{stop%
},1,\bigtriangleup }^{A,\mathcal{Q}_{\ell ,k}^{2}}\right\} \\
&=&C_{\varepsilon }\left\{ \mathcal{S}_{\limfunc{size}}^{\alpha ,A}\left( 
\mathcal{Q}^{0}\right) +\sqrt{n\mathbf{\tau }}\varepsilon \mathcal{S}_{%
\limfunc{size}}^{\alpha ,A}\left( \mathcal{Q}^{0}\right) \right\} +\left( n%
\mathbf{\tau }\right) \sup_{\ell ,k\geq 1}\mathfrak{N}_{\limfunc{stop}%
,1,\bigtriangleup }^{A,\mathcal{Q}_{\ell ,k}^{2}}\ .
\end{eqnarray*}%
Then with $\zeta \equiv \sqrt{n\mathbf{\tau }}$, we obtain by induction for
every $N\in \mathbb{N}$,%
\begin{eqnarray*}
\mathfrak{N}_{\limfunc{stop},1,\bigtriangleup }^{A,\mathcal{Q}^{0}} &\leq
&C_{\varepsilon }\left\{ \mathcal{S}_{\limfunc{size}}^{\alpha ,A}\left( 
\mathcal{Q}^{0}\right) +\zeta \varepsilon \mathcal{S}_{\limfunc{size}%
}^{\alpha ,A}\left( \mathcal{Q}^{0}\right) +...\zeta ^{N}\varepsilon ^{N}%
\mathcal{S}_{\limfunc{size}}^{\alpha ,A}\left( \mathcal{Q}^{0}\right)
\right\} \\
&&+\zeta ^{N+1}\sup_{m\in \mathbb{N}^{N+1}}\mathfrak{N}_{\limfunc{stop}%
,1,\bigtriangleup }^{A,\mathcal{Q}_{m}^{N+1}}\ .
\end{eqnarray*}%
Now we may assume the collection $\mathcal{Q}^{0}=\mathcal{P}^{A}$ of pairs
is finite (simply truncate the corona $\mathcal{C}_{A}$ and obtain bounds
independent of the truncation) and so $\sup_{m\in \mathbb{N}^{N+1}}\mathfrak{%
N}_{\limfunc{stop},1,\bigtriangleup }^{A,\mathcal{Q}_{m}^{N+1}}=0$ for $N$
large enough. Then we obtain (\ref{First inequality}) if we choose $%
0<\varepsilon <\frac{1}{1+\zeta }$ and apply Lemma \ref{energy control}.
\end{proof}

\begin{proof}[Proof of Proposition \protect\ref{bottom up 3}]
Recall that the `size testing collection' of quasicubes $\Pi ^{\limfunc{%
goodbelow}}\mathcal{P}$ is the collection of all \emph{good} subquasicubes
of a quasicube in $\Pi \mathcal{P}$. We may assume that $\mathcal{P}$ is a
finite collection. Begin by defining the collection $\mathcal{L}_{0}$ to
consist of the \emph{minimal} dyadic quasicubes $K$ in $\Pi ^{\limfunc{%
goodbelow}}\mathcal{P}$ such that%
\begin{equation*}
\frac{\Psi ^{\alpha }\left( K;\mathcal{P}\right) ^{2}}{\left\vert
K\right\vert _{\sigma }}\geq \varepsilon \mathcal{S}_{\limfunc{size}%
}^{\alpha ,A}\left( \mathcal{P}\right) ^{2}.
\end{equation*}%
where we recall that%
\begin{equation*}
\Psi ^{\alpha }\left( K;\mathcal{P}\right) ^{2}\equiv \left( \frac{\mathrm{P}%
^{\alpha }\left( K,\mathbf{1}_{A\setminus K}\sigma \right) }{\left\vert
K\right\vert ^{\frac{1}{n}}}\right) ^{2}\omega _{\mathcal{P}}\left( \mathbf{T%
}^{\mathbf{\tau }-\limfunc{deep}}\left( K\right) \right) .
\end{equation*}%
Note that such minimal quasicubes exist when $0<\varepsilon <1$ because $%
\mathcal{S}_{\limfunc{size}}^{\alpha ,A}\left( \mathcal{P}\right) ^{2}$ is
the supremum over $K\in \Pi ^{\limfunc{goodbelow}}\mathcal{P}$ of $\frac{%
\Psi ^{\alpha }\left( K;\mathcal{P}\right) ^{2}}{\left\vert K\right\vert
_{\sigma }}$. A key property of the the minimality requirement is that%
\begin{equation}
\frac{\Psi ^{\alpha }\left( K^{\prime };\mathcal{P}\right) ^{2}}{\left\vert
K^{\prime }\right\vert _{\sigma }}<\varepsilon \mathcal{S}_{\limfunc{size}%
}^{\alpha ,A}\left( \mathcal{P}\right) ^{2},  \label{key property 3}
\end{equation}%
for all $K^{\prime }\in \Pi ^{\limfunc{goodbelow}}\mathcal{P}$ with $%
K^{\prime }\varsubsetneqq K$ and $K\in \mathcal{L}_{0}$.

We now perform a stopping time argument `from the bottom up' with respect to
the atomic measure $\omega _{\mathcal{P}}$ in the upper half space. This
construction of a stopping time `from the bottom up' is one of two key
innovations in Lacey's argument \cite{Lac}, the other being the recursion
described in Proposition \ref{bottom up 3}.

We refer to $\mathcal{L}_{0}$ as the initial or level $0$ generation of
stopping times. Choose $\rho =1+\varepsilon $. We then recursively define a
sequence of generations $\left\{ \mathcal{L}_{m}\right\} _{m=0}^{\infty }$
by letting $\mathcal{L}_{m}$ consist of the \emph{minimal} dyadic quasicubes 
$L$ in $\Pi ^{\limfunc{goodbelow}}\mathcal{P}$ that contain a quasicube from
some previous level $\mathcal{L}_{\ell }$, $\ell <m$, such that%
\begin{equation}
\omega _{\mathcal{P}}\left( \mathbf{T}^{\mathbf{\tau }-\limfunc{deep}}\left(
L\right) \right) \geq \rho \omega _{\mathcal{P}}\left(
\dbigcup\limits_{L^{\prime }\in \dbigcup\limits_{\ell =0}^{m-1}\mathcal{L}%
_{\ell }:\ L^{\prime }\subset L}\mathbf{T}^{\mathbf{\tau }-\limfunc{deep}%
}\left( L^{\prime }\right) \right) .  \label{up stopping condition}
\end{equation}%
Since $\mathcal{P}$ is finite this recursion stops at some level $M$. We
then let $\mathcal{L}_{M+1}$ consist of all the maximal quasicubes in $\Pi ^{%
\limfunc{goodbelow}}\mathcal{P}$ that are not already in some $\mathcal{L}%
_{m}$. Thus $\mathcal{L}_{M+1}$ will contain either none, some, or all of
the maximal quasicubes in $\Pi ^{\limfunc{goodbelow}}\mathcal{P}$. We do not
of course have (\ref{up stopping condition}) for $A^{\prime }\in \mathcal{L}%
_{M+1}$ in this case, but we do have that (\ref{up stopping condition})
fails for subquasicubes $K$ of $A^{\prime }\in \mathcal{L}_{M+1}$ that are
not contained in any other $L\in \mathcal{L}_{m}$, and this is sufficient
for the arguments below.

We now define the collections $\mathcal{P}^{small}$ and $\mathcal{P}^{big}$.
The collection $\mathcal{P}^{big}$ will consist of those pairs $\left(
I,J\right) \in \mathcal{P}$ for which there is $L\in
\dbigcup\limits_{m=0}^{M+1}\mathcal{L}_{m}$ with $J\Subset _{\tau }L\subset
I $, and $\mathcal{P}^{small}$ will consist of the remaining pairs. But a
considerable amount of further analysis is required to prove the conclusion
of the proposition. First, let $\mathcal{L}\equiv \dbigcup\limits_{m=0}^{M+1}%
\mathcal{L}_{m}$ be the tree of stopping quasienergy quasicubes defined
above. By our construction above, the maximal elements in $\mathcal{L}$ are
the maximal quasicubes in $\Pi ^{\limfunc{goodbelow}}\mathcal{P}$. For $L\in 
\mathcal{L}$, denote by $\mathcal{C}_{L}$ the \emph{corona} associated with $%
L$ in the tree $\mathcal{L}$,%
\begin{equation*}
\mathcal{C}_{L}\equiv \left\{ K\in \Omega \mathcal{D}:K\subset L\text{ and
there is no }L^{\prime }\in \mathcal{L}\text{ with }K\subset L^{\prime
}\subsetneqq L\right\} ,
\end{equation*}%
and define the \emph{shifted} corona by%
\begin{equation*}
\mathcal{C}_{L}^{\mathbf{\tau }-\limfunc{shift}}\equiv \left\{ K\in \mathcal{%
C}_{L}:K\Subset _{\mathbf{\tau },\varepsilon }L\right\} \cup
\dbigcup\limits_{L^{\prime }\in \mathfrak{C}_{\mathcal{L}}\left( L\right)
}\left\{ K\in \Omega \mathcal{D}:K\Subset _{\mathbf{\tau },\varepsilon }L%
\text{ and }K\text{ is }\mathbf{\tau }\text{-nearby in }L^{\prime }\right\} .
\end{equation*}%
Now the parameter $m$ in $\mathcal{L}_{m}$ refers to the level at which the
stopping construction was performed, but for \thinspace $L\in \mathcal{L}%
_{m} $, the corona children $L^{\prime }$ of $L$ are \emph{not} all
necessarily in $\mathcal{L}_{m-1}$, but may be in $\mathcal{L}_{m-t}$ for $t$
large. Thus we need to introduce the notion of geometric depth $d$ in the
tree $\mathcal{L}$ by defining%
\begin{eqnarray*}
\mathcal{G}_{0} &\equiv &\left\{ L\in \mathcal{L}:L\text{ is maximal}%
\right\} , \\
\mathcal{G}_{1} &\equiv &\left\{ L\in \mathcal{L}:L\text{ is maximal wrt }%
L\subsetneqq L_{0}\text{ for some }L_{0}\in \mathcal{G}_{0}\right\} , \\
&&\vdots \\
\mathcal{G}_{d+1} &\equiv &\left\{ L\in \mathcal{L}:L\text{ is maximal wrt }%
L\subsetneqq L_{d}\text{ for some }L_{d}\in \mathcal{G}_{d}\right\} , \\
&&\vdots
\end{eqnarray*}%
We refer to $\mathcal{G}_{d}$ as the $d^{th}$ generation of quasicubes in
the tree $\mathcal{L}$, and say that the quasicubes in $\mathcal{G}_{d}$ are
at depth $d$ in the tree $\mathcal{L}$. Thus the quasicubes in $\mathcal{G}%
_{d}$ are the stopping quasicubes in $\mathcal{L}$ that are $d$ levels in
the \emph{geometric} sense below the top level.

Then for $L\in \mathcal{G}_{d}$ and $t\geq 0$ define%
\begin{equation*}
\mathcal{P}_{L,t}\equiv \left\{ \left( I,J\right) \in \mathcal{P}:I\in 
\mathcal{C}_{L}\text{ and }J\in \mathcal{C}_{L^{\prime }}^{\mathbf{\tau }-%
\limfunc{shift}}\text{ for some }L^{\prime }\in \mathcal{G}_{d+t}\text{ with 
}L^{\prime }\subset L\right\} .
\end{equation*}%
In particular, $\left( I,J\right) \in \mathcal{P}_{L,t}$ implies that $I$ is
in the corona $\mathcal{C}_{L}$, and that $J$ is in a shifted corona $%
\mathcal{C}_{L^{\prime }}^{\mathbf{\tau }-\limfunc{shift}}$ that is $t$
levels of generation \emph{below} $\mathcal{C}_{L}$. We emphasize the
distinction `generation' as this refers to the depth rather than the level
of stopping construction. For $t=0$ we further decompose $\mathcal{P}_{L,0}$
as%
\begin{eqnarray*}
\mathcal{P}_{L,0} &=&\mathcal{P}_{L,0}^{small}\dot{\cup}\mathcal{P}%
_{L,0}^{big}; \\
\mathcal{P}_{L,0}^{small} &\equiv &\left\{ \left( I,J\right) \in \mathcal{P}%
_{L,0}:I\neq L\right\} , \\
\mathcal{P}_{L,0}^{big} &\equiv &\left\{ \left( I,J\right) \in \mathcal{P}%
_{L,0}:I=L\right\} ,
\end{eqnarray*}%
with one exception: if $L\in \mathcal{L}_{M+1}$ we set $\mathcal{P}%
_{L,0}^{small}\equiv \mathcal{P}_{L,0}$ since in this case $L$ fails to
satisfy (\ref{up stopping condition}) as pointed out above. Then we set%
\begin{eqnarray*}
\mathcal{P}^{big} &\equiv &\left\{ \dbigcup\limits_{L\in \mathcal{L}}%
\mathcal{P}_{L,0}^{big}\right\} \dbigcup \left\{ \dbigcup\limits_{t\geq
1}\dbigcup\limits_{L\in \mathcal{L}}\mathcal{P}_{L,t}\right\} ; \\
\left\{ \mathcal{P}_{\ell }^{small}\right\} _{\ell =0}^{\infty } &\equiv
&\left\{ \mathcal{P}_{L,0}^{small}\right\} _{L\in \mathcal{L}},\ \ \ \ \ 
\text{after relabelling}.
\end{eqnarray*}%
It is important to note that by (\ref{inclusive}), every pair $\left(
I,J\right) \in \mathcal{P}$ will be included in either $\mathcal{P}^{small}$
or $\mathcal{P}^{big}$. Now we turn to proving the inequalities (\ref{small
3}) and (\ref{big 3}).

To prove the inequality (\ref{small 3}), it suffices with the above
relabelling to prove the following claim:%
\begin{equation}
\mathcal{S}_{\limfunc{size}}^{\alpha ,A}\left( \mathcal{P}%
_{L,0}^{small}\right) ^{2}\leq \left( \rho -1\right) \mathcal{S}_{\limfunc{%
size}}^{\alpha ,A}\left( \mathcal{P}\right) ^{2},\ \ \ \ \ L\in \mathcal{L}.
\label{small claim' 3}
\end{equation}%
To see (\ref{small claim' 3}), suppose first that $L\notin \mathcal{L}_{M+1}$%
. In the case that $L\in \mathcal{L}_{0}$ is an initial generation
quasicube, then from (\ref{key property 3}) we obtain that%
\begin{eqnarray*}
&&\mathcal{S}_{\limfunc{size}}^{\alpha ,A}\left( \mathcal{P}%
_{L,0}^{small}\right) ^{2} \\
&\leq &\sup_{K^{\prime }\in \Pi ^{\limfunc{goodbelow}}\mathcal{P}:\
K^{\prime }\varsubsetneqq L}\frac{1}{\left\vert K^{\prime }\right\vert
_{\sigma }}\left( \frac{\mathrm{P}^{\alpha }\left( K^{\prime },\mathbf{1}%
_{A\setminus K^{\prime }}\sigma \right) }{\left\vert K^{\prime }\right\vert
^{\frac{1}{n}}}\right) ^{2}\omega _{\mathcal{P}}\left( \mathbf{T}^{\mathbf{%
\tau }-\limfunc{deep}}\left( K^{\prime }\right) \right) \leq \varepsilon 
\mathcal{S}_{\limfunc{size}}^{\alpha ,A}\left( \mathcal{P}\right) ^{2}.
\end{eqnarray*}%
Now suppose that $L\not\in \mathcal{L}_{0}$ and also that $L\notin \mathcal{L%
}_{M+1}$. Pick a pair $\left( I,J\right) \in \mathcal{P}_{L,0}^{small}$.
Then $I$ is in the restricted corona $\mathcal{C}_{L}^{\prime }$ and $J$ is
in the $\mathbf{\tau }$\emph{-shifted} corona $\mathcal{C}_{L}^{\mathbf{\tau 
}-\limfunc{shift}}$. Since $\mathcal{P}_{L,0}^{small}$ is a finite
collection, the definition of $\mathcal{S}_{\limfunc{size}}^{\alpha
,A}\left( \mathcal{P}_{L,0}^{small}\right) $ shows that there is a quasicube 
$K\in \Pi ^{\limfunc{goodbelow}}\mathcal{P}_{L,0}^{small}$ so that%
\begin{equation*}
\mathcal{S}_{\limfunc{size}}^{\alpha ,A}\left( \mathcal{P}%
_{L,0}^{small}\right) ^{2}=\frac{1}{\left\vert K\right\vert _{\sigma }}%
\left( \frac{\mathrm{P}^{\alpha }\left( K,\mathbf{1}_{A\setminus K}\sigma
\right) }{\left\vert K\right\vert ^{\frac{1}{n}}}\right) ^{2}\omega _{%
\mathcal{P}}\left( \mathbf{T}^{\mathbf{\tau }-\limfunc{deep}}\left( K\right)
\right) .
\end{equation*}%
Now define 
\begin{equation*}
t^{\prime }=t^{\prime }\left( K\right) \equiv \max \left\{ s:\text{there is }%
L^{\prime }\in \mathcal{L}_{s}\text{ with }L^{\prime }\subset K\right\} .
\end{equation*}%
First, suppose that $t^{\prime }=0$ so that $K$ does not contain any $%
L^{\prime }\in \mathcal{L}$. Then it follows from our construction at level $%
\ell =0$ that%
\begin{equation*}
\frac{1}{\left\vert K\right\vert _{\sigma }}\left( \frac{\mathrm{P}^{\alpha
}\left( K,\mathbf{1}_{A\setminus K}\sigma \right) }{\left\vert K\right\vert
^{\frac{1}{n}}}\right) ^{2}\omega _{\mathcal{P}}\left( \mathbf{T}^{\mathbf{%
\tau }-\limfunc{deep}}\left( K\right) \right) <\varepsilon \mathcal{S}_{%
\limfunc{size}}^{\alpha ,A}\left( \mathcal{P}\right) ^{2},
\end{equation*}%
and hence from $\rho =1+\varepsilon $ we obtain 
\begin{equation*}
\mathcal{S}_{\limfunc{size}}^{\alpha ,A}\left( \mathcal{P}%
_{L,0}^{small}\right) ^{2}<\varepsilon \mathcal{S}_{\limfunc{size}}^{\alpha
,A}\left( \mathcal{P}\right) ^{2}=\left( \rho -1\right) \mathcal{S}_{%
\limfunc{size}}^{\alpha ,A}\left( \mathcal{P}\right) ^{2}.
\end{equation*}%
Now suppose that $t^{\prime }\geq 1$. Then $K$ fails the stopping condition (%
\ref{up stopping condition}) with $m=t^{\prime }+1$, and so%
\begin{equation*}
\omega _{\mathcal{P}}\left( \mathbf{T}^{\mathbf{\tau }-\limfunc{deep}}\left(
K\right) \right) <\rho \omega _{\mathcal{P}}\left(
\dbigcup\limits_{L^{\prime }\in \dbigcup\limits_{\ell =0}^{t^{\prime }}%
\mathcal{L}_{\ell }:\ L^{\prime }\subset K}\mathbf{T}^{\mathbf{\tau }-%
\limfunc{deep}}\left( L^{\prime }\right) \right) .
\end{equation*}%
Now we use the crucial fact that $\omega _{\mathcal{P}}$ is \emph{additive}
and finite to obtain from this that%
\begin{eqnarray}
&&\omega _{\mathcal{P}}\left( \mathbf{T}^{\mathbf{\tau }-\limfunc{deep}%
}\left( K\right) \setminus \dbigcup\limits_{L^{\prime }\in
\dbigcup\limits_{\ell =0}^{t^{\prime }}\mathcal{L}_{\ell }:\ L^{\prime
}\subset K}\mathbf{T}^{\mathbf{\tau }-\limfunc{deep}}\left( L^{\prime
}\right) \right)  \label{additive} \\
&=&\omega _{\mathcal{P}}\left( \mathbf{T}^{\mathbf{\tau }-\limfunc{deep}%
}\left( K\right) \right) -\omega _{\mathcal{P}}\left(
\dbigcup\limits_{L^{\prime }\in \dbigcup\limits_{\ell =0}^{t^{\prime }}%
\mathcal{L}_{\ell }:\ L^{\prime }\subset K}\mathbf{T}^{\mathbf{\tau }-%
\limfunc{deep}}\left( L^{\prime }\right) \right)  \notag \\
&\leq &\left( \rho -1\right) \omega _{\mathcal{P}}\left(
\dbigcup\limits_{L^{\prime }\in \dbigcup\limits_{\ell =0}^{t^{\prime }}%
\mathcal{L}_{\ell }:\ L^{\prime }\subset K}\mathbf{T}^{\mathbf{\tau }-%
\limfunc{deep}}\left( L^{\prime }\right) \right) .  \notag
\end{eqnarray}%
Thus using 
\begin{equation*}
\omega _{\mathcal{P}_{L,0}^{small}}\left( \mathbf{T}^{\mathbf{\tau }-%
\limfunc{deep}}\left( K\right) \right) \leq \omega _{\mathcal{P}}\left( 
\mathbf{T}^{\mathbf{\tau }-\limfunc{deep}}\left( K\right) \setminus
\dbigcup\limits_{L^{\prime }\in \dbigcup\limits_{\ell =0}^{t^{\prime }}%
\mathcal{L}_{\ell }:\ L^{\prime }\subset K}\mathbf{T}^{\mathbf{\tau }-%
\limfunc{deep}}\left( L^{\prime }\right) \right) ,
\end{equation*}%
and (\ref{additive}) we have%
\begin{eqnarray*}
&&\mathcal{S}_{\limfunc{size}}^{\alpha ,A}\left( \mathcal{P}%
_{L,0}^{small}\right) ^{2} \\
&\leq &\sup_{K\in \Pi ^{\limfunc{goodbelow}}\mathcal{P}_{L,0}^{small}}\frac{1%
}{\left\vert K\right\vert _{\sigma }}\left( \frac{\mathrm{P}^{\alpha }\left(
K,\mathbf{1}_{A\setminus K}\sigma \right) }{\left\vert K\right\vert ^{\frac{1%
}{n}}}\right) ^{2}\omega _{\mathcal{P}}\left( \mathbf{T}^{\mathbf{\tau }-%
\limfunc{deep}}\left( K\right) \setminus \dbigcup\limits_{L^{\prime }\in
\dbigcup\limits_{\ell =0}^{t^{\prime }}\mathcal{L}_{\ell }:\ L^{\prime
}\subset K}\mathbf{T}^{\mathbf{\tau }-\limfunc{deep}}\left( L^{\prime
}\right) \right) \\
&\leq &\left( \rho -1\right) \sup_{K\in \Pi ^{\limfunc{goodbelow}}\mathcal{P}%
_{L,0}^{small}}\frac{1}{\left\vert K\right\vert _{\sigma }}\left( \frac{%
\mathrm{P}^{\alpha }\left( K,\mathbf{1}_{A\setminus K}\sigma \right) }{%
\left\vert K\right\vert ^{\frac{1}{n}}}\right) ^{2}\omega _{\mathcal{P}%
}\left( \dbigcup\limits_{L^{\prime }\in \dbigcup\limits_{\ell =0}^{t^{\prime
}}\mathcal{L}_{\ell }:\ L^{\prime }\subset K}\mathbf{T}^{\mathbf{\tau }-%
\limfunc{deep}}\left( L^{\prime }\right) \right) .
\end{eqnarray*}%
and we can continue with 
\begin{eqnarray*}
&&\mathcal{S}_{\limfunc{size}}^{\alpha ,A}\left( \mathcal{P}%
_{L,0}^{small}\right) \\
&\leq &\left( \rho -1\right) \sup_{K\in \Pi ^{\limfunc{goodbelow}}\mathcal{P}%
}\frac{1}{\left\vert K\right\vert _{\sigma }}\left( \frac{\mathrm{P}^{\alpha
}\left( K,\mathbf{1}_{A\setminus K}\sigma \right) }{\left\vert K\right\vert
^{\frac{1}{n}}}\right) ^{2}\omega _{\mathcal{P}}\left( \mathbf{T}^{\mathbf{%
\tau }-\limfunc{deep}}\left( K\right) \right) \\
&\leq &\left( \rho -1\right) \mathcal{S}_{\limfunc{size}}^{\alpha ,A}\left( 
\mathcal{P}\right) ^{2}.
\end{eqnarray*}

In the remaining case where $L\in \mathcal{L}_{M+1}$ we can include $L$ as a
testing quasicube $K$ and the same reasoning applies. This completes the
proof of (\ref{small claim' 3}).

To prove the other inequality (\ref{big 3}), we need a lemma to bound the
norm of certain `straddled' stopping forms by the size functional $\mathcal{S%
}_{\limfunc{size}}^{\alpha ,A}$, and another lemma to bound sums of
`mutually orthogonal' stopping forms. We interrupt the proof to turn to
these matters.
\end{proof}

\subsubsection{The Straddling Lemma}

Given an admissible collection of pairs $\mathcal{Q}$ for $A$, and a
subpartition $\mathcal{S}\subset \Pi ^{\limfunc{goodbelow}}\mathcal{Q}$ of
pairwise disjoint quasicubes in $A$, we say that $\mathcal{Q}$ $\mathbf{\tau 
}$\emph{-straddles} $\mathcal{S}$ if for every pair $\left( I,J\right) \in 
\mathcal{Q}$ there is $S\in \mathcal{S}\cap \left[ J,I\right] $ where $\left[
J,I\right] $ denotes the geodesic in the dyadic tree $\Omega \mathcal{D}$
that connects $J$ to $I$, and moreover that $J\Subset _{\mathbf{\tau }%
,\varepsilon }S$. Denote by $\mathcal{N}_{\mathbf{\rho }-\mathbf{\tau }}^{%
\limfunc{good}}\left( S\right) $ the finite collection of quasicubes that
are both good and $\left( \mathbf{\rho }-\mathbf{\tau }\right) $-nearby in $%
S $. For any good dyadic quasicube $S\in \Omega \mathcal{D}_{\limfunc{good}}$%
, we will also need the collection $\mathcal{W}^{\limfunc{good}}\left(
S\right) $ of maximal \emph{good} subquasicubes $I$ of $S$ whose triples $3I$
are contained in $S$.

\begin{lemma}
\label{straddle 3}Let $\mathcal{S}$ be a subpartition of $A$, and suppose
that $\mathcal{Q}$ is an admissible collection of pairs for $A$ such that $%
\mathcal{S}\subset \Pi ^{\limfunc{goodbelow}}\mathcal{Q}$, and such that $%
\mathcal{Q}$ $\mathbf{\tau }$-straddles $\mathcal{S}$. Then we have the
sublinear form bound%
\begin{equation*}
\mathfrak{N}_{\limfunc{stop},1,\bigtriangleup }^{A,\mathcal{Q}}\leq C_{%
\mathbf{r},\mathbf{\tau },\mathbf{\rho }}\sup_{S\in \mathcal{S}}\mathcal{S}_{%
\limfunc{size}}^{\alpha ,A;S}\left( \mathcal{Q}\right) \leq C_{\mathbf{r},%
\mathbf{\tau },\mathbf{\rho }}\mathcal{S}_{\limfunc{size}}^{\alpha ,A}\left( 
\mathcal{Q}\right) ,
\end{equation*}%
where $\mathcal{S}_{\limfunc{size}}^{\alpha ,A;S}$ is an $S$-localized
version of $\mathcal{S}_{\limfunc{size}}^{\alpha ,A}$ with an $S$-hole given
by%
\begin{equation}
\mathcal{S}_{\limfunc{size}}^{\alpha ,A;S}\left( \mathcal{Q}\right)
^{2}\equiv \sup_{K\in \mathcal{N}_{\mathbf{\rho }-\mathbf{\tau }}^{\limfunc{%
good}}\left( S\right) \cup \mathcal{W}^{\limfunc{good}}\left( S\right) }%
\frac{1}{\left\vert K\right\vert _{\sigma }}\left( \frac{\mathrm{P}^{\alpha
}\left( K,\mathbf{1}_{A\setminus S}\sigma \right) }{\left\vert K\right\vert
^{\frac{1}{n}}}\right) ^{2}\omega _{\mathcal{Q}}\left( \mathbf{T}^{\mathbf{%
\tau }-\limfunc{deep}}\left( K\right) \right) .  \label{localized size}
\end{equation}
\end{lemma}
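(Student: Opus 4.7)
The plan is to exploit the subpartition $\mathcal{S}$ to localize the sublinear stopping form $|\mathsf{B}|^{A,\mathcal{Q}}_{\limfunc{stop},1,\bigtriangleup^{\omega}}$ to each $S\in\mathcal{S}$, bound each local piece by the $S$-localized size functional $\mathcal{S}_{\limfunc{size}}^{\alpha,A;S}(\mathcal{Q})$, and then sum over $S$ using the pairwise disjointness of $\mathcal{S}$ in $A$.

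First, since $\mathcal{S}$ is pairwise disjoint and $J\subset S$ for the straddling $S$, each $J\in\Pi_{2}\mathcal{Q}$ is assigned a unique $S=S(J)\in\mathcal{S}$. Partitioning $\mathcal{Q}=\bigsqcup_{S\in\mathcal{S}}\mathcal{Q}_{S}$ with $\mathcal{Q}_{S}\equiv\{(I,J)\in\mathcal{Q}:J\Subset_{\mathbf{\tau},\varepsilon}S\}$ and noting $\varphi_{J}^{\mathcal{Q}}=\varphi_{J}^{\mathcal{Q}_{S(J)}}$ gives the exact decomposition
\[
|\mathsf{B}|^{A,\mathcal{Q}}_{\limfunc{stop},1,\bigtriangleup^{\omega}}(f,g)=\sum_{S\in\mathcal{S}}|\mathsf{B}|^{A,\mathcal{Q}_{S}}_{\limfunc{stop},1,\bigtriangleup^{\omega}}(f,g).
\]
For each $(I,J)\in\mathcal{Q}_{S}$ the uniform bound $|\varphi_{J}^{\mathcal{Q}_{S}}|\lesssim\alpha_{\mathcal{A}}(A)\mathbf{1}_{A\setminus I_{\mathcal{Q}_{S}}(J)}$ combined with $S\subset I_{\mathcal{Q}_{S}}(J)$ yields $\mathrm{P}^{\alpha}(J,|\varphi_{J}^{\mathcal{Q}_{S}}|\mathbf{1}_{A\setminus I_{\mathcal{Q}_{S}}(J)}\sigma)\lesssim\alpha_{\mathcal{A}}(A)\mathrm{P}^{\alpha}(J,\mathbf{1}_{A\setminus S}\sigma)$, introducing the crucial $S$-hole.

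Second, I would classify each good $J\Subset_{\mathbf{\tau},\varepsilon}S$ as either \emph{nearby} (lying in $\mathcal{N}_{\mathbf{\rho}-\mathbf{\tau}}^{\limfunc{good}}(S)$) or \emph{deep}, in which case goodness of $J$ produces an enclosing $K\in\mathcal{W}^{\limfunc{good}}(S)$ with $J\subset K$ and $3K\subset S$. In the deep case, Lemma~\ref{Poisson inequalities} applied with $J\subset K\subset 2K\subset S$ and $\mu=\mathbf{1}_{A\setminus S}\sigma$ transfers the Poisson integral from $J$ to $K$:
\[
\frac{\mathrm{P}^{\alpha}(J,\mathbf{1}_{A\setminus S}\sigma)}{|J|^{1/n}}\approx\frac{\mathrm{P}^{\alpha}(K,\mathbf{1}_{A\setminus S}\sigma)}{|K|^{1/n}},
\]
while in the nearby case $J$ itself serves as the testing cube. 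Grouping pairs by their testing cube $K\in\mathcal{W}^{\limfunc{good}}(S)\cup\mathcal{N}_{\mathbf{\rho}-\mathbf{\tau}}^{\limfunc{good}}(S)$ and applying Cauchy--Schwarz over $J\subset K$ gives
\[
\sum_{J\subset K}\|\bigtriangleup_{J}^{\omega}\mathbf{x}\|_{L^{2}(\omega)}\|\bigtriangleup_{J}^{\omega}g\|_{L^{2}(\omega)}\leq\sqrt{\omega_{\mathcal{Q}}(\mathbf{T}^{\mathbf{\tau}-\limfunc{deep}}(K))}\,\|\mathsf{P}^{\omega}_{K}g\|_{L^{2}(\omega)},
\]
and the definition (\ref{localized size}) converts the Poisson factor times this square root into $\mathcal{S}_{\limfunc{size}}^{\alpha,A;S}(\mathcal{Q})\sqrt{|K|_{\sigma}}$. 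A second Cauchy--Schwarz across the finitely overlapping testing cubes inside $S$ (the overlap constant $C_{\mathbf{r},\mathbf{\tau},\mathbf{\rho}}$ comes from the cardinality of $\mathcal{N}_{\mathbf{\rho}-\mathbf{\tau}}^{\limfunc{good}}(S)$ and the Whitney property of $\mathcal{W}^{\limfunc{good}}(S)$) then delivers
\[
|\mathsf{B}|^{A,\mathcal{Q}_{S}}\lesssim C_{\mathbf{r},\mathbf{\tau},\mathbf{\rho}}\,\alpha_{\mathcal{A}}(A)\,\mathcal{S}_{\limfunc{size}}^{\alpha,A;S}(\mathcal{Q})\,\sqrt{|S|_{\sigma}}\,\|\mathsf{P}^{\omega}_{\Pi_{2}\mathcal{Q}_{S}}g\|_{L^{2}(\omega)}.
\]

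Finally, summing over the pairwise disjoint $S\in\mathcal{S}$ via one further Cauchy--Schwarz, and using $\sum_{S}|S|_{\sigma}\leq|A|_{\sigma}$ together with the (order $\mathbf{\tau}$) bounded overlap of $\{\Pi_{2}\mathcal{Q}_{S}\}_{S\in\mathcal{S}}$ inside the quasiHaar support of $g$, produces
\[
|\mathsf{B}|^{A,\mathcal{Q}}\lesssim C_{\mathbf{r},\mathbf{\tau},\mathbf{\rho}}\,\alpha_{\mathcal{A}}(A)\,\sqrt{|A|_{\sigma}}\,\|g\|_{L^{2}(\omega)}\,\sup_{S\in\mathcal{S}}\mathcal{S}_{\limfunc{size}}^{\alpha,A;S}(\mathcal{Q}),
\]
which is exactly the stated bound on $\mathfrak{N}^{A,\mathcal{Q}}_{\limfunc{stop},1,\bigtriangleup}$; the trailing inequality $\sup_{S}\mathcal{S}_{\limfunc{size}}^{\alpha,A;S}\leq\mathcal{S}_{\limfunc{size}}^{\alpha,A}$ is immediate from $\mathcal{N}_{\mathbf{\rho}-\mathbf{\tau}}^{\limfunc{good}}(S)\cup\mathcal{W}^{\limfunc{good}}(S)\subset\Pi^{\limfunc{goodbelow}}\mathcal{Q}$ and the monotonicity $\mathrm{P}^{\alpha}(K,\mathbf{1}_{A\setminus S}\sigma)\leq\mathrm{P}^{\alpha}(K,\mathbf{1}_{A\setminus K}\sigma)$ for $K\subset S$. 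The main obstacle will be the classification/covering step and the calibration of the parameters $\mathbf{r}<\mathbf{\tau}<\mathbf{\rho}$ (Definition~\ref{def parameters}) so that every good $J\Subset_{\mathbf{\tau},\varepsilon}S$ is covered cleanly by at most one of the two testing classes, the Poisson transfer lemma applies uniformly with $2K\subset S$, and the overlap constant $C_{\mathbf{r},\mathbf{\tau},\mathbf{\rho}}$ stays finite---this is purely geometric bookkeeping, but delicate because $J$ may sit awkwardly relative to the ``whose-triple-fits'' condition defining $\mathcal{W}^{\limfunc{good}}(S)$.
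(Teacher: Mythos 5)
Your high-level plan matches the paper's: decompose $\mathcal{Q}$ by the straddling cube $S$, introduce the $S$-hole via $S\subset I_{\mathcal{Q}_S}(J)$ together with the uniform bound on $\varphi_J$, classify $J$ as nearby or deep, transfer the Poisson integral to a good testing cube, and finish with two Cauchy--Schwarz steps. However, there is a genuine gap in your treatment of the nearby case. You take $J$ itself as the testing cube when $J\in\mathcal{N}_{\mathbf{\rho}-\mathbf{\tau}}^{\limfunc{good}}(S)$, but the quantity $\omega_{\mathcal{Q}}\bigl(\mathbf{T}^{\mathbf{\tau}-\limfunc{deep}}(J)\bigr)$ that enters the definition of $\mathcal{S}_{\limfunc{size}}^{\alpha,A;S}(\mathcal{Q})$ only counts those $J'\in\Pi_2\mathcal{Q}$ with $\ell(J')\leq 2^{-\mathbf{\tau}}\ell(J)$ and $(c_{J'},\ell(J'))\in\mathbf{T}(J)$. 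The point $(c_J,\ell(J))$ does \emph{not} lie in $\mathbf{T}^{\mathbf{\tau}-\limfunc{deep}}(J)$, so $\|\bigtriangleup_J^\omega\mathbf{x}\|_{L^2(\omega)}^2$ is \emph{not} captured by $\omega_{\mathcal{Q}}(\mathbf{T}^{\mathbf{\tau}-\limfunc{deep}}(J))$. Your Cauchy--Schwarz estimate
$\sum_{J'\subset K}\|\bigtriangleup_{J'}^\omega\mathbf{x}\|\|\bigtriangleup_{J'}^\omega g\|\leq\sqrt{\omega_{\mathcal{Q}}(\mathbf{T}^{\mathbf{\tau}-\limfunc{deep}}(K))}\,\|\mathsf{P}_K^\omega g\|$ therefore fails when $K=J$.

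The correct choice of testing cube in the nearby regime is $\pi^{\mathbf{\tau}}J$, not $J$; this is precisely why the paper passes to the collection $\mathcal{C}_s^S=\pi^{\mathbf{\tau}}(\Pi_2\mathcal{Q}_s^S)$ of $\mathbf{\tau}$-parents and then to its maximal elements $\mathcal{M}_s^S$. These lie in $\mathcal{N}_{\mathbf{\rho}-\mathbf{\tau}}^{\limfunc{good}}(S)$ whenever $\ell(J)=2^{-s}\ell(S)$ with $\mathbf{\tau}\leq s<\mathbf{\rho}$, they are good because the quasiHaar support of $g$ lies in the $\mathbf{\tau}$-good quasigrid, and $J\Subset_{\mathbf{\tau},\varepsilon}\pi^{\mathbf{\tau}}J$ so that $\|\bigtriangleup_J^\omega\mathbf{x}\|^2$ \emph{is} captured by $\omega_{\mathcal{Q}}(\mathbf{T}^{\mathbf{\tau}-\limfunc{deep}}(\pi^{\mathbf{\tau}}J))$. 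Concomitantly your nearby/deep threshold is off: you classify by $J\in\mathcal{N}_{\mathbf{\rho}-\mathbf{\tau}}^{\limfunc{good}}(S)$, i.e.\ $\ell(J)>2^{-(\mathbf{\rho}-\mathbf{\tau})}\ell(S)$, whereas the relevant dividing line is $\ell(J)>2^{-\mathbf{\rho}}\ell(S)$ versus $\ell(J)\leq 2^{-\mathbf{\rho}}\ell(S)$, precisely so that the $\mathbf{\tau}$-parent lands in $\mathcal{N}_{\mathbf{\rho}-\mathbf{\tau}}^{\limfunc{good}}(S)$ in the first case and in some $K\in\mathcal{W}^{\limfunc{good}}(S)$ via $\pi^{\mathbf{\tau}}J\Subset_{\mathbf{\rho}-\mathbf{\tau},\varepsilon}S$ in the second. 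Once the testing cubes are corrected, the rest of your outline (the two nested Cauchy--Schwarz applications and the use of $\sum_S|S|_\sigma\leq|A|_\sigma$ with bounded overlap) goes through as in the paper, so the gap is local but essential.
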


\begin{proof}
For $S\in S$ let $\mathcal{Q}^{S}\equiv \left\{ \left( I,J\right) \in 
\mathcal{Q}:J\Subset _{\mathbf{\tau },\varepsilon }S\subset I\right\} $. We
begin by using that $\mathcal{Q}$ $\mathbf{\tau }$-straddles $\mathcal{S}$,
together with the sublinearity property (\ref{phi sublinear}) of $\varphi
_{J}^{\mathcal{Q}}$, to write%
\begin{eqnarray*}
\left\vert \mathsf{B}\right\vert _{\limfunc{stop},1,\bigtriangleup }^{A,%
\mathcal{Q}}\left( f,g\right) &=&\sum_{J\in \Pi _{2}\mathcal{P}}\frac{%
\mathrm{P}^{\alpha }\left( J,\left\vert \varphi _{J}^{\mathcal{Q}%
}\right\vert \mathbf{1}_{A\setminus I_{\mathcal{Q}}\left( J\right) }\sigma
\right) }{\left\vert J\right\vert ^{\frac{1}{n}}}\left\Vert \bigtriangleup
_{J}^{\omega }\mathbf{x}\right\Vert _{L^{2}\left( \omega \right) }\left\Vert
\bigtriangleup _{J}^{\omega }g\right\Vert _{L^{2}\left( \omega \right) } \\
&\leq &\sum_{S\in \mathcal{S}}\sum_{J\in \Pi _{2}^{S,\mathbf{\tau }-\limfunc{%
deep}}\mathcal{Q}}\frac{\mathrm{P}^{\alpha }\left( J,\left\vert \varphi
_{J}^{\mathcal{Q}^{S}}\right\vert \mathbf{1}_{A\setminus I_{\mathcal{Q}%
}\left( J\right) }\sigma \right) }{\left\vert J\right\vert ^{\frac{1}{n}}}%
\left\Vert \bigtriangleup _{J}^{\omega }\mathbf{x}\right\Vert _{L^{2}\left(
\omega \right) }\left\Vert \bigtriangleup _{J}^{\omega }g\right\Vert
_{L^{2}\left( \omega \right) }; \\
\text{where }\varphi _{J}^{\mathcal{Q}^{S}} &\equiv &\sum_{I\in \Pi _{1}%
\mathcal{Q}^{S}:\mathcal{\ }\left( I,J\right) \in \mathcal{Q}^{S}}\mathbb{E}%
_{I}^{\sigma }\left( \bigtriangleup _{\pi I}^{\sigma }f\right) \ \mathbf{1}%
_{A\setminus I}\ .
\end{eqnarray*}%
At this point, with $S$ fixed for the moment, we consider separately the
finitely many cases $\ell \left( J\right) =2^{-s}\ell \left( S\right) $
where $s\geq \mathbf{\rho }$ and where $\mathbf{\tau }\leq s<\mathbf{\rho }$%
. More precisely, we pigeonhole the side length of $J\in \Pi _{2}\mathcal{Q}%
^{S}=\Pi _{2}^{S,\tau -\limfunc{deep}}\mathcal{Q}$ by%
\begin{eqnarray*}
\mathcal{Q}_{\ast }^{S} &\equiv &\left\{ \left( I,J\right) \in \mathcal{Q}%
^{S}:J\in \Pi _{2}\mathcal{Q}^{S}\text{ and }\ell \left( J\right) \leq 2^{-%
\mathbf{\rho }}\ell \left( S\right) \right\} , \\
\mathcal{Q}_{s}^{S} &\equiv &\left\{ \left( I,J\right) \in \mathcal{Q}%
^{S}:J\in \Pi _{2}\mathcal{Q}^{S}\text{ and }\ell \left( J\right)
=2^{-s}\ell \left( S\right) \right\} ,\ \ \ \ \ \mathbf{\tau }\leq s<\mathbf{%
\rho }.
\end{eqnarray*}%
Then we have%
\begin{eqnarray*}
\Pi _{2}\mathcal{Q}_{\ast }^{S} &\equiv &\left\{ J\in \Pi _{2}\mathcal{Q}%
^{S}:\ell \left( J\right) \leq 2^{-\mathbf{\rho }}\ell \left( S\right)
\right\} , \\
\Pi _{2}\mathcal{Q}_{s}^{S} &\equiv &\left\{ J\in \Pi _{2}\mathcal{Q}%
^{S}:\ell \left( J\right) =2^{-s}\ell \left( S\right) \right\} ,\ \ \ \ \ 
\mathbf{\tau }\leq s<\mathbf{\rho },
\end{eqnarray*}%
and we make the corresponding decomposition for the sublinear form%
\begin{eqnarray*}
\left\vert \mathsf{B}\right\vert _{\limfunc{stop},1,\bigtriangleup }^{A,%
\mathcal{Q}}\left( f,g\right) &=&\left\vert \mathsf{B}\right\vert _{\limfunc{%
stop},1,\bigtriangleup }^{A,\mathcal{Q}_{\ast }}\left( f,g\right)
+\dsum\limits_{\mathbf{\tau }\leq s<\mathbf{\rho }}\left\vert \mathsf{B}%
\right\vert _{\limfunc{stop},1,\bigtriangleup }^{A,\mathcal{Q}_{s}}\left(
f,g\right) \\
&\equiv &\sum_{S\in \mathcal{S}}\sum_{J\in \Pi _{2}\mathcal{Q}_{\ast }^{S}}%
\frac{\mathrm{P}^{\alpha }\left( J,\left\vert \varphi _{J}^{\mathcal{Q}%
_{\ast }^{S}}\right\vert \mathbf{1}_{A\setminus I_{\mathcal{Q}_{\ast
}}\left( J\right) }\sigma \right) }{\left\vert J\right\vert ^{\frac{1}{n}}}%
\left\Vert \bigtriangleup _{J}^{\omega }\mathbf{x}\right\Vert _{L^{2}\left(
\omega \right) }\left\Vert \bigtriangleup _{J}^{\omega }g\right\Vert
_{L^{2}\left( \omega \right) } \\
&&+\dsum\limits_{\mathbf{\tau }\leq s<\mathbf{\rho }}\sum_{S\in \mathcal{S}%
}\sum_{J\in \Pi _{2}\mathcal{Q}_{s}^{S}}\frac{\mathrm{P}^{\alpha }\left(
J,\left\vert \varphi _{J}^{\mathcal{Q}_{s}^{S}}\right\vert \mathbf{1}%
_{A\setminus I_{\mathcal{Q}_{S}}\left( J\right) }\sigma \right) }{\left\vert
J\right\vert ^{\frac{1}{n}}}\left\Vert \bigtriangleup _{J}^{\omega }\mathbf{x%
}\right\Vert _{L^{2}\left( \omega \right) }\left\Vert \bigtriangleup
_{J}^{\omega }g\right\Vert _{L^{2}\left( \omega \right) }\ .
\end{eqnarray*}%
By the tree-connected property of $\mathcal{Q}$, and the telescoping
property of martingale differences, together with the bound $\alpha _{%
\mathcal{A}}\left( A\right) $ on the averages of $f$ in the corona $\mathcal{%
C}_{A}$, we have%
\begin{equation}
\left\vert \varphi _{J}^{\mathcal{Q}_{\ast }^{S}}\right\vert ,\left\vert
\varphi _{J}^{\mathcal{Q}_{s}^{S}}\right\vert \lesssim \alpha _{\mathcal{A}%
}\left( A\right) 1_{A\setminus I_{\mathcal{Q}^{S}}\left( J\right) },
\label{bfi 3}
\end{equation}%
where $I_{\mathcal{Q}^{S}}\left( J\right) \equiv \dbigcap \left\{ I:\left(
I,J\right) \in \mathcal{Q}^{S}\right\} $ is the smallest quasicube $I$ for
which $\left( I,J\right) \in \mathcal{Q}^{S}$.

\bigskip

\textbf{Case} for $\left\vert \mathsf{B}\right\vert _{\limfunc{stop}%
,1,\bigtriangleup }^{A,\mathcal{Q}_{s}^{S}}\left( f,g\right) $ when $\mathbf{%
\tau }\leq s\leq \mathbf{\rho }$: Now is a crucial definition that permits
us to bound the form by the size functional with a large hole. Let 
\begin{equation*}
\mathcal{C}_{s}^{S}\equiv \pi ^{\mathbf{\tau }}\left( \Pi _{2}\mathcal{Q}%
_{s}^{S}\right)
\end{equation*}%
be the collection of $\mathbf{\tau }$-parents of quasicubes in $\Pi _{2}%
\mathcal{Q}_{s}^{S}$, and denote by $\mathcal{M}_{s}^{S}$ the set of \emph{%
maximal} quasicubes in the collection $\mathcal{C}_{s}^{S}$. We have that
the quasicubes in $\mathcal{M}_{s}^{S}$ are good by our assumption that the
quasiHaar support of $g$ is contained in the $\mathbf{\tau }$-good quasigrid 
$\Omega \mathcal{D}_{\left( \mathbf{r},\varepsilon \right) -\limfunc{good}}^{%
\mathbf{\tau }}$, and so $\mathcal{M}_{s}^{S}\subset \mathcal{N}_{\mathbf{%
\rho }-\mathbf{\tau }}\left( S\right) $. Here is the first of two key
inclusions:%
\begin{equation}
J\Subset _{\mathbf{\tau },\varepsilon }K\subset S\text{ if }K\in \mathcal{M}%
_{s}^{S}\text{ is the unique quasicube containing }J.  \label{first key}
\end{equation}

Let $I_{s}\equiv \pi ^{\mathbf{\rho }-s}S$ so that for each $J$ in $\Pi _{2}%
\mathcal{Q}_{s}^{S}$ we have the second key inclusion%
\begin{equation}
\pi ^{\mathbf{\rho }}J=I_{s}\subset I_{\mathcal{Q}^{S}}\left( J\right) .
\label{second key}
\end{equation}%
Now\ each $K\in \mathcal{M}_{s}^{S}$ is also $\left( \mathbf{\rho }-\mathbf{%
\tau }\right) $-deeply embedded in $I_{s}$ if $\mathbf{\rho }\geq \mathbf{r}+%
\mathbf{\tau }$, so that in particular, $3K\subset I_{s}$. This and (\ref%
{second key}) have the consequence that the following Poisson inequalities
hold:%
\begin{equation*}
\frac{\mathrm{P}^{\alpha }\left( J,\mathbf{1}_{A\setminus I_{\mathcal{Q}%
^{S}}\left( J\right) }\sigma \right) }{\left\vert J\right\vert ^{\frac{m}{n}}%
}\lesssim \frac{\mathrm{P}^{\alpha }\left( J,\mathbf{1}_{A\setminus
I_{s}}\sigma \right) }{\left\vert J\right\vert ^{\frac{m}{n}}}\lesssim \frac{%
\mathrm{P}^{\alpha }\left( K,\mathbf{1}_{A\setminus I_{s}}\sigma \right) }{%
\left\vert K\right\vert ^{\frac{m}{n}}}\lesssim \frac{\mathrm{P}^{\alpha
}\left( K,\mathbf{1}_{A\setminus S}\sigma \right) }{\left\vert K\right\vert
^{\frac{m}{n}}}.
\end{equation*}%
Let $\Pi _{2}\mathcal{Q}_{s}^{S}\left( K\right) \equiv \left\{ J\in \Pi _{2}%
\mathcal{Q}_{s}^{S}:J\subset K\right\} $. Let%
\begin{eqnarray*}
\left[ \Pi _{2}\mathcal{Q}_{s}^{S}\right] _{\ell } &\equiv &\left\{ J\in \Pi
_{2}\mathcal{Q}_{s}^{S}:\ell \left( J^{\prime }\right) =2^{-\ell }\ell
\left( K\right) \right\} , \\
\left[ \Pi _{2}\mathcal{Q}_{s}^{S}\right] _{\ell }^{\ast } &\equiv &\left\{
J^{\prime }:J^{\prime }\subset J\in \Pi _{2}\mathcal{Q}_{s}^{S}:\ell \left(
J^{\prime }\right) =2^{-\ell }\ell \left( K\right) \right\} .
\end{eqnarray*}%
Now set $\mathcal{Q}_{s}\equiv \dbigcup\limits_{S\in \mathcal{S}}\mathcal{Q}%
_{s}^{S}$. We apply (\ref{bfi 3}) and Cauchy-Schwarz in $J$ to bound $%
\left\vert \mathsf{B}\right\vert _{\limfunc{stop},1,\bigtriangleup }^{A,%
\mathcal{Q}_{s}}\left( f,g\right) $ by 
\begin{equation*}
\alpha _{\mathcal{A}}\left( A\right) \sum_{S\in \mathcal{S}}\sum_{K\in 
\mathcal{M}_{s}^{S}}\left( \frac{\mathrm{P}^{\alpha }\left( K,\mathbf{1}%
_{A\setminus S}\sigma \right) }{\left\vert K\right\vert ^{\frac{1}{n}}}%
\right) \left\Vert \mathsf{P}_{\Pi _{2}^{S,\mathbf{\tau }-\limfunc{deep}}%
\mathcal{Q}_{s};K}^{\omega }\mathbf{x}\right\Vert _{L^{2}\left( \omega
\right) }\left\Vert \mathsf{P}_{\Pi _{2}^{S,\mathbf{\tau }-\limfunc{deep}}%
\mathcal{Q}_{s};K}^{\omega }g\right\Vert _{L^{2}\left( \omega \right) },
\end{equation*}%
where the localized projections $\mathsf{P}_{\Pi _{2}^{S,\mathbf{\tau }-%
\limfunc{deep}}\mathcal{Q}_{s};K}^{\omega }$ are defined in (\ref{def
localization}) above.

Thus using Cauchy-Schwarz in $K$ we have that $\left\vert \mathsf{B}_{%
\limfunc{stop},1,\bigtriangleup }^{A,\mathcal{Q}_{s}}\left( f,g\right)
\right\vert $ is bounded by%
\begin{eqnarray*}
&&\alpha _{\mathcal{A}}\left( A\right) \sum_{S\in \mathcal{S}}\sum_{K\in 
\mathcal{M}_{s}^{S}}\sqrt{\left\vert K\right\vert _{\sigma }} \\
&&\times \frac{1}{\sqrt{\left\vert K\right\vert _{\sigma }}}\left( \frac{%
\mathrm{P}^{\alpha }\left( K,\mathbf{1}_{A\setminus S}\sigma \right) }{%
\left\vert K\right\vert ^{\frac{1}{n}}}\right) \left\Vert \mathsf{P}_{\Pi
_{2}\mathcal{Q}_{s}^{S}\left( K\right) }^{\omega }\mathbf{x}\right\Vert
_{L^{2}\left( \omega \right) }\left\Vert \mathsf{P}_{\Pi _{2}\mathcal{Q}%
_{s}^{S}\left( K\right) }^{\omega }g\right\Vert _{L^{2}\left( \omega \right)
} \\
&\leq &\alpha _{\mathcal{A}}\left( A\right) \sup_{S\in \mathcal{S}}\mathcal{S%
}_{\limfunc{size}}^{\alpha ,A;S}\left( \mathcal{Q}\right) \left( \sum_{S\in 
\mathcal{S}}\sum_{K\in \mathcal{N}_{\mathbf{\rho }-\mathbf{\tau }}\left(
S\right) }\left\vert K\right\vert _{\sigma }\right) ^{\frac{1}{2}}\left\Vert
g\right\Vert _{L^{2}\left( \omega \right) } \\
&\leq &\sup_{S\in \mathcal{S}}\mathcal{S}_{\limfunc{size}}^{\alpha
,A;S}\left( \mathcal{Q}\right) \alpha _{\mathcal{A}}\left( A\right) \sqrt{%
\left\vert A\right\vert _{\sigma }}\left\Vert g\right\Vert _{L^{2}\left(
\omega \right) },
\end{eqnarray*}%
since $J\Subset _{\mathbf{\tau },\varepsilon }M\subset K$ by (\ref{first key}%
), since $\mathcal{M}_{s}^{S}\subset \mathcal{N}_{\mathbf{\rho }-\mathbf{%
\tau }}\left( S\right) $, and since the collection of quasicubes $%
\dbigcup\limits_{S\in \mathcal{S}}\mathcal{M}_{s}^{S}$ is pairwise disjoint
in $A$.

\textbf{Case} for $\left\vert \mathsf{B}\right\vert _{\limfunc{stop}%
,1,\bigtriangleup }^{A,\mathcal{Q}_{\ast }}\left( f,g\right) $: This time we
let $\mathcal{C}_{\ast }^{S}\equiv \pi ^{\mathbf{\tau }}\left( \Pi _{2}%
\mathcal{Q}_{\ast }^{S}\right) $ and denote by $\mathcal{M}_{\ast }^{S}$ the
set of \emph{maximal} quasicubes in the collection $\mathcal{C}_{\ast }^{S}$%
. We have the two key inclusions,%
\begin{equation*}
J\Subset _{\mathbf{\tau },\varepsilon }M\Subset _{\mathbf{\rho }-\mathbf{%
\tau },\varepsilon }S\text{ if }M\in \mathcal{M}_{\ast }^{S}\text{ is the
unique quasicube containing }J,
\end{equation*}%
and%
\begin{equation*}
\pi ^{\mathbf{\rho }}J\subset S\subset I_{\mathcal{Q}}\left( J\right) .
\end{equation*}%
Moreover there is $K\in \mathcal{W}^{\limfunc{good}}\left( S\right) $ that
contains $M$. Thus $3K\subset S$ and we have 
\begin{equation*}
\frac{\mathrm{P}^{\alpha }\left( J,\mathbf{1}_{A\setminus S}\sigma \right) }{%
\left\vert J\right\vert ^{\frac{1}{n}}}\lesssim \frac{\mathrm{P}^{\alpha
}\left( K,\mathbf{1}_{A\setminus S}\sigma \right) }{\left\vert K\right\vert
^{\frac{1}{n}}},
\end{equation*}%
and $\left\vert \varphi _{J}\right\vert \lesssim \alpha _{\mathcal{A}}\left(
A\right) 1_{A\setminus S}$. Now set $\mathcal{Q}_{\ast }\equiv
\dbigcup\limits_{S\in \mathcal{S}}\mathcal{Q}_{\ast }^{S}$. Arguing as
above, but with $\mathcal{W}^{\limfunc{good}}\left( S\right) $ in place of $%
\mathcal{N}_{\mathbf{\rho }-\mathbf{\tau }}\left( S\right) $, and using $%
J\Subset _{\mathbf{\rho },\varepsilon }I_{\mathcal{Q}}\left( J\right) $, we
can bound $\left\vert \mathsf{B}\right\vert _{\limfunc{stop}%
,1,\bigtriangleup }^{A,\mathcal{Q}_{\ast }}\left( f,g\right) $ by%
\begin{eqnarray*}
&&\alpha _{\mathcal{A}}\left( A\right) \sum_{S\in \mathcal{S}}\sum_{K\in 
\mathcal{W}^{\limfunc{good}}\left( S\right) }\sqrt{\left\vert K\right\vert
_{\sigma }} \\
&&\times \frac{1}{\sqrt{\left\vert K\right\vert _{\sigma }}}\left( \frac{%
\mathrm{P}^{\alpha }\left( K,\mathbf{1}_{A\setminus S}\sigma \right) }{%
\left\vert K\right\vert ^{\frac{1}{n}}}\right) \left\Vert \mathsf{P}_{\Pi
_{2}\mathcal{Q}_{s}^{S}\left( K\right) }^{\omega }\mathbf{x}\right\Vert
_{L^{2}\left( \omega \right) }\left\Vert \mathsf{P}_{\Pi _{2}\mathcal{Q}%
_{\ast }^{S}\left( K\right) }^{\omega }g\right\Vert _{L^{2}\left( \omega
\right) } \\
&\leq &\alpha _{\mathcal{A}}\left( A\right) \sup_{S\in \mathcal{S}}\mathcal{S%
}_{\limfunc{size}}^{\alpha ,A;S}\left( \mathcal{Q}\right) \left( \sum_{S\in 
\mathcal{S}}\sum_{K\in \mathcal{W}^{\limfunc{good}}\left( S\right)
}\left\vert K\right\vert _{\sigma }\right) ^{\frac{1}{2}}\left\Vert
g\right\Vert _{L^{2}\left( \omega \right) } \\
&\leq &\sup_{S\in \mathcal{S}}\mathcal{S}_{\limfunc{size}}^{\alpha
,A;S}\left( \mathcal{Q}\right) \alpha _{\mathcal{A}}\left( A\right) \sqrt{%
\left\vert A\right\vert _{\sigma }}\left\Vert g\right\Vert _{L^{2}\left(
\omega \right) }.
\end{eqnarray*}%
We now sum these bounds in $s$ and $\ast $ and use $\sup_{S\in \mathcal{S}}%
\mathcal{S}_{\limfunc{size}}^{\alpha ,A;S}\left( \mathcal{Q}\right) \leq 
\mathcal{S}_{\limfunc{size}}^{\alpha ,A}\left( \mathcal{Q}\right) $ to
complete the proof of Lemma \ref{straddle 3}.
\end{proof}

\subsubsection{The Orthogonality Lemma}

Given a set $\left\{ \mathcal{Q}_{m}\right\} _{m=0}^{\infty }$ of admissible
collections for $A$, we say that the collections $\mathcal{Q}_{m}$ are \emph{%
mutually orthogonal}, if each collection $\mathcal{Q}_{m}$ satisfies%
\begin{equation*}
\mathcal{Q}_{m}\subset \dbigcup\limits_{j=0}^{\infty }\left\{ \mathcal{A}%
_{m,j}\times \mathcal{B}_{m,j}\right\} \ ,
\end{equation*}%
where the sets $\left\{ \mathcal{A}_{m,j}\right\} _{m,j}$ and $\left\{ 
\mathcal{B}_{m,j}\right\} _{m,j}$ each have bounded overlap on the dyadic
quasigrid $\Omega \mathcal{D}$: 
\begin{equation*}
\sum_{m,j=0}^{\infty }\mathbf{1}_{\mathcal{A}_{m,j}}\leq A\mathbf{1}_{\Omega 
\mathcal{D}}\text{ and }\sum_{m,j=0}^{\infty }\mathbf{1}_{\mathcal{B}%
_{m,j}}\leq B\mathbf{1}_{\Omega \mathcal{D}}.
\end{equation*}

\begin{lemma}
\label{mut orth}Suppose that $\left\{ \mathcal{Q}_{m}\right\} _{m=0}^{\infty
}$ is a set of admissible collections for $A$ that are \emph{mutually
orthogonal}. Then if $\mathcal{Q}\equiv \dbigcup\limits_{m=0}^{\infty }%
\mathcal{Q}_{m}$, the sublinear stopping form $\left\vert \mathsf{B}%
\right\vert _{\limfunc{stop},1,\bigtriangleup }^{A,\mathcal{Q}}\left(
f,g\right) $ has its restricted norm $\mathfrak{N}_{\limfunc{stop}%
,1,\bigtriangleup }^{A,\mathcal{Q}}$ controlled by the \emph{supremum} of
the restricted norms $\mathfrak{N}_{\limfunc{stop},1,\bigtriangleup }^{A,%
\mathcal{Q}_{m}}$: 
\begin{equation*}
\mathfrak{N}_{\limfunc{stop},1,\bigtriangleup }^{A,\mathcal{Q}}\leq \sqrt{nAB%
}\sup_{m\geq 0}\mathfrak{N}_{\limfunc{stop},1,\bigtriangleup }^{A,\mathcal{Q}%
_{m}}.
\end{equation*}
\end{lemma}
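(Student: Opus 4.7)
The approach is to decompose the sublinear form $|\mathsf{B}|_{\limfunc{stop},1,\bigtriangleup}^{A,\mathcal{Q}}$ as a sum of the corresponding forms $|\mathsf{B}|_{\limfunc{stop},1,\bigtriangleup}^{A,\mathcal{Q}_m}$ by the sublinearity of $\varphi_{J}^{\mathcal{P}}$; then localize $f$ and $g$ via quasiHaar projections onto the sets $\mathcal{A}_{m,j}$ and $\mathcal{B}_{m,j}$, apply the restricted norm bound to each $m$, and finally close the argument by Cauchy--Schwarz together with the bounded overlap hypothesis.

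\medskip

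\textbf{Step 1 (sublinear splitting).} After arranging (by reassigning each pair to the first $m$ in which it occurs) that the $\mathcal{Q}_{m}$ are pairwise disjoint with $\mathcal{Q}=\dot{\cup}_{m}\mathcal{Q}_{m}$, the sublinearity identity (\ref{phi sublinear}) yields $|\varphi_{J}^{\mathcal{Q}}|\leq \sum_{m}|\varphi_{J}^{\mathcal{Q}_{m}}|$. Since $I_{\mathcal{Q}}(J)\subset I_{\mathcal{Q}_{m}}(J)$, the support of each $|\varphi_{J}^{\mathcal{Q}_{m}}|$ lies inside $A\setminus I_{\mathcal{Q}_{m}}(J)$ (a subset of $A\setminus I_{\mathcal{Q}}(J)$), so from the Monotonicity Lemma representation (\ref{def split}) we conclude
\begin{equation*}
|\mathsf{B}|_{\limfunc{stop},1,\bigtriangleup }^{A,\mathcal{Q}}(f,g)\leq \sum_{m=0}^{\infty}|\mathsf{B}|_{\limfunc{stop},1,\bigtriangleup }^{A,\mathcal{Q}_{m}}(f,g).
\end{equation*}

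\medskip

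\textbf{Step 2 (localization and restricted norm).} For each $m$ set
\begin{equation*}
\widetilde{\mathcal{A}}_{m}=\bigcup_{j}\pi\mathcal{A}_{m,j},\qquad \mathcal{B}_{m}=\bigcup_{j}\mathcal{B}_{m,j},
\end{equation*}
and let $f_{m}=\sum_{K\in \widetilde{\mathcal{A}}_{m}}\bigtriangleup_{K}^{\sigma}f$, $g_{m}=\sum_{J\in \mathcal{B}_{m}}\bigtriangleup_{J}^{\omega}g$. Since the form $|\mathsf{B}|^{A,\mathcal{Q}_{m}}$ depends on $f$ only through the numbers $\mathbb{E}_{I}^{\sigma}\bigtriangleup_{\pi I}^{\sigma}f$ for $I\in \Pi_{1}\mathcal{Q}_{m}\subset\bigcup_{j}\mathcal{A}_{m,j}$, and on $g$ only through $\bigtriangleup_{J}^{\omega}g$ for $J\in\Pi_{2}\mathcal{Q}_{m}\subset\mathcal{B}_{m}$, one has $|\mathsf{B}|^{A,\mathcal{Q}_{m}}(f,g)=|\mathsf{B}|^{A,\mathcal{Q}_{m}}(f_{m},g_{m})$. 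The crucial observation is that the telescoping bound (\ref{phi bound}), $|\varphi_{J}^{\mathcal{Q}_{m}}|\lesssim\alpha_{\mathcal{A}}(A)\mathbf{1}_{A\setminus I_{\mathcal{Q}_{m}}(J)}$, is invariant under the replacement $f\mapsto f_{m}$ because $\varphi_{J}^{\mathcal{Q}_{m}}$ is a linear combination of precisely the Haar modes preserved by the projection. Consequently the proof underlying the restricted norm $\mathfrak{N}_{\limfunc{stop},1,\bigtriangleup}^{A,\mathcal{Q}_{m}}$ goes through with $(f_{m},g_{m})$ in place of $(f,g)$ and gives
\begin{equation*}
|\mathsf{B}|_{\limfunc{stop},1,\bigtriangleup }^{A,\mathcal{Q}_{m}}(f_{m},g_{m})\leq \mathfrak{N}_{\limfunc{stop},1,\bigtriangleup}^{A,\mathcal{Q}_{m}}\,\bigl(\alpha_{\mathcal{A}}(A)\sqrt{|A|_{\sigma}}+\|f_{m}\|_{L^{2}(\sigma)}\bigr)\|g_{m}\|_{L^{2}(\omega)}.
\end{equation*}

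\medskip

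\textbf{Step 3 (Cauchy--Schwarz and overlap).} Let $N=\sup_{m}\mathfrak{N}_{\limfunc{stop},1,\bigtriangleup}^{A,\mathcal{Q}_{m}}$. Summing the bound from Step 2 and applying Cauchy--Schwarz separately to the $\|f_{m}\|\|g_{m}\|$ piece and to the $\alpha_{\mathcal{A}}(A)\sqrt{|A|_{\sigma}}\|g_{m}\|$ piece, the overlap hypotheses give $\sum_{m}\|g_{m}\|_{L^{2}(\omega)}^{2}\leq B\|g\|_{L^{2}(\omega)}^{2}$ by Plancherel and $\sum_{m,j}\mathbf{1}_{\mathcal{B}_{m,j}}\leq B$; while $\sum_{m}\|f_{m}\|_{L^{2}(\sigma)}^{2}\leq nA\|f\|_{L^{2}(\sigma)}^{2}$ since a parent $K$ belongs to $\widetilde{\mathcal{A}}_{m}$ only when one of its (at most $2^{n}$) children lies in some $\mathcal{A}_{m,j}$, and by counting Haar modes in $\Gamma_{n}$. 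These two Cauchy--Schwarz applications combine to yield $|\mathsf{B}|_{\limfunc{stop},1,\bigtriangleup }^{A,\mathcal{Q}}(f,g)\leq N\sqrt{nAB}\bigl(\alpha_{\mathcal{A}}(A)\sqrt{|A|_{\sigma}}+\|f\|_{L^{2}(\sigma)}\bigr)\|g\|_{L^{2}(\omega)}$, which is the desired bound on $\mathfrak{N}_{\limfunc{stop},1,\bigtriangleup }^{A,\mathcal{Q}}$.

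\medskip

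\textbf{Main obstacle.} The subtlety I expect to struggle with is justifying the Cauchy--Schwarz step applied to the constant summand $\alpha_{\mathcal{A}}(A)\sqrt{|A|_{\sigma}}$: a naïve bound $\sum_{m}\alpha_{\mathcal{A}}(A)\sqrt{|A|_{\sigma}}\|g_{m}\|$ picks up a factor of $\sqrt{M}$ where $M$ is the cardinality of the nontrivial $m$, and this is not \emph{a priori} controlled by $AB$. The resolution is to view $\alpha_{\mathcal{A}}(A)\sqrt{|A|_{\sigma}}$ not as a separate constant but as the contribution of a ``root'' Haar mode attached to $A$ in the localization: namely, adjoin to each $f_{m}$ the zero--mean fluctuation of $\mathbb{E}_{A}^{\sigma}f\cdot\mathbf{1}_{A}$ on the cubes of $\widetilde{\mathcal{A}}_{m}$, so that this constant is absorbed into a slightly enlarged $\|f_{m}\|$, whose total is then again controlled via the bounded overlap $\sum_{m}\mathbf{1}_{\widetilde{\mathcal{A}}_{m}}\leq nA$. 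Verifying that this enlargement does not inflate the constant beyond $\sqrt{nAB}$ is where the technical work lies.
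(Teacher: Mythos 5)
Your approach is in outline identical to the paper's: sublinear splitting into the forms for the $\mathcal{Q}_{m}$, localization of $f$ and $g$ by quasiHaar projections onto the collections entering each $\mathcal{Q}_{m}$, and Cauchy--Schwarz with the bounded-overlap hypothesis. The paper's proof is precisely your Steps 1--3, with $\mathsf{P}_{m}^{\sigma }f=\sum_{j}\sum_{I\in \mathcal{A}_{m,j}}\bigtriangleup _{\pi I}^{\sigma }f$ and $\mathsf{P}_{m}^{\omega }g$ in the roles of your $f_{m},g_{m}$; the one visible difference is that the paper silently drops the constant summand $\alpha _{\mathcal{A}}(A)\sqrt{\left\vert A\right\vert _{\sigma }}$ from the localized restricted-norm bound in Step 2, passing directly to $\left\vert \mathsf{B}\right\vert ^{A,\mathcal{Q}_{m}}(f,g)\leq \mathfrak{N}^{A,\mathcal{Q}_{m}}\left\Vert \mathsf{P}_{m}^{\sigma }f\right\Vert \left\Vert \mathsf{P}_{m}^{\omega }g\right\Vert$.

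The obstacle you flag is therefore real and your instinct is sound: retaining the $\alpha _{\mathcal{A}}(A)\sqrt{\left\vert A\right\vert _{\sigma }}$ summand and summing over $m$ produces $\alpha _{\mathcal{A}}(A)\sqrt{\left\vert A\right\vert _{\sigma }}\sum_{m}\left\Vert \mathsf{P}_{m}^{\omega }g\right\Vert$, which Cauchy--Schwarz converts to $\alpha _{\mathcal{A}}(A)\sqrt{\left\vert A\right\vert _{\sigma }}\sqrt{MB}\left\Vert g\right\Vert$, controlled only by the number $M$ of nontrivial collections. However, your proposed fix does not close this gap: adjoining a ``root'' Haar mode at $A$ to each $f_{m}$ attaches $A$ to every $\widetilde{\mathcal{A}}_{m}$, so the overlap at the single cube $A$ becomes $M$ and the bounded-overlap premise is destroyed. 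The resolution the argument actually rests on --- implicit in the paper, and worth making explicit --- is that the $\sqrt{\left\vert A\right\vert _{\sigma }}$ appearing in the restricted-norm bound for $\mathcal{Q}_{m}$ is not intrinsic: in every use of this lemma that bound is produced by the Straddling Lemma, whose proof yields the localized quantity $\bigl(\sum_{S\in \mathcal{S}_{m}}\left\vert S\right\vert _{\sigma }\bigr)^{1/2}$ over the $\mathbf{\tau }$-straddling subpartition $\mathcal{S}_{m}$ of $\mathcal{Q}_{m}$, with $\left\vert A\right\vert _{\sigma }$ inserted only afterward as a crude overestimate. Since in the recursion the subpartitions live in pairwise-disjoint coronas of the $\sigma $-Carleson tree $\mathcal{L}$, one has $\sum_{m}\sum_{S\in \mathcal{S}_{m}}\left\vert S\right\vert _{\sigma }\lesssim \left\vert A\right\vert _{\sigma }$, and a single Cauchy--Schwarz over $m$ bounds the troublesome sum by $\alpha _{\mathcal{A}}(A)\sqrt{\left\vert A\right\vert _{\sigma }}\sqrt{B}\left\Vert g\right\Vert$ with no factor of $M$. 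The Orthogonality Lemma as literally stated, with the global $\sqrt{\left\vert A\right\vert _{\sigma }}$ baked into the restricted norm, should be read as an abbreviation of this localized argument.
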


\begin{proof}
If $\mathsf{P}_{m}^{\sigma }=\dsum\limits_{j\geq 0}\dsum\limits_{I\in 
\mathcal{A}_{m,j}}\bigtriangleup _{\pi I}^{\sigma }$ (note the parent $\pi I$
in the projection $\bigtriangleup _{\pi I}^{\sigma }$ because of our `change
of dummy variable' in (\ref{dummy})) and $\mathsf{P}_{m}^{\omega
}=\dsum\limits_{j\geq 0}\dsum\limits_{J\in \mathcal{B}_{m,j}}\bigtriangleup
_{J}^{\omega }$, then we have%
\begin{equation*}
\mathsf{B}_{\limfunc{stop}}^{A,\mathcal{Q}_{m}}\left( f,g\right) =\mathsf{B}%
_{\limfunc{stop}}^{A,\mathcal{Q}_{m}}\left( \mathsf{P}_{m}^{\sigma }f,%
\mathsf{P}_{m}^{\omega }g\right) ,
\end{equation*}%
and%
\begin{eqnarray*}
\sum_{m\geq 0}\left\Vert \mathsf{P}_{m}^{\sigma }f\right\Vert _{L^{2}\left(
\sigma \right) }^{2} &\leq &\sum_{m\geq 0}\sum_{j\geq 0}\left\Vert \mathsf{P}%
_{\mathcal{A}_{m,j}}^{\sigma }f\right\Vert _{L^{2}\left( \sigma \right)
}^{2}\leq An\left\Vert f\right\Vert _{L^{2}\left( \sigma \right) }^{2}, \\
\sum_{m\geq 0}\left\Vert \mathsf{P}_{m}^{\omega }g\right\Vert _{L^{2}\left(
\sigma \right) }^{2} &\leq &\sum_{m\geq 0}\sum_{j\geq 0}\left\Vert \mathsf{P}%
_{\mathcal{B}_{m,j}}^{\omega }g\right\Vert _{L^{2}\left( \omega \right)
}^{2}\leq B\left\Vert g\right\Vert _{L^{2}\left( \omega \right) }^{2}\ .
\end{eqnarray*}%
The sublinear inequality (\ref{phi sublinear}) and Cauchy-Schwarz now give%
\begin{eqnarray*}
\left\vert \mathsf{B}\right\vert _{\limfunc{stop},1,\bigtriangleup }^{A,%
\mathcal{Q}}\left( f,g\right) &\leq &\sum_{m\geq 0}\left\vert \mathsf{B}%
\right\vert _{\limfunc{stop},1,\bigtriangleup }^{A,\mathcal{Q}_{m}}\left(
f,g\right) \leq \sum_{m\geq 0}\mathfrak{N}_{stop}^{A,\mathcal{Q}%
_{m}}\left\Vert \mathsf{P}_{m}^{\sigma }f\right\Vert _{L^{2}\left( \sigma
\right) }\left\Vert \mathsf{P}_{m}^{\omega }g\right\Vert _{L^{2}\left(
\sigma \right) } \\
&\leq &\left( \sup_{m\geq 0}\mathfrak{N}_{\limfunc{stop},1,\bigtriangleup
}^{A,\mathcal{Q}_{m}}\right) \sqrt{\sum_{m\geq 0}\left\Vert \mathsf{P}%
_{m}^{\sigma }f\right\Vert _{L^{2}\left( \sigma \right) }^{2}}\sqrt{%
\sum_{m\geq 0}\left\Vert \mathsf{P}_{m}^{\omega }g\right\Vert _{L^{2}\left(
\sigma \right) }^{2}} \\
&\leq &\left( \sup_{m\geq 0}\mathfrak{N}_{\limfunc{stop},1,\bigtriangleup
}^{A,\mathcal{Q}_{m}}\right) \sqrt{nAB}\sqrt{n}\left\Vert f\right\Vert
_{L^{2}\left( \sigma \right) }\left\Vert g\right\Vert _{L^{2}\left( \omega
\right) }.
\end{eqnarray*}
\end{proof}

\subsubsection{Completion of the proof}

Now we return to the proof of inequality (\ref{big 3}) in Proposition \ref%
{bottom up 3}.

\begin{proof}[Proof of (\protect\ref{big 3})]
Recall that 
\begin{eqnarray*}
\mathcal{P}^{big} &=&\left\{ \dbigcup\limits_{L\in \mathcal{L}}\mathcal{P}%
_{L,0}^{big}\right\} \dbigcup \left\{ \dbigcup\limits_{t\geq
1}\dbigcup\limits_{L\in \mathcal{L}}\mathcal{P}_{L,t}\right\} \equiv 
\mathcal{Q}_{0}^{big}\dbigcup \mathcal{Q}_{1}^{big}; \\
\mathcal{Q}_{0}^{big} &\equiv &\dbigcup\limits_{L\in \mathcal{L}}\mathcal{P}%
_{L,0}^{big}\ ,\ \ \ \ \ \mathcal{Q}_{1}^{big}\equiv \dbigcup\limits_{t\geq
1}\mathcal{P}_{t}^{big},\ \ \ \ \ \mathcal{P}_{t}^{big}\equiv
\dbigcup\limits_{L\in \mathcal{L}}\mathcal{P}_{L,t}.
\end{eqnarray*}%
We first consider the collection $\mathcal{Q}_{0}^{big}=\dbigcup\limits_{L%
\in \mathcal{L}}\mathcal{P}_{L,0}^{big}$, and claim that%
\begin{equation}
\mathfrak{N}_{\limfunc{stop},1,\bigtriangleup }^{A,\mathcal{P}%
_{L,0}^{big}}\leq C\mathcal{S}_{\limfunc{size}}^{\alpha ,A}\left( \mathcal{P}%
_{L,0}^{big}\right) \leq C\mathcal{S}_{\limfunc{size}}^{\alpha ,A}\left( 
\mathcal{P}\right) ,\ \ \ \ \ L\in \mathcal{L}.  \label{big t 3}
\end{equation}%
To see this we note that $\mathcal{P}_{L,0}^{big}$ $\mathbf{\tau }$%
-straddles the trivial collection $\left\{ L\right\} $ consisting of a
single quasicube, since the pairs $\left( I,J\right) $ that arise in $%
\mathcal{P}_{L,0}^{big}$ have $I=L$ and $J$ in the shifted corona $\mathcal{C%
}_{I}^{\mathbf{\tau }-\limfunc{shift}}$. Thus we can apply Lemma \ref%
{straddle 3} with $\mathcal{Q}=\mathcal{P}_{L,0}^{big}$ and $\mathcal{S}%
=\left\{ L\right\} $ to obtain (\ref{big t 3}).

Next, we observe that the collections $\mathcal{P}_{L,0}^{big}$ are \emph{%
mutually orthogonal}, namely 
\begin{eqnarray*}
\mathcal{P}_{L,0}^{big} &\subset &\mathcal{C}_{L}\times \mathcal{C}_{L}^{%
\mathbf{\tau }-\limfunc{shift}}\ , \\
\dsum\limits_{L\in \mathcal{L}}\mathbf{1}_{\mathcal{C}_{L}} &\leq &1\text{
and }\dsum\limits_{L\in \mathcal{L}}\mathbf{1}_{\mathcal{C}_{L}^{\mathbf{%
\tau }-\limfunc{shift}}}\leq \mathbf{\tau }.
\end{eqnarray*}%
Thus the Orthogonality Lemma \ref{mut orth} shows that%
\begin{equation*}
\mathfrak{N}_{\limfunc{stop},1,\bigtriangleup }^{A,\mathcal{Q}%
_{0}^{big}}\leq \sqrt{n\mathbf{\tau }}\sup_{L\in \mathcal{L}}\mathfrak{N}_{%
\limfunc{stop},1,\bigtriangleup }^{A,\mathcal{P}_{L,0}^{big}}\leq \sqrt{n%
\mathbf{\tau }}C\mathcal{S}_{\limfunc{size}}^{\alpha ,A}\left( \mathcal{P}%
\right) .
\end{equation*}

Now we turn to the collection%
\begin{eqnarray*}
\mathcal{Q}_{1}^{big} &=&\dbigcup\limits_{t\geq 1}\dbigcup\limits_{L\in 
\mathcal{L}}\mathcal{P}_{L,t}=\dbigcup\limits_{t\geq 1}\mathcal{P}_{t}^{big};
\\
\mathcal{P}_{t}^{big} &\equiv &\dbigcup\limits_{L\in \mathcal{L}}\mathcal{P}%
_{L,t}\ ,\ \ \ \ \ t\geq 0.
\end{eqnarray*}%
We claim that%
\begin{equation}
\mathfrak{N}_{\limfunc{stop},1,\bigtriangleup }^{A,\mathcal{P}%
_{t}^{big}}\leq C\rho ^{-\frac{t}{2}}\mathcal{S}_{\limfunc{size}}^{\alpha
,A}\left( \mathcal{P}\right) ,\ \ \ \ \ t\geq 1.  \label{S big t 3}
\end{equation}%
Note that with this claim established, we have%
\begin{equation*}
\mathfrak{N}_{\limfunc{stop},1,\bigtriangleup }^{A,\mathcal{P}^{big}}\leq 
\mathfrak{N}_{\limfunc{stop},1,\bigtriangleup }^{A,\mathcal{Q}_{0}^{big}}+%
\mathfrak{N}_{\limfunc{stop},1,\bigtriangleup }^{A,\mathcal{Q}%
_{1}^{big}}\leq \mathfrak{N}_{\limfunc{stop},1,\bigtriangleup }^{A,\mathcal{Q%
}_{0}^{big}}+\sum_{t=1}^{\infty }\mathfrak{N}_{\limfunc{stop}%
,1,\bigtriangleup }^{A,\mathcal{P}_{t}^{big}}\leq C_{\mathbf{\rho }}\mathcal{%
S}_{\limfunc{size}}^{\alpha ,A}\left( \mathcal{P}\right) ,
\end{equation*}%
which proves (\ref{big 3}) if we apply the Orthogonal Lemma \ref{mut orth}
to the set of collections $\left\{ \mathcal{P}_{L,0}^{small}\right\} _{L\in 
\mathcal{L}}$, which is mutually orthogonal since $\mathcal{P}%
_{L,0}^{small}\subset \mathcal{C}_{L}^{\prime }\times \mathcal{C}_{L}^{%
\mathbf{\tau }-\limfunc{shift}}$ . With this the proof of Proposition \ref%
{bottom up 3} is now complete since $\rho =1+\varepsilon $. Thus it remains
only to show that (\ref{S big t 3}) holds.

The cases $1\leq t\leq \mathbf{r}+1$ can be handled with relative ease since
decay in $t$ is not needed there. Indeed, $\mathcal{P}_{L,t}$ $\mathbf{\tau }
$-straddles the collection $\mathfrak{C}_{\mathcal{L}}\left( L\right) $ of $%
\mathcal{L}$-children of $L$, and so the Straddling Lemma applies to give%
\begin{equation*}
\mathfrak{N}_{\limfunc{stop},1,\bigtriangleup }^{A,\mathcal{P}_{L,t}}\leq C%
\mathcal{S}_{\limfunc{size}}^{\alpha ,A}\left( \mathcal{P}_{L,t}\right) \leq
C\mathcal{S}_{\limfunc{size}}^{\alpha ,A}\left( \mathcal{P}\right) ,
\end{equation*}%
and then the Orthogonality Lemma \ref{mut orth} applies to give%
\begin{equation*}
\mathfrak{N}_{\limfunc{stop},1,\bigtriangleup }^{A,\mathcal{P}%
_{t}^{big}}\leq \sqrt{n\mathbf{\tau }}\sup_{L\in \mathcal{L}}\mathfrak{N}_{%
\limfunc{stop},1,\bigtriangleup }^{A,\mathcal{P}_{L,t}}\leq C\sqrt{n\mathbf{%
\tau }}\mathcal{S}_{\limfunc{size}}^{\alpha ,A}\left( \mathcal{P}\right) ,
\end{equation*}%
since $\left\{ \mathcal{P}_{L,t}\right\} _{L\in \mathcal{L}}$ is mutually
orthogonal as $\mathcal{P}_{L,t}\subset \mathcal{C}_{L}\times \mathcal{C}%
_{L^{\prime }}^{\mathbf{\tau }-\limfunc{shift}}$ with $L\in \mathcal{G}_{d}$
and $L^{\prime }\in \mathcal{G}_{d+t}$ for depth $d=d\left( L\right) $.

Now we consider the case $t\geq \mathbf{r}+2$, where it is essential to
obtain decay in $t$. We again apply Lemma \ref{straddle 3} to $\mathcal{P}%
_{L,t}$ with $\mathcal{S}=\mathfrak{C}_{\mathcal{L}}\left( L\right) $, but
this time we must use the stronger localized bounds $\mathcal{S}_{\limfunc{%
size}}^{\alpha ,A;S}$ with an $S$-hole, that give%
\begin{equation}
\mathfrak{N}_{\limfunc{stop},1,\bigtriangleup }^{A,\mathcal{P}_{L,t}}\leq
C\sup_{S\in \mathfrak{C}_{\mathcal{L}}\left( L\right) }\mathcal{S}_{\limfunc{%
size}}^{\alpha ,A;S}\left( \mathcal{P}_{L,t}\right) ,\ \ \ \ \ t\geq 0.
\label{t,n 3}
\end{equation}%
Fix $L\in \mathcal{G}_{d}$. Now we note that if $J\in \Pi _{2}^{L,\mathbf{%
\tau }-\limfunc{deep}}\mathcal{P}_{L,t}$ then $J$ belongs to the $\mathbf{%
\tau }$-shifted corona $\mathcal{C}_{L^{d+t}}^{\mathbf{\tau }-\limfunc{shift}%
}$ for some quasicube $L^{d+t}\in \mathcal{G}_{d+t}$. Then $\pi ^{\mathbf{%
\tau }}J$ is $\mathbf{\tau }$ levels above $J$, hence in the corona $%
\mathcal{C}_{L^{d+t}}$. This quasicube $L^{d+t}$ lies in some child $S\in 
\mathcal{S}=\mathfrak{C}_{\mathcal{L}}\left( L\right) $. So fix $S\in 
\mathcal{S}$ and a quasicube $L^{d+t}\in \mathcal{G}_{d+t}$ that is
contained in $S$ with $t\geq \mathbf{r}+2$. Now the quasicubes $K$ that
arise in the supremum defining $\mathcal{S}_{\limfunc{size}}^{\alpha
,A;S}\left( \mathcal{P}_{L,t}\right) $ in (\ref{localized size}) belong to
either $\mathcal{N}_{\mathbf{\rho }-\mathbf{\tau }}\left( S\right) $ or $%
\mathcal{W}^{\limfunc{good}}\left( S\right) $. We will consider these two
cases separately.

So first suppose that $K\in \mathcal{N}_{\mathbf{\rho }-\mathbf{\tau }%
}\left( S\right) $. A simple induction on levels yields%
\begin{eqnarray*}
\omega _{\mathcal{P}_{L,t}}\left( \mathbf{T}^{\mathbf{\tau }-\limfunc{deep}%
}\left( K\right) \right) &=&\sum_{\substack{ J\in \Pi _{2}^{S,\mathbf{\tau }-%
\limfunc{deep}}\mathcal{P}_{L,t}  \\ J\subset K}}\left\Vert \bigtriangleup
_{J}^{\omega }\mathbf{x}\right\Vert _{L^{2}\left( \omega \right) }^{2} \\
&\leq &\omega _{\mathcal{P}}\left( \dbigcup\limits_{L^{d+t}\in \mathcal{G}%
_{d+t}:\ L^{d+t}\subset K}\mathbf{T}^{\mathbf{\tau }-\limfunc{deep}}\left(
L^{d+t}\right) \right) \\
&\leq &\frac{1}{\rho }\omega _{\mathcal{P}}\left(
\dbigcup\limits_{L^{d+t-1}\in \mathcal{G}_{d+t-1}:\ L^{d+t-1}\subset K}%
\mathbf{T}^{\mathbf{\tau }-\limfunc{deep}}\left( L^{d+t-1}\right) \right) \\
&&\vdots \\
&\lesssim &\rho ^{-\left( t-\mathbf{\rho }-\mathbf{\tau }\right) }\omega _{%
\mathcal{P}}\left( \mathbf{T}^{\mathbf{\tau }-\limfunc{deep}}\left( K\right)
\right) ,\ \ \ \ \ t\geq \mathbf{\rho }-\mathbf{\tau +}2.
\end{eqnarray*}%
Thus we have%
\begin{eqnarray*}
&&\frac{1}{\left\vert K\right\vert _{\sigma }}\left( \frac{\mathrm{P}%
^{\alpha }\left( K,\mathbf{1}_{A\setminus S}\sigma \right) }{\left\vert
K\right\vert ^{\frac{1}{n}}}\right) ^{2}\omega _{\mathcal{P}_{L,t}}\left( 
\mathbf{T}^{\mathbf{\tau }-\limfunc{deep}}\left( K\right) \right) \\
&\lesssim &\rho ^{-t}\frac{1}{\left\vert K\right\vert _{\sigma }}\left( 
\frac{\mathrm{P}^{\alpha }\left( K,\mathbf{1}_{A\setminus S}\sigma \right) }{%
\left\vert K\right\vert ^{\frac{1}{n}}}\right) ^{2}\omega _{\mathcal{P}%
}\left( \mathbf{T}^{\mathbf{\tau }-\limfunc{deep}}\left( K\right) \right)
\lesssim \rho ^{-t}\mathcal{S}_{\limfunc{size}}^{\alpha ,A}\left( \mathcal{P}%
\right) ^{2}.
\end{eqnarray*}

Now suppose that $K\in \mathcal{W}^{\limfunc{good}}\left( S\right) $ and
that $J\in \Pi _{2}^{S,\mathbf{\tau }-\limfunc{deep}}\mathcal{P}_{L,t}$ and $%
J\subset K$. There is a unique quasicube $L^{d+\mathbf{r}+1}\in \mathcal{G}%
_{d+\mathbf{r}+1}$ such that $J\subset L^{d+\mathbf{r}+1}\subset S$. Now $%
L^{d+\mathbf{r}+1}$ is good so $L^{d+\mathbf{r}+1}\Subset _{\mathbf{r}%
,\varepsilon }S$. Thus in particular $3L^{d+\mathbf{r}+1}\subset S$ so that $%
L^{d+\mathbf{r}+1}\subset K$. The above simple induction applies here to give%
\begin{eqnarray*}
\sum_{\substack{ J\in \Pi _{2}^{S,\mathbf{\tau }-\limfunc{deep}}\mathcal{P}%
_{L,t}  \\ J\subset L^{d+\mathbf{r}+1}}}\left\Vert \bigtriangleup
_{J}^{\omega }\mathbf{x}\right\Vert _{L^{2}\left( \omega \right) }^{2} &\leq
&\omega _{\mathcal{P}}\left( \dbigcup\limits_{L^{d+t}\in \mathcal{G}_{d+t}:\
L^{m-t}\subset L^{d+\mathbf{r}+1}}\mathbf{T}^{\mathbf{\tau }-\limfunc{deep}%
}\left( L^{d+t}\right) \right) \\
&\lesssim &\rho ^{-\left( t-1-\mathbf{r}\right) }\omega _{\mathcal{P}}\left( 
\mathbf{T}^{\mathbf{\tau }-\limfunc{deep}}\left( L^{d+\mathbf{r}+1}\right)
\right) ,\ \ \ \ \ t\geq \mathbf{r}+2.
\end{eqnarray*}%
Thus we have,%
\begin{eqnarray*}
&&\left( \frac{\mathrm{P}^{\alpha }\left( K,\mathbf{1}_{A\setminus S}\sigma
\right) }{\left\vert K\right\vert ^{\frac{1}{n}}}\right) ^{2}\sum_{\substack{
J\in \Pi _{2}^{K,\mathbf{\tau }-\limfunc{deep}}\mathcal{P}_{L,t}  \\ %
J\subset K}}\left\Vert \bigtriangleup _{J}^{\omega }\mathbf{x}\right\Vert
_{L^{2}\left( \omega \right) }^{2} \\
&\leq &C\left( \frac{\mathrm{P}^{\alpha }\left( K,\mathbf{1}_{A\setminus
S}\sigma \right) }{\left\vert K\right\vert ^{\frac{1}{n}}}\right) ^{2}\rho
^{-\left( t-1-\mathbf{r}\right) }\sum_{\substack{ L^{d+\mathbf{r}+1}\in 
\mathcal{G}_{d+\mathbf{r}+1}  \\ L^{d+\mathbf{r}+1}\subset K}}\omega _{%
\mathcal{P}}\left( \mathbf{T}^{\mathbf{\tau }-\limfunc{deep}}\left( L^{d+%
\mathbf{r}+1}\right) \right) \\
&\leq &C\rho ^{-\left( t-1-\mathbf{r}\right) }\left( \frac{\mathrm{P}%
^{\alpha }\left( K,\mathbf{1}_{A\setminus S}\sigma \right) }{\left\vert
K\right\vert ^{\frac{1}{n}}}\right) ^{2}\omega _{\mathcal{P}}\left( \mathbf{T%
}^{\mathbf{\tau }-\limfunc{deep}}\left( K\right) \right) \leq C\rho
^{-\left( t-1-\mathbf{r}\right) }\mathcal{S}_{\limfunc{size}}^{\alpha
,A}\left( \mathcal{P}\right) ^{2}.
\end{eqnarray*}

So altogether we conclude that%
\begin{eqnarray*}
&&\sup_{S\in \mathfrak{C}_{\mathcal{L}}\left( L\right) }\mathcal{S}_{%
\limfunc{size}}^{\alpha ,A;S}\left( \mathcal{P}_{L,t}\right) ^{2} \\
&=&\sup_{S\in \mathfrak{C}_{\mathcal{L}}\left( L\right) }\sup_{K\in \mathcal{%
N}_{\mathbf{\rho }-\mathbf{\tau }}\left( S\right) \cup \mathcal{W}^{\limfunc{%
good}}\left( S\right) }\frac{1}{\left\vert K\right\vert _{\sigma }}\left( 
\frac{\mathrm{P}^{\alpha }\left( K,\mathbf{1}_{A\setminus K}\sigma \right) }{%
\left\vert K\right\vert ^{\frac{1}{n}}}\right) ^{2}\sum_{\substack{ J\in \Pi
_{2}^{K,\mathbf{\tau }-\limfunc{deep}}\mathcal{P}_{L,t}  \\ J\subset K}}%
\left\Vert \mathsf{P}_{J}^{\omega }\mathbf{x}\right\Vert _{L^{2}\left(
\omega \right) }^{2} \\
&\leq &C_{\mathbf{r},\mathbf{\tau },\mathbf{\rho }}\rho ^{-t}\mathcal{S}_{%
\limfunc{size}}^{\alpha ,A}\left( \mathcal{P}\right) ^{2},
\end{eqnarray*}%
and combined with (\ref{t,n 3}) this gives (\ref{S big t 3}). As we pointed
out above, this completes the proof of Proposition \ref{bottom up 3}, hence
of Proposition \ref{stopping bound}, and finally of Theorem \ref{T1 theorem}.
\end{proof}

\end{document}